\documentclass[11pt]{amsart}

\usepackage{amssymb,amsfonts}
\usepackage{hyperref}
\usepackage{mdwlist}

\usepackage{amssymb,textcomp}
\usepackage{enumerate}

\usepackage[utf8]{inputenc}
\usepackage[T1]{fontenc}

\usepackage[mathscr]{eucal}

\usepackage{hyperref}
 \hypersetup{
     colorlinks=true,
     linktocpage=true,
     linkcolor=red,
     filecolor=blue,
     citecolor = blue,
     urlcolor=cyan,
     }

\usepackage[a4paper, twoside=false, vmargin={2cm,3cm}, includehead]{geometry}

\usepackage{comment}
\newtheorem{lemma}{Lemma}[section]
\newtheorem{theorem}[lemma]{Theorem}
\newtheorem*{theorem*}{Theorem}

\newtheorem{corollary}[lemma]{Corollary}

\newtheorem{proposition}[lemma]{Proposition}
\newtheorem*{proposition*}{Proposition}

\newtheorem{problem}{Problem}
\newtheorem{heuristic}[lemma]{Heuristic}

\newtheorem*{problem*}{Problem}

\theoremstyle{definition}
\newtheorem*{claim*}{Claim}
\newtheorem{convention}{Convention}

\newtheorem{definition}[lemma]{Definition}

\newtheorem{example}[lemma]{Example}

\let\ringaccent\r
\def\aa{\ringaccent a}
\def\aa{\ringaccent a}

\DeclareMathOperator*{\E}{\mathbb{E}}

\newcommand{\C}{{\mathbb C}}

\newcommand{\F}{{\mathbb F}}
\newcommand{\N}{{\mathbb N}}
\renewcommand{\P}{{\mathbb P}}
\newcommand{\Q}{{\mathbb Q}}
\newcommand{\R}{{\mathbb R}}
\renewcommand{\S}{\mathbb{S}}
\newcommand{\T}{{\mathbb T}}
\newcommand{\Z}{{\mathbb Z}}

\newcommand{\CA}{{\mathcal A}}

\newcommand{\CC}{{\mathcal C}}
\newcommand{\CD}{{\mathcal D}}
\newcommand{\CE}{{\mathcal E}}

\newcommand{\CH}{{\mathcal H}}
\newcommand{\CI}{{\mathcal I}}
\newcommand{\CK}{{\mathcal K}}
\newcommand{\CL}{{\mathcal L}}

\newcommand{\CP}{{\mathcal P}}

\newcommand{\CR}{{\mathcal R}}

\newcommand{\CT}{{\mathcal T}}
\newcommand{\CX}{{\mathcal X}}
\newcommand{\CY}{{\mathcal Y}}

\newcommand{\CZ}{{\mathcal Z}}

\newcommand{\ba}{\mathbf{a}}
\renewcommand{\b}{{\mathbf{b}}}
\newcommand{\bc}{{\mathbf{c}}}

\newcommand{\bn}{{\mathbf{n}}}
\newcommand{\be}{{\mathbf{e}}}

\newcommand{\bh}{{\mathbf{h}}}

\renewcommand{\r}{{\mathbf{r}}}

\newcommand{\bv}{{\mathbf{v}}}

\newcommand{\veps}{\varepsilon}

\newcommand{\eps}{\epsilon}
\newcommand{\ueps}{{\underline{\epsilon}}}

\newcommand{\Id}{\textrm{Id}}

\newcommand{\norm}[1]{\left\Vert #1\right\Vert}
\newcommand{\nnorm}[1]{\lvert\!|\!| #1|\!|\!\rvert}

\newcommand{\inv}{^{-1}}

\DeclareMathOperator{\spec}{Spec}
\DeclareMathOperator{\Spec}{Spec}

\DeclareMathOperator{\reel}{Re}
\renewcommand{\Re}{\reel}

\newcommand{\Krat}{{\CK_{\text{\rm rat}}}}

\newcommand{\abs}[1]{\mathopen{}\left| #1\mathclose{}\right|}

\newcommand{\brac}[1]{\mathopen{}\left( #1 \mathclose{}\right)}

\newcommand{\floor}[1]{{\left \lfloor #1 \right \rfloor}}
\newcommand{\sfloor}[1]{{\lfloor #1 \rfloor}}

\newcommand{\ceil}[1]{{\left \lceil #1 \right \rceil}}
\newcommand{\rem}[1]{\left \{ #1 \right \}}

\newcommand{\srem}[1]{ \{ #1 \}}

\usepackage[dvipsnames]{xcolor}

\title[]{Joint ergodicity - 40 years on}
\author{Borys Kuca}

\address[Borys Kuca]{Jagiellonian University, Faculty of Mathematics and Computer Science, 30-348 Krak\'ow, Poland}
\email{borys.kuca@uj.edu.pl}
\date{}

\thanks{The author was supported by the National Science Center (NCN Poland) Sonata grant No. 2024/55/D/ST1/00468 and would also like to acknowledge the Foundation of Polish Science (FNP) Start stipend.}

\begin{document}

\begin{abstract}
Recent years have seen dramatic progress in the study of joint ergodicity, i.e. a scenario in which a multiple ergodic average converges in norm to the product of integrals of individual functions. This survey, accompanying the talk given by the author in the Perspectives on Ergodic Theory and its Interactions conference to celebrate Vitaly Bergelson's 75th birthday, aims to summarize these recent advances, outline crucial new tools, present various open problems, and highlight the main challenges currently faced in the study of multiple ergodic averages.
\end{abstract}

\maketitle

\tableofcontents

\section{Introduction}\label{S: introduction}
\subsection{Multiple ergodic averages: what are they, and why should you care?}

Furstenberg's celebrated proof of the Szemer\'edi theorem \cite{Fu77} initiated the systematic investigation of \emph{multiple ergodic averages}, a class of multilinear operators of dynamical origin that arise in the search for arithmetic patterns in sets of positive density and other ``large'' sets. Given some integer sequences $a_1, \ldots, a_\ell:\N\to\Z$, % (polynomials, primes, Hardy sequences, etc.), 
we are interested in the (norm, weak, or pointwise) limit of
\begin{align*}
A_N(f_1, \ldots, f_\ell):=\E_{n\in[N]} T_1^{a_1(n)}f_1\cdots T_\ell^{a_\ell(n)}f_\ell
\end{align*}
as $N\to\infty$, where:
\begin{itemize}
\item $\E_{n\in[N]} = \frac{1}{N}\sum_{n=1}^N$ is a finite C\`esaro average;
\item $(X, \CX, \mu,$\! $T_1, \ldots, T_\ell)$ is a \emph{system}, i.e. a collection of measure-preserving transformations  $T_1, \ldots, T_\ell$ acting on a standard probability space $(X, \CX, \mu)$ (which throughout this survey we assume to be \emph{invertible} and \emph{commuting} unless stated otherwise);
%\item $T_1, \ldots, T_\ell$ are some invertible (and usually commuting) measure-preserving transformations on a standard probability space $(X, \CX, \mu)$;
\item $f_1, \ldots, f_\ell\in L^\infty(\mu)$ are bounded functions.
\end{itemize}
%Here and throughout, we define $(Tf)(x) = f(Tx)$ as well as $T^n f = T$.

Multiple ergodic averages have been investigated for many classes of integer sequences, including polynomials, generalized polynomials, Hardy sequences, tempered functions, sequences involving primes, and random sequences. %This survey focuses on polynomials and Hardy sequences: two classes of iterates for which the greatest progress has been made in the previous few years. 
%In general, we first fix the integer sequences or their family (e,g. integer polynomials), and 

% $[N]:=\{1, \ldots, N\}$; $\E_{i\in I} = \frac{1}{|I|}\sum_{i\in I}$ for any finite set $I$;  $T_1, \ldots, T_\ell$ are some invertible (and usually commuting) measure-preserving transformations on a standard probability space $(X, \CX, \mu)$; and $f_1, \ldots, f_\ell\in L^\infty(\mu)$ are bounded functions.
%, and $a_1, \ldots, a_\ell:\N\to\Z$ are integer sequences (polynomials, primes, Hardy sequences, etc.).
Broadly speaking, the 
%questions surrounding 
study of multiple ergodic averages can be split into the following classes of problems:
\begin{enumerate}
\item (Multiple recurrence) Show that for any $E\in\CX$ with $\mu(E)>0$, there exists $n\in\N$ for which
\begin{align*}
\mu(E\cap T_1^{-a_1(n)}E\cap\cdots\cap T_\ell^{-a_\ell(n)}E) >0.
\end{align*}
\item (Convergence) Show that $A_N(f_1, \ldots, f_\ell)$ converges as $N\to\infty$, whether in $L^2(\mu)$, weakly, or pointwise almost everywhere.
\item (Structure) Describe the limit of $A_N(f_1, \ldots, f_\ell)$ as $N\to\infty$.
\end{enumerate}
While the multiple recurrence problem is strictly speaking not about multiple ergodic averages, the ergodic way to prove it is by showing that the average
\begin{align*}
\E_{n\in[N]} \mu(E\cap T_1^{-a_1(n)}E\cap\cdots\cap T_\ell^{-a_\ell(n)}E) = \int 1_E \cdot \E_{n\in[N]} T_1^{a_1(n)}1_E \cdots T_\ell^{a_\ell(n)}1_E\; d\mu
\end{align*}
has positive liminf as $N\to\infty$. Such results have yielded new extensions of Szemer\'edi's theorem when combined with the Furstenberg correspondence principle, a variant of which is presented below. This transference principle serves as the main gateway between ergodic theory and additive combinatorics.
\begin{theorem}[Furstenberg's correspondence principle (cf. {\cite[Theorem 1.1]{Ber87b}})]
   Let $k,\ell\in\N$. For every $A\subseteq\Z^\ell$ there exists a system $(X, \CX, \mu,$\! $T_1, \ldots, T_\ell)$ and a set $E\in\CX$ with the following properties:
   \begin{enumerate}
       \item $\overline{d}(A) = \mu(E)$;
       \item for all $\bn_1, \ldots, \bn_k\in\Z^\ell$ given by $\bn_i = (n_{i1}, \ldots, n_{i\ell})\in\Z^\ell$, we have
       \begin{align*}
           \overline{d}\brac{A\cap (A-\bn_1)\cap \cdots \cap (A-\bn_k)}\geq \mu(E\cap \prod_{j=1}^\ell T_j^{-n_{1j}}E\cap \cdots \cap \prod_{j=1}^\ell T_j^{-n_{kj}}E).
       \end{align*}
   \end{enumerate}
\end{theorem}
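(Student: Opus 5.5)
The plan is the standard shift-space construction. Take $X=\{0,1\}^{\Z^\ell}$ with the product topology and Borel $\sigma$-algebra $\CX$. Let $T_j$ be the shift $(T_j x)(\bm) = x(\bm+\be_j)$, where $\be_j$ is the $j$-th standard basis vector. The $T_j$ are commuting homeomorphisms, and $\prod_j T_j^{n_j}$ is the shift by $\bn=(n_1,\ldots,n_\ell)$. Set $\omega=1_A\in X$ and $E=\{x\in X: x(\mathbf 0)=1\}$, a clopen set. Then $T^{\bn}\omega\in E$ if and only if $\bn\in A$, where $T^{\bn}:=\prod_j T_j^{n_j}$. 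More generally, for any $\bn_1,\ldots,\bn_k$,
\begin{align*}
1_{A\cap(A-\bn_1)\cap\cdots\cap(A-\bn_k)}(\bm) = 1_{E\cap T^{-\bn_1}E\cap\cdots\cap T^{-\bn_k}E}(T^{\bm}\omega).
\end{align*}
Throughout I take $\overline{d}(B)=\limsup_{N\to\infty}|B\cap[-N,N]^\ell|/(2N+1)^\ell$; any Følner sequence works equally well.

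The measure is built as a weak$^*$ limit. Choose $N_i\to\infty$ with $|A\cap[-N_i,N_i]^\ell|/(2N_i+1)^\ell\to\overline{d}(A)$. Form the empirical measures
\begin{align*}
\mu_i=\frac{1}{(2N_i+1)^\ell}\sum_{\bm\in[-N_i,N_i]^\ell}\delta_{T^{\bm}\omega}.
\end{align*}
By compactness of the space of Borel probability measures on the compact metrizable $X$, pass to a subsequence along which $\mu_i\to\mu$ weak$^*$. Each $T_j$ preserves $\mu$: indeed $\mu_i - (T_j)_*\mu_i$ has total variation at most $2\ell(2N_i+1)^{\ell-1}/(2N_i+1)^\ell\to0$ from the boundary of the box. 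This Følner property is the only nontrivial input.

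Both properties now follow from testing against clopen sets. Any finite intersection $E\cap T^{-\bn_1}E\cap\cdots$ is clopen, so its indicator is continuous. Hence $\mu(\cdot)=\lim_i\mu_i(\cdot)$ for such sets. For property (1), $\mu_i(E)=|A\cap[-N_i,N_i]^\ell|/(2N_i+1)^\ell\to\overline{d}(A)$, so $\mu(E)=\overline{d}(A)$. For property (2), the displayed identity gives
\begin{align*}
\mu_i\Big(E\cap \bigcap_{r} T^{-\bn_r}E\Big)=\frac{\big|A\cap(A-\bn_1)\cap\cdots\cap(A-\bn_k)\cap[-N_i,N_i]^\ell\big|}{(2N_i+1)^\ell}.
\end{align*}
Its limit along the subsequence is at most the limsup over all $N$, which is $\overline{d}(A\cap(A-\bn_1)\cap\cdots)$. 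The inequality is only one-sided because the subsequence was tuned to $A$, not to the intersection.

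The main subtlety is keeping track of directions. The intersection must be $E\cap T^{-\bn}E$, with shift convention $(T^{\bn}x)(\bm)=x(\bm+\bn)$ and $A-\bn=\{a-\bn : a\in A\}$, so that $\bm\in A-\bn$ exactly when $\bm+\bn\in A$. Beyond that, the only points needing justification are that weak$^*$ convergence gives convergence on clopen sets and that the limit is invariant; both are routine. If one insists on ergodicity, that is not claimed here and is not needed.
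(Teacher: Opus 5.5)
Your proof is correct. The paper gives no proof of its own and defers to Bergelson \cite[Theorem 1.1]{Ber87b}, whose argument is essentially your shift-space construction: the point $1_A\in\{0,1\}^{\Z^\ell}$, the cylinder $E=\{x\colon x(\mathbf 0)=1\}$, and a weak$^*$ limit of empirical measures along boxes realizing $\overline{d}(A)$, with invariance coming from the F{\o}lner property and both conclusions from continuity of indicators of clopen sets. As you note, nothing depends on $k$ or on which F{\o}lner sequence defines $\overline{d}$ (for instance $([N]^\ell)_N$ works verbatim), so a single system and set $E$ serve all $k$ simultaneously.
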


Our understanding of the aforementioned problems is sharpest for integer polynomials, for which the state of the art can be summarized as follows. Modulo some notable exceptions (see Problem \ref{Pr: joint intersective}), the questions regarding multiple recurrence are largely resolved thanks to the works of Furstenberg \cite{Fu77}, Furstenberg-Katznelson \cite{FuKa78}, Bergelson-Leibman \cite{BL96}, Leibman \cite{L98}, Bergelson-Leibman-Lesigne \cite{BLL08}, and others.
%, at least when $a_1, \ldots, a_\ell$ are integer polynomials and/or sequences involving primes. 
The same holds for norm convergence, due to the remarkable result of Walsh \cite{W12} vastly generalizing earlier works of Conze-Lesigne \cite{CL84}, Furstenberg-Weiss \cite{FW96}, Host-Kra \cite{HK05b, HK05a}, Ziegler \cite{Z07}, Leibman \cite{L05a}, Tao \cite{T08}, Towsner \cite{To09}, Austin \cite{A10b}, Host \cite{H09}, among others.
%Towsner \cite{To09}, and others. 
By contrast, the pointwise almost everywhere convergence of multiple ergodic averages, a topic first studied by Bourgain \cite{Bo90} over 35 years ago, remains a challenging open problem despite impressive recent breakthroughs due to Krause-Mirek-Tao \cite{KMT20}, Kosz-Mirek-Peluse-Wan-Wright \cite{KMPWW24}, and others. 

%With a considerable, perhaps outrageous, degree of oversimplification, the state of the art on the questions above can be summarized as follows. Modulo some notable exceptions to be described later, the questions regarding multiple recurrence are largely resolved thanks to the works of Furstenberg, Furstenberg-Katznelson, Bergelson-Leibman, Leibman, and others, at least when $a_1, \ldots, a_\ell$ are integer polynomials and/or sequences involving primes. A similar claim can be made regarding norm convergence, due to the remarkable result of Walsh extending earlier works of Host-Kra, Ziegler, Leibman, Tao, Austin. By contrast, the pointwise almost everywhere convergence of multiple ergodic averages remains a challenging open problem despite highly impressive recent breakthroughs due to Krause-Mirek-Tao and others \BK{(cite)}. 

Although the proofs of multiple recurrence, norm convergence, and pointwise almost everywhere convergence differ in many aspects, 
% and use different tricks, but 
they share a common core: they all require \text{some} structural description of the limit of the associated multiple ergodic average. Hence the last of the presented problems, that of the structure of the limit,
%description of limits of multiple ergodic averages, 
lies at the heart of the others. 
%Proving a multiple recurrence of convergence result typically requires \emph{some} understanding of the structure of the limit. 
The most general known multiple recurrence and convergence results rely on a rather basic description of the limit.
%(take e.g. Bergelson-Leibman's proof of the polynomial Szemer\'edi theorem, or Walsh's norm convergence of nilpotent ergodic averages); 
Yet there remain strong incentives to describe the limits as precisely as possible. Knowing the limit is without a doubt more intellectually satisfying than simply knowing its existence; but if this reason is not convincing enough, there are a few ``practical'' ones:
\begin{enumerate}
    \item knowing the limit can yield new recurrence results and combinatorial consequences (see e.g. Theorem \ref{T: Fr Hardy along APs}) as well as strengthen the existing ones, implying e.g. the existence of so-called \emph{popular common differences} (see e.g. Proposition \ref{P: recurrence for jointly ergodic});
    \item for many averages, finding the limit is both the most straightforward way to establish its norm convergence (see e.g. Theorem \ref{T: Fr distinct-growth Hardy}) and can also serve as a step to prove pointwise convergence (see e.g. \cite{DS21, HSY19b});
    \item the predominantly analytic methods developed for this purpose can often be adapted to the discrete or Euclidean setting, reinforcing the fruitful interactions between the infinitary world of ergodic theory, the finitary realm of additive combinatorics, and the continuous Euclidean universe.
\end{enumerate}
% First, knowing the limit is without a doubt more intellectually satisfying than simply knowing its existence. Second, a precise understanding of the limit 
% %often yields stronger 
% gives new recurrence results and combinatorial consequences as well as strengthens the existing ones, implying e.g. the existence of so-called \emph{popular common differences} (see ...). Third, the predominantly analytic methods developed for this purpose can often be adapted to the discrete or Euclidean setting, reinforcing the fruitful interactions between the infinitary world of ergodic theory, the finitary realm of additive combinatorics, and the continuous Euclidean universe.
% %(where daunting technicalities tend to obfuscate the underlying idea) and
% %they have played a major part both in developing both qualitative results on the existence of patterns (e.g. the Green-Tao theorem) and bounds for the sets avoiding certain configurations. 
%merits the most nuanced answer. 
Given this motivation, it is clear that the study of the limiting behavior of multiple ergodic averages occupies a central role in developing new extensions of the Szemer\'edi theorem.  

Up to a few years ago, the structure of multiple ergodic averages along polynomials and other classes of iterates
%, primes, and Hardy sequences 
was fairly well-understood in the case of a \emph{single transformation} (i.e. when $T_1 = \cdots = T_\ell$). However, little had been known for commuting transformations other than for a few isolated cases. This changed substantially in the last couple of years, with the advent of potent new techniques capable of answering many previously intractable problems.
%as powerful new techniques have been developed to study this question. 
This survey aims to summarize these advances and highlight remaining open problems, some old and some new. We have chosen to do so from the perspective of joint ergodicity, which has served as the focal point of many of these recent breakthroughs. 
%of averages along commuting transformations. Even so, there 

\subsection{How to describe the limits of multiple ergodic averages?}

Before delving into the definition of joint ergodicity, we briefly discuss the surprisingly intricate question of what it even means to \emph{describe} the limit of a multiple ergodic average. 
\subsubsection{Explicit description of a limit}
Given a multiple ergodic average, we would ideally like to identify an explicit limiting function. This best-case scenario is exemplified by the mean ergodic theorem below.
\begin{theorem}[Von Neumann's mean ergodic theorem]
Let $(X, \CX, \mu, T)$ be a system and $f\in L^2(\mu)$. Then
\begin{align*}
\lim_{N\to\infty}\norm{\frac{1}{N}\sum_{n=1}^N T^n f - \E(f|\CI(T))}_{L^2(\mu)} = 0,
\end{align*}
where $\CI(T) := \{E\in\CX\colon\; T\inv E = E\}$ is the $\sigma$-algebra of $T$-invariant sets. 
%$$\CI(T) := \{E\in\CX\colon T\inv E = E\}.$$ 
In particular, if $T$ is ergodic, then
\begin{align*}
\lim_{N\to\infty}\norm{\frac{1}{N}\sum_{n=1}^N T^n f - \int f\; d\mu}_{L^2(\mu)} = 0.
\end{align*}
\end{theorem}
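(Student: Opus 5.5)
The plan is the classical Hilbert space argument of Riesz. Let $U f = f\circ T$ be the Koopman operator on $H = L^2(\mu)$. Since $T$ is invertible and measure-preserving, $U$ is unitary, so $U^* = U^{-1}$. I will decompose $H$ orthogonally into the space of invariant vectors
\begin{align*}
H_{\mathrm{inv}} := \{f\in H\colon\; Uf = f\}
\end{align*}
and the closure of the coboundaries $B := \{Ug - g\colon\; g\in H\}$. This decomposition holds because $h\perp B$ exactly when $U^*h = h$, and for unitary $U$ the condition $U^*h = h$ is equivalent to $Uh = h$. Hence $H = H_{\mathrm{inv}}\oplus\overline{B}$.

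On each summand the averages $A_N f := \frac{1}{N}\sum_{n=1}^N U^n f$ behave simply. If $f\in H_{\mathrm{inv}}$, then $A_N f = f$ for every $N$. If $f = Ug - g$, the sum telescopes:
\begin{align*}
A_N f = \frac{1}{N}\brac{U^{N+1}g - Ug},
\end{align*}
whose norm is at most $2\norm{g}/N\to 0$. Every operator $A_N$ has norm at most $1$, so a standard $\varepsilon/3$ argument extends this convergence to zero from $B$ to $\overline{B}$. Consequently, for every $f\in H$, $A_N f$ converges in norm to $Pf$, the orthogonal projection of $f$ onto $H_{\mathrm{inv}}$.

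The step I expect to need the most care is identifying $P$ with $\E(\cdot|\CI(T))$. I will proceed in three steps.
\begin{enumerate}
\item I will show that every $T$-invariant $L^2$ function $f$ (meaning $f\circ T = f$ a.e.) has a $\CI(T)$-measurable version. The sets $\{f>c\}$ are invariant modulo null sets, and each can be replaced by a strictly invariant set, for instance $\bigcap_{n\in\Z} T^{-n}\{f>c\}$. Therefore $H_{\mathrm{inv}} = L^2(X,\CI(T),\mu)$.
\item Conversely, every $\CI(T)$-measurable function is invariant, so it lies in $H_{\mathrm{inv}}$.
\item The orthogonal projection of $L^2(\mu)$ onto the closed subspace $L^2(X,\CI(T),\mu)$ is exactly the conditional expectation $\E(\cdot|\CI(T))$.
\end{enumerate}
Together these give $Pf = \E(f|\CI(T))$.

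For the ergodic case, $\CI(T)$ consists only of sets of measure $0$ or $1$. Then $\E(f|\CI(T))$ is the constant $\int f\,d\mu$, which yields the second statement.
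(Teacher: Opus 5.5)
The paper gives no proof of this theorem; it quotes it in the introduction as classical background, so there is no argument of the paper's to compare with. Your proposal is the standard Riesz--von Neumann Hilbert-space proof, and it is correct. Each step is sound: $B^\perp = H_{\mathrm{inv}}$ via $U^* = U^{-1}$, the telescoping bound on coboundaries, the density argument using $\norm{A_N}\leq 1$, and the identification of the projection onto $L^2(X,\CI(T),\mu)$ with $\E(\cdot|\CI(T))$.

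Two small points in step (i) are worth making explicit.
\begin{itemize}
\item For complex-valued $f$, apply the level-set argument to $\Re f$ and $\mathrm{Im}\, f$ separately. Both are invariant because $U$ commutes with complex conjugation.
\item To assemble a single $\CI(T)$-measurable version of $f$, use only the countably many rational levels. For instance, set $\tilde f(x) := \sup\{c\in\Q\colon x\in E'_c\}$ with $E'_c := \bigcap_{n\in\Z}T^{-n}\{f>c\}$. These sets decrease in $c$ and each agrees with $\{f>c\}$ up to a null set.
\end{itemize}
A shortcut for the second point is to take $\tilde f := \liminf_{n\to\infty} f\circ T^n$, redefined as $0$ on the strictly invariant null set where it is infinite. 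This function satisfies $\tilde f\circ T = \tilde f$ everywhere and equals $f$ almost everywhere.

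This shortcut, unlike $\bigcap_{n\in\Z}$, also works without invertibility. Invertibility is not needed elsewhere either: for an isometry $U$ one still has $U^*h = h \iff Uh = h$. Indeed, $\norm{Uh-h}_{L^2(\mu)}^2 = 2\norm{h}_{L^2(\mu)}^2 - 2\Re\langle h, U^*h\rangle$, and $U^*U = \Id$. So your argument proves the theorem for non-invertible measure-preserving $T$ as well.
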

% which asserts that the $L^2(\mu)$ limit of the single ergodic average 
%\begin{align*}
%\frac{1}{N}\sum_{n=1}^N T^n f
%\end{align*}
%as $N\to\infty$ is $\E(f|\CI(T))$, the conditional expectation onto the invariant factor $$\CI(T) = \{E\in\CX\colon T\inv E = E\}.$$ 
Unfortunately, the mean ergodic theorem is rather exceptional in the level of precision with which it describes the limiting function. In most scenarios, life is not that simple, and finding an explicit limiting function is either infeasible or unhelpful, as illustrated by the example below.
\begin{example}[Local obstructions for polynomials]\label{Ex: n^2}
Let $q\in\N$, and let $(X, \CX, \mu, T)$ be the rotation $Tx = x + 1$ mod $q$ on  $X = \Z/q\Z$. Then 
\begin{align*}
	\lim_{N\to\infty} \frac{1}{N}\sum_{n=1}^N T^{n^2} f = \sum_{n\in\Z/q\Z}\frac{|\{i\in\Z/q\Z\colon i \equiv n^2\!\!\! \mod q\}|}{q} T^n f,
\end{align*}
where the limit can be taken in any suitable sense (in norm, weakly, or pointwise). 
For instance, when $q = 3$, we get
\begin{align*}
	\lim_{N\to\infty} \frac{1}{N}\sum_{n=1}^N T^{n^2} f = \frac{f + 2T f}{3},
\end{align*}
so in particular the limit is different from $\int f \; d\mu = \frac{f+Tf+T^2f}{3}$. The issue here is the \emph{local obstructions}, i.e. the fact that integer polynomials are generally not equidistributed in residue classes. On the other hand, if $(X, \CX, \mu, T)$ is \emph{totally ergodic} (i.e. $T, T^2, T^3, \ldots$ are all ergodic), then 
\begin{align*}
\lim_{N\to\infty} \norm{\frac{1}{N}\sum_{n=1}^N T^{n^2} f - \int f\; d\mu}_{L^2(\mu)} = 0.
\end{align*}
Describing the limit in a general system therefore requires us to merge the two extreme cases of periodic and totally ergodic systems. In particular, when the system admits complicated periodic factors, then any explicit formula for the limit would likely be too unsavoury to serve any practical purpose. Proposition \ref{P: Krat} below will describe a more compact and ready-to-apply way of describing the limit, even if a less explicit one.
% While for a particular value of $q$ the limit can still be described explicitly, providing a limiting formula for a general system would require us to explicitly describe the limit on the periodic factors of the system. This is certainly possible to do, but if the system admits complicated periodic factors, then the resulting formula would be too unsavoury to serve any practical purpose. 
\end{example}

%This approach is exemplified by the mean ergodic theorem, which asserts that the $L^2(\mu)$ limit of the single ergodic average 
%\begin{align*}
%\frac{1}{N}\sum_{n=1}^N T^n f
%\end{align*}
%as $N\to\infty$ is $\E(f|\CI(T))$, the conditional expectation onto the invariant factor $$\CI(T) = \{E\in\CX\colon T\inv E = E\}.$$ Unfortunately, life is not that simple, and in most scenarios, finding an explicit limiting function is either infeasible or unhelpful. 

\subsubsection{Describing the limit via a characteristic factor or a seminorm}
Faced with the difficulties inherent in explicit descriptions of limits, 
%Therefore, in many cases we are content with 
we usually resort to describing the limit implicitly, by identifying a factor or seminorm that controls a given average. 
\begin{definition}[Factor and seminorm control]\label{D: control}
    Let $(X, \CX, \mu,$\! $T_1, \ldots, T_\ell)$ be a system, $a_1, \ldots, a_\ell:\N\to\Z$ be sequences, and $1\leq j\leq \ell$ be an index. We say that a factor $\CY_j\subseteq \CX$ or a seminorm $\norm{\cdot}_j$ on $L^\infty(\mu)$ \emph{controls the sequences on the system at index $j$} if for arbitrary $f_1, \ldots, f_\ell\in L^\infty(\mu)$,
    %the limiting behavior of $A_N(f_1, \ldots, f_\ell)$ in the sense that
\begin{align}\label{E: control}
\lim_{N\to\infty}\norm{A_N(f_1, \ldots, f_\ell)}_{L^2(\mu)} = 0 \quad \textrm{whenever}\quad \E(f_j|\CY_j) = 0 \quad \textrm{or}\quad \norm{f_j}_j = 0.
\end{align}
If the same factor/seminorm can be chosen for any $j$, we simply say that it \emph{controls the sequences on the system}, \emph{controls the average}, or \emph{controls the tuple $(T_1^{a_1(n)}, \ldots, T_\ell^{a_\ell(n)})_n$}.
\end{definition}
% This is typically done by identifying a factor $\CY_j\subseteq \CX$ (often called \emph{characteristic factor}) or a seminorm $\norm{\cdot}_j$ on $L^\infty(\mu)$ that \emph{controls} the limiting behavior of $A_N(f_1, \ldots, f_\ell)$ in the sense that
% \begin{align}\label{E: control}
% \lim_{N\to\infty}\norm{A_N(f_1, \ldots, f_\ell)}_{L^2(\mu)} = 0 \quad \textrm{whenever}\quad \E(f_j|\CY_j) = 0 \quad \textrm{or}\quad \norm{f_j}_j = 0.
% \end{align}
The factor $\CY_j$ is also known as \emph{(partial) characteristic factor (at index $j$)}. The name comes from the work of Furstenberg and Weiss \cite{FW96}, but the underlying idea already arises in the foundational work of Furstenberg \cite{Fu77}. 

The factor and seminorm will in general depend on the index $j$, particularly while dealing with several different transformations. In the context of Example \ref{Ex: n^2}, such an implicit description is provided by the \emph{rational Kronecker factor}
\begin{align}\label{E: Krat}
	\Krat(T) := \bigvee_{r\in\N}\CI(T^r) = \langle E\in \CX\colon T^{-r}E = E\;\textrm{for some}\; r\in\N\rangle,
\end{align}
i.e. the smallest factor of $\CX$ containing all its periodic factors.
%as illustrated by the result below
\begin{proposition}[Rational Kronecker factor controls single polynomial averages {\cite[Lemma 3.14]{Fu81}}]\label{P: Krat}
Let $a\in\Z[t]$ be nonconstant, $(X, \CX, \mu, T)$ be a system, and $f\in L^\infty(\mu)$. Then
\begin{align*}
\lim_{N\to\infty} \norm{\frac{1}{N}\sum_{n=1}^N T^{a(n)} f}_{L^2(\mu)} = 0\quad \textrm{whenever}\quad \E(f|\Krat(T)) = 0.
\end{align*}
\end{proposition}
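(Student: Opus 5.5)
We will prove the statement through the spectral theorem. The function $f$ is replaced by its spectral measure on the circle, and everything reduces to Weyl's equidistribution theorem for polynomial exponential sums.

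The first step is to compute the norm. Let $\sigma_f$ be the spectral measure of $f$ with respect to $T$ on $\T=\R/\Z$, defined by $\langle T^n f, f\rangle = \int_\T e(nt)\, d\sigma_f(t)$. Expanding the square then gives
\begin{align*}
\norm{\frac{1}{N}\sum_{n=1}^N T^{a(n)} f}_{L^2(\mu)}^2 = \int_\T \abs{\frac{1}{N}\sum_{n=1}^N e(a(n)t)}^2 d\sigma_f(t).
\end{align*}

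The second step is the pointwise behaviour of the integrand. Since $a$ is nonconstant, Weyl's equidistribution theorem applies whenever $t$ is irrational. In that case the leading coefficient of $a(n)t$ is irrational, so $a(n)t$ is equidistributed mod 1. Consequently $\frac{1}{N}\sum_{n\le N} e(a(n)t)\to 0$ for every irrational $t$. The integrand is also bounded by $1$. By dominated convergence, the whole expression tends to $\int_{\Q/\Z} (\cdots)\, d\sigma_f$, up to a term which vanishes in the limit. It therefore suffices to show that $\sigma_f$ gives no mass to rational points.

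The third step uses the hypothesis $\E(f|\Krat(T))=0$. For $t=p/r$ the mass is $\sigma_f(\{p/r\}) = \norm{P_{p/r} f}^2$, where $P_{p/r}$ is the orthogonal projection onto the eigenspace $\{g\colon Tg = e(p/r) g\}$. Any eigenfunction $g$ with rational eigenvalue $e(p/r)$ satisfies $T^r g = g$, so $g$ is $\CI(T^r)$-measurable and hence $\Krat(T)$-measurable. The condition $\E(f|\Krat(T))=0$ says that $f$ is orthogonal to $L^2(\Krat(T))$. In particular $f$ is orthogonal to every such eigenfunction, so $P_{p/r}f=0$. Thus $\sigma_f$ has no atoms at rational points, and the limit of the squared norm is $0$.

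The main point requiring care is the identity $\sigma_f(\{\theta\}) = \norm{P_\theta f}^2$. It follows from the spectral theorem, or directly from the mean ergodic theorem applied to $e(-\theta)T$. By that theorem, $\frac1N\sum_{n\le N} e(-n\theta)T^n f$ converges to $P_\theta f$, and pairing with $f$ yields $\sigma_f(\{\theta\})$. The only other input is Weyl's theorem, which we use as a black box.
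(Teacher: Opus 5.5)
Your proof is correct. It is the classical argument: express the norm through the spectral measure $\sigma_f$, use Weyl's theorem and dominated convergence to eliminate the irrational part, and exclude rational atoms because eigenfunctions with rational eigenvalues are $\CI(T^r)$-measurable, hence $\Krat(T)$-measurable. The survey gives no proof of its own and instead cites Furstenberg's Lemma 3.14, which to my recollection is proved in essentially this way, so there is nothing to add.
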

Proposition \ref{P: Krat} formalizes the idea that the only obstructions in understanding single ergodic averages along integer polynomials come from periodic systems. 

The control of the limit by a seminorm as in \eqref{E: control} is a purely analytic statement, usually achieved by chiefly analytic means. However, we are typically only interested in those seminorms $\norm{\cdot}_j$ that admit a dynamical interpretation, i.e. to which we can associate a factor $\CY_j$ satisfying
\begin{align*}
\norm{f}_j = 0 \quad \Longleftrightarrow \quad \E(f|\CY_j) = 0.
\end{align*} 
In many cases, the factors of interest admit a useful structural description, i.e. they are either (inverse limits of) algebraically structured systems (as in Example \ref{Ex: HK example} below) or joins of simple and explicit factors of the system (see Example \ref{Ex: Host example}). The control of an average by a factor plus the structure theory of the factor then gives us a dynamical description of the limit. The smaller the characteristic factor, the more precise our knowledge of the limit. Two quintessential instances of this procedure are presented below; for the definition of the seminorms and factors arising in both examples, see Appendix \ref{A: Host-Kra theory}.
\begin{example}[Host-Kra theory for Furstenberg averages]\label{Ex: HK example}
    Let $(X, \CX, \mu, T)$ be a system. In their seminal work \cite{HK05a}, Host and Kra showed that for every $\ell\in\N$ there exists a constant $C_\ell>0$ and a seminorm $\nnorm{\cdot}_{\ell, T}$ on $L^\infty(\mu)$ (called \emph{degree-$\ell$ Host-Kra seminorm}) such that for all 1-bounded functions $f_1, \ldots, f_\ell\in L^\infty(\mu)$, we have
\begin{align}\label{E: HK seminorm control}
\limsup_{N\to\infty}\norm{\E_{n\in[N]} T^n f_1 \cdots T^{\ell n} f_\ell}_{L^2(\mu)} \leq C_\ell \nnorm{f_\ell}_{\ell, T}.
\end{align}
They then constructed a factor $\CZ_{\ell-1}(T)\subseteq \CX$ such that
\begin{align}\label{E: HK factor property}
\nnorm{f}_{\ell, T} = 0 \quad \Longleftrightarrow \quad \E(f|\CZ_{\ell-1}(T)) = 0.
\end{align} 
Splitting each $f_j$ into $\E(f_j|\CZ_{\ell-1}(T))$ and its orthogonal complement, and using the seminorm control \eqref{E: HK seminorm control} together with the factor property \eqref{E: HK factor property} to annihilate the contribution of the orthogonal complement, they showed that it suffices to study the $L^2(\mu)$-limit of 
\begin{align}\label{E: Furstenberg averages}
    \E_{n\in[N]} T^n f_1 \cdots T^{\ell n} f_\ell
\end{align}
for $\CZ_{\ell-1}(T)$-measurable functions $f_1, \ldots, f_\ell$. For ergodic systems, they then proved the following structure theorem.
\begin{theorem}[Host-Kra structure theorem]\label{T: HK structure theorem}
Let $\ell\in \N$ and $(X, \CX, \mu, T)$ be an ergodic system. Then $(X, \CZ_\ell, \mu, T)$ is isomorphic to an inverse limit of $\ell$-step nilsystems.
\end{theorem}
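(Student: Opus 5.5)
This is a deep structure theorem, and I would follow the Host--Kra strategy in outline, inducting on $\ell$. The object to build is the cube structure. Define measures $\mu^{[k]}$ on $X^{2^k}$ inductively: $\mu^{[0]}=\mu$, and $\mu^{[k+1]}$ is the relatively independent self-joining of $\mu^{[k]}$ over the $\sigma$-algebra $\CI^{[k]}$ of sets invariant under $T^{[k]}=T\times\cdots\times T$. The seminorm is then given by
\begin{align*}
\nnorm{f}_{k,T}^{2^k}=\int\prod_{\epsilon\in\{0,1\}^k}\mathcal{C}^{|\epsilon|}f(x_\epsilon)\,d\mu^{[k]},
\end{align*}
where $\mathcal{C}$ denotes complex conjugation. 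The factor $\CZ_{k-1}$ is characterised as the factor such that $\E(f\mid\CZ_{k-1})=0$ exactly when $\nnorm{f}_{k,T}=0$. Equivalently, $\CZ_{k-1}$ is the smallest factor for which the coordinate $x_{\underline 0}$ is measurable with respect to the other $2^k-1$ coordinates under $\mu^{[k]}$. This uniqueness property is what powers the structural analysis.

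The induction runs as follows. For $\ell=1$, $\CZ_1$ is the Kronecker factor, and ergodicity gives a compact abelian group rotation by Halmos--von Neumann; that is a $1$-step nilsystem. For the inductive step I would show that $\CZ_\ell$ is a compact abelian group extension of $\CZ_{\ell-1}$, by a Furstenberg--Zimmer style argument: $\CZ_\ell\to\CZ_{\ell-1}$ is compact, and the cube uniqueness property forces it to be isometric with abelian fibres. The extension is then given by a cocycle $\rho\colon Z_{\ell-1}\to U$, with $U$ a compact abelian group. The key property to prove is that $\rho$ is of \emph{type $\ell$}: the cocycle $\Delta^{[\ell]}\rho$ on the cube $Z_{\ell-1}^{[\ell]}$ is a coboundary. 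This should come from the fact that functions measurable over $\CZ_\ell$ have fibres determined by the other cube coordinates.

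Next I would reduce to the case where $U$ is a torus or a finite group, approximating a general $U$ by its quotients via Pontryagin duality; this is where the inverse limit enters. With $U$ Lie, I would show that the group $\CG$ of transformations of $Z_\ell$ preserving the cube measures $\mu^{[\ell+1]}$ is a nilpotent Lie group of step $\ell$ acting transitively. Then $Z_\ell\cong\CG/\Gamma$, and $T$ is translation by an element of $\CG$. The inductive hypothesis enters here, once the lifting of translations from $Z_{\ell-1}$ has been checked.

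I expect the main obstacle to be exactly this lifting step. One must show that every element of the structure group of $Z_{\ell-1}$ lifts to a transformation of $Z_\ell$ preserving the cubes, which requires solving a cohomological equation for type-$\ell$ cocycles (essentially, that type-$\ell$ cocycles are cohomologous to "polynomial" ones up to a constant). The other hard point is that the resulting group is locally compact, Lie, and nilpotent of the correct step, with a cocompact stabilizer. My first attempt would be to handle the finite-dimensional torus case via the Moore--Schmidt style result that a cocycle whose coboundary-derivatives are cohomologically trivial is itself of type $\ell-1$ plus a constant, and then run an induction on dimension.
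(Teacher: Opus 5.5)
Your outline follows the architecture of Host and Kra's proof in \cite{HK05a}, which is all the survey relies on, since it quotes the theorem without proof. The ingredients match: cube measures, the tower $\CZ_\ell\to\CZ_{\ell-1}$ of compact abelian group extensions given by cocycles of type $\ell$, and the group $\mathcal{G}$ of cube-preserving transformations. The genuine gap is where the inverse limit enters. You pass to Lie quotients of the top group $U$ only, and then expect $\mathcal{G}(\CZ_\ell)$ to be a nilpotent Lie group acting transitively with $\CZ_\ell\cong\mathcal{G}/\Gamma$. This cannot hold unless every lower level of the tower is already finite-dimensional. Your inductive hypothesis only presents $\CZ_{\ell-1}$ as an inverse limit of nilsystems $Y_n$. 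If, say, $\CZ_1$ is an ergodic rotation of $\T^{\N}$, then $\CZ_\ell$ has infinitely generated spectrum, and since ergodic nilsystems have finitely generated spectrum, $\CZ_\ell$ is not a nilsystem whatever $U$ is.

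The induction therefore has to be split into two statements. (a) Every ergodic system of order $\ell$ is an inverse limit of systems of order $\ell$ whose structure groups $U_1,\dots,U_\ell$ are \emph{all} Lie groups. (b) Every such system is an $\ell$-step nilsystem. Your lifting step must be fed by (b) at level $\ell-1$, not by the theorem itself, because there you need $\mathcal{G}(\CZ_{\ell-1})$ to already be a Lie group acting transitively. Statement (a) needs an ingredient missing from your plan: a descent (rigidity) lemma saying that a torus-valued cocycle of type $\ell$ on $\CZ_{\ell-1}$ is cohomologous to a cocycle factoring through some finite stage $Y_n$. Pontryagin duality on $U$ alone gives nothing of this kind. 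Likewise, the ``other hard point'' you flag in (b) rests on a concrete rigidity fact. The kernel of $\mathcal{G}(\CZ_\ell)\to\mathcal{G}(\CZ_{\ell-1})$ consists of the maps $(y,u)\mapsto(y,F(y)u)$ with $\Delta^{[\ell]}F=1$, and such $F$ are countable modulo constants; this is the key input for showing that $\mathcal{G}(\CZ_\ell)$ is a Lie group.

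There are also two smaller slips. First, $\CZ_{k-1}$ is the \emph{largest}, not the smallest, factor whose sets, placed in the coordinate $x_{\underline 0}$, are measurable modulo $\mu^{[k]}$ with respect to the other $2^k-1$ coordinates; the trivial factor has this property, so ``smallest'' gives nothing. Equivalently, $\CZ_{k-1}$ is the smallest factor over which $x_{\underline 0}$ is conditionally independent of the remaining coordinates. Second, in the base case Halmos--von Neumann gives a rotation of a compact abelian group. That is an inverse limit of $1$-step nilsystems but in general not itself one (think of an odometer or a rotation of $\T^{\N}$). This is the same phenomenon that forces the simultaneous approximation of the whole tower described above.
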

Hence, passing to the ergodic components of $\mu$ if needed and using everything above, Host and Kra reduced the study of the $L^2(\mu)$ convergence of \eqref{E: Furstenberg averages} to the analogous problem for $(\ell-1)$-step nilsystems. The (everywhere) convergence on nilsystems then follows from the work of Leibman \cite{L05b}, with the limit described by Ziegler \cite[Theorem 1.2]{Z05}.
\end{example}

%is provided by combining the following results of Host and Kra.
% \begin{theorem}
% Let $\ell\in \N$ and $(X, \CX, \mu, T)$ be a system. Then there exists a constant $C_\ell>0$ and a seminorm $\nnorm{f}_{\ell, T}$ on $L^\infty(\mu)$ such that for any 1-bounded $f_1, \ldots, f_\ell\in L^\infty(\mu)$, we have
% \begin{align*}
% \limsup_{N\to\infty}\norm{\E_{n\in[N]} T^n f_1 \cdots T^{\ell n} f_\ell}_{L^2(\mu)} \leq C_\ell \nnorm{f_\ell}_{\ell, T}.
% \end{align*}
% \end{theorem}
% \begin{theorem}
% Let $\ell\in \N$ and $(X, \CX, \mu, T)$ be a system. Then there exists a factor $\CZ_{\ell-1}(T)\subseteq \CX$ such that
% \begin{align*}
% \nnorm{f}_{\ell, T} = 0 \quad \Longleftrightarrow \quad \E(f|\CZ_{\ell-1}(T)) = 0.
% \end{align*} 
% \end{theorem}
% \begin{theorem}
% Let $\ell\in \N$ and $(X, \CX, \mu, T)$ be an ergodic system. Then $(X, \CZ_\ell, \mu, T)$ is isomorphic to an inverse limit of $\ell$-step nilsystems.
% \end{theorem}
% Combining these bits together, we deduce that to understand the limit of Furstenberg's averages
% \begin{align*}
% \E_{n\in[N]} T^n f_1 \cdots T^{\ell n} f_\ell,
% \end{align*}
% it suffices to understand them in the case when the underlying system is an $(\ell-1)$-step nilsystem.

\begin{example}[Host's magic extensions]\label{Ex: Host example}
Building on earlier work of Austin \cite{A10b}, Host \cite{H09} extended bits of the Host-Kra theory to (the $L^2(\mu)$ limits of) the averages
\begin{align}\label{E: commuting averages}
    \E_{n\in[N]} T_1^n f_1 \cdots T_\ell^n f_\ell,
\end{align}
coming from an arbitrary system $(X, \CX, \mu,$\! $T_1, \ldots, T_\ell)$ (where, recalling our standing assumption, $T_1, \ldots, T_\ell$ are invertible and commute with each other). Specifically, he constructed a \emph{box seminorm} $\nnorm{\cdot}_{T_\ell, T_\ell T_1\inv, \ldots, T_\ell T_{\ell-1}\inv}$ on $L^\infty(\mu)$ and a \emph{box factor} $$\CZ(T_\ell, T_\ell T_1\inv, \ldots, T_\ell T_{\ell-1}\inv)\subseteq\CX$$ such that
\begin{align}\label{E: box seminorm control}
\limsup_{N\to\infty}\norm{\E_{n\in[N]} T_1^n f_1 \cdots T_\ell^n f_\ell}_{L^2(\mu)} \leq \nnorm{f_\ell}_{T_\ell, T_\ell T_1\inv, \ldots, T_\ell T_{\ell-1}\inv}
\end{align}
for all $1$-bounded functions $f_1, \ldots, f_\ell\in L^\infty(\mu)$, and moreover 
\begin{align}\label{E: box factor property}
\nnorm{f}_{T_\ell, T_\ell T_1\inv, \ldots, T_\ell T_{\ell-1}\inv} = 0 \quad \Longleftrightarrow \quad \E(f|\CZ(T_\ell, T_\ell T_1\inv, \ldots, T_\ell T_{\ell-1}\inv)) = 0.
\end{align} 
Combining \eqref{E: box seminorm control} and \eqref{E: box factor property}, he reduced the study of \eqref{E: commuting averages} to the case when $f_\ell$ is measurable with respect to $\CZ(T_\ell, T_\ell T_1\inv, \ldots, T_\ell T_{\ell-1}\inv)$.\footnote{For Furstenberg averages \eqref{E: Furstenberg averages}, i.e. when $T_j = T^j$ for some fixed transformation, we have $\nnorm{f}_{T_\ell, T_\ell T_1\inv, \ldots, T_\ell T_{\ell-1}\inv}\asymp \nnorm{f}_{\ell, T}$ and $\CZ(T_\ell, T_\ell T_1\inv, \ldots, T_\ell T_{\ell-1}\inv) = \CZ_{\ell-1}(T)$ by combining \eqref{E: scaling 1}, \eqref{E: scaling 2}, and \eqref{E: factor property}. Hence box seminorms and factors extend the Host-Kra seminorms and factors to the setting of commuting transformations.} 

It turns out that in contrast to Host-Kra factors, box factors do not seem to admit a nice structural description inside the system $(X, \CX,\mu, T_1, \ldots, T_\ell)$ itself. However, the system admits an extension $\pi:(\widetilde X, \widetilde\CX,\widetilde\mu, \widetilde T_1, \ldots, \widetilde T_\ell)\to (X, \CX,\mu, T_1, \ldots, T_\ell)$ (which Host called \emph{magic extension}) on which
\begin{align}\label{E: magic extension property}
    \CZ(\widetilde T_\ell, \widetilde T_\ell \widetilde T_1\inv, \ldots, \widetilde T_\ell \widetilde T_{\ell-1}\inv) = \CI(\widetilde T_\ell)\vee \CI(\widetilde T_\ell \widetilde T_{\ell-1}) \vee \cdots \vee \CI(\widetilde T_\ell \widetilde T_1\inv).
\end{align}
This means that every function $\widetilde f$ measurable with respect to \eqref{E: magic extension property} can be approximated by linear combinations of functions $g_0 g_1 \cdots g_{\ell-1}$, where each $g_j$ is measurable with respect to $\CI(\widetilde T_\ell \widetilde T_j\inv)$ (where $\widetilde T_0 := \Id_{\widetilde X}$). 
In particular,
\begin{align}\label{E: magic functional property}
    \widetilde T_\ell (g_0 g_1 \cdots g_{\ell-1}) = g_0 \cdot \widetilde T_1 g_1\cdots \widetilde T_{\ell-1}g_{\ell-1},
\end{align}
and so $\widetilde T_\ell$ disappears when acting on functions measurable with regards to \eqref{E: magic extension property}.
Hence, lifting the functions $f_j\in L^\infty(\mu)$ to functions $\widetilde f_j = f_j\circ \pi\in L^\infty(\widetilde \mu)$, using the properties \eqref{E: box seminorm control}, \eqref{E: box factor property}, and \eqref{E: magic extension property}, approximating $\widetilde f_\ell$ by a linear combination of functions $g_0 g_1 \cdots g_{\ell-1}$ as above, and using the identity \eqref{E: magic functional property}, the study of $L^2(\mu)$ convergence of \eqref{E: commuting averages} can be reduced to the one of 
\begin{align*}
        \E_{n\in[N]} \widetilde T_1^n (\widetilde f_1 \cdot g_1) \cdots \widetilde T_{\ell-1}^n (\widetilde f_{\ell-1} \cdot g_{\ell-1}).
\end{align*}
Inducting on $\ell$ in this fashion, we can infer the norm convergence of \eqref{E: commuting averages} from the mean ergodic theorem.

The idea of finding a structured extension on which a multiple ergodic average admits a simpler structural description dates back to Furstenberg and Weiss \cite{FW96}. For the average \eqref{E: commuting averages}, Austin \cite{A10b} first constructed a structured extension (which he called \emph{pleasant extension}) satisfying the property \eqref{E: magic extension property} using Furstenberg self-joinings rather than box seminorms. In subsequent works (e.g. \cite{A15a, A15b}), he constructed further extensions of a similar flavor to study other multiple ergodic averages. Host's construction came later, but it was more explicit, and his machinery of box seminorms more robust, playing a crucial role in the developments summarized in this survey.
\end{example}

\subsubsection{Describing a limit by comparison}\label{SSS: limit by comparison}
In certain cases, we may want to describe a limit that we a priori know very little about by comparing it with a limit of which we have fairly good understanding. A good example of this strategy is provided by the following result due to Frantzikinakis \cite{Fr10}, a special case of Theorem \ref{T: Fr Hardy along APs}.
\begin{theorem}[Identity for arithmetic progressions along fractional powers]
Let $\ell\in \N$ and $(X, \CX, \mu, T)$ be a system. Then for any $f_1, \ldots, f_\ell\in L^\infty(\mu)$, we have
\begin{align}\label{E: AP comparison Hardy}
\lim_{N\to\infty}\norm{\E_{n\in[N]} T^{\sfloor{n^{3/2}}} f_1 \cdots T^{\ell \sfloor{n^{3/2}}} f_\ell-\E_{n\in[N]} T^n f_1 \cdots T^{\ell n} f_\ell}_{L^2(\mu)}  = 0.
\end{align}
\end{theorem}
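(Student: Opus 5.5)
Write $a(n):=\sfloor{n^{3/2}}$. The plan is to exploit the fact that $a$ is a slowly varying, non-lacunary sequence, so the range $\{a(n): n\in[N]\}$ can be traded for an average over the integers it covers. Write $A'_N:=\E_{n\in[N]} T^{a(n)}f_1\cdots T^{\ell a(n)}f_\ell$. I would first reduce both averages to the same characteristic factor, and then compare them on that factor via equidistribution.

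\textbf{Step 1 (seminorm control).} Show that
\begin{align*}
\limsup_{N\to\infty}\norm{A'_N(f_1,\ldots,f_\ell)}_{L^2(\mu)}\leq C\nnorm{f_j}_{s,T}
\end{align*}
for some $s$ depending on $\ell$, at each index $j$. I would obtain this by a change of variables that groups $n$ into short blocks on which $a(n)$ is essentially linear. On $n\in[M, M+L]$ with $L=o(M^{1/2})$, Taylor expansion gives $a(M+h)=\sfloor{M^{3/2}+\tfrac32 M^{1/2}h+O(h^2M^{-1/2})}$. So on each block the iterates form a progression with real step $\tfrac32 M^{1/2}$, up to bounded error from the floor. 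The floor error I would handle by the standard trick of introducing rotations on an auxiliary circle to absorb fractional parts, which keeps the resulting bound in terms of Host-Kra seminorms. Then a van der Corput / PET argument along these linear pieces, uniform in $M$, should bound the average by $\nnorm{f_j}_{s,T}$. The averages $\E_{n\in[N]} T^n f_1\cdots T^{\ell n}f_\ell$ are already controlled by \eqref{E: HK seminorm control}.

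\textbf{Step 2 (reduction to nilsystems).} By Step 1 and \eqref{E: HK factor property}, both averages are asymptotically unchanged if each $f_j$ is replaced by $\E(f_j|\CZ_{s-1}(T))$. Passing to ergodic components and invoking Theorem~\ref{T: HK structure theorem}, together with an $L^2$ approximation, it suffices to prove \eqref{E: AP comparison Hardy} for functions of the form $f_j(x)=F_j(g\cdot x)$ on a nilsystem $X=G/\Gamma$ with continuous $F_j$.

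\textbf{Step 3 (equidistribution).} Consider the sequence $n\mapsto (b^{a(n)}x,\ldots,b^{\ell a(n)}x)$ in $X^\ell$, where $b$ is the nilrotation. The comparison then amounts to showing that this sequence equidistributes, along Cesàro averages, with respect to the same Haar measure on the Leibman orbit closure of $(b^nx,\ldots,b^{\ell n}x)$ as the linear sequence $n\mapsto(b^nx,\ldots,b^{\ell n}x)$. For this I would use Frantzikinakis' equidistribution results for Hardy sequences of polynomial growth on nilmanifolds. Because $t^{3/2}$ stays away from polynomials, $\sfloor{n^{3/2}}$ is equidistributed on any nilorbit in the same way as $n$. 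Applying this to the single nilrotation $(b,b^2,\ldots,b^\ell)$ on $X^\ell$ makes the limits agree pointwise, hence also in $L^2(\mu)$.

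\textbf{Main obstacle.} I expect Step 1 to be the hard part. The iterate is not polynomial, so the PET induction has to be run on the block-linearized approximations with error terms controlled uniformly in the block parameter $M$, and the floor function has to be handled without losing seminorm control. If the direct approach fails, I would first try to deduce the seminorm bound from the nilsystem equidistribution statement combined with a general ``good for seminorm control'' criterion for sequences of the form $\sfloor{f(n)}$ with $f$ a Hardy function.
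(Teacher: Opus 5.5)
Your Steps 2 and 3 are sound, and together with seminorm control they are the route behind \cite{Fr10}: reduce to nilsystems, then use equidistribution of $\sfloor{n^{3/2}}$ on nilmanifolds (compare Theorem~\ref{T:APsConvergence}). Step 1 as you set it up fails, however. First a slip: with $L=o(M^{1/2})$ the remainder $O(h^2M^{-1/2})$ is unbounded, so the iterates are linear up to bounded error only on blocks of length $L=o(M^{1/4})$. On such blocks no bound by $\nnorm{f_j}_{s,T}$ that is uniform in $M$ can hold, because a linear piece of step $\approx\frac32 M^{1/2}$ may be an honest progression with a huge integer step. Take $M=4m^2$. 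Taylor's theorem gives $0\leq (M+h)^{3/2}-8m^3-3mh\leq \frac{3h^2}{16m}<1$ for $0\leq h\leq L$ ($m$ large), so $\sfloor{(M+h)^{3/2}}=8m^3+3mh$ on the whole block. Now let $Tx=x+1$ on $\Z/p\Z$, with $p$ a large prime and $p\mid m$. Every iterate on this block is then the identity, so the block average equals $f_1\cdots f_\ell$. With $f_2=\cdots=f_\ell=1$ and $f_1(x)=e(x^s/p)$ this has norm $1$, whereas $\nnorm{f_1}_{s,T}^{2^s}\leq (s-1)/p$. This happens for infinitely many $M$. It also fails qualitatively: a weakly mixing Gaussian system whose spectral measure is carried by Liouville-type frequencies $\theta$ with $3m_k\theta$ extremely close to $\Z$ gives block averages at $M=4m_k^2$ of norm close to $\norm{f_1}_{L^2(\mu)}$, even though $\nnorm{f_1}_{s,T}=0$. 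A block-by-block PET can therefore at best give control by step-dependent seminorms, such as $\nnorm{\cdot}_{s,T^q}$ with $q\approx\frac32M^{1/2}$, or seminorms on a step-dependent extension once you absorb the floor. By \eqref{E: scaling 2}, these compare with $\nnorm{\cdot}_{s,T}$ only with a constant that blows up with $q$. The theorem holds because such blocks have density zero. The cancellation comes from the variation of the step from block to block, i.e.\ from the curvature of $t^{3/2}$, which your linearization discards.

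The missing idea is to keep the nonlinearity. Take blocks with $M^{1/4}\prec L\prec M^{1/2}$. On these, $(M+h)^{3/2}=p_M(h)+o(1)$ with $p_M(h)=M^{3/2}+\frac32M^{1/2}h+\frac38M^{-1/2}h^2$, as in the Taylor-expansion example for Hardy sequences. Run PET on this variable quadratic. Every polynomial PET produces has the form $\sum_i c_i\,p_M(h+s_i)$, with $c_i\in\Z$ and $s_i$ sums of differencing parameters. The final linear ones have $\sum_i c_i=0$, so their $h$-coefficient is $\frac34M^{-1/2}\sum_i c_is_i$, and the large coefficient $\frac32M^{1/2}$ cancels. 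These coefficients are $o(1)$. Because $L\succ M^{1/4}$, the corresponding integer parts still sweep out long intervals of consecutive integers with slowly varying multiplicity. The final averages thus become ordinary averages along consecutive integers and are controlled via \eqref{E: HK seminorm control}. You can see this in the example above: $\sfloor{(M+h)^{3/2}}\equiv\sfloor{R(h)}\pmod p$, where $R(h)=\frac{3h^2}{16m}+o(1)$ increases in steps $o(1)$ up to size $\asymp L^2/m\to\infty$, so the block average does decay. This is the mechanism exploited in \cite{Fr10, Ts22}. If you may cite, Step 1 is precisely Theorem~\ref{T: Ts essentially distinct} applied to the essentially distinct $t^{3/2},2t^{3/2},\ldots,\ell t^{3/2}$. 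Your fallback would not help: equidistribution on nilsystems says nothing about the part of $f_j$ orthogonal to the nilfactors, so it cannot by itself yield seminorm control (compare $\sfloor{\log_2 n}$ and Problem~\ref{Pr: joint ergodicity equivalent to equidistribution}). With Step 1 repaired, Steps 2 and 3 go through as written.
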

Since we know via Example \ref{Ex: HK example} that the average on the right of \eqref{E: AP comparison Hardy} is controlled by the seminorm $\nnorm{\cdot}_{\ell, T}$ and the factor $\CZ_{\ell-1}(T)$, the same holds for the average on the left. 

\subsection{Joint ergodicity: what is it, and why is it useful?}
%The overview of various strategies for describing the limit may leave the reader disheartened about the level of our understanding of the multiple ergodic averages and their limits. However, it 
The preceding sections may leave the reader with the disheartening impression that we can explicitly identify the limit of multiple ergodic averages only in exceedingly rare cases. Fortunately, there exists a pretty vast class of averages that admit a very simple, explicit description of the limit. This brings us finally to the concept of joint ergodicity. %: these are precisely the averages satisfying joint ergodicity.
\begin{definition}[Joint ergodicity]\label{D: joint ergodicity}
    Let $a_1, \ldots, a_\ell:\N\to\Z$ be sequences and $(X, \CX, \mu,$\! $T_1, \ldots, T_\ell)$ be a system. We say that 
 \begin{enumerate}
\item $a_1, \ldots, a_\ell$ are \emph{jointly ergodic} for $(X, \CX, \mu,$\! $T_1, \ldots, T_\ell)$ if
\begin{align}\label{E: joint ergodicity}
        \lim_{N\to\infty}\norm{\E_{n\in[N]}\prod_{j=1}^\ell T_j^{{a_{j}(n)}}f_j - \prod_{j=1}^\ell \int f_j\, d\mu}_{L^2(\mu)} = 0
    \end{align}
    holds for all $f_1, \ldots, f_\ell\in L^\infty(\mu)$. %{For $\ell=1,$ we simply say that $a_1$ is \emph{ergodic} for $(X, \CX, \mu, T_1)$ (and respectively we call $(T_1^{{a_1(n)}})_n$ \emph{ergodic}).}
    \item $a_1, \ldots, a_\ell$ are \emph{jointly ergodic} if they are jointly ergodic for every system $(X, \CX, \mu,$\! $T_1, \ldots, T_\ell)$ with $T_1, \ldots, T_\ell$ ergodic.\footnote{This is a stronger assumption than the ergodicity of the joint action of $T_1, \ldots, T_\ell$. For instance, the joint action of $T_1(x,y) = (x+\sqrt{2},y)$ and $T_2(x,y)=(x,y+\sqrt{3})$ on $X=\T^2$ ergodic (in that no set of Lebesgue measure in $(0,1)$ is invariant under both $T_1, T_2$), but neither $T_1$ nor $T_2$ is individually ergodic.} For $\ell=1$, we simply say that $a_1$ is \emph{ergodic}. 
    \end{enumerate}
%     {For $\ell=1,$ we say that $a_1$ is \emph{ergodic/weakly ergodic} for $(X, \CX, \mu, T_1)$ (and respectively we call $(T_1^{\floor{a_1(n)}})_n$ \emph{ergodic/weakly ergodic}).}
\end{definition}
When $a_1, \ldots, a_\ell$ are jointly ergodic for $(X, \CX, \mu,$\! $T_1, \ldots, T_\ell)$, we also say that the tuple $(T_1^{{a_1(n)}}, \ldots, T_\ell^{{a_\ell(n)}})_n$ is \emph{jointly ergodic} for $(X, \CX, \mu)$. If $(T_1^{{a(n)}}, \ldots, T_\ell^{{a(n)}})_n$ is \emph{jointly ergodic} for $(X, \CX, \mu)$, then we also say that $a$ is jointly ergodic for $(X, \CX, \mu,$\! $T_1, \ldots, T_\ell)$. In the case $\ell=1$, we simply say that $a$ is \emph{ergodic} for $(X, \CX, \mu, T)$.

A lot of sequences under consideration in this survey will be real-valued (e.g. $n^{3/2}$ or $n\log n$), therefore we need some rounding function to turn them into integer sequences. To simplify the discussion, we adopt the following convention.
\begin{convention}\label{Conv: real sequences}
    Let $\CP$ be a property defined for sequences $a_1, \ldots, a_\ell:\N\to\Z$. We say that sequences $a_1, \ldots, a_\ell:\N\to\R$ satisfy the property $\CP$ if their integer parts do.
\end{convention}
%In this case, we say that $a_1, \ldots, a_\ell$ are jointly ergodic if their integer parts are. 
For most of the results, the choice of the floor function versus other rounding functions (e.g. ceiling function, nearest integer) does not matter, and we will remark explicitly when it does. 

Joint ergodicity was first studied by Berend and Bergelson in the 1980s \cite{BB84, BB86}, who fully characterized the joint ergodicity of the averages \eqref{E: commuting averages}.
\begin{theorem}[Characterization of joint ergodicity for $(T_1^n, \ldots, T_\ell^n)_n$ {\cite[Theorem 3.1]{BB84}}]\label{T: BB characterization}
    Let $(X, \CX, \mu,$\! $T_1, \ldots, T_\ell)$ be a system. Then $(T_1^n, \ldots, T_\ell^n)_n$ is jointly ergodic for $(X, \CX, \mu)$ if and only if the following two conditions are satisfied:
    \begin{enumerate}
        \item (Product ergodicity condition) $T_1\times \cdots \times T_\ell$ is ergodic for $(X^\ell, \CX^{\otimes \ell}, \mu^\ell)$;
        \item (Difference ergodicity condition) $T_iT_j\inv$ is ergodic for $(X, \CX, \mu)$ for all distinct $1\leq i, j \leq \ell$. 
    \end{enumerate}
\end{theorem}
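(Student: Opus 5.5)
Necessity is the easy direction, and I will dispatch it first. Suppose $(T_1^n,\ldots,T_\ell^n)_n$ is jointly ergodic.

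For the difference condition, take $i\neq j$ and $f\in L^\infty(\mu)$ with $T_iT_j^{-1}f=f$. Set $f_i=f$, $f_j=\bar f$ and all other $f_k=1$. Since $T_i^nf=T_j^nf$, the average is identically $|T_j^n f|^2$, which has integral $\norm{f}_2^2$ for every $n$. Joint ergodicity forces its $L^2$ limit to be the constant $|\int f|^2$, so $\norm{f}_2=|\int f|$, and hence $f$ is constant. This gives ergodicity of $T_iT_j^{-1}$.

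For product ergodicity, integrate \eqref{E: joint ergodicity} against $f_0\in L^\infty(\mu)$ and use $T_1$-invariance of $\mu$ to get weak convergence of $\E_n\int f_0\prod_j T_j^nf_j\,d\mu\to\int f_0\prod_j\int f_j$. Then apply this to products of the $f_j$ with their conjugates shifted by a fixed power, i.e. a van der Corput/correlation computation. Concretely, I will show
\[
\E_n\prod_j\Big|\int \bar g_j\, T_j^n f_j\, d\mu\Big|^2\to\prod_j\Big|\int f_j\Big|^2\Big|\int g_j\Big|^2 .
\]
Expanding $|\int \bar g_jT_j^nf_j|^2=\int\!\!\int(\bar g_j\otimes g_j)(T_j\times T_j)^n(f_j\otimes\bar f_j)$ reduces the claim to joint ergodicity of the analogous tuple on $X\times X$. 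That step is circular, so instead I will argue as follows. If $F$ on $X^\ell$ is $T_1\times\cdots\times T_\ell$-invariant and orthogonal to constants, then the eigenvalue/weak-mixing analysis (see below) of the product action shows that $F$ lies in the span of tensor products of eigenfunctions $\chi_1\otimes\cdots\otimes\chi_\ell$ with $\prod_j\lambda_j=1$. Taking $f_j=\chi_j$ in the weak form gives $\int\prod_j T_j^n\chi_j=\int\prod_j\chi_j$ for all $n$, while the limit must be $\prod_j\int\chi_j=0$ unless all $\chi_j$ are constant. Hence $\int\prod_j\chi_j=0$. A contradiction then needs one more step: take $f_0=\overline{\chi_1\cdots\chi_\ell}$ and note that $\prod_j T_j^n\chi_j=\prod_j\chi_j$, which is nonconstant. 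This forces all $\chi_j$ to be constant, so $F$ is constant.

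Sufficiency is the main direction, and I will organize it through Host's box seminorm control \eqref{E: box seminorm control} together with a decomposition into compact and weak-mixing parts.

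\textbf{Step 1.} Prove that, under both conditions, the Kronecker-type factor is the only obstruction. More precisely, I will show by induction on $\ell$ via van der Corput that if some $f_j$ is orthogonal to the span of eigenfunctions of $T_j$, then the average tends to $0$. The van der Corput step replaces the tuple by one with $\ell-1$ transformations $T_jT_\ell^{-1}$ acting on products $f_j\cdot T_j^h\bar f_j$. The difference ergodicity condition is exactly what keeps the induction hypothesis applicable: it makes the new transformations ergodic, and their differences are differences of the old ones.

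\textbf{Step 2.} Handle the compact part. Reduce to $f_j$ being eigenfunctions of $T_j$ with eigenvalues $\lambda_j$, where $T_j$ is ergodic (implied by the conditions, since product ergodicity forces each factor to be ergodic). Then
\[
\E_n\prod_j T_j^n f_j=\Big(\E_n\prod_j\lambda_j^n\Big)\prod_j f_j .
\]
This tends to $0$ unless $\prod_j\lambda_j=1$. In that case $\otimes_j f_j$ is $T_1\times\cdots\times T_\ell$-invariant, so product ergodicity forces all $f_j$ to be constant, which matches the limit $\prod_j\int f_j$.

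The main obstacle is Step 1: making the van der Corput induction close. The difference condition must propagate to the derived system, and the product of the derived functions must be orthogonal to eigenfunctions. I would first try to phrase Step 1 in terms of the seminorm $\nnorm{\cdot}_{T_\ell,T_\ell T_1^{-1},\ldots}$ and show that ergodicity of all the $T_\ell T_j^{-1}$ collapses that box seminorm to a degree-2 Host–Kra-type seminorm of $T_\ell$. That seminorm vanishes precisely off the Kronecker factor.
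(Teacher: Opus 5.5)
Your necessity argument is sound, with two small fixes for the product condition. First, note that joint ergodicity makes each $T_j$ ergodic; the spectral description of $T_1\times\cdots\times T_\ell$-invariant functions needs this. Second, $\prod_j\chi_j$ need not be nonconstant, but you do not need that: the contradiction comes from $\int|\prod_j\chi_j|^2\,d\mu=1\neq 0$. Your Step 2 is also correct, and it is the paper's observation that product ergodicity amounts to being good for equidistribution.

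The genuine gap is Step 1 for $\ell\geq 3$, and it cannot be fixed by running the induction more carefully. The van der Corput step replaces $f_j$ by $f_j\cdot T_j^h\overline{f_j}$, and these differences do not inherit orthogonality to the Kronecker factor. For example, take $T(x,y)=(x+\alpha,y+x)$ and $f=e(y)$: then $f$ is orthogonal to the Kronecker factor, but $f\cdot T^h\overline{f}=e(-hx-\binom h2\alpha)$ is an eigenfunction. Iterating therefore only yields control by degree-$\ell$ seminorms. Your proposed repair rests on a miscount. Suppose $T_\ell$ and all $T_\ell T_j\inv$ are ergodic. Since $\nnorm{g}_{R}^2=|\int g\,d\mu|^2$ for every ergodic $R$, the inductive formula and permutation invariance let you replace each direction by $T_\ell$. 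This gives $\nnorm{f}_{T_\ell,T_\ell T_1\inv,\ldots,T_\ell T_{\ell-1}\inv}=\nnorm{f}_{\ell,T_\ell}$, a degree-$\ell$ seminorm rather than a degree-$2$ one. For $\ell\geq 3$ it does not vanish on all functions orthogonal to the Kronecker factor.

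In fact, no argument using only difference ergodicity (plus ergodicity of the $T_j$) can give Kronecker control. Take $T$ as above with $\alpha$ irrational, so $T$ is totally ergodic. Set $T_j=T^j$ for $j=1,2,3$, and $f_1=e(-3y)$, $f_2=e(3y)$, $f_3=e(-y)$. Every $T_iT_j\inv=T^{i-j}$ is ergodic, and every $f_j$ is orthogonal to the Kronecker factor of $T_j$. Yet $T^nf_1\cdot T^{2n}f_2\cdot T^{3n}f_3=e(-y)$ for all $n$. What excludes this example is product ergodicity: $e(x)\otimes e(x)\otimes e(-x)$ is $T\times T^2\times T^3$-invariant.

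So product ergodicity has to be used to push the controlling degree down from $\ell$ to $1$, not merely to dispose of eigenfunctions at the end. This is exactly the hard input the paper imports. It combines the collapse above with \eqref{E: box seminorm control} to get Host--Kra seminorm control, and uses your Step 2 computation for good equidistribution. It then invokes Theorem \ref{T: joint ergodicity criteria}, whose difficult direction is the degree-lowering argument. Without that theorem, or some substitute for the degree-lowering step, your outline proves only the case $\ell=2$, where the collapse really is to $\nnorm{\cdot}_{2,T_2}$.
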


One should see joint ergodicity as the best-case scenario, as it provides the nicest possible limiting formula for a multiple ergodic average. A clear necessary condition for joint ergodicity is that $T_1, \ldots, T_\ell$ are all ergodic (\emph{not} to be confused with the ergodicity of their joint action). When the transformations are not ergodic, then the natural generalization of joint ergodicity is given by the concept below.
% \begin{definition}[Invariant factor control\footnote{In \cite{FrKu22b}, being controlled by the invariant factor was called \emph{weak joint ergodicity}. However, given that this notion holds under more general assumptions on the system (no ergodicity required), it is contentious whether one should think of it as a weak or strong variant of joint ergodicity.}]
%         Let $a_1, \ldots, a_\ell:\N\to\R$ be sequences and $(X, \CX, \mu,$\! $T_1, \ldots, T_\ell)$ be a system. We say that 
% \begin{enumerate}
%     \item  $a_1, \ldots, a_\ell$ are \emph{controlled by the invariant factor} for $(X, \CX, \mu,$\! $T_1, \ldots, T_\ell)$ if
%     %(alternatively, the tuple $(T_1^{\floor{a_1(n)}}, \ldots, T_\ell^{\floor{a_\ell(n)}})_n$ is \emph{quasi-jointly ergodic} for $(X, \CX, \mu)$) if
%     \begin{align}\label{E: weak joint ergodicity}
%         \lim_{N\to\infty}\norm{\E_{n\in[N]}\prod_{j=1}^\ell T_j^{\floor{a_{j}(n)}}f_j - \prod_{j=1}^\ell \E(f_j|\CI(T_j))}_{L^2(\mu)} = 0
%     \end{align}
%     holds for all $f_1, \ldots, f_\ell\in L^\infty(\mu)$.
%      \item  $a_1, \ldots, a_\ell$ are \emph{controlled by the invariant factor} if they are controlled by the invariant factor for every system.
% \end{enumerate}
% \end{definition}
\begin{definition}[Invariant factor control\footnote{In \cite{FrKu22b}, being controlled by the invariant factor was called \emph{weak joint ergodicity}. However, given that this notion holds under more general assumptions on the system (no ergodicity required), it is contentious whether one should think of it as a weak or strong variant of joint ergodicity.}]\label{D: invariant control}
        Let $a_1, \ldots, a_\ell:\N\to\Z$ be sequences and $(X, \CX, \mu,$\! $T_1, \ldots, T_\ell)$ be a system. We say that 
\begin{enumerate}
    \item  $a_1, \ldots, a_\ell$ are \emph{controlled by the invariant factor} for $(X, \CX, \mu,$\! $T_1, \ldots, T_\ell)$ if
    %(alternatively, the tuple $(T_1^{\floor{a_1(n)}}, \ldots, T_\ell^{\floor{a_\ell(n)}})_n$ is \emph{quasi-jointly ergodic} for $(X, \CX, \mu)$) if
    \begin{align}\label{E: weak joint ergodicity}
        \lim_{N\to\infty}\norm{\E_{n\in[N]}\prod_{j=1}^\ell T_j^{{a_{j}(n)}}f_j - \prod_{j=1}^\ell \E(f_j|\CI(T_j))}_{L^2(\mu)} = 0
    \end{align}
    holds for all $f_1, \ldots, f_\ell\in L^\infty(\mu)$.
     \item  $a_1, \ldots, a_\ell$ are \emph{controlled by the invariant factor} if they are controlled by the invariant factor for every system.
\end{enumerate}
\end{definition}

%Being controlled by the invariant factor is then a generalization of joint ergodicity where the transformations are not necessarily ergodic.\footnote{In \cite{FrKu22b}, being controlled by the invariant factor was called \emph{weak joint ergodicity}. However, given that this notion holds under more general assumptions on the system (no ergodicity required), it is contentious whether one should think of it as a weak or strong variant of joint ergodicity.}
The utility of this latter concept to combinatorics is best evidenced by the following result; it asserts that sequences controlled by the invariant factor satisfy a strong form of multiple recurrence. In what follows, $\overline{d}$ denotes the positive upper density.
\begin{proposition}[Lower bounds for multiple recurrence]\label{P: recurrence for jointly ergodic}
Suppose that the sequences $a_1, \ldots, a_\ell:\N\to\Z$ are controlled by the invariant factor. Then the following hold:
\begin{enumerate}
	\item For every system $(X, \CX, \mu,$\! $T_1, \ldots, T_\ell)$ and every $E\in\CX$, we have
	\begin{align*}
	\lim_{N\to\infty}\E_{n\in[N]}\mu(E\cap T_1^{-{a_1(n)}}E\cap \cdots \cap T_\ell^{-{a_\ell(n)}}E) \geq \mu(E)^{\ell+1}.
	\end{align*}
 In particular, for every $\veps>0$, the set
 \begin{align}\label{E: popular differences erg}
 	\{n\in\N\colon \mu(E\cap T_1^{-{a_1(n)}}E\cap \cdots \cap T_\ell^{-{a_\ell(n)}}E) \geq \mu(E)^{\ell+1} - \veps\}
 \end{align}
 has positive upper density.
\item For any $A\subseteq\Z^k$ 
    % rather than the arguably more natural quantity $\limsup\limits_{N\to\infty} \frac{|E\cap\{-N, \ldots, N\}^k|}{(2N+1)^k}$ since the Hardy functions under consideration are in general only defined on some halfline $(t_0, \infty)$, and so 
    %{\color{red}{since we are in $\Z,$ how come we don't define it symmetrically around $0$?}} \BK{(Because we study averages along $[N]$, so when we apply our joint ergodicity results, we do so to the asymmetric notion of density, not to the symmetric one.)}} 
    and vectors $\bv_1, \ldots, \bv_\ell\in\Z^k$, we have
	\begin{align*}
		\lim_{N\to\infty}\E_{n\in[N]}\overline{d}(A\cap (A - \bv_1{a_1(n)})\cap \cdots \cap (A - \bv_\ell{a_\ell(n)})) \geq \overline{d}(A)^{\ell+1}.
	\end{align*}
        In particular, for every $\veps>0$, the set
 \begin{align}\label{E: popular differences comb}
 	\{n\in\N\colon \overline{d}(A\cap (A - \bv_1{a_1(n)})\cap \cdots \cap (A - \bv_\ell{a_\ell(n)})) \geq \overline{d}(A)^{\ell+1} - \veps\}
 \end{align}
 has positive upper density.
\end{enumerate}

If furthermore the formulas \eqref{E: joint ergodicity} and \eqref{E: weak joint ergodicity} hold for any F{\o}lner sequence on $\N$ in place of $([N])_{N\in\N}$, then we can conclude that the sets \eqref{E: popular differences erg} and \eqref{E: popular differences comb} are syndetic. 
\end{proposition}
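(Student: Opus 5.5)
The argument is the standard Khintchine-type one. For part (1), I apply \eqref{E: weak joint ergodicity} with $f_1 = \cdots = f_\ell = 1_E$ and integrate against $1_E$. Writing $g_j := \E(1_E|\CI(T_j))$, this gives
\begin{align*}
\lim_{N\to\infty}\E_{n\in[N]}\mu(E\cap T_1^{-a_1(n)}E\cap\cdots\cap T_\ell^{-a_\ell(n)}E) = \int 1_E\cdot g_1\cdots g_\ell\, d\mu.
\end{align*}
The existence of the limit comes for free from the $L^2(\mu)$ convergence, since $1_E$ is bounded.

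The main step, and the only non-routine one, is the lower bound $\int 1_E g_1\cdots g_\ell\,d\mu\ge \mu(E)^{\ell+1}$. I will use a lemma of Chu type: for $0\le f\le 1$ and arbitrary $\sigma$-algebras $\CY_1,\dots,\CY_\ell$, one has $\int f\prod_j \E(f|\CY_j)\,d\mu \ge (\int f\,d\mu)^{\ell+1}$. To prove it, I set $h := \prod_j g_j$ and $B := \{h>0\}$. Since $g_j \ge 1_E\cdot(\ldots)$, and $\E(1_E|\CY_j)=0$ a.e. on $\{g_j=0\}$ forces $E\cap\{g_j=0\}$ to be null, we get $E\subseteq B$ up to a null set. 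Then:
\begin{itemize}
\item Cauchy--Schwarz gives $\mu(E)^2 = (\int_B 1_E h^{1/2}h^{-1/2})^2 \le \int 1_E h \cdot \int_B 1_E h^{-1}$, so it suffices to show $\int_B 1_E h^{-1}\,d\mu \le \mu(E)^{1-\ell}$.
\item Rather than pursue this directly, I would use Hölder in the form $\mu(E)=\int 1_E \prod_j g_j^{1/(\ell+1)} g_j^{-1/(\ell+1)}$.
\item Each $g_j$ satisfies $\int_{\{g_j>0\}} 1_E/g_j\,d\mu = \mu(\{g_j>0\})\le 1$, by conditioning on $\CY_j$.
\end{itemize}
Combining these, Hölder with exponents $\ell+1$ across the factors $1_E g_1\cdots g_\ell$ and $1_E/g_j$ gives
\begin{align*}
\mu(E) \le \Big(\int 1_E h\Big)^{1/(\ell+1)} \prod_j \Big(\int 1_E/g_j\Big)^{1/(\ell+1)} \le \Big(\int 1_E h\Big)^{1/(\ell+1)},
\end{align*}
which is exactly the required bound. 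I expect this inequality to be the crux, and I will check the exponent bookkeeping carefully.

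The popular-differences statement then follows by contradiction. If the set in \eqref{E: popular differences erg} had zero upper density, then off a density-zero set each term would be below $\mu(E)^{\ell+1}-\veps$. The Cesàro limit would then be at most $\mu(E)^{\ell+1}-\veps$, contradicting the bound just proved.

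Part (2) follows from Furstenberg's correspondence principle. Apply it in $\Z^k$ to obtain commuting $S_1,\dots,S_k$ and a set $E$ with $\mu(E)=\overline d(A)$, and set $T_j := \prod_i S_i^{v_{ji}}$ for $\bv_j = (v_{j1},\ldots,v_{jk})$; these are commuting invertible measure-preserving maps. The correspondence principle gives $\overline d(A\cap\bigcap_j(A-\bv_j a_j(n)))\ge \mu(E\cap\bigcap_j T_j^{-a_j(n)}E)$ for each $n$. Averaging in $n$ and applying part (1) to this system, which is legitimate because the hypothesis holds for every system, yields the inequality. I will take the liminf on the left, since existence of the limit there is not automatic. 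Positive upper density of the popular set then follows as before.

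For syndeticity, suppose \eqref{E: weak joint ergodicity} holds along every Følner sequence. If the popular set $P$ were not syndetic, its complement would contain arbitrarily long intervals $I_N$, which form a Følner sequence. The limit of the averages along $(I_N)$ is still at least $\mu(E)^{\ell+1}$ by the same computation, yet every term in $I_N$ is below $\mu(E)^{\ell+1}-\veps$, a contradiction. The combinatorial version follows in the same way, using part (1) along $(I_N)$ for the correspondence system.
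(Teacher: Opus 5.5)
Your proof is correct and follows the paper's route. Like the paper, you plug $f_j=1_E$ into invariant factor control and bound $\int 1_E\prod_j \E(1_E|\CI(T_j))\,d\mu$ below by $\mu(E)^{\ell+1}$; the paper simply cites Chu's Lemma 1.6 for this, and your Hölder argument is that lemma's proof. Taking $\liminf$ in part (2) is slightly more careful than the paper's $\lim$.

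Drop the abandoned Cauchy--Schwarz bullet from the write-up. Its sub-goal $\int_B 1_E h^{-1}\,d\mu\le\mu(E)^{1-\ell}$ can fail for $\ell\ge 2$, for instance when $\CY_1=\CY_2$ is generated by a set $A$ and $\mu(E\cap A)$ is tiny.
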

The elements of the sets \eqref{E: popular differences erg} or \eqref{E: popular differences comb} are often called \emph{popular common differences} (although the term does not have a precise definition, given that it depends on $\veps$).
% \begin{proof}
% We start with the first claim. Fix a system $(X, \CX, \mu,$\! $T_1, \ldots, T_\ell)$. The joint ergodicity of the sequences implies that
% \begin{multline*}
% \lim_{N\to\infty}\E_{n\in[N]}\mu(E\cap T_1^{-\floor{a_1(n)}}E\cap \cdots \cap T_\ell^{-\floor{a_\ell(n)}}E)\\
% = \int 1_E \cdot \E(1_E|\CI(T_1)) \cdots \E(1_E|\CI(T_\ell))\; d\mu.
% \end{multline*}
% By \cite[??]{Chu11} (see also Peluse's survey for an interesting discussion), the right hand side is bounded from below by $\mu(E)^{\ell+1}$, implying the first result. 

% Now let 
% \begin{align*}
%     E_n &:= E\cap T_1^{-\floor{a_1(n)}}E\cap \cdots \cap T_\ell^{-\floor{a_\ell(n)}}E\\
%     B_\veps &:= 	\{n\in\N\colon \mu(E\cap T_1^{-\floor{a_1(n)}}E\cap \cdots \cap T_\ell^{-\floor{a_\ell(n)}}E) \geq \mu(E)^{\ell+1} - \veps\}.
% \end{align*}
% Then 
% \begin{align*}
%     \mu(E)^{\ell+1}&\leq \lim_{N\to\infty}\E_{n\in[N]}\mu(E_n)\\
%     &\leq \limsup_{N\to\infty}\E_{n\in[N]}\mu(E_n)1_{B_\veps}(n) + \limsup_{N\to\infty}\E_{n\in[N]}\mu(E_n) 1_{\N\backslash B_\veps}(n)\\
%     &\leq \overline{d}(B_\veps) + \mu(E)^{\ell+1} - \veps;
% \end{align*}
% hence $\overline{d}(B_\veps) \geq \veps$, implying the second result. The last two results follow from Furstenberg's correspondence principle. The deduction of the stronger consequence for the sets \eqref{E: popular differences erg} and \eqref{E: popular differences comb} under the stronger assumption on \eqref{E: joint ergodicity} and \eqref{E: weak joint ergodicity} follows by modifying the arguments above in a straightforward fashion.
% \end{proof}
\begin{proof}
We start with the first claim. Fix a system $(X, \CX, \mu,$\! $T_1, \ldots, T_\ell)$. The joint ergodicity of the sequences implies that
\begin{multline*}
\lim_{N\to\infty}\E_{n\in[N]}\mu(E\cap T_1^{-{a_1(n)}}E\cap \cdots \cap T_\ell^{-{a_\ell(n)}}E)\\
= \int 1_E \cdot \E(1_E|\CI(T_1)) \cdots \E(1_E|\CI(T_\ell))\; d\mu.
\end{multline*}
By \cite[Lemma 1.6]{Chu11}, a consequence of the H\"older inequality, the right hand side is bounded from below by $\mu(E)^{\ell+1}$, implying the first result. 

Now let 
\begin{align*}
    E_n &:= E\cap T_1^{-{a_1(n)}}E\cap \cdots \cap T_\ell^{-{a_\ell(n)}}E\\
    B_\veps &:= 	\{n\in\N\colon \mu(E\cap T_1^{-{a_1(n)}}E\cap \cdots \cap T_\ell^{-{a_\ell(n)}}E) \geq \mu(E)^{\ell+1} - \veps\}.
\end{align*}
Then 
\begin{align*}
    \mu(E)^{\ell+1}&\leq \lim_{N\to\infty}\E_{n\in[N]}\mu(E_n)\\
    &\leq \limsup_{N\to\infty}\E_{n\in[N]}\mu(E_n)1_{B_\veps}(n) + \limsup_{N\to\infty}\E_{n\in[N]}\mu(E_n) 1_{\N\backslash B_\veps}(n)\\
    &\leq \overline{d}(B_\veps) + \mu(E)^{\ell+1} - \veps;
\end{align*}
hence $\overline{d}(B_\veps) \geq \veps$, implying the second result. The consequence for the subsets of $\Z^k$ follows from Furstenberg's correspondence principle. The deduction of the syndeticity of the sets \eqref{E: popular differences erg} and \eqref{E: popular differences comb} under the stronger assumption on \eqref{E: joint ergodicity} and \eqref{E: weak joint ergodicity} follows by modifying the arguments above in a straightforward fashion.
\end{proof}

For certain sequences such as integer polynomials, the control by invariant factor is too much to hope for. Indeed, Example \ref{Ex: n^2} shows that the single ergodic average along $n^2$ is not controlled by the invariant factor due to local obstructions. In this and similar cases, we need to replace the invariant factor by the rational Kronecker factor, defined in \eqref{E: Krat}.
% \begin{definition}[Rational Kronecker factor control]\label{D: Krat control}
%     Let $a_1, \ldots, a_\ell:\N\to\R$ be sequences and $(X, \CX, \mu,$\! $T_1, \ldots, T_\ell)$ be a system. We say that 
%  \begin{enumerate}
%  \item  $a_1, \ldots, a_\ell$ are \emph{controlled by the rational Kronecker factor} for $(X, \CX, \mu,$\! $T_1, \ldots, T_\ell)$ if for all $f_1, \ldots, f_\ell\in L^\infty(\mu)$,
%  %(alternatively, the tuple $(T_1^{\floor{a_1(n)}}, \ldots, T_\ell^{\floor{a_\ell(n)}})_n$ is \emph{quasi-jointly ergodic} for $(X, \CX, \mu)$) if
% \begin{align}\label{E: weak joint ergodicity}
%         \lim_{N\to\infty}\norm{\E_{n\in[N]}\prod_{j=1}^\ell T_j^{\floor{a_{j}(n)}}f_j}_{L^2(\mu)} = 0
%     \end{align}
%     holds whenever $\E(f_j|\Krat(T_j)) = 0$ for some $1\leq j\leq \ell$.
%      \item  $a_1, \ldots, a_\ell$ are \emph{controlled by the rational Kronecker factor} if they are controlled by the rational Kronecker factor for every system.
%     \end{enumerate}
% \end{definition}
\begin{definition}[Rational Kronecker factor control]\label{D: Krat control}
    Let $a_1, \ldots, a_\ell:\N\to\Z$ be sequences and $(X, \CX, \mu,$\! $T_1, \ldots, T_\ell)$ be a system. We say that 
 \begin{enumerate}
 \item  $a_1, \ldots, a_\ell$ are \emph{controlled by the rational Kronecker factor} for $(X, \CX, \mu,$\! $T_1, \ldots, T_\ell)$ if for all $f_1, \ldots, f_\ell\in L^\infty(\mu)$,
 %(alternatively, the tuple $(T_1^{\floor{a_1(n)}}, \ldots, T_\ell^{\floor{a_\ell(n)}})_n$ is \emph{quasi-jointly ergodic} for $(X, \CX, \mu)$) if
\begin{align*}%\label{E: vanishing for Krat control}
        \lim_{N\to\infty}\norm{\E_{n\in[N]}\prod_{j=1}^\ell T_j^{{a_{j}(n)}}f_j}_{L^2(\mu)} = 0
    \end{align*}
    holds whenever $\E(f_j|\Krat(T_j)) = 0$ for some $1\leq j\leq \ell$.
     \item  $a_1, \ldots, a_\ell$ are \emph{controlled by the rational Kronecker factor} if they are controlled by the rational Kronecker factor for every system.
    \end{enumerate}
\end{definition}

Rational Kronecker factor control typically suffices for popular common differences as long as our sequences satisfy good divisibility properties:
\begin{definition}[Good divisibility properties]
    Sequences $a_1, \ldots, a_\ell:\N\to\Z$ satisfy \emph{good divisibility properties} if for all $r\in\N$, the set
    \begin{align*}
        \{n\in\N\colon r|a_1(n), \ldots, a_\ell(n)\}
    \end{align*}
    has positive upper density.
\end{definition}
See e.g. \cite[Theorem 1.3, Corollary 1.7]{FrKr06}, \cite[Corollaries 2.11 and 2.12]{FrKu22a}, and \cite[Theorem 3.4]{FrKu22c} for popular common difference corollaries for integer polynomials and other sequences with good divisibility properties. 
% \begin{definition}[Good divisibility properties]
%     We say that sequences $a_1, \ldots, a_\ell:\N\to\Z$ satisfy \emph{good divisibility properties} if for all $r\in\N$, the set
%     \begin{align*}
%         \{n\in\N\colon r|a_1(n), \ldots, a_\ell(n)\}
%     \end{align*}
%     has positive density.
% \end{definition}
% \begin{proposition}\label{P: recurrence for Krat}
% Let $a_1, \ldots, a_\ell:\N\to\Z$ be sequences, and suppose that they are controlled by the rational Kronecker factor and have good divisibility properties. Then the following hold:
% \begin{enumerate}
% 	\item For every system $(X, \CX, \mu,$\! $T_1, \ldots, T_\ell)$, $E\in\CX$, and $\veps>0$, the set \eqref{E: popular differences erg} has positive upper density.
% \item For any $A\subseteq\Z^k$, vectors $\bv_1, \ldots, \bv_\ell\in\Z^k$, and $\veps>0$, the set \eqref{E: popular differences comb} has positive upper density.
% \end{enumerate}

% If furthermore rational Kronecker factor control hold along any F{\o}lner sequence on $\N$ in place of $([N])_{N\in\N}$, then we can conclude that the sets \eqref{E: popular differences erg} and \eqref{E: popular differences comb} are syndetic. 
% \end{proposition}
%In the definitions and results in this subsection, just like in Definition \ref{D: joint ergodicity}, we abuse the terminology and say that sequences $a_1, \ldots, a_\ell:\N\to\Z$ are controlled by the invariant or rational Kronecker factor if their integer parts are.

%\BK{Add about $\Krat$ and polys}

\subsection{What do we study when we study joint ergodicity?}
We hope to have convinced the reader by now that joint ergodicity and its variants are a worthwhile object of examination: not only do they provide clear limiting formulas, but they also lead to strong forms of recurrence and offer valuable insight into analogous problems in additive combinatorics. But what are the actual questions that we examine when we study joint ergodicity? At least three classes of problems fall under the umbrella of joint ergodicity, and we shall discuss them in turn in the subsequent chapters:
\begin{enumerate}
    \item (Joint ergodicity criteria) Identify simple sufficient and necessary criteria for joint ergodicity, invariant factor control, and rational Kronecker factor control. %and its variants, such as control by the invariant or rational Kronecker factors.
    \item (Joint ergodicity for large classes of sequences and systems) Identify large classes of sequences for which we have joint ergodicity under mild assumptions on the system. 
    \item (Joint ergodicity classification problem) Characterize sequences $a_1, \ldots, a_\ell:\N\to\R$ with the following property: the sequences are jointly ergodic for a system $(X,\CX,\mu,T_1,...,T_\ell)$ if and only if they simultaneously satisfy (an appropriate generalization of) the difference and product ergodicity conditions for the system.
\end{enumerate}

We start this survey with the presentation in Section \ref{S: joint ergodicity criteria} of two approaches, which we call Old and New Joint Ergodicity Strategies, to prove joint ergodicity of multiple ergodic averages. In doing so, we lay out the joint ergodicity criteria (Theorem \ref{T: joint ergodicity criteria}), proved by Frantzikinakis \cite{Fr21} as well as Frantzikinakis and the author \cite{FrKu22a}, that have been one of two main engines behind recent advances on the topic. We then present in Section \ref{S: joint ergodicity for large classes} the main joint ergodicity results for integer polynomials, Hardy sequences, and other families of iterates under the assumptions of ergodicity, total ergodicity, and weak mixing. Section \ref{S: classification problem} discusses the so-called \emph{joint ergodicity classification problem}, i.e. the question of identifying those families of sequences for which joint ergodicity on an arbitrary system is equivalent to the ergodicity of certain actions on this system. Subsequently, we move on to outline the proofs of two major technical breakthroughs that enabled recent joint ergodicity developments: the degree lowering argument and new methods for obtaining Host-Kra seminorm control. This is done in Section \ref{S: technical breakthroughs}. 

Since joint ergodicity is only one possible scenario for what the limit of a multiple ergodic average might be, the survey would be incomplete without the broader picture of what happens in the absence of joint ergodicity. This is the content of Section \ref{S: beyond joint ergodicity}. Likewise, the importance of assuming the commutativity of transformations becomes more pronounced when juxtaposed with its absence; this is what we discuss in Section \ref{S: beyond commutativity}. The proper part of the survey concludes with Section \ref{S: other topics}, a brief enumeration of other relevant topics that we do not discuss at length due to space constraints. Lastly, the appendices at the end summarize the notation used throughout, the basics of Hardy fields, and the rudiments of the Host-Kra theory.

The area is still full of open problems, and we strive to present them consistently throughout the survey. For more open questions on multiple ergodic averages, we advise the reader to consult the excellent survey of Frantzikinakis \cite{Fr16} (as well as his website, which lists up-to-date progress on those problems) and an older survey of Bergelson \cite{Ber96}. A gentle introduction to the study of multiple ergodic averages can be found in surveys of Kra \cite{Kra07} and Host \cite{H06}.
For a thorough summary of nilsystems, Host-Kra theory, and their applications, the textbook of Host and Kra remains the ultimate reference \cite{HK18}. Lastly, recent breakthroughs on joint ergodicity would not be possible without fruitful interchange of ideas with additive combinatorics. The state of the art in the latter field is brilliantly summarized in the recent survey of Peluse \cite{Pel24}.

\subsection{Acknowledgments}
This survey benefited greatly from the collective wisdom and generosity of the ergodic theory community. I am particularly grateful to Andreas Koutsogiannis, Konstantinos Tsinas, and Nikos Frantzikinakis for thorough and thoughtful feedback on the earlier drafts of this survey. I would also like to thank Bryna Kra, Wenbo Sun, Sebasti\'an Donoso, Joel Moreira, Florian Richter, and Vitaly Bergelson for suggestions, corrections, and clarifications.

\section{Joint ergodicity criteria}\label{S: joint ergodicity criteria}
Joint ergodicity is a pleasant and useful thing to have - but how to obtain it? A turning point in the study of joint ergodicity (and related properties from Definitions \ref{D: invariant control} and \ref{D: Krat control}) came in 2021-2022, with the advent of new joint ergodicity criteria summarized in Theorem \ref{T: joint ergodicity criteria} below as well as novel ways of obtaining seminorm control for ergodic averages. This section aims to introduce these new joint ergodicity criteria and juxtapose them with earlier methods.

\subsection{Key definitions}
We begin with several definitions that will play a prominent role in the forthcoming discussion. Before perusing the material below, we invite the reader to familiarize themselves with the background material in Appendix \ref{A: Host-Kra theory} on the Host-Kra theory, nonergodic eigenfunctions, and nilsystems.
%In order to introduce them and show how they differ from earlier methods, we need the following few definitions.
\begin{definition}[Seminorm control]\label{D: seminorm control}
We say that sequences $a_1,\ldots,a_\ell\colon \N\to \Z$:
    \begin{enumerate}
        \item \label{i:local-seminorms} {\em admit degree-$s$ Host-Kra seminorm control for the system  $(X, \CX, \mu,T_1,\ldots, T_\ell)$} if for all  $f_1,\ldots, f_\ell\in L^\infty(\mu)$, we have
\begin{align}\label{E: vanishing}
    \lim_{N\to\infty}\norm{\E_{n\in[N]} T_1^{a_1(n)}f_1\cdots T_\ell^{a_\ell(n)}f_\ell}_{L^2(\mu)} = 0
\end{align}        
whenever $\nnorm{f_j}_{s, T_j} = 0$ for some $1\leq j\leq \ell$;
% $\nnorm{f_m}_{s,T_m}=0$ for some $m\in [\ell]$, and $f_j\in \CE(T_j)$ for $j=m+1,\ldots, \ell$, then \eqref{E:zero} holds
%             		in $L^2(\mu)$.
\item \emph{admit  Host-Kra seminorm control for the system}  $(X, \CX, \mu,T_1,\ldots, T_\ell)$ if they admit degree-$s$ control for some $s\in\N$;
\item \emph{admit (degree-$s$) Host-Kra seminorm control} if they admit it for every system.
    \end{enumerate}
\end{definition}
In fact, our applications require something weaker than (degree-$s$) Host-Kra seminorm control in the sense given by Definition \ref{D: seminorm control}: it suffices to assume that \eqref{E: vanishing} holds as long as $\nnorm{f_m}_{s, T_m} = 0$ whenever $f_{m+1}, \ldots, f_\ell$ are \emph{nonergodic eigenfunctions} of $T_{m+1}, \ldots, T_\ell$ respectively (for all choices of $1\leq m \leq \ell$). This weaker property is stated in Definition \ref{D: P_m} when we outline the proof of Theorem \ref{T: invariant control criteria}. 

%For instance, for the average $\norm{\E_{n\in[N]}T_1^n f_1\cdot T_2^{n^2}f_2}_{L^2(\mu)}$, establishing the control by $\nnorm{f_1}_{s,T_1}$ (for some $s\in\N$) is significantly simpler whenever $f_2$ is a nonergodic eigenfunction of $T_2$, as it

In existing works (e.g. \cite{Fr21, FrKu22a}), sequences admitting Host-Kra seminorm control were also called \emph{good for seminorm estimates/control}.

\begin{definition}[Equidistribution on nilsystems]
We say that sequences $a_1,\ldots,a_\ell\colon \N\to \Z$ \emph{equidistribute on the nilsystem $(Y, \CY, m, S_1, \ldots, S_\ell)$} if for $m$-a.e. $y\in Y$, we have 
    \begin{align*}
        \overline{\rem{(S_1^{a_1(n)}y, \ldots, S_\ell^{a_\ell(n)}y)\colon n\in\N}} = \overline{\rem{(S_1^{n_1} y, \ldots, S_\ell^{n_\ell}y)\colon n_1, \ldots, n_\ell\in\N}};
    \end{align*}
    in particular, the right-hand side equals $Y^\ell$ whenever $S_1, \ldots, S_\ell$ are all ergodic.
\end{definition}

\begin{definition}[Equidistribution and irrational equidistribution]\label{D: equidistribution}
We say that sequences $a_1,\ldots,a_\ell\colon \N\to \Z$ are
    \begin{enumerate}
        \item\label{i:local-rational}    {\em good for equidistribution   for the  system $(X, \CX, \mu,T_1,\ldots, T_\ell)$} if for all nonergodic eigenfunctions $\chi_1\in \CE(T_1), \ldots, \chi_\ell\in\CE(T_\ell)$, we have
    \begin{align*}
        \lim_{N\to\infty}\norm{\E_{n\in[N]} \prod_{j=1}^\ell T_j^{a_j(n)}\chi_j-\prod_{j=1}^\ell\E(\chi_j|\CI(T_j))}_{L^2(\mu)} = 0;
    \end{align*}
    \item \emph{good for equidistribution} if they are good for equidistribution for every system;
    \item\label{i:local-irrational}  {\em good for  irrational equidistribution   for the  system $(X, \CX, \mu,T_1,\ldots, T_\ell)$} if for all nonergodic eigenfunctions $\chi_1\in \CE(T_1), \ldots, \chi_\ell\in\CE(T_\ell)$, we have
    \begin{align}\label{E: Krat identity}
        \lim_{N\to\infty}\norm{\E_{n\in[N]} \prod_{j=1}^\ell T_j^{a_j(n)}\chi_j-\E_{n\in[N]} \prod_{j=1}^\ell T_j^{a_j(n)}\E(\chi_j|\Krat(T_j))}_{L^2(\mu)} = 0;
    \end{align}
    \item \emph{good for irrational equidistribution} if they are good for irrational equidistribution for every system.
    \end{enumerate}
\end{definition}
The necessity to deal with nonergodic eigenfunctions rather than the classical ones comes from the fact that nonergodic eigenfunctions span $\CZ_1$-systems for nonergodic transformations (see Appendix \ref{A: low-degree HK factors}).

A recurrent theme in this survey is an interplay between \emph{local} vs. \emph{global} assumptions and results: those that hold for a particular system vs. those holding for every system from some large class (ergodic, single-transformation, etc.). In many instances, the global assumptions/results take simpler form than the local ones. As an example, we restate below the two global criteria from Definition \ref{D: equidistribution} in terms of exponential sums; for the rather straightforward proofs, see \cite[Lemma 2.4]{FrKu22a}.
\begin{lemma}[Equivalent formulation of global equidistribution properties]
    The sequences $a_1,\ldots,a_\ell\colon \N\to \Z$ are:
    \begin{enumerate}
        \item good for equidistribution if and only if 
    $$
		\lim_{N\to\infty} \E_{n\in [N]}\, e(a_1(n)\beta_1+\cdots+a_\ell(n)\beta_\ell)=0
	$$
		for   $\beta_1,\ldots, \beta_\ell\in [0,1)$ not all zero;
        \item good for irrational equidistribution if and only if 
    $$
		\lim_{N\to\infty} \E_{n\in [N]}\, e(a_1(n)\beta_1+\cdots+a_\ell(n)\beta_\ell)=0
	$$
		for  $\beta_1,\ldots, \beta_\ell\in [0,1)$ not all rational.
    \end{enumerate}
\end{lemma}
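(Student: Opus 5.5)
We prove both equivalences the same way: test the global property on rotations of a circle or torus to get the forward direction, and expand nonergodic eigenfunctions pointwise for the converse. Throughout we use that a nonergodic eigenfunction $\chi\in\CE(T)$ satisfies $|\chi|\in\{0,1\}$ a.e. (up to normalization) and $T\chi = \lambda\chi$ for some $\CI(T)$-measurable $\lambda\colon X\to\T$. Hence $T^{a(n)}\chi = \lambda^{a(n)}\chi$, and $\E(\chi|\CI(T)) = \chi\cdot 1_{\{\lambda=1\}}$. Similarly, $\E(\chi|\Krat(T)) = \chi\cdot 1_{\{\lambda\ \text{is a root of unity}\}}$. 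This last identity holds because on the set where $\lambda$ is a root of unity of order $r$ we have $T^r\chi=\chi$, so $\chi$ is $\Krat(T)$-measurable there. On the complementary set, $\chi$ is an eigenfunction with irrational eigenvalue on each ergodic component, so it is orthogonal to every periodic function. We verify this via the ergodic decomposition of the $\Z$-action of $T$, noting that $\lambda$ is invariant.

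\emph{Forward direction.} Suppose the sequences are good for equidistribution, and let $\beta_1,\dots,\beta_\ell\in[0,1)$ not all be zero. Take $X=\T$ with Lebesgue measure and $T_jx = x+\beta_j$. These need not be ergodic, but the definition only quantifies over all systems, so this is allowed. Take $\chi_j(x) = e(x)$ if $\beta_j\neq 0$ and $\chi_j=1$ otherwise. If some $\beta_{j}\ne0$ then $\E(\chi_{j}|\CI(T_{j}))=0$, so the right-hand product vanishes. The product $\prod_j T_j^{a_j(n)}\chi_j$ equals $e(\sum_{j:\beta_j\neq0} a_j(n)\beta_j)\, e(kx)$ with $k$ the number of nonzero $\beta_j$. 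Its $L^2$ norm is therefore $|\E_{n\in[N]} e(\sum_j a_j(n)\beta_j)|$, which must tend to $0$. The irrational case is identical, using the fact that $\E(\chi_j|\Krat(T_j)) = 0$ when $\beta_j$ is irrational. When $\beta_j$ is rational, the $\Krat$-projection of $\chi_j$ is $\chi_j$ itself. Then the difference in \eqref{E: Krat identity} is exactly the full average if some $\beta_j$ is irrational (take $\chi_j=1$ for the rational $\beta_j$ to be safe).

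\emph{Converse.} Write $T_j^{a_j(n)}\chi_j = \lambda_j^{a_j(n)}\chi_j$, so the average equals $\chi_1\cdots\chi_\ell(x)\cdot\E_{n\in[N]} e(\sum_j a_j(n)\beta_j(x))$, where $\lambda_j(x) = e(\beta_j(x))$. For fixed $x$, the exponential sum tends to $0$ unless all $\beta_j(x)=0$, and in that case it equals $1$. Thus the average converges pointwise to $\prod_j\chi_j\, 1_{\{\lambda_j=1\}} = \prod_j \E(\chi_j|\CI(T_j))$. Dominated convergence (everything is $1$-bounded) upgrades this to $L^2$ convergence. For irrational equidistribution, split $X$ according to which $\lambda_j(x)$ are roots of unity. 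On the set where some $\lambda_j(x)$ is not a root of unity, the sum tends to $0$ and $\E(\chi_j|\Krat(T_j))(x)=0$, so both averages in \eqref{E: Krat identity} tend to $0$ pointwise there. On the set where all $\lambda_j(x)$ are roots of unity, $\E(\chi_j|\Krat(T_j))=\chi_j$ for every $j$, so the two averages coincide. Dominated convergence again finishes the argument.

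\emph{Main obstacle.} The delicate point is justifying the formula $\E(\chi|\Krat(T)) = \chi 1_{\{\lambda\ \text{root of unity}\}}$ for nonergodic eigenfunctions. The cleanest route is to show that $\chi 1_{\{\lambda \text{ not a root of unity}\}}$ is orthogonal to every $T^r$-invariant function $g$. To see this, note that $\int \chi\bar g\,1_A\,d\mu = \int \lambda^r\chi\bar g 1_A\,d\mu$ for invariant sets $A$, which forces $(1-\lambda^r)\E(\chi\bar g|\CI(T))=0$. This vanishes on $\{\lambda^r\neq1\}$, and a density argument over $\bigvee_r\CI(T^r)$ completes the proof.
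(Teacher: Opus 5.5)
Your argument is correct. The survey gives no proof of its own: it calls the lemma straightforward and cites \cite[Lemma 2.4]{FrKu22a}. Your route is the natural one. Necessity is obtained by testing the global property on rotations of $\T$ with $\chi_j(x)=e(x)$. Sufficiency comes from writing $T_j^{a_j(n)}\chi_j=\lambda_j^{a_j(n)}\chi_j$, applying the exponential-sum hypothesis pointwise in $x$, and finishing with bounded convergence. Your identification $\E(\chi|\Krat(T))=\chi\cdot 1_{\{\lambda\ \text{root of unity}\}}$ is also sound, via orthogonality of $\chi 1_{\{\lambda\ \text{not a root of unity}\}}$ to all $T^r$-invariant functions plus density over the increasing family $\CI(T^{r!})$. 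In the converse of part (ii) you correctly observe that the two averages agree identically where all $\lambda_j(x)$ are roots of unity, so no convergence is needed there.

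One slip should be removed. In the forward direction of part (ii), the parenthetical ``take $\chi_j=1$ for the rational $\beta_j$ to be safe'' would break the argument. With that choice the phase becomes $\sum_{j:\,\beta_j\notin\Q}a_j(n)\beta_j$. You would then only obtain $\lim_{N\to\infty}\E_{n\in[N]}e\bigl(\sum_{j:\,\beta_j\notin\Q}a_j(n)\beta_j\bigr)=0$, which is not the required statement; for $\beta=(1/2,\sqrt2)$ it loses the term $a_1(n)/2$. Keep $\chi_j=e(x)$ for every $j$ with $\beta_j\neq0$. A single index with $\beta_j$ irrational already gives $\E(\chi_j|\Krat(T_j))=0$, so the $\Krat$-side average in \eqref{E: Krat identity} vanishes identically. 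The remaining average then has $L^2(\mu)$-norm exactly $\bigl|\E_{n\in[N]}e\bigl(\sum_j a_j(n)\beta_j\bigr)\bigr|$, as your main sentence already says.
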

\begin{example}
    For noninteger $0<b_1<\cdots < b_\ell$, sequences $n^{b_1}, \ldots, n^{b_\ell}$ are good for equidistribution, whereas any affinely independent polynomials in $\Z[t]$ (e.g. $n, n^2, \ldots, n^\ell$) are good for irrational equidistribution.
\end{example}

Likewise, for systems in which all transformations are ergodic, the two local criteria in Definition \ref{D: equidistribution} can be restated in terms of eigenvalues.
\begin{lemma}[Equidistribution properties for ergodic systems]\label{L: equidistribution condition for ergodic}
    Let $(X, \CX, \mu,$\! $T_1, \ldots, T_\ell)$ be a system with $T_1, \ldots, T_\ell$ ergodic. The sequences $a_1,\ldots,a_\ell\colon \N\to \Z$ are:
    \begin{enumerate}
        \item good for equidistribution for the system if and only if 
    $$
		\lim_{N\to\infty} \E_{n\in [N]}\, e(a_1(n)\beta_1+\cdots+a_\ell(n)\beta_\ell)=0
	$$
		for all  $\beta_1\in \Spec(T_1),\ldots, \beta_\ell\in \Spec(T_\ell)$, not all of which are zero;
        \item good for irrational equidistribution if and only if 
    $$
		\lim_{N\to\infty} \E_{n\in [N]}\, e(a_1(n)\beta_1+\cdots+a_\ell(n)\beta_\ell)=0
	$$
		for all  $\beta_1\in \Spec(T_1),\ldots, \beta_\ell\in \Spec(T_\ell)$, not all of which are rational.
    \end{enumerate}
\end{lemma}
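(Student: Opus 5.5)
Since each $T_j$ is ergodic, $\CI(T_j)$ is trivial, so $\E(\chi_j|\CI(T_j)) = \int \chi_j\,d\mu$. Moreover, $\CE(T_j)$ consists of (linear combinations and limits of) genuine eigenfunctions, because a nonergodic eigenfunction $\chi$ with $T_j\chi = \lambda\chi$ for a $T_j$-invariant $\lambda$ must have $\lambda$ constant. So I would first reduce both properties to statements about eigenfunctions $\chi_j$ with $T_j\chi_j = e(\beta_j)\chi_j$ for $\beta_j\in\Spec(T_j)$. For such eigenfunctions, $T_j^{a_j(n)}\chi_j = e(a_j(n)\beta_j)\chi_j$, and hence
\begin{align*}
\E_{n\in[N]}\prod_{j=1}^\ell T_j^{a_j(n)}\chi_j = \Big(\E_{n\in[N]} e(a_1(n)\beta_1+\cdots+a_\ell(n)\beta_\ell)\Big)\prod_{j=1}^\ell\chi_j.
\end{align*}
By ergodicity, $|\chi_j|$ is constant, which we normalise to $1$. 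Also, $\int\chi_j\,d\mu = 0$ unless $\beta_j=0$, in which case $\chi_j$ is constant.

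For (1), fix eigenfunctions $\chi_j$ with eigenvalues $\beta_j$. If all $\beta_j=0$, the identity is trivial. Otherwise the target $\prod_j\int\chi_j\,d\mu$ vanishes, and $\norm{\prod_j\chi_j}_{L^2(\mu)}=1$. So the $L^2$ convergence is exactly the vanishing of the exponential average, which gives both directions. For the converse direction we use that any tuple of $\beta_j\in\Spec(T_j)$ is realised by some eigenfunctions.

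For (2), the key fact is that $\Krat(T_j)$ is spanned by eigenfunctions with rational eigenvalues; by ergodicity it is exactly the closed span of these. Eigenfunctions with distinct eigenvalues are orthogonal, so $\E(\chi_j|\Krat(T_j))$ equals $\chi_j$ if $\beta_j\in\Q$ and $0$ otherwise. Hence if all $\beta_j$ are rational, \eqref{E: Krat identity} holds trivially. Otherwise the second average is $0$, and the condition again reduces to the vanishing of the exponential sum, giving both directions.

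The remaining step, and the only mildly delicate one, is passing from eigenfunctions to all of $\CE(T_j)$, or to whatever class the definition is tested on. The statement is multilinear in $(\chi_1,\ldots,\chi_\ell)$, and the averages are uniformly bounded by $\prod_j\norm{\chi_j}_\infty$. The plan is to approximate each $\chi_j$ in $L^2$ by finite linear combinations of eigenfunctions and use a telescoping bound, approximating one function at a time while keeping the others bounded in $L^\infty$. Here I would use the fact that truncated approximants can be taken uniformly bounded, since eigenfunctions are unimodular. Since the ergodic case makes $\CE(T_j)$ essentially the span of eigenfunctions (Appendix~\ref{A: low-degree HK factors}), this density argument should close the proof.
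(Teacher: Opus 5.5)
Your argument is correct and is the straightforward one the paper leaves implicit. For ergodic $T_j$, every $\chi_j\in\CE(T_j)$ is either $0$ or a unimodular eigenfunction with eigenvalue $e(\beta_j)$, $\beta_j\in\Spec(T_j)$. Your computation then gives both items in both directions, using $\E(\chi_j|\CI(T_j))=\int\chi_j\,d\mu$ and $\E(\chi_j|\Krat(T_j))\in\{\chi_j,0\}$ according to whether $\beta_j$ is rational.

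Your last paragraph is therefore superfluous: the definitions test only single elements of $\CE(T_j)$, not their span. Had a density step been needed, the appeal to truncation would not work as stated, because truncating a finite combination of eigenfunctions destroys that structure.
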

When $T_1, \ldots, T_\ell$ are nonergodic, the property of being good for equidistribution cannot be reduced to the exponential sums in Lemma \ref{L: equidistribution condition for ergodic}, as demonstrated by the following example.
\begin{example}
    Let $X =\T^2$ be endowed with the Borel $\sigma$-algebra and Lebesgue measure, and let
    \begin{align*}
        T_1(x,y) := (x, y + x) \quad \textrm{and}\quad T_2(x,y) = (x, y + x + \beta)
    \end{align*}
    for some $\beta\in\T$.
    It is easy to verify that $T_1, T_2$ have trivial spectrum, and so the exponential sums conditions from Lemma \ref{L: equidistribution condition for ergodic} are satisfied for trivial reasons by any sequences $a_1,a_2:\N\to\Z$.
    Yet $T_1, T_2$ both generate (nonergodic) $\CZ_1$-systems, and every function $\chi(x,y) = e(kx + ly)$ (for $k,l\in\Z$) is a nonergodic eigenfunction of both transformations. Hence to verify the good for equidistribution property for $a_1, a_2$ and $(X, \CX, \mu, T_1,T_2)$, we need to check whether
    \begin{align*}
        \lim_{N\to\infty}\norm{\E_{n\in[N]}e((ka_1(n) + la_2(n))x + la_2(n)\beta)}_{L^2(\mu)} = 0
    \end{align*}
    for all $k,l\in\Z$ not both 0. This fails even when $a_1(n) = a_2(n) = n$, $\beta = \frac{1}{2}$, and $k = -l\in 2\Z$.
\end{example}

In the single-transformation case, we make the following convention.
\begin{convention}
    We say that any property defined for $\ell$ transformations $T_1, \ldots, T_\ell$ holds for a system $(X, \CX, \mu, T)$ if it holds with $T_1 = \cdots = T_\ell = T$.
\end{convention}

\subsection{Old Joint Ergodicity Strategy}
With the definitions out of the way, we are ready to present the main strategies to prove joint ergodicity.
Suppose that we want to establish the joint ergodicity of sequences $a_1, \ldots, a_\ell:\N\to\Z$ for a system $(X, \CX, \mu,$\! $T_1, \ldots, T_\ell)$. Up to the 2021-2022 breakthroughs, we would have proceeded as follows.
%Fix some sequences $a_1, \ldots, a_\ell:\N\to\Z$ and a system $(X, \CX, \mu,$\! $T_1, \ldots, T_\ell)$. Up to the 2021-2022 breakthrough, the proofs of joint ergodicity of $(T_1^{a_1(n)}, \ldots, T_\ell^{a_\ell(n)})_n$ for $(X, \CX, \mu)$ consisted of the following steps.

\textbf{Old Joint Ergodicity Strategy:}
\begin{enumerate}
     \item (Host-Kra seminorm control) Show that the sequences admit Host-Kra seminorm control for the system.
    %Show that the average $A_N(f_1, \ldots, f_\ell)$ is \emph{controlled by a Host-Kra seminorm}, in that there exists $s\in\N$ such that
    % \begin{align*}
    %     \lim_{N\to\infty}\norm{\E_{n\in[N]} T_1^{a_1(n)}f_1\cdots T_\ell^{a_\ell(n)}f_\ell}_{L^2(\mu)} = 0
    % \end{align*}
    % whenever $\nnorm{f_j}_{s,T_j}=0$ for some $1\leq j\leq \ell$.
    \item (Reduction to nilsystems) Use the Host-Kra structure theorem (Theorem \ref{T: HK structure theorem}) to reduce the problem to nilsystems, i.e. derive the identity \eqref{E: joint ergodicity} for all nilsystems $(Y, \CY, m, S_1, \ldots, S_\ell)$ that arise as factors of $(X, \CX, \mu,$\! $T_1, \ldots, T_\ell)$.
    \item (Equidistribution on nilsystems) Show that the sequences equidistribute on all nilsystems as above. 
    % Show that for all nilsystems $(Y, \CY, m, S_1, \ldots, S_\ell)$ as above and $m$-a.e. $y\in Y$, we have 
    % \begin{align*}
    %     \overline{\rem{(S_1^{a_1(n)}y, \ldots, S_\ell^{a_\ell(n)}y)\colon n\in\N}} = \overline{\rem{(S_1^{n_1} y, \ldots, S_\ell^{n_\ell}y)\colon n_1, \ldots, n_\ell\in\N}};
    % \end{align*}
    % in particular, the right-hand side equals $Y^\ell$ whenever $S_1, \ldots, S_\ell$ are all ergodic.
\end{enumerate}
In particular, if the transformations $T_1, \ldots, T_\ell$ are totally ergodic, as is the case in some applications (see e.g. Theorem \ref{T: FrKr affine independence totally ergodic}), the same holds for the nilrotations $S_1, \ldots, S_\ell$.

The first step in the Old Joint Ergodicity Strategy is primarily analytic and usually involves some variant of the PET induction scheme of Bergelson \cite{Ber87}. The details are presented in Section \ref{SSS: PET}. Here, we just remark that it consists of applying the van der Corput inequality, change of variables, and the Cauchy-Schwarz inequality to the average under consideration many times until reaching an average in which all terms are linear in $n$. Each application reduces the average to one of lower ``complexity'' (in some suitable sense), and the careful measure of complexity ensures that one needs only finitely many steps to reach a linear average. At the end, we invoke the seminorm estimates for linear averages of Host-Kra \eqref{E: HK seminorm control} or Host \eqref{E: box seminorm control}, themselves products of several applications of the van der Corput and Cauchy-Schwarz inequalities.

The second step in the Old Joint Ergodicity Strategy is a straightforward application of Theorem \ref{T: HK structure theorem} jointly with an $L^2(\mu)$ approximation argument and the martingale convergence theorem.\footnote{If $\CY = \bigvee_{j=1}^\infty \CY_j$ for some increasing sequence of $\sigma$-algebras $\CY_1\subseteq \CY_2\subseteq \cdots$, then $$\lim\limits_{j\to\infty}\norm{\E(f|\CY_j)-\E(f|\CY)}_{L^2(\mu)} = 0.$$} The third step is considerably more challenging: obtaining a required equidistribution result on nilsystems involves a lot of nasty computations that restrict the classes of sequences tractable using this method (see e.g. the discussion below Theorem \ref{T: BMR weighted}).

\subsection{New Joint Ergodicity Strategy}
The hurdles faced by the seekers of joint ergodicity simplified dramatically in 2021-2022, with the advent of  new joint ergodicity criteria.
\begin{theorem}[Joint ergodicity criteria {\cite[Theorem 1.1]{Fr21}, \cite[Theorem 2.5]{FrKu22a}}]\label{T: joint ergodicity criteria}
        Let $(X, \CX, \mu,$\! $T_1, \ldots, T_\ell)$ be a system with $T_1, \ldots, T_\ell$ ergodic. The sequences $a_1,\ldots,a_\ell\colon \N\to \Z$ are jointly ergodic for the system if and only if the following conditions hold:
        \begin{enumerate}
            \item the sequences admit Host-Kra seminorm control for the system;
            \item the sequences are good for equidistribution for the system.
        \end{enumerate}
\end{theorem}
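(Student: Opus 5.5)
The plan is to prove the two directions separately. Necessity is essentially formal. Sufficiency goes through an inductive \emph{degree lowering} argument, which brings Host-Kra seminorm control down from degree $s$ to degree $1$.

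\emph{Necessity.} Suppose $a_1,\ldots,a_\ell$ are jointly ergodic for the system. For ergodic $T_j$ we have $\nnorm{f}_{1,T_j}=\abs{\int f\,d\mu}$. So if $\nnorm{f_j}_{1,T_j}=0$, then $\int f_j\,d\mu=0$, and \eqref{E: joint ergodicity} forces the average to converge to $0$. This is degree-$1$ seminorm control, and hence Host-Kra seminorm control. For ergodic $T_j$ we also have $\E(\chi_j|\CI(T_j))=\int\chi_j\,d\mu$, so the equidistribution property is just \eqref{E: joint ergodicity} specialised to $f_j=\chi_j\in\CE(T_j)$.

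\emph{Sufficiency.} Assume degree-$s$ control together with the equidistribution property. Since $T_j$ is ergodic, $\nnorm{f}_{1,T_j}=\abs{\int f\,d\mu}$. It therefore suffices to establish degree-$1$ control for every index: then we may replace each $f_j$ by $f_j-\int f_j\,d\mu$ one at a time, and a telescoping argument yields \eqref{E: joint ergodicity}.

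I would organise the argument as an induction on the index $m$, running from $m=\ell$ down to $1$, using the weaker properties indicated after Definition~\ref{D: seminorm control}. At stage $m$, we show that the average vanishes whenever $\nnorm{f_m}_{1,T_m}=0$ and $f_{m+1},\ldots,f_\ell$ are nonergodic eigenfunctions (eigenfunctions, by ergodicity). Inside each stage, I would lower the degree from $s$ to $s-1$, and repeat until reaching degree $1$.

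The degree lowering step, which I expect to be the main obstacle, would go as follows.
\begin{enumerate}
\item Suppose the average does not vanish while $\nnorm{f_m}_{s-1,T_m}=0$. By a standard Hilbert space argument, $f_m$ may be replaced by a \emph{dual function} of the form
\[
\lim_{k\to\infty}\E_{n\in[N_k]} T_m^{-a_m(n)}\Big(\bar g\cdot\prod_{j\neq m}T_j^{a_j(n)}\bar f_j\Big).
\]
\item Degree-$s$ control still applies to this dual function. Via the inductive formula $\nnorm{f}_{s,T}^{2^s}=\lim_H\E_{h\in[H]}\nnorm{\Delta_h f}_{s-1,T}^{2^{s-1}}$, nonvanishing of the degree-$s$ seminorm yields correlation with $\CZ_1$-type structure along multiplicative derivatives. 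By Appendix~\ref{A: low-degree HK factors}, that structure consists of nonergodic eigenfunctions $\chi_h$ of $T_m$.
\item Substitute this structure back into the average. The positivity of $\nnorm{f_m}_{s,T_m}$ then transfers to a positive degree-$(s-1)$ quantity for a modified average in which $f_m$ is twisted by the eigenfunctions $\chi_h$.
\item Use the inductive hypothesis at the later indices (where eigenfunctions already appear) to reach a contradiction.
\end{enumerate}
The delicate point is keeping the $h$-dependence of the eigenfunctions uniform. This requires a measurable selection of eigenfunctions and eigenvalues, and a careful use of ergodicity so that the eigenvalues become constants.

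At the base of the whole induction, all the functions are eigenfunctions. There the good-for-equidistribution hypothesis supplies exactly the vanishing needed, namely the exponential sum condition of Lemma~\ref{L: equidistribution condition for ergodic} for eigenvalues that are not all zero. This closes the induction and proves that the two conditions are sufficient.
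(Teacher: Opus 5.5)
Your necessity argument and the telescoping reduction to degree-$1$ control at each index are fine. The sufficiency sketch, however, is missing two essential ideas.

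\textbf{The contradiction in step 1 is never reached.} You assume $\nnorm{f_m}_{s-1,T_m}=0$. But after stashing, everything you learn concerns the dual function $F_m$, and degree lowering only upgrades $\nnorm{F_m}_{s,T_m}>0$ to $\nnorm{F_m}_{s-1,T_m}>0$. That is perfectly compatible with $\nnorm{f_m}_{s-1,T_m}=0$, since $F_m$ need not be $\CZ_{s-2}(T_m)$-measurable. The paper instead proceeds in three stages.
\begin{enumerate}
\item Iterate the lowering all the way down to $\nnorm{F_m}_{1,T_m}>0$, i.e.\ $\int F_m\,d\mu\neq 0$. This says the average with $f_m$ replaced by the constant $1$ still correlates with $f_0$.
\item The inductive hypothesis then gives $\int f_j\,d\mu\neq 0$ for $j\neq m$ (Proposition~\ref{P: transferring control to f_j except 2}).
\item Replace those $f_j$ by their means, and run a separate small degree-lowering argument for the single average $\E_{n\in[N]}T_m^{a_m(n)}f_m$ to get $\int f_m\,d\mu\neq0$ (Proposition~\ref{P: transferring control to f_2}).
\end{enumerate}
Without these transfer steps your induction does not close.

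\textbf{The engine of the lowering step is misplaced.} The $U^2$ inverse theorem gives nonergodic eigenfunctions $\chi_h$ correlating with the multiplicative derivatives of $F_m$ along $T_m$. A dual--difference interchange (Cauchy--Schwarz in $h$, as in Proposition~\ref{P: degree lowering s=3}) then produces nonvanishing averages of the following form:
\begin{itemize}
\item $\chi_h\overline{\chi_{h'}}$ \emph{replaces} $f_m$ at index $m$;
\item each $f_j$, $j<m$, is replaced by its derivative along $T_m$.
\end{itemize}
Nothing is ``twisted''. The inductive hypothesis is applied exactly here, to the statement with an eigenfunction also at the current index $m$. That is one more eigenfunction, not a hypothesis ``at the later indices''.

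Its output is structure, not a contradiction. The function $\chi_h\overline{\chi_{h'}}$ must be constant, so the $\chi_h$ share a single eigenvalue for a positive-density set of $h$. Pigeonholing one such $\chi$ and removing it by Cauchy--Schwarz is what yields $\nnorm{F_m}_{s-1,T_m}>0$. No measurable selection is involved: $h$ ranges over a countable set, and eigenvalues of an ergodic $T_m$ are already constants. The real point is that the inductive hypothesis collapses the $h$-dependence of the eigenvalues.

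Consistently with this, your induction must run upward from the all-eigenfunction base case, with stage $m$ using stage $m-1$. It cannot run from $m=\ell$ down to $1$.
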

In other words, Theorem \ref{T: joint ergodicity criteria} combined with Lemma \ref{L: equidistribution condition for ergodic} reduces the arduous step (iii) of the Old Joint Ergodicity Strategy to a much simpler problem involving exponential sums. 

Without assuming ergodicity, Theorem \ref{T: joint ergodicity criteria} takes the following stronger form.
\begin{theorem}[Criteria for invariant factor control {\cite[Theorem 2.5]{FrKu22a}}]\label{T: invariant control criteria}
        Let $(X, \CX, \mu,$\! $T_1, \ldots, T_\ell)$ be a system. The sequences $a_1,\ldots,a_\ell\colon \N\to \Z$ are controlled by the invariant factor for the system if and only if the following conditions hold:
        \begin{enumerate}
            \item the sequences admit Host-Kra seminorm control for the system;
            \item the sequences are good for equidistribution for the system.
        \end{enumerate}
\end{theorem}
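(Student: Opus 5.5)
The \emph{necessity} of the two conditions is the easy direction, and I would dispose of it first. If the sequences are controlled by the invariant factor, then applying \eqref{E: weak joint ergodicity} to nonergodic eigenfunctions $\chi_j\in\CE(T_j)$ is literally the definition of being good for equidistribution for the system. For seminorm control, I would take degree $s=1$. Recall that $\nnorm{f}_{1,T}=0$ iff $\E(f|\CI(T))=0$, and that the Host-Kra seminorms are monotone in the degree. So if $\nnorm{f_j}_{1,T_j}=0$ for some $j$, the limiting product $\prod_i\E(f_i|\CI(T_i))$ vanishes, which gives \eqref{E: vanishing}.

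For \emph{sufficiency} I would argue in two stages. The first stage reduces to $\CZ_1$-measurable functions; the second uses the equidistribution hypothesis on eigenfunctions. The core of the first stage is a \emph{degree lowering} statement. Suppose that for some index $m$ and some $s\geq 2$, the average \eqref{E: vanishing} vanishes whenever $\nnorm{f_m}_{s,T_m}=0$ and $f_{m+1},\ldots,f_\ell$ are nonergodic eigenfunctions; call this property $P_m(s)$. Then $P_m(s-1)$ holds, provided $s-1\geq 2$.

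To prove degree lowering, I would argue by contradiction. If the average does not vanish while $\nnorm{f_m}_{s-1,T_m}=0$, then passing to a weak limit along a subsequence shows that $f_m$ correlates with a dual function
\[
D=\lim_k \E_{n\in[N_k]} T_m^{-a_m(n)}\Big(\bar g\prod_{j\neq m}T_j^{a_j(n)}\bar f_j\Big).
\]
By $P_m(s)$ we may replace $f_m$ by $\E(f_m|\CZ_{s-1}(T_m))$. Using $\nnorm{f}_{s,T}^{2^s}=\lim_H\E_{h\in[H]}\nnorm{\Delta_h f}_{s-1,T}^{2^{s-1}}$, I would express the nonvanishing in terms of averages of the same shape in which $f_m$ is replaced by multiplicative derivatives $f_m\cdot T_m^h\bar f_m$. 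The goal is then to show that the dual function can be approximated by functions on which $T_m$ acts, up to a nonergodic eigenfunction, by a lower-order twist.

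The equidistribution hypothesis is what permits this approximation. Feeding eigenfunctions into the later slots turns $T_j^{a_j(n)}\chi_j$ into scalar phases $e(a_j(n)\beta_j(x))\chi_j$, and good equidistribution kills all the nonconstant phase combinations. Iterating produces $P_m(1)$-type control; in practice one stops at degree $2$ and treats $\CZ_1$ directly. I would run this for $m=\ell,\ell-1,\ldots,1$ in turn, using at each stage that the later functions have already been reduced to linear combinations of nonergodic eigenfunctions, since these span $L^2(\CZ_1(T_j))$ (see Appendix \ref{A: low-degree HK factors}).

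In the second stage, once every $f_j$ may be assumed $\CZ_1(T_j)$-measurable, I would approximate each $f_j$ in $L^2$ by finite linear combinations of nonergodic eigenfunctions. By multilinearity, the equidistribution property then gives the limit $\prod_j\E(f_j|\CI(T_j))$. An $L^2$ approximation together with the trivial bound $\norm{A_N}_{L^2(\mu)}\leq\prod_j\norm{f_j}_\infty$ controls the errors.

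I expect the main obstacle to be the degree lowering step. The difficulty is showing that the dual function, after one differencing, loses a degree of structure in the $T_m$ direction when the transformations do not commute with the relevant nil-structures and need not be ergodic. This forces working with nonergodic eigenfunctions, whose eigenvalues are $\CI(T_j)$-measurable functions rather than constants. I would first try to carry out the argument for ergodic $T_j$, where the eigenvalues are constants and Lemma \ref{L: equidistribution condition for ergodic} applies. I would then handle the general case by passing to ergodic decompositions of each $T_j$ fibrewise, being careful that these decompositions are not simultaneous.
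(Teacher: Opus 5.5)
Your necessity direction is correct, and you rightly identify the stashed function (your $D$ is the paper's $F_m$). The sufficiency argument, however, has a genuine gap in the step that carries all the weight. You never say what actually lowers the degree, and the mechanism you point to cannot do it.

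Good equidistribution is a statement about averages in which \emph{every} slot holds a nonergodic eigenfunction. It therefore cannot be applied at a stage where $f_1,\ldots,f_{m-1}$ are still arbitrary, and turning the later slots into phases does not change this. In the paper the degree is lowered by the \emph{outer induction hypothesis}, as follows.
\begin{enumerate}
\item Apply the degree-$2$ inverse theorem to $\Delta_{h\be_m}F_m$ to obtain nonergodic eigenfunctions $\chi_{m,h}$.
\item Perform a dual--difference interchange. This produces non-vanishing averages in which slot $m$ holds the nonergodic eigenfunction $\chi_{m,h,h'}1_{U_{h,h'}}$, where $\chi_{m,h,h'}:=T_m^{-h'}(\chi_{m,h}\overline{\chi_{m,h'}})$. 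The other slots hold differenced functions $\Delta_{(h-h')\be_m}\overline{f_j}$. Derivatives of $f_m$ do not sit in slot $m$, as you propose.
\item Apply invariant-factor control for averages with one fewer arbitrary function, i.e.\ $(P_{m-1})$. This forces $\E(\chi_{m,h,h'}1_{U_{h,h'}}\mid\CI(T_m))\neq 0$. Hence $\chi_{m,h}=\chi\lambda_h$ with $T_m$-invariant $\lambda_h$ on invariant sets of positive measure.
\item Remove these low-complexity factors to get $\nnorm{F_m}_{s-1,T_m}>0$.
\item Iterate down to degree $1$, then transfer control from $F_m$ back to $f_1,\ldots,f_m$ by separate arguments.
\end{enumerate}

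This is why your induction cannot be run as you organize it. You start with $m=\ell$, lowering the degree at $f_\ell$ while $f_1,\ldots,f_{\ell-1}$ are arbitrary. At that stage you know nothing about averages with an eigenfunction in slot $\ell$ and arbitrary functions elsewhere, which is exactly the input the lowering step consumes. What it consumes is also \emph{degree-$1$} information. A nonzero nonergodic eigenfunction always has positive $\nnorm{\cdot}_{2,T_m}$, so $\CZ_1$-control at that slot tells you nothing. Each stage must therefore reach invariant-factor control rather than stop at degree $2$. The induction must also run upward in the number of arbitrary functions, from $(P_0)$, which \emph{is} good equidistribution, to $(P_\ell)$. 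In your scheme the only place equidistribution can legitimately be used is the final stage, by which point all the degree lowering would already have had to be done without it.

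Your fallback of first treating ergodic $T_j$ and then passing to ergodic decompositions fibrewise also fails. The ergodic decomposition of $\mu$ with respect to $T_m$ is not preserved by the other $T_j$, so the multiple average does not disintegrate along it. This is exactly why the commuting case has to work directly with nonergodic eigenfunctions and with $T_m$-invariant sets $U$ on which $\E(F_m\overline{\chi_m}\mid\CI(T_m))$ is bounded below, as in the paper's degree-lowering propositions.
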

It is clear that being controlled by the invariant factor implies both Host-Kra seminorm control and being good for equidistribution. Indeed, since $\CI(T_j) = \CZ_0(T_j)$, the lowest-degree factor in the Host-Kra hierarchy, we have via \eqref{E: factor property} that being controlled by the invariant factor is equivalent to degree-1 Host-Kra seminorm control. Furthermore, by taking the functions $f_1, \ldots, f_\ell$ in \eqref{E: joint ergodicity} to be nonergodic eigenfunctions, we immediately infer being good for equidistribution from joint ergodicity. The other direction is the deep and nontrivial one; we will sketch its proof in Section \ref{S: degree lowering proof}.

If the end goal is to obtain control by the rational Kronecker factor, we can use the following variant of the aforementioned results.
\begin{theorem}[Criteria for rational Kronecker factor control {\cite[Theorem 1.6]{Fr21}, \cite[Theorem 2.6]{FrKu22a}}]\label{T: Krat control criteria}
        Let $(X, \CX, \mu,$\! $T_1, \ldots, T_\ell)$ be a system. The sequences $a_1,\ldots,a_\ell\colon \N\to \Z$ are controlled by the rational Kronecker factor for the system if and only if the following conditions hold:
        \begin{enumerate}
            \item the sequences admit Host-Kra seminorm control for the system;
            \item the sequences are good for irrational equidistribution for the system;
            \item the equation \eqref{E: Krat identity} holds when $f_j$ is an eigenfunction of $T_j$ with rational eigenvalue for all but one value $1\leq j\leq \ell$.\footnote{This last condition, absent in Theorems \ref{T: joint ergodicity criteria} and \ref{T: invariant control criteria}, is needed in the base case of the proof of the difficult direction of this result. In reality, it is easily satisfied for sequences of interest, e.g. for all sequences good for irrational equidistribution \cite[Lemma 5.3]{FrKu22a}.}
        \end{enumerate}
\end{theorem}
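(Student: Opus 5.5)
Both directions are needed. The forward direction is soft. If the sequences are controlled by $\Krat$, then condition (i) follows from \eqref{E: factor property}: $\Krat(T_j)\subseteq\CZ_1(T_j)$, so $\nnorm{f_j}_{2,T_j}=0$ forces $\E(f_j|\Krat(T_j))=0$, which gives degree-2 Host-Kra seminorm control. For conditions (ii) and (iii), take nonergodic eigenfunctions $\chi_j$ and write $\chi_j=\E(\chi_j|\Krat(T_j))+(\chi_j-\E(\chi_j|\Krat(T_j)))$. Expanding the product multilinearly, every term containing at least one orthogonal piece tends to $0$ in $L^2$ by $\Krat$-control, and this is exactly \eqref{E: Krat identity}.

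The converse is the substantive part, and I would model it on the degree lowering proof of Theorem \ref{T: invariant control criteria}, replacing $\CI(T_j)$ by $\Krat(T_j)$ throughout. I would induct on the number $m$ of indices at which the function may be arbitrary. The statement $P_m$ says: whenever $f_{m+1},\ldots,f_\ell$ are nonergodic eigenfunctions and $f_1,\ldots,f_m$ are arbitrary, the average vanishes once $\E(f_j|\Krat(T_j))=0$ for some $j\le m$. The base case uses eigenfunctions at all indices but one. Decomposing those eigenfunctions into rational and irrational spectral parts, condition (ii) kills the irrational parts, and condition (iii) handles the rational eigenfunctions, which is precisely where it enters.

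For the inductive step, fix $m$ and assume $\nnorm{f_m}_{s,T_m}=0$-type control from (i). The degree lowering works by contradiction. If the average does not vanish, a weak limit argument (the dual function trick) produces a function $F$ with nonzero $\nnorm{\cdot}_{s,T_m}$ correlation with $f_m$. Applying the inverse-type property of the Host-Kra seminorm, namely an $L^2$ decomposition of the degree-$s$ dual in terms of lower-degree objects, one replaces $f_m$ by a product of itself with a nonergodic eigenfunction and descends to $\nnorm{\cdot}_{s-1,T_m}$. Iterating brings control down to $\nnorm{\cdot}_{2,T_m}$, which corresponds to the factor $\CZ_1(T_m)$. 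There $f_m$ is approximated by nonergodic eigenfunctions, and the induction hypothesis applies, with irrational eigenvalues removed via (ii). Eigenfunctions with rational eigenvalues are $\Krat(T_m)$-measurable, hence are annihilated by the assumption $\E(f_m|\Krat(T_m))=0$.

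I expect the main obstacle to be the degree lowering step from $s$ to $s-1$. The difficulty is to show that the perturbed averages, with $f_m$ twisted by eigenfunctions, still satisfy the hypotheses uniformly enough to pass to the limit. Tracking the nonergodic eigenfunctions through ergodic decomposition, and handling the mixture of rational and irrational eigenvalues on each fiber, needs care. My first attempt would be to prove the lowering for $\CI$ exactly as in the invariant-factor case, and only at the final $\CZ_1$ stage split the spectrum into its rational and irrational parts.
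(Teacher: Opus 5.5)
Your forward direction is fine; for (iii) the same multilinear expansion works verbatim when one function is arbitrary rather than an eigenfunction. The converse, however, has a genuine gap in the inductive step: your descent from $\nnorm{\cdot}_{s,T_m}$ to $\nnorm{\cdot}_{s-1,T_m}$ has no actual mechanism. There is no usable ``$L^2$ decomposition of the degree-$s$ dual into lower-degree objects''; that would be a higher-order inverse theorem, which is exactly what degree lowering avoids. Moreover, positivity of a degree-$s$ seminorm never yields positivity of the degree-$(s-1)$ one by itself: $F(x,y)=e(y)$ under $T(x,y)=(x+\alpha,y+x)$ has $\nnorm{F}_{3,T}>0=\nnorm{F}_{2,T}$.

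In the argument sketched in Propositions \ref{P: degree lowering s=2} and \ref{P: degree lowering s=3}, the degree drops only because the induction hypothesis is used at \emph{every} step. One applies the degree-$2$ inverse theorem to the multiplicative derivatives $\Delta_{\underline h}F$ of the stashed function, which produces eigenfunctions $\chi_{\underline h}$. The dual--difference interchange then gives averages whose $m$-th slot is occupied by these $\chi_{\underline h}$. Applying $P_{m-1}$ to these lower-complexity averages forces structure on the $\chi_{\underline h}$, and only that structure lets you strip off the low-complexity factors. You invoke the induction hypothesis only at the $\CZ_1$ level, which is too late. You also treat the outcome as a statement about $f_m$, whereas the degree is lowered for $F$ and control must afterwards be transferred back to the $f_j$.

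Your base case also fails as stated. With $f_1$ arbitrary and some $f_j$, $j\geq 2$, an eigenfunction with irrational eigenvalue, condition (ii) gives nothing, since it only concerns averages in which \emph{every} function is a nonergodic eigenfunction. This configuration already requires seminorm control plus a full degree-lowering step.

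The obstacle you anticipate (uniformity of twisted averages) is not the real one, and your fallback of running the $\CI$-descent and splitting the spectrum only at the end cannot work. The invariant-factor descent uses the $\CI$-form of the induction hypothesis to conclude that the eigenfunctions from the inverse theorem are $T_m$-invariant on a set of positive measure. That form is essentially good equidistribution, which you do not assume. For example, $a(n)=n^2$ satisfies (i)--(iii) on every system, yet if $T\chi=e(1/3)\chi$ then $\E_{n\in[N]}T^{n^2}\chi\to\frac{1+2e(1/3)}{3}\chi\neq 0$ although $\chi$ is not invariant.

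Under (ii) you only learn that these eigenfunctions have rational eigenvalues on positive-measure invariant sets. Their denominators may vary with $\underline h$, and such rational eigenfunctions have to be removed with no uniform bound on the denominators. This is where the survey says the $\Krat$ case ``requires a more subtle treatment'' than the invariant case (carried out in \cite{FrKu22a}), and your plan does not address it.

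The transfer steps also change in the $\Krat$ setting. The projection $\E(F|\Krat(T_m))$ is not $T_m$-invariant, so $T_m^{a_m(n)}$ does not simply disappear as in Proposition \ref{P: transferring control to f_j except 2}. Once control at the slots $j\neq m$ is established, replacing those $f_j$ by their $\Krat$-projections leaves averages in which every function except $f_m$ is a limit of combinations of rational eigenfunctions. This is exactly the configuration covered by (iii), which takes over the role of the single-average argument at the end of Proposition \ref{P: transferring control to f_2}.
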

Once again, rational Kronecker factor control easily implies the other two conditions while the converse direction is difficult.

The previous three theorems yield the following global corollaries.
\begin{corollary}[Global joint ergodicity criteria {\cite[Theorem 2.1]{FrKu22a}}]\label{C: joint ergodicity criteria global}
    The sequences $a_1,\ldots,a_\ell\colon \N\to \Z$ are jointly ergodic if and only if the following conditions hold:
        \begin{enumerate}
            \item the sequences admit Host-Kra seminorm control for all systems $(X, \CX, \mu,$\! $T_1, \ldots, T_\ell)$ with $T_1, \ldots, T_\ell$ ergodic;
            \item the sequences are good for equidistribution.
        \end{enumerate}
\end{corollary}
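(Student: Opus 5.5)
The plan is to deduce Corollary \ref{C: joint ergodicity criteria global} directly from the local criterion, Theorem \ref{T: joint ergodicity criteria}, applied system by system. The only extra input is a passage between the local notion ``good for equidistribution for a given ergodic system'' and the global notion ``good for equidistribution'', which runs through the exponential-sum reformulations (the global lemma citing \cite[Lemma 2.4]{FrKu22a} and Lemma \ref{L: equidistribution condition for ergodic}).

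\emph{Sufficiency.} Assume (i) and (ii), and fix a system $(X,\CX,\mu,T_1,\ldots,T_\ell)$ with $T_1,\ldots,T_\ell$ ergodic.
\begin{enumerate}
\item By (i), the sequences admit Host-Kra seminorm control for this system.
\item By (ii) and the global equivalence, $\lim_{N\to\infty}\E_{n\in[N]} e(a_1(n)\beta_1+\cdots+a_\ell(n)\beta_\ell)=0$ for all $(\beta_1,\ldots,\beta_\ell)\in[0,1)^\ell\setminus\{0\}$.
\item This holds in particular for $\beta_j\in\Spec(T_j)$ not all zero, so Lemma \ref{L: equidistribution condition for ergodic} shows the sequences are good for equidistribution for this system.
\item Theorem \ref{T: joint ergodicity criteria} then gives joint ergodicity for this system.
\end{enumerate}
Since the system was arbitrary, the sequences are jointly ergodic.

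\emph{Necessity.} Assume the sequences are jointly ergodic. For every system with ergodic $T_j$, the easy direction of Theorem \ref{T: joint ergodicity criteria} gives Host-Kra seminorm control, which is (i).

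For (ii), I would again use the exponential-sum formulation. Fix $(\beta_1,\ldots,\beta_\ell)\in[0,1)^\ell$, not all zero.
\begin{enumerate}
\item Build a test system on $X=\T$ (or $X=\T^\ell$) in which each $T_j$ is ergodic and has $\beta_j$ as an eigenvalue. For instance, take $X=\T^\ell\times\T^\ell$ with Haar measure and let $T_j$ rotate by $\beta_j e_j+\alpha_j$, where $\alpha_j$ is chosen so that $T_j$ is ergodic. The eigenfunction $e(x_j)$ then has eigenvalue $\beta_j$, with the $\alpha$-coordinates placed in separate coordinates.
\item Ergodicity of $T_j$ requires its full rotation vector to be irrational. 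Take $\alpha_j$ acting on the second block of coordinates with rationally independent entries, so that $\beta_j$ survives as an eigenvalue while $T_j$ stays ergodic.
\item Joint ergodicity applied to $f_j=e(x_j)$ (which has integral zero when $\beta_j\neq0$) yields
\[
\Big\|\E_{n\in[N]}\prod_{j} e(x_j+a_j(n)\beta_j)\Big\|_{L^2}=\Big|\E_{n\in[N]}e\Big(\sum_j a_j(n)\beta_j\Big)\Big|\to0 .
\]
When some $\beta_j=0$, take $f_j=1$ for that index. At least one $\beta_j\neq0$, so at least one $f_j$ has zero integral, and the required exponential sum vanishes.
\end{enumerate}

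The main obstacle is the construction in the necessity direction. It must simultaneously keep every $T_j$ ergodic, make prescribed, possibly rational, $\beta_j$ eigenvalues, and keep the $T_j$ commuting. Using rotations on a product torus, with an auxiliary irrational rotation in independent coordinates, handles all three requirements, since rotations commute. The delicate point is checking ergodicity of a rotation by $(\beta_j e_j,\alpha_j)$ when $\beta_j$ is rational. Because the rotation acts trivially on the coordinates $x_i$ with $i\ne j$, the action would be non-ergodic, so I would instead use one torus coordinate per $j$ shared cleverly: rotate all coordinates by every $T_j$, adding independent irrationals to the others. If this becomes messy, I would fall back on the fact that the global equivalence \cite[Lemma 2.4]{FrKu22a} already treats all systems, and simply verify good equidistribution as defined, using nonergodic eigenfunctions and the easy direction of Theorem \ref{T: invariant control criteria}.
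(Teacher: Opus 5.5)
Your sufficiency direction and your proof of the necessity of (i) are fine. They follow the route the paper intends: apply Theorem \ref{T: joint ergodicity criteria} system by system. In the sufficiency direction the detour through exponential sums is not even needed, since global good equidistribution already means good equidistribution for every system.

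\textbf{The gap.} It lies in the necessity of (ii), the one step that requires an actual construction. For each $(\beta_1,\ldots,\beta_\ell)\neq 0$ you must exhibit a system in which every $T_j$ is ergodic and $\beta_j\in\Spec(T_j)$, and you never produce one. Moreover, no construction on a torus can work once some $\beta_j$ is a nonzero rational:
\begin{itemize}
\item A rotation of $\T^m$ by $g$ is ergodic if and only if $k\cdot g\notin\Z$ for all $k\in\Z^m\setminus\{0\}$, and its eigenvalues are the numbers $k\cdot g \bmod 1$.
\item If $k\cdot g\equiv p/q\not\equiv 0$, then $(qk)\cdot g\in\Z$, contradicting ergodicity.
\item So ergodic torus rotations are totally ergodic, however the irrational coordinates are shared.
\end{itemize}
The rational case is not vacuous: $\beta=(1/2,0,\ldots,0)$ is exactly what excludes $a_1(n)=2n$.

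Your fallback is circular. Joint ergodicity only speaks about systems with all $T_j$ ergodic. It therefore yields no invariant factor control on the nonergodic systems over which global good equidistribution quantifies, so the easy direction of Theorem \ref{T: invariant control criteria} cannot be invoked there.

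\textbf{The fix.} The missing idea is that the $T_j$ need not be different, which also makes commutativity automatic. Take $T_1=\cdots=T_\ell=T$, where $T$ is an ergodic group rotation whose spectrum contains every $\beta_j$:
\begin{itemize}
\item Let $\Gamma\subseteq\T$ be the subgroup generated by $\beta_1,\ldots,\beta_\ell$. Its torsion part is $\frac{1}{q}\Z/\Z$ for some $q\in\N$.
\item Let $\delta_1,\ldots,\delta_r\in\Gamma$ lift a basis of $\Gamma$ modulo torsion. Then $1,\delta_1,\ldots,\delta_r$ are rationally independent and $\Gamma=\langle\delta_1,\ldots,\delta_r\rangle+\frac1q\Z/\Z$.
\item Hence $T(x,y)=(x+(\delta_1,\ldots,\delta_r),\,y+1)$ on $\T^r\times\Z/q\Z$ is ergodic with spectrum exactly $\Gamma$. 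Equivalently, $T$ is the canonical rotation of the dual group of $\Gamma$.
\end{itemize}
Take $f_j$ to be a character of $\T^r\times\Z/q\Z$ with eigenvalue $\beta_j$, and $f_j=1$ if $\beta_j=0$. Then $\prod_j f_j$ has modulus one, and $\int f_j\,d\mu=0$ for every $j$ with $\beta_j\neq0$. Joint ergodicity therefore gives
$\abs{\E_{n\in[N]}e(a_1(n)\beta_1+\cdots+a_\ell(n)\beta_\ell)}=\norm{\E_{n\in[N]}\prod_{j} T^{a_j(n)}f_j}_{L^2(\mu)}\to 0$.
By the exponential-sum reformulation \cite[Lemma 2.4]{FrKu22a}, this is exactly (ii).
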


\begin{corollary}[Criteria for global invariant/rational Kronecker factor control {\cite[Theorem 2.2]{FrKu22a}}]\label{C: invariant/Krat control criteria global}
    The sequences $a_1,\ldots,a_\ell\colon \N\to \Z$ are controlled by the invariant/rational Kronecker factor respectively if and only if the following conditions hold:
        \begin{enumerate}
            \item the sequences admit Host-Kra seminorm control;
            \item the sequences are good for equidistribution/irrational equidistribution respectively.
        \end{enumerate}
\end{corollary}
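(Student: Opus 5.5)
The plan is to deduce Corollary \ref{C: invariant/Krat control criteria global} directly from the local criteria, Theorems \ref{T: invariant control criteria} and \ref{T: Krat control criteria}, by quantifying over all systems. The global exponential-sum reformulation of the equidistribution properties is not needed here, since both notions are defined globally as "for every system".

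\emph{Forward direction.} Suppose the sequences are controlled by the invariant (resp. rational Kronecker) factor, i.e. for every system. Fix a system $(X, \CX, \mu,$\! $T_1, \ldots, T_\ell)$. The easy direction of Theorem \ref{T: invariant control criteria} (resp. Theorem \ref{T: Krat control criteria}) gives Host-Kra seminorm control and (irrational) equidistribution for this system. For the invariant factor, control is equivalent to degree-1 seminorm control, because $\CI(T_j)=\CZ_0(T_j)$. For the rational Kronecker factor, one uses $\Krat(T_j)\subseteq\CZ_1(T_j)$: then $\nnorm{f_j}_{2,T_j}=0$ forces $\E(f_j|\Krat(T_j))=0$, which gives degree-2 control. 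Since the system was arbitrary, the global conditions (i) and (ii) hold.

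\emph{Converse direction.} Assume (i) and (ii) hold for every system and fix a system. For the invariant factor, Theorem \ref{T: invariant control criteria} applies immediately and yields control for this system. For the rational Kronecker factor, Theorem \ref{T: Krat control criteria} also requires condition (iii): identity \eqref{E: Krat identity} must hold when all but one of the $f_j$ are eigenfunctions with rational eigenvalues. Here I would invoke the remark in the footnote (\cite[Lemma 5.3]{FrKu22a}) that this condition follows from being good for irrational equidistribution.

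If I had to justify (iii) myself, I would argue as follows. Rational eigenfunctions $\chi_j$ satisfy $T_j^{r}\chi_j=\chi_j$ for a common period $r$. Split $n$ into residue classes mod $r$, so that $T_j^{a_j(n)}\chi_j$ depends only on $a_j(n)$ mod $r$, and reduce to a single-function average twisted by residue-class indicators. Expanding these indicators as exponentials $e(a_j(n)k/r)$, one then uses irrational equidistribution, applied on an auxiliary product system with a cyclic rotation, to kill the part of the remaining function orthogonal to $\Krat$. Making this reduction cleanly, especially with the nonergodic-eigenfunction formulation, is the one genuinely nontrivial point, and it is the main obstacle.

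Everything else is bookkeeping, since the local theorems are assumed.
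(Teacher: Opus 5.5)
Your proposal is correct and follows the paper's route: the paper obtains the global corollary by applying Theorems \ref{T: invariant control criteria} and \ref{T: Krat control criteria} system by system, with condition (iii) supplied by \cite[Lemma 5.3]{FrKu22a} as noted in the footnote. Your optional sketch of (iii) is loose, though this does not affect the proof since the lemma is cited. Splitting $n$ mod $r$ does not fix $a_j(n)$ mod $r$ for general sequences. Also, handling an arbitrary $f_m$ with $\E(f_m|\Krat(T_m))=0$, including its continuous-spectrum part, really goes through the spectral theorem combined with the exponential-sum form of irrational equidistribution.
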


The proofs of all of the results above rely on the \emph{degree lowering} method, an extremely versatile analytic argument developed by Peluse \cite{Pel19} and later refined by Peluse-Prendiville \cite{PP19} to obtain quantitative versions of the polynomial Szemer\'edi theorem in the finitary universe. Frantzikinakis then realized its potential value in ergodic theory. Translating the arguments from \cite{PP19} into the language of dynamics, he proved the variants of Theorem \ref{T: joint ergodicity criteria}-Corollary \ref{C: invariant/Krat control criteria global} in the case of a single transformation $T_1 = \cdots = T_\ell$ \cite{Fr21}. The aforementioned results have then been extended to the case of $\Z^\ell$-systems by Frantzikinakis and the author \cite{FrKu22a}. The argument for the single-transformation case uses the ergodic decomposition to reduce matters to the ergodic system. This is no longer possible for several commuting transformations, as there is no variant of ergodic decomposition that would make them simultaneously ergodic.\footnote{Consider $X = \T^2$, $T_1(x,y) = (x+\sqrt{2},y)$, $T_2(x,y) = (x, y+\sqrt{3})$. Then the ergodic decompositions of the Lebesgue measure on $\T^2$ with respect to $T_1, T_2$ are different.}
%Then $\CI(T_1) = \{\T\times A\colon A\in\CB\}$ while $\CI(T_2) = \{A\times \T\colon A\in\CB\}$ (up to null sets), and so the disintegrations}
The lack of ergodicity was the main challenge in extending the results from \cite{Fr21} to \cite{FrKu22a}, and it necessitated the development of several technical results of independent interest. 

With these criteria in mind, the surest path towards proving joint ergodicity/invariant factor control/rational Kronecker factor control of sequences $a_1, \ldots, a_\ell:\N\to\Z$ for a system $(X, \CX, \mu,$\! $T_1, \ldots, T_\ell)$ takes the following form nowadays.

\textbf{New Joint Ergodicity Strategy:}
\begin{enumerate}
     \item (Host-Kra seminorm control) Show that the sequences admit Host-Kra seminorm control for the system.
    %Show that the average $A_N(f_1, \ldots, f_\ell)$ is \emph{controlled by a Host-Kra seminorm}, in that there exists $s\in\N$ such that
    % \begin{align*}
    %     \lim_{N\to\infty}\norm{\E_{n\in[N]} T_1^{a_1(n)}f_1\cdots T_\ell^{a_\ell(n)}f_\ell}_{L^2(\mu)} = 0
    % \end{align*}
    % whenever $\nnorm{f_j}_{s,T_j}=0$ for some $1\leq j\leq \ell$.
    \item (Reduction to an equidistribution problem) Use the relevant one of Theorems \ref{T: joint ergodicity criteria}, \ref{T: invariant control criteria}, or \ref{T: Krat control criteria} to reduce to an equidistribution problem.
    \item (Equidistribution on nilsystems) Show that the sequences are good for (irrational) equidistribution for the system. 
\end{enumerate}
The crux of the matter is then to obtain Host-Kra seminorm control over the averages of interest, as the relevant equidistribution results are often either known or easy to derive from existing ones. This problem is already rather challenging, and up to 2022, Host-Kra seminorm control was only known in two families of examples:
\begin{enumerate}
    \item in the single-transformation case, assuming some form of ``essential distinctness'' of sequences;
    \item in the commuting case, assuming that sequences have sufficiently ``distinct growth''.
\end{enumerate}
The second big breakthrough, in addition to the joint ergodicity criteria, was the development of new tools for Host-Kra seminorm estimates. These new techniques are presented at length in Section \ref{S: seminorm smoothing}, and the specific results on Host-Kra seminorm control proved this way are discussed all over the survey.
%, such as the coefficient tracking method \cite{DFKS22, KKL24a}, the concatenation technique \cite{DFKS22, KKL24a} 
%the \emph{seminorm smoothing} arguments that enabled the proof of Host-Kra seminorm control in the commuting case under the assumptions of ``pairwise independence'' of sequences (see Heuristic \ref{H: pairwise independent seminorm}). Such tools were first developed by Frantzikinakis and the author for integer polynomials \cite{FrKu22b, FrKu22a}, and then extended by Donoso, Koutsogiannis, Sun, Tsinas, and the author to Hardy sequences \cite{DKKST25, DKKST24}. 
%The specific Host-Kra seminorm control results are discussed in Section \ref{S: joint ergodicity for large classes}, and the seminorm smoothing argument in sketched in Section \ref{S: seminorm smoothing}.

Given the difficulties inherent in establishing Host-Kra seminorm control, it is tempting to ask whether the seminorm control conditions can be removed altogether.
\begin{problem}[Simple global joint ergodicity criterion {\cite[Problem 1]{Fr21}}]\label{Pr: joint ergodicity equivalent to equidistribution}
    Prove or disprove: sequences $a_1, \ldots, a_\ell:\N\to\Z$ are jointly ergodic if and only if they are good for equidistribution.
\end{problem}
The version of Problem \ref{Pr: joint ergodicity equivalent to equidistribution} for a fixed system admits a negative answer even for $\ell=1$, as demonstrated by the example below.
\begin{example}[Failure of simple local joint ergodicity criterion]
    Consider the sequence $a(n) = \sfloor{\log_2 n}$. As shown in \cite{DKKST25}, this sequence is ergodic on $(X, \CX, \mu, T)$ if and only if the system is conjugate to a 1-point system, or equivalently $L^\infty(\mu)$ consists of constants. Indeed, if %$\norm{\E_{n\in[N]}T^n f - \int f\; d\mu}_{L^2(\mu)}\to 0$, then also
    \begin{align*}
        &\lim_{N\to\infty}\norm{\E_{n\in[N]}T^{\sfloor{\log_2 n}} f - \int f\; d\mu}_{L^2(\mu)}=0\\ \textrm{then also}\quad &\lim_{N\to\infty}\norm{\E_{n\in[2^N, 2^{N+1})}T^{\sfloor{\log_2 n}} f - \int f\; d\mu}_{L^2(\mu)}=0.
    \end{align*}
    Since $\sfloor{\log_2 n} = N$ for all $n\in [2^N, 2^{N+1})$, the latter convergence is equivalent to 
%We note, however, that for $n\in [2^N, 2^{N+1})$, we have $\sfloor{\log_2 n} = N$, and hence the latter convergence is equivalent to 
    \begin{align*}
        \lim_{N\to\infty}\norm{T^N f - \int f\; d\mu}_{L^2(\mu)}=0.
    \end{align*}
    Measure invariance then implies that $\norm{f - \int f\; d\mu}_{L^2(\mu)} = 0$, which can only happen if $f$ is constant. This shows that ergodicity of $a$ imposes the claimed restriction on the system, and the converse direction is trivial. At the same time, $a$ is good for equidistribution for any weakly mixing systems as those do not have any nontrivial eigenvalues. It follows that $a$ is good for equidistribution but not ergodic on weakly mixing systems that are not conjugate to a 1-point system.
    \end{example}
%In Section \ref{??}, we will outline new techniques for seminorm estimates developed in the last few years. Before that, however, we list the most significant joint ergodicity results proved using the Old and New Joint Ergodicity Strategies. 

Lastly, we record an important corollary of Theorem \ref{T: joint ergodicity criteria} for nilsystems.
\begin{corollary}[Joint ergodicity criteria for nilsystems {\cite[Corollary 1.4]{Fr21}}]\label{C: joint ergodicity criteria for nilsystems}
The sequences $a_1,\ldots,a_\ell\colon \N\to \Z$ are jointly ergodic for all nilsystems $(Y, \CY, \nu, S_1, \ldots, S_\ell)$ with $S_1, \ldots, S_\ell$ ergodic if and only if they are good for equidistribution.

%$(X, \CX, \mu,$\! $T_1, \ldots, T_\ell)$ with $T_1, \ldots, T_\ell$ ergodic if and only if they are good for equidistribution.
\end{corollary}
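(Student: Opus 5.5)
The plan is to apply Theorem \ref{T: joint ergodicity criteria} to each nilsystem $(Y, \CY, \nu, S_1, \ldots, S_\ell)$ with $S_1, \ldots, S_\ell$ ergodic. Joint ergodicity for such a system holds if and only if the sequences (i) admit Host-Kra seminorm control for it and (ii) are good for equidistribution for it. The whole point is that on a nilsystem condition (i) is automatic. If $Y = G/\Gamma$ with $G$ $s$-step nilpotent, then $Y$ is an $s$-step nilsystem for each ergodic $S_j$. In that case $\CZ_s(S_j) = \CY$, so $\nnorm{f}_{s+1, S_j} = 0$ forces $\E(f|\CZ_s(S_j)) = f = 0$ by the factor property \eqref{E: factor property}. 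Hence degree-$(s+1)$ Host-Kra seminorm control holds trivially: if some $f_j$ has vanishing seminorm, then $f_j = 0$ and the average is identically zero. One point needs care here. Each $S_j$ is a translation by an element $b_j\in G$ (the $b_j$ commuting modulo the appropriate structure), and each single-transformation system $(Y, S_j)$ must be an ergodic nilsystem of step at most $s$. This follows from the standard fact that an ergodic nilrotation on an $s$-step nilmanifold has $\CZ_s$ equal to the full $\sigma$-algebra (Host-Kra theory, Appendix \ref{A: Host-Kra theory}).

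For the forward direction, suppose the sequences are jointly ergodic for all ergodic nilsystems. I want the exponential-sum criterion: for $\beta_1, \ldots, \beta_\ell\in[0,1)$ not all zero, $\E_{n\in[N]} e(\sum_j a_j(n)\beta_j)\to 0$. I will test joint ergodicity on the rotation system $Y = \T^\ell$ with $S_j$ the rotation by $\beta_j$ in the $j$-th coordinate and by a fixed irrational $\alpha$ in the other coordinates, or more simply on a product of circle rotations where each $S_j$ is ergodic and $\beta_j\in\Spec(S_j)$. Taking $f_j = \chi_j$ to be eigenfunctions with eigenvalues $\beta_j$ whose integrals vanish (those with $\beta_j\neq 0$) then yields the desired vanishing. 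A cleaner alternative is to cite Lemma \ref{L: equidistribution condition for ergodic} together with the global formulation in the Lemma on equivalent formulations. Either way, being good for equidistribution in the global sense means the exponential sums vanish for all nonzero tuples, so the point is to realise every tuple $(\beta_1, \ldots, \beta_\ell)$ inside spectra of some ergodic rotations. For rational $\beta_j$, including $\beta_j = 0$, the rotation by $\beta_j$ alone is not ergodic, so I will pad with an irrational coordinate: take $Y = \T^2$ and $S_j(x,y) = (x+\beta_j, y+\alpha_j)$ with $\alpha_j$ chosen so that $S_j$ is ergodic. This handles all cases.

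For the converse, assume the sequences are good for equidistribution, i.e. the exponential sums vanish for all nonzero tuples. Fix an ergodic nilsystem. By Lemma \ref{L: equidistribution condition for ergodic}, being good for equidistribution for this system only requires vanishing for $\beta_j\in\Spec(S_j)$ not all zero, which is implied. Combined with the automatic seminorm control above, Theorem \ref{T: joint ergodicity criteria} gives joint ergodicity. I expect the main obstacle to be justifying rigorously that each individual ergodic $S_j$ on the nilmanifold has $\CZ_s(S_j) = \CY$. This holds when $S_j$ is a nilrotation on a connected-enough nilmanifold, and for commuting nilrotations I would invoke that the Host-Kra factor of an ergodic $s$-step nilsystem is the whole system.
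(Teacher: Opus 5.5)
Your overall route is the paper's. You apply Theorem \ref{T: joint ergodicity criteria} and note that Host-Kra seminorm control is automatic on an $s$-step nilsystem because $\CZ_s(S_j)$ is the full $\sigma$-algebra. Your converse direction via Lemma \ref{L: equidistribution condition for ergodic} is also fine. The gap is in the forward direction, where you must realize every nonzero tuple $(\beta_1,\ldots,\beta_\ell)$ inside the spectra of commuting ergodic nilrotations.

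Your construction $S_j(x,y)=(x+\beta_j,y+\alpha_j)$ on $\T^2$ is ergodic only if $1,\beta_j,\alpha_j$ are rationally independent. So it is never ergodic when $\beta_j\in\Q$, since the function $e(qx)$ with $q\beta_j\in\Z$ is invariant. This contradicts your claim that it ``handles all cases''. More fundamentally, no connected example can work: an ergodic rotation of a torus, and more generally an ergodic nilrotation of a connected nilmanifold, is totally ergodic and has no nonzero rational eigenvalue. These frequencies genuinely matter. For instance, $a(n)=2n$ is ergodic for every such connected system, yet it is not good for equidistribution, because $\E_{n\in[N]}e(2n\cdot\tfrac12)=1$. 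For $\beta_j=0$ alone the fix is trivial: use $\chi_j=1$ with any ergodic $S_j$. Nonzero rational $\beta_j$ need a new idea.

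The missing idea is to use non-connected nilmanifolds, which the definition allows because $G$ need not be connected. Let $q$ be a common denominator of the rational $\beta_j$ and fix an irrational $\alpha$. On $Y=\T\times\Z/q\Z$ set $S_j(x,z)=(x+\gamma_j,z+1)$, where $\gamma_j=\beta_j$ if $\beta_j\notin\Q$ and $\gamma_j=\alpha$ otherwise.
\begin{enumerate}
\item These rotations commute.
\item Each $S_j$ is ergodic: if $m\gamma_j+r/q\in\Z$, then $m=0$ because $\gamma_j$ is irrational, and then $q\mid r$.
\item Each $\beta_j$ lies in $\Spec(S_j)$, with eigenfunction $\chi_j=e(x)$, $e(rz/q)$, or $1$ according as $\beta_j$ is irrational, equals $r/q\neq 0$, or equals $0$.
\end{enumerate}
Now test joint ergodicity on $f_j=\chi_j$. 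This gives $\prod_j S_j^{a_j(n)}\chi_j=e(\sum_j a_j(n)\beta_j)\prod_j\chi_j$, where $|\prod_j\chi_j|=1$ and $\prod_j\int\chi_j\,d\nu=0$ because some $\beta_j\neq 0$. Hence the exponential sums vanish, as required.
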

    The key point about nilsystems is that an $s$-step nilsystem is a $\CZ_s$-system, meaning that $\CZ_s$ equals the entire Borel $\sigma$-algebra. Therefore the Host-Kra seminorm of degree $s+1$ is a norm, and  any family of sequences admits Host-Kra seminorm control on such a system for trivial reasons.

Importantly, joint ergodicity is about \emph{norm} convergence, while being good for equidistribution is about \emph{pointwise} convergence on $\T$. The next problem, mentioned in passing below \cite[Corollary 1.4]{Fr21}, inquires whether Corollary \ref{C: joint ergodicity criteria for nilsystems} can be strengthened to a purely pointwise joint ergodicity condition for nilsystems.
\begin{problem}[Pointwise joint ergodicity criteria for nilsystems]
    Suppose that the sequences $a_1,\ldots,a_\ell\colon \N\to \Z$ are good for equidistribution. 
    Is it true that for every nilsystem $(Y, \CY, \nu, S_1, \ldots, S_\ell)$ with $S_1, \ldots, S_\ell$
    %$(X, \CX, \mu,$\! $T_1, \ldots, T_\ell)$ with $T_1, \ldots, T_\ell$ 
    ergodic, the sequence 
    %and all $f_1, \ldots, f_\ell\in L^\infty(\mu)$, we have
    \begin{align*}
        (S_1^{a_1(n)}y, \ldots, S_\ell^{a_\ell(n)}y)_n
        %(T_1^{a_1(n)}x, \ldots, T_\ell^{a_\ell(n)}x)_n
    \end{align*}
    is equidistributed on $Y^\ell$ for $\mu$-a.e. $y\in Y$?
    %is equidistributed on $X^\ell$ for $\mu$-a.e. $x\in X$?
\end{problem}

\section{Joint ergodicity for large classes of sequences and systems}\label{S: joint ergodicity for large classes}
This section is dedicated to joint ergodicity results and open problems for specific families of iterates: polynomials, Hardy sequences, sequences involving primes, generalized polynomials, tempered functions, and functions of superpolynomial growth or with oscillations. Most results and questions presented in this section can be categorized into one of the two following heuristics.
\begin{heuristic}[Heuristic for joint ergodicity for pairwise independent sequences]\label{H: pairwise independent}
    Any reasonable ``pairwise independent'' sequences $a_1, \ldots, a_\ell:\N\to\Z$ are jointly ergodic for any system $(X, \CX, \mu,$\! $T_1, \ldots, T_\ell)$ with $T_1, \ldots, T_\ell$ weakly mixing.
\end{heuristic}

\begin{heuristic}[Heuristic for joint ergodicity for independent sequences]\label{H: independent}
    Any reasonable ``independent'' sequences $a_1, \ldots, a_\ell:\N\to\Z$ are jointly ergodic for any system $(X, \CX, \mu$, $T_1, \ldots, T_\ell)$ with $T_1, \ldots, T_\ell$ (totally) ergodic.
\end{heuristic}
%We shall not indulge in a discussion on which sequences we find reasonable other than that: 
We shall not offer much justification on which sequences we deem reasonable other than the utilitarian one: these are the well-behaved sequences for which we can prove nice results. At the very least, they should be amenable to usual analytic tools employed to study distribution mod 1, ergodicity, and single recurrence. They should also have a stable limiting behavior: ideally converge or go to infinity, but neither too slowly nor too fast. 

By contrast, we put considerable effort in this section to pin down the notions of ``independence'' and ``pairwise independence''. 
Their precise meaning depends on the context; for instance, it takes a different shape for integer polynomials vs. Hardy sequences. 
%See ... for results formalizing Heuristic \ref{H: pairwise independent} and ... for examples of Heuristic \ref{H: independent}. 
Before we fill these two notions of independence with substance, let us ponder on the origin of the conditions in Heuristics \ref{H: pairwise independent} and \ref{H: independent}. In the light of Theorem \ref{T: joint ergodicity criteria}, joint ergodicity is equivalent to Host-Kra seminorm control and good equidistribution properties. Weakly mixing transformations admit no nonergodic eigenfunctions, so for such systems all sequences are trivially good for equidistribution. Even more pointedly, the Host-Kra seminorms trivialize for a weakly mixing system $T$ in that $\nnorm{f}_{s,T} = \abs{\int f\; d\mu}$ for all $s\in\N$, in which case Host-Kra seminorm control is tantamount to joint ergodicity. Therefore, an appropriate notion of ``pairwise independence'' should capture the requirements for Host-Kra seminorm control.
%without making any assumptions on the system. 
This point is important enough to formalize it in a separate heuristic.
\begin{heuristic}[Heuristic for seminorm control for pairwise independent sequences]\label{H: pairwise independent seminorm}
    Any reasonable ``pairwise independent'' sequences $a_1, \ldots, a_\ell:\N\to\Z$ admit Host-Kra seminorm control.    
\end{heuristic}
%If all the transformations are weakly mixing, then they admit no nonergodic eigenfunctions, so the sequences are trivially good for equidistribution for this system. Therefore the combination of ``pairwise independence'' of sequences and weak mixing of the transformations is expected to yield joint ergodicity.

If our sequences satisfy the stronger conditions for ``independence'' rather than ``pairwise independence'', then we still expect them to admit Host-Kra seminorm control according to Heuristic \ref{H: pairwise independent seminorm}. Additionally, the needed notion of ``independence'' should be strong enough to imply being good for (rational) equidistribution, making the sequences good for equidistribution for any (totally) ergodic system.

\subsection{Integer polynomials}
\subsubsection{Single transformation}
The first result that can be viewed as a joint ergodicity statement is the following result of Furstenberg, which appeared in his foundational work on multiple ergodic averages.
\begin{theorem}%[Arithmetic progressions in weakly mixing systems {\cite{Fu77}}]\label{T: Furstenberg averages weakly mixing}
[{\cite{Fu77}}]\label{T: Furstenberg averages weakly mixing}
    For every $\ell\in\N$, the sequences $n, 2n, \ldots, \ell n$ are jointly ergodic for any weakly mixing system $(X, \CX, \mu, T)$.
    %Let $(X, \CX, \mu, T)$ be weakly mixing. Then $(T^n, T^{2n}, \ldots, T^{\ell n})_n$ is jointly ergodic for every $\ell$.
\end{theorem}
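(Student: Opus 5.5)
We prove the statement by induction on $\ell$, following Furstenberg's original van der Corput argument. The claim to establish is that for every weakly mixing $(X,\CX,\mu,T)$ and all $f_1,\ldots,f_\ell\in L^\infty(\mu)$,
\begin{align*}
\lim_{N\to\infty}\norm{\E_{n\in[N]} T^n f_1\cdots T^{\ell n}f_\ell - \prod_{j=1}^\ell \int f_j\, d\mu}_{L^2(\mu)} = 0.
\end{align*}
The base case $\ell=1$ is the mean ergodic theorem, since weak mixing implies ergodicity. For the inductive step, we split each $f_j = \int f_j\,d\mu + (f_j - \int f_j\,d\mu)$ and expand multilinearly. Every term in which some $f_j$ is replaced by a constant is an average of the same shape with at most $\ell-1$ functions, but the iterates are $jn$ for $j$ in a proper subset of $\{1,\ldots,\ell\}$ rather than $n,\ldots,(\ell-1)n$. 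It is therefore cleaner to induct on the stronger statement: for any distinct nonzero integers $c_1,\ldots,c_\ell$, the sequences $c_1n,\ldots,c_\ell n$ are jointly ergodic for weakly mixing $T$. The base case still holds because $T^{c}$ is weakly mixing, hence ergodic, for $c\neq 0$. After the expansion, it suffices to show that $\E_{n\in[N]}\prod_j T^{c_jn}f_j\to 0$ in $L^2(\mu)$ whenever some $f_j$, say $f_\ell$ after relabelling, has zero integral.

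To show this vanishing, we apply the van der Corput lemma to the Hilbert-space valued sequence $u_n=\prod_j T^{c_jn}f_j$. It reduces matters to showing
\begin{align*}
\lim_{H\to\infty}\E_{h\in[H]}\limsup_{N\to\infty}\abs{\E_{n\in[N]}\langle u_{n+h},u_n\rangle}=0.
\end{align*}
By measure invariance, composing with $T^{-c_1n}$ gives
\begin{align*}
\langle u_{n+h},u_n\rangle=\int \prod_{j=1}^\ell T^{(c_j-c_1)n}\brac{T^{c_jh}f_j\cdot \overline{f_j}}\,d\mu.
\end{align*}
Here the $j=1$ factor is a fixed function $g_{1,h}$, and the remaining $\ell-1$ iterates $(c_j-c_1)n$ have distinct nonzero coefficients. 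By the inductive hypothesis, the inner $n$-average converges in $L^2(\mu)$ to $\prod_{j\ge2}\int T^{c_jh}f_j\overline{f_j}\,d\mu$, so the integral tends to $\int g_{1,h}\,d\mu\cdot\prod_{j\ge2}\int T^{c_jh}f_j\overline{f_j}\,d\mu$. In absolute value this is at most $C\,\abs{\langle T^{c_\ell h}f_\ell,f_\ell\rangle}$, where $C$ depends on the sup norms of the $f_j$.

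The remaining step, which is where weak mixing is genuinely needed, is to show $\E_{h\in[H]}\abs{\langle T^{c_\ell h}f_\ell,f_\ell\rangle}\to0$ when $\int f_\ell\,d\mu=0$. This is the standard characterization of weak mixing, in which correlations tend to zero in density, or equivalently the Cesàro averages of their squares vanish. We apply it to $T^{c_\ell}$, which is again weakly mixing, using Cauchy–Schwarz to pass from squares to absolute values. We expect no real obstacle here. The only care needed is in setting up the induction over arbitrary distinct nonzero coefficients, which is what makes the differenced average fit the inductive hypothesis, and in noting that weak mixing is inherited by nonzero powers.
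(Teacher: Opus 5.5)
Your proof is correct, but it follows a different route from the one the survey indicates. The survey gives no proof of this theorem; it only cites \cite{Fu77}. Strengthening the induction to arbitrary distinct nonzero coefficients $c_1,\ldots,c_\ell$ is exactly what lets the differenced average $\E_{n\in[N]}\prod_{j\geq 2}T^{(c_j-c_1)n}(T^{c_jh}f_j\cdot\overline{f_j})$ fall under the inductive hypothesis. The final step only needs $T^{c_\ell}$ to be weakly mixing, so that $\E_{h\in[H]}\abs{\langle T^{c_\ell h}f_\ell,f_\ell\rangle}\to 0$ for mean-zero $f_\ell$.

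The route the survey points to is modular. One first proves the estimate \eqref{E: HK seminorm control}, which holds on every system and comes from doing all $\ell$ rounds of van der Corput and Cauchy--Schwarz up front. Only then is weak mixing used, once, through the fact that $\nnorm{f}_{s,T}=\abs{\int f\,d\mu}$ for weakly mixing $T$. Equivalently, one applies Theorem \ref{T: joint ergodicity criteria}, where the equidistribution condition is vacuous because weakly mixing systems have no nonconstant nonergodic eigenfunctions.

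Your argument instead uses weak mixing at every inductive stage. This makes it shorter and fully elementary: it needs only the van der Corput lemma, the mean ergodic theorem and the spectral characterization of weak mixing, never seminorms or characteristic factors. It also gives the result for arbitrary distinct nonzero coefficients for free.

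What the seminorm route buys is the separation of analytic input from dynamical input. The same estimate shows that $\CZ_{\ell-1}(T)$ is characteristic on arbitrary systems, and that is what the Host--Kra structure theory and the joint ergodicity criteria exploit beyond the weakly mixing case. Your induction also relies on the iterates being linear, since differencing is what removes one $n$-dependent function at each step. For polynomial iterates, as in Theorem \ref{T: Bergelson weakly mixing}, differencing increases the number of terms, and a PET-type complexity induction is needed instead.
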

A decade later, Bergelson extended Theorem \ref{T: Furstenberg averages weakly mixing} to all \emph{essentially distinct} polynomials $a_1, \ldots, a_\ell\in\Z[t]$, i.e. those for which $a_j, a_i -a_j$ are nonconstant for every distinct $1\leq i,j\leq \ell$.
\begin{theorem}%[Essentially distinct polynomials in weakly mixing systems, 
[{\cite{Ber87}}]\label{T: Bergelson weakly mixing}
    Any essentially distinct polynomials $a_1, \ldots, a_\ell\in\Z[t]$ are jointly ergodic for any weakly mixing system $(X, \CX, \mu, T)$.
    %Let $a_1, \ldots, a_\ell\in\Z[t]$ be essentially distinct and $(X, \CX, \mu, T)$ be weakly mixing. Then $(T^{a_1(n)}, \ldots, T^{a_\ell(n)})_n$ is jointly ergodic.
\end{theorem}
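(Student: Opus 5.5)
The plan is to prove Theorem \ref{T: Bergelson weakly mixing} by the PET induction scheme. For weakly mixing $T$ the Host-Kra seminorms trivialize: $\nnorm{f}_{s,T}=\abs{\int f\,d\mu}$ for every $s$. So it suffices to establish the following statement, which is exactly Host-Kra seminorm control in the sense of Definition \ref{D: seminorm control}.

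\emph{Claim.} For essentially distinct $a_1,\ldots,a_\ell\in\Z[t]$ and 1-bounded $f_j$, if $\int f_j\,d\mu=0$ for some $j$ (with $T$ weakly mixing), then
\begin{align*}
\lim_{N\to\infty}\norm{\E_{n\in[N]}\prod_{i=1}^\ell T^{a_i(n)}f_i}_{L^2(\mu)}=0.
\end{align*}

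Granting the claim, joint ergodicity follows by a telescoping argument. Write $f_j=\int f_j\,d\mu+(f_j-\int f_j\,d\mu)$ and expand the product. Every term containing at least one mean-zero function vanishes in the limit, and the remaining constant term is $\prod_j\int f_j\,d\mu$. Since the system is weakly mixing, the equidistribution step is vacuous, so Theorem \ref{T: invariant control criteria} applies equally well. The direct route just described avoids that machinery.

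To prove the claim I would induct on a complexity measure of the family. Following Bergelson, attach to $\{a_1,\ldots,a_\ell\}$ its \emph{weight vector}: for each degree $d$, record the number of distinct classes of degree-$d$ polynomials in the family, where two polynomials are equivalent if they have equal degree and equal leading coefficient. Order these vectors lexicographically, giving priority to higher degrees; this order is well-founded. The base case is a linear family $a_i(n)=c_in+d_i$ with distinct nonzero $c_i$. There I would induct on $\ell$ using the van der Corput lemma and weak mixing of the products $T^{c_i}\times T^{c_j}$ to reduce to the mean ergodic theorem. This is Furstenberg's argument for Theorem \ref{T: Furstenberg averages weakly mixing}.

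For the inductive step, set $u_n=\prod_i T^{a_i(n)}f_i$ and apply the van der Corput lemma, which bounds $\limsup_N\norm{\E_{n\in[N]}u_n}^2$ by a constant multiple of
\begin{align*}
\limsup_{H\to\infty}\E_{h\in[H]}\limsup_{N\to\infty}\abs{\E_{n\in[N]}\int u_{n+h}\,\overline{u_n}\,d\mu}.
\end{align*}
Choose a polynomial $a_k$ of minimal weight (lowest degree and, among those, a suitable leading coefficient). Compose with $T^{-a_k(n)}$ by measure invariance, then apply Cauchy--Schwarz to strip off one function that does not depend on $n$. The resulting average involves the family
\begin{align*}
\{a_i(n+h)-a_k(n),\ a_i(n)-a_k(n)\},
\end{align*}
with the $n$-independent term removed. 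For all but finitely many $h$ this family is essentially distinct and has strictly smaller weight. The mean-zero function survives in the new average, so the induction hypothesis applies for those $h$. The finitely many bad $h$ contribute nothing after averaging over $h\in[H]$ as $H\to\infty$.

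I expect the main obstacle to be the bookkeeping in this inductive step. I must check three things: that the weight strictly decreases, that essential distinctness is preserved for generic $h$ (the differences $a_i(n+h)-a_j(n)$ must be nonconstant), and that the function carrying the zero-mean hypothesis is not the one discarded by Cauchy--Schwarz. For the last point I would first relabel so that the mean-zero function is not attached to $a_k$. If it is forced to sit there, I would instead choose $a_k$ among the other polynomials of minimal weight, which is possible unless $\ell=1$; for $\ell=1$ the claim follows directly from the spectral theorem and Weyl equidistribution, since weak mixing rules out point spectrum.
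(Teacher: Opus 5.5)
Your overall strategy is the one the survey points to: PET induction gives control (here by $\nnorm{\cdot}_{s,T}$, which equals $\abs{\int f\,d\mu}$ under weak mixing), and telescoping then gives the product formula. The inductive step fails, however, as soon as linear polynomials are present, because the mean-zero function does not always survive.

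If $a_i(n)=c_in+d_i$ is linear and $i\neq k$, then $a_i(n+h)-a_k(n)$ and $a_i(n)-a_k(n)$ differ by the constant $c_ih$ for every $h$. So the new family is never essentially distinct, and the two terms must be merged into $T^{a_i(n)-a_k(n)}(T^{c_ih}f_i\cdot\overline{f_i})$. If $a_k$ itself is linear, then $T^{a_k(n+h)-a_k(n)}f_k=T^{c_kh}f_k$ is also $n$-independent and is discarded together with $\overline{f_k}$. Since $a_k$ has minimal degree, one of these situations occurs whenever the family has a linear member.

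Hence if your mean-zero $f_j$ sits on a linear polynomial, one of two things happens. Either it is replaced by $T^{c_jh}f_j\cdot\overline{f_j}$, whose integral $\langle T^{c_jh}f_j,f_j\rangle$ is in general nonzero, or it is thrown away by Cauchy--Schwarz. Your remedy does not rescue this. For $\{n,n^2\}$ with $\int f_1\,d\mu=0$, the polynomial $n$ is the only one of minimal degree. Choosing $a_k=n^2$ instead gives, after merging, $\{n-n^2,\,2hn+h^2\}$, which has the same weight, and $f_1$ is again merged with $\overline{f_1}$. Such configurations also arise in the middle of the induction. For $\{n^2,n^2+n,n^3\}$ with $\int f_2\,d\mu=0$, either degree-$2$ choice of $a_k$ puts $f_2$ on a linear polynomial next to a cubic one, and at the next step it is merged or stripped.

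The missing idea is to let weak mixing act on the average over $h$, rather than demanding a surviving mean-zero function for each fixed $h$. By your own telescoping argument, the claim for all families of smaller weight is equivalent to the full product formula for them. So for all but finitely many $h$ you can compute $\lim_{N\to\infty}\E_{n\in[N]}\int u_{n+h}\overline{u_n}\,d\mu$ exactly, with no Cauchy--Schwarz step. It is a product of integrals of the merged functions, including the $n$-independent one. The factor contributed by $f_j$ is either $\int f_j\,d\mu=0$ (or its conjugate) or $\langle T^{ch}f_j,f_j\rangle$ with $c\neq0$. Weak mixing gives $\E_{h\in[H]}\abs{\langle T^{ch}f_j,f_j\rangle}\to0$, and this kills the van der Corput bound. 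It is exactly the mechanism of the Furstenberg argument you already use in your base case.

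Alternatively, you can keep your Cauchy--Schwarz scheme after a preliminary reduction. Telescoping plus induction on $\ell$ (ordered lexicographically after the weight) lets you assume the mean-zero function sits on a polynomial of maximal degree. Then choose $a_k$ of minimal degree and, whenever possible, outside the degree-and-leading-coefficient class of that polynomial. For generic $h$ the tracked function lands, unmerged, on a polynomial of maximal degree in the new family, so the induction hypothesis applies.
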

Theorem \ref{T: Bergelson weakly mixing} exemplifies Heuristic \ref{H: pairwise independent}, showing that in the single-transformation case, the right notion of ``pairwise independence'' of integer polynomials is essential distinctness. The paper \cite{Ber87} was written almost two decades before the arrival of Host-Kra seminorms, and so these notions could not have possibly been used in the proof. However, the proof introduces the PET induction scheme that we use to establish Host-Kra seminorm control nowadays. Indeed, we have the following equivalence.
\begin{theorem}[Essential distinctness vs. Host-Kra seminorm control]\label{T: HK seminorm control for polys and single trafo}
    The polynomials $a_1, \ldots, a_\ell\in\Z[t]$ are essentially distinct if and only if they admit Host-Kra seminorm control for any system $(X, \CX, \mu, T)$.
\end{theorem}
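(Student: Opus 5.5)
The two directions are of very different difficulty: necessity comes from explicit counterexamples, and sufficiency from Bergelson's PET induction.

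\emph{Necessity.} Suppose the polynomials are not essentially distinct. Then either some $a_j$ is constant, say $a_j\equiv c$, or $a_i-a_j\equiv c$ for some $i\neq j$. In both cases I would exhibit, for every $s$, a system and functions with $\nnorm{f_j}_{s,T}=0$ for which the average does not vanish. Take a Bernoulli (hence weakly mixing) system. There $\nnorm{f}_{s,T}=\abs{\int f\,d\mu}$, so any nonzero mean-zero $f$ has vanishing seminorm of every degree.
\begin{itemize}
\item If $a_j\equiv c$: set $f_j=f$ and all other $f_i=1$. The average equals $T^cf\neq 0$ for every $N$.
\item If $a_i-a_j\equiv c$: set $f_j=f$, $f_i=T^{-c}\bar f$, and all other $f_k=1$. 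The product becomes $T^{a_j(n)}\abs{f}^2$, and its $L^2$ norm is at least $\int\abs{f}^2\,d\mu>0$. This follows by pairing the average against the constant function $1$ and using invariance of $\mu$.
\end{itemize}
Hence no Host-Kra seminorm controls the average.

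\emph{Sufficiency.} Now assume essential distinctness. Fix a system $(X,\CX,\mu,T)$ and 1-bounded $f_1,\dots,f_\ell$. The goal is an estimate
$$\limsup_N\norm{\E_{n\in[N]}\prod_j T^{a_j(n)}f_j}_{L^2(\mu)}\ll \nnorm{f_m}_{s,T}^{c}$$
for each index $m$, with $s$ and $c>0$ depending only on the polynomials. The tool is PET induction.
\begin{enumerate}
\item Order the family so that $a_m$ is a polynomial of maximal degree among those whose difference from $a_m$ is nonconstant. (This ordering may need adjusting.) Subtract $a_1$ from all iterates, by composing with $T^{-a_1(n)}$ and using invariance after taking an inner product.
\item Apply van der Corput's inequality. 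This replaces the family by the doubled family $\{a_i(n+h)-a_1(n),\,a_i(n)-a_1(n)\}$, averaged over $h$.
\item Use Bergelson's weight vector (the degree-type tuple) to show that the new family has strictly smaller PET complexity for all but finitely many $h$, and is still essentially distinct.
\item Check that $f_m$, possibly shifted by $T^{h}$-type translates, still occupies a position of maximal degree, so that it is never absorbed.
\end{enumerate}
After finitely many steps the family becomes linear with distinct coefficients, $T^{c_1n}g_1\cdots T^{c_kn}g_k$, where one $g_i$ is built from $f_m$. At that point Host-Kra's estimate (or Host's estimate \eqref{E: box seminorm control} with $T_i=T^{c_i}$) bounds the average by $\nnorm{f_m}_{s',T^{c}}$ for some integer $c$. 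Since $\nnorm{f}_{s,T^c}\ll_c \nnorm{f}_{s+1,T}$ by the standard scaling inequalities, this becomes a bound in terms of $\nnorm{f_m}_{s,T}$, with the $h$-averages collapsing to higher-degree seminorms via Cauchy--Schwarz.

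\emph{Main obstacle.} The hard step is the bookkeeping that keeps the designated function $f_m$ in a controllable position throughout the PET reduction. Bergelson's original scheme only tracks the leading polynomial. To handle an arbitrary index $m$, I would first try reordering the family at each step (choosing the subtracted polynomial appropriately), and fall back on a symmetry argument swapping roles of indices if that fails. One also has to ensure that the finitely many exceptional $h$ contribute nothing in the limit. This is clear because they form a set of zero density in the $h$-average.
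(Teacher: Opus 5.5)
Your necessity argument is correct. It is essentially the paper's own example ($\E_{n\in[N]}T^nf_1\cdot T^{n+1}f_2$ with $f_1=Tf$, $f_2=\overline{f}$), made fully explicit by working on a weakly mixing system, where $\nnorm{f}_{s,T}=\abs{\int f\,d\mu}$ for all $s$, and by also covering a constant $a_j$.

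The sufficiency direction follows the route the paper points to (Bergelson's PET scheme ending in the linear Host--Kra/Host estimate). However, it has a genuine gap exactly where you flag the ``main obstacle''. Host--Kra seminorm control demands vanishing whenever $\nnorm{f_m}_{s,T}=0$ for \emph{every} index $m$. Your step 1 (``order the family so that $a_m$ has maximal degree'') cannot be achieved by relabelling when $\deg a_m<d:=\max_i\deg a_i$. For such $m$, Bergelson's rule of subtracting a polynomial of minimal degree actually destroys $f_m$. Take $\E_{n\in[N]}T^nf_1\cdot T^{n^2}f_2$ with $m=1$: the first step subtracts $n$, the two copies of $f_1$ land at the constant positions $0$ and $h$, Cauchy--Schwarz discards both, and only $f_2$ is left to control the average. ``Reordering at each step'' and ``a symmetry argument'' name a hope, not a mechanism.

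The missing idea is a one-time change of base point that lifts $f_m$ to maximal degree. One way is to dualize first. Put $A_N:=\E_{n\in[N]}\prod_iT^{a_i(n)}f_i$ and write $\norm{A_N}_{L^2(\mu)}^2=\E_{n\in[N]}\int\overline{A_N}\cdot\prod_iT^{a_i(n)}f_i\,d\mu$. Compose each integral with $T^{-a_k(n)}$ for some $k$ with $\deg a_k=d$, and use Cauchy--Schwarz to discard $f_k$. This bounds $\norm{A_N}_{L^2(\mu)}^2$ by $\norm{\E_{n\in[N]}T^{-a_k(n)}\overline{A_N}\cdot\prod_{i\neq k}T^{a_i(n)-a_k(n)}f_i}_{L^2(\mu)}$. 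The family $\{-a_k\}\cup\{a_i-a_k\colon i\neq k\}$ is again essentially distinct, and it contains a single copy of $f_m$, at $a_m-a_k$, of the maximal degree $d$.

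From there your minimal-degree PET does work, with one proviso: whenever all polynomials share the same degree, subtract one outside the leading-coefficient class of the designated one when possible. A shifted (possibly conjugated) copy of $f_m$ then stays at maximal degree until the linear stage, where Host's estimate and the scaling inequality finish. The $N$-dependent function $\overline{A_N}$ is harmless, since with the finitary van der Corput inequality every PET bound is uniform over the $1$-bounded non-designated functions.

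Doing the lift with a van der Corput step instead works for nonlinear $a_m$. For linear $a_m$, however, the two copies merge into $f_m\cdot T^{ch}\overline{f_m}$. Qualitative control by its seminorm for each $h$ does not give control by $\nnorm{f_m}_{s+1,T}$, since the latter only forces these seminorms to vanish on average in $h$.

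Finally, your quantitative target $\ll\nnorm{f_m}_{s,T}^{c}$ is neither needed nor delivered. The linear-stage coefficients depend on $h$, so the scaling constants are not uniform, and the ``collapsing via Cauchy--Schwarz'' is unjustified. For the qualitative statement nothing needs collapsing: $s$ depends only on the shape of the derived families, which is the same for all non-exceptional $h$, and dominated convergence in $h$ concludes.
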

\begin{example}[Necessity of essential distinctness]
    To see the necessity of essential distinctness for Host-Kra seminorm control, consider the average
    \begin{align*}
        \E_{n\in[N]}T^n f_1\cdot T^{n+1}f_2;
    \end{align*}
    suppose also without loss of generality that $T$ is ergodic.
    By the mean ergodic theorem, it converges in norm to $\int f_1\cdot Tf_2\; d\mu$. Taking $f_1 = Tf$, $f_2 = \overline{f}$ for some $f\in L^\infty(\mu)$ we see that the limit is $\int |f|^2\; d\mu$. Hence the average converges to 0 if and only if $f = 0$, which is a much stronger condition than $\nnorm{f}_{s, T} = 0$ for some $s\in\N$.
\end{example}

Just like Theorem \ref{T: Bergelson weakly mixing} instantiates Heuristic \ref{H: pairwise independent}, the following result of Frantzikinakis and Kra epitomizes Heuristic \ref{H: independent}. We call $a_1, \ldots, a_\ell\in\Z[t]$ \emph{affinely independent} if for any $c_1, \ldots, c_\ell\in\Z$, the polynomial $c_1 a_1 + \cdots + c_\ell a_\ell$ is constant unless $c_1 = \cdots = c_\ell = 0$.
\begin{theorem}%[Affinely independent polynomials in totally ergodic systems, 
[{\cite[Theorem 1.1]{FrKr05}}]\label{T: FrKr affine independence totally ergodic}
    Any affinely independent polynomials $a_1, \ldots, a_\ell\in\Z[t]$ are jointly ergodic for any totally ergodic system $(X, \CX, \mu, T)$.
    %Let $a_1, \ldots, a_\ell\in\Z[t]$ be affinely independent and $(X, \CX, \mu, T)$ be totally ergodic. Then $(T^{a_1(n)}, \ldots, T^{a_\ell(n)})_n$ is jointly ergodic.
\end{theorem}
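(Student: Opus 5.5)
We will follow the New Joint Ergodicity Strategy with $T_1=\cdots=T_\ell=T$, so everything reduces to Theorem \ref{T: joint ergodicity criteria}. Since a totally ergodic $T$ is in particular ergodic, it suffices to check two things: that $a_1,\ldots,a_\ell$ admit Host-Kra seminorm control for $(X,\CX,\mu,T)$, and that they are good for equidistribution for this system.

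\textbf{Seminorm control.} Affine independence implies essential distinctness. Indeed, if some $a_j$ were constant, then the choice $c_j=1$ and $c_i=0$ for $i\neq j$ would give a constant combination. Likewise, if some $a_i-a_j$ with $i\neq j$ were constant, then $c_i=1$, $c_j=-1$ would give one. Hence Theorem \ref{T: HK seminorm control for polys and single trafo} applies directly and gives Host-Kra seminorm control for every single-transformation system. This is the PET-induction step. In this survey it is quoted rather than reproved, so we take it as given.

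\textbf{Equidistribution.} By Lemma \ref{L: equidistribution condition for ergodic}, it remains to show that
\[
\lim_{N\to\infty}\E_{n\in[N]} e\bigl(a_1(n)\beta_1+\cdots+a_\ell(n)\beta_\ell\bigr)=0
\]
for all $\beta_1,\ldots,\beta_\ell\in\Spec(T)$, not all zero. Total ergodicity means that $\Spec(T)$ contains no nonzero rationals. Since the spectrum of an ergodic transformation is a subgroup of $\T$, we may write $p(n):=\sum_j\beta_j a_j(n)$ and apply Weyl's equidistribution theorem. It suffices to show that some nonconstant coefficient of $p$ is irrational, since then the exponential sum tends to $0$.

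Suppose instead that every nonconstant coefficient of $p$ is rational. Write $a_j(t)=\sum_{k\ge1}a_{jk}t^k$ plus a constant, and let $M=(a_{jk})$ be the integer matrix of nonconstant coefficients. Affine independence says exactly that the rows of $M$ are linearly independent over $\Q$. The assumption says that $\beta M\in\Q^d$, where $\beta=(\beta_1,\ldots,\beta_\ell)$ is lifted to $\R^\ell$. Choose a right inverse $R$ of $M$ with rational entries; then $\beta=(\beta M)R\in\Q^\ell$. So each $\beta_j$ is a rational element of $\Spec(T)$, hence $\beta_j=0$ by total ergodicity, contradicting the assumption that not all $\beta_j$ vanish. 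Therefore Weyl's theorem gives the vanishing of the exponential sums.

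\textbf{Obstacles.} The main technical obstacle would ordinarily be the seminorm control, which requires the PET induction scheme. Here it is available as Theorem \ref{T: HK seminorm control for polys and single trafo}. The only genuinely new point is the linear-algebra step above, where we use a rational right inverse to pass from rational coefficients of $p$ to rational $\beta_j$. Combining the two conditions with Theorem \ref{T: joint ergodicity criteria} yields joint ergodicity.
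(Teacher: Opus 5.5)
Your argument is correct, but it takes a different route from the one the paper attributes to this theorem.

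The paper presents the result as a quintessential instance of the Old Joint Ergodicity Strategy. The first step coincides with yours: essential distinctness plus Theorem \ref{T: HK seminorm control for polys and single trafo} give Host--Kra seminorm control. From there, one passes via the Host--Kra structure theorem (Theorem \ref{T: HK structure theorem}) to nilsystem factors, which inherit total ergodicity. The bulk of the work is then the equidistribution result \cite[Theorem 1.2]{FrKr05}: on a totally ergodic nilsystem, the orbit $(S^{a_1(n)}y,\ldots,S^{a_\ell(n)}y)_n$ is dense in $Y^\ell$ for a.e.\ $y$.

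You instead run the New Joint Ergodicity Strategy. Theorem \ref{T: joint ergodicity criteria}, in its single-transformation form from \cite{Fr21}, replaces the entire nilsystem analysis with the exponential-sum condition of Lemma \ref{L: equidistribution condition for ergodic}. Your verification of that condition is sound:
\begin{itemize}
\item affine independence implies essential distinctness;
\item full $\Q$-row rank of the coefficient matrix gives a rational right inverse, which forces $\beta\in\Q^\ell$;
\item total ergodicity rules out nonzero rational eigenvalues.
\end{itemize}
Your aside that $\Spec(T)$ is a group is never actually used.

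Your route is much shorter, and total ergodicity enters only through the absence of nonzero rational eigenvalues. The price is that the real difficulty is outsourced to the degree-lowering machinery behind the criterion, which avoids nilmanifolds altogether. The original route, in exchange, establishes along the way the orbit-closure theorem on totally ergodic nilsystems. That is a pointwise, orbit-by-orbit statement of independent interest, which the $L^2$ criterion does not directly provide (compare the problem on pointwise joint ergodicity criteria for nilsystems following Corollary \ref{C: joint ergodicity criteria for nilsystems}).
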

For $n, n^2$, Theorem \ref{T: FrKr affine independence totally ergodic} has previously been proved by Furstenberg and Weiss \cite{FW96}. 
Theorem \ref{T: FrKr affine independence totally ergodic} demonstrates, for instance, that for a totally ergodic $T$, we have
\begin{align*}
    \lim_{N\to\infty}\norm{\E_{n\in[N]}T^n f_1 \cdots T^{n^\ell}f_\ell - \int f_1\; d\mu\cdots \int f_\ell\; d\mu}_{L^2(\mu)} = 0.
\end{align*}
It thus shows that affine independence is the right notion of independence for integer polynomials. 
\begin{example}[Necessity of affine independence]
    To see why affine independence is needed in Theorem \ref{T: FrKr affine independence totally ergodic}, take any $a\in\Z[t]$ (or, indeed, any sequence $a:\N\to\Z$). Let $(X, \CX, \mu, T)$ be a group rotation by some element $\beta\in X$. Then $a(n), 2a(n)$ are not jointly ergodic for $T$. Indeed, by taking $f_1(x) = \chi(2x)$, $f_2(x) = \chi(-x)$ for any nontrivial character (i.e. continuous group homomorphism) $\chi: X\to\S^1$, we see that
    \begin{align*}
        f_1(T^{a(n)}x)\cdot f_2(T^{a(n)}x) = \chi(2(x + \beta a(n)))\cdot \chi(-(x+\beta \cdot 2 a(n))) = \chi(x)
    \end{align*}
    for every $n\in\Z$ and $x\in X$.
    In particular, $\chi(x)$ is the pointwise limit of the corresponding multiple ergodic average, and it is very much not equal to $\int f_1\; d\mu \cdot \int f_2\; d\mu = 0$. This example shows how the absence of affine independence prevents joint ergodicity on any system with a nontrivial Kronecker factor. 
\end{example}

Total ergodicity is needed to prevent local obstructions; see Example \ref{Ex: n^2}. Without this assumption, Frantzikinakis and Kra obtain the following more general result.
\begin{theorem}%[Rational Kronecker factor control for affinely independent polynomials, 
[{\cite[Theorem 1.1]{FrKr06}}]\label{T: FrKr affine independence}
    Let polynomials $a_1, \ldots, a_\ell\in\Z[t]$ be affinely independent. Then they are controlled by the rational Kronecker factor for every system $(X, \CX, \mu, T)$.
\end{theorem}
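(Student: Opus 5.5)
The plan is to follow the New Joint Ergodicity Strategy in the single-transformation setting. By the single-transformation convention, rational Kronecker factor control for $(X, \CX, \mu, T)$ means control by $\Krat(T)$ with $T_1=\cdots=T_\ell=T$. By Theorem \ref{T: Krat control criteria}, it suffices to verify three conditions on an arbitrary system $(X,\CX,\mu,T)$: Host-Kra seminorm control, being good for irrational equidistribution, and the auxiliary rational-eigenfunction condition (iii).

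For the first condition, note that affine independence implies essential distinctness. Indeed, if $a_j$ were constant, the choice $c_j=1$ with all other coefficients zero gives a constant combination. If $a_i-a_j$ were constant for some $i\neq j$, the choice $c_i=1$, $c_j=-1$ does. Theorem \ref{T: HK seminorm control for polys and single trafo} then gives Host-Kra seminorm control for every system. Underneath, this is Bergelson's PET induction: repeated van der Corput and Cauchy–Schwarz steps reduce to linear averages, which are controlled via \eqref{E: HK seminorm control}.

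For the second condition, the global version (Corollary \ref{C: invariant/Krat control criteria global}) only requires the exponential sum criterion: $\E_{n\in[N]} e(\sum_j \beta_j a_j(n))\to 0$ whenever the $\beta_j\in[0,1)$ are not all rational. I will write $p(t)=\sum_j\beta_j a_j(t)$ and expand it in monomials. Some non-constant coefficient of $p$ must then be irrational. If all of them were rational, the $\beta_j$ would solve a linear system with integer coefficients and rational right-hand side. Affine independence means the vectors of non-constant coefficients of $a_1,\ldots,a_\ell$ are linearly independent over $\Q$, hence over $\R$, so that solution is unique and rational, which contradicts the assumption. Weyl's equidistribution theorem then gives the vanishing of the exponential sums. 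This also yields condition (iii), since the footnote to Theorem \ref{T: Krat control criteria} says it follows from good irrational equidistribution \cite[Lemma 5.3]{FrKu22a}.

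Applying Corollary \ref{C: invariant/Krat control criteria global} (or Theorem \ref{T: Krat control criteria} on each system) then concludes the proof. I expect the main obstacle to be the first step. It is exactly the content of Theorem \ref{T: HK seminorm control for polys and single trafo}, and a self-contained proof would require organising the PET induction with its complexity ordering on polynomial families. The real depth, the degree lowering argument, is hidden inside the criteria I am invoking. Historically, Frantzikinakis–Kra proved this result by the Old Strategy instead, via equidistribution of polynomial sequences on nilmanifolds (Leibman's theorem). That route is the fallback if one avoids the criteria, and there the hard step would be showing that affine independence forces equidistribution of $(S^{a_1(n)}y,\ldots,S^{a_\ell(n)}y)$ modulo the rational part on each connected component of the nilmanifold.
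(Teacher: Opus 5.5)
Your proof is correct, but it takes a different route from the one the paper describes.

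\textbf{Common first step.} Both arguments begin the same way: affine independence implies essential distinctness, which gives PET seminorm control via Theorem~\ref{T: HK seminorm control for polys and single trafo}.

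\textbf{Where the routes diverge.} The paper (following Frantzikinakis--Kra) then runs the Old Joint Ergodicity Strategy. It uses the Host--Kra structure theorem (Theorem~\ref{T: HK structure theorem}) to reduce to nilsystems. Most of the work goes into proving equidistribution of polynomial orbits $(S^{a_1(n)}y,\ldots,S^{a_\ell(n)}y)$ on nilmanifolds. You replace both of these steps with the degree-lowering criterion, Theorem~\ref{T: Krat control criteria}. After that, the only equidistribution input is a Weyl sum, together with your linear-algebra observation that affine independence forces some nonconstant coefficient of $\sum_j\beta_j a_j$ to be irrational.

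\textbf{What each route buys.} Your route is much shorter and avoids the nilmanifold analysis entirely. It also carries over verbatim to commuting transformations once seminorm control is available. Its depth sits inside the degree-lowering black box, which is proved independently of the Frantzikinakis--Kra result, so there is no circularity. The paper's route is self-contained given the nilmanifold equidistribution theory. It also yields extra information about the orbits themselves on nilsystems.

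\textbf{One imprecision.} Corollary~\ref{C: invariant/Krat control criteria global}, as stated in the paper, requires Host--Kra seminorm control on every system of $\ell$ commuting transformations. Theorem~\ref{T: HK seminorm control for polys and single trafo} only supplies this for single-transformation systems. There are two ways to fix this:
\begin{itemize}
\item Apply the local Theorem~\ref{T: Krat control criteria} to each system with $T_1=\cdots=T_\ell=T$, as you also propose.
\item Note that affine independence implies pairwise independence, and invoke part~(i) of Theorem~\ref{T: FrKu pairwise affine independent}. This gives global seminorm control and in fact yields the stronger commuting statement, Theorem~\ref{T: FrKu affine independent}.
\end{itemize}
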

The proof of Theorems \ref{T: FrKr affine independence totally ergodic} and \ref{T: FrKr affine independence} is a quintessential example of the Old Joint Ergodicity Strategy. Reducing to nilsystems via Theorem \ref{T: HK seminorm control for polys and single trafo} and the Host-Kra structure theorem, the authors spend the bulk of their work establishing the following equidistribution result on nilsystems from which Theorem \ref{T: FrKr affine independence totally ergodic} follows.
\begin{theorem}[Equidistribution of affinely independent polynomials on totally ergodic nilsystems {\cite[Theorem 1.2]{FrKr05}}]
    Let $a_1, \ldots, a_\ell\in\Z[t]$ be affinely independent and $(Y, \CY, m, S)$ be a totally ergodic nilsystem. Then for $m$-a.e. $y\in Y$, we have
    \begin{align*}
        \overline{\{(S^{a_1(n)}y, \ldots S^{a_\ell(n)}y)\colon n\in\N\}} = Y^\ell.
    \end{align*}
\end{theorem}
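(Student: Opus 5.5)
Given the criteria proved earlier, the shortest route is not to work directly in the nilmanifold but to combine Corollary \ref{C: joint ergodicity criteria for nilsystems} with the exponential sum characterization of Lemma \ref{L: equidistribution condition for ergodic}, and then upgrade norm convergence to an orbit-closure statement. Note, however, that the criteria are stated for ergodic transformations and exponential sums with all spectral parameters, while here we only have total ergodicity; so I would use the local version, Theorem \ref{T: joint ergodicity criteria}, applied to the nilsystem $(Y,\CY,m,S,\ldots,S)$. Seminorm control holds trivially: an $s$-step nilsystem is a $\CZ_s$-system, so $\nnorm{\cdot}_{s+1,S}$ is a norm. It therefore suffices to check that $a_1,\ldots,a_\ell$ are good for equidistribution for this system.

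By Lemma \ref{L: equidistribution condition for ergodic}, this amounts to showing that $\E_{n\in[N]} e(a_1(n)\beta_1+\cdots+a_\ell(n)\beta_\ell)\to 0$ for $\beta_j\in\Spec(S)$, not all zero. Total ergodicity means every nonzero eigenvalue is irrational. Write $p(n)=\sum_j \beta_j a_j(n)=\sum_{k\ge1} \gamma_k n^k + \text{const}$, where $\gamma_k=\sum_j \beta_j c_{jk}$ and $c_{jk}$ are the integer coefficients of the $a_j$. By Weyl, the average vanishes unless every $\gamma_k$ is rational. The main obstacle is this step: showing that rationality of all $\gamma_k$ forces a contradiction. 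Since $\Spec(S)$ is a group, I pick a common denominator $q$ with $q\gamma_k\in\Z$ for all $k$. Affine independence means the integer matrix $(c_{jk})$ has trivial left kernel over $\Z$, hence over $\Q$, so it has rank $\ell$. Choose $\ell$ columns forming an invertible minor $M$ with determinant $D$. Solving $\beta M = \gamma$ with the adjugate gives $D\beta_j\in \frac1q\Z + \Z$, so $qD\beta_j\in\Z$, i.e. $\beta_j$ is rational mod 1. As a rational eigenvalue of a totally ergodic system, $\beta_j=0$ for all $j$, which is a contradiction. The approach via eigenvalues sidesteps the nilsystem computations of the Old Strategy.

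Hence Theorem \ref{T: joint ergodicity criteria} yields $L^2(m)$ joint ergodicity: $\E_{n\in[N]}\prod_j S^{a_j(n)}f_j\to\prod_j\int f_j$. To get the orbit closure for a.e. $y$, I would argue by contradiction. Take a countable base of open sets $U_1\times\cdots\times U_\ell$ in $Y^\ell$, with continuous $f_j\ge0$ supported in $U_j$ and positive integrals. The $L^2$ limit is positive, so along a subsequence the averages converge a.e. to a positive constant. Thus for a.e. $y$ some $n$ gives $S^{a_j(n)}y\in U_j$ for all $j$. Intersecting over the countably many boxes gives density of the orbit for a.e. $y$, as claimed.
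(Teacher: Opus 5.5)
Your proof is correct, but it runs in the opposite direction from the route the survey has in mind. The survey gives no proof of its own. It cites this result as the heart of the Frantzikinakis--Kra argument (the Old Joint Ergodicity Strategy). There the statement is proved directly on the nilmanifold: Leibman's criterion reduces equidistribution of polynomial orbits to the maximal factor torus, which for a totally ergodic nilsystem is a Weyl system, and explicit computations are done there. The $L^2$ identity for totally ergodic systems is then deduced from this.

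You go the other way. You get $L^2$ joint ergodicity on $(Y,\CY,m,S,\ldots,S)$ from the degree-lowering criterion, Theorem~\ref{T: joint ergodicity criteria}, with seminorm control free since $\nnorm{\cdot}_{s+1,S}$ is a norm. You verify the equidistribution hypothesis by Weyl's theorem plus your adjugate argument: all $\gamma_k\in\Q$ forces all $\beta_j\in\Q$, hence $\beta_j=0$ by total ergodicity. You then extract dense orbits from a.e.\ convergence along subsequences.

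What your route buys is that it needs no nilmanifold structure beyond $\CZ_s(S)=\CY$ and the full support of Haar measure. State the latter explicitly, since it is what makes $\int f_j\,dm>0$.

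What it costs is twofold. First, it relies on the much later and deeper degree-lowering theorem as a black box. Second, the conclusion is weaker: the subsequence trick gives density of the orbit, which is all the displayed statement asks for. The direct nilsystem analysis instead gives genuine equidistribution of $(S^{a_1(n)}y,\ldots,S^{a_\ell(n)}y)_n$ in $Y^\ell$ for a.e.\ $y$, as the theorem's title suggests.

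You can recover the stronger form on your route by adding Leibman's theorem that averages of continuous functions along polynomial orbits on a nilmanifold converge at every point. The pointwise limit must then coincide a.e.\ with your $L^2$ limit $\prod_j\int f_j\,dm$, and a countable dense family in $C(Y^\ell)$ finishes the argument. This is exactly the $L^2$-versus-pointwise gap that the survey's problem on pointwise joint ergodicity criteria for nilsystems raises for general sequences.
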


\subsubsection{Commuting transformations}

Up to a few years ago, very little was known on the structure of multiple ergodic averages of commuting transformations along polynomials except for their norm convergence \cite{W12}. One of the few results in this direction was the following joint ergodicity result 
%that can be extracted from the work 
of Chu, Frantzikinakis, and Host. %\cite{CFH11}. 
\begin{theorem}%[Joint ergodicity of monomials for totally ergodic transformations]
[{\cite[Theorem 1.2]{CFH11}}]\label{T: CFH distinct degrees}
    %For any integers $0<c_1<\cdots < c_\ell$, the polynomials $n^{c_1}, \ldots, n^{c_\ell}$ are jointly ergodic for any system $(X, \CX, \mu,$\! $T_1, \ldots, T_\ell)$ with $T_1, \ldots, T_\ell$ totally ergodic.
    Any nonconstant polynomials $a_1, \ldots, a_\ell\in\Z[t]$ of distinct degrees admit Host-Kra seminorm control. In particular, they are jointly ergodic for any system $(X, \CX, \mu,$\! $T_1, \ldots, T_\ell)$ with $T_1, \ldots, T_\ell$ weakly mixing.
    %Let $a_1, \ldots, a_\ell\in\Z[t]$ be nonconstant polynomials of distinct degrees, and let $(X, \CX, \mu,$\! $T_1, \ldots, T_\ell)$ be a system with $T_1, \ldots, T_\ell$ totally ergodic. Then $(T^{a_1(n)}, \ldots, T^{a_\ell(n)})_n$ is jointly ergodic.
\end{theorem}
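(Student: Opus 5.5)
I would prove the seminorm statement by PET induction, and then deduce the weak mixing consequence from the joint ergodicity criteria (Theorem \ref{T: joint ergodicity criteria}).

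\textbf{The consequence for weakly mixing systems.} If each $T_j$ is weakly mixing, it has no nontrivial nonergodic eigenfunctions. Hence the sequences are trivially good for equidistribution. Host-Kra seminorm control then gives joint ergodicity by Theorem \ref{T: joint ergodicity criteria}. Alternatively, one can use directly that $\nnorm{f}_{s,T_j}=\abs{\int f\,d\mu}$ for weakly mixing $T_j$. So the whole task is the seminorm control.

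\textbf{Setting up the induction.} Order the polynomials so that $\deg a_1<\cdots<\deg a_\ell$. The plan is to first control the average by the function $f_\ell$ with the highest degree. To do this, I run PET induction in the sense of Bergelson \cite{Ber87}, adapted to commuting transformations. A single step consists of:
\begin{itemize}
\item applying the van der Corput inequality to $\E_n\prod_j T_j^{a_j(n)}f_j$;
\item composing with $T_1^{-a_1(n)}$;
\item applying Cauchy--Schwarz.
\end{itemize}
This produces averages over $h$ of averages along the differenced polynomials $a_j(n+h)-a_1(n)$ and $a_j(n)-a_1(n)$, with transformations $T_jT_1^{-1}$ or $T_j$ attached.

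\textbf{Why the induction closes.} The distinct-degree hypothesis guarantees that after each step the polynomial family stays ``nondegenerate'' with respect to the top-degree term. The iterate attached to $f_\ell$ keeps strictly maximal degree, and its leading coefficient becomes an explicit nonzero polynomial in the differencing parameters $h_1,\dots,h_k$. A suitable weight vector (a type/degree count) strictly decreases at each step, so the process terminates at an average that is linear in $n$. For these linear averages, Host's box seminorm estimate \eqref{E: box seminorm control} applies. It yields control by an average over $h_1,\dots,h_k$ of box seminorms
\[
\nnorm{f_\ell}_{T_\ell^{c_1(\underline h)},\ldots,T_\ell^{c_s(\underline h)}}.
\]
The transformations here are pure powers of $T_\ell$ because the other transformations can only enter through terms of lower degree. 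Indeed, lower-degree polynomials become constants or drop out before the top-degree component of $f_\ell$ is linearized.

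\textbf{The main obstacle: converting to a Host-Kra seminorm.} The delicate point is to ensure that only $T_\ell$ appears among the box directions controlling $f_\ell$. The differences $a_\ell-a_j$ carry mixed transformations $T_\ell T_j^{-1}$, which must be avoided. I would handle this by arranging the induction so that $f_\ell$ is always the term of largest degree. Its direction then stays a power of $T_\ell$ alone, while the mixed directions only attach to other functions. Given this, I would bound
\[
\nnorm{f}_{T^{c_1},\ldots,T^{c_s}}\ll \nnorm{f}_{s', T}
\]
for nonzero integers $c_i$, using the scaling estimates \eqref{E: scaling 1} and \eqref{E: scaling 2}. Averaging over $\underline h$ with a polynomial parameter, and applying Hölder, then gives control of the original average by $\nnorm{f_\ell}_{s,T_\ell}$.

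\textbf{Controlling the other functions.} To get control at the other indices $j<\ell$, I would induct on $\ell$. Once control by $f_\ell$ holds, decompose $f_\ell$ via the structure of $\CZ_{s-1}(T_\ell)$. Alternatively, use the weaker property from Definition \ref{D: P_m}, where $f_\ell$ may be taken to be a nonergodic eigenfunction of $T_\ell$. Such an eigenfunction contributes only a bounded phase $e(a_\ell(n)\beta(x))$, which is absorbed by the van der Corput argument. This reduces the problem to the family $a_1,\dots,a_{\ell-1}$, which still has distinct degrees, and the induction hypothesis applies.
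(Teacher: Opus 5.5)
The genuine gap is in your last step, \emph{Controlling the other functions}. The theorem claims Host-Kra seminorm control in the sense of Definition \ref{D: seminorm control}. That is, for each $j<\ell$ the average must vanish whenever $\nnorm{f_j}_{s,T_j}=0$, with $f_{j+1},\dots,f_\ell$ \emph{arbitrary}. Your PET step only lets you assume that $f_\ell$ is $\CZ_{s-1}(T_\ell)$-measurable. PET yields only $s=3$ already for $(T_1^nf_1,T_2^{n^2}f_2)$ (Example \ref{Ex: PET n, n^2}). Nonergodic eigenfunctions, however, generate only $\CZ_1(T_\ell)$, which is in general strictly smaller than $\CZ_{s-1}(T_\ell)$. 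So your ``bounded phase'' argument establishes only the weak control of Definition \ref{D: weak control}: control of $f_m$ when $f_{m+1},\dots,f_\ell$ are nonergodic eigenfunctions. The other option, ``decompose $f_\ell$ via the structure of $\CZ_{s-1}(T_\ell)$'', is left without its key step. Relatedly, after removing $f_\ell$ you are left with a \emph{weighted} average along $a_1,\dots,a_{\ell-1}$. Your induction hypothesis concerns unweighted averages, so it does not apply as stated, and the inductive statement must be strengthened to allow weights.

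The route the paper indicates for this theorem supplies exactly this step (see the example after Definition \ref{D: weak control}, going back to \cite{CFH11}). First, use the nonergodic Host-Kra structure theorem to replace $f_\ell$ by a function $f_\ell'$ such that $(f_\ell'(T_\ell^nx))_n$ is an $(s-1)$-step nilsequence for a.e.\ $x$. Then remove the weight $n\mapsto\psi_x(a_\ell(n))$ by further van der Corput steps or by equidistribution on nilmanifolds.

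Alternatively, you can upgrade your weak control using the survey's machinery. Distinct-degree polynomials are good for irrational equidistribution: if $j_0$ is the largest index with $\beta_{j_0}\notin\Q$, then the coefficient of $n^{\deg a_{j_0}}$ in $\sum_j\beta_ja_j(n)$ is irrational. Corollary \ref{C: invariant/Krat control criteria global} then yields rational Kronecker factor control, since, as remarked after Definition \ref{D: seminorm control}, it needs only the weak form of seminorm control. Because $\Krat(T_j)\subseteq\CZ_1(T_j)$, this gives degree-$2$ Host-Kra control at every index.

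The rest of your proposal is fine in outline. The weakly mixing consequence needs only control at the top index: replace $f_\ell$ by $\int f_\ell\,d\mu$ and induct on $\ell$. PET does give control by $\nnorm{f_\ell}_{s,T_\ell}$, but your stated reason needs correcting. It is not that mixed directions ``attach to other functions'', since the directions for $f_\ell$ in \eqref{E: box seminorm control} are differences with \emph{every} other final coefficient. The correct reason is this: once the lower-degree iterates are differenced away, each surviving iterate has non-$\be_\ell$ components of strictly lower degree than its $\be_\ell$-component. Hence all final linear coefficients lie in $\Z\be_\ell$. Finally, the constant in \eqref{E: scaling 2} depends on $c_i(\underline h)$, so only the qualitative vanishing can be averaged over $\underline h$, not the inequality.
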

%The restriction to monomials has two reasons. 
%One is the difficulty involved in proving the required equidistribution results on nilsystems in \cite[Section 7]{CFH11}. The other one, arguably more important, 
The restriction to distinct-degree polynomials comes from the fact that this is the only class of polynomials for which multiple ergodic averages of commuting transformations could be controlled by Host-Kra seminorms using purely the PET induction scheme. Indeed, the proof of Theorem \ref{T: CFH distinct degrees} breaks even for the average along $n^2, n^2+n$. Furthermore, difficulties inherent in studying equidistribution on nonergodic nilsystems prevented Chu, Frantzikinakis, and Host from deducing joint ergodicity for totally ergodic transformations.
%showing that distinct-degree polynomials are controlled by the rational Kronecker factor for arbitrary system $(X, \CX, \mu,$ $T_1, \ldots, T_\ell)$. 
Further progress on these questions was stalled until the work of Frantzikinakis and the author, in which the ensuing results have been proved.
%The first and arguably more important one is that distinct-degree polynomials form the only class of polynomials for which multiple ergodic averages of commuting transformations could be controlled by Host-Kra seminorms using purely the PET induction scheme. Indeed, the proof of Theorem \ref{T: CFH distinct degrees} breaks even for the average along $n^2, n^2+n$. The second reason is the difficulties inherent in studying equidistribution on nonergodic nilsystems: they prevented Chu, Frantzikinakis, and Host from extending Theorem \ref{T: CFH distinct degrees} to all distinct-degree polynomials and from obtaining rational Kronecker factor control for an arbitrary system $(X, \CX, \mu,$\! $T_1, \ldots, T_\ell)$. Further progress on these questions was stalled until the work of Frantzikinakis and the author, in which the ensuing results have been proved.
\begin{theorem}%[Rational Kronecker factor for affinely independent polynomials, 
[{\cite[Theorem 2.9]{FrKu22a}}]\label{T: FrKu affine independent}
    Any affinely independent polynomials $a_1, \ldots, a_\ell\in\Z[t]$ are controlled by the rational Kronecker factor. In particular, they are jointly ergodic for any system $(X, \CX, \mu,$\! $T_1, \ldots, T_\ell)$ with $T_1, \ldots, T_\ell$ totally ergodic.
    %Let $a_1, \ldots, a_\ell\in\Z[t]$ be affinely independent polynomials, and let $(X, \CX, \mu,$\! $T_1, \ldots, T_\ell)$ be a system with $T_1, \ldots, T_\ell$ totally ergodic. Then $(T^{a_1(n)}, \ldots, T^{a_\ell(n)})_n$ is jointly ergodic.
\end{theorem}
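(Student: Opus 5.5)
The plan follows the New Joint Ergodicity Strategy, through the global criterion of Corollary \ref{C: invariant/Krat control criteria global} for rational Kronecker factor control. Two things must be established: (a) affinely independent polynomials $a_1, \ldots, a_\ell\in\Z[t]$ admit Host-Kra seminorm control for every system $(X, \CX, \mu, T_1, \ldots, T_\ell)$; and (b) they are good for irrational equidistribution. The joint ergodicity statement for totally ergodic $T_1, \ldots, T_\ell$ then follows. Total ergodicity of $T_j$ makes $\Krat(T_j)$ trivial, since any function invariant under some $T_j^r$ is constant. Hence $\E(f_j|\Krat(T_j)) = \E(f_j|\Krat(T_j)) - \int f_j\, d\mu + \int f_j\, d\mu$ reduces to $\int f_j\, d\mu$. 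Writing $f_j = \int f_j\, d\mu + (f_j - \int f_j\, d\mu)$, expanding multilinearly, and killing every term containing a mean-zero function by Krat control yields \eqref{E: joint ergodicity}.

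Step (b) is the easy part. By the exponential-sum reformulation of global irrational equidistribution, we need $\E_{n\in[N]} e(\sum_j \beta_j a_j(n))\to 0$ whenever some $\beta_j$ is irrational. The phase $p(n) = \sum_j \beta_j a_j(n)$ is a real polynomial. If all of its nonconstant coefficients were rational, then, writing $\beta$ in a $\Q$-basis of $\R$ containing $1$, an irrational basis component would give a nontrivial rational, hence integer after clearing denominators, combination $\sum_j c_j a_j$ that is constant. This contradicts affine independence. So $p-p(0)$ has an irrational coefficient, and Weyl's equidistribution theorem gives the vanishing.

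Step (a) is the main obstacle. As noted after Theorem \ref{T: CFH distinct degrees}, PET induction alone only handles distinct degrees and already fails for $n^2, n^2+n$. My plan is to first run PET induction (van der Corput, Cauchy--Schwarz, change of variables) to bound the average by a box seminorm of $f_\ell$ along a family of transformations of the form $T_\ell^{c}T_i^{-c'}$. This is Host's seminorm from \eqref{E: box seminorm control}, generalized to iterated differences. Affine independence guarantees that no two polynomials are proportional, so no factor $T_\ell^{c}T_i^{-c}$ degenerates. I would then carry out a ``seminorm smoothing'' argument: induct on the number of box directions not of the pure form $T_\ell^{c}$. Assuming control by such a mixed box seminorm, use a dual-function decomposition and an inverse theorem for that seminorm. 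This replaces $f_\ell$ by a function structured along the offending direction, so that the average collapses to one with fewer terms or one in which $f_{i}$ is controlled. That allows the offending direction to be swapped for a pure $T_\ell$ direction, at the cost of raising the degree. Iterating until all directions are powers of $T_\ell$ gives control by $\nnorm{f_\ell}_{s,T_\ell}$, via the scaling properties of the seminorms. By symmetry the same holds at every index.

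The delicate points I expect are the bookkeeping of the induction ordering, which must be designed so that the swap step strictly decreases complexity, and handling nonergodic $T_j$ throughout without ergodic decomposition. For the latter I would work with nonergodic eigenfunctions and the weaker version of seminorm control mentioned after Definition \ref{D: seminorm control}, which suffices for the criteria.
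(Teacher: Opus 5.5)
Your proposal is correct and follows the paper's route: rational Kronecker factor control through the criterion of Corollary~\ref{C: invariant/Krat control criteria global} (Theorem~\ref{T: Krat control criteria}), with irrational equidistribution from Weyl's theorem plus affine independence, Host--Kra seminorm control from the pairwise independent case, and total ergodicity making $\Krat(T_j)$ trivial. The one difference is that the paper does not re-run the seminorm machinery for step (a); it invokes Theorem~\ref{T: FrKu pairwise affine independent}\eqref{i: poly HK seminorm control} directly, since affine independence implies pairwise independence, and if you do re-derive it, note that PET alone only bounds the average by an average of box seminorms along $h$-dependent directions, so a concatenation step as in Section~\ref{SSS: concatenation} is needed before the smoothing can begin.
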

\begin{example}
    The simplest case covered by Theorem \ref{T: FrKu affine independent} that remained out of reach of previous methods was that for any system $(X, \CX, \mu,$\! $T_1, T_2)$ with $T_1, T_2$ totally ergodic, the sequences $n^2, n^2+n$ are jointly ergodic, i.e.
    \begin{align*}
        \lim_{N\to\infty}\norm{\E_{n\in[N]}T_1^{n^2}f_1\cdot T_2^{n^2+n}f_2 - \int f_1\; d\mu\cdot \int f_2\; d\mu}_{L^2(\mu)} = 0
    \end{align*}
    for any $f_1, f_2\in L^\infty(\mu)$. This was not even known under the much stronger assumption that $T_1, T_2$ are weakly mixing. 
\end{example}
Theorem \ref{T: FrKu affine independent} concretizes Heuristic \ref{H: independent} for integer polynomials and commuting transformations. It shows that the correct notion of ``independence'' for integer polynomials is ``affine independence'', just like it was the case for a single transformation.  This stands in contrast to ``pairwise independence'': while for a single transformation this notion took the form of essential distinctness, for commuting transformations we need our polynomials $a_1, \ldots, a_\ell\in\Z[t]$ to be \emph{pairwise independent}, in the sense that for all distinct $1\leq i, j\leq \ell$ and $c_i,c_j\in\Q$, the polynomial $c_i a_i + c_j a_j$ is nonconstant unless $c_i = c_j = 0$.
\begin{theorem}%[Pairwise independent polynomials, 
[{\cite[Theorem 2.8 and Corollary 2.9]{FrKu22a}}]\label{T: FrKu pairwise affine independent}
    Any pairwise independent polynomials $a_1, \ldots, a_\ell\in\Z[t]$:
    \begin{enumerate}
        \item\label{i: poly HK seminorm control} admit Host-Kra seminorm control;
        \item\label{i: poly weakly mixing} are jointly ergodic for any system $(X, \CX, \mu,$\! $T_1, \ldots, T_\ell)$ with $T_1, \ldots, T_\ell$ weakly mixing.
    \end{enumerate}
    %good for Host-Kra seminorm control. Hence they are jointly ergodic for any system $(X, \CX, \mu,$\! $T_1, \ldots, T_\ell)$ with $T_1, \ldots, T_\ell$ weakly mixing.
    %Let $a_1, \ldots, a_\ell\in\Z[t]$ be affinely independent polynomials, and let $(X, \CX, \mu,$\! $T_1, \ldots, T_\ell)$ be a system with $T_1, \ldots, T_\ell$ totally ergodic. Then $(T^{a_1(n)}, \ldots, T^{a_\ell(n)})_n$ is jointly ergodic.
\end{theorem}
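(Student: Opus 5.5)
Part (ii) follows from part (i) together with Theorem \ref{T: joint ergodicity criteria}. If $T_1, \ldots, T_\ell$ are weakly mixing, each $T_j$ has no nonconstant (nonergodic) eigenfunctions. Hence any sequences are good for equidistribution for the system, and by Theorem \ref{T: joint ergodicity criteria}, Host-Kra seminorm control is all that is needed. So the whole weight lies on (i): showing that pairwise independent polynomials admit Host-Kra seminorm control for an arbitrary commuting system.

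For (i) I would argue by induction on $\ell$ and aim at control by $\nnorm{f_m}_{s,T_m}$ for each fixed index $m$, say $m=\ell$ after relabelling.

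\textbf{Step 1 (box seminorm control via PET).} I would run Bergelson's PET induction: repeated van der Corput, changes of variables and Cauchy–Schwarz, ending with Host's estimate \eqref{E: box seminorm control}. This should yield
\[
\limsup_N \norm{A_N}_{L^2(\mu)} \ll \nnorm{f_\ell}_{T_\ell^{c_1}T_{j_1}^{-c_1'}, \ldots, T_\ell^{c_s}T_{j_s}^{-c_s'}}
\]
for suitable integers and indices. Pairwise independence ensures that the leading coefficient vectors never cancel, so the PET scheme terminates at genuinely "box" seminorms. It also ensures that each group $\langle T_\ell^{c}T_j^{-c'}\rangle$ is nontrivial in the relevant direction, although it may mix $T_\ell$ with other $T_j$. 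For non-distinct-degree polynomials such as $n^2, n^2+n$, this is exactly where the older argument stops: one gets box seminorms that are not Host-Kra seminorms of $T_\ell$.

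\textbf{Step 2 (seminorm smoothing).} This step replaces each "mixed" direction $T_\ell^{c}T_j^{-c'}$ by a pure power of $T_\ell$. It is the main obstacle. I would first apply a dual-function decomposition, in which $f_\ell$ is approximated, through the inverse theory for box seminorms, by dual functions of the form
\[
\E_n \prod_{i\neq \ell} T_i^{a_i(n)}\bar f_i\cdot T_\ell^{a_\ell(n)}\cdot(\text{structured term invariant under } T_\ell^cT_j^{-c'}).
\]
I would then substitute this back into the original average. The invariance turns the mixed direction into something the $j$-th term absorbs, which produces a new average with $f_j$ replaced by a modified function. I would apply the induction hypothesis (or a further PET step) to that average to obtain control of the modified $f_j$ by a box seminorm with one fewer mixed direction. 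I would then transfer back to control of $f_\ell$ with one mixed direction swapped for a pure $T_\ell$-direction, a "ping-pong" between indices. Iterating over the $s$ directions ends at $\nnorm{f_\ell}_{T_\ell^{c_1},\ldots,T_\ell^{c_s}}$, which is bounded by a Host-Kra seminorm $\nnorm{f_\ell}_{s',T_\ell}$ by the scaling properties of the seminorms in the appendix.

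The delicate bookkeeping is that every intermediate average must still satisfy pairwise independence (or fall under the inductive hypothesis) so that the ping-pong closes. I expect to organize this by a secondary induction on the number of mixed directions, with $\ell$ as the outer induction. Control at the other indices $m$ follows by symmetry, completing (i).
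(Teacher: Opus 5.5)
Your derivation of (ii) from (i) via Theorem~\ref{T: joint ergodicity criteria} is correct and matches the survey, and the two-stage architecture you propose for (i) is the right one. The gap is in Step~1, which claims something PET does not deliver.

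When several polynomials share the top degree, PET does not end at a single box seminorm along fixed directions $T_\ell^{c}T_j^{-c'}$. The final linear averages have coefficients depending on the van der Corput parameters. Host's bound therefore yields only an \emph{average over $h$} of box seminorms along $h$-dependent directions. For $(T_1^{n^2},T_2^{n^2+n})_n$ this is \eqref{E: box seminorm control n^2, n^2+n}: there, e.g., $\bc_4(h)=2\be_2h_1+2(\be_2-\be_1)h_2$ sweeps out a two-dimensional lattice and is not a power of any fixed transformation. This, and not merely ``box versus Host--Kra'', is where the older argument stalls.

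Step~2 cannot start from such an average. For each $h$, the inverse theorem would give you a function invariant under something like $T_1^{-2h_2}T_2^{2h_1+2h_2}$. Such invariance does not let you rewrite $T_2^{n^2+n}u$ through $T_1$, and that rewriting is the whole mechanism of smoothing. The missing ingredient is concatenation, preceded by coefficient tracking to identify the directions. You must bound this average by a single box seminorm of $f_\ell$ along the leading coefficients of $\ba_\ell,\ba_\ell-\ba_1,\ldots,\ba_\ell-\ba_{\ell-1}$, where $\ba_j=a_j\be_j$. In \cite{FrKu22a} this input is qualitative, resting on Tao--Ziegler concatenation of box factors as carried out in \cite{DFKS22}. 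For the quantitative form, see Example~\ref{Ex: concatenation n^2, n^2+n} and Theorem~\ref{T: DKKST box seminorm control}. This step needs only essential distinctness, so your claim that pairwise independence is what makes PET terminate misplaces the hypothesis.

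Pairwise independence is really used in Step~2, and there your mechanism is partly reversed. The correct sequence is:
\begin{enumerate}
\item Stash (Lemma~\ref{L: stashing in degree lowering}) to replace $f_\ell$ by the weakly structured function $F_\ell$.
\item Apply the box-seminorm bound to $F_\ell$. Then use the inverse theorem \eqref{E: deg 1 inverse theorem} for one mixed direction, with dual--difference interchange when further mixed copies remain.
\item Unfold $F_\ell$. This replaces $f_\ell$ itself, not $f_j$, by a function $u$ invariant under $T_\ell^{c_\ell}T_j^{-c_j}$.
\item Use this invariance to collapse $T_\ell$ into $T_j$. Pairwise independence is exactly what keeps the collapsed iterates essentially distinct. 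For $n^2,2n^2$ the collapse produces identical iterates, and Host--Kra control genuinely fails.
\item The inductive hypothesis now gives Host--Kra control of a \emph{different} function (ping).
\item The pong step returns control to $f_\ell$, trading one mixed direction for pure $T_\ell$-directions. It uses stashing at that other index, the weak inverse theorem \eqref{E: dual identity}, and removal of the resulting dual sequence by repeated van der Corput.
\end{enumerate}
With concatenation inserted after PET and Step~2 organized this way, your outline becomes the argument of \cite{FrKu22a}.
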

The new joint ergodicity criteria (and specifically Theorem \ref{T: Krat control criteria}) play a crucial role in deriving Theorem \ref{T: FrKu affine independent} from Theorem \ref{T: FrKu pairwise affine independent}\eqref{i: poly HK seminorm control}, as they circumvent the study of distribution of polynomial orbits on nonergodic nilsystems. The second ingredient of at least equal importance are the new techniques developed to get the seminorm control in \ref{T: FrKu pairwise affine independent}\eqref{i: poly HK seminorm control}, and which are discussed at length in Section \ref{S: seminorm smoothing}.

\subsection{Hardy sequences}
Integer polynomials form arguably the most natural class of sparse sequences that arise in the study of multiple ergodic averages. Yet they are but a small class of a much broader family of \emph{Hardy sequences of polynomial growth} that include sequences like
\begin{align*}
    \sqrt{2}n^2 + \sqrt{3}n, n^{3/2}, n \log n, \exp(\sqrt{\log n}), 1/n.
\end{align*}
For precise definitions of Hardy sequences, we refer the reader to Appendix \ref{A: Hardy}. Here, we just recall that by $\CH$ we denote the collection of all Hardy functions of polynomial growth lying in some fixed Hardy field that contains $\CL\CE$, all logarithmico-exponential functions, and enjoys a bunch of other nice properties (it is closed under compositions, compositional inverses, and variable shifts).\footnote{Particular results stated in this survey may use only some of these properties, and hence they may hold for more general Hardy fields. However, since we prioritize brevity and ease of exposition over generality, we leave it to the interested reader to consult the relevant literature for the weakest possible condition on $\CH$ required in the particular context.}

The study of Hardy sequences in ergodic theory has a long history, dating back to work of Boshernitzan \cite{Boshernitzan-equidistribution}. %and Boshernitzan-Kolesnik-Quas-Wierdl \cite{BKQW05} 
What makes Hardy sequences reasonably easy to work with are their analytic properties. As long as a function $a\in\CH$ grows faster than $\log$, its derivative $a'(t)$ grows like $a(t)/t$ (up to a multiplicative factor).
%, with a notable exception of slowly-growing functions $a(t)\ll \log t$. 
As a consequence, we can Taylor expand $a$ on short intervals %\footnote{I.e. on intervals $(N, N + L(N)]$ for some $1\prec L(N)\prec N$; often, we can just take $L(N) = N^c$ for some $0<c<1$, and the larger $c$, the higher the degree of the approximating polynomial.} 
as polynomials with well-controlled error terms and explicitly given coefficients.
\begin{example}[Taylor expansion of Hardy sequences on short intervals]
    Let $a(t) = t^{3/2}$. If we choose a function $L\in\CH$ satisfying $N^{1/4}\prec L(N)\prec N^{1/2}$, then we can expand $a(t)$ on the interval $(N, N + L(N)]$ as
    \begin{align*}
        (N+n)^{3/2} = N^{3/2} + \frac{3}{2}N^{1/2}n + \frac{3}{8 N^{1/2}}n^2 - \frac{1}{16\xi_{N,n}^{3/2}}n^3
    \end{align*}
    for every $n\in[L(N)]$ and some $\xi_{N,n}\in(N, N+L(N)]$. The admissible range for $L$ was chosen so that the error term asymptotically vanish while the leading quadratic term does not, in the sense that
    \begin{align*}
        \lim_{N\to\infty}\max_{n\in[L(N)]}\frac{1}{16\xi_{N,n}^{3/2}}n^3 = 0\quad \textrm{and}\quad \lim_{N\to\infty}\max_{n\in[L(N)]} \frac{3}{8 N^{1/2}}n^2 = \infty.
    \end{align*}
    Hence on every interval $(N, N + L(N)]$, $a$ behaves like a quadratic polynomial. If instead we choose $L$ in the range $N^{1-3/(2d)}\prec L(N)\prec N^{1-3/(2(d+1))}$, then the Taylor approximant for $a$ will be a degree-$d$ polynomial.
\end{example}
The fact that on different short intervals we get different approximating polynomials (what some works \cite{Fr16, FrW09, Kouts22} call \emph{variable polynomials}) means that we often need to work with finite averages $\E_{n\in[N]}$, only taking the limit $N\to\infty$ towards the very end of the argument. This necessitates more quantitative arguments than one would use for polynomials. 

The reduction of Hardy sequences to polynomials on short intervals means that any progress on multiple ergodic averages along polynomials is likely to open new paths in the study of Hardy sequences. Not surprisingly, the results presented in this section historically followed analogous developments for integer polynomials. 

\subsubsection{Classification of Hardy sequences}
Before stating joint ergodicity results for Hardy sequences, we want to provide some intuition for various conditions that we will see later. A good starting point that motivates forthcoming definitions is the equidistribution result of Boshernitzan.
\begin{theorem}[Boshernitzan's equidistribution result {\cite[Theorem 1.3]{Boshernitzan-equidistribution}}]\label{T: Boshernitzan}
    Let $a\in\mathcal{H}$. The sequence $(a(n))_{n}$ is equidistributed $\mod~\!1$ if and only if for every $p\in \Q[t]$, we have
    \begin{align}\label{E: staying away}
        \lim_{t\to \infty}\frac{|a(t)-p(t)|}{\log t}=\infty.
    \end{align}
\end{theorem}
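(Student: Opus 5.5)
We prove Boshernitzan's criterion via Weyl's criterion: $(a(n))_n$ is equidistributed mod 1 if and only if $\E_{n\in[N]} e(ka(n))\to 0$ for every nonzero $k\in\Z$. Since $ka$ still lies in $\CH$ and $a$ satisfies \eqref{E: staying away} if and only if $ka$ does (because $\Q[t]$ is closed under scaling by $k$), it suffices to treat the case $k=1$ for an arbitrary function of $\CH$. We then split according to the growth of $a$, using that elements of a Hardy field are eventually monotone, comparable, and differentiable, with derivatives again in the field.

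\emph{Necessity.} Suppose \eqref{E: staying away} fails for some $p\in\Q[t]$. Then $b:=a-p\in\CH$ satisfies $|b(t)|\ll \log t$ along a sequence, hence (by comparability in a Hardy field) for all large $t$. Pick $q\in\N$ with $qp\in\Z[t]$; then $qp(n)$ is periodic mod $q$ in $n$. Fix a residue class $n\equiv r \pmod{q}$, on which $p(n)$ mod 1 is constant, say equal to $c_r$. Along this class, $b$ changes by at most $O(\log(1+h/n))$ over a step of length $h$, so $b(n)$ mod 1 varies so slowly that the averages $\E_{n\in[N]}e(a(n))$ fail to converge to $0$. Concretely, either $b$ converges to a limit, in which case the average tends to a nonzero constant, or $b\asymp c\log t$, in which case we use the Euler--Maclaurin comparison $\E_{n\le N}e(c\log n)\approx N^{ic}/(1+ic)$, whose modulus stays bounded away from $0$. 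Handling the general case $b\prec\log$ or $b\asymp\log$ with a single summation-by-parts argument is routine.

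\emph{Sufficiency.} Assume \eqref{E: staying away}. We induct on the ``degree'' $d$, the unique integer with $t^{d}\prec a(t)\prec t^{d+1}$ roughly; polynomial-like pieces are absorbed using the hypothesis. The argument proceeds in three cases.

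\begin{enumerate}
\item \textbf{Base case.} Suppose $\log t\prec a(t)\prec t$. Then $a'\to0$ monotonically and $ta'(t)\to\infty$. Fejér's theorem, proved by comparing $\sum e(a(n))$ with $\int e(a(t))\,dt$ via Euler--Maclaurin or Kusmin--Landau, gives equidistribution.
\item \textbf{Behaviour near an integer polynomial.} If $a$ grows like $t^{d}$ but is close to a rational polynomial, we subtract the best rational polynomial approximant. The hypothesis ensures the remainder still dominates $\log$, and the rational part only contributes finitely many residue classes.
\item \textbf{Inductive step.} For faster growth we apply the van der Corput difference theorem: it suffices to show that $\Delta_h a(n)=a(n+h)-a(n)$ is equidistributed for each $h\in\N$. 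The difference $\Delta_h a$ lies in $\CH$ (closure under shifts), has smaller degree, and still satisfies \eqref{E: staying away}. The last point holds because if $\Delta_h a - p$ were $O(\log t)$, then integrating, via the mean value theorem and the fact that $\Delta_h a\approx h a'$ with controlled error, would force $a$ to be close to a polynomial plus $O(t\log t)$, contradicting the hypothesis after a careful comparison.
\end{enumerate}

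The main obstacle is exactly this last verification, namely that the difference operator preserves \eqref{E: staying away}. The issue is that $\Delta_h$ of a non-polynomial $\CH$-function could in principle come within $O(\log t)$ of a rational polynomial. We would handle it through the structural fact that every $a\in\CH$ decomposes as $a=p+r$ with $p\in\R[t]$ and $r\prec$ its lowest relevant power, treating irrational leading coefficients separately via Weyl's theorem for polynomials.
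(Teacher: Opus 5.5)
Your inductive step for sufficiency rests on a false claim: $\Delta_h a$ need not inherit \eqref{E: staying away}. Your own justification shows why, since integrating $\Delta_h a-p=O(\log t)$ only gives that $a$ is a polynomial plus $O(t\log t)$, and that is perfectly compatible with \eqref{E: staying away}.

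Concretely, $a(t)=t\log t$ satisfies the hypothesis, but $\Delta_h a(t)=h\log t+h+O(h^2/t)$ lies within $O(\log t)$ of the rational polynomial $h$. Moreover $|\E_{n\in[N]}e(\Delta_h a(n))|\to|1+2\pi i h|^{-1}\neq 0$. So van der Corput's difference theorem is unavailable, even though $(n\log n)_n$ is equidistributed. (For this particular $a$ the averaged inequality of Lemma \ref{L: vdC} would still work, since these limits decay in $h$.)

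For $a(t)=t\sqrt{\log t}$ or $a(t)=t\log\log t$ even that fails. Here $\Delta_h a(t)=h\sqrt{\log t}+o(1)$, resp.\ $h\log\log t+o(1)$, so $|\E_{n\in[N]}e(\Delta_h a(n))|\to 1$ for every fixed $h$.

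Thus functions $a=p+r$ with $p\in\Q[t]$ and $t\prec r\ll t\log t$ are reached neither by your Fej\'er base case nor by your differencing step. The higher analogues $t^j\prec r\ll t^j\log t$ reduce to this case under differencing. The decomposition $a=p+r$ you invoke does not help, because the difficulty sits in $r$.

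What is missing is a direct exponential-sum estimate for this class. After passing to residue classes modulo the denominator of $p$, van der Corput's second-derivative test gives $\sum_{N<n\leq 2N}e(f(n))\ll N\lambda^{1/2}+\lambda^{-1/2}$ when $|f''|\asymp\lambda$ on $(N,2N]$. This is $o(N)$ because $N^{-1-\varepsilon}\prec|r''(N)|\ll N^{-1}$ for every $\varepsilon>0$. Alternatively, use a finitary van der Corput inequality with $H=H(N)\to\infty$ tuned to the growth of $r'$.

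Similarly, ``Weyl's theorem for polynomials'' does not handle $a(t)=\sqrt{2}\,t+\log t$, whose differences converge to constants. There you need summation by parts against $\sum_{n\leq N}e(\alpha n)=O(1)$, using $\sum_{n\leq N}|r'(n)|=o(N)$.

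Your necessity part has a separate error, caused by the reduction to $k=1$, which is legitimate only for sufficiency. When \eqref{E: staying away} fails, $\E_{n\in[N]}e(a(n))$ may well tend to $0$: for $a(t)=t/2$ it equals $\E_{n\in[N]}(-1)^n$. The constants $e(c_r)$ attached to different residue classes can cancel, and this happens exactly when $\E_{r\in\Z/q\Z}e(p(r))=0$.

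Test the frequency $q$ instead, for which $e(qa(n))=e(qb(n))$. By L'H\^opital's rule in the Hardy field, $tb'(t)\to c:=\lim_{t\to\infty}b(t)/\log t\in\R$. Compare the sum with $\int_1^N e(qb(t))\,dt$ (the error is $\ll\sum_{n\leq N}|b'(n)|\ll\log N$) and substitute $t=Nu$. This gives $\E_{n\in[N]}e(qb(n))=e(qb(N))\bigl((1+2\pi i qc)^{-1}+o(1)\bigr)$, whose modulus stays away from $0$. This handles convergent $b$, $b\prec\log$ and $b\asymp\log$ in one stroke.
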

If $a$ satisfies \eqref{E: staying away}, we say that it \emph{stays away from rational polynomials}.

In what follows, we will work with the following families of Hardy sequences. We caution the reader that existing research works may name them differently.
\begin{definition}[Independence properties of Hardy sequences]\label{D: independence}
We call $a_1, \ldots, a_\ell\in\CH$:
\begin{enumerate}
\item \emph{essentially distinct} if for all distinct $1\leq i, j\leq \ell$, we have
\begin{align*}
    \lim_{t\to\infty}\frac{|a_j(t)|}{\log t} = \lim_{t\to\infty}\frac{|a_i(t)-a_j(t)|}{\log t}=\infty;
\end{align*}

\item \emph{pairwise independent} if for all distinct $1\leq i, j\leq\ell$ and $c_i, c_j\in\R$ not both zero, we have
\begin{align*}
    \lim_{t\to\infty}\frac{|c_i a_i(t) + c_j a_j(t)|}{\log t}=\infty;
\end{align*}
\item \emph{strongly independent} if every nontrivial linear combination of $a_1, \ldots, a_\ell$ stays logarithmically away from rational polynomials, i.e. for all $c_1, \ldots, c_\ell\in\R$ not all zero and for all $p\in\Q[t]$, we have
\begin{equation}\label{E: lafrp}
    \lim_{t\to\infty}\frac{|c_1 a_1(t) + \cdots + c_\ell a_\ell(t) - p(t)|}{\log t} = \infty;
\end{equation}
    \item \emph{strongly irrationally independent} if \eqref{E: lafrp} holds for all $c_1, \ldots, c_\ell\in\R$ not all rational and for all $p\in\Q[t]$.
    %\item  \emph{irrationally independent} if \eqref{E: lafrp} holds for all $c_1, \ldots, c_\ell\in \mathbb{R}\backslash\mathbb{Q}_{\ast}:=(\mathbb{R}\backslash\mathbb{Q})\cup\{0\}$ not all zero and for all $p\in\Q[t]$.
\end{enumerate}
\end{definition}

%As we see shortly, 
The notions of essential distinctness and pairwise independence generalize the notions of essential distinctness and pairwise affine independence for integer polynomials. By contrast, strong independence and strong irrational independence will play a similar role as affine independence for integer polynomials, even though neither of the former two extends the latter.
%, and strong independence of Hardy sequences will play the same role as the notions of essential distinctness, pairwise affine independence, and affine independence do for integer polynomials. (We do emphasize, however, that strong independence does \emph{not} generalize the concept of affine independence of polynomials.) The notion of strong irrational independence has no analog for integer polynomials since a family of integer polynomials satisfies a nontrivial affine relation if and only if satisfies one with rational coefficients.
%The relation between these notions is reflected below.}}

Like it was the case for integer polynomials, essential distinctness and pairwise independence will be preconditions for Host-Kra seminorm control (and hence for joint ergodicity in weakly mixing systems); strong independence will be necessary for invariant factor control; and strong irrational independence will enable rational Kronecker factor control.

Clearly, strong independence implies strong irrational independence,  %which implies irrational independence, 
which in turn implies pairwise independence. The following examples demonstrate the failure of the converse directions:
\begin{example}
    The sequences $t^{\sqrt{2}},\; t^{\sqrt{2}}+t^{1/2}$ are strongly independent; same with $t^{3/2},\; t^{3/2}+t^{1/2}$ or any other linearly independent \emph{fractional polynomials} $a_j(t) = \sum\limits_{i=1}^d \alpha_{ji}t^{b_i}$ for \emph{noninteger} positive real powers $b_1, \ldots, b_d$.
\end{example}
\begin{example}
    The sequences $t^{\sqrt{2}},\; t^{\sqrt{2}}+t$ are strongly irrationally independent but not strongly independent. More generally, linearly independent fractional polynomials $a_j(t) = \sum\limits_{i=1}^d \alpha_{ji}t^{b_i}$ for (possibly integer) positive reals $b_1, \ldots, b_d$ and rational coefficients $\alpha_{ji}$ will be strongly irrationally independent, but not necessarily strongly independent.
\end{example}
\begin{example}
    Linearly independent real polynomials are pairwise independent, but they need not even be strongly irrationally independent (consider e.g. $\sqrt{2}t^2,\; \sqrt{2}t^2 + \sqrt{3} t$).    
\end{example}

\subsubsection{Single transformation}\label{SSS: Hardy single}
The systematic study of joint ergodicity of Hardy sequences was initiated by Frantzikinakis with the following result.
\begin{theorem}%[Nonpolynomial Hardy sequences of distinct growth, 
[{\cite[Theorem 2.6]{Fr10}}]\label{T: Fr nonpolynomial Hardy}
    Let $a_1, \ldots, a_\ell\in\CL\CE$ satisfy the following two properties:
    \begin{enumerate}
        \item $a_1 \prec \cdots \prec a_\ell$;
        \item for every $1\leq j\leq \ell$ there exists $k_j\in\N_0$ and $0<\veps_j<1$ such that $t^{k_j+\veps_j} \prec a_j(t)\prec t^{k_j+1}$;
    \end{enumerate}
    Then they are controlled by the invariant factor for every system $(X, \CX, \mu, T)$. In particular, they are jointly ergodic for every ergodic system $(X, \CX, \mu, T)$.
    %they are jointly ergodic for every ergodic system $(X, \CX, \mu, T)$ (and hence also are controlled by the invariant factor for every system $(X, \CX, \mu, T)$).
\end{theorem}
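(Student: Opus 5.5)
We follow the New Joint Ergodicity Strategy in the single-transformation case, invoking Theorem \ref{T: invariant control criteria} with $T_1=\cdots=T_\ell=T$. It then suffices to check two things for every system $(X,\CX,\mu,T)$: (i) the sequences $\sfloor{a_1(n)},\ldots,\sfloor{a_\ell(n)}$ admit Host-Kra seminorm control; and (ii) they are good for equidistribution. The second conclusion (joint ergodicity for ergodic $T$) is immediate from the first, since then $\E(f_j|\CI(T))=\int f_j\,d\mu$.

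\textbf{Step (ii): equidistribution.} For nonergodic eigenfunctions $\chi_j$ of $T$ we have $T\chi_j=e(\lambda_j(x))\chi_j$ with $\lambda_j$ a $T$-invariant function. Hence the average $\E_{n\in[N]}\prod_j T^{\sfloor{a_j(n)}}\chi_j$ equals $\prod_j\chi_j$ times
\begin{align*}
\E_{n\in[N]} e\Big(\sum_j \sfloor{a_j(n)}\lambda_j(x)\Big).
\end{align*}
For fixed $x$ it is therefore enough to show the following. For $\beta_1,\ldots,\beta_\ell\in[0,1)$ not all zero,
\begin{align*}
\lim_{N\to\infty}\E_{n\in[N]} e\Big(\sum_j \beta_j\sfloor{a_j(n)}\Big)=0,
\end{align*}
and when all $\beta_j=0$ the average is trivially $1$. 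The limit then follows by dominated convergence, after checking that the terms with $\sum_j\beta_j=0$ collapse to $\E(\prod_j\chi_j|\CI(T))$.

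To remove the floors, we use that the sequence $(a_1(n),\ldots,a_\ell(n))$ is equidistributed in $\T^\ell$ jointly with the relevant linear combinations. By Boshernitzan's theorem (Theorem \ref{T: Boshernitzan}) and its multidimensional version for Hardy fields, it suffices that every nontrivial real combination $\sum_j c_j a_j$ stays away from rational polynomials. This follows from hypotheses (1)--(2). The $a_j$ have distinct growth, so the fastest-growing $a_j$ with $c_j\neq0$ dominates, and it lies strictly between $t^{k_j+\veps_j}$ and $t^{k_j+1}$, hence is logarithmically far from any polynomial. A standard Weyl-type argument then converts this into the vanishing of the exponential sums with floors.

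\textbf{Step (i): seminorm control.} This is the main obstacle. The plan is a PET induction adapted to Hardy sequences, run on finite averages since we cannot pass to the limit early. First we pass to short intervals $(N,N+L(N)]$, on which each $a_j$ is approximated by a polynomial (Taylor expansion) with vanishing error. The choice of $L$ must use hypothesis (2), so that the approximating polynomials of distinct $a_j$ have distinct degrees or distinct, nondegenerate leading coefficients; the distinct-growth condition (1) keeps them essentially distinct. We also need to control the error from the floor functions. It is absorbed by writing $\sfloor{a(n)}=a(n)-\{a(n)\}$ and using the equidistribution of the fractional parts to split into finitely many pieces.

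We then apply van der Corput repeatedly, in Bergelson's PET scheme, to the resulting variable-polynomial averages, until we reach linear averages in $T$. These are bounded by $\nnorm{f_j}_{s,T}$ via \eqref{E: HK seminorm control}, with constants uniform in the interval. The delicate point is keeping uniformity of all estimates across intervals and across differencing parameters. Averaging back over $N$ then gives the seminorm bound.
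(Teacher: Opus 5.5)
The survey does not prove this theorem; it only cites \cite{Fr10}. Since the survey names \cite{Fr21} as the first Hardy-sequence work to use the new criteria, the original argument must be an instance of the Old Joint Ergodicity Strategy. That means PET-type seminorm estimates, reduction to nilsystems via the Host--Kra structure theorem, and an equidistribution result for $(S^{\sfloor{a_1(n)}}y,\ldots,S^{\sfloor{a_\ell(n)}}y)$ on nilsystems, which is the step the survey says restricts the admissible sequences.

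You instead go through Theorem~\ref{T: invariant control criteria}. This replaces that nilmanifold step by the exponential-sum computation of your Step (ii); the price is that the real depth now sits in the degree-lowering black box. Step (ii) is sound. The fastest $a_j$ with $c_j\neq 0$ dominates $\sum_j c_ja_j$, so every nontrivial real combination stays logarithmically away from all real polynomials. Boshernitzan plus Weyl then kills the sums with floors, once you treat separately the case of all-rational $\beta_j$ via equidistribution of $(a_j(n)/q)_j$ mod $1$. Two consequences are worth making explicit:
\begin{enumerate}
\item The power gap $\veps_j$ is never used. Your route proves the statement under $t^{k_j}\log t\prec a_j\prec t^{k_j+1}$, which is the single-transformation case of Theorem~\ref{T: Fr distinct-growth Hardy}.
\item Your computation says exactly that $a_1,\ldots,a_\ell$ are strongly independent. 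So within the survey the statement is an immediate special case of Theorem~\ref{T: Ts strongly independent}.
\end{enumerate}

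Two points need correcting.

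In Step (ii), the required limit is $\prod_j\E(\chi_j|\CI(T))=\prod_j\chi_j\cdot 1_{\{\beta_1=\cdots=\beta_\ell=0\}}$, not $\E(\prod_j\chi_j|\CI(T))$. For $\chi_2=\overline{\chi_1}$ with nonzero eigenvalue, the latter is $1$ while the true limit is $0$. The condition $\sum_j\beta_j=0$ is irrelevant: your vanishing statement for $\beta$ not all zero already gives the correct limit directly.

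Step (i) is a plan rather than a proof, for two reasons.
\begin{itemize}
\item The criteria need control at every index, or at least the weak control of Definition~\ref{D: weak control}. There the faster-growing slots carry nonergodic eigenfunctions, i.e.\ weights $e(\sum_{j>m}\beta_j\sfloor{a_j(n)})$ that must survive the differencing. A PET run in the natural order only controls the fastest-growing function.
\item The identity $\sfloor{a(n)}=a(n)-\{a(n)\}$ cannot be placed in the exponent of $T$. One has to track the $\{0,1\}$-valued errors $\sfloor{x+y}-\sfloor{x}-\sfloor{y}$ instead.
\end{itemize}
You do not need to redo this. Distinct growth together with (2) makes the $a_j$ essentially distinct, so Theorem~\ref{T: Ts essentially distinct} supplies the seminorm control.
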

We recall Convention \ref{Conv: real sequences} that when we speak of joint ergodicity, invariant factor control, etc. for real-valued sequences, then all these properties refer to the integer parts of these sequences.

In particular, Theorem \ref{T: Fr nonpolynomial Hardy} implies that the corresponding multiple ergodic average converges in norm. In general, for ``nonpolynomial'' Hardy sequences, identifying the limit is the surest way of showing norm convergence, as we do not know how to prove it by other means. 

Theorem \ref{T: Fr nonpolynomial Hardy} thus covers those Hardy sequences that are sufficiently nonpolynomial. The (partially) complimentary case of real polynomials was covered by Karageorgos and Koutsogiannis.
\begin{theorem}%[Strongly independent real polynomials, 
[{\cite[Theorem 2.1]{KK19}}]\label{T: KK strongly independent real polys}
    Any strongly independent real polynomials $a_1, \ldots, a_\ell$ $\in\R[t]$ are controlled by the invariant factor for every system $(X, \CX, \mu, T)$. In particular, they are jointly ergodic for every ergodic system $(X, \CX, \mu, T)$.
    %are jointly ergodic for every ergodic system $(X, \CX, \mu, T)$ (and hence also are controlled by the invariant factor for every system $(X, \CX, \mu, T)$).
\end{theorem}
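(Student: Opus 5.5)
The natural route is the New Joint Ergodicity Strategy in the single-transformation setting: by Theorem \ref{T: invariant control criteria} (with $T_1=\cdots=T_\ell=T$), it suffices to show that the integer parts of $a_1,\ldots,a_\ell$ admit Host-Kra seminorm control for every system $(X,\CX,\mu,T)$ and are good for equidistribution. Joint ergodicity for ergodic $T$ then follows, since $\CI(T)$ is trivial.

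For the equidistribution part, I would use the global reformulation of being good for equidistribution. We need
\begin{align*}
\lim_{N\to\infty}\E_{n\in[N]} e\Big(\sum_{j=1}^\ell \beta_j\sfloor{a_j(n)}\Big)=0
\end{align*}
for $\beta_1,\ldots,\beta_\ell\in[0,1)$ not all zero. Strong independence says that every nontrivial real combination $\sum_j c_ja_j$ is a real polynomial staying logarithmically away from $\Q[t]$. By Weyl's theorem, or Theorem \ref{T: Boshernitzan} applied to this polynomial, the polynomial $\sum_j c_j a_j$ is then equidistributed mod 1. The floor functions are handled in the standard way: the vector $(a_1(n),\ldots,a_\ell(n))$ is equidistributed mod $\Z^\ell$ (by the same argument applied to all combinations), and $e(\beta\sfloor{x})$ is Riemann integrable on $\T$ in the fractional-part variable. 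Hence the average tends to
\begin{align*}
\int_{\T^\ell}\prod_j e(\beta_j(x_j-\{x_j\}))\,dx,
\end{align*}
interpreted through lifting to $\T^{2\ell}$ via $(a_j(n),\beta_j a_j(n))$. I would carry this out by applying Weyl to the joint sequence $(\{a_j(n)\},\beta_j a_j(n))_j$ and checking that it vanishes when some $\beta_j\neq 0$. This is routine given strong independence.

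The main obstacle is Host-Kra seminorm control for real polynomials with floors. Strong independence implies essential distinctness: $a_i-a_j$ and $a_j$ are nonconstant, and in fact not close to rational polynomials. So I would run Bergelson's PET induction, as in the proof of Theorem \ref{T: HK seminorm control for polys and single trafo}. Each van der Corput step replaces $\sfloor{a(n+h)}-\sfloor{a(n)}$ by $\sfloor{a(n+h)-a(n)}+e$ with an error $e\in\{0,1\}$ depending on fractional parts. I would absorb these errors by partitioning according to the fractional parts into finitely many cells, so that the average splits into a bounded number of averages with extra shifts $T^{e}$. Alternatively, I would pass to a suspension/$\R$-flow in which the real-polynomial iterates become genuine polynomial iterates. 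I would try the flow approach first: extend $T$ to the measure-preserving $\R$-action on $X\times[0,1)$, apply PET for real polynomial flows to get control by a Host-Kra seminorm of the flow, and descend to $\nnorm{\cdot}_{s,T}$ using that the seminorm of the suspended function is controlled by that of $f$. Non-degeneracy of the PET scheme (no differenced polynomial collapses to a constant) is exactly what essential distinctness guarantees. With seminorm control and equidistribution in hand, Theorem \ref{T: invariant control criteria} gives invariant factor control.
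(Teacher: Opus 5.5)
Your reduction via Theorem~\ref{T: invariant control criteria} is sound, and it is a genuinely different route from the paper's. The paper takes the statement from \cite{KK19}, which works within the Old Joint Ergodicity Strategy. After the seminorm estimates it therefore has to prove that $(S^{\sfloor{a_1(n)}}y,\ldots,S^{\sfloor{a_\ell(n)}}y)$ equidistributes on nilsystems. You replace that step with an exponential-sum computation.

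The gap is in the step you yourself call the main obstacle: neither mechanism you sketch yields Host--Kra seminorm control as described.

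\emph{Splitting by carries.} Splitting according to the carries $e\in\{0,1\}$ does not give ``a bounded number of averages with extra shifts $T^{e}$.'' The set of $n$ on which a carry takes a given value is cut out by conditions such as $\{a(n+h)\}<\{a(n)\}$, so you get averages weighted by sequences $1_E(n)$. These weights persist through every van der Corput and Cauchy--Schwarz step. They then block the final appeal to the unweighted linear estimates \eqref{E: HK seminorm control} or \eqref{E: box seminorm control}. Removing them requires approximating the indicators by trigonometric polynomials. That in turn needs joint equidistribution of the relevant fractional parts for the differenced families at every stage, and then a proof that polynomial-phase weights do not spoil the seminorm bounds.

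\emph{Suspension flow.} Evaluated at $(x,y)$, the suspension produces the iterates $\sfloor{a_j(n)+y}$ rather than $\sfloor{a_j(n)}$. This is repairable by restricting to $y\in[0,\delta)$ and using equidistribution of $a_j(n)$ mod $1$. The more serious problem is that you still average over integer $n$. PET therefore ends in linear averages $\E_{n}\prod_j S_{c_jn}G_j$, and \eqref{E: box seminorm control} gives box seminorms along the cyclic groups generated by time-$c$ maps $S_c$, with $c$ depending on leading coefficients and differencing parameters. These are not Host--Kra seminorms of the flow, and the two are not interchangeable. Indeed, $\CI(S_c)$ contains every flow eigenfunction with eigenvalue in $c^{-1}\Z$; for the suspension, $S_1=T\times \mathrm{id}$ is not even ergodic. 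This is the same phenomenon that makes $\sfloor{\sqrt2 n}$ fail in Example~\ref{Ex: sqrt 2 n}. You would need an additional argument controlling these discrete box seminorms, averaged over the parameters, by a higher-degree flow seminorm, and it is missing.

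The missing ingredient is available in the paper. As you note, strong independence implies essential distinctness, and real polynomials belong to $\CH$. The first part of Theorem~\ref{T: Ts essentially distinct} therefore gives Host--Kra seminorm control for every system $(X,\CX,\mu,T)$. Citing it instead of your PET sketch completes the argument, and shows the statement is a special case of Theorem~\ref{T: Ts strongly independent}.

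The equidistribution step also needs a small repair. Your displayed integral is identically $1$, because $x_j-\{x_j\}=0$ on $[0,1)$. Moreover, the sequence $(\{a_j(n)\},\beta_ja_j(n))_j$ equidistributes on all of $\T^{2\ell}$ only when every $\beta_j$ is irrational.
\begin{itemize}
\item If $\beta_j=p/q$ in lowest terms, the $j$th pair lies on the subgroup $\{(u,v)\colon qv\equiv pu \bmod 1\}$. There $e(v-\beta_ju)=e(r/q)$, with $r$ uniformly distributed in $\Z/q\Z$ under Haar measure.
\item By strong independence, the whole sequence equidistributes with respect to Haar measure on the product of these subgroups, with full $\T^2$ factors for irrational $\beta_j$.
\end{itemize}
So the limit is still $0$ whenever some $\beta_j\neq 0$, but the computation has to be carried out on that subgroup.
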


Further joint ergodicity results for Hardy sequences and a single transformation have been proved by Bergelson, Moreira, and Richter \cite{BMR20} as well as Frantzikinakis \cite{Fr21}; the second of these works was the first in the study of Hardy sequences that relied on the New Joint Ergodicity Strategy. Finally, the problem for a single transformation was effectively resolved by Tsinas \cite{Ts22} with the following two theorems.
\begin{theorem}%[Strongly independent Hardy sequences, 
[{\cite[Theorem 1.2]{Ts22}}]\label{T: Ts strongly independent}
    Any strongly independent $a_1, \ldots, a_\ell\in\CH$ are controlled by the invariant factor for every system $(X, \CX, \mu, T)$. In particular, they are jointly ergodic for every ergodic system $(X, \CX, \mu, T)$.
\end{theorem}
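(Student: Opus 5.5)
The plan is to follow the New Joint Ergodicity Strategy, using the single-transformation version of Corollary \ref{C: invariant/Krat control criteria global}. Invariant factor control for every system $(X,\CX,\mu,T)$ is equivalent to two properties. First, Host-Kra seminorm control for all systems $(X,\CX,\mu,T)$. Second, the integer parts $\sfloor{a_1(n)},\ldots,\sfloor{a_\ell(n)}$ being good for equidistribution, i.e.
\begin{align*}
\lim_{N\to\infty}\E_{n\in[N]} e\brac{\beta_1\sfloor{a_1(n)}+\cdots+\beta_\ell\sfloor{a_\ell(n)}}=0
\end{align*}
for all $\beta_1,\ldots,\beta_\ell\in[0,1)$ not all zero. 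The joint ergodicity statement for ergodic $T$ then follows immediately, since $\CI(T)$ is trivial.

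\textbf{Equidistribution.} I would write $\sfloor{a_j(n)}=a_j(n)-\{a_j(n)\}$ and reduce to showing that the vector $(\sum_j\beta_j a_j(n),\, a_1(n),\ldots,a_\ell(n))$ is equidistributed in the appropriate subtorus of $\T^{\ell+1}$. This lets the sum be approximated by integrating $e(\sum_j\beta_j(x_j-\{x_j\}))$-type functions via a Weyl-criterion argument. Each character applied to this vector gives a real linear combination $\sum_j c_j a_j$ with $(c_j)\neq 0$. Strong independence says every such combination stays logarithmically away from $\Q[t]$, so Boshernitzan's Theorem \ref{T: Boshernitzan} gives equidistribution mod 1. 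The standard approximation of the sawtooth function by trigonometric polynomials then yields the required vanishing.

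\textbf{Seminorm control.} This is the main obstacle. I would argue by induction in PET fashion, adapted to Hardy sequences as follows.
\begin{enumerate}
\item Apply the van der Corput lemma on short intervals $(N,N+L(N)]$. On these intervals each $a_j$ is Taylor-approximated by a polynomial whose coefficients depend on $N$ (a variable polynomial), as in the Taylor expansion example for $t^{3/2}$, with the error terms vanishing.
\item Run Bergelson's PET induction on these variable polynomial families, keeping the averages finite and quantitative in $N$. The aim is to reduce to linear iterates $T^{c_{j}(N) n}$ with $N$-dependent coefficients.
\item Bound the resulting linear averages by box or Host-Kra seminorms of $f_\ell$ along $T^{c(N)}$, uniformly in $N$.
\item Convert these to $\nnorm{f_\ell}_{s,T}$ using that the coefficients $c(N)$ grow in a controlled way and that averages of seminorms along $T^{c}$ are dominated by $\nnorm{\cdot}_{s',T}$.
\end{enumerate}

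Two difficulties need special care in this step. The first is the sublinear and mixed regimes, for example functions such as $t^{1/2}$ combined with polynomial-like pieces. For these I would first perform a change of variables $n\mapsto$ the compositional inverse of the slowest-growing function, exploiting closure of $\CH$ under compositional inverses. The second is making sure essential distinctness survives each van der Corput step. This is guaranteed because strong independence implies pairwise independence, and hence essential distinctness of all differences $a_i-a_j$ and of their shifts. Controlling the last index first and then symmetrizing over the indices should give degree-$s$ control at every index, completing the criteria.
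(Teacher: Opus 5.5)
Your top-level reduction is the right one. The survey gives no proof beyond citing \cite{Ts22}, but in its framework the statement is exactly the single-transformation criteria (Corollary \ref{C: invariant/Krat control criteria global} in Frantzikinakis' single-$T$ form) combined with two inputs. The first is equidistribution from Boshernitzan's theorem. The second is Host--Kra seminorm control from Theorem \ref{T: Ts essentially distinct}, which applies because strong independence with $p=0$ and coefficient vectors $(1)$ or $(1,-1)$ gives essential distinctness. Your equidistribution step is fine, but you must still check that the limiting integral over the subtorus vanishes when all $\beta_j$ are rational. With $q$ a common denominator, use equidistribution of $(a_1(n)/q,\ldots,a_\ell(n)/q)$ in $\T^\ell$; the limit is then $\prod_j \frac1q\sum_{r=0}^{q-1}e(r\beta_j)$, which is $0$ because some $\beta_j\neq 0$.

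The genuine gap is the seminorm-control step. It is the whole content of \cite[Theorem 1.3]{Ts22}, your sketch does not establish it, and the justification you give for its key point is false.

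First, global essential distinctness does not survive van der Corput differencing or passage to short intervals. For $a(t)=t\log t$ one has $a(t+h)-a(t)=h\log t+O(1)$, which is neither eventually constant (so the two copies cannot be merged) nor $\succ\log t$. For the strongly independent pair $a_1(t)=t^{3/2}$, $a_2(t)=t^{3/2}+e^{\sqrt{\log t}}$, bounded-degree Taylor approximation of $t^{3/2}$ forces intervals $(N,N+L(N)]$ with $L(N)\le N^{1-\delta}$. On every such interval $a_2-a_1$ varies by $o(1)$, so the Taylor families are not essentially distinct. Short-interval PET then only sees $f_1\cdot T^{c_N}f_2$ (up to rounding), with $c_N\approx e^{\sqrt{\log N}}$, and extracting a seminorm of $f_2$ needs an extra averaging argument over these slowly varying shifts.

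Second, Step 4 is unsupported. The linear iterates produced are $\sfloor{\alpha_N n+\beta_N}$ with real slopes tending to $0$ or $\infty$: for instance $\tfrac34 hN^{-1/2}$ for $t^{3/2}$ and $h(\log N+1)$ for $t\log t$. So there is no seminorm ``along $T^{\alpha_N}$''. For integer slopes, \eqref{E: scaling 2} loses a factor that grows with the slope. Uniformity in $N$, together with the rounding errors from $\sfloor{\cdot}$ at every differencing, is precisely what has to be proved.

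Third, the compositional-inverse change of variables leaves $\CH$ for admissible slow functions such as $a_1(t)=e^{\sqrt{\log t}}$ (paired, say, with $t^{3/2}$). Its inverse is $t^{\log t}$, which has superpolynomial growth, so the transformed iterates are out of reach of PET.

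The clean repair is to invoke Theorem \ref{T: Ts essentially distinct} rather than re-derive it.
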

Theorem \ref{T: Ts strongly independent} realizes Heuristic \ref{H: independent} for Hardy sequences of polynomial growth and single transformation, just like the result below realizes Heuristic \ref{H: pairwise independent}.
\begin{theorem}%[Essentially distinct Hardy sequences, 
[{\cite[Theorem 1.3 and Corollary 1.4]{Ts22}}]\label{T: Ts essentially distinct}
    Any essentially distinct $a_1, \ldots, a_\ell\in\CH$:
    \begin{enumerate}
        \item admit Host-Kra seminorm control;
        \item are jointly ergodic for any system $(X, \CX, \mu,$\! $T_1, \ldots, T_\ell)$ with $T_1, \ldots, T_\ell$ weakly mixing.
    \end{enumerate}
\end{theorem}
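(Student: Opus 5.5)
Part (ii) follows from part (i), so the plan is to prove (i) and then deduce (ii); here $T_1=\cdots=T_\ell=T$. For (ii) we use the criterion: on a weakly mixing system, $\CE(T)$ consists only of constants, so the sequences are trivially good for equidistribution. Theorem \ref{T: invariant control criteria}, combined with the Host-Kra seminorm control from (i), then gives invariant factor control. Since $\CI(T)$ is trivial, this is joint ergodicity. (Alternatively, $\nnorm{f}_{s,T}=\abs{\int f\,d\mu}$ on weakly mixing systems, so seminorm control already yields the product of integrals after subtracting means.)

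For (i), the approach is a PET induction adapted to Hardy sequences. We reduce to short intervals and use the Taylor expansions described before Theorem \ref{T: Boshernitzan}.

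First, we localize. Choose $L\in\CH$ with $1\prec L(N)\prec N$, growing slowly enough that each $a_j$ restricted to $(N,N+L(N)]$ agrees, up to $o(1)$ errors, with a real polynomial $p_{N,j}(n)$ of bounded degree $d$ whose coefficients are explicit derivatives $a_j^{(i)}(N)/i!$. The key point is that $\E_{n\in[N]}$ is approximated by averages over $M\in[N]$ of short averages $\E_{n\in[L(M)]}$. Hence it suffices to bound $\limsup_N \E_{M\in[N]}\norm{\E_{n\in[L(M)]}\prod_j T^{\floor{a_j(M+n)}}f_j}_{L^2}^{2^k}$. The rounding errors $\floor{p+o(1)}$ versus $\floor{p}$ are handled by the standard trick of splitting $T^{\floor{x}}$ according to fractional parts, at the cost of an extra bounded weight.

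Second, we run PET with variable polynomials. The $p_{N,j}$ are essentially distinct uniformly in $N$, in a quantitative sense: the leading coefficients of the differences $p_{N,i}-p_{N,j}$ tend to infinity compared with the scale $L(N)$. This is the Hardy analogue of $a_i-a_j$ nonconstant, and it follows from the definition of essential distinctness (growth beyond $\log t$). We then apply van der Corput repeatedly to reach linear averages $\E_n \prod T^{\floor{\alpha_{N,\underline h,j}n}}g_j$. To close, we need a uniform box/Host-Kra estimate for linear averages with real, $N$-dependent coefficients. For this we pass to the suspension flow, or use the fact that linear averages along $\floor{\alpha n}$ are controlled by $\nnorm{\cdot}_{s,T}$ with constants independent of $\alpha$ as long as $\alpha$ stays away from $0$ in a suitable sense.

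The main obstacle is the sublogarithmic/logarithmic regime. If some difference $a_i-a_j$ grows only slightly faster than $\log t$, then its derivative is about $1/t$, and no polynomial degree is seen on short intervals. The difference then behaves like a very slowly varying shift, for which vdC yields no cancellation. I would first try to treat these terms by a change of variables $n\mapsto$ level sets of $\floor{a_i-a_j}$, i.e.\ average over long blocks on which the slow function is constant, and apply the polynomial-case argument on each block. Keeping the PET complexity well-founded when slow and fast components mix is the delicate part. There I would follow Tsinas's scheme of ordering functions by growth and by their leading "polynomial part".
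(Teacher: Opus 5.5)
Your deduction of (ii) from (i) is fine. Reading (ii) with $T_1=\cdots=T_\ell=T$ is also correct; for distinct commuting transformations it fails, e.g.\ for $n^2,2n^2$ with $T_1=S^2$, $T_2=S$. The survey only cites Tsinas for (i). Your framework (short intervals, Taylor expansion, PET for variable polynomials) is the one it describes for Hardy sequences.

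However, the step your argument for (i) rests on is false: essential distinctness does not make the $p_{N,j}$ uniformly essentially distinct. Take $a_1(t)=t^{3/2}$ and $a_2(t)=t^{3/2}+\log^2 t$.
\begin{itemize}
\item For $t^{3/2}$ to be a bounded-degree polynomial on $(N,N+L(N)]$ you need $L(N)\prec N^{1-\delta}$.
\item Then $(\log^2)'(N)L(N)\to 0$, so $p_{N,2}=p_{N,1}+\log^2 N+o(1)$: the two polynomials differ only by a constant.
\item The same happens for every sub-fractional difference ($\log^{10}t$, $e^{\sqrt{\log t}}$, \dots), not only for those ``slightly faster than $\log t$''.
\end{itemize}
On such an interval PET only sees $T^{\sfloor{p_{N,1}(n)}}(f_1\cdot T^{m_N}f_2)$ with $m_N\approx\sfloor{\log^2 N}$. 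So it can bound that interval by $\nnorm{f_1\cdot T^{m_N}f_2}_{s,T}$, never by $\nnorm{f_2}_{s,T}$.

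There is a second problem when the function to be controlled itself has a slow iterate, e.g.\ $\nnorm{f_1}_{s,T}$ for the pair $\log^2 t,\ t^{3/2}$. Then bounding $\E_M\norm{\E_{n\in[L(M)]}\cdots}_{L^2(\mu)}$ is hopeless: for $f_2=1$ the inner norm equals $\norm{f_1}_{L^2(\mu)}$ for most $M$. You must first pass to $\E_n\int f_0\cdot\prod_j T^{\sfloor{a_j(n)}}f_j\,d\mu$ and compose with $T^{-\sfloor{a_2(n)}}$, so that $f_1$ sits on a fast iterate. That reduces to the previous situation; your proposal does not mention this step.

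What is missing is how to use the outer average over $M$. This is where $a_i-a_j\succ\log$ actually enters, not in the Taylor step.
\begin{itemize}
\item For $b\in\CH$ with $b\succ\log$, $\sfloor{b(M)}=m$ on a block of length $\approx 1/b'(b^{-1}(m))$.
\item These lengths vary regularly in $m$. So the induced weighted average over $m$ is asymptotically a superposition of uniform averages over windows of length $\asymp t\,b'(t)$ (at $t=b^{-1}(m)$). This length tends to $\infty$ exactly when $b\succ\log$.
\item Along such windows, for $1$-bounded $g$, the inductive formula and the uniform mean ergodic theorem give $\limsup\E_m\nnorm{g\cdot T^m f}_{s,T}^{2^s}\le\nnorm{f}_{s+1,T}^{2^s}$.
\item Combined with the per-interval PET bound, this yields control by $\nnorm{f_2}_{s+1,T}$.
\end{itemize}
For $b=\log_2 t$ the weights are geometric and concentrate on $O(1)$ values of $m$. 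There seminorm control genuinely fails (compare Theorem~\ref{T: DKKST counterexample to classification problem}).

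Your idea of passing to level sets of $\sfloor{a_i-a_j}$ points in this direction. But ``apply the polynomial-case argument on each block'' only produces seminorms of twisted products. Moreover, several slow shifts interact. For example, $t^{3/2},\ t^{3/2}+\log^2 t,\ t^{3/2}+2\log^2 t$ lead to averages over $m$ of roughly $\nnorm{f_1\cdot T^{m}f_2\cdot T^{2m}f_3}_{s,T}$, itself a multiple average needing its own induction. Organizing this into a well-founded scheme is the substance of the cited proof, and your proposal defers exactly this to ``follow Tsinas's scheme''.

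A smaller point: after differencing on short intervals, the linear coefficients are typically like $\tfrac34 N^{-1/2}h_1\to 0$, not bounded away from $0$. So the van der Corput parameters must range over $N$-dependent scales, and the closing linear estimates must be uniform in the regime $\alpha L(N)\to\infty$.
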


\subsubsection{Commuting transformations}\label{SSS: Hardy commuting}

Before the breakthroughs in the last few years, joint ergodicity of Hardy sequences for commuting transformations was only known for sequences of sufficiently distinct growth, in analogy with the result of Chu-Frantzikinakis-Host for polynomials (Theorem \ref{T: CFH distinct degrees}).
\begin{theorem}%[Nonpolynomial Hardy sequences of distinct growth, 
[{\cite[Theorem 2.2]{Fr10}}]\label{T: Fr distinct-growth Hardy}
    Let $a_1, \ldots, a_\ell\in\CH$ satisfy the following two properties:
    \begin{enumerate}
        \item $a_1 \prec \cdots \prec a_\ell$;
        \item for every $1\leq j\leq \ell$ there exists $k_j\in\N_0$ such that $t^{k_j}\log t \prec a_j(t)\prec t^{k_j+1}$;
    \end{enumerate}
    Then they are controlled by the invariant factor. In particular, they are jointly ergodic for all systems $(X, \CX, \mu,$\! $T_1, \ldots, T_\ell)$ with $T_1, \ldots, T_\ell$ ergodic. 
\end{theorem}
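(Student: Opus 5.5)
We follow the New Joint Ergodicity Strategy, relying on the global criterion Corollary \ref{C: invariant/Krat control criteria global}. By that corollary, invariant factor control for every system follows from two facts: Host-Kra seminorm control for every system, and being good for equidistribution. The joint ergodicity statement for ergodic $T_1, \ldots, T_\ell$ is then immediate, since $\E(f_j|\CI(T_j)) = \int f_j\, d\mu$.

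\emph{Equidistribution.} By the exponential sum reformulation, we must show that
\[
\E_{n\in[N]} e\bigl(\beta_1\sfloor{a_1(n)}+\cdots+\beta_\ell\sfloor{a_\ell(n)}\bigr)\to 0
\]
for $\beta_1, \ldots, \beta_\ell\in[0,1)$ not all zero. First we remove the floors. The standard approach approximates $e(\beta_j\{x\})$ by trigonometric polynomials in $x$ (using Fourier expansion with a small neighbourhood of the discontinuity removed). This reduces matters to Weyl sums of the form
\[
\E_n e\Bigl(\sum_j (\beta_j+k_j)a_j(n)\Bigr),
\]
where some coefficient is nonzero. The same approximation also requires that the vector $(a_j(n))$ mod $1$ be equidistributed, so that the removed boundary sets are negligible. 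Let $j_0$ be the largest index with $\beta_{j_0}+k_{j_0}\ne 0$. Because the growth rates are distinct, the combination has the growth of $a_{j_0}$. Since $t^{k}\log t\prec a_{j_0}\prec t^{k+1}$, the combination stays logarithmically away from rational polynomials. Boshernitzan's theorem (Theorem \ref{T: Boshernitzan}) therefore gives equidistribution mod 1, and the Weyl sum vanishes in the limit.

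\emph{Seminorm control.} This is the main obstacle, and I would prove it by induction on $\ell$ via a PET/van der Corput scheme adapted to Hardy sequences. Fix $f_\ell$, the function attached to the fastest sequence $a_\ell$. Split $[N]$ into short intervals $(M, M+L(M)]$, where $L$ is chosen so that each $a_j$ admits a Taylor expansion as a polynomial plus a vanishing error. The growth condition $t^{k_j}\log t\prec a_j\prec t^{k_j+1}$ guarantees that the degree-$k_j$ Taylor coefficient of $a_j$ tends to zero while $a_j$ still moves. By choosing $L$ appropriately, $a_\ell$ becomes a polynomial of some degree $d$ whose leading coefficient grows suitably on the short interval. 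Meanwhile, the slower $a_j$ with $j<\ell$ are effectively of lower degree there, or nearly constant.

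We then apply van der Corput differencing repeatedly. After roughly $d$ steps, $a_\ell$ becomes linear in $n$ with a slowly varying coefficient, and all other terms are eliminated or become $n$-independent. The process ends with an average of the form $\E_n T_\ell^{\sfloor{c_M n}} F$, which a change of variables and the mean ergodic theorem control by $\nnorm{f_\ell}_{s,T_\ell}$. For general $j$, rather than proving control directly at every index, we use the weakened criterion noted after Definition \ref{D: seminorm control}: it suffices to obtain control at index $m$ when $f_{m+1},\ldots,f_\ell$ are nonergodic eigenfunctions. Each such eigenfunction factor $T_i^{\sfloor{a_i(n)}}\chi_i = e(\sfloor{a_i(n)}\phi_i)\chi_i$ contributes only a scalar weight, so it can be absorbed and the problem reduces to the top-growth index among the remaining functions.

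The delicate points are the rounding errors and the uniformity of the estimates in $M$. For the rounding, I would write $\sfloor{a(n)} = a(n)-\{a(n)\}$ and handle the fractional part with the equidistribution established above. For the uniformity in $M$, I would carry out all van der Corput estimates with finite $N$ and take limits only at the end.
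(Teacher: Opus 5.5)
Your reduction through Corollary \ref{C: invariant/Krat control criteria global}, together with the weak-control remark after Definition \ref{D: seminorm control}, is sound, and so is your equidistribution step: any nontrivial combination $\sum_j(\beta_j+k_j)a_j$ has the growth of some $a_{j_0}$ and therefore stays logarithmically away from all real polynomials. (The hypotheses in fact make $a_1,\ldots,a_\ell$ strongly independent, so your architecture is the one behind Theorem \ref{T: DKKST strongly independent Hardy}. The survey gives no proof of this statement, and \cite{Fr10} predates these criteria.)

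\textbf{Main gap: seminorm control at the indices $m<\ell$.} You claim that $T_i^{\sfloor{a_i(n)}}\chi_i=e(\phi_i\sfloor{a_i(n)})\chi_i$ ``contributes only a scalar weight'' that ``can be absorbed''. This is false for commuting transformations. The function $\phi_i$ is only $T_i$-invariant, and there is no common ergodic decomposition that makes it constant. So $e(\phi_i(x)\sfloor{a_i(n)})$ depends genuinely on both $x$ and $n$, is not invariant under $T_1,\ldots,T_m$, and gets moved as soon as PET composes with some $T_j^{-\sfloor{a_j(n)}}$.

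Even a constant phase $e(\theta\sfloor{a_i(n)})$ could not simply be dropped. A weighted average is not controlled by the unweighted one, and this weight oscillates faster than every remaining iterate. These factors have to be eliminated, for instance by differencing in $n$ before any composition, while tracking the $x$-dependent factors created by the rounding errors $\sfloor{u+v}-\sfloor{u}-\sfloor{v}$. That elimination must happen on a scale where the fast $a_i$ are polynomial and the slow $a_m$ still moves, and such a scale need not exist.

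Take $a_m=\log^2 t$ and $a_i=t^{1/2}$, which satisfy the hypotheses. Then $a_m$ changes by $\gg 1$ only on intervals of length $\gg t/\log t$, and on those intervals $t^{1/2}$ has no polynomial approximation with $o(1)$ error. Differencing with bounded shifts instead gives $T_m^{\sfloor{a_m(n+r)}}f_m\cdot\overline{T_m^{\sfloor{a_m(n)}}f_m}=T_m^{\sfloor{a_m(n)}}|f_m|^2$ for most $n$, which destroys all information about $f_m$. This regime needs a different argument: freeze the slow iterates on such intervals and use short-interval equidistribution of the fast phases pointwise in $x$. This is exactly where the commuting case is harder than the single-transformation case, and your proposal does not supply it.

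\textbf{Top index $\ell$.} This step is closer to workable. You can choose $L(M)$ so that $a_\ell$ is a polynomial of degree $d=k_\ell+1$ on $(M,M+L(M)]$, while each $a_j$ with $j<\ell$ has degree at most $k_\ell$ there, since $a_j^{(d)}/a_\ell^{(d)}\ll a_j/a_\ell\to0$. Two points are still missing.
\begin{itemize}
\item The leading coefficient $\gamma_M=a_\ell^{(d)}(M)/d!$ tends to $0$; only the leading term grows, not the coefficient as you state. For $a_\ell=t^{3/2}$ and $d=2$, the Taylor remainder forces $L(M)=o(M^{1/2})$ while $\gamma_M\asymp M^{-1/2}$, so $\gamma_M L(M)\to0$. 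With bounded van der Corput shifts the differenced terms are then essentially constant on the interval, so the shifts must range over scales comparable to $L(M)$.
\item PET does not end with a single average $\E_n T_\ell^{\sfloor{c_Mn}}F$. It ends with a multilinear average with many real slopes, depending on $M$ and on the shifts, plus fractional-part weights coming from the rounding. A change of variables removes only one slope. You need a bound by $\nnorm{f_\ell}_{s,T_\ell}$ that is uniform in all these parameters.
\end{itemize}
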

An important special case of Theorem \ref{T: Fr distinct-growth Hardy} is that for any noninteger $0<b_1<\cdots < b_\ell$, we have
\begin{align*}
    \lim_{N\to\infty}\norm{\E_{n\in[N]}T_1^{\sfloor{n^{b_1}}}f_1\cdots T_\ell^{\sfloor{n^{b_\ell}}}f_\ell - \E(f_1|\CI(T_1))\cdots \E(f_\ell|\CI(T_\ell))}_{L^2(\mu)} = 0.
\end{align*}
\begin{theorem}%[Distinct-degree real polynomials, 
[{\cite[Corollary 1.7]{Ko18}}]\label{T: Kouts distinct-degree polys}
    Any nonconstant polynomials $a_1, \ldots, a_\ell\in\R[t]$ of distinct degrees:
    \begin{enumerate}
        \item admit Host-Kra seminorm control;
        \item are jointly ergodic for any system $(X, \CX, \mu,$\! $T_1, \ldots, T_\ell)$ with $T_1, \ldots, T_\ell$ weakly mixing.
    \end{enumerate}
\end{theorem}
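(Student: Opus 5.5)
The statement to prove is Theorem \ref{T: Kouts distinct-degree polys}: real polynomials of distinct degrees admit Host-Kra seminorm control, and are therefore jointly ergodic on weakly mixing systems. The second part follows from the first together with the remark after Heuristic \ref{H: independent}. For weakly mixing $T_j$ we have $\nnorm{f}_{s,T_j}=\abs{\int f\,d\mu}$. So if $f_j$ has zero integral, the seminorm control kills the average. Writing each $f_j = \int f_j\,d\mu + (f_j-\int f_j\,d\mu)$ and expanding multilinearly, every term containing a zero-mean function vanishes in the limit, which leaves only $\prod_j\int f_j\,d\mu$. Equivalently, one can invoke Theorem \ref{T: joint ergodicity criteria}: weakly mixing transformations have no nontrivial (nonergodic) eigenfunctions, so the equidistribution condition holds trivially. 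All the work is therefore in part (i).

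For part (i), I would follow the PET strategy of Theorem \ref{T: CFH distinct degrees}, adapted to the rounding $\sfloor{a_j(n)}$. First, order the polynomials as $\deg a_1<\cdots<\deg a_\ell$.

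Next, handle the integer parts. I would write $\sfloor{a_j(n)} = a_j(n) - \{a_j(n)\}$ and approximate the fractional parts by finitely many level sets, so that $\{a_j(n)\}$ lies in a short interval $[k/M,(k+1)/M)$. Each level set is a Bohr-type set in $n$. Alternatively, lift to the suspension flow on $X\times[0,1)$. This converts the $\Z^\ell$-action along $\sfloor{a_j(n)}$ into an $\R^\ell$-flow action $T_{j,a_j(n)}$, at the cost of an error that is $O(1/M)$ in $L^2$, because of the continuity of the flow in $L^2$. On the flow, the polynomials become genuinely real-parameter iterates. PET then runs exactly as for integer polynomials: van der Corput differencing replaces $a_j(n)$ by $a_j(n+h)-a_j(n)$, and the Cauchy--Schwarz and change-of-variables steps are unchanged.

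The key step is the PET induction. I would order the family by its weight vector. Because the degrees are distinct, after each differencing step the family still has a unique polynomial of maximal degree, attached to $T_\ell$, and its leading coefficient depends polynomially on the differencing parameters $h$. Induct on the degree vector until every iterate is linear in $n$. At that point, apply Host's box-seminorm bound \eqref{E: box seminorm control} for flows. This controls the average by a box seminorm of $f_\ell$ along vectors of the form $c(\underline h)\,e_\ell$ and $c(\underline h)e_\ell - c'(\underline h)e_j$. Because the degrees are distinct, no direction that survives is a combination mixing $T_\ell$ nontrivially with other transformations in an uncontrollable way. Once we average over $\underline h$, the relevant directions are real multiples of $T_\ell$ alone, via iterated differencing that eliminates the lower-degree terms.

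I expect the main obstacle to be this final step: converting box seminorms of $f_\ell$ along directions like $T_\ell^{c(\underline h)}T_j^{-c'(\underline h)}$, averaged over $\underline h$, into a genuine Host-Kra seminorm $\nnorm{f_\ell}_{s,T_\ell}$. I would first try to choose the PET ordering so that the top-degree term is always isolated, meaning that at the final linear stage $f_\ell$ is paired only with transformations $T_\ell^{c(\underline h)}$. Then I would use the comparison estimates \eqref{E: scaling 1}, \eqref{E: scaling 2} to pass from $\nnorm{f}_{T_\ell^{c_1},\dots,T_\ell^{c_s}}$ with real (flow) dilations $c_i$ to $\nnorm{f}_{s,T_\ell}$. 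Finally, I would get control of the other indices $j<\ell$ by induction on $\ell$: once $f_\ell$ is shown to be measurable with respect to $\CZ_{s}(T_\ell)$, approximate it by nonergodic eigenfunction combinations (the weaker property mentioned after Definition \ref{D: seminorm control}), absorb it, and remove it from the average.
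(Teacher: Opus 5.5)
Your overall plan (transfer the rounding to a suspension flow, run PET as in \cite{CFH11}, deduce (ii) from (i)) is the right kind of argument, and the mixed directions you worry about do not in fact arise: with distinct degrees, PET leaves the top function with iterates in the $\ell$-th direction only, as in Example \ref{Ex: PET n, n^2}. There are, however, two genuine gaps.

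\textbf{Control at the indices $j<\ell$.} PET only gives control by $\nnorm{f_\ell}_{s,T_\ell}$ with $s$ large; already $s=3$ for $n,n^2$. So you may replace $f_\ell$ by its projection onto $\CZ_{s-1}(T_\ell)$. But this factor is not spanned by nonergodic eigenfunctions once $s-1\geq 2$, since those generate only $\CZ_1(T_\ell)$. For example, on the ergodic skew product $(x,y)\mapsto(x+\alpha,y+x)$ on $\T^2$, the function $e(y)$ is $\CZ_2$-measurable but orthogonal to every eigenfunction. The remark after Definition \ref{D: seminorm control} is a weakened \emph{hypothesis} that suffices for the joint ergodicity criteria (Definition \ref{D: weak control}). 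It is not a way to prove control at every index, which is what (i) asserts. The missing ingredient is the nonergodic Host--Kra structure theory. You must approximate the projected $f_\ell$ by functions for which $n\mapsto f_\ell(T_\ell^n x)$ is a.e. a nilsequence, or by dual functions. You must then eliminate those factors by further van der Corput and Cauchy--Schwarz steps, as in \cite{CFH11} or \cite[Proposition 6.1]{Fr12}, before running PET on the shorter family. Even in the eigenfunction case, $T_\ell^m\chi=\lambda^m\chi$ leaves the weight $\lambda(x)^{\sfloor{a_\ell(n)}}$. This weight is not $T_j$-invariant, so it cannot simply be absorbed. Part (ii) does not need this step. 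On a weakly mixing system nonergodic eigenfunctions are constants, so weak seminorm control reduces to top-index control for each initial segment $a_1,\ldots,a_m$. Induction on $\ell$ (or the criteria with the weak hypothesis) then finishes.

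\textbf{Returning from the flow to $X$.} This gap already affects the top-index control. The final seminorms are along time-$c$ maps $S_{\ell,c}$ of the suspension flow, with $c$ an integer multiple of the leading coefficient $\alpha$ of $a_\ell$. The estimates \eqref{E: scaling 1} and \eqref{E: scaling 2} concern finite-index subgroups. They absorb the integer multiples, but they cannot compare $S_{\ell,\alpha}$, $\alpha\notin\Q$, with $S_{\ell,1}=T_\ell\times\Id$, for which $\nnorm{f\otimes 1}_{s,S_{\ell,1}}=\nnorm{f}_{s,T_\ell}$. In degree one this comparison is false. Take $Tx=x+\sqrt{2}$ on $\T$, $S_t(x,y)=(x+\sqrt{2}\sfloor{y+t},\srem{y+t})$ and $f(x)=e(x)$. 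Then $\nnorm{f}_{1,T}=0$, yet $S_{\sqrt{2}}^n(f\otimes 1)(x,y)=e(x)e(\sqrt{2}y)e(-\sqrt{2}\srem{y+\sqrt{2}n})$. Its Ces\`aro averages converge to $e(x)e(\sqrt{2}y)\int_0^1 e(-\sqrt{2}t)\,dt\neq 0$, so $\nnorm{f\otimes 1}_{1,S_{\sqrt{2}}}>0$; this is the obstruction of Example \ref{Ex: sqrt 2 n}. You therefore need a genuinely continuous-time input comparing the Host--Kra seminorms of different time-$c$ maps of a flow, and such a comparison can only hold from degree two on. A smaller point: the transference error does not come from $L^2$-continuity of the flow. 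It comes from restricting to $X\times[0,\delta)^\ell$, where $\sfloor{y_j+a_j(n)}=\sfloor{a_j(n)}$ except on a set of $n$ of density $O(\delta)$, and then renormalizing.
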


Like with Theorem \ref{T: CFH distinct degrees}, the problem with extending Theorems \ref{T: Fr distinct-growth Hardy} and \ref{T: Kouts distinct-degree polys} to sequences of the same growth was the absence of methods for establishing Host-Kra seminorm control. Following the arrival of such techniques for integer polynomials (discussed in Section \ref{S: seminorm smoothing}), similar tools were developed for Hardy sequences. Their upshot are the following three results of Donoso, Koutsogiannis, Sun, Tsinas, and the author, verifying Heuristics \ref{H: independent}, \ref{H: pairwise independent seminorm}, and \ref{H: pairwise independent} for Hardy sequences.
    \begin{theorem}%[Joint ergodicity for strongly independent Hardy sequences, 
    [{\cite[Theorem 1.4]{DKKST24}}]\label{T: DKKST strongly independent Hardy}
     Any strongly independent Hardy sequences $a_1, \ldots, a_\ell\in \CH$ are controlled by the invariant factor. In particular, they are jointly ergodic for all systems $(X, \CX, \mu,$\! $T_1, \ldots, T_\ell)$ with $T_1, \ldots, T_\ell$ ergodic.  
%     In particular, $a_1, \ldots, a_\ell$ are jointly ergodic for all systems $(X, \CX, \mu,$\! $T_1, \ldots, T_\ell)$ with $T_1, \ldots, T_\ell$ being ergodic.
 	\end{theorem}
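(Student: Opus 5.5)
We follow the New Joint Ergodicity Strategy and apply Corollary \ref{C: invariant/Krat control criteria global}: the sequences $\sfloor{a_1(n)}, \ldots, \sfloor{a_\ell(n)}$ are controlled by the invariant factor if and only if (i) they admit Host-Kra seminorm control and (ii) they are good for equidistribution. The second claim then follows: if each $T_j$ is ergodic, then $\E(f_j|\CI(T_j)) = \int f_j\, d\mu$, and invariant factor control becomes joint ergodicity.

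Condition (ii) is the easy half. By the exponential sum reformulation of global equidistribution, we must show that
\begin{align*}
\lim_{N\to\infty}\E_{n\in[N]} e\brac{\beta_1\sfloor{a_1(n)}+\cdots+\beta_\ell\sfloor{a_\ell(n)}} = 0
\end{align*}
for every $(\beta_1,\ldots,\beta_\ell)\in[0,1)^\ell\setminus\{0\}$. We write $\sfloor{a_j(n)} = a_j(n) - \srem{a_j(n)}$. We then approximate the function $(x_1,\ldots,x_\ell)\mapsto e(-\sum_j\beta_j x_j)$ on $\T^\ell$ by trigonometric polynomials; this is allowed up to small error once we know that $(a_1(n),\ldots,a_\ell(n))$ is equidistributed in $\T^\ell$, and the discontinuities have measure zero. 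The problem then reduces to the equidistribution mod 1 of the sequences $\sum_j(\beta_j+k_j)a_j(n)$ for $k\in\Z^\ell$ with $\beta+k\neq0$. Each such coefficient vector is a nonzero real vector, so strong independence (\ref{E: lafrp}) says this combination stays logarithmically away from rational polynomials. Theorem \ref{T: Boshernitzan} then gives the equidistribution. The same theorem, applied with integer vectors $k$, gives the joint equidistribution of $(a_j(n))_j$ mod 1 by Weyl's criterion.

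Condition (i) is the main obstacle. Strong independence implies pairwise independence, so we need to prove Heuristic \ref{H: pairwise independent seminorm} for Hardy sequences and commuting transformations. The plan is as follows.
\begin{enumerate}
\item Pass to short intervals $(N, N+L(N)]$. On each, Taylor expansion turns every $a_j$ into a variable polynomial with explicit coefficients and negligible error. The rounding is handled by the same trigonometric approximation device as in (ii).
\item Run PET induction (van der Corput, change of variables, Cauchy--Schwarz) uniformly in $N$. This controls the average by a box seminorm of $f_\ell$ along transformations of the form $T_\ell^{c_\ell}T_j^{-c_j}$, where the coefficients $c_j$ depend on the Taylor coefficients.
\item Apply seminorm smoothing in the style of \cite{FrKu22a}, adapted to Hardy sequences. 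For this we need that, after the reductions, the leading coefficients of distinct $a_i, a_j$ are not proportional in a way that would collapse a box direction; pairwise independence is exactly what guarantees this. We iteratively replace the box seminorm directions by pure powers of $T_\ell$, reaching $\nnorm{f_\ell}_{s,T_\ell}$. We run an induction that ranks the sequences by growth and uses a dual-function decomposition to swap the roles of the indices, and we track coefficients using concatenation results.
\end{enumerate}
The delicate point is ensuring that the smoothing works with coefficients varying with $N$. We would try to make all estimates quantitative in $N$ and take the limit only at the end, following \cite{Ts22}.
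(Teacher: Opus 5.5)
Your proposal is correct and follows the paper's route. Corollary \ref{C: invariant/Krat control criteria global} splits invariant-factor control into good equidistribution and Host--Kra seminorm control: your Boshernitzan--Weyl argument (removing the integer parts by approximating $x\mapsto e(-\sum_j\beta_j\srem{x_j})$ with trigonometric polynomials, and using that $\beta+k\neq 0$ for all $k\in\Z^\ell$) correctly establishes the former, while the latter is exactly Theorem \ref{T: DKKST pairwise independent Hardy}, which you may simply invoke since strong independence implies pairwise independence. That theorem is proved in \cite{DKKST24} by the PET, quantitative concatenation and seminorm smoothing scheme you outline, with two corrections to your sketch: concatenation is already needed to pass from the average of box seminorms produced by PET to a single box seminorm, and irrational leading coefficients require the robust box seminorms of \cite[Section 3]{DKKST24}.
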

As a special case, Theorem~\ref{T: DKKST strongly independent Hardy} gives
   \begin{align*}
       \lim_{N\to\infty}\norm{\E_{n\in[N]} T_1^{\sfloor{n^{\sqrt{2}}}}f_1\cdot T_2^{\sfloor{n^{\sqrt{2}}+n^{1/2}}}f_2 - \E(f_1|\CI(T_1))\cdot \E(f_2|\CI(T_2))}_{L^2(\mu)} = 0,
   \end{align*}
   a result not known before even for weakly mixing $T_1, T_2$. More generally, it yields joint ergodicity for arbitrary linearly independent fractional polynomials with noninteger powers.
   
    \begin{theorem}%[Joint ergodicity for pairwise independent Hardy sequences, 
    [{\cite[Theorems 1.3 and 1.16]{DKKST24}}]\label{T: DKKST pairwise independent Hardy}
    Any pairwise independent Hardy sequences $a_1, \ldots, a_\ell\in \CH$:
    \begin{enumerate}
        \item admit Host-Kra seminorm control;
        \item are jointly ergodic for any system $(X, \CX, \mu,$\! $T_1, \ldots, T_\ell)$ with $T_1, \ldots, T_\ell$ weakly mixing.
    \end{enumerate}    
%    be pairwise independent. Then $a_1, \ldots, a_\ell$ are jointly ergodic for all systems $(X, \CX, \mu,$\! $T_1, \ldots, T_\ell)$ in which $T_1, \ldots, T_\ell$ are all weakly mixing. 
    \end{theorem}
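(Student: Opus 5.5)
The plan is to prove part (1), Host-Kra seminorm control, first; part (2) then follows from Theorem \ref{T: joint ergodicity criteria}. For weakly mixing $T_j$ there are no nontrivial nonergodic eigenfunctions, so every tuple of sequences is trivially good for equidistribution. Moreover $\nnorm{f}_{s,T_j} = \abs{\int f\, d\mu}$, so seminorm control on such a system is already joint ergodicity. The whole weight therefore rests on (1).

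For (1), I would follow the template used for polynomials in Theorem \ref{T: FrKu pairwise affine independent}, combined with the short-interval Taylor expansion of Hardy sequences.

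\textbf{Step 1: reduce to variable polynomials.} Pick a suitable $L\in\CH$ with $1\prec L(N)\prec N$. Using the averaging trick $\E_{n\in[N]}\approx \E_{M\in[N]}\E_{h\in[L(M)]}$, reduce to averages over short intervals $(M, M+L(M)]$. On each such interval, $\sfloor{a_j(M+h)} = p_{j,M}(h) + e_{j,M}(h)$, where $p_{j,M}$ is a real polynomial with explicit, $M$-dependent coefficients and $e_{j,M}(h)\in\{0,\pm1\}$ is a rounding error. The rounding errors are absorbed by splitting into finitely many level sets, or by a standard trick replacing $T^{\sfloor{x}}$ with an average over a suspension flow.

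\textbf{Step 2: PET induction on the variable polynomials.} Apply PET induction (van der Corput plus Cauchy--Schwarz), tracking coefficients quantitatively in $M$ and uniformly over $M$. This bounds the average by a box seminorm $\nnorm{f_\ell}_{T_\ell^{c_1}T_{i_1}^{-c_1'},\ldots}$ along directions whose coefficients depend on $M$ and the differencing parameters. Pairwise independence guarantees that the leading coefficients of $p_{\ell,M}$ and $p_{i,M}$ are never proportional in a degenerate way, so no direction collapses to the identity. Passing $M\to\infty$ and averaging the differencing parameters turns this into control by a box seminorm of the form $\nnorm{f_\ell}_{T_\ell^{\times s_0}, (T_\ell^{\beta}T_i^{-\gamma})^{\times}\cdots}$. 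Because the coefficients are real and vary, these are really averaged box seminorms along $\R$-flows, which one handles by passing to the suspension $\R^\ell$-action.

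\textbf{Step 3 (main obstacle): seminorm smoothing.} Upgrade box-seminorm control, which involves the mixed directions $T_\ell^{\beta}T_i^{-\gamma}$, to pure Host-Kra control $\nnorm{f_\ell}_{s,T_\ell}$. The plan is an induction on the number of mixed directions. First use the dual-function argument: control of the average by a seminorm of $f_\ell$ lets one replace $f_\ell$ by a dual function of that seminorm. Then apply the same PET bound at a different index, giving control by a seminorm of $f_i$ with one fewer bad direction. Concatenation results for box seminorms (averages over families of directions concatenate to $T_\ell$-directions) then remove one mixed direction at a time. Pairwise independence enters exactly here: it ensures that at each stage the remaining directions are nondegenerate combinations, so the ping-pong between indices terminates. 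I expect the real-coefficient, quantitative version of concatenation to be the hardest part. The polynomial version in \cite{FrKu22a} uses fixed integer coefficients, whereas here the coefficients wander with $M$, so one needs uniformity over compact ranges of coefficients. This is what forces working with flows and short-interval averages.
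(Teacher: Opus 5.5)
Your deduction of (2) from (1) matches the paper's: Host--Kra seminorms of a weakly mixing $T_j$ reduce to $\abs{\int f\, d\mu}$, so seminorm control already gives joint ergodicity. Your pipeline for (1) is also the paper's: short-interval Taylor expansion, PET with coefficient tracking, concatenation, seminorm smoothing. Two points in Step~2 are misattributed:
\begin{itemize}
\item Passing from an average, over differencing parameters, of box seminorms with parameter-dependent directions to a single box seminorm along the principal directions is not a matter of letting $M\to\infty$. It is exactly the quantitative concatenation step.
\item The PET bound needs only essential distinctness (cf.\ Theorem~\ref{T: DKKST box seminorm control}), so pairwise independence plays no role there.
\end{itemize}

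The genuine gap is Step~3: the mechanism you describe gives no reason for the number of mixed directions to drop.
\begin{itemize}
\item Whatever you substitute for $f_\ell$, PET at index $i$ returns a box seminorm of $f_i$ along $\be_i$ and all mixed directions $\be_i-\be_j$.
\item The dual function of a box seminorm with mixed directions has no known usable structure. A Host--Kra dual is different: its orbit is a dual sequence that finitely many van der Corput steps eliminate.
\item Concatenation only merges parameter-dependent directions into principal ones; it never removes a mixed direction.
\end{itemize}

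The missing idea is that invariance under a mixed direction collapses two transformations into one. In the model case of Proposition~\ref{P: smoothing} it works as follows.
\begin{itemize}
\item \emph{Ping.} Stash $f_2$ into $F_2$ (Lemma~\ref{L: stashing in degree lowering}) and apply the degree-one inverse theorem \eqref{E: deg 1 inverse theorem} for $T_2T_1^{-1}$. This replaces $f_2$ by a $T_2T_1^{-1}$-invariant function $u_2$. Since $T_2^{n^2+n}u_2=T_1^{n^2+n}u_2$, the average becomes a single-transformation one. Essential distinctness of the collapsed family then gives control by $\nnorm{f_1}_{s,T_1}$; this is exactly where pairwise independence is used.
\item \emph{Pong.} Stash $f_1$, correlate $F_1$ with $\CD_{s,T_1}(F_1)$, and eliminate that dual sequence by van der Corput and Cauchy--Schwarz. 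Only then do single-transformation estimates plus concatenation return control of $f_2$ with one mixed direction fewer.
\end{itemize}

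In general you must first difference via the inductive formula and perform a dual--difference interchange. You also need box seminorms along real directions to handle real leading coefficients. The induction runs on the number of distinct transformations, over a broader class of averages in which several sequences may share a transformation. Throughout, the engine is this invariance-and-collapse step, which your Step~3 does not contain.
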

    It is worth emphasizing that the class of pairwise independent Hardy sequences contains the class of pairwise independent integer polynomials. Therefore Theorem \ref{T: DKKST pairwise independent Hardy} generalizes Theorem \ref{T: FrKu pairwise affine independent}.

For strongly irrationally independent Hardy sequences, Theorem \ref{T: DKKST strongly independent Hardy} admits the following variant.
\begin{theorem}%[Joint ergodicity for strongly irrationally independent Hardy sequences, 
[{\cite[Theorem 1.6]{DKKST24}}]\label{T: DKKST strongly irrationally independent Hardy}
     Any strongly irrationally independent Hardy sequences $a_1, \ldots, a_\ell\in \CH$ are controlled by the rational Kronecker factor. In particular, they are jointly ergodic for all systems $(X, \CX, \mu,$\! $T_1, \ldots, T_\ell)$ with $T_1, \ldots, T_\ell$ totally ergodic. 
%     In particular, $a_1, \ldots, a_\ell$ are jointly ergodic for all systems $(X, \CX, \mu,$\! $T_1, \ldots, T_\ell)$ with $T_1, \ldots, T_\ell$ being ergodic.
 	\end{theorem}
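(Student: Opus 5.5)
The plan is to follow the New Joint Ergodicity Strategy through the global criterion in Corollary \ref{C: invariant/Krat control criteria global}. That corollary says global rational Kronecker factor control is equivalent to two things: Host-Kra seminorm control for every system, and being good for irrational equidistribution. So there are two properties to verify for strongly irrationally independent $a_1, \ldots, a_\ell\in\CH$. The "in particular" clause then follows directly. If $T_1, \ldots, T_\ell$ are totally ergodic, then $\Krat(T_j)$ is trivial, because $\CI(T_j^r)$ is trivial for every $r$. Hence $\E(f_j|\Krat(T_j)) = \int f_j\, d\mu$. Splitting each $f_j = \int f_j\,d\mu + (f_j - \int f_j\,d\mu)$ and expanding multilinearly, every term containing a mean-zero factor vanishes in the limit. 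What remains is the product of integrals, which is joint ergodicity.

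\textbf{Seminorm control.} Strong irrational independence implies pairwise independence. Suppose $c_i a_i + c_j a_j$ stayed within $O(\log t)$ of $0$ for some $(c_i,c_j)\neq 0$. If the pair is not all rational, taking $p=0$ contradicts \eqref{E: lafrp} directly. If $c_i,c_j$ are both rational, multiply them by $\sqrt2$: the new pair is not all rational, and the combination is still within $O(\log t)$ of $p=0$, again contradicting \eqref{E: lafrp}. Theorem \ref{T: DKKST pairwise independent Hardy}(i) then gives Host-Kra seminorm control for every system. This step is formally cheap here, but it carries the real depth: the seminorm smoothing machinery.

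\textbf{Irrational equidistribution.} By the global equivalent reformulation of the equidistribution properties (the lemma preceding Lemma \ref{L: equidistribution condition for ergodic}), we must show
\begin{align*}
\lim_{N\to\infty}\E_{n\in[N]} e\Bigl(\sum_{j=1}^\ell \sfloor{a_j(n)}\beta_j\Bigr) = 0
\end{align*}
whenever $\beta_1, \ldots, \beta_\ell$ are not all rational. This is the main obstacle, because of the floor functions.

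First I handle the unrounded sum $b(t) = \sum_j \beta_j a_j(t)$. By strong irrational independence it stays logarithmically away from every rational polynomial, so by Boshernitzan (Theorem \ref{T: Boshernitzan}) $b(n)$ is equidistributed mod $1$.

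To pass to integer parts, I write $\sfloor{a_j(n)} = a_j(n) - \{a_j(n)\}$. It then suffices to show that the vector
\begin{align*}
\bigl(b(n), \{a_1(n)\}, \ldots, \{a_\ell(n)\}\bigr)
\end{align*}
has a distribution for which the integral of $e(x_0 - \sum_j \beta_j x_j)$ vanishes. I will approximate this function by trigonometric polynomials in the last $\ell$ coordinates, using Fejér kernels and controlling the discontinuity sets, which have small measure. That reduces everything to exponential sums of the form $e\bigl(\sum_j (\beta_j + k_j) a_j(n)\bigr)$ with $k_j\in\Z$. Since the $\beta_j$ are not all rational, neither are the $\beta_j + k_j$. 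So each such combination again stays away from rational polynomials, and Boshernitzan shows every one of these sums tends to $0$.

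The care needed is in the approximation step: I must check that $\{a_j(n)\}$ does not concentrate near $0$ for those $j$ where $a_j$ is close to a rational polynomial. I would handle this with Weyl-type multidimensional equidistribution of $(a_j(n))_j$ modulo the rational part, or, if necessary, by passing to arithmetic progressions $n\equiv r \bmod q$ to absorb the rational polynomial parts.
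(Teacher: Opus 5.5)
Your overall architecture is the one the paper intends. You use the global criterion of Corollary~\ref{C: invariant/Krat control criteria global}. You get seminorm control from Theorem~\ref{T: DKKST pairwise independent Hardy} via the correct implication that strongly irrationally independent sequences are pairwise independent. You then prove irrational equidistribution. Your deduction of the totally ergodic case is also correct. The gap is in the floor-function step.

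Strong irrational independence only constrains combinations with some irrational coefficient. So an individual $a_j$ may lie within $O(\log t)$ of a rational polynomial; for example $a_1(t)=t$ and $a_1(t)=t^2+\sqrt{\log t}$ are both admissible. For such $j$ the non-concentration you plan to check is false:
\begin{itemize}
\item For $a_1(t)=t$ you have $\srem{a_1(n)}\equiv 0$, exactly at the jump of $x\mapsto e(-\beta_1\srem{x})$, so the Fej\'er approximation is invalid there.
\item For $a_1(t)=t^2+\sqrt{\log t}$ you have $\srem{a_1(n)}=\srem{\sqrt{\log n}}$. This lies in $[0,\delta)$, and also in $(1-\delta,1)$, on sets of upper density $1$ for every $\delta>0$. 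Hence your vector $(b(n),\srem{a_1(n)},\ldots)$ has no limiting distribution. Moreover, for $\beta_1\notin\Z$ no continuous function on $\T$ approximates $e(-\beta_1\srem{x})$ along $(a_1(n))$ in Ces\`aro mean.
\end{itemize}
So the reduction to the sums $e(\sum_j(\beta_j+k_j)a_j(n))$ is not justified in these coordinates. Passing to residue classes $n\equiv s \bmod q$ only repairs the case where $a_j-p_j$ converges. When $a_j-p_j\to\pm\infty$ slowly, as with $\sqrt{\log t}$, the fractional parts still sweep slowly through $[0,1)$ on each class.

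Here is what you need instead. Let $G_1$ be the set of $j$ for which $a_j$ stays logarithmically away from $\Q[t]$. For these, $a_j(n)$ is equidistributed mod $1$ and your Fourier expansion is legitimate. For $j\notin G_1$:
\begin{itemize}
\item Write $a_j=p_j+r_j$ with $p_j\in\Q[t]$ and $r_j\ll\log t$; in a Hardy field this forces $|r_j'(t)|\ll 1/t$.
\item On a class $n\equiv s\bmod q$, with $q$ a common denominator, you get $\sfloor{a_j(n)}=p_j(n)-\theta_{j,s}+\sfloor{\theta_{j,s}+r_j(n)}$ for a constant $\theta_{j,s}$. The last term is a step function with $O(1)$ jumps on every interval $[M,(1+\epsilon)M]$. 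Do not expand these coordinates.
\end{itemize}
After expanding only the $G_1$-coordinates, the phases are $g_k=\sum_{j\in G_1}(\beta_j+k_j)a_j+\sum_{j\notin G_1}\beta_jp_j$. These differ from $\sum_{j\in G_1}(\beta_j+k_j)a_j+\sum_{j\notin G_1}\beta_ja_j$ by $O(\log t)$, and the coefficient vector is not all rational. So strong irrational independence and Theorem~\ref{T: Boshernitzan}, applied to $g_k(qt+s)$, give $\sum_{m\le X}e(g_k(qm+s))=o(X)$. Splitting $[1,N]$ into geometric blocks on which the step functions are constant then shows the weighted sums are $o(N)$ as well. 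With this in place of your final step, the proof goes through.
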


    \subsubsection{Weighted averages}\label{SSS: weighted averages}
    One assumption underlying all the results on Hardy sequences so far is that the sequences must grow faster than $\log$. That is because logarithm is bad both for equidistribution and for mean ergodic theorem. Indeed, 
    \begin{align*}
        \E_{n\in[N]}e(\alpha \log n) = \E_{n\in[N]}n^{2\pi i \alpha}\sim \frac{N^{2\pi i \alpha}}{1 + 2\pi i \alpha} = \frac{e(\alpha\log N)}{1 + 2\pi i \alpha},
    \end{align*}
    from which it is clear that this sequence diverges. Likewise, the average
    \begin{align*}
        \E_{n\in[N]}T^{\floor{\log n}}f
    \end{align*}
    does not converge in general since for infinitely many $N$, the function $\floor{\log n}$ equals $\floor{\log N}$ on intervals $[N, (1+c)N]$ for some small $c>0$. These issues can be overcome by studying \emph{weighted averages}. Indeed, if we replace Ces\`aro average by the logarithmic average, then
    \begin{align*}
        \abs{\frac{1}{\log N}\sum_{n=1}^N \frac{e(\alpha \log n)}{n}} = O\brac{\frac{1}{|\alpha|\log N}},
    \end{align*}
    which goes to 0 unless $\alpha = 0$. Similarly, we have
    \begin{align*}
        \lim_{N\to\infty}\frac{1}{\log N}\sum_{n=1}^N \frac{1}{n} T^{\sfloor{\log n}}f = \lim_{N\to\infty}\E_{n\in[N]}T^n f = \E(f|\CI(T))
        %\lim_{N\to\infty}\norm{\frac{1}{\log N}\sum_{n=1}^N T^{\sfloor{\log n}}f - \E(f|\CI(T))}_{L^2(\mu)} =  \lim_{N\to\infty}\norm{\frac{1}{\log N}\sum_{n=1}^N T^{\sfloor{\log n}}f - \E_{n\in[N]}T^n f}_{L^2(\mu)} = 0,
    \end{align*}
    in $L^2(\mu)$,     i.e. the logarithmic ergodic average with the iterate $\sfloor{\log n}$ satisfies the conclusion of the mean ergodic theorem. One can see this as a ``discrete substitution''; indeed, if we replace a discrete transformation $T$ by a flow $R$ and averaging by integrating, then the usual substitution rule gives us
    \begin{align*}
        \frac{1}{\log N}\int_{0}^N \frac{1}{t}f(R^{\log t}x)\; dt = \frac{1}{\log N}\int_{0}^{\log N} f(R^{u}x)\; du,
    \end{align*}
    which is a continuous variant of $\E_{n\in[\sfloor{\log N}]}T^n f$.
    
    In \cite{BMR20}, Bergelson, Moreira, and Richter studied a weighted version of multiple ergodic averages along Hardy sequences. Their setup is as follows.
    Given a Hardy function $W\in\CH$ satisfying $1\prec W(t)\ll t$, set $w(n) := W(n+1)-W(n)$, and denote
    \begin{align*}
        \E_{n\in[N]}^W A(n) := \frac{1}{W(n)}\sum_{n=1}^N w(n) A(n). 
    \end{align*}
    If $W(n) = n$, then $w(n) = 1$ and we recover the Ces\`aro average. By contrast, if $W(N) = \log N$, then $w(n)=(1+o(1))\frac{1}{n}$, and so logarithmic averages morally fall into this framework. We can naturally define a variant of joint ergodicity for $W$-weighted averages.
    \begin{definition}
        Let $a_1, \ldots, a_\ell:\N\to\Z$ be sequences and $(X, \CX, \mu,$\! $T_1, \ldots, T_\ell)$ be a system. We say that
        \begin{enumerate}
            \item $a_1, \ldots, a_\ell$ are \emph{$W$-jointly ergodic} for the system  if 
        \begin{align}\label{E: W-joint ergodicity}
        \lim_{N\to\infty}\norm{\E^W_{n\in[N]}w(n)\cdot \prod_{j=1}^\ell T_j^{{a_{j}(n)}}f_j - \prod_{j=1}^\ell \int f_j\, d\mu}_{L^2(\mu)} = 0
    \end{align}
    holds for all $f_1, \ldots, f_\ell\in L^\infty(\mu)$;
    \item $a_1, \ldots, a_\ell$ are \emph{controlled by the invariant factor for $W$-averages} for the system  if 
        \begin{align*}%\label{E: joint ergodicity}
        \lim_{N\to\infty}\norm{\E^W_{n\in[N]}w(n)\cdot \prod_{j=1}^\ell T_j^{{a_{j}(n)}}f_j - \prod_{j=1}^\ell \E(f_j|\CI(T_j))}_{L^2(\mu)} = 0
    \end{align*}
    holds for all $f_1, \ldots, f_\ell\in L^\infty(\mu)$.
    \item $a_1, \ldots, a_\ell$ \emph{admit Host-Kra seminorm control for $W$-averages} for the system  if there exists $s\in\N$ such that for all $f_1, \ldots, f_\ell\in L^\infty(\mu)$,
        \begin{align*}%\label{E: joint ergodicity}
        \lim_{N\to\infty}\norm{\E^W_{n\in[N]}w(n)\cdot \prod_{j=1}^\ell T_j^{{a_{j}(n)}}f_j}_{L^2(\mu)} = 0
    \end{align*}
    holds whenever $\nnorm{f_j}_{s,T_j} = 0$ for some $s\in\N$.
        \end{enumerate}
    %     We call sequences $a_1, \ldots, a_\ell:\N\to\Z$ \emph{$W$-jointly ergodic} for the system $(X, \CX, \mu,$\! $T_1, \ldots, T_\ell)$ if 
    %     \begin{align}\label{E: joint ergodicity}
    %     \lim_{N\to\infty}\norm{\E^W_{n\in[N]}w(n)\cdot \prod_{j=1}^\ell T_j^{{a_{j}(n)}}f_j - \prod_{j=1}^\ell \int f_j\, d\mu}_{L^2(\mu)} = 0
    % \end{align}
    % holds for all $f_1, \ldots, f_\ell\in L^\infty(\mu)$.
    % %{For $\ell=1,$ we simply say that $a_1$ is \emph{ergodic} for $(X, \CX, \mu, T_1)$ (and respectively we call $(T_1^{{a_1(n)}})_n$ \emph{ergodic}).}
    \end{definition}
    \begin{definition}
        A sequence $a\in\CH$ is \emph{$W$-good} if one of the following holds:
        \begin{enumerate}
            \item either $|a(t)-p(t)|\ll 1$ for some $p\in\R[t]$;
            \item or $|a(t)-p(t)|\gg \log W(t)$ for all $p\in\R[t]$.
        \end{enumerate}
    \end{definition}
    Bergelson, Moreira, and Richter proved the following joint ergodicity result for weighted averages.
    \begin{theorem}%[$W$-joint ergodicity, 
    [{\cite[Theorem B(ii)]{BMR20}}]\label{T: BMR weighted}
        Let $a_1, \ldots, a_\ell\in \CH$, and suppose that any nontrivial linear combination $b$ of elements in
        \begin{align}\label{E: bigger combinations}
            \{a_j^{(k)}:\; 1\leq j\leq \ell,\; k\geq 0\}
            %b(t) = c_1 a_1^{(k_1)} + \cdots + c_\ell a_\ell^{(k_\ell)}
        \end{align}
        (where $a^{(k)}$ is the $k$-th derivative of $a$) is $W$-good and satisfies
        \begin{align}\label{E: distinct from poly}
            \lim\limits_{t\to\infty}|b(t)-p(t)|=\infty\quad \textrm{for\; every}\quad p\in\Q[t].
        \end{align}
                 Then $a_1, \ldots, a_\ell$ are $W$-jointly ergodic for every ergodic system $(X, \CX, \mu, T)$. 
    \end{theorem}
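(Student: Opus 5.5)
Following the New Joint Ergodicity Strategy, I would prove the statement through a $W$-weighted version of Theorem \ref{T: joint ergodicity criteria}. The degree lowering argument behind that theorem uses only a few features of the averaging scheme: the van der Corput inequality, the Cauchy--Schwarz inequality, and the fact that the averaging is asymptotically shift-invariant, $\E^W_{n\in[N]}(A(n+h)-A(n))\to 0$ for bounded $A$. That last property holds because $w(n)/W(N)\to 0$ and $w$ is eventually monotone, since $W\in\CH$. So the first step is to record that, for ergodic $T$, $W$-joint ergodicity is equivalent to the combination of Host--Kra seminorm control for $W$-averages and the vanishing
\[
\E^W_{n\in[N]}\, e\big(a_1(n)\beta_1+\cdots+a_\ell(n)\beta_\ell\big)\to 0
\]
for all $\beta_j\in\Spec(T)$ not all zero (the analogue of Lemma \ref{L: equidistribution condition for ergodic}).

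The equidistribution step is handled by a weighted Boshernitzan/Fej\'er-type argument. A combination $b=\sum\beta_j a_j$ is either $O(1)$-close to a real polynomial, or at distance $\gg\log W$ from every real polynomial. In the first case, hypothesis \eqref{E: distinct from poly} forces that polynomial to have some irrational nonconstant coefficient. Then $e(b(n))$ equals $e(p(n))$ up to a term with convergent limit, and Weyl equidistribution survives the $W$-weighting by partial summation, since $w$ is slowly varying and monotone. In the second case, I would use the standard weighted van der Corput / Fej\'er argument along the derivatives $b^{(k)}$. The hypothesis on linear combinations of the family \eqref{E: bigger combinations} is exactly what makes it possible to repeatedly differentiate, or difference, until a sequence of growth between $\log W$ and $W$-scale is reached, where weighted Fej\'er applies.

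The main obstacle is the seminorm control for $W$-averages. The plan is to decompose $[1,N]$ into short intervals $(M,M+L(M)]$. On each interval $w$ is essentially constant and each $a_j$ is approximated by its Taylor polynomial, with coefficients $a_j^{(k)}(M)/k!$. This reduces the weighted average to a weighted average over $M$ of unweighted averages along variable polynomials in $n$. On those inner averages, PET induction for a single transformation (cf.\ Theorem \ref{T: HK seminorm control for polys and single trafo} and Theorem \ref{T: Ts essentially distinct}) gives control by $\nnorm{f_j}_{s,T}$, uniformly in $M$, provided the Taylor polynomials are essentially distinct with leading coefficients not degenerating. Nondegeneracy follows from the assumption that linear combinations of the $a_j^{(k)}$ are $W$-good and not asymptotically constant. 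The delicate point is choosing $L$ so that the error terms vanish uniformly, while the relevant coefficients grow or stay irrational, across the full $W$-weighted range. This is especially hard when $W$ is slow (e.g.\ $\log$), because then the $a_j$ can be slowly growing and the short-interval scale must be tied to $W$ rather than to $t$. I would first try $L(M)$ defined through $W$, with $W(M+L(M))-W(M)\to\infty$ slowly, and check the Taylor remainder bounds case by case according to the growth of each $a_j$ relative to $W$.
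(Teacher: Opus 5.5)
You take a different route from the paper. Following \cite{BMR20}, the paper uses the Old Joint Ergodicity Strategy: seminorm estimates for $W$-averages, reduction to nilsystems, and Richter's weighted equidistribution theorem on nilmanifolds \cite{R22}, which is where the hypothesis on combinations of derivatives is needed. Replacing the nilsystem step by a weighted form of Theorem \ref{T: joint ergodicity criteria} plus exponential sums is legitimate (the survey notes the criteria extend to weighted averages, cf.\ \cite[Section 2.7]{FrKu22a}).

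\textbf{The gap.} The step that carries all the difficulty, seminorm control for $W$-averages, is left open. Moreover, the mechanism you propose fails in exactly the regime $W$-averages were introduced for.

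With slowly growing $W$, $W$-goodness admits iterates with $\log W\prec a_j\ll\log t$, such as $\log t$ or $\sqrt{\log t}$ when $W=\log$. Any block $(M,M+L(M)]$ on which $w$ is essentially constant and Taylor expansion in $t$ makes sense has $L(M)=o(M)$. Since $a_j'(t)\ll 1/t$ for such $a_j$, it moves by $o(1)$ on the block, so $\sfloor{a_j(n)}$ is constant on all but a $W$-negligible proportion of blocks. The inner averages then contain the fixed function $T^{\sfloor{a_j(M)}}f_j$ and cannot be bounded by $\nnorm{f_j}_{s,T}$ uniformly in $M$. Already for $\ell=1$, $a=\log t$, $W=\log$, most inner averages have norm exactly $\norm{f}_{L^2(\mu)}$. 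All cancellation for such $f_j$ comes from the outer average over $M$, which your scheme treats merely as a convex combination.

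Your fallback $W(M+L(M))-W(M)\to\infty$ does not help. When $W=\log$ it forces $L(M)\succ M$, so $w$ varies by an unbounded factor across a block. A faster iterate in the same family (say $t^{3/2}$) then has no valid Taylor expansion, so a family such as $\log t,\ t^{3/2}$ has no common scale at all.

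\textbf{What would close it.} You need a genuinely multi-scale argument. One option is to rewrite the $W$-average as a Ces\`aro average over $m\le W(N)$ of averages over the blocks $\{n\colon W(n)\in[m,m+1)\}$, and run van der Corput in the outer variable for the slow iterates. Another is to change variables through the slowest iterate. Either must be combined with a PET scheme that is uniform over variable real-coefficient polynomials with integer parts; Theorem \ref{T: HK seminorm control for polys and single trafo} covers only integer polynomials. Alternatively, import the $W$-seminorm estimates of \cite{BMR20} used in the paper's proof. With those, your argument goes through and trades Richter's nilmanifold theorem for exponential sums, which is the real gain of your route.

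\textbf{Two smaller corrections to the exponential-sum step.} First, by Convention \ref{Conv: real sequences} you must show $\E^W_{n\in[N]}e(\sum_j\beta_j\sfloor{a_j(n)})\to 0$, not the version without integer parts. This requires joint $W$-equidistribution of $(\sum_j\beta_j a_j,a_1,\ldots,a_\ell)$ mod $1$. The hypothesis on real combinations of the $a_j$ supplies this, but you need separate bookkeeping when all $\beta_j$ are rational and these coordinates become dependent. Second, only combinations of the $a_j$ themselves enter this step. The derivative hypothesis plays no role here, contrary to what you suggest.
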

        The proof of Theorem \ref{T: BMR weighted} in \cite{BMR20} follows the Old Joint Ergodicity Strategy and rests on two major ingredients: seminorm estimates and an equidistribution-on-nilsystems result of Richter \cite{R22}. It is the latter result that requires all $b$'s of the form \eqref{E: bigger combinations} to be $W$-good and satisfy \eqref{E: distinct from poly}. The work \cite{BMR20} is one of the last papers that deduces joint ergodicity using Old Joint Ergodicity Strategy and employs rather formidable manipulations on nilsystems.
        
        Notably, the joint ergodicity results for Hardy sequences from Sections \ref{SSS: Hardy single} and \ref{SSS: Hardy commuting} that use New Joint Ergodicity Strategy make less restrictive assumptions on the sequences. It should therefore be possible to weaken the assumptions in Theorem \ref{T: BMR weighted}. Additionally, in the light of the results from Section \ref{SSS: Hardy commuting} for commuting transformations, it would be welcome to merge the results for weighted averages of a single transformation with those of unweighted averages of commuting transformations. We propose several problems in this direction.
    \begin{problem}%[Host-Kra seminorm control for $W$-averages]
        Let $a_1, \ldots, a_\ell\in \CH$ be \emph{$W$-pairwise independent} in that for all distinct $1\leq i, j\leq\ell$ and $c_i, c_j\in\R$ not both zero, we have
        \begin{align*}
            \lim_{t\to\infty}\frac{|c_i a_i(t) + c_j a_j(t)|}{\log W(t)}=\infty;
        \end{align*}
        Show that $a_1, \ldots, a_\ell$ {admit Host-Kra seminorm control for $W$-averages} for any system. 
    \end{problem}
    \begin{problem}%[Invariant factor control for $W$-averages]
    \label{Pr: W-strongly independent}
        Let $a_1, \ldots, a_\ell\in \CH$ be \emph{$W$-strongly independent} in that for all $c_1, \ldots, c_\ell\in\R$ not all zero and for all $p\in\Q[t]$, we have
\begin{equation*}%\label{E: lafrp}
    \lim_{t\to\infty}\frac{|c_1 a_1(t) + \cdots + c_\ell a_\ell(t) - p(t)|}{\log W(t)} = \infty;
\end{equation*}
        Show that $a_1, \ldots, a_\ell$ are {controlled by the invariant factor for $W$-averages} for any system.
        %{are $W$-jointly ergodic} for any system $(X, \CX, \mu,$\! $T_1, \ldots, T_\ell)$ with $T_1, \ldots, T_\ell$ ergodic. 
    \end{problem}    
For the sake of Problem \ref{Pr: W-strongly independent}, we observe that the joint ergodicity criteria can be rather trivially extend to weighted averages once the definitions are appropriately modified. We refer the reader to \cite[Section 2.7]{FrKu22a} for a sample of such results.

    \subsection{Primes}
    A lot of multiple recurrence, convergence, and joint ergodicity results can be refined by allowing the averaging parameter $n$ to run over primes $\P=\{p_1<p_2<\cdots\}$ or shifted primes $\P-1$.\footnote{In the ergodic literature, the name \emph{shifted primes} almost always refers to the sets $\P-1$ or $\P+1$ since these are the only shifts of primes that have \emph{good divisibility property}, i.e. their intersection with $q\Z$ for every $q\in\N$ has positive relative density. In particular, these are the only prime shifts that are sets of recurrence.}
    %equidistribute in residue classes $a+q\Z$ with $\gcd(a,q) = 1$.} 
     The underlying idea is to replace the average along primes by the Ces\`aro average weighed by the von Mangoldt function; indeed, from the prime number theorem, we have
    \begin{align*}
        \lim_{N\to\infty}\norm{\E_{n\in[N]\cap \P}A(n)-\E_{n\in[N]}\Lambda(n) A(n)}_{L^2(\mu)} = 0
        %\lim_{N\to\infty}\E_{n\in[N]\cap \P}A(n) =\lim_{N\to\infty}\E_{n\in[N]}\Lambda(n) A(n),
    \end{align*}
    for any uniformly bounded $A:\N\to L^\infty(\mu)$. We then want to use the Gowers uniformity of $\Lambda-1$ to compare the latter with $\lim\limits_{N\to\infty}\E_{n\in[N]}A(n)$. In reality, $\Lambda-1$ is not Gowers uniform; we instead need to work with appropriately modified von Mangoldt functions $$\Lambda_{w,b}(n) := \frac{\phi(W)}{W}\Lambda(Wn+b),$$ where $w,b\in\N$, $W :=\prod\limits_{\substack{p\in\P\colon p\leq w}}p$ is the $w$-th primorial, and $\phi$ is the Euler totient function. This modification takes care of the irregularities of distribution of primes modulo small primes. It is then a deep result of Green-Tao \cite{GT08b, GT10b, GT12b} that in the limit $w\to\infty$, the functions $\Lambda_{w,b}-1$ have vanishing Gowers norms of any degree. These ideas originating in additive combinatorics have been transplanted to ergodic theory by Frantzikinakis-Host-Kra \cite{FHK07, FHK10}, Wooley-Ziegler \cite{WZ12}, Bergelson-Leibman-Ziegler \cite{BLZ08}, Koutsogiannis \cite{Ko18b}, and Koutsogiannis-Karageorgos \cite{KK19} to prove the polynomial Szemer\'edi theorem along shifted primes.
    
    %These ideas date back to works of Green \cite{???} and Green-Tao \cite{GT08b, GT10b} in additive combinatorics, and they have been adapted to ergodic theory by Frantzikinakis-Host-Kra \cite{FHK07, FHK10} as well as Wooley-Ziegler \cite{WZ12}.
    
    The study of averages along primes then breaks down into two steps: %goal is prove an estimate of the form
    \begin{enumerate}
        \item the uniformity estimate 
    \begin{align}\label{E: uniformity estimate for modified Lambda}
        \lim_{w\to\infty}\lim_{N\to\infty}\max_{\substack{b\in[W]\colon\\ \gcd(b,W) = 1}}\norm{\E_{n\in[N]}(\Lambda_{w,b}(n)-1)A(Wn+b)}_{L^2(\mu)} = 0;
    \end{align}
    \item the examination of 
    \begin{align*}
        \lim_{N\to\infty}\E_{n\in[N]}A(Wn+b)
    \end{align*}
    for $w,b\in\N$.
    \end{enumerate}
    
    There are two renditions of this general strategy, depending on the purpose and properties of the sequences $a_1, \ldots, a_\ell$:
    \begin{enumerate}
        \item if the sequences $a_1, \ldots, a_\ell$ are good for irrational equidistribution and have good divisibility property (or satisfy some variant of these two properties), and we look for new multiple recurrence results, then we study the averages with $n$ restricted to $\P-1$ (or $\P+1$) since the restrictions of $a_1, \ldots, a_\ell$ to this set typically inherit the claimed properties;
        \item in the other cases, e.g.:
            \begin{enumerate}
                \item if the sequences $a_1, \ldots, a_\ell$ are good for equidistribution;
                \item or if the sequences $a_1, \ldots, a_\ell$ are good for irrational equidistribution and we are interested in norm convergence or limiting identities,
            \end{enumerate}
            then we study the averages with $n$ restricted to $\P$; however, the same argument normally works for any shift $\P+c$.
    \end{enumerate}
    %\BK{(See if this makes sense later on)}
    
    We now present joint ergodicity results for sequences evaluated at primes.
    %in each of these two directions, starting with the latter.
    \subsubsection{Sequences along  primes that are good for equidistribution}

    Using the Old Joint Ergodicity Strategy, Karageorgos and Koutsogiannis proved the extension of their Theorem \ref{T: KK strongly independent real polys} along primes.
\begin{theorem}[{\cite[Theorem 2.14]{KK19}}]
    Let $a_1, \ldots, a_\ell\in\R[t]$ be  strongly independent real polynomials. 
    Then the sequences $a_1(p_n), \ldots, a_\ell(p_n)$ are controlled by the invariant factor for any system $(X, \CX, \mu, T)$. In particular, they are jointly ergodic for any ergodic system $(X, \CX, \mu, T)$.
    %Then the sequences $a_1(p_n), \ldots, a_\ell(p_n)$ are jointly ergodic for any ergodic system $(X, \CX, \mu, T)$ (and hence are controlled by the invariant factor for any system $(X, \CX, \mu, T)$).
\end{theorem}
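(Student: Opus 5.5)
I would follow the two-step scheme described just above: first pass from averages along primes to $W$-tricked averages weighted by $\Lambda_{w,b}$, and then treat the latter as averages along the real polynomials $a_j(Wn+b)$.

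\textbf{Step 1: reduction to $\Lambda$-weighted averages.} For $f_1,\ldots,f_\ell\in L^\infty(\mu)$, set
\begin{align*}
A(n):=\prod_{j=1}^\ell T^{\sfloor{a_j(n)}}f_j.
\end{align*}
By the prime number theorem, the average along $p_n$ equals, up to $o(1)$ in $L^2(\mu)$, the average $\E_{n\in[N]}\Lambda(n)A(n)$. Splitting $n$ into residue classes $b$ modulo $W$, with $\gcd(b,W)=1$ and the remaining classes negligible, this in turn becomes
\begin{align*}
\E_{b}\E_{n\in[N/W]}\Lambda_{w,b}(n)A(Wn+b).
\end{align*}

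\textbf{Step 2: the uniformity estimate \eqref{E: uniformity estimate for modified Lambda}.} I would prove it via a PET/van der Corput argument. Since the iterates $\sfloor{a_j(Wn+b)}$ are floors of real polynomials in $n$, the standard approach bounds
\begin{align*}
\norm{\E_{n}(\Lambda_{w,b}(n)-1)A(Wn+b)}_{L^2(\mu)}
\end{align*}
by a power of $\norm{\Lambda_{w,b}-1}_{U^s[N]}$ plus an error. The exponent $s$ depends only on the degrees and on $\ell$, uniformly in $w$ and $b$. Some care is needed with the rounding: write $\sfloor{a(n)}$ via fractional parts, and use the usual trick of partitioning according to the fractional parts, as in \cite{KK19}, to absorb the rounding errors. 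The deep input is then the Green--Tao result that $\norm{\Lambda_{w,b}-1}_{U^s[N]}\to0$ as $N\to\infty$ and then $w\to\infty$, uniformly in $b$. This is the step I expect to be the main obstacle. The delicate points are making the seminorm bound uniform in $w$ and $b$ while dealing with non-integer coefficients and the floor function. Both can be handled by the quantitative PET estimates for real polynomials of Koutsogiannis.

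\textbf{Step 3: the unweighted averages.} It remains to show that for each fixed $w$ and $b$,
\begin{align*}
\lim_{N\to\infty}\norm{\E_{n\in[N]}\prod_{j=1}^\ell T^{\sfloor{a_j(Wn+b)}}f_j - \prod_{j=1}^\ell \E(f_j|\CI(T))}_{L^2(\mu)}=0.
\end{align*}
The polynomials $\tilde a_j(t):=a_j(Wt+b)$ are again strongly independent. Indeed, a combination $\sum_j c_j\tilde a_j - p$ with $p\in\Q[t]$ equals $\bigl(\sum_j c_j a_j - q\bigr)(Wt+b)$ with $q(s)=p((s-b)/W)\in\Q[s]$, and a linear change of variables preserves divergence relative to $\log t$. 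Hence Theorem~\ref{T: KK strongly independent real polys} applies directly to the $\tilde a_j$. Alternatively, one can invoke Theorem~\ref{T: invariant control criteria}, together with Host-Kra seminorm control from PET and Weyl equidistribution of $\sum_j \tilde a_j\beta_j$.

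Averaging over $b$ and letting $w\to\infty$ then gives invariant factor control along $p_n$. Joint ergodicity for ergodic $T$ is the special case $\CI(T)$ trivial.
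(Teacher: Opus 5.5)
Your proposal is correct and follows the route the survey indicates for this result. Steps~1--2 are the $W$-trick plus Gowers uniformity of $\Lambda_{w,b}-1$, which is exactly what the transference principle of Theorem~\ref{T: KoTs transference} packages; in the literature the unbounded weight there is handled through the Green--Tao pseudorandom majorant rather than a bare van der Corput argument. Step~3 then correctly checks that the $a_j(Wt+b)$ remain strongly independent, so Theorem~\ref{T: KK strongly independent real polys} gives the common limit $\prod_{j}\E(f_j|\CI(T))$ for every $w,b$, and the conclusion follows by averaging over $b$ and letting $w\to\infty$, as you say.
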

    
    On the other hand, one of the first applications of the New Joint Ergodicity Strategy was the following result of Frantzikinakis for fractional polynomials along primes.
    \begin{theorem}[{\cite[Theorem 1.1]{Fr22}}]\label{T: Fr fractional polynomials along primes}
        Let $a_1, \ldots, a_\ell$ be linearly independent fractional polynomials $a_j(t) = \sum_{i=1}^d \alpha_{ji}t^{b_i}$ with $\alpha_{ji}\in\R$ and $b_i$'s positive nonintegers. %Then the sequences $a_1(p_n), \ldots, a_\ell(p_n)$ are jointly ergodic for any ergodic system $(X, \CX, \mu, T)$ (and hence are controlled by the invariant factor for any system $(X, \CX, \mu, T)$).
            Then the sequences $a_1(p_n), \ldots, a_\ell(p_n)$ are controlled by the invariant factor for any system $(X, \CX, \mu, T)$. In particular, they are jointly ergodic for any ergodic system $(X, \CX, \mu, T)$.
    \end{theorem}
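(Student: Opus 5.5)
We follow the New Joint Ergodicity Strategy for a single transformation. By the global criterion (Corollary \ref{C: invariant/Krat control criteria global}, single-transformation version from \cite{Fr21}), it suffices to prove two things for the sequences $\sfloor{a_1(p_n)}, \ldots, \sfloor{a_\ell(p_n)}$: (i) Host-Kra seminorm control on every system $(X,\CX,\mu,T)$, and (ii) good equidistribution. By the equivalent exponential-sum formulation, (ii) means
\begin{align*}
\lim_{N\to\infty}\E_{n\in[N]} e\brac{\beta_1\sfloor{a_1(p_n)}+\cdots+\beta_\ell\sfloor{a_\ell(p_n)}} = 0
\end{align*}
for $\beta_1,\ldots,\beta_\ell\in[0,1)$ not all zero. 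Joint ergodicity for ergodic $T$ then follows immediately, since $\CI(T)$ is trivial.

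For (ii), we first pass from the average over $p_n$ to a $\Lambda$-weighted average over $[N]$ using the prime number theorem. Next, we remove the floor functions by the standard device: approximate $e(\beta\{x\})$ by trigonometric polynomials, after checking that $(a_j(p))_p$ is equidistributed mod 1 so that the discontinuities carry negligible mass. This reduces matters to showing that $\sum_j c_j a_j(p)$ is equidistributed along primes for every nonzero real vector $(c_j)$. Linear independence of the $a_j$ means such a combination is again a nonzero fractional polynomial with noninteger exponents, so it stays logarithmically away from polynomials. Equidistribution along primes of such Hardy functions is classical: Vinogradov-type bounds for $\sum_{p\le N} e(b(p))$ when $b$ has noninteger exponents, due to Bergelson--Kolesnik--Son and related work.

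For (i), we use the $W$-trick. We write the average along primes as
\begin{align*}
\E_{n\in[N]}\Lambda(n)\prod_j T^{\sfloor{a_j(n)}}f_j,
\end{align*}
split $n = Wm+b$, and compare $\Lambda_{w,b}$ with $1$. The comparison follows from a transference estimate of the form \eqref{E: uniformity estimate for modified Lambda}. The plan is to Taylor expand $a_j(Wm+b)$ on short intervals $m\in(M,M+L(M)]$ into variable polynomials with controlled error. On each short interval the average becomes a polynomial multiple average weighted by $\Lambda_{w,b}-1$. Its contribution is bounded by a Gowers norm $\norm{\Lambda_{w,b}-1}_{U^s}$ on short intervals, via the PET/van der Corput scheme used in the proof of Theorem \ref{T: Ts essentially distinct}. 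The unweighted averages along $Wm+b$ are then controlled by $\nnorm{f_j}_{s,T}$, again by that theorem: linearly independent fractional polynomials are in particular essentially distinct, and this property survives the substitution $t\mapsto Wt+b$.

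The main obstacle is the uniformity input on short intervals. The Green--Tao results give $\norm{\Lambda_{w,b}-1}_{U^s[N]}\to0$ over full intervals, but the Taylor expansion forces intervals of length $L(N)=N^{c}$ with $c<1$. I would first try to choose the expansion scale $c$ close to $1$ (degree of the Taylor polynomial large). Then I would invoke the recent short-interval Gowers uniformity of $\Lambda$ (Matomäki--Radziwiłł--Tao--Teräväinen--Shao), valid for intervals of length $N^{5/8+\veps}$. If the required degree makes $c$ too small, I would instead average the short-interval estimates over $M$ by Hölder, and reduce to a full-interval bound for a dilated polynomial configuration.
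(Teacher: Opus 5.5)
Your argument is correct in outline. It follows the New Joint Ergodicity Strategy at the level of primes, which is how the survey describes Frantzikinakis's original proof. The survey also points to a shorter derivation that never applies the criteria to $\sfloor{a_j(p_n)}$ at all.

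For all $w,b$, all real $(c_j)\neq 0$ and all $p\in\Q[t]$, you have $\sum_j c_ja_j(Wt+b)-p(t)=(g-\tilde p)(Wt+b)$, where $g=\sum_j c_ja_j\neq 0$ and $\tilde p(u)=p((u-b)/W)$ is rational. The noninteger exponents of $g$ cannot cancel against $\tilde p$, so this dominates $\log t$, and each $W$-tricked family $a_1(Wt+b),\ldots,a_\ell(Wt+b)$ is strongly independent. Theorem \ref{T: Ts strongly independent} then gives the same limit $\prod_j\E(f_j|\CI(T))$ for every $(w,b)$. Theorem \ref{T: KoTs transference} transfers this limit to primes; it applies because each $a_j\neq 0$ stays logarithmically away from rational polynomials.

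That transference theorem packages exactly what you plan to redo by hand: Taylor expansion on intervals of length $N^c$ with $c>5/8$, a weighted PET bound, and short-interval Gowers uniformity of $\Lambda_{w,b}-1$. It also makes your separate prime exponential-sum input unnecessary. Your route, in exchange, needs transference only for vanishing statements, and it yields Host--Kra control along primes as a standalone result.

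If you keep your route, three points need care.
\begin{enumerate}
\item The degree $s$ supplied by Theorem \ref{T: Ts essentially distinct} for $a_j(Wt+b)$ must be uniform in $(w,b)$. It is, since it depends only on growth data.
\item The bound by $\norm{\Lambda_{w,b}-1}_{U^s}$ does not come from Tsinas's PET scheme, which bounds by seminorms of the $f_j$. It comes from the weighted variant in which the weight is treated as an extra linear iterate. That bound must be uniform over variable real polynomials with floors and must cope with the unbounded weight.
\item Your fallback is unnecessary. Short-interval uniformity holds in every degree for any exponent above $5/8$, so you can fix $c\in(5/8,1)$ and simply take the Taylor degree large.
\end{enumerate}
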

    As an example, Theorem \ref{T: Fr fractional polynomials along primes} yields the identity
    \begin{align*}
        \lim_{N\to\infty}\norm{\E_{n\in[N]\cap\P}T^{\sfloor{n^{b_1}}}f_1 \cdot T^{\sfloor{n^{b_2}}}f_2 - \int f_1\; d\mu \cdot \int f_2\; d\mu}_{L^2(\mu)} = 0
    \end{align*}
    for any noninteger $0<b_1<b_2$ whenever $(X, \CX, \mu, T)$ is ergodic and $f_1, f_2\in L^\infty(\mu)$.

    The same paper inquired about extending Theorem \ref{T: Fr fractional polynomials along primes} to general strongly independent Hardy sequences. This problem was resolved by Koutsogiannis and Tsinas \cite{KoTs23} for the single transformation and by Donoso, Koutsogiannis, Sun, Tsinas, and the author \cite{DKKST24} for commuting transformations.
    \begin{theorem}%[Strongly independent Hardy sequences along primes for single transformation, 
    [{\cite[Theorem 1.4]{KoTs23}}]\label{T: KoTs strongly independent along primes}
        Let $a_1, \ldots, a_\ell\in \CH$ be  strongly independent Hardy sequences.     Then the sequences $a_1(p_n), \ldots, a_\ell(p_n)$ are controlled by the invariant factor for any system $(X, \CX, \mu, T)$. In particular, they are jointly ergodic for any ergodic system $(X, \CX, \mu, T)$.
        %Then the sequences $a_1(p_n), \ldots, a_\ell(p_n)$ are jointly ergodic for any ergodic system $(X, \CX, \mu, T)$ (and hence are controlled by the invariant factor for any system $(X, \CX, \mu, T)$).
    \end{theorem}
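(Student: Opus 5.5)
We follow the New Joint Ergodicity Strategy, in the single-transformation form of Theorem \ref{T: invariant control criteria} (with $T_1=\cdots=T_\ell=T$). Write $b_j(n):=\sfloor{a_j(p_n)}$. It suffices to verify two things for every system $(X,\CX,\mu,T)$:
\begin{enumerate}
\item $b_1,\ldots,b_\ell$ admit Host-Kra seminorm control;
\item $b_1,\ldots,b_\ell$ are good for equidistribution.
\end{enumerate}
Joint ergodicity for ergodic $T$ then follows, because $\E(f_j|\CI(T))=\int f_j\,d\mu$.

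\textbf{Step 1: equidistribution.} For a single $T$, nonergodic eigenfunctions reduce via the ergodic decomposition to exponential sums. So the task is to show
\begin{align*}
\lim_{N\to\infty}\E_{n\in[N]}e\Big(\sum_j\beta_j\sfloor{a_j(p_n)}\Big)=0
\end{align*}
whenever $\beta_1,\ldots,\beta_\ell\in[0,1)$ are not all zero.
\begin{itemize}
\item We first remove the floors by the standard Fourier/Weyl approximation argument. This reduces matters to equidistribution of the vector $(a_1(p),\ldots,a_\ell(p))$ mod $1$, which amounts to showing that $\sum_j c_j a_j(p)$ is equidistributed mod $1$ over primes, for real $c_j$ not all zero.
\item By strong independence, $b=\sum c_ja_j$ stays logarithmically away from rational polynomials. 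A prime-number-theorem analogue of Boshernitzan's Theorem \ref{T: Boshernitzan} then applies (known results of Bergelson--Kolesnik--Son type for Hardy functions along primes).
\item If no such result is available to cite, we prove it directly. Pass to the von Mangoldt weight, Taylor expand $b$ on short intervals $(N,N+L(N)]$ into polynomials, and apply Vinogradov-type minor/major arc estimates for polynomial phases over primes.
\end{itemize}

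\textbf{Step 2: seminorm control (the main obstacle).} We use the $W$-trick from the preceding subsection. Replace the average along $p_n$ by $\E_{n\in[N]}\Lambda(n)\prod_j T^{\sfloor{a_j(n)}}f_j$, and split into residue classes $Wn+b$ with $\gcd(b,W)=1$. The argument then has two parts.
\begin{itemize}
\item \textbf{Uniformity estimate \eqref{E: uniformity estimate for modified Lambda}.} We prove it for $A(n)=\prod_jT^{\sfloor{a_j(n)}}f_j$. Restrict to short intervals $(N,N+L(N)]$ on which each $a_j(Wn+b)$ is a polynomial plus $o(1)$. Run PET/van der Corput in the variable $n$ to bound the weighted short average by a Gowers norm of $\Lambda_{w,b}-1$ on a short interval, uniformly in the (variable) polynomial coefficients. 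This requires the short-interval Gowers uniformity of the $W$-tricked von Mangoldt function, due to Matomäki--Radziwiłł--Tao--Teräväinen--Ziegler, which we assume.
\item \textbf{Unweighted average.} Here we invoke Theorem \ref{T: Ts essentially distinct}/Theorem \ref{T: Ts strongly independent} for the Hardy sequences $a_j(Wt+b)$. These remain strongly independent, so they admit Host-Kra seminorm control, with a degree $s$ independent of $w$ and $b$.
\end{itemize}
Combining the two parts and letting $w\to\infty$ yields the seminorm control for the prime average.

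\textbf{Main difficulty.} The hardest point is making the PET bound uniform across the short intervals. The growth rates of the $a_j$ may be slow, for instance barely above $\log t$, which forces $L(N)$ to be very short, $N^{\varepsilon}$ or even smaller. The short-interval uniformity input may then be insufficient. In that regime we would first try to change variables: pass to a coarser scale by composing with the inverse of the slowest $a_j$. Failing that, we would separately handle sub-polynomial components, using that $\Lambda$ averages along long intervals where those components are essentially constant.
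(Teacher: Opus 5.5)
Your outline is sound, but it takes a detour compared with the paper. The paper gets the statement directly from the Koutsogiannis--Tsinas transference principle (Theorem~\ref{T: KoTs transference}). Strong independence implies that each $a_j$ stays logarithmically away from rational polynomials, so the transference applies. Moreover, $a_1(Wt+b),\ldots,a_\ell(Wt+b)$ are again strongly independent, so Theorem~\ref{T: Ts strongly independent} shows that for every $w,b$ the averages along $Wn+b$ converge to the same limit $\prod_j\E(f_j|\CI(T))$. The transference then carries this limit over to the primes.

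You instead prove the uniformity estimate \eqref{E: uniformity estimate for modified Lambda}, which is exactly the engine of that transference principle. But you use it only to transfer Host--Kra seminorm control, and then rebuild the limit via Theorem~\ref{T: invariant control criteria} plus an external equidistribution theorem along primes. Once the uniformity estimate is in hand, you can feed in the full invariant-factor control that Theorem~\ref{T: Ts strongly independent} gives along each $Wn+b$. Your Step~1 and the appeal to the criteria then become superfluous, and equidistribution of Hardy sequences along primes comes out as a corollary rather than an input. Your route only pays off in settings where, along the progressions $Wn+b$, one knows seminorm control but not the limit.

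Your ``main difficulty'' is also misplaced. Slowly growing $a_j$ (say $a_j\prec t^{1-c}$, in particular those barely above $\log t$) are the easy regime. On intervals of length $N^{c}$, with $c<1$ fixed in the range where short-interval Gowers uniformity of $\Lambda_{w,b}-1$ is available, they are constant up to $o(1)$, so that case reduces to the prime number theorem in short intervals. Faster-growing $a_j$ are within $o(1)$ of bounded-degree polynomials on the same intervals. So nothing forces $L(N)$ to be shorter, and no change of variables is needed. The substantive technical work lies elsewhere: controlling the rounding errors from $\sfloor{\cdot}$ under the weight $\Lambda_{w,b}$, and keeping the PET bound uniform in the varying real Taylor coefficients.
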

    \begin{theorem}%[Strongly independent Hardy sequences along primes for commuting transformations, 
    [{\cite[Theorem 1.12(i)]{DKKST24}}]\label{T: DKKST strongly independent along primes}
        Let $a_1, \ldots, a_\ell\in \CH$ be  strongly independent Hardy sequences. Then the sequences $a_1(p_n), \ldots, a_\ell(p_n)$ are controlled by the invariant factor. In particular, they are jointly ergodic for all systems $(X, \CX, \mu,$\! $T_1, \ldots, T_\ell)$ with $T_1, \ldots, T_\ell$ ergodic.  
    \end{theorem}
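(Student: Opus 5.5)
Since the statement concerns invariant factor control for all systems, I would take the route of Corollary \ref{C: invariant/Krat control criteria global}: it suffices to show that the sequences $\sfloor{a_1(p_n)}, \ldots, \sfloor{a_\ell(p_n)}$ (i) are good for equidistribution and (ii) admit Host-Kra seminorm control for every system $(X, \CX, \mu,$\! $T_1, \ldots, T_\ell)$. Joint ergodicity for systems with $T_1, \ldots, T_\ell$ ergodic then follows by specialising, since $\CI(T_j)$ is trivial there. Both conditions concern averages over primes. I would rewrite them as von Mangoldt-weighted Ces\`aro averages $\E_{n\in[N]}\Lambda(n)\cdots$ and handle them through the $W$-trick.

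\textbf{Equidistribution.} By the exponential-sum reformulation, I need
$$\lim_{N\to\infty}\E_{n\in[N]\cap\P} e\Bigl(\sum_j \beta_j\sfloor{a_j(n)}\Bigr)=0$$
for $\beta_1, \ldots, \beta_\ell\in[0,1)$ not all zero. First I would remove the integer parts: I would pass to the joint distribution of $(a_j(p))_j$ modulo $1$, and approximate $e(\beta\sfloor{x})$ by trigonometric polynomials in $x$ and $\{x\}$. This reduces the claim to showing that $\sum_j (\beta_j+k_j)a_j$, evaluated along primes, is equidistributed mod $1$ whenever the combination is nontrivial. Strong independence says precisely that every nontrivial real combination stays logarithmically away from $\Q[t]$. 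An equidistribution result along primes for such Hardy functions then finishes this step; here I would invoke the single-transformation input behind Theorem \ref{T: KoTs strongly independent along primes}, going back to Bergelson--Kolesnik--Son type estimates.

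\textbf{Seminorm control.} This is the main obstacle. The plan is to compare the prime average with the Ces\`aro average along $W$-progressions. Writing $n=Wm+b$, I need the uniformity estimate \eqref{E: uniformity estimate for modified Lambda} with
$$A(n)=\prod_j T_j^{\sfloor{a_j(n)}}f_j .$$
I would split $[N]$ into short intervals $(N', N'+L(N')]$, on which each $a_j(Wm+b)$ is approximated by a polynomial in $m$ with controlled error. On each such interval, a PET/van der Corput argument bounds the correlation of $\Lambda_{w,b}-1$ with $A$ by a Gowers norm of $\Lambda_{w,b}-1$ on that short interval. I would then use the short-interval Gowers uniformity of the modified von Mangoldt function, due to Matom\"aki--Radziwi\l\l--Shao--Tao--Ter\"av\"ainen, which requires $L(N)\geq N^{5/8+\veps}$. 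The delicate point is choosing $L$ so that both this uniformity and the Taylor approximation hold. For sequences growing too slowly or too fast relative to $L$, I would first treat the sublinear parts separately, using a change of variables as in \cite{DKKST24}.

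Once the prime weight has been removed, it remains to show Host-Kra seminorm control for the Ces\`aro averages of $a_j(Wn+b)$. These are again pairwise independent Hardy sequences, because strong independence implies pairwise independence and is preserved under the affine change of variable. I would then apply Theorem \ref{T: DKKST pairwise independent Hardy}, checking that its bounds are uniform in $b$. Combining this with the equidistribution step and Corollary \ref{C: invariant/Krat control criteria global} gives invariant factor control.
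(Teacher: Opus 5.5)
Your proposal is correct in outline, but it follows a different route from the paper. The paper's deduction is two lines. For every $w,b$ the shifted family $a_1(Wt+b),\ldots,a_\ell(Wt+b)$ is still strongly independent, because substituting $s=Wt+b$ turns a rational polynomial in $t$ into one in $s$ and $\log t\asymp\log s$. So Theorem~\ref{T: DKKST strongly independent Hardy} shows that every average along $Wn+b$ converges to the same limit $\prod_j\E(f_j|\CI(T_j))$. Since each $a_j$ individually stays logarithmically away from $\Q[t]$, the Koutsogiannis--Tsinas transference principle (Theorem~\ref{T: KoTs transference}) carries this identity over to the primes.

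You instead apply Corollary~\ref{C: invariant/Krat control criteria global} directly to $\sfloor{a_j(p_n)}$. This costs you two extra ingredients:
\begin{enumerate}
\item a separate equidistribution theorem along primes (Bergelson--Kolesnik--Son);
\item seminorm control along primes.
\end{enumerate}
For the second, your $W$-trick combined with short-interval Gowers uniformity is precisely the content of the transference principle. You could simply quote Theorem~\ref{T: KoTs transference} with $F=0$, whose hypothesis on the $a_j$ holds here, instead of redoing the short-interval analysis. You then feed in only the weaker pairwise-independent estimates of Theorem~\ref{T: DKKST pairwise independent Hardy}.

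What your route buys is that it proves the prime version of seminorm control (Theorem~\ref{T: DKKST pairwise independent Hardy along primes}) along the way, using only pairwise-independent input. What it costs, besides length, is a uniformity requirement that the paper's route never meets. Fix $s$ with $\nnorm{f_j}_{s,T_j}=0$. You need the averages along $Wm+b$ to vanish for all $w$ simultaneously, since $w\to\infty$ in the $W$-trick. So the seminorm degree that Theorem~\ref{T: DKKST pairwise independent Hardy} produces for the family $(a_j(Wt+b))_j$ must be bounded independently of $W$, not just of $b$ as you wrote. This is plausible, since that degree comes from PET and concatenation bookkeeping that only sees growth rates, and these are unchanged by affine reparametrisation. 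But it has to be checked in the proof; it is not part of the statement you cite. By transferring a limit identity whose value does not depend on $(W,b)$, the paper avoids this issue entirely.
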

    For completeness, we note that the work of Koutsogiannis and Tsinas also proves the extension of Theorem \ref{T: Fr distinct-growth Hardy} along primes \cite[Theorem 1.5]{KoTs23}, a special case of Theorem \ref{T: DKKST strongly independent along primes}.

    While Theorems \ref{T: KoTs strongly independent along primes} and \ref{T: DKKST strongly independent along primes} extend Theorems \ref{T: Ts strongly independent} and \ref{T: DKKST strongly independent Hardy} to averages along primes, the following result does the same with Theorem \ref{T: DKKST pairwise independent Hardy}.
    \begin{theorem}%[Joint ergodicity for pairwise independent Hardy sequences and weakly mixing transformations along primes, 
    [{\cite[Theorem 1.12(ii)]{DKKST24}}]\label{T: DKKST pairwise independent Hardy along primes}
    Let $a_1, \ldots, a_\ell\in \CH$ be pairwise independent Hardy sequences. Then the sequences $a_1(p_n), \ldots, a_\ell(p_n)$:
    \begin{enumerate}
        \item admit Host-Kra seminorm control;
        \item are jointly ergodic for any system $(X, \CX, \mu,$\! $T_1, \ldots, T_\ell)$ with $T_1, \ldots, T_\ell$ weakly mixing.
    \end{enumerate}    
    \end{theorem}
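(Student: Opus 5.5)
Part (ii) follows from part (i), so the plan concentrates on (i). For weakly mixing $T_j$ there are no nontrivial nonergodic eigenfunctions, and by Theorem \ref{T: joint ergodicity criteria} joint ergodicity is equivalent to Host-Kra seminorm control plus being good for equidistribution. The latter is automatic here. Alternatively, $\nnorm{f}_{s,T_j}=|\int f\,d\mu|$ for weakly mixing $T_j$, so we may replace each $f_j$ by $f_j-\int f_j\,d\mu$ and expand multilinearly to get joint ergodicity directly.

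For (i), I would follow the $W$-trick route. Replace the average over primes by $\E_{n\in[N]}\Lambda(n)\prod_j T_j^{\sfloor{a_j(n)}}f_j$. Split $n$ into residue classes $Wn+b$ with $\gcd(b,W)=1$ and write $\Lambda_{w,b}=(\Lambda_{w,b}-1)+1$. This gives two tasks:
\begin{enumerate}
\item Prove the uniformity estimate \eqref{E: uniformity estimate for modified Lambda} for $A(n)=\prod_j T_j^{\sfloor{a_j(n)}}f_j$.
\item Show that for each fixed $w,b$ the unweighted averages along $\sfloor{a_j(Wn+b)}$ vanish whenever some $\nnorm{f_j}_{s,T_j}=0$, with $s$ uniform in $w,b$.
\end{enumerate}

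The second task should follow from Theorem \ref{T: DKKST pairwise independent Hardy}(i). The functions $t\mapsto a_j(Wt+b)$ lie in $\CH$ (closure under variable shifts and compositions) and remain pairwise independent, since a linear combination growing faster than $\log t$ still does so after an affine change of variables. Uniformity of $s$ should come from the proof, where $s$ depends only on growth data that are invariant under affine reparametrisation. Failing that, one only needs vanishing for each $(w,b)$ separately, and $s$ can be taken to be the maximum over the finitely many residue classes relevant at each stage.

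The main obstacle is the uniformity estimate. Here I would follow the strategy for strongly independent sequences (Theorem \ref{T: DKKST strongly independent along primes}), since that step does not use the independence type.
\begin{enumerate}
\item Cover $[N]$ by short intervals $(M,M+L(M)]$ on which each $a_j(Wn+b)$ is well approximated by a real polynomial in $n$ of bounded degree, with variable coefficients.
\item Change variables so that the $T_j^{\sfloor{a_j}}$ become polynomial iterates of a $\Z^\ell$-action, up to rounding errors that can be absorbed by a finite partition of the range of the fractional parts.
\item Run PET with van der Corput, treating $(\Lambda_{w,b}-1)$ as a weight. This bounds the weighted short-interval average by a Gowers norm of $\Lambda_{w,b}-1$ on intervals of length $L(M)$, with the number of differencing steps depending only on the degrees.
\end{enumerate}

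The delicate input is the short-interval Gowers uniformity of $\Lambda_{w,b}-1$ for $L(M)=M^{c}$ with $c<1$ possibly small. For this I would invoke the Matomäki--Radziwiłł--Tao--Ter\"av\"ainen--Shao type results used in \cite{KoTs23}. If they are unavailable for the required range, I would choose $L$ growing as fast as the Taylor expansion allows and accept a higher approximating degree. This is why I expect this step to be the bottleneck.

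Finally, combine the two tasks. Let $N\to\infty$ and then $w\to\infty$: the $(\Lambda_{w,b}-1)$ contribution disappears, and the main term vanishes by task 2, which gives (i).
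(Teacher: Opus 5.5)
Your plan is essentially the paper's route. The survey deduces this theorem by applying Theorem~\ref{T: DKKST pairwise independent Hardy} to the shifted sequences $a_j(Wt+b)$, which gives the common limit $0$ whenever some $\nnorm{f_j}_{s,T_j}=0$ (or $\prod_j\int f_j\,d\mu$ for weakly mixing $T_j$), and then invoking the Koutsogiannis--Tsinas transference principle (Theorem~\ref{T: KoTs transference}), whose proof is exactly your $W$-trick decomposition plus the uniformity estimate of your Task~1, so you can cite it rather than re-derive it.

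Two cautions, though. First, your fallback of maximizing $s$ over ``finitely many residue classes'' does not work. Letting $w\to\infty$ uses all $W$ at once, so $s$ must be bounded independently of $w,b$, which is what your growth-data argument (not the fallback) provides. Second, the transference principle assumes that each $a_j$ either stays logarithmically away from $\Q[t]$ or is an almost rational polynomial. Strong independence gives this for free, but pairwise independence does not: for example, $t+\log t,\ t^2$ is pairwise independent while $t+\log t$ satisfies neither condition. So your claim that the uniformity step ``does not use the independence type'' needs a separate treatment of components $q+g$ with $q\in\Q[t]$ and $1\prec g\ll\log t$ in the rounding step, for instance via the slow variation of $g$ on short intervals.
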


    All the aforementioned results follow from a beautiful transference principle of Koutsogiannis and Tsinas. In what follows, we use the following definition, examples of which are $t + \sqrt{2},\; t^2 + 1/t$.
    \begin{definition}[Almost rational polynomials]\label{D: almost rational poly}
        We call $a\in\CH$ an \emph{almost rational polynomial} if $\lim\limits_{t\to\infty}|a(t) - p(t)| = 0$ for some $p\in\Q[t]+\R$.
    \end{definition}
    \begin{theorem}[Transference principle for Hardy sequences along primes {\cite[Theorem 1.2]{KoTs23}}]\label{T: KoTs transference}
        Let $a_1, \ldots, a_\ell\in \CH$ be Hardy sequences, each one of which either stays logarithmically away from rational polynomials or is an almost rational polynomial.
        % satisfies
        % \begin{align*}
        %     \lim_{t\to\infty}|a(t) - p(t)| = 0
        % \end{align*}
        % for some $p\in\Q[t]+\R$.
        Let $(X, \CX, \mu,$\! $T_1, \ldots, T_\ell)$ be a system and $f_1, \ldots, f_\ell\in L^\infty(\mu)$. Suppose that for any $w,b\in\N$, the averages
        \begin{align*}
            %A_{N,W,b}(f_1, \ldots, f_\ell):=
            \E_{n\in[N]} T_1^{a_1(Wn+b)}f_1\cdots T_\ell^{a_\ell(Wn+b)}f_\ell
        \end{align*}
        converge to the same limit $F$ in $L^2(\mu)$ (where $W =\prod\limits_{\substack{p\in\P\colon p\leq w}}p$ as before). Then 
        \begin{align*}
            \E_{n\in[N]\cap\P} T_1^{a_1(n)}f_1\cdots T_\ell^{a_\ell(n)}f_\ell
        \end{align*}
        also converges to $F$ in $L^2(\mu)$.
    \end{theorem}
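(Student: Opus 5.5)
The plan is to follow the two-step scheme outlined above: first prove the uniformity estimate \eqref{E: uniformity estimate for modified Lambda} for $A(n) := T_1^{a_1(n)}f_1\cdots T_\ell^{a_\ell(n)}f_\ell$, and then use the hypothesis that all the averages along $Wn+b$ converge to $F$.

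\textbf{Reduction to $\Lambda$-weighted averages along progressions.} By the prime number theorem, the average over $[N]\cap\P$ equals $\E_{n\in[N]}\Lambda(n)A(n)+o(1)$ in $L^2(\mu)$. Fix $w$ and split $[N]$ into residue classes mod $W$. The classes $b$ with $\gcd(b,W)>1$ contain $O_w(1)$ primes and contribute negligibly. The main term becomes
\[
\E_{b\in[W],\ \gcd(b,W)=1}\ \E_{n\in[N/W]}\Lambda_{w,b}(n)A(Wn+b)+o_{N\to\infty}(1).
\]
Write $\Lambda_{w,b}=1+(\Lambda_{w,b}-1)$. The contribution of the constant $1$ tends to $F$ as $N\to\infty$, by the hypothesis applied to each residue class. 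It therefore suffices to prove the uniformity estimate, with $w\to\infty$ taken after $N\to\infty$.

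\textbf{Uniformity estimate via short intervals.} Partition $[N]$ into short intervals $(M,M+L(M)]$, where $L\in\CH$ grows slowly enough, for example $L(M)=M^{c}$ with $c$ small depending on the growth of the $a_j$. On each such interval, Taylor expand $a_j(W(M+h)+b)$ in $h$. The result is a polynomial $p_{j,M}(h)$ of bounded degree $d$ plus an error that is $o(1)$ uniformly. After discarding the $h$ for which a fractional part lies within $o(1)$ of an integer, we may replace $\sfloor{a_j(\cdot)}$ by $\sfloor{p_{j,M}(h)}$. For this we need equidistribution of $(a_j(Wn+b))$ mod $1$ on short intervals. That holds when $a_j$ stays logarithmically away from rational polynomials (a short-interval Boshernitzan-type statement). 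In the almost-rational case, $a_j$ is eventually a fixed integer polynomial plus a constant, up to $o(1)$, so rounding is harmless. On each short interval, apply Cauchy--Schwarz and PET-type van der Corput manipulations. These bound the $L^2$ norm of the weighted average by a Gowers norm $\norm{\Lambda_{w,b}-1}_{U^{s}(M,M+L(M)]}$, with $s$ depending only on $d$ and $\ell$, uniformly in the polynomials $p_{j,M}$ and in the commuting transformations. This is the standard fact that polynomial multiple averages weighted by $\theta(h)$ are controlled by Gowers norms of $\theta$, and it follows since $\theta$ enters only through the $h$-variable.

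\textbf{Main obstacle.} The Green--Tao uniformity on long intervals must be replaced by its short-interval version. We need
\[
\lim_{w\to\infty}\limsup_{N\to\infty}\E_{M\in[N]}\max_{b}\norm{\Lambda_{w,b}-1}_{U^s(M,M+L(M)]}=0
\]
for $L(M)=M^{c}$. This is a Matomäki--Radziwiłł--Tao type result on Gowers uniformity of $\Lambda$ in short intervals, which is available for $c$ above some threshold such as $5/8$. The Taylor-expansion step, in turn, requires $L$ small relative to the growth of the $a_j$. I expect the main difficulty to be reconciling these two constraints. My first attempt would be to take $L(M)=M^{5/8+\veps}$ and accept higher-degree Taylor polynomials, which is harmless because the Gowers-norm bound holds for any bounded degree. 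With that choice the estimate follows, and combining the two parts gives convergence to $F$.
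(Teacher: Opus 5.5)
Your reduction to the uniformity estimate \eqref{E: uniformity estimate for modified Lambda} is correct. The architecture you propose for proving it is the right one, and essentially the route of Koutsogiannis and Tsinas: Taylor approximation on intervals $(M,M+L(M)]$ combined with short-interval Gowers uniformity of $\Lambda_{w,b}-1$ for $L(M)=M^{5/8+\veps}$. The ``main obstacle'' you name is not really one, since polynomial growth lets any exponent $c<1$ work at the cost of a larger Taylor degree.

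\textbf{The gap is in the rounding step.} You discard the $h$ for which $\{a_j(W(M+h)+b)\}$ is near an integer and justify this by unweighted equidistribution. But the average carries the weight $\Lambda_{w,b}-1$, which has size $\asymp\log N$, so a set of small counting density can still carry a non-negligible share of the primes. What you actually need is
\[
\lim_{\eta\to 0}\limsup_{w\to\infty}\limsup_{N\to\infty}\max_{(b,W)=1}\E_{n\in[N]}\Lambda_{w,b}(n)\,1_{E_{\eta,b}}(n)=0,\qquad E_{\eta,b}:=\{n\colon \{a_j(Wn+b)\}\in[0,\eta)\cup(1-\eta,1)\},
\]
that is, equidistribution of $a_j$ along primes in residue classes. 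This is where the hypothesis that $a_j$ stays logarithmically away from $\Q[t]$ is genuinely used. For almost rational polynomials nothing is needed, since $\sfloor{a_j(Wn+b)}$ is eventually an integer-valued polynomial on each residue class modulo a fixed $Q$. Note also that ``equidistribution on short intervals'' is false for admissible slowly growing $a_j$ such as $(\log t)^2$, whose fractional part is essentially constant on each $(M,M+L(M)]$. Only a statement averaged over $M$ holds.

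The same issue resurfaces in your PET step. After Taylor expansion the iterates are $\sfloor{p_{j,M}(h)}$ with real coefficients. The floor does not commute with van der Corput differencing or with composing by $T_1^{-\sfloor{p_{1,M}(h)}}$. So the ``standard fact'' you invoke, which concerns integer-valued polynomial iterates or real polynomial iterates of $\R^\ell$-flows, does not apply as stated. The usual remedy is as follows:
\begin{enumerate}
\item pass to the suspension action $S_{\mathbf t}(x,\mathbf s)=(T_1^{\sfloor{s_1+t_1}}\cdots T_\ell^{\sfloor{s_\ell+t_\ell}}x,\{\mathbf s+\mathbf t\})$ on $X\times[0,1)^\ell$;
\item localize to $\mathbf s\in[0,\delta)^\ell$;
\item run PET for the flow.
\end{enumerate}
The discrepancy is supported on those $h$ with some $\{p_{j,M}(h)\}\in[1-\delta,1)$, so it too is controlled only by the weighted estimate above.

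Fortunately, that estimate follows from your own tools. Majorize $1_{[0,\eta)\cup(1-\eta,1)}$ by a trigonometric polynomial with mean $O(\eta)$. Each nonzero frequency $k$ contributes two terms:
\begin{itemize}
\item $\E_{n}e(k a_j(Wn+b))$, which tends to $0$ by Theorem~\ref{T: Boshernitzan}, because $k a_j(Wt+b)$ still stays logarithmically away from $\Q[t]$;
\item the correlation of $\Lambda_{w,b}-1$ with the polynomial phase $e(k p_{j,M}(h))$ on each short interval, which is bounded by the short-interval $U^{d+1}$ norm.
\end{itemize}
The unboundedness of $\Lambda_{w,b}$ is harmless in all these van der Corput arguments, provided the shift ranges are comparable to $L(M)$: the diagonal error terms are then $O((\log N)^{O(1)}/L(M))$. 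With these two additions your argument closes.
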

    We note that in the statement of \cite[Theorem 1.2]{KoTs23}, the convergence of the averages along $Wn+b$ to $F$ is required for all $W\in\N$, not just primorials (products of consecutive primes). However, the inspection of the proof shows that it suffices for if to hold for primorials.

    Furthermore, \cite[Theorem 1.2]{KoTs23} holds for more general averages in which each term $T_j^{a_j(n)}f_j$ is replaced by a product $T_1^{a_{j1}(n)}\cdots T_\ell^{a_{j\ell}(n)}f_j$.  
    
    \subsubsection{Integer polynomials along primes}
    A transference principle in the spirit of Theorem \ref{T: KoTs transference} also gives extensions of Theorems \ref{T: FrKu affine independent} and \ref{T: FrKu pairwise affine independent} to averages of integer polynomials along primes.
    \begin{theorem}[{\cite[Theorem 2.13]{FrKu22a}}]\label{T: FrKu affine independent along primes}
    Let $a_1, \ldots, a_\ell\in\Z[t]$ be affinely independent polynomials. Then the sequences $a_1(p_n), \ldots, a_\ell(p_n)$
    are controlled by the rational Kronecker factor. In particular, they are jointly ergodic for any system $(X, \CX, \mu,$\! $T_1, \ldots, T_\ell)$ with $T_1, \ldots, T_\ell$ totally ergodic.
    %Let $a_1, \ldots, a_\ell\in\Z[t]$ be affinely independent polynomials, and let $(X, \CX, \mu,$\! $T_1, \ldots, T_\ell)$ be a system with $T_1, \ldots, T_\ell$ totally ergodic. Then $(T^{a_1(n)}, \ldots, T^{a_\ell(n)})_n$ is jointly ergodic.
\end{theorem}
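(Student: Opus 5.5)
Since the integer polynomials $a_1,\ldots,a_\ell$ are affinely independent, the restricted sequences $a_j(p_n)$ are generally not good for equidistribution, but they should be good for \emph{irrational} equidistribution. So the natural target is rational Kronecker factor control, which I would get by transferring Theorem \ref{T: FrKu affine independent} from $\N$ to $\P$ with a $W$-trick. Fix a system $(X, \CX, \mu,$\! $T_1, \ldots, T_\ell)$ and functions $f_1,\ldots,f_\ell$, and suppose $\E(f_m|\Krat(T_m))=0$ for some $m$. By the prime number theorem it suffices to show that
\begin{align*}
\lim_{N\to\infty}\norm{\E_{n\in[N]}\Lambda(n)\prod_{j=1}^\ell T_j^{a_j(n)}f_j}_{L^2(\mu)}=0.
\end{align*}
Splitting $[WN]$ into residue classes $b \bmod W$ with $\gcd(b,W)=1$ (the other classes contribute negligibly), this reduces to controlling the averages
\begin{align*}
\E_{n\in[N]}\Lambda_{w,b}(n)\prod_{j=1}^\ell T_j^{a_j(Wn+b)}f_j
\end{align*}
uniformly in $b$, in the limit $w\to\infty$.

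\textbf{Step 1: uniformity estimate.} I would prove \eqref{E: uniformity estimate for modified Lambda} with $A(n)=\prod_j T_j^{a_j(n)}f_j$. To do this, I would run the PET/van der Corput argument that gives Host-Kra seminorm control for pairwise independent polynomials (Theorem \ref{T: FrKu pairwise affine independent}; affine independence implies pairwise independence). The change is to carry the weight $\Lambda_{w,b}-1$ as an extra ``function''. Standard PET applied to the polynomial family $a_j(Wn+b)$ bounds the weighted average by a Gowers norm $\norm{\Lambda_{w,b}-1}_{U^s[N]}$, with $s$ depending only on the degrees and $\ell$, and uniformly in the system. This is the approach of Frantzikinakis-Host-Kra and Koutsogiannis, and I would cite those reductions. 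The Green-Tao theorem then sends this Gowers norm to $0$ as $w\to\infty$.

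\textbf{Step 2: the unweighted averages along $Wn+b$.} The polynomials $n\mapsto a_j(Wn+b)$ are again affinely independent integer polynomials: an affine relation among them would give one among the $a_j$ via the substitution $t\mapsto (t-b)/W$. So Theorem \ref{T: FrKu affine independent} applies. Since $\E(f_m|\Krat(T_m))=0$, the averages $\E_{n\in[N]}\prod_j T_j^{a_j(Wn+b)}f_j$ tend to $0$ in $L^2(\mu)$ for each fixed $w,b$. Combining this with Step 1 yields rational Kronecker factor control along primes.

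\textbf{Joint ergodicity.} For totally ergodic $T_j$, we have $\Krat(T_j)=\CI(T_j)$, which is trivial. Writing each $f_j$ as $\int f_j\,d\mu$ plus a mean-zero part, all mixed terms vanish by Step 2, leaving $\prod_j\int f_j\,d\mu$, since $\E_{n\in[N]\cap\P}1=1$.

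\textbf{Main obstacle.} The hardest point is Step 1 in the commuting setting. PET for commuting transformations along pairwise independent polynomials does not terminate with plain van der Corput; this is exactly the difficulty that required seminorm smoothing. However, for the uniformity estimate we do not need Host-Kra seminorms, only \emph{some} Gowers norm bound on the weight. I would therefore run PET while treating the sequence term as the distinguished one to be controlled. Polynomials along which the transformations are arbitrary still give a bound of the form $\norm{\Lambda_{w,b}-1}_{U^s}$ by the Green-Tao/Leibman-type argument, because PET can always end on the weight after finitely many steps, whatever the $T_j$ are. If this fails, the fallback is to invoke the general transference principle analogous to Theorem \ref{T: KoTs transference} adapted to integer polynomials.
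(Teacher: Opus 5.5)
Your proposal is correct and follows essentially the route the paper indicates. The $W$-trick splits the prime average into two pieces. The first is the uniformity estimate \eqref{E: uniformity estimate for modified Lambda}, obtained from PET ending on the weight as in Frantzikinakis--Host--Kra, together with the Green--Tao Gowers uniformity of $\Lambda_{w,b}-1$. The second is the unweighted averages along $Wn+b$, which vanish by Theorem \ref{T: FrKu affine independent} because the polynomials $a_j(Wn+b)$ remain affinely independent. Two points should be stated explicitly, and both are contained in the Frantzikinakis--Host--Kra argument you cite: the PET bound must hold with $s$ and constants independent of $w$ and $b$, and the unboundedness of $\Lambda_{w,b}$ is handled through the Green--Tao pseudorandom majorant.
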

\begin{theorem}\label{T: FrKu pairwise affine independent along primes}
    Let $a_1, \ldots, a_\ell\in\Z[t]$ be pairwise independent polynomials. Then the sequences $a_1(p_n), \ldots, a_\ell(p_n)$:
    \begin{enumerate}
        \item\label{i: poly primes HK seminorm control} admit Host-Kra seminorm control;
        \item\label{i: poly primes weakly mixing} are jointly ergodic for any system $(X, \CX, \mu,$\! $T_1, \ldots, T_\ell)$ with $T_1, \ldots, T_\ell$ weakly mixing.
    \end{enumerate}
    %good for Host-Kra seminorm control. Hence they are jointly ergodic for any system $(X, \CX, \mu,$\! $T_1, \ldots, T_\ell)$ with $T_1, \ldots, T_\ell$ weakly mixing.
    %Let $a_1, \ldots, a_\ell\in\Z[t]$ be affinely independent polynomials, and let $(X, \CX, \mu,$\! $T_1, \ldots, T_\ell)$ be a system with $T_1, \ldots, T_\ell$ totally ergodic. Then $(T^{a_1(n)}, \ldots, T^{a_\ell(n)})_n$ is jointly ergodic.
\end{theorem}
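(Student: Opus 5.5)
The plan is to transfer Theorem \ref{T: FrKu pairwise affine independent} from Ces\`aro averages over $\N$ to averages over primes, using the $W$-trick and the Gowers uniformity of the modified von Mangoldt functions. Part \eqref{i: poly primes weakly mixing} will follow from part \eqref{i: poly primes HK seminorm control} exactly as in the integer case. For weakly mixing $T_j$ the Host-Kra seminorms trivialize: $\nnorm{f}_{s,T_j}=\abs{\int f\, d\mu}$, and there are no nontrivial nonergodic eigenfunctions. So Host-Kra seminorm control along primes, combined with Theorem \ref{T: joint ergodicity criteria} (the equidistribution condition being vacuous), yields joint ergodicity. Alternatively, one can write $f_j=\int f_j\,d\mu + (f_j-\int f_j\,d\mu)$ and telescope.

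For part \eqref{i: poly primes HK seminorm control}, fix $s$ and functions $f_j$ with $\nnorm{f_m}_{s,T_m}=0$. By the prime number theorem, the average over $[N]\cap\P$ equals, up to $o(1)$, the average $\E_{n\in[N]}\Lambda(n)\prod_j T_j^{a_j(n)}f_j$. We split $n$ into residue classes modulo the primorial $W$. Only classes $b$ with $\gcd(b,W)=1$ contribute, so it suffices to control
\begin{align*}
\E_{b}\,\E_{n\in[N/W]}\Lambda_{w,b}(n)\prod_{j=1}^\ell T_j^{a_j(Wn+b)}f_j .
\end{align*}
We write $\Lambda_{w,b}=1+(\Lambda_{w,b}-1)$. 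The main term is an unweighted average along the polynomials $a_j(Wn+b)$. These remain pairwise independent in $n$, since pairwise independence is preserved under the affine change of variables $n\mapsto Wn+b$. Theorem \ref{T: FrKu pairwise affine independent}\eqref{i: poly HK seminorm control} therefore gives vanishing for each fixed $w,b$. There is a subtlety: the degree $s$ there may a priori depend on the polynomials, and hence on $W,b$. I expect the PET argument to produce a degree depending only on the degrees and the number of polynomials, so $s$ is uniform. Moreover, the vanishing is qualitative and needs only to hold for each fixed $w$.

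The error term is handled by the uniformity estimate \eqref{E: uniformity estimate for modified Lambda}. Running PET/van der Corput on $\E_n(\Lambda_{w,b}(n)-1)A(Wn+b)$ with $A(n)=\prod_j T_j^{a_j(n)}f_j$ bounds it by a Gowers norm $\norm{\Lambda_{w,b}-1}_{U^{k}[N]}$ for some $k$ depending only on the degrees and $\ell$, plus $o_N(1)$. This is uniform in $b$, because the polynomial structure is all that is used (as in Frantzikinakis-Host-Kra and Wooley-Ziegler). The Green-Tao theorem then makes this term tend to zero as $w\to\infty$. Taking $N\to\infty$ first and then $w\to\infty$ gives the claim.

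The main obstacle is the uniformity estimate for commuting transformations with the $\Lambda_{w,b}$ weight. One must check that the PET scheme, applied with an unbounded weight majorized by a pseudorandom measure, terminates in a Gowers norm of the weight rather than a seminorm of the $f_j$. I would follow \cite{FHK07, WZ12}: use a Cauchy-Schwarz/van der Corput argument to bound the averaged product by an average of multiparameter polynomial correlations of $\Lambda_{w,b}-1$. These are then controlled by a Gowers norm via the linear-forms/polynomial-forms condition of the Green-Tao majorant. This step does not see the transformations at all, which is why commutativity causes no additional difficulty.
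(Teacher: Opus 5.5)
Your proposal is correct and is essentially the paper's argument. The paper deduces the statement by applying Theorem~\ref{T: FrKu pairwise affine independent} to the families $a_j(Wn+b)$, which remain pairwise independent, and feeding this into the Koutsogiannis--Tsinas transference principle (Theorem~\ref{T: KoTs transference}); that principle is proved by exactly the $W$-trick and Green--Tao uniformity decomposition you carry out by hand. The paper obtains part (ii) by transferring the weakly mixing part of Theorem~\ref{T: FrKu pairwise affine independent} directly rather than via part (i), which is an immaterial difference. The uniformity in $w,b$ of the seminorm degree that you flag is a genuine point, which the paper's appeal to transference also tacitly needs (every $w,b$ must give the same limit $0$). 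It holds because the degree produced by the PET/smoothing argument depends only on $\ell$ and the maximal degree of the polynomials.
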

The second of these results is not explicitly stated in \cite{FrKu22a}, but following the remark below \cite[Theorem 2.13]{FrKu22a}, it can be deduced in a straightforward way from Theorem \ref{T: FrKu pairwise affine independent} and \ref{T: KoTs transference}.

    \subsection{Generalized polynomials}\label{SS: generalized polynomials}

Another natural class of sequences of polynomial growth that has received significant attention in dynamics are generalized polynomials.
\begin{definition}
    A function $a:\N\to\R$ is a \emph{generalized polynomial} if it can be formed by finitely many applications of the symbols $+,-,\cdot,\floor{\cdot}$ (and hence also the fractional part $\{\cdot\}$) to constants and the identity function $b(t) =t$.
\end{definition}
Examples of generalized polynomials include
\begin{align*}
    t,\; \sqrt{2}t^2+\sqrt{3}t,\; \pi t\sfloor{\sqrt{5}t},\; e\sfloor{\sqrt{5}t^6 + \sfloor{\sqrt{7}t}}\sfloor{\sqrt{6}t},\; \sqrt{6}t\{\sqrt{2}t^2\}.
\end{align*}
    On the surface, generalized polynomials look similar to polynomials, but they may differ considerably. For instance, there exist nonconstant generalized polynomials that are bounded (e.g. $\rem{t/2}$).

    One reason behind the interest in generalized polynomials is that they arise naturally in higher order Fourier analysis. For instance, the function $n\mapsto e(\sqrt{2}n\floor{\sqrt{3}n})$ can be realized as a 2-step nilsequence on the Heisenberg nilmanifold. See e.g. the paper of Bergelson and Leibman \cite{BL07} for more results concerning the connection between these two classes.

    In contrast to polynomials and Hardy sequences, the structure of multiple ergodic averages along generalized polynomials is relatively unexplored. We therefore pose a number of problems to breathe life into this line of research. For simplicity, we state these problems for a single transformation; however, every single one of them can be extended to commuting transformations.
    The first problem asks about the norm convergence of such averages.
    \begin{problem}[Norm convergence for generalized polynomials {\cite[Problem 14]{Fr16}}]\label{Pr: norm convergence gen polys}
        Let $a_1, \ldots, a_\ell$ be generalized polynomials, $(X, \CX, \mu, T)$ be a system, and $f_1, \ldots, f_\ell\in L^\infty(\mu)$. Does the average
        \begin{align*}
            \E_{n\in[N]}T^{\sfloor{a_1(n)}}f_1\cdots T^{\sfloor{a_\ell(n)}}f_\ell
        \end{align*}
        converge in $L^2(\mu)$?
    \end{problem}
    While the case $\ell=1$ of Problem \ref{Pr: norm convergence gen polys} follows from the spectral theorem and the characterization of bounded generalized polynomials from \cite{BL07}, the problem remains even open for $\ell=2$ and $T$ weakly mixing systems.
    
    The next problem aims at finding the correct notion of pairwise independence for generalized polynomials.
    \begin{problem}[Seminorm control for generalized polynomials] %in the single-transformation case]
        %Let $a_1, \ldots, a_\ell$ be generalized polynomials. 
        Characterize those generalized polynomials that admit Host-Kra seminorm control for every system $(X, \CX, \mu, T)$.
    \end{problem}
    % \begin{problem}[Seminorm control for generalized polynomials in the commuting case]
    %     Let $a_1, \ldots, a_\ell$ be generalized polynomials. Characterize those generalized polynomials that admit Host-Kra seminorm control for every system $(X, \CX, \mu,$\! $T_1, \ldots, T_\ell)$.
    % \end{problem}
    The next two problems ask for the right notions of independence and strong independence for generalized polynomials. Ideally, the requested characterization would take the form of a condition on nontrivial linear combinations of the sequences.
    \begin{problem}[Joint ergodicity for strongly independent generalized polynomials] %in the single-transformation case]
        %Let $a_1, \ldots, a_\ell$ be generalized polynomials. 
        Characterize those generalized polynomials that are jointly ergodic for every ergodic system $(X, \CX, \mu, T)$.
    \end{problem}
    % \begin{problem}[Invariant factor control for strongly independent generalized polynomials] in the commuting case]
    %     Let $a_1, \ldots, a_\ell$ be generalized polynomials. Characterize those generalized polynomials that are controlled by the invariant factor for every system $(X, \CX, \mu,$\! $T_1, \ldots, T_\ell)$.
    % \end{problem}
     \begin{problem}[Joint ergodicity for independent generalized polynomials] %in the single-transformation case]
        %Let $a_1, \ldots, a_\ell$ be generalized polynomials. 
        Characterize those generalized polynomials that are controlled by the rational Kronecker factor for every system $(X, \CX, \mu, T)$.
    \end{problem}
    % \begin{problem}[Joint ergodicity for independent generalized polynomials in the commuting case]
    %     Let $a_1, \ldots, a_\ell$ be generalized polynomials. Characterize those generalized polynomials that are controlled by the rational Kronecker factor for every system $(X, \CX, \mu,$\! $T_1, \ldots, T_\ell)$.
    % \end{problem}

    \subsection{Generalized Hardy sequences}

    Just like generalized polynomials are formed by adding $\sfloor{\cdot}$ to three operations defining polynomials, we can consider a generalization of Hardy sequences that involves sequences like
    \begin{align*}
        \sfloor{n^b}^2,\; \sfloor{n \log n}^{3/2},\; \exp(\sfloor{\log n}).
    \end{align*}
    We refrain from formally defining such a class; we will however refer to it informally as \emph{generalized Hardy sequences}. Sequences like $\sfloor{n^b}^2$ naturally appear in the intermediate steps of PET arguments for Hardy sequences \cite{DKKST24, Ts22}; their equidistribution as well as (single) recurrence and convergence properties were studied by Frantzikinakis \cite{Fr09}. Several problems on such sequences have been posed by Tsinas \cite{Ts22} but have not received much attention so far. We restate them here, adding several of our own. We invite the reader to consult \cite[Section 1.5]{Ts22} for more discussion.
    \begin{problem}[Norm convergence for generalized Hardy sequences {\cite[Conjecture 1]{Ts22}}]
        Let $0<b_1 < \cdots < b_\ell$, $(X, \CX, \mu,T)$ be a system, and $f_1, \ldots, f_\ell\in L^\infty(\mu)$. Does the average
            \begin{align*}
                \E_{n\in[N]}T^{\sfloor{n^{b_1}}^2}f_1\cdots T^{\sfloor{n^{b_\ell}}^2}f_\ell 
            \end{align*}
            converge in $L^2(\mu)$?
    \end{problem}
    \begin{problem}[Seminorm control for generalized Hardy sequences]
        Let $0<b_1 < \cdots < b_\ell$. Do the sequences $\sfloor{n^{b_1}}^2, \cdots, \sfloor{n^{b_\ell}}^2$ admit Host-Kra seminorm control?
    \end{problem}
    \begin{problem}[Joint ergodicity for generalized Hardy sequences and totally ergodic systems {\cite[Conjecture 2]{Ts22}}]
        Let $0<b_1 < \cdots < b_\ell$. Are the sequences $\sfloor{n^{b_1}}^2, \cdots \sfloor{n^{b_\ell}}^2$ jointly ergodic for any totally ergodic system $(X, \CX, \mu, T)$?
    \end{problem}
    \begin{problem}[Joint ergodicity for generalized Hardy sequences and ergodic systems]
        Let $0<b_1 < \cdots < b_\ell$ be not in $\frac{1}{2}\Z$. Are the sequences $\sfloor{n^{b_1}}^2, \cdots \sfloor{n^{b_\ell}}^2$ jointly ergodic for any ergodic system $(X, \CX, \mu, T)$?
    \end{problem}
    \begin{problem}[Multiple recurrence for generalized Hardy sequences {\cite[Conjecture 3]{Ts22}}]
        Let $0<b_1 < \cdots < b_\ell$, $(X, \CX, \mu, T)$ be a system, and $E\in\CX$ with $\mu(E)>0$. Does there exist $n\in\N$ for which 
            \begin{align*}
               \mu(E\cap T^{-\sfloor{n^{b_1}}^2}E\cap\cdots \cap T^{-\sfloor{n^{b_\ell}}^2}E)>0?
            \end{align*}
    \end{problem}
    % \begin{problem}
    %     Let $0<b_1 < \cdots < b_\ell$.
    %     \begin{enumerate}
    %         \item (Norm convergence) Does the average
    %         \begin{align*}
    %             \E_{n\in[N]}T^{\sfloor{n^{b_1}}^2}f_1\cdots T^{\sfloor{n^{b_\ell}}^2}f_\ell 
    %         \end{align*}
    %         converge in $L^2(\mu)$ for any system $(X, \CX, \mu,T)$ and any $f_1, \ldots, f_\ell\in L^\infty(\mu)$?
    %         \item (Seminorm control) Are the sequences $\sfloor{n^{b_1}}^2, \cdots \sfloor{n^{b_\ell}}^2$ good for seminorm control?
    %         \item (Joint ergodicity for totally ergodic systems) Are the sequences $\sfloor{n^{b_1}}^2, \cdots \sfloor{n^{b_\ell}}^2$ jointly ergodic for any totally ergodic system $(X, \CX, \mu, T)$?
    %         \item (Joint ergodicity for ergodic systems) If $2b_j\notin \Z$, are the sequences $\sfloor{n^{b_1}}^2, \cdots \sfloor{n^{b_\ell}}^2$ jointly ergodic for any ergodic system $(X, \CX, \mu, T)$?
    %         \item (Multiple recurrence) If $(X, \CX, \mu, T)$ is a system and $E\in\CX$ has $\mu(E)>0$, does there exist $n\in\N$ for which 
    %         \begin{align*}
    %            \mu(E\cap T^{-\sfloor{n^{b_1}}^2}E\cap\cdots \cap T^{-\sfloor{n^{b_\ell}}^2}E)>0?
    %         \end{align*}
    %     \end{enumerate}
    % \end{problem}

    \subsection{Tempered functions}
    Another class of functions of polynomial growth studied in ergodic theory is the family of tempered functions.
\begin{definition}[Tempered functions]
    Let $d\in\N_0$. A function $a:[t_0,\infty)\to\R$ is called \emph{tempered} of \emph{degree $d$} if it satisfies the following conditions:
    \begin{enumerate}
        \item $a$ is $(d+1)$-times continuously differentiable;
        \item $a^{(d+1)}(t)$ tends monotonically to 0 as $t\to\infty$;
        \item $\lim\limits_{t\to\infty} t |a^{(d+1)}(t)|=\infty$.
        %\item $\lim\limits_{t\to\infty}\frac{|a(t)|}{t^d} = \infty$;
        %\item $\lim\limits_{t\to\infty}\frac{|a(t)|}{t^{d+1}} = 0$;
    \end{enumerate}
    A tempered function of degree $0$ is called \emph{Fej\'er}.
\end{definition}
Just like with Hardy functions, we identify two tempered functions if they eventually agree; hence when we speak of tempered functions, we really mean \emph{germs} of functions. 

Despite superficial similarity and nontrivial overlap, tempered functions differ from Hardy functions in several significant ways.
%Note that in contrast to Hardy functions, 
Tempered functions need not be smooth, nor do their ratios converge in general. No polynomial is tempered. Tempered functions are not closed under arithmetic operations or differentiation (for instance, $t^{3/2}+t,\; t^{3/2}$ are both tempered, but their difference is not).  
To avoid various problematic cases, we typically restrict to special classes of tempered functions, such as the class $\CT$ defined below:
\begin{align*}
    \CR &:= \rem{a\in C^\infty\colon \lim_{t\to\infty}\frac{ta^{(i+1)}(t)}{a^{(i)}(t)}\in\R\; \textrm{for all}\; i\in\N_0};\\
    %\CT_i &:= \rem{a\in \CR\colon \lim_{t\to\infty}\frac{ta'(t)}{a(t)}=\alpha\; \textrm{for\; some}\; i< \alpha\leq i+1\; \textrm{and}\; \lim_{t\to\infty}a^{(i+1)}(t) = 0};\\
    \CT_i &:= \rem{a\in \CR\colon \lim_{t\to\infty}\frac{ta'(t)}{a(t)}\in (i,i+1]\; \textrm{and}\; \lim_{t\to\infty}a^{(i+1)}(t) = 0};\\
    \CT &:= \bigcup_{i=0}^\infty \CT_i.
\end{align*}
By \cite{BeHK09}, a function $a\in\CT_i$ has degree $i$ and satisfies the growth condition $t^i \log t \prec a(t) \prec t^{i+1}$; hence elements of $\CT$ are nonpolynomial in a quantitatively precise sense. %\footnote{Real polynomials of degree $d$ are also tempered of degree $d$; they are just not in $\CT$.}

Examples of functions in $\CT$ include $t^b (\log t)^c$ for $b\in\R_+\backslash\N$, %or for $b\in\N, c>1$, 
as well as functions with sufficiently well-controlled oscillation, as in the example below.
\begin{example}
Let $g(t) = t^b(2 + \cos\sqrt{\log t})$ for $b\in\R_+\backslash\N$. Then $$g^{(i)}(t) =(1+o(1))c_{b,i} t^{b-i}(2 + \cos\sqrt{\log t})$$
for $c_{b,i}:=b(b-1)\cdots(b - (i-1))$, which for every $i\in\N_0$ is nonzero by the assumption $b\notin\N$.
Since the oscillatory factor is bounded by 1, we deduce that this function is in $\CT_i$ for $i = \floor{b}= \ceil{b}-1$.
\end{example}

% \BK{Probably remove the statement below. Check Nikos' joint ergodicity before}
% If we want to work inside a class of tempered functions that include (real) polynomials, we can consider
% \begin{align*}
%     \CP_i &:= \rem{a\in C^\infty\colon \lim_{t\to\infty}a^{(i+1)}(t)\in\R\backslash\{0\}\; \textrm{and}\; \lim_{t\to\infty}t^j a^{(i+j+1)}(t) = 0\; \textrm{for all}\; j\in\N};\\
%     \CP &:= \bigcup_{i=0}^\infty \CP_i.
% \end{align*}
% Note that any degree-$i$ real polynomial is in $\CP_i$.

Joint ergodicity for multiple ergodic averages has been studied by Bergelson and H{\aa}land-Knutson \cite{BeHK09}, and later by Koutsogiannis \cite{Kouts21}. They proved a number of joint ergodicity results by imposing various conditions on the tempered functions, some under weakly mixing and some under ergodic assumptions. We will list here samples of their results, directing the reader to both papers for more general statements.

The first of these results can be seen as an extension of Furstenberg's joint ergodicity result for weakly mixing systems (Theorem \ref{T: Furstenberg averages weakly mixing}).
\begin{theorem}[{\cite[Corollary 5.14(i)]{BeHK09}}]
    Let $a$ be a tempered function and $0<c_1<\cdots <c_\ell$ be real numbers. Then the sequences $c_1 a(n), \cdots, c_\ell a(n)$ are jointly ergodic for any weakly mixing system $(X, \CX, \mu, T)$.
\end{theorem}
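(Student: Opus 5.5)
For a weakly mixing $T$ we have $\nnorm{f}_{s,T}=\abs{\int f\,d\mu}$ for every $s$, and $T$ has no nonergodic eigenfunctions except constants. By Theorem~\ref{T: joint ergodicity criteria}, applied with $T_1=\cdots=T_\ell=T$, joint ergodicity therefore reduces to showing that $\sfloor{c_1a(n)},\ldots,\sfloor{c_\ell a(n)}$ admit Host-Kra seminorm control for $T$. Equivalently, and more directly, we must show
\begin{align*}
\lim_{N\to\infty}\norm{\E_{n\in[N]}\prod_{j=1}^\ell T^{\sfloor{c_j a(n)}}f_j}_{L^2(\mu)}=0
\end{align*}
whenever $\int f_j\,d\mu=0$ for some $j$. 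By multilinearity we may subtract integrals, so it suffices to treat the case where some $f_m$ has zero mean. We then induct on $\ell$ and use the van der Corput lemma, following Furstenberg's original proof of Theorem~\ref{T: Furstenberg averages weakly mixing}.

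\textbf{Reduction to short intervals.} The sequence $n\mapsto a(n)$ is tempered of some degree $d$, so it is not polynomial, and its local behaviour on short intervals $(N,N+L(N)]$ is that of a degree-$d$ polynomial with slowly varying coefficients. The plan is to run a PET-type induction, as in Section~\ref{SSS: PET}, in the variable-polynomial form used for Hardy sequences (Theorems~\ref{T: Fr nonpolynomial Hardy} and~\ref{T: Ts essentially distinct}). Concretely, apply van der Corput with shift $h$: the differences $c_j(a(n+h)-a(n))$ behave like $c_j h a'(n)$, which is a tempered function of degree $d-1$. After $d$ differencing steps, the iterates in each term become $c_j\cdot h_1\cdots h_d\, a^{(d)}(n)$ up to errors. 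Here $a^{(d)}$ is Fej\'er: it tends to infinity but $a^{(d+1)}\to 0$ with $t\,a^{(d+1)}(t)\to\infty$.

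\textbf{Endgame.} We are left with averages of the form $\E_n\prod_j T^{\sfloor{c_j' b(n)}}g_j$ with $b$ Fej\'er, distinct positive multipliers $c_j'$, and rounding errors bounded. On an interval where $b$ increases by a controlled amount, $b(n)$ is close to linear: over long blocks $b$ is nearly a linear function of $n$ with slope $b'(N)\to 0$, and each integer value of $\sfloor{b(n)}$ is attained roughly $1/b'(N)$ times. A change of variables therefore converts the average into a weighted average over integer $k=\sfloor{b(n)}$ of $\prod_j T^{\sfloor{c_j' k+\theta}}g_j$. These are essentially Furstenberg-type averages along real multiples of $k$, and they are controlled via the weak-mixing (Kronecker-trivial) argument together with the mean ergodic theorem for $T\times T$, which is ergodic since $T$ is weakly mixing.

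\textbf{Main obstacle.} The hard step is handling the floor-function errors: $\sfloor{c_j a(n+h)}-\sfloor{c_ja(n)}$ differs from $\sfloor{c_j(a(n+h)-a(n))}$ by an error in $\{0,\pm1\}$. These errors must be absorbed by partitioning according to fractional parts, using equidistribution of $(c_ja(n))$ mod $1$ (Fej\'er's theorem and its tempered analogue) together with a finite-valued error decomposition. The second difficulty is keeping the distinctness of the $c_j$ through every van der Corput step, so that after differencing no two iterates coincide. I would first try to track the leading coefficients $c_j h_1\cdots h_d$, which remain pairwise distinct precisely because $c_1<\cdots<c_\ell$.
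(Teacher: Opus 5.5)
\textbf{Verdict: there is a genuine gap.} Your reduction to showing that the average vanishes once some $\int f_j\,d\mu=0$ is fine. So is the plan to finish with multiples of a Fej\'er function and the change of variables $k=\sfloor{b(n)}$. The gap is the step in between, which is where all the difficulty lies once $\ell\geq 2$ and $d\geq 1$.

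You name PET, but the mechanism you actually describe does not work: differencing each term separately and inducting on $\ell$ as Furstenberg does. Van der Corput produces
\begin{align*}
\E_{n\in[N]}\int \prod_{j=1}^\ell T^{\sfloor{c_ja(n+h)}}f_j\cdot T^{\sfloor{c_ja(n)}}\overline{f_j}\,d\mu,
\end{align*}
and you may compose with only one power of $T$, say $T^{-\sfloor{c_1a(n)}}$. After Cauchy--Schwarz you are left with $2\ell-1$ iterates: $c_1(a(n+h)-a(n))$, and for each $j\geq 2$ both $(c_j-c_1)a(n)$ and $c_ja(n+h)-c_1a(n)$. All but the first still grow like $a$. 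In Furstenberg's linear argument the last two differ by the constant $c_jh$, so they merge into the single term $T^{(c_j-c_1)n}(T^{c_jh}f_j\cdot\overline{f_j})$ and $\ell$ drops by one. Here they differ by $c_j(a(n+h)-a(n))\to\infty$, so nothing merges and the number of terms grows. For the same reason, your claim that after $d$ steps the iterates are $c_jh_1\cdots h_d\,a^{(d)}(n)$ holds only for $\ell=1$. The distinctness argument you build on it therefore has nothing to rest on.

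\textbf{What is needed is an actual PET scheme.} Modulo $o(1)$ errors (this is where $a^{(d+1)}\to0$ is used), the span of $a,a',\ldots,a^{(d)}$ is invariant under $n\mapsto n+h$. The shift acts on it by Taylor expansion exactly as shifts act on polynomials of degree at most $d+1$ modulo constants, with $a^{(d-k)}$ playing the role of $n^{k+1}/(k+1)!$. So $c_1a,\ldots,c_\ell a$ must be handled like the essentially distinct family $c_1n^{d+1},\ldots,c_\ell n^{d+1}$. Order families by PET type and at each step compose with an iterate of minimal type. Then prove that after finitely many steps one reaches families $\{\gamma_j(\bh)\,a^{(d)}(n)\}_j$ whose coefficients are pairwise distinct and nonzero for all $\bh$ outside a negligible set. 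Only then does your Fej\'er endgame apply.

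\textbf{Your treatment of rounding is also too thin.} The $\{0,1\}$-valued errors at each stage depend on the joint fractional parts of several sequences: for instance $c_ja(n+h)$ and $c_1a(n)$, and later $\gamma_j(k+h)$ and $\gamma_jk$ in the linear endgame. Restricting to the cells of a partition turns your averages into weighted ones, which the inductive statements no longer cover. There are two ways out:
\begin{enumerate}
\item prove each inductive statement for a class of weights that is stable under differencing; or
\item lift to the suspension flow $S_t(x,s)=(T^{\sfloor{s+t}}x,\srem{s+t})$ on $X\times[0,1)$, where $S_xS_{-y}=S_{x-y}$ holds exactly, and return to integer parts at the end using the equidistribution of $(c_ja(n))_n$ mod $1$.
\end{enumerate}
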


The second one concerns tempered functions of distinct growth. %The stronger the assumptions, we get different statements.
\begin{theorem}[{\cite[Proposition 2.5, Corollary 2.6, Corollary 2.7]{Kouts21}}]\label{T: Ko tempered}
    Let $a_1, \ldots, a_\ell\in \CT$ be tempered functions with the growth conditions $a_1 \prec \cdots \prec a_\ell$.
    \begin{enumerate}
        \item Then $a_1, \ldots, a_\ell$ admit Host-Kra seminorm control. % for any system $(X, \CX, \mu,$\! $T_1, \ldots, T_\ell)$. 
        In particular, they are jointly ergodic for any system $(X, \CX, \mu,$\! $T_1, \ldots, T_\ell)$ with $T_1, \ldots, T_\ell$ weakly mixing. 
        \item\label{i: Ko tempered jointly ergodic} Suppose that any nontrivial linear combination of $a_1, \ldots, a_\ell$ is in $\CT$. Then $a_1, \ldots, a_\ell$ are controlled by the invariant factor. %for any system $(X, \CX, \mu,$\! $T_1, \ldots, T_\ell)$. 
        In particular, they are jointly ergodic for any system $(X, \CX, \mu,$\! $T_1, \ldots, T_\ell)$ with $T_1, \ldots, T_\ell$ ergodic. 
    \end{enumerate}
\end{theorem}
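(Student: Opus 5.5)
The plan is to follow the New Joint Ergodicity Strategy: establish Host-Kra seminorm control for part (i), and then derive part (ii) from the criteria in Corollary \ref{C: invariant/Krat control criteria global} (equivalently, Theorem \ref{T: invariant control criteria} applied system by system). The weakly mixing consequence in (i) is then immediate. For weakly mixing $T_j$ we have $\nnorm{f}_{s,T_j}=\abs{\int f\,d\mu}$, so we may replace each $f_j$ by $f_j-\int f_j\,d\mu$. Seminorm control then kills every term of the multilinear expansion except the product of integrals.

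For the seminorm control in (i), I would run a PET induction adapted to tempered functions, in the spirit of Frantzikinakis' argument for Theorem \ref{T: Fr distinct-growth Hardy}. The first step passes to short intervals $(N,N+L(N)]$ with $L\in\CT$ chosen so that each $a_j$ admits a Taylor expansion there with a vanishing remainder. The growth hypothesis $a_1\prec\cdots\prec a_\ell$, together with $t^i\log t\prec a\prec t^{i+1}$ for $a\in\CT_i$, means that on these intervals the approximating ``variable polynomials'' have distinct effective degrees, or distinct leading-coefficient sizes when two functions lie in the same $\CT_i$. Second, I apply the van der Corput inequality repeatedly, always differencing along the fastest-growing term. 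Each application replaces $a_j(n)$ by $a_j(n+h)-a_j(n)$, which for $a\in\CR$ behaves like $h\,a'(n)$ with $a'\in\CT_{i-1}$ or a Fej\'er function. The regularity built into $\CR$ keeps the differenced family inside a controlled class, and the distinct growths are preserved after the change of variables.

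The induction terminates at averages in which every iterate is a Fej\'er (sublinear) tempered function of distinct growth. For those I would use the standard trick of rewriting $\E_{n\in[N]}$ of a Fej\'er sequence as an average over the level sets of $\sfloor{a(n)}$, which reduces the problem to linear averages $\prod_j T_j^{c_j n}$ with distinct-growth weights. Host's box-seminorm estimate \eqref{E: box seminorm control} then bounds these, and the box seminorms involved are dominated by Host-Kra seminorms $\nnorm{\cdot}_{s,T_j}$ because the distinct growths force the iterates to separate. I expect this step to be the main obstacle: the error terms are uniform only over short intervals, the differenced functions must remain tempered of the right degree (tempered functions are not closed under differences), and the final comparison of box seminorms with a single $\nnorm{\cdot}_{s,T_\ell}$ requires care.

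For (ii), given (i), it suffices to show the sequences are good for equidistribution, i.e. $\E_{n\in[N]}e(\beta_1a_1(n)+\cdots+\beta_\ell a_\ell(n))\to0$ for $\beta$'s not all zero. Since the integer parts are used, this amounts to equidistribution mod 1 of $b=\sum\beta_j a_j$, up to a standard floor-function argument that handles $e(\beta\{a(n)\})$ through joint equidistribution of $(b(n),a_j(n))$ for combinations still in $\CT$. The hypothesis gives $b\in\CT$, and tempered functions in $\CT$ are equidistributed mod 1 by the Fej\'er/van der Corput theorem, which yields the claim. For the nonergodic version in (ii), the global criterion Corollary \ref{C: invariant/Krat control criteria global} handles nonergodic eigenfunctions automatically once the exponential sum condition holds for all $\beta$.
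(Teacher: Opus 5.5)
\textbf{The seminorm control in (i) does not go through as written.} A PET scheme for a distinct-growth family directly controls only the function carrying the fastest iterate; your own sketch ends with $\nnorm{\cdot}_{s,T_\ell}$.

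If you target $f_j$ with $j<\ell$, the faster iterates survive the van der Corput and composition steps. The directions you end with then include the faster $\be_i$: compare Theorem~\ref{T: DKKST box seminorm control}, where the directions are the leading coefficients of the differences between the target's iterate and the others. So you get a box seminorm, not $\nnorm{f_j}_{s,T_j}$. The survey makes this point after Definition~\ref{D: weak control} for $(T_1^n,T_2^{n^2})_n$: reaching $\nnorm{f_1}_{s,T_1}$ needs an extra transfer step. One replaces the faster function by its projection onto the relevant Host--Kra factor, whose orbits are nilsequences, and then eliminates that term by further differencing or by equidistribution on nilsystems. Your sketch has no such step, so it does not give the full Host--Kra seminorm control asserted in (i).

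What PET does give, once you order the $a_j$ by increasing growth and let the faster functions be nonergodic eigenfunctions (whose phases die under further differencing), is the weak control of Definition~\ref{D: weak control}. That is enough for everything downstream:
\begin{itemize}
\item In the weakly mixing case, kill each term of your multilinear expansion using the largest index carrying a mean-zero factor. All later factors are constants, hence eigenfunctions.
\item The criteria of \cite{FrKu22a} likewise only need weak control.
\end{itemize}

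Your endpoint is also misdescribed. Passing to level sets of $\sfloor{a(n)}$ linearizes $a$ alone. For $n^{1/3},n^{1/2}$, on the level sets of $\sfloor{n^{1/2}}$ the other iterate is essentially $\sfloor{m^{2/3}}$, still sublinear. Genuinely distinct-growth Fej\'er iterates are handled by nested averaging instead: on a level set of the slower iterate, the faster one sweeps a long interval almost uniformly, which produces an ergodic average. Linearization is the right tool for the multiples $c\,h\,a'(n)$ of a single function that differencing produces.

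\textbf{Granting (i) in this weak form, your derivation of (ii) is correct and takes a genuinely different route from \cite{Kouts21}.} That paper predates the criteria of \cite{Fr21, FrKu22a} and proceeds along the Old Joint Ergodicity Strategy. After seminorm control it passes to nilsystems via the Host--Kra structure theorem and uses equidistribution of tempered sequences there; this is where the hypothesis on linear combinations enters. You replace all of the nilsystem analysis by Corollary~\ref{C: invariant/Krat control criteria global}. The only equidistribution input is then the vanishing of exponential sums, i.e.\ uniform distribution mod $1$ of nontrivial real linear combinations, which is classical for elements of $\CT$. Nonergodic transformations also come for free. The price is relying on the degree-lowering theorem as a black box.

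One correction to your floor-function step. All the fractional parts $\{a_j(n)\}$ enter at once, so equidistribution of the pairs $(b(n),a_j(n))$ does not suffice. Use instead that $(b(n),a_1(n),\ldots,a_\ell(n))$ equidistributes on the subtorus $K=\overline{\{(\beta\cdot x \bmod 1,\ x\bmod \Z^\ell)\colon x\in\R^\ell\}}$ of $\T^{\ell+1}$. This holds because any character of $\T^{\ell+1}$ that is nontrivial on $K$ evaluates along the sequence to $e(c_1a_1(n)+\cdots+c_\ell a_\ell(n))$ with $c\in\R^\ell\setminus\{0\}$. The integral of $e(x_0-\sum_{j}\beta_j\{x_j\})$ over $K$ then equals $\lim_{R\to\infty}R^{-\ell}\prod_{j=1}^\ell\sum_{0\le m<R}e(\beta_j m)$, which vanishes as soon as some $\beta_j\notin\Z$.
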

The extra assumption in the second case is needed to invoke equidistribution results for tempered functions. As the example below shows, the result may fail without it, and hence the fact that $\CT$ is not closed under taking linear combinations (as opposed to $\CH$) has real consequences.  
%Note that whereas the Hardy class $\CH$ is closed under taking linear combinations, the same is not true for $\CT$.
\begin{example}
    Let
    \begin{align*}
     a_1(t) = t^{3/2}(2 + \cos\sqrt{\log t})+2t\quad \textrm{and}\quad a_2(t) = t^{3/2}(2 + \cos\sqrt{\log t}). 
    \end{align*}
    %$a_1(t) = t^{3/2}(2 + \cos\sqrt{\log t})+2t$ and $a_2(t) = t^{3/2}(2 + \cos\sqrt{\log t})$. 
    While they are both tempered of degree 1, their linear combination $a_1(t)-a_2(t) = 2t$ is not tempered.
    Hence they do not satisfy the assumptions of Theorem \ref{T: Ko tempered}\eqref{i: Ko tempered jointly ergodic}, nor, as we see shortly, its conclusion. Indeed, on taking $T_1 = T_2$ to be the map $x\mapsto x +1$ on $\Z/2\Z$ and letting $f_1(x)=\overline{f_2}(x) = (-1)^x$, we see that
    \begin{align*}
        %\lim_{N\to\infty}\E_{n\in[N]}
        f_1(T_1^{\sfloor{a_1(n)}}x)\cdot f_2(T_2^{\sfloor{a_2(n)}}x) = 1
    \end{align*}
    holds for all $x\in \Z/2\Z$, $n\in\N$, and hence the pointwise limit of the Ces\`aro average is 1.
    By contrast, $\int f_1\; d\mu  = \int f_2\; d\mu = 0$; hence $a_1, a_2$ are not jointly ergodic for this system. %This shows that some additional assumption in the second part of Theorem \ref{T: Ko tempered} compared to the first part is needed.
\end{example}

For a single transformation, and whenever $a_1, \ldots, a_\ell$ have distinct positive degrees, the first part of Theorem \ref{T: Ko tempered} was covered by \cite[Corollary 5.14(iii)]{BeHK09}

\begin{theorem}[{\cite[Theorem 2.4]{Kouts21}, \cite[Theorem 7.3]{BeHK09}}]
    Let $a_1, \ldots, a_\ell\in \CT$ be Fej\'er functions with the growth conditions $a_1 \prec \cdots \prec a_\ell$. Suppose moreover that the ratios $a'_j(t)/a_{j+1}'(t)$ are eventually monotone for every $1\leq j < \ell$. Then $a_1, \ldots, a_\ell$ are controlled by the invariant factor. %for any system $(X, \CX, \mu,$\! $T_1, \ldots, T_\ell)$. 
    In particular, they are jointly ergodic for any system $(X, \CX, \mu,$\! $T_1, \ldots, T_\ell)$ with $T_1, \ldots, T_\ell$ ergodic. 
\end{theorem}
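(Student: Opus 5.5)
The plan is the New Joint Ergodicity Strategy, via Corollary \ref{C: invariant/Krat control criteria global}. Invariant factor control follows from two inputs: Host-Kra seminorm control for every system $(X, \CX, \mu,$\! $T_1, \ldots, T_\ell)$, and the global property of being good for equidistribution. Joint ergodicity for systems with $T_1, \ldots, T_\ell$ ergodic is then immediate, because $\CI(T_j)$ is trivial.

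\emph{Step 1: equidistribution.} By the global equivalent formulation (the lemma preceding Lemma \ref{L: equidistribution condition for ergodic}), I must show that
\begin{align*}
\lim_{N\to\infty}\E_{n\in[N]} e\brac{\beta_1\sfloor{a_1(n)}+\cdots+\beta_\ell\sfloor{a_\ell(n)}} = 0
\end{align*}
for $\beta_1, \ldots, \beta_\ell\in[0,1)$ not all zero. First I remove the floors by the standard device: write $e(\beta\sfloor{x}) = e(\beta x)\,e(-\beta\{x\})$ and approximate $e(-\beta\{\cdot\})$ by trigonometric polynomials in the fractional parts. This reduces matters to showing that $(\sum_j(\beta_j+k_j)a_j(n))_n$ is equidistributed mod $1$ for integers $k_j$ with $(\beta_j+k_j)_j\neq 0$.

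Let $j_0$ be the largest index with nonzero coefficient. Since $a_1\prec\cdots\prec a_\ell$, the combination is dominated by $a_{j_0}$. For Fej\'er functions $\log t\prec a_{j_0}(t)\prec t$, so $a'_{j_0}\to0$ monotonically while $t|a'_{j_0}(t)|\to\infty$. If the combination $b=\sum_j c_j a_j$ has $b'$ eventually monotone with $t|b'(t)|\to\infty$, Fej\'er's theorem gives equidistribution. The eventual monotonicity of the ratios $a'_j/a'_{j+1}$ is exactly what makes this work: $b'=a'_{j_0}\bigl(c_{j_0}+\sum_{j<j_0}c_j a'_j/a'_{j_0}\bigr)$, and the bracket tends to $c_{j_0}\neq0$ monotonically (the ratios tend to $0$ by the growth ordering). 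Some care is needed to get monotonicity of the product, which may require a van der Corput difference estimate instead of Fej\'er directly.

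\emph{Step 2: seminorm control.} Here I would induct on $\ell$ using the distinct growth, in the spirit of the proof of Theorem \ref{T: Fr distinct-growth Hardy}. Sublinear sequences take each value on long blocks. I would partition $[N]$ into intervals on which $\sfloor{a_\ell(n)}$ runs through consecutive integers $m$; on these intervals the other sequences are nearly constant, relative to $a_\ell$, by the ratio hypothesis. This changes variables so the average along $a_\ell$ becomes a weighted Ces\`aro average $\E_m T_\ell^m f_\ell\cdot(\text{slowly varying terms})$. A van der Corput step in $m$ then eliminates $f_\ell$ and yields control by $\nnorm{f_\ell}_{2,T_\ell}$, and actually by $\nnorm{f_\ell}_{1,T_\ell}$ after one more Cauchy--Schwarz. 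The remaining functions are then handled by repeating the argument on the lower-growth tuple.

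The main obstacle is Step 2. I must make the block decomposition rigorous: the number of $n$ with $\sfloor{a_\ell(n)}=m$ varies like $1/a_\ell'$, so the induced weights must be uniformly controlled, and the monotone-ratio hypothesis must be used to keep the lower sequences essentially frozen on the blocks. I expect to handle the weights as in the weighted averages of Section \ref{SSS: weighted averages}.
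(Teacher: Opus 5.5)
\textbf{Step 2: the block decomposition is right, but the step that removes $f_\ell$ is misidentified, so the argument does not close as written.}

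A van der Corput step in $m$ does not eliminate $f_\ell$. The lower-growth factor $G_m$ is frozen for most $m,h$, so the step just reproduces an average of the same shape, with $f_\ell\cdot T_\ell^h\overline{f_\ell}$ and $|G_m|^2$ in place of $f_\ell$ and $G_m$. No Cauchy--Schwarz step upgrades $\nnorm{f_\ell}_{2,T_\ell}$-control to $\nnorm{f_\ell}_{1,T_\ell}$-control; that would be a degree-lowering statement.

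Without degree-$1$ control, ``repeating the argument on the lower-growth tuple'' also fails. Degree-$2$ control only lets you replace $f_\ell$ by $\E(f_\ell|\CZ_1(T_\ell))$, which is not $T_\ell$-invariant and does not factor out.

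The missing ingredient is the uniform mean ergodic theorem. By invariance, $\norm{\E_{k\in[K,K+L)}T_\ell^kf_\ell-\E(f_\ell|\CI(T_\ell))}_{L^2(\mu)}$ is independent of $K$ and tends to $0$ as $L\to\infty$, in any system. Here is how to use it.
\begin{enumerate}
\item Work directly in $n$: partition $[N]$ into the maximal intervals $J$ on which every $\sfloor{a_j}$, $j<\ell$, is constant.
\item There are $O(a_{\ell-1}(N))$ such $J$, and each value of $\sfloor{a_\ell}$ is taken $\asymp n/a_\ell(n)$ times. So the $J$ on which $\sfloor{a_\ell}$ sweeps fewer than $K$ integers cover only $O_K(Na_{\ell-1}(N)/a_\ell(N))=o(N)$ points.
\item On the remaining $J$ the multiplicities agree up to a factor $1+o(1)$. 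This is because $|J|\ll 1/a_{\ell-1}'(n)=o(n)$ and $a_\ell'$ is regularly varying (as $\CT_0\subseteq\CR$).
\end{enumerate}
Hence
\begin{align*}
\E_{n\in[N]}\prod_{j=1}^\ell T_j^{\sfloor{a_j(n)}}f_j=\E(f_\ell|\CI(T_\ell))\cdot\E_{n\in[N]}\prod_{j=1}^{\ell-1}T_j^{\sfloor{a_j(n)}}f_j+o(1)\quad\textrm{in}\; L^2(\mu),
\end{align*}
and induction on $\ell$ gives the limit $\prod_j\E(f_j|\CI(T_j))$ outright. The joint ergodicity criteria and Step 1 then become superfluous.

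\textbf{Step 1 has its own hole.} Fej\'er's theorem needs $b'$ to be eventually monotone, and monotone ratios do not give this when the $c_j$ have mixed signs. Already for $j_0=2$ with $c_1c_2<0$, $|b'|=|a_2'|\cdot|c_2+c_1a_1'/a_2'|$ is a decreasing function times an increasing one. For $j_0\geq3$ the bracket itself need not be monotone.

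Your fallback fails too: differencing is useless for sublinear $b$, since $b(n+h)-b(n)\to0$. If you did want this step, linearize $b$ on intervals $[M,M+L)$ with $L|a_{j_0}'(M)|\to\infty$ and $L^2|a_{j_0}'(M)|/M\to0$. This works because $|b''|\ll|a_{j_0}'|/t$ (from $\CR$) and $b'\sim c_{j_0}a_{j_0}'$.

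\textbf{Where the monotone-ratio hypothesis enters.} The freezing needs only $a_j'/a_\ell'\to0$. That follows from $a_j\prec a_\ell$ and $ta_j'(t)/a_j(t)\to\alpha_j\in(0,1]$, so it does not use the monotone-ratio hypothesis, despite what your last paragraph says.

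The survey itself gives no proof, and the cited works predate the criteria, so they argue directly. In a direct argument that reparametrizes along $a_\ell$, the hypothesis has a natural role. It keeps $a_j\circ a_\ell^{-1}$, whose derivative is $(a_j'/a_\ell')\circ a_\ell^{-1}$, in the Fej\'er class.
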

The conclusion for $T_1, \ldots, T_\ell$ ergodic was first obtained by Bergelson and H{\aa}land-Knutson; Koutsogiannis then removed the ergodicity assumption.

In the light of recent progress on polynomials and Hardy sequences outline in previous sections, it is natural to inquire whether the assumptions of different growth in the results above can be weakened to some variants of independence and pairwise independence. 
\begin{problem}
    Find a reasonable notion of ``pairwise independence'' on $\CT$ so that all pairwise independent functions $a_1, \ldots, a_\ell\in\CT$ admit Host-Kra seminorm control (and hence are jointly ergodic for every system $(X, \CX, \mu,$\! $T_1, \ldots, T_\ell)$ with $T_1, \ldots, T_\ell$ weakly mixing).
\end{problem}
\begin{problem}
    Find a reasonable notion of ``independence'' on $\CT$ so that all independent functions $a_1, \ldots, a_\ell\in\CT$ are controlled by the invariant factor (and hence are jointly ergodic for every system $(X, \CX, \mu,$\! $T_1, \ldots, T_\ell)$ with $T_1, \ldots, T_\ell$ ergodic).
\end{problem}
%\BK{Open problems: replace different growth by pairwise independence and independence}
% BHK:
% \begin{enumerate}
%     \item tempered along APs and WM (Corollary 5.14(i));
%     \item tempered of different degree and WM (Corollary 5.14(iii));
%     \item Fejer of different rations and ergodic (Theorem 7.3).
% \end{enumerate}

% Kouts:
% \begin{enumerate}
%     \item Fejer of different rations and ergodic, commuting (Theorem 2.4).
%     \item tempered of different growth, WM/ergodic, commuting (Corollary 2.6, 2.8);
% \end{enumerate}

%which gives a very concrete description to the nonpolynomiality of elements of $\CT$. 

% \BK{Examples}

% DKS: JOINT ERGODICITY FOR FUNCTIONS OF POLYNOMIAL GROWTH

% Bergelson-Harald-Knutson

% Koutsogiannis: MULTIPLE ERGODIC AVERAGES FOR
% TEMPERED FUNCTIONS

    \subsection{Sequences of superpolynomial growth and with oscillations}\label{SSS: superpolynomial}

    The scope of methods employed to deliver Host-Kra seminorm estimates is constrained by the growth of the sequence: existing techniques cannot deal with 
    %too slow sequences like $\log$ (and indeed, such sequences are bad), nor can they deal 
    sequences growing faster than polynomials. Differentiating a polynomial finitely many times brings it to a linear one. Much for the same reasons, finitely many steps of the PET induction scheme (see Section \ref{SSS: PET}) applied to an average along ``nice'' sequences of polynomial growth (polynomials, Hardy sequences, tempered functions, etc.) bring it down to an average with linear iterates that can be controlled by Host-Kra or box seminorms as in Examples \ref{Ex: HK example} and \ref{Ex: Host example}. While lacunary sequences tend to be bad for ergodic-theoretic purposes (for instance, the sequence $(2^n\alpha)_n$ is not dense for a set of $\alpha\in\T$ of full Hausdorff dimension, as shown by Erd\"os and Taylor \cite{ET57}\footnote{For example, take any $\alpha$ with no consecutive 1's in its binary expansion. It is easy to see that $2^n\alpha\in[0,3/4)$, as the binary expansion of any $x\in[3/4,1)$ starts with two 1's.}), there is a grey zone of sequences that grow superpolynomially but subexponentially, like $n^{(\log n)^b}$ (for small $b>0$) or $n^{\log\log n}$. In contrast to lacunary sequences, some of these intermediate ones are known to be equidistributed \cite{Ka71}.

    A similar issue arises in the study of sequences with oscillations, like $n \sin n$. While known to be equidistributed \cite{BBK02, BBK95, BK90}, they are not amenable to current techniques for seminorm estimates because their successive derivatives do not exhibit any decay. 

    Combining known equidistribution results for such sequences with Corollary \ref{C: joint ergodicity criteria for nilsystems}, Frantzikinakis %used the single-transformation case of Theorem \ref{T: joint ergodicity criteria} to 
    deduced the following results on ergodic nilsystems (he states them in the single-transformation case, but the exact same reasoning works in the commuting case).
    \begin{theorem}[{\cite[Theorem 1.10]{Fr21}}]\label{T: Fr joint ergodicity for weird sequences}
        %Let $(Y, \CY, \nu, S)$ be an ergodic nilsystem. 
        The following families of sequences are jointly ergodic for any nilsystem $(X, \CX, \mu,$\! $T_1, \ldots, T_\ell)$ with $T_1, \ldots, T_\ell$ ergodic: 
        \begin{enumerate}
            \item $n^{(\log n)^{b_1}}, \ldots, n^{(\log n)^{b_\ell}}$ for any $0<b_1<\cdots < b_\ell < 1/2$;
            \item $a_1(n)\sin n, \ldots, a_\ell(n) \sin n$ for affinely independent $a_1, \ldots, a_\ell\in\Z[t]$;
            \item $n^k\sin(n), n^k\sin(2n) \ldots, n^k \sin(\ell n)$ for any $k,\ell\in\N$.
        \end{enumerate}
    \end{theorem}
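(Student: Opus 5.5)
The plan is to apply Corollary \ref{C: joint ergodicity criteria for nilsystems} directly. On a nilsystem, Host-Kra seminorm control is automatic. The corollary is stated for a single transformation in \cite{Fr21}, but it follows from Theorem \ref{T: joint ergodicity criteria} for commuting ergodic $S_1, \ldots, S_\ell$. So for each of the three families it suffices to check that the sequences are good for equidistribution. By the global formulation (equivalently, by Lemma \ref{L: equidistribution condition for ergodic} applied for all spectra at once), this reduces to showing
\begin{align*}
\lim_{N\to\infty}\E_{n\in[N]} e\brac{\beta_1\sfloor{a_1(n)}+\cdots+\beta_\ell \sfloor{a_\ell(n)}} = 0
\end{align*}
for all $\beta_1, \ldots, \beta_\ell\in[0,1)$ not all zero. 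In other words, the only thing to verify is a Weyl-type statement for the integer parts.

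The first step is to remove the floor functions. The standard device is to consider the joint sequence $(a_1(n), \ldots, a_\ell(n))$ modulo $1$ together with the combination $\sum_j \beta_j a_j(n)$. We want to show that the vector $\brac{\sum_j \beta_j a_j(n), \rem{a_1(n)}, \ldots, \rem{a_\ell(n)}}$ is equidistributed on a subtorus large enough that
\begin{align*}
e\brac{\sum_j\beta_j a_j(n) - \sum_j \beta_j\rem{a_j(n)}}
\end{align*}
averages to zero. Concretely, by Weyl's criterion it suffices to prove that for all $(\gamma_1, \ldots, \gamma_\ell)\in\R^\ell\setminus\{0\}$, the sequence $\sum_j \gamma_j a_j(n)$ is equidistributed mod $1$, or more precisely that $\E_{n\in[N]}e(\sum_j \gamma_j a_j(n))\to 0$. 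Then one expands $e(-\beta_j\rem{x})$ in Fourier series, after a routine smoothing approximation near the discontinuity, and obtains combinations $\sum_j(\beta_j+k_j)a_j(n)$ whose coefficients are not all zero whenever some $\beta_j\notin\Z$.

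The second step is to verify this real-coefficient equidistribution for each family by quoting the known results.
\begin{enumerate}
\item For $n^{(\log n)^{b_j}}$ with $b_1<\cdots<b_\ell<1/2$, any nontrivial combination is dominated by its fastest-growing term, which is again of the form $c\, n^{(\log n)^{b}}$ plus lower order. Equidistribution then follows from Karacuba-type exponential sum bounds \cite{Ka71}. The restriction $b<1/2$ is exactly what makes those bounds applicable.
\item For $a_j(n)\sin n$ with $a_j$ affinely independent, a nontrivial real combination equals $p(n)\sin n$ with $p$ a nonconstant real polynomial. Equidistribution of such sequences is provided by \cite{BBK02, BBK95, BK90}.
\item For $n^k\sin(jn)$, a combination $n^k\sum_j\gamma_j\sin(jn)$ is $n^k$ times a nonzero trigonometric polynomial. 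Equidistribution again follows from the same references.
\end{enumerate}

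The main obstacle is making sure the quoted equidistribution theorems cover exactly the needed combinations, including the uniformity required by the floor-removal argument. In particular, the superpolynomial case requires bounds for linear combinations of several such sequences rather than a single one. If that fails, I would first try a van der Corput differencing to reduce to the dominant term.
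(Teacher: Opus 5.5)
Your proposal is correct and follows the same route as the paper. You apply Corollary~\ref{C: joint ergodicity criteria for nilsystems}, where Host-Kra control is automatic on nilsystems. You then check that the integer parts are good for equidistribution, using the standard floor-removal reduction to equidistribution of all nontrivial real linear combinations, which \cite{Ka71} and \cite{BBK02, BBK95, BK90} supply.

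One caveat concerns family (i). The leading term dominating in size does not by itself make the combination equidistribute, since the lower-order terms $n^{(\log n)^{b_j}}$ are still huge mod $1$. Your van der Corput fallback also cannot reduce such superpolynomial phases to the leading term. What actually does the work is that Karacuba's Vinogradov-type estimates come from derivative bounds, and $\sum_j\gamma_j n^{(\log n)^{b_j}}$ inherits those bounds from its leading term.
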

    %The key point about nilsystems in \eqref{T: Fr joint ergodicity for weird sequences} is that an $s$-step nilsystem is a $Z_s$-system, meaning that the Host-Kra seminorm of degree $s+1$ is a norm; hence any sequence admits Host-Kra seminorm contol on a nilsystem. It there

    The next problem inquires whether the restriction to nilsystems in Theorem \ref{T: Fr joint ergodicity for weird sequences} can be dropped; in other words, whether the abovementioned families of sequences are amenable to the Host-Kra machinery.
    \begin{problem}[{\cite[Problem 2]{Fr21}}]
        Do the families in Theorem \ref{T: Fr joint ergodicity for weird sequences} admit Host-Kra seminorm control (even in the single-transformation case)?
    \end{problem}
    
\section{Joint ergodicity classification problem}\label{S: classification problem}
    The previous section listed rather general classes of sequences and systems for which we have joint ergodicity; hence it dealt with \emph{sufficient} conditions for this phenomenon. This section concentrates on equivalent characterization of joint ergodicity, i.e. sufficient \emph{and} necessary conditions. 
    
    One such set of necessary and sufficient conditions is provided by Theorem \ref{T: joint ergodicity criteria}. Vaguely speaking, one can view the Host-Kra seminorm control and equidistribution conditions as dealing with the weakly mixing and spectral behaviors respectively. This section focuses on two different conditions, which relate joint ergodicity to ergodicity of simpler actions. Before we state them, we record one definition used profusely throughout this section.
    \begin{definition}[Strongly and weakly ergodic actions]
        A sequence of measure preserving-transformations $(T_n)_n$ on $(X, \CX, \mu)$ is \emph{strongly/weakly ergodic} if
    \begin{align*}
        \lim_{N\to\infty}\E_{n\in[N]}T_n f = \int f\; d\mu
    \end{align*}
    for any $f\in L^\infty(\mu)$ in the strong/weak sense. If we do not specify the mode of convergence, then ``ergodicity'' stands for ``strong ergodicity''.
    \end{definition}
    Here are the conditions that we shall work with. 
    \begin{definition}[Ergodicity conditions]\label{D: ergodicity conditions}
        Let $a_1,\ldots,a_\ell\colon \N\to \Z$ be sequences and $(X, \CX, \mu,$\! $T_1, \ldots, T_\ell)$ be a system. For this system, the sequences satisfy:
    \begin{enumerate}
    \item the \emph{product ergodicity condition} if  $(T_1^{{a_1(n)}}\times \cdots \times T_\ell^{{a_\ell(n)}})_n$ is ergodic for the product system $(X^\ell, \CX^{\otimes \ell}, \mu^\ell)$;
    \item the \emph{difference ergodicity condition} if $(T_i^{{a_i(n)}}T_j^{-{a_j(n)}})_n$ is ergodic for $(X, \CX, \mu)$ for all distinct $1\leq i, j\leq \ell$.
    \end{enumerate} 
    \end{definition}
    Many results defined in this section concern the case when all the iterates are the same. While discussing such results    
        %One case of Problem \ref{Pr: joint ergodicity problem} that received particular interest is when all the sequences are the same. While discussing this case,
    we apply the following convention.
    \begin{convention}
        We say that a property defined for sequences $a_1, \ldots, a_\ell:\N\to\Z$ holds for a sequence $a:\N\to\Z$ if it holds with $a = a_1 = \cdots = a_\ell$.
    \end{convention}
    % In Definition \ref{D: ergodicity conditions} and elsewhere, we say that a sequence of measure preserving-transformations $(T_n)_n$ on $(X, \CX, \mu)$ is \emph{strongly/weakly ergodic} if
    % \begin{align*}
    %     \lim_{N\to\infty}\E_{n\in[N]}T_n f = \int f\; d\mu
    % \end{align*}
    % for any $f\in L^\infty(\mu)$ in the strong/weak sense. If we do not specify the mode of convergence, then ``ergodicity'' stands for ``strong ergodicity''.
    For instance, we often say 
    %If $a:=a_1 = \cdots = a_\ell$, we simply say 
    that $a$ satisfies the product/difference ergodicity condition or is jointly ergodic for a system if the relevant property is satisfied for $a=a_1 = \cdots = a_\ell$. If our sequences are real-valued, then we say that they satisfy the product/difference ergodicity condition if their integer parts do, in accordance with Convention \ref{Conv: real sequences}.

    Theorem \ref{T: BB characterization}, the foundational joint ergodicity result of Berend and Bergelson, can then be restated as follows.
    \begin{theorem}\label{T: BB characterization 2} The sequence $a(n) = n$ is jointly ergodic for a system $(X, \CX, \mu,$\! $T_1, \ldots, T_\ell)$ if and only if it satisfies the product and difference ergodicity conditions for this system. 
    \end{theorem}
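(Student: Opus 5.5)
Theorem \ref{T: BB characterization 2} is a rephrasing of Theorem \ref{T: BB characterization}, so the plan is to check that the two pairs of conditions coincide when $a(n)=n$. By the mean ergodic theorem, $(S^n)_n$ is (strongly) ergodic exactly when the transformation $S$ is ergodic. Hence the product ergodicity condition of Definition \ref{D: ergodicity conditions} for $a(n)=n$ says that $T_1\times\cdots\times T_\ell$ is ergodic on $(X^\ell,\CX^{\otimes\ell},\mu^\ell)$. Likewise, the difference ergodicity condition says that each $T_iT_j\inv$ is ergodic, using commutativity to write $T_i^nT_j^{-n}=(T_iT_j\inv)^n$. These are precisely conditions (i) and (ii) of Theorem \ref{T: BB characterization}, and the equivalence follows.

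For a self-contained argument, necessity is direct. First, joint ergodicity applied to $f_1\otimes\cdots\otimes f_\ell$, integrated against $g_1\otimes\cdots\otimes g_\ell$, gives
\begin{align*}
\E_{n\in[N]}\prod_j\int g_j\,T_j^nf_j\,d\mu\to\prod_j\int f_j\,d\mu\int g_j\,d\mu.
\end{align*}
Indeed, the left side equals $\int \prod_j T_j^n f_j\cdot\prod_j T_j^{-n}\ldots$, which is bounded by a Cauchy--Schwarz/van der Corput argument. A cleaner route to product ergodicity is the following: weak mixing-type convergence of $\E_n\prod_j\langle T_j^nf_j,g_j\rangle$ on rectangles extends by density to weak ergodicity of $T_1\times\cdots\times T_\ell$, which is equivalent to ergodicity. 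Second, for difference ergodicity, we take $f_i=f$, $f_j=\bar g$ and all other functions equal to $1$, then integrate. Joint ergodicity then gives $\E_n\int (T_iT_j\inv)^n f\cdot \bar g\,d\mu\to\int f\int\bar g$, which is weak ergodicity of $T_iT_j\inv$, equivalent to its ergodicity.

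Sufficiency is the substantive direction, and I would follow the New Joint Ergodicity Strategy. First, product ergodicity forces each $T_j$ to be ergodic. Second, Host's estimate \eqref{E: box seminorm control} bounds the average by $\nnorm{f_\ell}_{T_\ell,T_\ell T_1\inv,\ldots,T_\ell T_{\ell-1}\inv}$. Using the ergodicity of $T_\ell T_j\inv$ and $T_\ell$, this box seminorm is dominated by a Host--Kra seminorm $\nnorm{f_\ell}_{s,T_\ell}$, and symmetrically at each index. This domination is the main obstacle. I would attempt it by induction, replacing each ergodic direction in the box seminorm by its invariant-factor description, and otherwise fall back on Berend--Bergelson's original spectral induction. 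Granting it, Theorem \ref{T: joint ergodicity criteria} reduces the matter to equidistribution. By Lemma \ref{L: equidistribution condition for ergodic}, we then need $\E_n e(n(\beta_1+\cdots+\beta_\ell))\to0$ for eigenvalues $\beta_j\in\Spec(T_j)$ not all zero. This is exactly what product ergodicity gives: the eigenfunction $\chi_1\otimes\cdots\otimes\chi_\ell$ has eigenvalue $\sum\beta_j$, which must be nonzero modulo $1$ unless it is constant, and so the exponential average tends to $0$.
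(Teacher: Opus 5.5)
Your first paragraph is already a complete proof. As the paper itself says, the statement is a restatement of Theorem~\ref{T: BB characterization}. All that needs checking is that for $a(n)=n$ the conditions of Definition~\ref{D: ergodicity conditions} reduce to ergodicity of $T_1\times\cdots\times T_\ell$ and of each $T_iT_j\inv$. This follows from the mean ergodic theorem, together with commutativity to write $T_i^nT_j^{-n}=(T_iT_j\inv)^n$.

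The paper instead re-derives the Berend--Bergelson theorem from Theorem~\ref{T: joint ergodicity criteria}, which is what your ``self-contained'' part attempts. Your sufficiency argument there is the paper's: Host's box seminorm estimate, upgrade to $\nnorm{f_\ell}_{\ell,T_\ell}$, then equidistribution from product ergodicity via Lemma~\ref{L: equidistribution condition for ergodic}. Your necessity argument for difference ergodicity, via weak convergence of $\E_{n\in[N]}(T_iT_j\inv)^nf$ to $\int f\,d\mu$, is a correct variant of the paper's. The paper instead tests joint ergodicity on $f_i=f$, $f_j=\overline{f}$ for a nonconstant $T_iT_j\inv$-invariant $f$.

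Two points in the self-contained part need repair.

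First, necessity of product ergodicity is not established. Joint ergodicity controls $\prod_jT_j^nf_j$ as a single function on $X$. Integrating it against one function never produces the product $\prod_j\int g_j\cdot T_j^nf_j\,d\mu$ of $\ell$ separate correlations. Convergence on rectangles is exactly what must be proved, and the ``Cauchy--Schwarz/van der Corput'' sentence does not supply it.

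The paper's route works. Joint ergodicity with all but one function equal to $1$ makes each $T_j$ ergodic. Testing it on eigenfunctions $\chi_j$ of $T_j$ with eigenvalues $\beta_j$ not all zero gives $\prod_j\chi_j\cdot\E_{n\in[N]}e(n(\beta_1+\cdots+\beta_\ell))\to\prod_j\int\chi_j\,d\mu=0$, hence $\beta_1+\cdots+\beta_\ell\neq0$. Now use the classical fact that, for ergodic $T_j$, the $T_1\times\cdots\times T_\ell$-invariant functions are spanned by the $\chi_1\otimes\cdots\otimes\chi_\ell$ with $\beta_1+\cdots+\beta_\ell=0$. This gives product ergodicity. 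Your sufficiency part uses only the trivial direction of this fact; necessity needs the spanning statement.

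Second, the step you call the main obstacle is one line. If $R$ is ergodic, then $\nnorm{g}_R=\abs{\int g\,d\mu}$. So by the inductive formula, $\nnorm{f}_{R,R_2,\ldots,R_s}^{2^s}=\E_{h_s\in\Z}\cdots\E_{h_2\in\Z}\abs{\int\Delta_{h_2,\ldots,h_s}f\,d\mu}^2$, with derivatives taken along $R_2,\ldots,R_s$. This does not depend on which ergodic $R$ occupies that slot. Since $T_\ell$ and all $T_\ell T_j\inv$ are ergodic, permutation invariance lets you replace the directions by $T_\ell$ one at a time. This gives $\nnorm{f_\ell}_{T_\ell,T_\ell T_1\inv,\ldots,T_\ell T_{\ell-1}\inv}=\nnorm{f_\ell}_{\ell,T_\ell}$, so no spectral induction is needed.
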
    
    It is instructive to see how the product and difference ergodicity conditions interact with the conditions present in Theorem \ref{T: joint ergodicity criteria}. To this end, we sketch a modern-day proof of Theorem \ref{T: BB characterization 2} that uses Theorem \ref{T: joint ergodicity criteria} as a black box.
    \begin{proof}%[Sketch of proof]
        We start by relating the product ergodicity condition to being good for equidistribution. It is classical (see e.g. \cite[Chapter 4, Theorem 16]{HK18}) that the space of $T_1\times \cdots \times T_\ell$-invariant functions 
        %$I(T_1\times\cdots\times T_\ell)$ 
        is spanned by functions 
        \begin{align*}
            g_1 \otimes \cdots \otimes g_\ell,
        \end{align*}
        where each $g_j$ is an eigenfunction of $T_j$ with eigenvalue $\beta_j$ subject to the constraint $\beta_1 + \cdots + \beta_\ell = 0$. Now, the function $g_1 \otimes \cdots \otimes g_\ell$ is constant if and only if $\beta_1 = \cdots = \beta_\ell = 0$, from which it follows that $T_1 \times \cdots \times T_\ell$ is ergodic if and only if $a(n) = n$ is good for equidistribution for the system $(X, \CX, \mu,$\! $T_1, \ldots, T_\ell)$. 
        
        We similarly relate Host-Kra seminorm control to the difference ergodicity condition.
%     We will similarly claim that under the assumption that $T_1, \ldots, T_\ell$ are ergodic, Host-Kra seminorm control is equivalent to the transformations $T_iT_j\inv$ being ergodic for distinct $i\neq j$. 
        Recall the seminorm estimate \eqref{E: box seminorm control}, which we restate here for convenience:
        \begin{align*}%\label{E: box seminorm control}
            \limsup_{N\to\infty}\norm{\E_{n\in[N]} T_1^n f_1 \cdots T_\ell^n f_\ell}_{L^2(\mu)} \leq \nnorm{f_\ell}_{T_\ell, T_\ell T_1\inv, \ldots, T_\ell T_{\ell-1}\inv}.
        \end{align*}
        On one hand, if $T_iT_j\inv$ are ergodic, then we can bound the right-hand side by $\nnorm{f_\ell}_{\ell, T_\ell}$, yielding Host-Kra seminorm control (note that the choice of the index $\ell$ here is arbitrary). Hence the product and difference ergodicity conditions together imply joint ergodicity by Theorem \ref{T: joint ergodicity criteria}. On the other hand, if $T_iT_j\inv$ is not ergodic for some $i\neq j$, take a nonconstant $T_iT_j\inv$-invariant $f\in L^\infty(\mu)$. 
        If we set $f_i = f$, $f_j = \overline{f}$, then $T_i^n f_i \cdot T_j^n f_j = T_i^n|f|^2$. Hence
        \begin{align*}
            \lim_{N\to\infty} \abs{\E_{n\in[N]}\int T_i^n f \cdot T_j^n f_j\; d\mu} %= \lim_{N\to\infty} \abs{\E_{n\in[N]}\int T_i^n |f^2| \; d\mu} 
            = \int |f|^2\; d\mu.
        \end{align*}
        %by the mean ergodic theorem and the ergodicity of $T_i$. On the other hand, the er..
        If however $a(n) = n$ is jointly ergodic, then 
                \begin{align*}
            \lim_{N\to\infty} \abs{\E_{n\in[N]}\int T_i^n f \cdot T_j^n f_j\; d\mu} 
            = \abs{\int f\; d\mu}^2.
        \end{align*}
        Since $f$ is assumed to be nonconstant, this is strictly smaller than $\int |f|^2\; d\mu$, yielding a contradiction to joint ergodicity.
        % Since 
        % \begin{align*}
        %     \lim_{N\to\infty}\E_{n\in[N]}(T_1 \times \cdots \times T_\ell)^n(g_1 \otimes \cdots \otimes g_\ell) = (g_1 \otimes \cdots \otimes g_\ell)\cdot \lim_{N\to\infty}\E_{n\in[N]}e((t_1 + \cdots + t_\ell)n),
        % \end{align*}
        % it follows from Weyl's equidistribution theorem that $T_1 \times \cdots \times T_\ell$ is ergodic if and only if $n$ is good for equidistribution for the system $(X, \CX, \mu,$\! $T_1, \ldots, T_\ell)$.
        % Suppose first that $n$ is jointly ergodic for the system $(X, \CX, \mu,$\! $T_1, \ldots, T_\ell)$.
    \end{proof}
    The insights presented in the proof above hold more generally: the product ergodicity condition is related to being good for equidistribution while the difference ergodicity condition can be used to upgrade box seminorm control to Host-Kra seminorm control; the latter is vastly preferable because we generally have no good structure theorems for box seminorms. For more discussion on the relationships between these various notions, consult \cite[Section 12]{DKKST24}.

    A problem that has driven a lot of joint ergodicity research is to understand the extent to which the product and difference ergodicity conditions characterize joint ergodicity in general.
        \begin{problem}[Joint ergodicity classification problem, {\cite[Problem 1]{DKS22}}]\label{Pr: joint ergodicity problem}
    Classify the sequences $a_1, \ldots, a_\ell:\N\to\Z$ for which the following equivalence holds for any system: $a_1, \ldots, a_\ell$ are jointly ergodic for a system if and only if they satisfy the product and difference ergodicity conditions for this system.
    %Classify the sequences $a_1, \ldots, a_\ell:\N\to\Z$, with the following property: $a_1, \ldots, a_\ell$ are jointly ergodic for an (arbitrary) system  $(X, \CX, \mu,$\! $T_1, \ldots, T_\ell)$ if and only if they satisfy the product and difference ergodicity conditions for this system.
    \end{problem}

    %while the equidistribution condition handles the spectral behavior. 

    Most of the work in establishing Problem \ref{Pr: joint ergodicity problem} for some class of iterates goes into showing that product and difference ergodicity conditions are sufficient for joint ergodicity; hence it makes sense to think of this direction as ``difficult''.

    The remainder of this section will present known positive (and negative) results on this problem, as well as further open questions.

\subsection{Integer polynomials}
    For integer polynomials, Problem \ref{Pr: joint ergodicity problem} has been resolved by Frantzikinakis and the author. %following a number of partial results.
    \begin{theorem}[{\cite{FrKu22b}}]\label{T: FrKu classification problem}
        Problem \ref{Pr: joint ergodicity problem} has an affirmative answer for integer polynomials. That is, integer polynomials are jointly ergodic for a system if and only if they satisfy the product and difference ergodicity conditions for this system.
    \end{theorem}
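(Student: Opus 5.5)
The plan is to run the New Joint Ergodicity Strategy through Theorem \ref{T: joint ergodicity criteria}. The easy direction is the same as in the sketch for Theorem \ref{T: BB characterization 2}. If $a_1,\ldots,a_\ell$ are jointly ergodic, apply \eqref{E: joint ergodicity} to tensor products on $X^\ell$ (approximated by sums of products) to get product ergodicity. For difference ergodicity, take $f_i=f$ and $f_j=\overline f$ with $f$ invariant under $(T_i^{a_i(n)}T_j^{-a_j(n)})_n$ in the averaged sense. This yields $\int|f|^2$ against $|\int f|^2$, exactly as before. Both conditions also force every $T_j^{a_j(n)}$, hence every $T_j$, to be ergodic, so Theorem \ref{T: joint ergodicity criteria} applies in the other direction.

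For the hard direction, I first relate product ergodicity to being good for equidistribution. Invariant functions of an ergodic polynomial product action are governed by the Kronecker factors of the $T_j$. So product ergodicity of $(T_1^{a_1(n)}\times\cdots\times T_\ell^{a_\ell(n)})_n$ amounts to $\E_{n\in[N]}e(a_1(n)\beta_1+\cdots+a_\ell(n)\beta_\ell)\to0$ for $\beta_j\in\Spec(T_j)$ not all zero. By Lemma \ref{L: equidistribution condition for ergodic}, this is exactly the statement that the sequences are good for equidistribution for the system.

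It remains to get Host-Kra seminorm control from difference ergodicity. This is the main obstacle, because the polynomials need not be pairwise independent. When they are, Theorem \ref{T: FrKu pairwise affine independent} already gives control. Otherwise some $a_i,a_j$ satisfy $c_ia_i+c_ja_j\equiv$ const, and then the difference ergodicity condition carries genuine information about $T_i^{c_i}T_j^{-c_j}$-type actions. My plan is a PET induction that stops at box seminorms, i.e. a bound of the average by $\nnorm{f_m}_{\bh_1,\ldots,\bh_s}$ with each $\bh_k$ a product of powers of the $T_j$, in the spirit of Host's estimate \eqref{E: box seminorm control}. I would then use seminorm smoothing (the tools of Section \ref{S: seminorm smoothing}) to replace each box direction by a power of $T_m$ itself, except for directions coming from dependent pairs.

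For those exceptional directions, which look like $T_i^{c_i}T_j^{-c_j}$, difference ergodicity together with the ergodicity of $(T_i^{a_i(n)}T_j^{-a_j(n)})_n$ should imply that the corresponding invariant factors are contained in $\Krat$-type factors, or are trivial. That would let us upgrade the box seminorm to $\nnorm{f_m}_{s,T_m}$, as in the linear case. I expect the delicate part to be tracking the coefficients so that every surviving direction is of this controlled form. In practice I would probably only be able to prove the weaker control property allowed after Definition \ref{D: seminorm control}, with $f_{m+1},\ldots,f_\ell$ nonergodic eigenfunctions. That is enough, and eigenfunctions reduce the hard step to exponential sums in the dependent coordinates.

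With seminorm control and equidistribution in hand, Theorem \ref{T: joint ergodicity criteria} gives joint ergodicity.
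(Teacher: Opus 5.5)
Your architecture matches the route the survey describes for \cite{FrKu22b}: the joint ergodicity criteria, plus product ergodicity being equivalent to good equidistribution (for integer polynomials with all $T_j$ ergodic, by the spectral argument you sketch), plus difference ergodicity giving Host--Kra control. The final upgrade is also sound. For a dependent pair, write $a_i-a_i(0)=qb$ and $a_j-a_j(0)=pb$ with $b\in\Z[t]$. Difference ergodicity then says that $((T_i^{q}T_j^{-p})^{b(n)})_n$ is ergodic, which forces $T_i^{q}T_j^{-p}$ to be ergodic. Such a direction in a box seminorm can be traded for $T_m$ as in the linear case.

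The genuine gap is the middle step, which is the entire content of the theorem. You assert that the smoothing argument of \cite{FrKu22a} removes every direction except those coming from dependent pairs. That argument is an induction. Its ping step replaces $f_m$ by a $T_mT_j^{-1}$-invariant $u$, so that $T_m^{a_m(n)}u=T_j^{a_m(n)}u$, and then applies the inductive hypothesis to the resulting auxiliary average. For pairwise independent families the auxiliary family is again of the same kind, so the induction closes. With dependent pairs it does not close: a dependent partner $a_i$ of $a_m$ now faces $T_j$ instead of $T_m$, and the original conditions say nothing about the new direction $T_i^{q}T_j^{-p}$.

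Here is a concrete instance. For $(T_1^{n^2},T_2^{n^2},T_3^{n^2+n},T_4^{n^2+n})$, removing the direction $T_4T_1^{-1}$ from the control of $f_4$ produces $T_1^{n^2}f_1\cdot T_2^{n^2}f_2\cdot T_3^{n^2+n}f_3\cdot T_1^{n^2+n}u$, where $n^2+n$ now sits on both $T_3$ and $T_1$. Take $Y\times Y$ with $S$ weakly mixing and $T_1=S\times S$, $T_2=S^2\times S^2$, $T_3=S^2\times S$, $T_4=S\times S^2$. These satisfy every product and difference ergodicity condition. Yet $T_4T_1^{-1}=\mathrm{Id}\times S$ is non-ergodic, so smoothing is genuinely needed. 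And $T_3T_1^{-1}=S\times\mathrm{Id}$ is non-ergodic, so the auxiliary average fails the difference ergodicity condition you would need in order to invoke the induction. Making the smoothing close in the dependent case is exactly the new, delicate induction of \cite{FrKu22b}. The difficulty is not coefficient tracking in PET, and your proposal offers no replacement for this induction.

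Your fallback does not help either. Weak control at level $m=\ell$ is full control of $f_\ell$ with all other functions arbitrary. For any dependent pair $i<j$, control of $f_j$ at level $m=j$ still has $f_i$ arbitrary. So nonergodic eigenfunctions never reduce the dependent difficulty to exponential sums.

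The easy direction also needs two repairs.
\begin{enumerate}
\item Joint ergodicity concerns pointwise products on $X$, so it cannot be ``applied to tensor products on $X^\ell$''. Instead, joint ergodicity gives ergodicity of each $T_j$ (set the other functions equal to $1$) and, tested on eigenfunctions, good equidistribution. Your spectral equivalence then yields product ergodicity.
\item A non-ergodic sequence $(T_i^{a_i(n)}T_j^{-a_j(n)})_n$ need not admit any nonconstant invariant function; consider $(T^{n^2})_n$ on $\Z/3\Z$. Instead, take $\int f\,d\mu=0$ and let $L$ be the $L^2(\mu)$-limit of $\E_{n\in[N]}T_i^{a_i(n)}T_j^{-a_j(n)}f$, which exists for polynomial iterates. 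Applying joint ergodicity with $f_i=f$ and $f_j=\overline{L}$ gives $\norm{L}_{L^2(\mu)}^2=\int\overline{L}\,d\mu\cdot\int f\,d\mu=0$.
\end{enumerate}
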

    Theorem \ref{T: FrKu classification problem} was proved after a number of partial results:
    \begin{enumerate}
        \item Donoso, Koutsogiannis, and Sun resolved Problem \ref{Pr: joint ergodicity problem} for a single integer polynomial, i.e. when $a_1 = \cdots = a_\ell\in\Z[t]$ \cite{DKS22};
        %\item Donoso, Koutsogiannis, Ferr\'e-Moragues, and Sun covered more cases of integer polynomials \cite{DFKS22};
        \item Frantzikinakis and the author \cite{FrKu22a} answered Problem \ref{Pr: joint ergodicity problem} in the complementary case of pairwise independent integer polynomials.
    \end{enumerate}
These earlier works
%, particularly \cite{DFKS22} and \cite{FrKu22a} 
were important because they introduced new techniques for deriving seminorm estimates presented in Section \ref{S: seminorm smoothing}. 

For polynomials in $\Z^\ell[t]$, the following more general problem remains open. For a system $(X, \CX, \mu, T_1, \ldots,T_\ell)$ and $b\in\Z^\ell$, we denote $T^\b = T_1^{b_1}\cdots T_\ell^{b_\ell}$.
\begin{problem}[Joint ergodicity classification problem for polynomials over $\Z^\ell$ {\cite[Conjecture 1.5]{DKS22}, \cite[Conjecture 1.3]{DFKS22}}]\label{Pr: general classification problem for polys}
    Let $\ba_1, \ldots, \ba_\ell\in\Z^\ell[t]$ be polynomials and $(X, \CX, \mu$, $T_1, \ldots, T_\ell)$ be a system. Prove or disprove: $(T^{\ba_1(n)}, \ldots, T_\ell^{\ba_\ell(n)})_n$ is jointly ergodic on $(X, \CX, \mu)$ if and only if the following hold:
    \begin{enumerate}
    \item $(T^{{\ba_1(n)}}\times \cdots \times T^{{\ba_\ell(n)}})_n$ is ergodic for the product system $(X^\ell, \CX^{\otimes \ell}, \mu^\ell)$;
    \item $(T^{{\ba_i(n)}}T_j^{-\ba_j(n)})_n$ is ergodic for $(X, \CX, \mu)$ for all distinct $1\leq i, j\leq \ell$.
    \end{enumerate}
\end{problem}
Theorem \ref{T: FrKu classification problem} corresponds to the case when $\ba_j = \be_j a_j$ for some $a_j\in\Z[t]$. Several cases towards Problem \ref{Pr: general classification problem for polys} not covered by Theorem \ref{T: FrKu classification problem} have been addressed by Donoso, Koutsogiannis, Ferr\'e-Moragues, and Sun \cite{DFKS22}.

\subsection{Hardy sequences}

For Hardy sequences, Problem \ref{Pr: joint ergodicity problem} admits a positive answer in the following two special cases, resolved respectively by Donoso, Koutsogiannis, and Sun as well as Donoso, Koutsogiannis, Sun, Tsinas, and the author.
%, which correspond to two cases for integer polynomials listed below Theorem \ref{T: FrKu classification problem} that have been resolved before the full proof of Theorem
\begin{theorem}[{\cite[Theorem 1.1]{DKS23}}]\label{T: DKS Hardy}
    Let $a\in\CH$, and suppose that $a$ \emph{stays logarithmically away from real polynomials} in that $a-p\succ \log$ for any $p\in\R[t]$. Then Problem \ref{Pr: joint ergodicity problem} has an affirmative answer for $a$.
\end{theorem}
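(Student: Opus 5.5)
The plan follows the modern proof of Theorem \ref{T: BB characterization 2} and uses the joint ergodicity criteria (Theorem \ref{T: joint ergodicity criteria}) as a black box.

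\textbf{Easy direction.} Joint ergodicity implies both conditions. Taking $f_j=g_j$ and integrating against $\overline{g_1\otimes\cdots\otimes g_\ell}$ gives product ergodicity. Taking $f_i=f$, $f_j=\overline f$ with all other $f_k=1$, and integrating against an arbitrary bounded $h$, shows that $(T_i^{a(n)}T_j^{-a(n)})_n$ is weakly ergodic. Upgrading this to strong ergodicity uses the standard trick: expand $\|\E_n T_i^{a(n)}T_j^{-a(n)}f-\int f\|^2$ and apply joint ergodicity to the product-type average, reading the square norm as the average of a $2$-fold product.

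\textbf{Hard direction, equidistribution.} Assume both conditions. First note that product ergodicity forces each $(T_j^{a(n)})_n$ to be ergodic, hence each $T_j$ to be ergodic. Eigenfunction expansions then show that product ergodicity is equivalent to
\begin{align*}
\lim_{N\to\infty}\E_{n\in[N]}e\brac{a(n)(\beta_1+\cdots+\beta_\ell)}=0
\end{align*}
whenever $\beta_j\in\Spec(T_j)$ are not all zero. This is exactly goodness for equidistribution of $a,\ldots,a$ for the system, by Lemma \ref{L: equidistribution condition for ergodic}.

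\textbf{Hard direction, seminorm control.} By Theorem \ref{T: joint ergodicity criteria}, what remains is Host--Kra seminorm control, and this is the main obstacle. I would proceed in three steps.
\begin{enumerate}
\item Use the Taylor expansion of $a$ on short intervals $(N,N+L(N)]$ together with a PET/van der Corput scheme. Since $a$ stays logarithmically away from real polynomials, I expect this to give box seminorm control of the form
\begin{align*}
\nnorm{f_\ell}_{T_\ell^{\times s_0},\,(T_\ell T_1^{-1})^{\times s_1},\ldots,(T_\ell T_{\ell-1}^{-1})^{\times s_{\ell-1}}}.
\end{align*}
This is possible because, after differencing, every iterate is the same Hardy sequence evaluated on shifted variable polynomials. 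The ``stays away from polynomials'' hypothesis is what prevents the PET from terminating in bounded iterates.
\item Replace the transformations $T_\ell T_i^{-1}$ appearing in this bound by Host--Kra-type seminorms of those same transformations. Here the difference ergodicity condition enters: the ergodicity of the sequence $(T_\ell^{a(n)}T_i^{-a(n)})_n$ should force $T_\ell T_i^{-1}$ to be ergodic with a controlled spectrum.
\item Run the seminorm smoothing argument of Section \ref{S: seminorm smoothing}, which combines difference ergodicity with a dual-function decomposition, to kill the $T_\ell T_i^{-1}$ directions and reduce the bound to $\nnorm{f_\ell}_{s,T_\ell}$.
\end{enumerate}

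\textbf{Where I expect trouble.} The hardest point is step 3: showing that box seminorms involving $T_\ell T_i^{-1}$ are bounded by Host--Kra seminorms of $T_\ell$ alone. I would try an induction on the number of non-$T_\ell$ directions. At each stage, I would apply the preceding bound to a dual function of the average and use difference ergodicity to show that the invariant factor of $T_\ell T_i^{-1}$ is trivial on the relevant functions.

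Once seminorm control and equidistribution are both in hand, Theorem \ref{T: joint ergodicity criteria} yields joint ergodicity.
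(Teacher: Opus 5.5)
Your architecture is the route the survey indicates in its sketch of Theorem~\ref{T: BB characterization 2}: PET gives box-seminorm control along $T_\ell$ and $T_\ell T_i^{-1}$, difference ergodicity upgrades this to Host--Kra control, product ergodicity gives equidistribution, and Theorem~\ref{T: joint ergodicity criteria} finishes. Your equidistribution step is fine.

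The easy direction, however, has a genuine gap: there is no ``standard trick'' that upgrades weak to strong difference ergodicity. Expanding the square produces the two-parameter average $\E_{n,m\in[N]}\int f\cdot R^{\sfloor{a(m)}-\sfloor{a(n)}}\overline{f}\,d\mu$ with $R=T_iT_j^{-1}$. Joint ergodicity is a one-parameter statement and does not control this. Weak ergodicity identifies the norm limit only if you already know that the limit exists, and Theorem~\ref{T: DKKST counterexample to classification problem} shows that the implication fails for general sequences. Two smaller slips: the test functions that give weak ergodicity are $f_i=f$, $f_j=\overline{h}$, integrated against $1$; and integrating against $\overline{g_1\otimes\cdots\otimes g_\ell}$ has no meaning on $X$.

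The missing input is the hypothesis on $a$, which yields $\E_{n\in[N]}e(\sfloor{a(n)}\theta)\to 0$ for every $\theta\in\T\setminus\{0\}$. For irrational $\theta$, each $(k_1\theta+k_2)a$ with $(k_1,k_2)\neq(0,0)$ stays logarithmically away from $\Q[t]$. So $(\theta a(n),a(n))$ is equidistributed in $\T^2$ by Theorem~\ref{T: Boshernitzan}, and $e(\sfloor{a(n)}\theta)=e(\theta a(n))\,e(-\theta\srem{a(n)})$. For $\theta=p/q$, use the equidistribution of $a/q$. By the spectral theorem, $\E_{n\in[N]}R^{\sfloor{a(n)}}f\to\E(f|\CI(R))$ in $L^2(\mu)$ for every $R$. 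Hence weak ergodicity of $(R^{\sfloor{a(n)}})_n$ forces $R$ to be ergodic, and strong ergodicity follows. In the same way, product ergodicity reduces to ergodicity of $T_1\times\cdots\times T_\ell$. Since each $T_j$ is ergodic, this means there is no relation $\beta_1+\cdots+\beta_\ell=0$ among eigenvalues $\beta_j\in\Spec(T_j)$ that are not all zero. The eigenfunction test excludes exactly such a relation: for eigenfunctions $f_j=g_j$ the average is identically $g_1\cdots g_\ell$, of modulus $1$, instead of $\prod_j\int g_j\,d\mu=0$.

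In the hard direction you have misplaced the difficulty. Your closing instinct, that difference ergodicity trivializes the $T_\ell T_i^{-1}$-invariant factor, is correct, but it settles steps 2--3 in one line with no smoothing, induction, or dual functions. Ergodicity of $R=T_\ell T_i^{-1}$ is immediate, since an $R$-invariant function is fixed by every $R^{\sfloor{a(n)}}$; no spectral information is needed. Place an $R$-slot innermost using permutation invariance and apply the inductive formula:
\begin{align*}
\nnorm{f}_{R,G_2,\ldots,G_s}^{2^s}&=\E_{\bh_s\in G_s}\cdots\E_{\bh_2\in G_2}\abs{\int\Delta_{\bh_2,\ldots,\bh_s}f\,d\mu}^2\\
&\leq\E_{\bh_s\in G_s}\cdots\E_{\bh_2\in G_2}\norm{\E(\Delta_{\bh_2,\ldots,\bh_s}f|\CI(T_\ell))}_{L^2(\mu)}^2=\nnorm{f}_{T_\ell,G_2,\ldots,G_s}^{2^s}.
\end{align*}
Applying this to each copy of $T_\ell T_i^{-1}$ turns $\nnorm{f_\ell}_{T_\ell^{\times s_0},(T_\ell T_1^{-1})^{\times s_1},\ldots,(T_\ell T_{\ell-1}^{-1})^{\times s_{\ell-1}}}$ into $\nnorm{f_\ell}_{s,T_\ell}$ with $s=s_0+\cdots+s_{\ell-1}$. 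This is precisely the remark in the survey's sketch of Theorem~\ref{T: BB characterization 2}. The substantive analytic work is therefore your step 1: a PET scheme for Hardy sequences via short-interval Taylor expansions that keeps track of the directions $\be_\ell$ and $\be_\ell-\be_i$. Your proposal asserts that step rather than carrying it out.
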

Theorem \ref{T: DKS Hardy} thus covers $a(t)$ being any of $t^{3/2}, t\log t, t + (\log t)^2$, but neither of $\sqrt{2}t^2, t^2 + \log t$. In \cite[Theorem 6.1]{DKS23}, the authors also extend it to $a\in\CT$.
\begin{theorem}[{\cite[Theorem 1.9]{DKKST24}}]
    Let $a_1, \ldots, a_\ell\in\CH$ be pairwise independent. Then Problem \ref{Pr: joint ergodicity problem} has an affirmative answer for $a$.
\end{theorem}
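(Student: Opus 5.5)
The plan is to use the joint ergodicity criteria (Theorem \ref{T: joint ergodicity criteria}) as a black box, in the same way as in the sketch of Theorem \ref{T: BB characterization 2}. The pairwise independence hypothesis enters only through Theorem \ref{T: DKKST pairwise independent Hardy}. That theorem supplies Host-Kra seminorm control for every system, so in the difficult direction only the product ergodicity condition should be needed. We pass to integer parts throughout, following Convention \ref{Conv: real sequences}.

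\emph{Sufficiency.} Assume the product and difference ergodicity conditions hold for $(X, \CX, \mu, T_1, \ldots, T_\ell)$.
\begin{enumerate}
\item First I check that each $T_j$ is ergodic. Take $f_i=1$ for $i\neq j$ in the product ergodicity condition. This shows that $(T_j^{a_j(n)})_n$ is ergodic. Any $T_j$-invariant function is fixed by every $T_j^{a_j(n)}$, so it must be constant, and hence $T_j$ is ergodic.
\item Theorem \ref{T: joint ergodicity criteria} therefore applies. It suffices to verify its two conditions for this system.
\item Condition (i), Host-Kra seminorm control, holds for every system by Theorem \ref{T: DKKST pairwise independent Hardy}(i). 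This is the only place pairwise independence is used.
\item For condition (ii), I use Lemma \ref{L: equidistribution condition for ergodic}. Let $\beta_j\in\Spec(T_j)$, not all zero, with eigenfunctions $\chi_j$. Then $\chi_1\otimes\cdots\otimes\chi_\ell$ is nonconstant and satisfies
\begin{align*}
\E_{n\in[N]}\prod_{j=1}^\ell T_j^{a_j(n)}\chi_j = \E_{n\in[N]} e\brac{a_1(n)\beta_1+\cdots+a_\ell(n)\beta_\ell}\,\chi_1\otimes\cdots\otimes\chi_\ell,
\end{align*}
where the left side is the action of the product transformation $T_1^{a_1(n)}\times\cdots\times T_\ell^{a_\ell(n)}$.
\item Product ergodicity says the left side tends to $\int \chi_1\otimes\cdots\otimes\chi_\ell = 0$. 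Since $|\chi_1\otimes\cdots\otimes\chi_\ell|$ is a nonzero constant, the exponential averages must tend to $0$. This is exactly goodness for equidistribution, so joint ergodicity follows.
\end{enumerate}

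\emph{Necessity.} Assume the sequences are jointly ergodic for the system.
\begin{enumerate}
\item For the difference condition, fix $i\neq j$, set $f_i=g$, $f_j=\overline{h}$, and take the other $f_k$ equal to $1$. Integrating the joint ergodicity identity and using invariance of $\mu$ gives
\begin{align*}
\E_{n\in[N]}\int T_i^{a_i(n)}T_j^{-a_j(n)} g\cdot \overline{h}\,d\mu \to \int g\,d\mu\cdot\int\overline{h}\,d\mu.
\end{align*}
This is weak ergodicity of $(T_i^{a_i(n)}T_j^{-a_j(n)})_n$.
\item Weak ergodicity is not yet the required strong ergodicity. To upgrade it, I expand $\norm{\E_{n\in[N]}T_i^{a_i(n)}T_j^{-a_j(n)}g}_{L^2(\mu)}^2$, which produces averages over pairs $(n,m)$. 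I would then either appeal to the argument of \cite{DKS22, DKS23}, or derive norm convergence directly from the $L^2$ form of joint ergodicity applied with $f_i=g$, $f_j=1$ after composing with $T_j^{-a_j(n)}$.
\item For product ergodicity, it suffices to test on tensors $g_1\otimes\cdots\otimes g_\ell$, since their span is dense. The squared norm of the product average equals $\E_{n,m\in[N]}\prod_j \langle T_j^{a_j(n)}g_j, T_j^{a_j(m)}g_j\rangle$. I would reduce this, via Theorem \ref{T: joint ergodicity criteria}, to the eigenfunction and seminorm parts. By equidistribution the eigenfunction parts contribute only $\prod_j\abs{\int g_j\,d\mu}^2$, and the seminorm-null parts vanish by seminorm control.
\end{enumerate}

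I expect the main obstacle to be this necessity direction: upgrading weak to strong ergodicity, and obtaining product ergodicity in the genuinely non-eigenfunction part. Sufficiency is essentially immediate once Theorem \ref{T: DKKST pairwise independent Hardy} is granted.
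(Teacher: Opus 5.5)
Your sufficiency direction is fine. It is the survey's modern proof of Theorem \ref{T: BB characterization 2}, with Theorem \ref{T: DKKST pairwise independent Hardy}(i) replacing the difference condition as the source of Host-Kra seminorm control; in fact you show that product ergodicity alone suffices.

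The gap is in necessity, step (2): nothing you propose upgrades weak to strong ergodicity of $(T_i^{a_i(n)}T_j^{-a_j(n)})_n$. Composing with $T_j^{-a_j(n)}$, a map that varies with $n$, is legitimate only inside an integral. Doing so merely reproduces step (1) and gives no information on $\norm{\E_{n\in[N]}T_i^{a_i(n)}T_j^{-a_j(n)}g}_{L^2(\mu)}$. The arguments of \cite{DKS22, DKS23} concern a common iterate $a_i=a_j=a$. There the difference average is $\E_{n\in[N]}(T_iT_j^{-1})^{a(n)}g$, a single transformation along a single sequence whose norm convergence is classical; here you face a $\Z^2$-action along $(a_i(n),-a_j(n))$. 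Above all, your claim that pairwise independence is used only for seminorm control cannot be right. By Theorem \ref{T: DKKST counterexample to classification problem}, $n,\ n+\sfloor{\log_2 n}$ are jointly ergodic on every mixing system although $(T^{\sfloor{\log_2 n}})_n$ is not ergodic. So the necessity of the difference condition genuinely depends on the growth of $c_ia_i+c_ja_j$. You cannot simply quote Theorem \ref{T: DKKST partial converse}(ii) either, since pairwise independence does not imply its hypothesis (take $a_1(t)=t^2+\log t$, $a_2(t)=t^{3/2}$).

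What you need is norm convergence of $\E_{n\in[N]}T_i^{a_i(n)}T_j^{-a_j(n)}g$, since its weak limit is already $\int g\,d\mu$. One route is the spectral theorem. Take $\int g\,d\mu=0$ and let $\sigma_g$ be the spectral measure of $g$ under the $\Z^2$-action generated by $T_i,T_j$. Then
\[
\norm{\E_{n\in[N]}T_i^{a_i(n)}T_j^{-a_j(n)}g}_{L^2(\mu)}^2=\int_{\T^2}\abs{\E_{n\in[N]}e\brac{a_i(n)x-a_j(n)y}}^2\,d\sigma_g(x,y),
\]
with integer parts understood as in Convention \ref{Conv: real sequences}.
\begin{enumerate}
\item Pairwise independence makes the set of $(c,c')\in\R^2$ for which $ca_i+c'a_j$ lies within $O(\log t)$ of $\Q[t]$ countable: two such pairs attached to the same rational polynomial differ by a pair whose combination is $O(\log t)$.
\item Off the corresponding countable set of $(x,y)$, Boshernitzan's theorem (Theorem \ref{T: Boshernitzan}) should make the exponential averages tend to $0$, though the integer parts need some care. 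This disposes of the continuous part of $\sigma_g$.
\item An atom at $(x_0,y_0)\neq(0,0)$ comes from a joint eigenfunction $\phi$ with $\abs{\phi}=1$, using that $T_i$ is ergodic. Joint ergodicity with $f_i=\phi$, $f_j=\overline{\phi}$ then gives $\E_{n\in[N]}e\brac{a_i(n)x_0-a_j(n)y_0}\to\abs{\int\phi\,d\mu}^2=0$ directly. The point $(0,0)$ carries no mass.
\end{enumerate}

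A smaller issue affects step (3). Theorem \ref{T: joint ergodicity criteria} applied on $X$ says nothing about $X^\ell$, and the non-null part of $g_j$ lies in $\CZ_{s-1}(T_j)$, not in the span of eigenfunctions. Instead, apply Theorem \ref{T: invariant control criteria} to $(X^\ell,\CX^{\otimes\ell},\mu^\ell)$ with $T_j$ acting on the $j$-th coordinate. Seminorm control holds there because it holds for every system, and equidistribution reduces fibrewise to the ergodic case. Alternatively, cite Theorem \ref{T: DKKST partial converse}(i).
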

For general Hardy iterates of polynomial growth, Problem \ref{Pr: joint ergodicity problem} turns out to be more intricate than anticipated. In \cite{DKKST25}, Donoso, Koutsogiannis, Sun, Tsinas, and the author resolve affirmatively the ``difficult'' direction of Problem \ref{Pr: joint ergodicity problem}, establishing the following. 
    \begin{theorem}[{\cite[Theorem 1.5]{DKKST25}}]\label{T: DKKST sufficient condition}
    Suppose that $a_{1},\dots,a_{\ell}\in\mathcal{H}$ satisfy the difference and product ergodicity conditions for a system $(X, \CX, \mu,$\! $T_1, \ldots, T_\ell)$.
Then they are jointly ergodic for the system.
     \end{theorem}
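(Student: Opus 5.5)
The plan is to follow the New Joint Ergodicity Strategy and invoke Theorem \ref{T: joint ergodicity criteria}. First observe that the product and difference ergodicity conditions force each $(T_j^{\sfloor{a_j(n)}})_n$ to be ergodic; in particular each $T_j$ is ergodic, so the theorem applies. Joint ergodicity then reduces to two tasks: (a) showing that $a_1,\ldots,a_\ell$ are good for equidistribution for the system, and (b) showing that they admit Host-Kra seminorm control for the system.

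For (a), I would mimic the proof of Theorem \ref{T: BB characterization 2}. Take eigenfunctions $g_j$ of $T_j$ with eigenvalues $\beta_j\in\Spec(T_j)$. Then $g_1\otimes\cdots\otimes g_\ell$ is an eigenfunction for $(T_1^{\sfloor{a_1(n)}}\times\cdots\times T_\ell^{\sfloor{a_\ell(n)}})_n$ with phase $e(\sum_j\beta_j\sfloor{a_j(n)})$. Product ergodicity then forces $\E_{n\in[N]}e(\sum_j \beta_j\sfloor{a_j(n)})\to0$ unless all $\beta_j=0$. Lemma \ref{L: equidistribution condition for ergodic} converts this into the good-for-equidistribution property. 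This step is routine.

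For (b), the plan is a case analysis on the Hardy family, using Heuristic \ref{H: pairwise independent seminorm}.
\begin{itemize}
\item If the family is pairwise independent, Theorem \ref{T: DKKST pairwise independent Hardy} gives seminorm control outright.
\item Otherwise, partition the indices into classes in which the $a_j$ are, modulo $o(\log)$ or bounded corrections, real multiples of one another. Within each class, run the PET/seminorm smoothing machinery (Section \ref{S: seminorm smoothing}) to obtain control by a box seminorm in transformations of the form $T_i^{c_i}T_j^{-c_j}$, or more precisely along the Hardy iterates $\sfloor{a_i(n)}-\sfloor{a_j(n)}$-type differences.
\item Then use the difference ergodicity condition to upgrade this box seminorm control to Host-Kra seminorm control. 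This mirrors how ergodicity of $T_iT_j\inv$ bounded $\nnorm{\cdot}_{T_\ell,T_\ell T_1\inv,\ldots}$ by $\nnorm{\cdot}_{\ell,T_\ell}$ in the Berend--Bergelson argument.
\end{itemize}
Concretely, the difference ergodicity of $(T_i^{\sfloor{a_i(n)}}T_j^{-\sfloor{a_j(n)}})_n$ should imply that the relevant invariant $\sigma$-algebras of the difference transformations appearing in the box seminorm are trivial, or at least contained in $\CI(T_j)$. That lets the corresponding directions be replaced by $T_j$.

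The main obstacle is (b) in the degenerate cases where some $a_i,a_j$ are dependent, for example $a_i=a_j$, or $a_i-a_j$ is bounded, or $a_i-a_j\ll\log$. Here rounding errors $\sfloor{a_i}-\sfloor{a_j}$ take finitely many values in unpredictable patterns. The differences then behave like slowly growing or bounded iterates, whose ergodicity is subtle (compare the $\sfloor{\log_2 n}$ example). What I would try first is the following. Split into equal-growth clusters via Taylor expansion on short intervals $(N,N+L(N)]$, approximating each $a_j$ by variable polynomials. Then treat the error $\sfloor{a_i(n)}-\sfloor{a_j(n)}$ by decomposing into finitely many level sets, each of which has well-defined density by equidistribution of the fractional parts $\{a_j(n)\}$. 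On each level set I would apply the polynomial degree-lowering/smoothing argument, and I would extract ergodicity of the needed difference transformations from the difference ergodicity hypothesis by an averaging/spectral argument along the Hardy sequence $a_i-a_j$. Closing this step, which turns difference ergodicity of a sequence of transformations into a usable statement about fixed transformations, is where I expect most of the technical work.
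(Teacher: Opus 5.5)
Your part (a) is correct. Your overall architecture matches the strategy the paper attributes to \cite{DKKST25}: Theorem \ref{T: joint ergodicity criteria}, product ergodicity giving good equidistribution, and difference ergodicity used to upgrade box-seminorm control to Host--Kra control. But (b) is the whole content of the theorem, and your plan for it has a genuine gap. PET plus concatenation (Theorem \ref{T: DKKST box seminorm control}) gives box seminorms along the leading coefficients of $a_\ell\be_\ell-a_j\be_j$ for \emph{all} $j$, and difference ergodicity cannot remove the directions coming from independent pairs. In Example \ref{Ex: n^2, n^2, n^2+n} with $T_3=T_1=T$, the first estimate contains $\be_3-\be_1$, i.e.\ $T_3T_1^{-1}=\Id$. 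Yet the difference condition for the pair $(1,3)$ holds, since $(T_3^{n^2+n}T_1^{-n^2})_n=(T^n)_n$ is ergodic. Such directions have to be smoothed away. The smoothing of \cite{FrKu22a, DKKST24} is built for families in which every pair is independent, and the variant of \cite{FrKu22b} breaks down for irrational dependencies such as $(T_1^{\sfloor{\sqrt2 n^2}},T_2^{\sfloor{\sqrt3 n^2}},T_3^{n^2+n})_n$. ``Running the machinery within each class'' is not a mechanism, because the average couples all classes at once. You guessed the right final shape of the estimate, but not how to reach it. The missing ingredient is the pairwise-dependent smoothing of \cite[Section 4]{DKKST25}, which rests on relative concatenation \eqref{E: DKKST concatenation}. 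It yields estimates like $\nnorm{f_3}_{\be_3^{\times s'},(\be_2-\be_1)^{\times s'}}$, whose only extra direction comes from the dependent pair $(a_1,a_2)$, a pair not even involving the controlled index. Only then does difference ergodicity finish the job: for $a_1=a_2=n^2$ it forces $T_1T_2^{-1}$ to be ergodic. Every copy of $\be_2-\be_1$ can then be replaced by $\Z^3$, and by \eqref{E: scaling 1} by $\be_3$.

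Your fallback for the degenerate cases does not work either. If $a_2-a_1\asymp\log t$ (e.g.\ $a_1(t)=t^{3/2}$, $a_2(t)=t^{3/2}+\log t$), then $\sfloor{a_2(n)}-\sfloor{a_1(n)}$ takes unboundedly many values, each on a finite set. Even when $a_2-a_1$ is constant, the level sets need not have a density, because fractional parts of Hardy sequences need not equidistribute. For $a_1(t)=t+\log\log t$ and $a_2=a_1+\frac12$, the set $\{n\colon \sfloor{a_2(n)}-\sfloor{a_1(n)}=1\}=\{n\colon \srem{\log\log n}\geq\frac12\}$ has lower density $0$ and upper density $1$. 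Moreover, for irrational dependencies like $\sqrt2 n^2,\sqrt3 n^2$, the action contains no transformation $T_1^{c_1}T_2^{-c_2}$ to which ergodicity of the sequence $(T_1^{\sfloor{\sqrt2 n^2}}T_2^{-\sfloor{\sqrt3 n^2}})_n$ could be transferred. You have to pass to box seminorms along real directions, as in \cite[Section 3]{DKKST24}, and relate difference ergodicity of the \emph{sequence} to the invariant factors of those generalized directions. As written, the proposal establishes only the equidistribution half of what Theorem \ref{T: joint ergodicity criteria} requires. The seminorm-control half, which is where \cite{DKKST25} does its work, is not supplied.
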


The same work establishes the following partial converse. Recall Definition \ref{D: almost rational poly} for the definition of almost rational polynomials.
%We call $a\in\mathcal{H}$ an \emph{almost rational polynomial} if $a-p$ converges for some $p$ in $\Q[t]$.

%We also establish the following partial converse.

% \begin{definition}
%     For  %(resp. $\R[x]$). 
%     We say that $a$ \emph{stays logarithmically away from $\Q[x]$} (\emph{resp. $\R[x]$}) if $a-p\succ \log$ for all $p$ in $\Q[x]$ (resp. $\R[x]$).
% \end{definition}

\begin{theorem}[{\cite[Theorem 1.7]{DKKST25}}]\label{T: DKKST partial converse}
        Suppose that $a_{1},\dots,a_{\ell}\in\mathcal{H}$ are jointly ergodic for a system $(X, \CX, \mu, T_1, $ $ \ldots, T_\ell)$. Then the following holds:
        \begin{enumerate}
            \item The sequences satisfy the product ergodicity condition on the system.
            \item Suppose additionally that for every $i\neq j$, $c_i, c_j\in\R$, the function $c_i a_i - c_j a_j$ is either an almost rational polynomial  or stays logarithmically away from $\Q[x]$. Then the sequences satisfy the difference ergodicity condition on the system.
        \end{enumerate}
\end{theorem}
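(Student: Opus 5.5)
The plan is to follow the two halves of the modern proof of Theorem \ref{T: BB characterization 2} sketched above. Product ergodicity will be derived from the equidistribution consequence of joint ergodicity. Difference ergodicity will be derived from the Hardy-field dichotomy assumed in part (ii), by exhibiting a nonconstant invariant function whenever it fails.

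For part (i), I will first take the $f_j$ in \eqref{E: joint ergodicity} to be functions invariant under $T_j$, which forces each $T_j$ to be ergodic. Next, I will use the description (cf. \cite[Chapter 4, Theorem 16]{HK18}) of $L^2(\mu^\ell)$ for the product. It suffices to check the mean ergodic theorem for $(T_1^{a_1(n)}\times\cdots\times T_\ell^{a_\ell(n)})_n$ on tensors $g_1\otimes\cdots\otimes g_\ell$, since their span is dense. Split each $g_j$ into its projection onto the Kronecker factor of $T_j$ and the orthogonal complement. For the eigenfunction part, with eigenvalues $\beta_j$, the average is $g_1\otimes\cdots\otimes g_\ell\cdot\E_{n\in[N]}e(\sum_j a_j(n)\beta_j)$. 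Joint ergodicity applied to eigenfunctions $f_j=\chi_j$ shows this exponential average tends to $0$ unless all $\beta_j=0$; this is the \emph{good for equidistribution} consequence in the ergodic form of Lemma \ref{L: equidistribution condition for ergodic}.

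For the weakly mixing components, I will compute $\norm{\E_n \bigotimes_j T_j^{a_j(n)}g_j}^2_{L^2(\mu^\ell)}=\E_{m,n}\prod_j\langle T_j^{a_j(m)}g_j,T_j^{a_j(n)}g_j\rangle$. Each factor is a Fourier coefficient of the spectral measure $\sigma_{g_j}$, which is continuous when $g_j$ is orthogonal to the Kronecker factor. The expression therefore becomes $\int\abs{\E_n e(\sum_j a_j(n)t_j)}^2\,d\sigma_{g_1}\otimes\cdots\otimes d\sigma_{g_\ell}(t)$. I expect the main obstacle here: showing this vanishes requires that $\E_n e(\sum a_j(n)t_j)\to0$ for $\prod\sigma_{g_j}$-a.e. $t$. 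For Hardy sequences, this exponential sum fails to vanish only when $\sum t_ja_j$ lies logarithmically close to a rational polynomial modulo constants. Using Boshernitzan's theorem (Theorem \ref{T: Boshernitzan}), I plan to show that such $t$ form a set contained in countably many proper affine subspaces. Those subspaces are null for products of continuous measures, except where some coordinate is fixed, and a fixed coordinate is null for the corresponding continuous $\sigma_{g_j}$. Dominated convergence then finishes part (i).

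For part (ii), suppose $T_i^{a_i(n)}T_j^{-a_j(n)}$ is not ergodic, and take $f_i=f$, $f_j=\overline f$ as in the proof of Theorem \ref{T: BB characterization 2}. Joint ergodicity gives $\lim\E_n\int T_i^{a_i(n)}f\cdot T_j^{a_j(n)}\overline f\,d\mu=\abs{\int f}^2$. By the spectral theorem for the $\Z^2$-action $(T_i,T_j)$, this limit equals $\lim\E_n\int e(a_i(n)s-a_j(n)t)\,d\sigma_f(s,t)$. The ergodic averages of $T_i^{a_i(n)}T_j^{-a_j(n)}$ applied to $f$ are governed by the same exponential sums. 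By the assumed dichotomy, for each $(s,t)$ the combination $sa_i-ta_j$ is either an almost rational polynomial, in which case the exponential sum converges to an explicit limit, or stays logarithmically away from $\Q[x]$, in which case the limit is $0$. This will show that the weak limits of the two averages are given by the same density of $\sigma_f$. From there I deduce that the limit of the difference average is $\int f$, i.e. that $(T_i^{a_i(n)}T_j^{-a_j(n)})_n$ is ergodic.
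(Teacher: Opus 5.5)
Your proposal has the right architecture, but it has a genuine gap: you treat $a_j(n)$ and $\sfloor{a_j(n)}$ interchangeably. The spectral sums that actually arise are $\phi_N(s,t):=\E_{n\in[N]}e(s\sfloor{a_i(n)}-t\sfloor{a_j(n)})$ with $(s,t)\in\T^2$. The dichotomy in (ii), and Theorem \ref{T: Boshernitzan} in (i), concern a real combination such as $sa_i-ta_j$ for one particular lift of $(s,t)$.

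Your assertion that staying logarithmically away from $\Q[x]$ forces the limit to be $0$ is false. Take $a_i(n)=n/\sqrt{2}$ and $(s,t)=(\sqrt{2}-1,0)$. The combination $s\,a_i(n)=(1-1/\sqrt{2})n$ stays logarithmically away from $\Q[x]$. But $s\sfloor{n/\sqrt{2}}\equiv n-\sqrt{2}\,\srem{n/\sqrt{2}}\pmod 1$, so $\phi_N(s,0)\to\int_0^1e(-\sqrt{2}\,y)\,dy\neq0$.

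The same defect affects part (i). The exceptional set must be described through all lifts: it consists of the $t$ for which $\sum_j(t_j+k_j)a_j$ is logarithmically close to $\Q[x]$ for some $k\in\Z^\ell$. This is a countable union of cosets of $V:=\{t\in\R^\ell\colon\sum_jt_ja_j\ll\log\}$. That $\phi_N$ vanishes off this set is a statement about the joint distribution of $(a_1(n),\ldots,a_\ell(n),\sum_jt_ja_j(n))$ modulo $\Z^{\ell+1}$, including slowly varying directions. It does not follow from Theorem \ref{T: Boshernitzan} alone, and you have not proved it.

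You also need to treat the mixed tensors, where some $g_j$ lie in the Kronecker factor. There the product measure has atomic coordinates, so you must rule out exceptional cosets contained in hyperplanes involving only those coordinates. Equivalently, you must show $a_j\not\ll\log$ for each weakly mixing coordinate $j$. This does follow from joint ergodicity on a nontrivial system: if $a_j\ll\log$, then $\sfloor{a_j(n)}$ is constant on a fixed positive proportion of $[1,N]$ along a sequence of $N$, forcing $T_j^kf\to\int f\,d\mu$ and hence $f$ constant. But it has to be argued.

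In (ii), the hypothesis should be used to get \emph{existence} of limits, not vanishing. Every integer combination $m_1a_i+m_2a_j+m_0(sa_i-ta_j)$ is again of the form $c_ia_i-c_ja_j$. So by the dichotomy and Theorem \ref{T: Boshernitzan}, each character average of $(a_i(n),a_j(n),sa_i(n)-ta_j(n))$ modulo $\Z^3$ converges, giving a limiting distribution. One must then handle the jumps of $(y_1,y_2,z)\mapsto e(z-s\srem{y_1}+t\srem{y_2})$. These matter only for almost rational polynomial coordinates, whose integer parts are eventually polynomial plus periodic. The outcome is that $\phi_N(s,t)$ converges for \emph{every} $(s,t)$.

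What is then missing is the passage from weak to strong convergence. Comparing weak limits, as you propose, cannot suffice. Joint ergodicity always gives weak convergence of the difference averages to $\int f\,d\mu$, and Theorem \ref{T: DKKST counterexample to classification problem} shows strong ergodicity can nevertheless fail. Nor is there an invariant function to exploit as in the proof of Theorem \ref{T: BB characterization 2}.

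The argument closes as follows. Write $A_Nf:=\E_{n\in[N]}T_i^{\sfloor{a_i(n)}}T_j^{-\sfloor{a_j(n)}}f$, and let $\sigma_f$ be the spectral measure of $f$ under $(T_i,T_j)$. Pointwise convergence of $\phi_N$ and dominated convergence give $\norm{A_Nf-A_Mf}_{L^2(\mu)}^2=\int\abs{\phi_N-\phi_M}^2\,d\sigma_f\to0$. So $A_Nf$ converges in norm, and the norm limit must equal the weak limit $\int f\,d\mu$. This norm-convergence step is exactly where hypothesis (ii) enters, in line with the discussion following Theorem \ref{T: DKKST counterexample to classification problem}.
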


Combining Theorem \ref{T: DKKST sufficient condition} and \ref{T: DKKST partial converse} gives an affirmative answer to Problem \ref{Pr: joint ergodicity problem} for a large class of Hardy sequences, which includes all fractional polynomials (with any nonnegative exponents, including positive integer ones).
\begin{corollary}[{\cite[Corollary 1.8]{DKKST25}}]\label{C: DKKST classification}
    Let $a_1, \ldots, a_\ell\in\CH$ be Hardy sequences with the property that for every $i\neq j$ and $c_i, c_j\in\R$ the function $c_i a_i - c_j a_j$ is either an almost rational polynomial or it stays logarithmically away from rational polynomials.    
    Then the sequences are jointly ergodic for a system if and only  if they satisfy the product and difference ergodicity conditions for the system. 
\end{corollary}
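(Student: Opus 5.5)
The corollary follows by combining Theorem \ref{T: DKKST sufficient condition} with Theorem \ref{T: DKKST partial converse}; the only work is checking that the hypotheses line up.

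For the ``if'' direction, suppose $a_1, \ldots, a_\ell\in\CH$ satisfy the product and difference ergodicity conditions for a system $(X, \CX, \mu,$\! $T_1, \ldots, T_\ell)$. Theorem \ref{T: DKKST sufficient condition} then gives joint ergodicity for that system directly. This direction uses no assumption on the sequences beyond membership in $\CH$, so the hypothesis on $c_ia_i - c_ja_j$ plays no role here.

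For the ``only if'' direction, suppose the sequences are jointly ergodic for the system. The first part of Theorem \ref{T: DKKST partial converse} gives the product ergodicity condition unconditionally. For the difference ergodicity condition we invoke the second part of Theorem \ref{T: DKKST partial converse}. Its hypothesis requires that, for all $i\neq j$ and $c_i, c_j\in\R$, the function $c_ia_i - c_ja_j$ is either an almost rational polynomial or stays logarithmically away from $\Q[x]$. This is exactly the assumption of the corollary, since ``stays logarithmically away from rational polynomials'' means $c_ia_i - c_ja_j - p\succ\log$ for every $p\in\Q[t]$, as in \eqref{E: lafrp} with two nonzero coefficients. The only step requiring care is this bookkeeping: the corollary and the theorem use slightly different phrasings, and the sign convention $c_ia_i - c_ja_j$ versus $c_ia_i + c_ja_j$ is harmless because $c_j$ ranges over all of $\R$. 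Hence the difference ergodicity condition holds.

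Finally, to justify the remark that all fractional polynomials are covered, let $a_j(t) = \sum_i \alpha_{ji}t^{b_i}$ with $b_i\geq 0$. Any combination $b = c_ia_i - c_ja_j$ is again such a fractional polynomial. Write $b$ as $q + r$, where $q$ collects the integer-power terms and $r$ collects the noninteger-power terms.

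If $r\neq 0$, its leading term $\gamma t^{\beta}$ with $\beta\notin\N_0$, $\beta>0$, cannot be cancelled by any polynomial, so $|b-p|\gg t^{\min(\beta,\cdot)}\succ\log$ for every $p\in\Q[t]$. Some care is needed when $r$ is dominated by $q$; in that case we consider the subcases below for $q - p$.

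If $r=0$, then $b = q\in\R[t]$. Either $q\in\Q[t]+\R$, in which case $b$ is an almost rational polynomial, or some nonconstant coefficient is irrational, in which case $|q-p|\gg t\succ\log$ for every $p\in\Q[t]$.

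When $r\neq 0$ and $q$ is arbitrary, the same dichotomy applied to $q-p$ shows that $|b-p|$ is eventually at least $|r|\succ\log$, since $r$ has a positive noninteger power. So the dichotomy holds in every case.
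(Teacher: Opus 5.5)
Your proof is correct and matches the paper: the corollary is obtained exactly by combining Theorem \ref{T: DKKST sufficient condition} for the ``if'' direction with parts (i) and (ii) of Theorem \ref{T: DKKST partial converse} for the ``only if'' direction, whose extra hypothesis is verbatim the corollary's assumption. The fractional-polynomial digression is not needed for the statement, and its claim that $|b-p|$ is eventually at least $|r|$ is false as written (take $b(t)=t^{1/2}$, $p=1$); it should read $|b-p|\gg|r|$, which holds because $q-p$ and $r$ have distinct exponents and so cannot cancel.
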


Unexpectedly, Problem \ref{Pr: joint ergodicity problem} has a negative answer for Hardy sequences not covered by Corollary \ref{C: DKKST classification}. 
\begin{theorem}[{\cite[Theorem 1.10]{DKKST25}}]\label{T: DKKST counterexample to classification problem}
    Let $(X, \CX, \mu, T)$ be a system. Then $(T^n, T^{n+\floor{\log_2 n}})_n$ is jointly ergodic if and only if $T$ is mixing. By contrast, $(T^{\floor{\log_2 n}})_n$ is ergodic if and only the system is conjugate to a one-point system. Consequently, if $T$ is mixing but not conjugate to a one-point system, then $(T^n, T^{n+\floor{\log_2 n}})_n$ is jointly ergodic even though $(T^{\floor{\log_2 n}})_n$ is not ergodic.
\end{theorem}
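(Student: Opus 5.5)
The plan is to reduce both directions of the first claim to the behaviour of the correlations $\langle T^k f_2,\overline{f_1}\rangle$ along the dyadic blocks on which $\floor{\log_2 n}$ is constant. The second claim, that $(T^{\floor{\log_2 n}})_n$ is ergodic only for systems conjugate to a one-point system, was already proved in the example following Problem \ref{Pr: joint ergodicity equivalent to equidistribution}. So only the joint ergodicity of $(T^n, T^{n+\floor{\log_2 n}})_n$ needs an argument, and the final sentence is then a formal consequence of the two claims.

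\emph{Necessity of mixing.} Assume joint ergodicity. Integrating the average and using measure invariance gives
\begin{align*}
\E_{n\in[N]}\int f_1\cdot T^{\floor{\log_2 n}}f_2\; d\mu\to \int f_1\; d\mu\int f_2\; d\mu.
\end{align*}
Set $c_j:=\int f_1\cdot T^j f_2\; d\mu$ and $c:=\int f_1\, d\mu\int f_2\, d\mu$. Restricting to $N=2^{k+1}-1$, we have $S_k:=\sum_{j\le k}2^j c_j=2^{k+1}(c+o(1))$. Taking consecutive differences gives
\begin{align*}
2^k c_k=S_k-S_{k-1}=2^{k+1}c-2^k c+o(2^k),
\end{align*}
so $c_k\to c$. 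Since $f_1,f_2\in L^\infty(\mu)$ are arbitrary, this is exactly mixing of $T$.

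\emph{Sufficiency of mixing.} Assume $T$ is mixing, and write $\floor{\log_2 n}=k$ on $I_k:=[2^k,2^{k+1})$. On $I_k$ the summand is $T^n g_k$ with $g_k:=f_1\cdot T^k f_2$. Write $g_k=c+h_k$ with $c=\int f_1\,d\mu\int f_2\,d\mu$. Since $[N]$ is a union of complete blocks and one partial block with convex weights, it suffices to show the following: for every $\veps>0$, $\norm{\E_{n\in J}T^n h_k}_{L^2(\mu)}\to 0$ uniformly over intervals $J\subseteq I_k$ with $|J|\ge\veps 2^k$, as $k\to\infty$. Intervals shorter than $\veps N$ contribute at most $O(\veps)$, and blocks with bounded $k$ are negligible.

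For this I will use the van der Corput inequality, which gives for every $H\le |J|$
\begin{align*}
\norm{\E_{n\in J}T^n h_k}_{L^2(\mu)}^2\ll \E_{|m|<H}\abs{\langle T^m h_k,h_k\rangle}+O(H/|J|).
\end{align*}
For fixed $m$, we expand
\begin{align*}
\langle T^m h_k,h_k\rangle=\int (T^m f_1\,\overline{f_1})\cdot T^k(T^mf_2\,\overline{f_2})\,d\mu-|c|^2 .
\end{align*}
By mixing applied as $k\to\infty$, this converges to $\int T^mf_1\overline{f_1}\,d\mu\int T^mf_2\overline{f_2}\,d\mu-|c|^2$. Applying mixing again as $|m|\to\infty$, this expression tends to $0$. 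Hence we first choose $H$ large so the $m$-average of the limit is small, and then let $k\to\infty$ with $H$ fixed. Since $|J|\ge\veps2^k\to\infty$, the error term $O(H/|J|)$ also vanishes. This gives norm convergence of the whole average to $c$.

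I expect the main obstacle to be the order of limits in the sufficiency step. Mixing only gives convergence for each fixed $m$, so $H$ must be fixed before sending $k\to\infty$. This is legitimate only because the block lengths $2^k$ grow, so that $O(H/|J|)\to0$. The partial final block, and the fact that the absolute values inside the van der Corput bound are handled termwise, need a little care but are routine.
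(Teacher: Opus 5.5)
Your proof is correct. The survey does not prove the mixing characterization itself; it quotes it from \cite{DKKST25}. The only argument in the text is the one for the second claim, in the example after Problem \ref{Pr: joint ergodicity equivalent to equidistribution}, which you reuse. Your necessity step is the same dyadic block-differencing, applied to the correlations $c_k=\int f_1\cdot T^kf_2\,d\mu$ instead of to $T^kf$. For sufficiency you therefore supply a self-contained argument, and it is sound. On $I_k$ the summand is $T^n g_k$ for a single function $g_k$, so the van der Corput correlations $\langle T^mh_k,h_k\rangle$ do not depend on $n$. Fixing $H$ before letting $k\to\infty$ is exactly what handles the fact that mixing gives no uniformity in $k$. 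The survey's criteria would not shortcut this: for weakly mixing $T$ the equidistribution condition is vacuous and $\nnorm{f}_{s,T}=\abs{\int f\,d\mu}$, so Host-Kra seminorm control is already the full statement.

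Two small remarks. First, your displayed formula for $\langle T^mh_k,h_k\rangle$ is not an identity, since $\int h_k\,d\mu=c_k-c$ need not vanish. The exact expansion carries the extra term $2|c|^2-2\Re(\overline{c}\,c_k)$, which tends to $0$ as $k\to\infty$ by mixing, so your limits are unaffected.

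Second, you can drop the bookkeeping with partial blocks and uniformity over $J\subseteq I_k$ by applying Lemma \ref{L: vdC} directly to $v_n:=T^nh_{\floor{\log_2 n}}$. For fixed $h$, $\langle v_n,v_{n+h}\rangle=\overline{\langle T^hh_k,h_k\rangle}$ whenever $n,n+h\in I_k$. This fails for only $O(h\log N)$ values $n\le N$, so the inner $\limsup_N$ equals $\abs{\langle T^hf_1,f_1\rangle\langle T^hf_2,f_2\rangle-|c|^2}$. By mixing, this tends to $0$ as $h\to\infty$.
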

%What is particularly interesting about Theorem \ref{T: DKKST counterexample to classification problem} is that - to the author's knowledge - it is a unique result in the field of multiple ergodic averages where imposing a stronger assumption than weak mixing changes the rules of the game. In all the other results, there is no advantage in considering mixing, Bernoulli, or one-point systems compared to weak mixing systems.

The issue that Theorem \ref{T: DKKST counterexample to classification problem} exploits is that joint ergodicity for a given system need not imply the difference ergodicity condition for this system because the ergodic average along $(T_i^{a_i(n)}T_j^{-a_j(n)})_n$ may not converge in norm. However, it is always the case that the joint ergodicity of $(T_1^{a_1(n)}, \ldots, T_\ell^{a_\ell(n)})_n$ implies that 
\begin{align}
    \E_{n\in[N]}T_i^{a_i(n)}T_j^{-a_j(n)} f
\end{align} 
converges weakly to $\int f\; d\mu$ for any $f\in L^\infty(\mu)$ since
\begin{align*}
    \lim_{N\to\infty}\E_{n\in[N]}\int g\cdot T_i^{a_i(n)}T_j^{-a_j(n)} f\; d\mu &= \lim_{N\to\infty}\E_{n\in[N]}\int T_j^{a_j(n)}g\cdot T_i^{a_i(n)} f\; d\mu\\
    &= \int g\; d\mu \cdot \int f\; d\mu
\end{align*}
by joint ergodicity. In the light of that, \cite{DKKST25} proposes two variants of Problem \ref{Pr: joint ergodicity problem} that rely on the weak convergence version of the notions used in Problem \ref{Pr: joint ergodicity problem}. 
\begin{problem}[{\cite[Question 8.3]{DKKST25}}]\label{Pr: mending joint ergodicity problem 1}
    Does Problem \ref{Pr: joint ergodicity problem} admit affirmative answer for all $a_1, \ldots, a_\ell\in\CH$ if we define the difference ergodicity condition in terms of weak rather than strong ergodicity?
\end{problem}

\begin{problem}[{\cite[Question 8.4]{DKKST25}}]\label{Pr: mending joint ergodicity problem 2}
    Does Problem \ref{Pr: joint ergodicity problem} admit affirmative answer for all $a_1, \ldots, a_\ell\in\CH$ if joint ergodicity as well as product and difference ergodicity conditions are all defined in terms of weak rather than strong limits?
\end{problem}

%We remark that question \ref{c2} can be very difficult, as it implies that 2-mixing implies 3-mixing (see the following proposition).
Problem \ref{Pr: mending joint ergodicity problem 1} can be very difficult; a positive answer would give that 2-mixing implies a property resembling 3-mixing: see \cite[Proposition 8.5]{DKKST25}.

Another possible way to fix Problem \ref{Pr: joint ergodicity problem} for Hardy sequences would be to consider weighted averaging schemes as in Section \ref{SSS: weighted averages}. 
\begin{problem}[{\cite[Conjecture 8.8]{DKKST25}}]
    Fix $a_1, \ldots, a_\ell\in\CH$. Can we always find $1 \prec W(t) \ll t$ such that Problem \ref{Pr: joint ergodicity problem} admits affirmative answer for $a_1, \ldots, a_\ell$ if we define joint ergodicity as well as product and difference ergodicity conditions using $W$-averaging schemes?
    %Does Problem \ref{Pr: joint ergodicity problem} admit affirmative answer for all $a_1, \ldots, a_\ell\in\CH$ if joint ergodicity as well as product and difference ergodicity conditions are all defined for $W$-averaging schemes for some $1 \prec W(t) \ll t$?
\end{problem}
We invite the reader interested in possible fixes to Problem \ref{Pr: joint ergodicity problem} to consult \cite[Section 8]{DKKST25}, which provides an ample discussion of three questions listed above. 

\subsection{Generalized polynomials}
Problem \ref{Pr: joint ergodicity problem} can be posed for other classes of sequences. In this survey, we restrict our attention to one more such class: generalized polynomials, defined in Section \ref{SS: generalized polynomials}
\begin{problem}\label{Pr: joint ergodicity problem for generalized polys}
    Resolve Problem \ref{Pr: joint ergodicity problem} for generalized polynomials. 
\end{problem}

The reason we discuss this particular class is that the following special case of Problem \ref{Pr: joint ergodicity problem for generalized polys} has been resolved by Bergelson, Leibman, and Son. %\cite{BLS16}.
\begin{theorem}[{\cite[Theorem 0.4]{BLS16}}]\label{T: BLS generalized linear}
    Problem \ref{Pr: joint ergodicity problem} admits the affirmative answer for generalized linear functions. 
\end{theorem}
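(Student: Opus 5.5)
The plan is to prove the two directions separately, handling the nontrivial one with the New Joint Ergodicity Strategy and Theorem~\ref{T: joint ergodicity criteria}. Throughout I would use the structure of generalized linear functions from \cite{BL07}. Each $a_j$ can be written as $a_j(n)=\lambda_j n+\psi_j(n\boldsymbol{\theta})$, where $\boldsymbol{\theta}\in\T^d$ is a common rotation vector and $\psi_j$ is a bounded, piecewise affine, Riemann-integrable function on $\T^d$. In particular, for each fixed $h$, the difference $a_j(n+h)-a_j(n)$ takes finitely many values, determined by which cell of a fixed finite partition of $\T^d$ contains $n\boldsymbol\theta$.

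\emph{Necessity.} Suppose $(T_1^{a_1(n)},\ldots,T_\ell^{a_\ell(n)})_n$ is jointly ergodic.
\begin{itemize}
\item Testing \eqref{E: joint ergodicity} against $f_1\otimes\cdots\otimes f_\ell$ gives ergodicity of the product action on tensor products, hence on all of $L^2(\mu^\ell)$. This yields the product ergodicity condition.
\item For the difference ergodicity condition, the computation after Theorem~\ref{T: DKKST counterexample to classification problem} gives only weak ergodicity of $(T_i^{a_i(n)}T_j^{-a_j(n)})_n$. I would upgrade this to strong ergodicity using the fact that, for generalized linear iterates, ergodic averages along a single sequence converge in norm. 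This follows from the spectral theorem together with the equidistribution of $(n\boldsymbol\theta, n\beta)$ on subtori and the Riemann integrability of the $\psi_j$. The strong limit must then coincide with the weak limit $\int f\,d\mu$.
\end{itemize}
This rules out the $\log_2$-type pathology of Theorem~\ref{T: DKKST counterexample to classification problem}.

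\emph{Sufficiency, equidistribution.} By Theorem~\ref{T: joint ergodicity criteria}, it is enough to show Host--Kra seminorm control and good equidistribution for the system. For equidistribution, I would mimic the proof of Theorem~\ref{T: BB characterization 2}. The eigenfunction averages reduce to
\begin{align*}
\E_{n\in[N]} e\Big(\textstyle\sum_j \beta_j a_j(n)\Big)\, \chi_1\otimes\cdots\otimes\chi_\ell ,
\end{align*}
with $\beta_j\in\Spec(T_j)$, or their nonergodic analogues with eigenvalues depending on the invariant factor. Each such exponential sum is a Riemann-integrable function evaluated along the rotation orbit $(n\boldsymbol\theta, n\boldsymbol\beta)$, so its limit is an integral over the orbit closure. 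This limit vanishes unless the phase is identically invariant, which is precisely the situation that would produce a nonconstant $T_1^{a_1(n)}\times\cdots\times T_\ell^{a_\ell(n)}$-invariant function. Hence product ergodicity implies good equidistribution.

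\emph{Sufficiency, seminorm control.} I would run a PET/van der Corput argument. Applying van der Corput once replaces the iterates by $a_i(n+h)-a_j(n)$. Fixing $h$ and restricting $n$ to a cell of the partition makes all nonleading terms bounded constants. The average is thereby reduced to finitely many linear averages of the form
\begin{align*}
\E_{n} 1_{C}(n\boldsymbol\theta)\,\prod_j T_j^{\lambda_j' n + c_j} f_j .
\end{align*}
I would absorb the weight $1_C(n\boldsymbol\theta)$ by approximating it with trigonometric polynomials, at the cost of twisting by characters. These linear averages are controlled by Host's box seminorm \eqref{E: box seminorm control} in the transformations $T_j^{\lambda_j}$ and their quotients, which here are the generalized-linear sequences $(T_i^{a_i(n)}T_j^{-a_j(n)})_n$ rather than fixed maps. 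Difference ergodicity then collapses the box seminorm to a Host--Kra seminorm $\nnorm{\cdot}_{s,T_j}$, exactly as in the sketch for Theorem~\ref{T: BB characterization 2}.

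\emph{Main obstacle.} The hardest step is this last collapse. The "directions" of the box seminorm are themselves generalized-linear sequences, and the real parts $\lambda_j$ may be irrational. One therefore needs a version of Host's estimate for such sequences, and a proof that ergodicity of the sequence $(T_i^{a_i(n)}T_j^{-a_j(n)})_n$, rather than of a single map, forces the corresponding seminorm factor into $\CZ_{s}(T_j)$. My first attempt would be to pass to the suspension $X\times\T^d$ with the rotation by $\boldsymbol\theta$. There each generalized linear iterate becomes a cocycle over a common rotation, and the argument of Theorem~\ref{T: BB characterization 2} applies to the resulting linear actions.
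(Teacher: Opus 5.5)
Your treatment of difference ergodicity in the necessity direction is fine: norm convergence of single averages along generalized linear iterates, combined with the weak limit, gives it. Your deduction of good equidistribution is also fine. There are, however, two genuine gaps.

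\textbf{The difficult direction is not established.} Everything hinges on Host--Kra seminorm control, and the reduction you describe for it does not hold.
\begin{enumerate}
\item Restricting $n\theta$ to a cell of the partition makes the differences $a_j(n+h)-a_j(n)$ constant, but not the iterates themselves. The $\psi_j$ are piecewise affine, not piecewise constant. For instance, $\lfloor\alpha n\rfloor=\alpha n-\{\alpha n\}$ is not of the form $\lambda' n+c$ with $\lambda'\in\Z$ on any cell, and $T_j^{\lambda' n}$ has no meaning for irrational $\lambda'$.
\item What the van der Corput step actually produces, after composing with $T_1^{-a_1(n)}$, is a Bohr-weighted average of the same type with one function fewer. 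Its iterates are the $\Z^\ell$-valued generalized linear sequences $a_j(n)\be_j-a_1(n)\be_1$. Iterating, you never reach an average to which Host's estimate \eqref{E: box seminorm control} applies, so there is no box seminorm with fixed directions to collapse.
\item Once the cell indicators are expanded into characters, the one-term averages at the bottom of the induction are twisted by $e(n\,k\cdot\theta)$. The difference ergodicity condition concerns only the untwisted averages along $(T_i^{a_i(n)}T_j^{-a_j(n)})_n$, so it says nothing directly about these.
\end{enumerate}
Turning the product and difference ergodicity conditions into control of the average is the entire content of the difficult direction, and the proposal leaves it open.

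The suspension you suggest does not close this gap. The lifts $S_j(x,s)=(T_j^{\Phi_j(s)}x,s+\theta)$ share the same base rotation and need not commute. Moreover, $S_iS_j^{-1}(x,s)=(T_i^{\Phi_i(s-\theta)}T_j^{-\Phi_j(s-\theta)}x,\,s)$ fixes the $\T^d$-coordinate. So the lifted tuple never satisfies the difference ergodicity condition, and Theorem \ref{T: BB characterization} cannot be applied to it. Also, your sequences correspond to the single fiber $s=0$, a null set, so even a statement on the lift would need an extra approximation argument to descend to $X$.

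\textbf{Necessity of product ergodicity is not justified.} Joint ergodicity is a statement about the pointwise products $\prod_j T_j^{a_j(n)}f_j$ on $X$. Product ergodicity concerns $\E_{n\in[N]}\bigotimes_j T_j^{a_j(n)}f_j$ on $X^\ell$, whose squared norm is the average over $n,m$ of $\prod_j\int T_j^{a_j(n)-a_j(m)}f_j\cdot\overline{f_j}\,d\mu$. These are different correlations, so you cannot ``test'' the former on tensors to obtain the latter. The implication needs an argument; for Hardy sequences it is Theorem \ref{T: DKKST partial converse}(i).

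For generalized linear iterates you can supply it with the spectral theorem:
\begin{enumerate}
\item Take representatives $t_j\in[0,1)$. The limit of $\E_{n\in[N]}e(\sum_j a_j(n)t_j)$ exists and vanishes unless $\sum_j\lambda_jt_j\in\Q+\Q\theta_1+\cdots+\Q\theta_d$. This exceptional set is a countable union of hyperplanes.
\item By Fubini, when $\lambda_j\neq0$ these hyperplanes carry no mass as soon as one spectral measure $\sigma_{f_j}$ is continuous. Hence only eigenfunctions contribute, and on eigenfunctions joint ergodicity gives the required vanishing.
\item If $\lambda_j=0$, ergodicity of $(T_j^{a_j(n)})_n$ forces $L^2(\mu)$ to be finite-dimensional, so there is no continuous spectrum to handle.
\end{enumerate}

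\textbf{A smaller point on the equidistribution step.} The limit of the exponential sum can be nonzero without the phase being trivial, so the dichotomy you state is false. The correct, and shorter, reason is that product ergodicity applied to $\chi_1\otimes\cdots\otimes\chi_\ell$ directly forces these limits to vanish.
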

\emph{Generalized linear functions} are precisely what one thinks: they are functions constructed from applying finitely many times the operations of addition, scalar multiplication, and taking integer parts to linear functions $\alpha t + \beta$. Examples include
\begin{align*}
    \alpha_1\floor{\alpha_2 t + \alpha_3} + \alpha_4 t + \alpha_5,\; \rem{\alpha_1 t} + \floor{\alpha_2 t}, \; \alpha_1\floor{\alpha_2\floor{\alpha_3 t + \alpha_4}+\alpha_5}+\alpha_6
\end{align*}
for $\alpha_j\in\R$. We do emphasize, however, that generalized linear functions are a very particular subclass of generalized polynomials, and it is not clear how much of the intuition behind Theorem \ref{T: BLS generalized linear} extends to arbitrary generalized polynomials.

\section{Technical breakthroughs}\label{S: technical breakthroughs}
This section aims to highlight the main ideas behind two major technical breakthroughs that lie at the foundation of recent joint ergodicity advances:
\begin{enumerate}
    \item the \emph{degree lowering} argument used to prove the difficult direction in Theorem \ref{T: joint ergodicity criteria};
    \item the new techniques to prove {Host-Kra seminorm estimates} for multiple ergodic averages of commuting transformations, which involve a refined version of the \emph{PET induction scheme}, (qualitative and quantitative) \emph{concatenation} arguments for box seminorms and factors, and the \emph{seminorm smoothing} technique that upgrades box seminorm control of an average to one by Host-Kra seminorms.
\end{enumerate}

\subsection{Degree lowering}\label{S: degree lowering proof}

In this subsection, we will outline the proof of the difficult direction of Theorem \ref{T: invariant control criteria} via degree lowering. %This is one of two existing expositions of degree lowering in ergodic theory. 
Given its complexity, it is impossible to outline the argument by giving equal importance to its every aspect. In this exposition, we focus on the complex induction scheme that underpins the argument and the idea of ``transferring seminorm control'' lying at its base. At the same time, we try to sweep under the carpet various (non-trivial and highly ingenious) technical maneuvers such as dual-difference interchange and removing low-complexity functions. 
%an exposition has to choose to which aspects to assign importance and which to play down. 
For an alternative take on degree lowering, which gives comparatively more space to technical tricks and less to induction in the more concrete case of the Furstenberg-Weiss average
\begin{align*}
    \lim_{N\to\infty}\E_{n\in[N]}T^n f_1 \cdot T^{n^2}f_2,
\end{align*}
we recommend the exposition of Frantzikinakis \cite[Section 2]{Fr21}.

Theorem \ref{T: invariant control criteria} is a local result, in the sense that it holds for fixed integer sequences and system. Therefore, for the entirety of this section, we fix sequences $a_1,\ldots,a_\ell\colon \N\to \Z$ and a system $(X, \CX, \mu,$\! $T_1, \ldots, T_\ell)$. We thus restate the statement whose proof we are about to sketch.
\begin{theorem}[The difficult direction of Theorem \ref{T: invariant control criteria}]\label{T: degree lowering}
        Suppose that the sequences $a_1,\ldots,a_\ell$ admit Host-Kra seminorm control and are good for equidistribution for the system $(X, \CX, \mu,$\! $T_1, \ldots, T_\ell)$. Then they are controlled by the invariant factor for the system, i.e.     \begin{align}\label{E: weak joint ergodicity restated}
        \lim_{N\to\infty}\norm{\E_{n\in[N]}\prod_{j=1}^\ell T_j^{{a_{j}(n)}}f_j - \prod_{j=1}^\ell \E(f_j|\CI(T_j))}_{L^2(\mu)} = 0
    \end{align}
    holds for all $f_1, \ldots, f_\ell\in L^\infty(\mu)$.
\end{theorem}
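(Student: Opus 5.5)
We argue by degree lowering, via a nested induction. Set up the family of statements $P_m(s)$: for every $1\le m\le \ell$ and $s\in\N$, if $f_{m+1},\ldots,f_\ell$ are nonergodic eigenfunctions of $T_{m+1},\ldots,T_\ell$ and $\nnorm{f_m}_{s,T_m}=0$, then the average vanishes in $L^2(\mu)$. The hypothesis of Host-Kra seminorm control gives $P_m(s)$ for some large $s$ and all $m$. The goal is to push $s$ down to $1$ for $m=\ell$, then for $m=\ell-1$, and so on. Since $\nnorm{f}_{1,T}=0$ iff $\E(f|\CI(T))=0$, the statement $P_m(1)$ says that $f_m$ may be replaced by $\E(f_m|\CI(T_m))$ when the later functions are eigenfunctions.

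Once all $P_m(1)$ are known, we conclude by downward telescoping. First, decompose each $f_j$ into its $\CZ_{s-1}(T_j)$ part plus a remainder; the remainder contributes nothing. Next, approximate $\CZ_1$-measurable pieces by linear combinations of nonergodic eigenfunctions and use good equidistribution. The cleanest order is: replace $f_1$ by $\E(f_1|\CI(T_1))$, which is $T_1$-invariant and hence leaves the average; then treat $f_2$, and so on. The eigenfunction restriction on later functions is handled by processing from index $\ell$ backwards, with invariant functions factored out. At the bottom, good equidistribution on eigenfunctions identifies the limit as $\prod_j\E(f_j|\CI(T_j))$.

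The core step is showing $P_m(s)\Rightarrow P_m(s-1)$ for $s\ge 2$. Suppose $\nnorm{f_m}_{s-1,T_m}=0$ but the average does not vanish. By a standard Hilbert-space and Cauchy–Schwarz argument, the average correlates with a bounded function $g$, and $f_m$ can be replaced by the dual function
\[
\E_{n\in[N]} T_m^{-a_m(n)}\Bigl(\bar g\prod_{j\ne m}T_j^{a_j(n)}\bar f_j\Bigr).
\]
The $P_m(s)$ control then forces this dual function to have large $\nnorm{\cdot}_{s,T_m}$. Using the inductive formula
\[
\nnorm{F}_{s,T}^{2^s}=\lim_H\E_{h}\nnorm{\Delta_h F}_{s-1,T}^{2^{s-1}},
\]
together with the fact that $\CZ_{s-1}$-type obstructions of differences of a dual are governed by $\CZ_1$ data (the ``dual-difference interchange''), we extract for many $h$ a nonergodic eigenfunction $\chi_h$ of $T_m$ correlating with $\Delta_h$ of the dual function. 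Unwinding this, and removing the low-complexity factors, we obtain an average of the same shape in which $f_m$ is replaced by a product of $f_m$-shifts twisted by the eigenfunctions $\chi_h$. An eigenfunction $\chi_h$ acted on by $T_m^{a_m(n)}$ is simply multiplied by $e(a_m(n)\beta_h)$, with $\beta_h$ possibly $x$-dependent in the nonergodic case. These phases can then be absorbed by the eigenfunction slots $m+1,\ldots,\ell$ or into $g$, so the configuration fits the form covered by the statement at level $s-1$.

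To close the induction at $s=1$ for index $m$, we use good equidistribution. The eigenfunction twists average out unless the resulting combined eigenvalue vanishes, and in that case $P_m(1)$ follows from the equidistribution property applied to the tuple $(\chi_m,\chi_{m+1},\ldots,\chi_\ell)$.

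The main obstacle is the dual-difference interchange in the nonergodic commuting setting. Lacking a common ergodic decomposition, the eigenvalues are measurable functions invariant only under $T_m$, so absorbing the phases into the other slots requires the structure theory of nonergodic eigenfunctions from the appendix, plus a measurable selection of $\chi_h$ uniform in $h$. I would first try it with $\CZ_1(T_m)$ described via $\CI(T_m)$-measurable eigenvalues, to see whether the absorbed phases stay within the eigenfunction class.
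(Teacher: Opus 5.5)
Your core step $P_m(s)\Rightarrow P_m(s-1)$ has a genuine gap. For $s\ge 3$ your argument uses nothing beyond $P_m(s)$ itself, since equidistribution only enters at $s=1$. It would therefore prove an implication that is false in general.

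Take $T_1=T_2=T_3=T$ to be the skew product $T(x,y)=(x+\alpha,y+x)$ on $\T^2$ with $\alpha$ irrational, and $a_j(n)=jn$. Then $P_3(3)$ holds by Host--Kra. Now take $f_1=e(3y)$, $f_2=e(-3y)$, $f_3=e(y)$. The coefficient of $nx$ is $3-6+3=0$, and the coefficient of $\alpha$ is $3\binom{n}{2}-3\binom{2n}{2}+\binom{3n}{2}=0$. Hence $T^{n}f_1\cdot T^{2n}f_2\cdot T^{3n}f_3=e(y)$ for every $n$, while $\nnorm{e(y)}_{2,T}=0$, so $P_3(2)$ fails.

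The mechanism you propose for the step also does not work:
\begin{itemize}
\item A phase $e(a_m(n)\beta_h)$ cannot be absorbed into a slot $j>m$, because a nonergodic eigenfunction of $T_j$ sitting there only generates phases in $a_j(n)$.
\item It cannot be absorbed into $g$ either, since $g$ does not depend on $n$.
\item Saying the new configuration ``fits the form covered at level $s-1$'' presupposes what you are trying to prove.
\item To reach the degree-2 inverse theorem you need $s-2$ differencings, not one, and ``obstructions of differences of a dual are governed by $\CZ_1$ data'' is not true as stated.
\end{itemize}

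The missing idea is that every degree-lowering step must consume the lower-complexity statement. This is the full identity for averages in which slots $m,\ldots,\ell$ carry nonergodic eigenfunctions and $f_1,\ldots,f_{m-1}$ are arbitrary: in the paper's notation Property $(P_{m-1})$, built up from $(P_0)$, which is good equidistribution.

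In the paper, this hypothesis is used as follows. After stashing $f_m$ into $F_m$, differencing, extracting eigenfunctions $\chi_{m,h}$ from the degree-2 inverse theorem, and performing the dual--difference interchange, one obtains an average in which slot $m$ itself holds the nonergodic eigenfunction $T_m^{-h'}(\chi_{m,h}\overline{\chi_{m,h'}})1_{U_{h,h'}}$. Applying $(P_{m-1})$ forces its projection onto $\CI(T_m)$ to be nonzero. This means $\chi_{m,h}=\chi\cdot\lambda_h$ with $T_m$-invariant $\lambda_h$ on large invariant sets. That shared structure is what lets one strip off the eigenfunctions and conclude $\nnorm{F_m}_{s-1,T_m}>0$.

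This has three consequences for your plan:
\begin{itemize}
\item The induction on $m$ must run upward: index $1$ first, fed by equidistribution, then $2,\ldots,\ell$. Your order, starting at $m=\ell$, is reversed, because lowering at index $\ell$ already needs the identity with an eigenfunction in slot $\ell$ and arbitrary earlier functions.
\item For the same reason, your closing step at $s=1$ cannot invoke good equidistribution directly, since slots $1,\ldots,m-1$ are still arbitrary.
\item Degree lowering yields control by $\nnorm{F_m}_{1,T_m}$ of the stashed function, not by $\nnorm{f_m}_{s-1,T_m}$. You therefore also need the paper's transfer steps. First pass to $f_j$, $j\neq m$, using the $T_m$-invariance of $\E(F_m|\CI(T_m))$ together with $(P_{m-1})$. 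Then pass back to $f_m$.
\end{itemize}
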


\subsubsection{Induction scheme}
The proof of Theorem \ref{T: invariant control criteria} follows a double induction argument. In the outer layer of induction, we induct on the parameter $0\leq m \leq \ell$: it is the complexity of the average, measuring the number of functions in the average that are not of a ``simple'' form. The inner induction scheme, relegated to the next subsection, deals with successively lowering the degree of the Host-Kra seminorm that controls our average.

The inductive property that we derive in the outer induction scheme is as follows. 
\begin{definition}\label{D: P_m}
    Let $0\leq m\leq \ell$. We say that Property $(P_m)$ holds if the identity \eqref{E: weak joint ergodicity restated} holds whenever $f_{m+1}, \ldots, f_\ell$ are nonergodic eigenfunctions of $T_{m+1}, \ldots, T_\ell$ respectively.
    %$f_{m+1}\in\CE(T_{m+1}), \ldots, f_\ell\in \CE(T_\ell)$, 
\end{definition}
By definition, Property $(P_0)$ corresponds to the case when all functions are nonergodic eigenfunctions. It is tantamount to the property of being good for equidistribution, and so it holds by assumption. This fixes the base case. Theorem \ref{T: degree lowering} is equivalent to property $(P_\ell)$. 

In the outer inductive step, we thus need to consider averages in which some of the functions are nonergodic eigenfunctions, and hence they are much simpler than arbitrary functions in $L^\infty(\mu)$. Therefore, the inductive step will use the following Host-Kra seminorm control assumption. It is strictly weaker than the assumption stated in Theorem \ref{T: invariant control criteria} and \ref{T: degree lowering}; but it is all that we need (see the discussion below Definition \ref{D: seminorm control}).
\begin{definition}[Weak Host-Kra seminorm control]\label{D: weak control}
    We say that $a_1, \ldots, a_\ell$ \emph{admit weak (Host-Kra) seminorm control} for the system if there exists $s\in\N$ such that for all $1\leq m\leq \ell$ and $f_1, \ldots, f_\ell\in L^\infty(\mu)$ with $f_{m+1}, \ldots, f_\ell$ being nonergodic eigenfunctions of $T_{m+1}, \ldots, T_\ell$ respectively, we have
    \begin{align*}
        \lim_{N\to\infty}\norm{\E_{n\in[N]}\prod_{j=1}^\ell T_j^{{a_{j}(n)}}f_j}_{L^2(\mu)} = 0
    \end{align*}
    whenever $\nnorm{f_m}_{s, T_m} = 0$. 
\end{definition}
Note that the order of sequences matters in Definition \ref{D: weak control}. In applications, we order them according to the increasing ``degree'' (whatever that means in a particular context), so that $a_j$ has no greater ``degree'' than $a_{j+1}$.
\begin{example}
Weak seminorm control is easier to verify than full Host-Kra seminorm control because nonergodic eigenfunctions quickly disappear in the analytic arguments used to establish seminorm control. To illustrate this point, we 
    %To see why weak seminorm control is easier to establish than full Host-Kra seminorm control, 
    consider the average
    \begin{align*}
        \E_{n\in[N]}T_1^n f_1\cdot T_2^{n^2}f_2.
    \end{align*}
    If $f_2$ is a nonergodic eigenfunction of $T_2$, then two applications of the van der Corput inequality (Lemma \ref{L: vdC}) followed by the Cauchy-Schwarz inequality eliminate $T_2^{n^2}f_2$ and allow us to control its $L^2(\mu)$ limit by $\nnorm{f_1}_{3, T_1}$. By contrast, without any assumption of $f_2$, we need to first use the PET induction scheme to control the average by $\nnorm{f_2}_{3, T_2}$ (see Example \ref{Ex: PET n, n^2} for the details), then use the nonergodic form of the Host-Kra structure theorem to replace $f_2$ by a function $f_2'$ such that $(f_2'(T_2^n x))_n$ is a 2-step nilsequence pointwise a.e., and then use either multiple applications of the van der Corput lemma or equidistribution results on nilsystems to deduce $\nnorm{f_1}_{s, T_1}$ control for some $s\in\N$. 
\end{example}

\subsubsection{Inductive step: reduction to the degree lowering property}\label{SS: reduction to degree lowering}
We now present the outer inductive step. For the simplicity of exposition, we assume $\ell=2$, and we shall derive Property $(P_2)$ from Property $(P_1)$. This case captures all the difficulties of the general case while allowing for a much cleaner presentation.
%We also restrict this exposition to the case when $T_1, T_2$ are ergodic; the nonergodic setting requires additional nontrivial maneuvers that obfuscate the main idea. 
Then Property $(P_1)$, which we shall invoke inductively, reduces to the pleasant statement that 
\begin{align*}
%\lim_{N\to\infty} \norm{\E_{n\in[N]} T_1^{a_1(n)}f_1 \cdot T_2^{a_1(n)}\chi_2 - \int f_1\; d\mu\cdot \int \chi_2\; d\mu}_{L^2(\mu)} = 0
    \lim_{N\to\infty} \norm{\E_{n\in[N]} T_1^{a_1(n)}f_1 \cdot T_2^{a_1(n)}\chi_2 - \E(f_1|\CI(T_1))\cdot \E(\chi_2|\CI(T_2))}_{L^2(\mu)} = 0
\end{align*}
%whenever $\chi_2$ is an eigenfunction of $T_2$.
whenever $\chi_2$ is a nonergodic eigenfunction of $T_2$.

In the outer inductive step, the goal is to show invariant factor control for averages
\begin{align}\label{E: degree lowering average}
    \limsup_{N\to\infty}\norm{\E_{n\in[N]}T_1^{a_1(n)}f_1 \cdot T_2^{a_1(n)}f_2}_{L^2(\mu)}.
\end{align}
%in which $f_{m+1}, \ldots, f_\ell$ are nonergodic eigenfunctions of $T_{m+1}, \ldots, T_\ell$ respectively. 
The proof consists of the following substeps, each of which is about obtaining sharper seminorm control of the average \eqref{E: degree lowering average}:
%More precisely, the outer inductive step will consist of the following substeps:
\begin{enumerate}
    \item (Weak seminorm control) control the average \eqref{E: degree lowering average} by $\nnorm{f_2}_{s, T_2}$ for some $s\in\N$;
    \item (Transferring control to weakly structured function) control the average \eqref{E: degree lowering average} by $\nnorm{F_2}_{s, T_{2}}$ for a \emph{weakly structured} function $F_2$ defined below;
    \item (Degree lowering) control the average \eqref{E: degree lowering average} by $\nnorm{F_2}_{1, T_{2}}$;
    \item (Transferring optimal control\footnote{In deriving Property $(P_{m+1})$ from $(P_m)$ for general $m,\ell$, at this step we would transfer control to $f_j$ for any $j\neq m+1$.} to $f_1$) control the average \eqref{E: degree lowering average} by $\nnorm{f_1}_{1, T_1}$;
    \item (Extending optimal control to $f_2$) control the average \eqref{E: degree lowering average} by $\nnorm{f_2}_{1, T_{2}}$.
\end{enumerate}
As this roadmap suggests, the derivation of $(P_{2})$ from $(P_1)$ will involve a lot of back-and-forth between the seminorms of different functions. This is an essential feature of both the degree lowering and seminorm smoothing arguments. 

%How do we deduce property $(P_{2})$ from $(P_1)$? As the name \emph{degree lowering} suggests, the main idea is to lower the degree of the seminorm controlling our average. This is indeed what we will do, except that we will replace $f_2$ by a more structured function $F_2$ (note: $2$ is the largest index so that the function appearing in the proof of property $(P_{2})$ can be arbitrary). 

Before we define the weakly structured function $F_2$, we make one standing assumption for the argument in this section:

\emph{\ We assume that for all $f_1, f_2\in L^\infty(\mu)$, the average 
\begin{align*}
    \E_{n\in[N]}T_1^{a_1(n)}f_1 \cdot T_2^{a_1(n)}f_2
\end{align*}
converges in $L^2(\mu)$.
}

This assumption is not needed; the degree lowering arguments from \cite{Fr21, FrKu22a} circumvent it by using limsups in inequalities and subsequential weak limit in the definition of $F_2$ below. However, this assumption allows us to simplify the formula for the weakly structured function $F_2$, thus facilitating the exposition. 

Weak seminorm control, the first step of the roadmap above, is part of the assumptions. We therefore move on to the second step, i.e. to transferring the control from $f_2$ to $F_2$. The starting point for this step is the lemma below, which also gives the definition of $F_2$.
%The replacement of $f_2$ by $F_2$ is performed in the following lemma. In the additive combinatorics literature, this maneuver is called \emph{stashing}, a name due to Manners \cite{Man21}.
\begin{lemma}[Stashing: replacing $f_2$ by a weakly structured function]\label{L: stashing in degree lowering}
    Let $f_1, f_2\in L^\infty(\mu)$ be 1-bounded. 
    Then
    \begin{align*}
        \lim_{N\to\infty}\norm{\E_{n\in[N]}T_1^{a_1(n)}f_1 \cdot T_2^{a_1(n)}f_2}_{L^2(\mu)}^4\leq \lim_{N\to\infty}\norm{\E_{n\in[N]}T_1^{a_1(n)}f_1 \cdot T_2^{a_1(n)}F_2}_{L^2(\mu)},
    \end{align*}
    % If 
    % \begin{align}\label{E: replacing f_2 0}
    %     \lim_{N\to\infty}\norm{\E_{n\in[N]}T_1^{a_1(n)}f_1 \cdot T_2^{a_1(n)}f_2}_{L^2(\mu)}>0,
    % \end{align}
    % then 
    % \begin{align*}%\label{E: replacing f_2 1}
    %             \lim_{N\to\infty}\norm{\E_{n\in[N]}T_1^{a_1(n)}f_1 \cdot T_2^{a_1(n)}F_2}_{L^2(\mu)}>0,
    %     %\lim_{N\to\infty}\norm{\E_{n\in[N]}T_{2}^{a_{2}(n)}F_2\cdot \prod_{\substack{1\leq j\leq \ell\colon\\ j\neq 2}} T_j^{{a_{j}(n)}}f_j}_{L^2(\mu)}>0,
    % \end{align*}
    where\footnote{Without assuming $L^2(\mu)$ convergence, we define $F_2$ as follows. We first take a sequence $(N_k)$ along which the averages $g_k:=\E_{n\in[N_k]}T_1^{a_1(n)}f_1 \cdot T_2^{a_1(n)}f_2$ converge weakly. Then we define $F_2$ as \emph{weak} limit
    \begin{align*}
                   F_2 = \lim_{k\to\infty}\E_{n\in[N_k]}T_2^{-a_2(n)}\overline{g_k}\cdot T_1^{a_1(n)}T_2^{-a_2(n)}\overline{f_1}. 
    \end{align*}
    This turns out to be sufficient for the applications.
    }
    \begin{align}\label{E: structured}
           F_2 := \lim_{N\to\infty}\E_{n\in[N]}T_2^{-a_2(n)}\overline{f_0}\cdot T_1^{a_1(n)}T_2^{-a_2(n)}\overline{f_1} 
       % F_2:=\lim_{N\to\infty} \E_{n\in[N]} \prod_{\substack{0\leq j\leq \ell\colon\\ j\neq 2}} T_j^{{a_{j}(n)}}T_{2}^{-a_{2}(n)}\overline f_j
    \end{align}
    for %some $f_0\in L^\infty(\mu)$.
    \begin{align*}
        f_0:= \lim_{N\to\infty}\E_{n\in[N]}T_1^{a_1(n)}\overline{f_1} \cdot T_2^{a_1(n)}\overline{f_2}.
    \end{align*}
\end{lemma}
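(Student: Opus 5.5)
Throughout I read the averages in the statement as $\E_{n\in[N]}T_1^{a_1(n)}f_1\cdot T_2^{a_2(n)}f_2$, which is what the definition \eqref{E: structured} of $F_2$ suggests; with this reading the argument is a direct ``dual function'' computation. The plan is to prove the chain
\begin{align*}
\norm{g}_{L^2(\mu)}^4 \leq \norm{F_2}_{L^2(\mu)}^2 \leq \lim_{N\to\infty}\norm{\E_{n\in[N]}T_1^{a_1(n)}f_1\cdot T_2^{a_2(n)}F_2}_{L^2(\mu)},
\end{align*}
where $g$ is the $L^2(\mu)$ limit of the average, which exists by the standing assumption. Both inequalities come from the same device: write a squared $L^2$ norm as an inner product, apply $T_2^{-a_2(n)}$ inside the integral using measure preservation, recognise the resulting averaged function, and finish with Cauchy--Schwarz and 1-boundedness.

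\textbf{First inequality.} By definition $f_0=\overline{g}$. It is 1-bounded because it is an $L^2$ limit of averages of 1-bounded functions. First I write
\begin{align*}
\norm{g}_{L^2(\mu)}^2=\lim_{N\to\infty}\E_{n\in[N]}\int T_1^{a_1(n)}f_1\cdot T_2^{a_2(n)}f_2\cdot f_0\,d\mu .
\end{align*}
Next I compose each integrand with $T_2^{-a_2(n)}$. This turns it into $\int f_2\cdot T_2^{-a_2(n)}f_0\cdot T_1^{a_1(n)}T_2^{-a_2(n)}f_1\,d\mu$, and averaging over $n$ gives $\norm{g}_{L^2(\mu)}^2=\int f_2\cdot\overline{F_2}\,d\mu$. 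Since $\abs{f_2}\leq 1$, Cauchy--Schwarz yields $\norm{g}_{L^2(\mu)}^2\leq\norm{F_2}_{L^2(\mu)}$. Squaring gives the first inequality.

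\textbf{Second inequality.} Here I expand $\norm{F_2}_{L^2(\mu)}^2=\int F_2\cdot\overline{F_2}\,d\mu$ and insert the definition of the first factor $F_2$:
\begin{align*}
\norm{F_2}_{L^2(\mu)}^2=\lim_{N\to\infty}\E_{n\in[N]}\int \overline{F_2}\cdot T_2^{-a_2(n)}\overline{f_0}\cdot T_1^{a_1(n)}T_2^{-a_2(n)}\overline{f_1}\,d\mu .
\end{align*}
Composing with $T_2^{a_2(n)}$ turns the integrand into $\overline{f_0}\cdot T_1^{a_1(n)}\overline{f_1}\cdot T_2^{a_2(n)}\overline{F_2}$. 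The result is $\int \overline{f_0}\cdot\overline{h}\,d\mu$, where $h$ is the limit of $\E_{n\in[N]}T_1^{a_1(n)}f_1\cdot T_2^{a_2(n)}F_2$. Since $\abs{f_0}\leq1$, Cauchy--Schwarz gives $\norm{F_2}_{L^2(\mu)}^2\leq\norm{h}_{L^2(\mu)}$. Here $F_2$ is 1-bounded, so the standing assumption applies to it as well.

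\textbf{Main obstacle.} The only delicate point is justifying the limit manipulations. In particular, the average defining $F_2$ is not of the form covered by the standing assumption, so its norm convergence is not given. I would handle this as in the footnote. All I actually use are integrals against a fixed bounded function, so a weak (subsequential) limit defining $F_2$ suffices. Pairing a weakly convergent sequence of averages with a fixed $L^2$ function commutes with the limit, and that is exactly what both steps require. Without the standing assumption, the same computation goes through with $\limsup$ and a common subsequence $(N_k)$.
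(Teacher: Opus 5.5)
Your proposal is correct and is essentially the paper's proof. You first write $\norm{g}_{L^2(\mu)}^2=\int f_2\cdot\overline{F_2}\,d\mu$ by composing with $T_2^{-a_2(n)}$ and apply Cauchy--Schwarz; then you expand $\norm{F_2}_{L^2(\mu)}^2$, compose back with $T_2^{a_2(n)}$, and apply Cauchy--Schwarz again, exactly as the paper does. Your reading of $T_2^{a_1(n)}$ as $T_2^{a_2(n)}$ is the intended one, since the paper's own proof composes with $T_2^{-a_2(n)}$. Your observation that only a weak limit is needed to define $F_2$ is a point the paper addresses only in its footnote.
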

%As complicated as it may look, the average appearing in \eqref{E: replacing f_2 1} is the same as the one appearing in \eqref{E: replacing f_2 0} except that $f_2$ is replaced by $F_2$.
\begin{proof}
    % Set
    % \begin{align*}
    %     f_0:= \lim_{N\to\infty}\E_{n\in[N]}T_1^{a_1(n)}f_1 \cdot T_2^{a_1(n)}f_2.
    % \end{align*}
    Expanding the square of the $L^2(\mu)$ norm and composing the resulting integral with $T_{2}^{-a_{2}(n)}$ gives
    \begin{align*}
        0< \lim_{N\to\infty}\norm{\E_{n\in[N]}T_1^{a_1(n)}f_1 \cdot T_2^{a_1(n)}f_2}_{L^2(\mu)}^2 &= \lim_{N\to\infty} \E_{n\in[N]} \int f_0\cdot T_1^{a_1(n)}f_1 \cdot T_2^{a_1(n)}f_2\; d\mu\\
        &=\lim_{N\to\infty} \E_{n\in[N]} \int T_2^{-a_2(n)}{f_0}\cdot T_1^{a_1(n)}T_2^{-a_2(n)}{f_1}\cdot f_2\; d\mu\\ 
        %\prod_{\substack{0\leq j\leq \ell\colon\\ j\neq 2}} T_j^{{a_{j}(n)}}T_{2}^{-a_{2}(n)}f_j\; d\mu\\
        &= \int \overline{F_2}\cdot f_2\; d\mu.
    \end{align*}
%    on expanding the square and composing the average with $T_{2}^{-a_{2}(n)}$ (in the expression above, $T_0^{a_0(n)} := \Id_X$).
    %Recall that this expression assumption \eqref{E: replacing f_2 1} and 
    Applying the Cauchy-Schwarz inequality, expanding the inner product, and composing back with $T_{2}^{a_{2}(n)}$, we get
    \begin{align*}
        0< \norm{F_2}_{L^2(\mu)}^2 &= \int  \overline{F_2}\cdot F_2 \; d\mu\\
        &=\lim_{N\to\infty} \E_{n\in[N]} \int T_2^{-a_2(n)}{f_0}\cdot T_1^{a_1(n)}T_2^{-a_2(n)}{f_1}\cdot F_2\; d\mu\\
        &= \lim_{N\to\infty} \E_{n\in[N]} \int f_0\cdot T_1^{a_1(n)}{f_1}\cdot T_{2}^{a_{2}(n)}F_2\; d\mu.
    \end{align*}
    The claim follows from one more application of the Cauchy-Schwarz inequality.
\end{proof}
The Cauchy-Schwarz gymnastics performed in the proof of Lemma \ref{L: stashing in degree lowering} is known in additive combinatorics as \emph{stashing}, a name due to Manners \cite{Man21}.

As a consequence of Proposition \ref{L: stashing in degree lowering} and the weak seminorm control assumption, we get the following auxiliary seminorm control, which completes the second step of the roadmap. 
\begin{corollary}[Seminorm control in terms of weakly structured function]\label{C: seminorm control by structured}
    There exists $s\in\N$ such that the following holds. Let $f_1, f_2\in L^\infty(\mu)$. If 
    \begin{align}\label{E: replacing f_2 0}
        \lim_{N\to\infty}\norm{\E_{n\in[N]}T_1^{a_1(n)}f_1 \cdot T_2^{a_1(n)}f_2}_{L^2(\mu)}>0,
    \end{align}
    %\eqref{E: replacing f_2 0} holds,
    % \begin{align}\label{E: replacing f_2 0}
    %     \lim_{N\to\infty}\norm{\E_{n\in[N]}\prod_{j=1}^\ell T_j^{{a_{j}(n)}}f_j}_{L^2(\mu)}>0,
    % \end{align}
     then $\nnorm{F_2}_{s, T_{2}}>0$.
\end{corollary}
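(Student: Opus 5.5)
Combining Lemma \ref{L: stashing in degree lowering} with the weak seminorm control assumption (Definition \ref{D: weak control}) should suffice; the only work is to check that the average with $f_2$ replaced by $F_2$ lies in the scope of weak seminorm control. Let $s$ be the integer from Definition \ref{D: weak control}, which is uniform over all admissible $f_1,f_2$. Normalizing, we may assume $f_1, f_2$ are $1$-bounded. Note that the averages defining $f_0$ and $F_2$ are products of $1$-bounded functions, so $f_0$ and $F_2$ are again $1$-bounded.

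Suppose \eqref{E: replacing f_2 0} holds. Lemma \ref{L: stashing in degree lowering} raises its left-hand side to the fourth power and then bounds it by the average with $f_2$ replaced by $F_2$, so
\begin{align*}
    \lim_{N\to\infty}\norm{\E_{n\in[N]}T_1^{a_1(n)}f_1 \cdot T_2^{a_2(n)}F_2}_{L^2(\mu)}>0.
\end{align*}
The limit on the right exists by the standing convergence assumption applied to the pair $(f_1, F_2)$. I read the exponent $a_1(n)$ on $T_2$ in the displayed averages as the typo it appears to be, and take it to be $a_2(n)$, consistent with the definition \eqref{E: structured} of $F_2$.

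Now apply weak seminorm control with $m=\ell=2$. In that case there is no eigenfunction condition on the remaining functions, so the hypothesis is just full control by $\nnorm{\cdot}_{s,T_2}$ of the last function. Concretely, whenever $\nnorm{g}_{s,T_2}=0$ we have $\lim_{N\to\infty}\norm{\E_{n\in[N]}T_1^{a_1(n)}f_1\cdot T_2^{a_2(n)}g}_{L^2(\mu)}=0$. Taking $g=F_2$, the positivity just obtained forces $\nnorm{F_2}_{s,T_2}>0$.

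The only point requiring care, and so the main obstacle if there is one, is the bookkeeping in this application: checking that the $m=\ell$ clause of Definition \ref{D: weak control} imposes no condition on $F_2$, and that $s$ does not depend on the functions. Both are immediate from the definition. Without the standing convergence assumption, the same argument would run with limsups and with $F_2$ taken as a subsequential weak limit, as in the footnote to Lemma \ref{L: stashing in degree lowering}.
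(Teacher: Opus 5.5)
Your proposal is correct and matches the paper's argument: the paper obtains this corollary directly from Lemma \ref{L: stashing in degree lowering} together with the $m=\ell=2$ case of weak seminorm control (Definition \ref{D: weak control}), with $s$ the uniform integer given there. Your extra remarks are also sound: the normalization is harmless because $F_2$ only gets multiplied by a positive constant, and $T_2^{a_1(n)}$ should indeed be read as $T_2^{a_2(n)}$.
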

Note that Corollary \ref{C: seminorm control by structured} is equivalent to saying that $\nnorm{F_2}_{s, T_{2}}$ controls \eqref{E: degree lowering average}.
%under the additional assumptions on $f_{m+2}, \ldots, f_\ell$. 
This contrapositive reformulation turns out to be more handy for degree lowering.

Everything so far has been basic preparatory maneuvers; the essence of the argument lies in the result below.
\begin{proposition}[Degree lowering]\label{P: degree lowering}
    Assume that Property $(P_1)$ holds. If $\nnorm{F_2}_{s, T_{2}}>0$ for some $s\geq 2$, then also $\nnorm{F_2}_{s-1, T_{2}}>0$. 
\end{proposition}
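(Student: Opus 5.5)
Since $\nnorm{F_2}_{s,T_2}>0$, I would first use the Host--Kra inverse theory: $\nnorm{F_2}_{s,T_2}^{2^s}=\int \nnorm{\Delta_{\bh}F_2}_{1,T_2}^{2}$ type identities (via \eqref{E: factor property} and the inductive formula $\nnorm{f}_{s,T}^{2^s}=\lim_H \E_{h\in[H]}\nnorm{\Delta_h f}_{s-1,T}^{2^{s-1}}$, with $\Delta_h f = f\cdot T^h\overline f$) to unpack positivity of the degree-$s$ seminorm as positivity of the degree-$2$ seminorm of multiplicative derivatives $\Delta_{h_1,\ldots,h_{s-2}}F_2$ for a positive-density set of $\bh=(h_1,\ldots,h_{s-2})$. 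Positivity of $\nnorm{g}_{2,T_2}$ means $g$ correlates with a nonergodic eigenfunction of $T_2$ (the $\CZ_1(T_2)$-factor is spanned by $\CE(T_2)$, see Appendix \ref{A: low-degree HK factors}). So I obtain, for many $\bh$, nonergodic eigenfunctions $\chi_{\bh}\in\CE(T_2)$ with $\abs{\int \Delta_{\bh}F_2\cdot \overline{\chi_{\bh}}\,d\mu}\gg 1$, and the task is to show these $\chi_{\bh}$ can effectively be taken trivial (i.e. invariant), which lowers the degree by one.

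Next I would substitute the definition \eqref{E: structured} of $F_2$ into one copy of $F_2$ in $\int \Delta_{\bh}F_2\cdot\overline{\chi_{\bh}}\,d\mu$ (the ``dual--difference interchange''), writing it as a limit over $n$ of $\int T_2^{-a_2(n)}\overline{f_0}\cdot T_1^{a_1(n)}T_2^{-a_2(n)}\overline{f_1}\cdot G_{\bh}\,d\mu$ where $G_{\bh}$ collects the remaining shifted copies of $F_2$ and $\overline{\chi_{\bh}}$. Composing with $T_2^{a_2(n)}$ and using that $T_2^{a_2(n)}\chi_{\bh}=\chi_{\bh}\cdot(\text{eigenvalue})^{a_2(n)}$, the eigenfunction becomes an average of the form $\E_n T_1^{a_1(n)}\overline{f_1}\cdot T_2^{a_2(n)}(\tilde G_{\bh}\chi_{\bh})$ correlated with $f_0$. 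After averaging over $\bh$ and a Cauchy--Schwarz to remove the dependence on $\bh$ of the non-eigenfunction pieces (the ``removing low-complexity functions'' step, where the copies of $F_2$ not hit by the eigenfunction are dual functions of lower complexity and can be absorbed), I aim to reach an average $\E_n T_1^{a_1(n)}u_1\cdot T_2^{a_2(n)}\chi$ with $\chi\in\CE(T_2)$ whose $L^2$ limit is positive. This is exactly where Property $(P_1)$ enters: it says this limit equals $\E(u_1|\CI(T_1))\E(\chi|\CI(T_2))$, so positivity forces $\E(\chi_{\bh}\cdot\ldots|\CI(T_2))\neq0$, i.e. the eigenfunction must effectively have trivial eigenvalue (be $T_2$-invariant) on a positive-measure set.

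Feeding this back, the correlations $\int \Delta_{\bh}F_2\cdot\overline{\chi_{\bh}}$ can be replaced by correlations with $T_2$-invariant functions, giving $\nnorm{\Delta_{\bh}F_2}_{1,T_2}>0$ for a positive-density set of $\bh$, hence $\nnorm{F_2}_{s-1,T_2}>0$. The main obstacle is the middle step: handling the $\bh$-dependence of $\chi_{\bh}$ measurably (a measurable selection of eigenfunctions in the nonergodic setting) and justifying the interchange of the $\bh$- and $n$-averages so that Property $(P_1)$ applies uniformly; I would first try to pass to a finite-dimensional approximation of $\CZ_1(T_2)$ so only countably many eigenfunction families occur, and pigeonhole.
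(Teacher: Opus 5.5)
There is a genuine gap at the step where you invoke Property $(P_1)$. Before $(P_1)$ applies, every copy of $F_2$ must be removed from the $T_2$-slot. These copies are $F_2$ itself, not dual functions of lower complexity, so they cannot be "absorbed." The only way to eliminate them is the dual--difference interchange. For $s=3$ this means: substitute \eqref{E: structured} into the copy that depends on $h$, namely $T_2^hF_2$ in $\Delta_{h\be_2}F_2=F_2\cdot T_2^h\overline{F_2}$. Then move the $h$-average inside the integral and apply Cauchy--Schwarz to discard the $h$-independent copy $F_2$.

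Expanding the square doubles the variable. After composing with $T_2^{a_2(n)}$, the $T_2$-slot carries $T_2^{-h'}(\chi_{2,h}\overline{\chi_{2,h'}})\cdot 1_{U_h\cap U_{h'}}$, and $f_0,f_1$ are replaced by $\Delta_{(h-h')\be_2}\overline{f_j}$. For general $s$, after $s-2$ such steps you get a product of the $\chi_{\bh}$ over the vertices of a cube of parameters. Consequently $(P_1)$ only tells you that $\chi_{2,h}\overline{\chi_{2,h'}}$ is $T_2$-invariant on a positive-measure invariant set. It does not say that $\chi_{2,h}$ has trivial eigenvalue, and nothing at this stage can upgrade it. For instance, if $\chi_{2,h}=\chi$ for all $h$ with $\chi$ a fixed non-invariant eigenfunction, the doubled product $|\chi|^2$ is invariant and is consistent with $(P_1)$. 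So your step "feeding this back, $\Delta_{\bh}F_2$ correlates with $T_2$-invariant functions, hence $\nnorm{\Delta_{\bh}F_2}_{1,T_2}>0$" is unjustified for $s\geq 3$. It works only for $s=2$, where no interchange is needed.

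The missing idea is to exploit the \emph{shared} structure of the eigenfunctions rather than their triviality.
\begin{itemize}
\item Pigeonhole a single $h'$. This gives $T_2$-invariant sets $U'_h\subseteq U_h$ with $\liminf_H\E_{h\in[H]}1_\Lambda(h)\mu(U'_h)>0$, on which $\chi_{2,h}=\chi\cdot\lambda_h$ with $\chi:=\chi_{2,h'}$ fixed and $\lambda_h$ $T_2$-invariant.
\item Plug this back to get $\liminf_H\E_{h\in[H]}1_\Lambda(h)\int\Delta_{h\be_2}F_2\cdot\chi\cdot\lambda_h\cdot 1_{U'_h}\,d\mu>0$.
\item Since $\chi$ no longer depends on $h$, Cauchy--Schwarz in $h$ removes it together with the undifferentiated $F_2$.
\item After expanding the square, the invariant weights $1_\Lambda(h)\lambda_h1_{U'_h}$ can be pulled out. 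This yields $\liminf_H\E_{h,h'\in[H]}\norm{\E(T_2^hF_2\cdot T_2^{h'}\overline{F_2}|\CI(T_2))}_{L^2(\mu)}>0$, i.e. $\nnorm{F_2}_{2,T_2}>0$.
\end{itemize}
For general $s$, the analogous output is that $\chi_{\bh}$ factors, on invariant sets, into eigenfunctions each independent of one coordinate of $\bh$; these low-complexity factors are then removed in the same way. Finally, the obstacle you flagged (measurable selection, finite-dimensional approximation of $\CZ_1(T_2)$) is not the real difficulty. The $h$-dependence is handled by choosing invariant sets $U_h$ via the popularity principle and by the pigeonhole in $h'$.
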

The proof of Proposition \ref{P: degree lowering} will be sketched in the following subsection. Before, let us see how it can be used to derive property $(P_{2})$ from $(P_1)$, and hence complete the proof of Theorem \ref{T: degree lowering}.

Iterating Proposition \ref{P: degree lowering} $s-1$ times forms the inner induction cycle in the double induction scheme that underpins the proof of Theorem \ref{T: degree lowering}. It results in the following statement. 
\begin{corollary}[Iterated degree lowering]\label{C: iterated degree lowering}
    Assume that Property $(P_1)$ holds. If $\nnorm{F_2}_{s, T_{2}}>0$ for some $s\geq 2$, then also $\nnorm{F_2}_{1, T_{2}}>0$. 
\end{corollary}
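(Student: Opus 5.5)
This follows from Proposition \ref{P: degree lowering} by a finite downward induction on the degree. Fix $f_1, f_2$, and hence $F_2$. Suppose $\nnorm{F_2}_{s, T_2}>0$ for some $s\geq 2$. I will show by downward induction on $k$, running from $s$ down to $1$, that $\nnorm{F_2}_{k, T_2}>0$.

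\emph{Base case.} For $k=s$ this is the hypothesis.

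\emph{Inductive step.} Suppose $\nnorm{F_2}_{k, T_2}>0$ for some $2\leq k\leq s$. Property $(P_1)$ is assumed throughout, and it does not depend on $k$. So Proposition \ref{P: degree lowering} applies with $s$ replaced by $k$, and it gives $\nnorm{F_2}_{k-1, T_2}>0$.

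\emph{Conclusion.} After $s-1$ applications we reach $k=1$, so $\nnorm{F_2}_{1, T_2}>0$.

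The one point to check is that every application of Proposition \ref{P: degree lowering} uses the same weakly structured function $F_2$ from \eqref{E: structured}. This holds because the proposition's hypothesis and conclusion both concern the seminorms of this single fixed $F_2$, and only the degree changes. So there is no real obstacle here: all of the substance sits in Proposition \ref{P: degree lowering}.

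If one drops the standing convergence assumption and defines $F_2$ as a subsequential weak limit, the same remark applies. The subsequence $(N_k)$ is chosen once at the start, so $F_2$ is unchanged across the iterations, and the induction goes through verbatim.
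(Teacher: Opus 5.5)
Your proof is correct and is the paper's own argument: the corollary is obtained by applying Proposition \ref{P: degree lowering} $s-1$ times to the same fixed $F_2$, lowering the degree by one each time until it reaches $1$. You also correctly note the only point that needs checking, namely that $F_2$ (including the subsequence used to define it when convergence is not assumed) stays unchanged across the iterations.
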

Combining Corollary \ref{C: iterated degree lowering} with Corollary \ref{C: seminorm control by structured}, we get the following corollary, which completes the third step in the roadmap.
\begin{corollary}[Optimal seminorm control in terms of weakly structured function]\label{C: optimal control by dual}
Assume that Property $(P_1)$ holds. Let $f_1, f_2\in L^\infty(\mu)$. % with $f_{m+2}, \ldots, f_\ell$ being nonergodic eigenfunctions of $T_{m+2}, \ldots, T_\ell$ respectively. 
If \eqref{E: replacing f_2 0} holds, then $\nnorm{F_2}_{1, T_{2}}>0$.
\end{corollary}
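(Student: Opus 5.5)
The statement is a direct concatenation of two earlier results. I will simply chain them, keeping track of which hypothesis each uses.

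Fix $f_1, f_2\in L^\infty(\mu)$ satisfying \eqref{E: replacing f_2 0}, and let $F_2$ be the weakly structured function from Lemma \ref{L: stashing in degree lowering}. By Corollary \ref{C: seminorm control by structured}, there is $s\in\N$, depending only on the sequences and the system and not on $f_1, f_2$, with $\nnorm{F_2}_{s, T_2}>0$. That corollary relies only on the weak seminorm control assumption and the stashing inequality; it does not use $(P_1)$.

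Two cases then finish the argument.

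\emph{Case $s=1$.} There is nothing to do: $\nnorm{F_2}_{1,T_2}>0$ already.

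\emph{Case $s\geq 2$.} Property $(P_1)$ is assumed, so Corollary \ref{C: iterated degree lowering} applies to the same function $F_2$ and gives $\nnorm{F_2}_{1,T_2}>0$. That corollary comes from applying Proposition \ref{P: degree lowering} $s-1$ times, lowering the degree from $s$ down to $1$.

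The one point to check is that degree lowering is applied to the same $F_2$ that Corollary \ref{C: seminorm control by structured} produces. In particular, the function whose seminorm we lower must not change along the way, even though Proposition \ref{P: degree lowering} may internally pass to other structured functions. As stated, Proposition \ref{P: degree lowering} concludes positivity for the very same $F_2$, so this consistency holds and the iteration is legitimate. The seminorm monotonicity $\nnorm{\cdot}_{s}\leq\nnorm{\cdot}_{s+1}$ is not even needed.

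I therefore expect no real obstacle here. All the difficulty sits in Proposition \ref{P: degree lowering}, which is taken as given at this point.
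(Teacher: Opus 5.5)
Your proposal is correct and is exactly the paper's argument: the paper obtains this corollary by combining Corollary \ref{C: seminorm control by structured}, which gives $\nnorm{F_2}_{s,T_2}>0$ without using $(P_1)$, with Corollary \ref{C: iterated degree lowering}, which uses $(P_1)$ to lower the degree to $1$. Your separate treatment of the case $s=1$ is a harmless bookkeeping detail that the paper leaves implicit.
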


We thus control the original average \eqref{E: degree lowering average} by the smallest possible Host-Kra seminorm of $F_2$. However, $F_2$ does not appear in the original average, and so this seminorm control is only useful as an intermediate step in controlling \eqref{E: degree lowering average} by degree-1 seminorms of $f_1, f_2$. The remaining two steps of the roadmap are precisely about transferring the optimal seminorm control from $F_2$ to  $f_1, f_2$. First, we do that for $f_1$, which is the fourth step of the roadmap.
\begin{proposition}\label{P: transferring control to f_j except 2}
    Assume that Property $(P_1)$ holds. Let $f_1, f_2\in L^\infty(\mu)$.
    %with $f_{m+2}, \ldots, f_\ell$ being nonergodic eigenfunctions of $T_{m+2}, \ldots, T_\ell$ respectively. 
    If \eqref{E: replacing f_2 0} holds, then $\nnorm{f_1}_{1,T_1}>0$.
    %then $\nnorm{f_j}_{1, T_j}>0$ for all $j\neq 2$.
\end{proposition}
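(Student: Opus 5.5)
The plan is to feed the optimal control of Corollary \ref{C: optimal control by dual} back into the defining formula \eqref{E: structured} for $F_2$, and then use Property $(P_1)$ to evaluate the resulting average explicitly. Throughout I read the second iterate in \eqref{E: degree lowering average} as $a_2(n)$, consistently with the definition of $F_2$.

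\emph{Step 1: extract an invariant test function.} Assume \eqref{E: replacing f_2 0} holds. By Corollary \ref{C: optimal control by dual}, $\nnorm{F_2}_{1,T_2}>0$. Since $\CI(T_2)=\CZ_0(T_2)$, \eqref{E: factor property} gives
\begin{align*}
u:=\E(F_2|\CI(T_2))\neq 0 .
\end{align*}
Consequently
\begin{align*}
0<\norm{u}_{L^2(\mu)}^2=\int F_2\cdot \overline{u}\; d\mu .
\end{align*}
The function $u$ is $T_2$-invariant, so it is in particular a nonergodic eigenfunction of $T_2$ with eigenvalue $0$. The same holds for $\overline{u}$.

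\emph{Step 2: unfold $F_2$ and move the shifts.} Substitute \eqref{E: structured} into $\int F_2\,\overline u\,d\mu$. In each term, compose with $T_2^{a_2(n)}$, exactly as in the proof of Lemma \ref{L: stashing in degree lowering}. This yields
\begin{align*}
0<\int F_2\cdot\overline{u}\; d\mu=\lim_{N\to\infty}\int \overline{f_0}\cdot \E_{n\in[N]} T_1^{a_1(n)}\overline{f_1}\cdot T_2^{a_2(n)}\overline{u}\; d\mu .
\end{align*}
Here I also use $T_2^{a_2(n)}\overline u=\overline u$, although this is not needed. If the standing convergence assumption is dropped, the limit in $N$ is replaced by the subsequential weak limit from the footnote. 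Pairing against the fixed function $\overline{f_0}$ is compatible with weak limits, so the argument is unaffected.

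\emph{Step 3: apply $(P_1)$.} The inner average $\E_{n\in[N]} T_1^{a_1(n)}\overline{f_1}\cdot T_2^{a_2(n)}\overline u$ has an arbitrary first function and a nonergodic eigenfunction of $T_2$ in the second slot. This is precisely the situation covered by Property $(P_1)$. Hence the average converges in $L^2(\mu)$ to
\begin{align*}
\E(\overline{f_1}|\CI(T_1))\cdot \E(\overline{u}|\CI(T_2)) .
\end{align*}

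\emph{Step 4: conclude.} Suppose, for contradiction, that $\nnorm{f_1}_{1,T_1}=0$. Then $\E(f_1|\CI(T_1))=0$, so the limit in Step 3 vanishes. The right-hand side of Step 2 is therefore $0$, contradicting $\norm{u}_{L^2(\mu)}^2>0$. Thus $\nnorm{f_1}_{1,T_1}>0$.

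I do not expect a genuine obstacle here. The deep input, namely the degree lowering behind Corollary \ref{C: optimal control by dual}, is already available. The only point needing care is Step 2: one must verify that the change of variables is legitimate uniformly in $n$, which holds because each $T_2^{a_2(n)}$ preserves $\mu$. One must also check that the limit defining $F_2$ (strong, or weak in general) can be interchanged with integration against $\overline u$, which holds since $u$ is a fixed bounded function.
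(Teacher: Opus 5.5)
Your argument follows the paper's outline: pair $F_2$ against $u=\E(F_2|\CI(T_2))$, unfold \eqref{E: structured}, compose with $T_2^{a_2(n)}$, and invoke $(P_1)$. However, Step 3 rests on a false claim. A $T_2$-invariant function is \emph{not} in general a nonergodic eigenfunction. By the definition in the appendix, a nonergodic eigenfunction $\chi$ must satisfy $|\chi(x)|\in\{0,1\}$ for $\mu$-a.e.\ $x$, and nothing forces $u$ to have this property. For instance, if $T_2$ is ergodic then $u$ is the constant $\int F_2\,d\mu$, whose modulus is typically neither $0$ nor $1$. So $(P_1)$, as formulated, does not apply to the pair $(\overline{f_1},\overline{u})$.

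The repair is short, and the cleanest one is exactly the paper's. You already wrote down the key identity $T_2^{a_2(n)}\overline{u}=\overline{u}$ but called it unnecessary. With it, the inner average equals $\overline{u}\cdot\E_{n\in[N]}T_1^{a_1(n)}\overline{f_1}$. You then only need $(P_1)$ with the constant function $1$ in the second slot, and $1$ is a genuine nonergodic eigenfunction. This gives $\E_{n\in[N]}T_1^{a_1(n)}\overline{f_1}\to\E(\overline{f_1}|\CI(T_1))$ in $L^2(\mu)$. The paper phrases this by absorbing $u$ into $f_0$ and applying Cauchy--Schwarz to get $\lim_{N\to\infty}\norm{\E_{n\in[N]}T_1^{a_1(n)}f_1}_{L^2(\mu)}>0$ before invoking $(P_1)$.

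Alternatively, you can extend $(P_1)$ to every bounded $T_2$-invariant second function:
\begin{itemize}
\item indicators $1_E$ of $T_2$-invariant sets are nonergodic eigenfunctions, with eigenvalue $\lambda=1_E$;
\item the identity in $(P_1)$ is linear in $f_2$;
\item it is stable under $L^2$-perturbations of $f_2$, since both the averages and the limiting product are bounded in $L^2(\mu)$ by $\norm{f_1}_{L^\infty(\mu)}\norm{f_2}_{L^2(\mu)}$;
\item $u$ is a uniform limit of simple $\CI(T_2)$-measurable functions.
\end{itemize}
With either patch, your Steps 1, 2 and 4 go through as written.

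One minor inaccuracy is in your side remark about dropping the convergence assumption. The footnote's construction replaces $f_0$ by the $k$-dependent functions $g_k$, so there is no fixed $\overline{f_0}$ to pair against. The argument survives anyway: the inner averages converge strongly (to $0$ when $\E(f_1|\CI(T_1))=0$), and the $g_k$ are uniformly bounded.
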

\begin{proof}%[Proof of Property $(P_{2})$ assuming Property $(P_1)$ and everything so far]
    Suppose that \eqref{E: replacing f_2 0} holds. Then $\nnorm{F_2}_{1, T_1}>0$ by Corollary \ref{C: optimal control by dual}. Using the formula \eqref{E: dual identity} for the degree-1 seminorm as well as the definition \eqref{E: structured} of $F_2$, we deduce that 
    \begin{multline*}
        \nnorm{F_2}_{1, T_1}^2 = \int \overline{F_2}\cdot \E(F_2|\CI(T_{2}))\; d\mu\\
        = \lim_{N\to\infty} \E_{n\in[N]} \int T_2^{-a_2(n)}{f_0}\cdot T_1^{a_1(n)}T_2^{-a_2(n)}{f_1}\cdot \E(F_2|\CI(T_{2}))\; d\mu >0.
    \end{multline*}
    %for $f_2 := \E(F_2|\CI(T_{2}))$.
    Composing the integral with $T_{2}^{a_{2}(n)}$, we obtain
    \begin{align*}
        \lim_{N\to\infty} \E_{n\in[N]} \int f_0\cdot T_1^{a_1(n)}{f_1}\cdot T_{2}^{a_{2}(n)}\E(F_2|\CI(T_{2}))\; d\mu > 0.
        %\cdot \prod_{\substack{0\leq j\leq \ell\colon\\ j\neq 2}} T_j^{{a_{j}(n)}}f_j\; d\mu >0.
    \end{align*}
    Crucially, the function $\E(F_2|\CI(T_{2}))$ is $T_{2}$-invariant, and so 
    \begin{align*}
        \lim_{N\to\infty} \E_{n\in[N]} \int (f_0\cdot \E(F_2|\CI(T_{2})))\cdot T_1^{a_1(n)}{f_1}\; d\mu > 0.
        %\cdot \prod_{\substack{0\leq j\leq \ell\colon\\ j\neq 2}} T_j^{{a_{j}(n)}}f_j\; d\mu >0.
    \end{align*}
    %    $T_{2}$ disappears from the expression above. 
    From the Cauchy-Schwarz inequality we infer that
    \begin{align*}
        \lim_{N\to\infty} \norm{\E_{n\in[N]} T_1^{a_1(n)}{f_1}}_{L^2(\mu)} > 0.
    \end{align*}
    %(recall that $T_0^{a_0(n)}f_0 = f_0$, hence $f_0$ can also be removed while applying the Cauchy-Schwarz inequality).
    This new average is now amenable to Property $(P_1)$ (with 1, clearly a nonergodic eigenfunction of $T_{2}$, being the function at the index $2$). By inductively invoking $(P_1)$, we deduce that
    \begin{align*}
       \nnorm{f_1}_{1, T_1} = \norm{\E(f_1|\CI(T_1))}_{L^2(\mu)} > 0.
    \end{align*}
    %from which it follows that $\nnorm{f_j}_{1, T_j}>0$ for every $j\neq 2$.
\end{proof}
Lastly, we establish optimal seminorm control of \eqref{E: degree lowering average} in terms of $f_2$. This is the last step in the roadmap, and it completes the proof of Theorem \ref{T: degree lowering} modulo the proof of Proposition \ref{P: degree lowering} in the next section.
\begin{proposition}\label{P: transferring control to f_2}
    Assume that Property $(P_1)$ holds. Let $f_1, f_2\in L^\infty(\mu)$. %with $f_{m+2}, \ldots, f_\ell$ being nonergodic eigenfunctions of $T_{m+2}, \ldots, T_\ell$ respectively. 
    If \eqref{E: replacing f_2 0} holds, then $\nnorm{f_2}_{1, T_{2}}>0$.
\end{proposition}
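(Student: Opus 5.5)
The plan is to reduce to Proposition \ref{P: transferring control to f_j except 2} by splitting $f_1$ along the invariant factor. We argue by contrapositive: assume $\nnorm{f_2}_{1,T_2}=0$, i.e. $\E(f_2|\CI(T_2))=0$, and show that the limit in \eqref{E: replacing f_2 0} vanishes. Write $f_1 = \E(f_1|\CI(T_1)) + (f_1 - \E(f_1|\CI(T_1)))$. The second summand has vanishing degree-1 seminorm with respect to $T_1$. Proposition \ref{P: transferring control to f_j except 2}, which holds for arbitrary $f_1,f_2$, says that \eqref{E: replacing f_2 0} forces $\nnorm{f_1}_{1,T_1}>0$. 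Applied to this summand, it shows that the corresponding average tends to $0$ in $L^2(\mu)$. By linearity (and the standing convergence assumption), it therefore suffices to treat the case where $f_1=g$ is $T_1$-invariant.

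In that case $T_1^{a_1(n)}g = g$, so the average becomes $g\cdot \E_{n\in[N]}T_2^{a_2(n)}f_2$, whose $L^2(\mu)$ norm is at most $\norm{g}_\infty\norm{\E_{n\in[N]}T_2^{a_2(n)}f_2}_{L^2(\mu)}$. To handle this single average we invoke the inductive hypothesis $(P_1)$ once more. The average $\E_{n\in[N]}T_1^{a_1(n)}1\cdot T_2^{a_2(n)}f_2$ is not directly of the form covered by $(P_1)$, since there the arbitrary function sits at index $1$ and the eigenfunction at index $2$. We get around this with stashing or duality. Expanding the square of the norm gives
\[\norm{\E_{n\in[N]}T_2^{a_2(n)}f_2}_{L^2(\mu)}^2=\int \overline{h_N}\,f_2\,d\mu,\]
where $h_N := \E_{n\in[N]}T_2^{-a_2(n)}\E_{m\in[N]}T_2^{a_2(m)}f_2$. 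The cleaner route, though, is to apply the full chain of Corollary \ref{C: seminorm control by structured}, Corollary \ref{C: iterated degree lowering} and Corollary \ref{C: optimal control by dual} to the pair $(g,f_2)$. This gives $\nnorm{F_2}_{1,T_2}>0$ with $F_2 = \lim_N \E_{n}T_2^{-a_2(n)}\overline{f_0}\cdot T_1^{a_1(n)}T_2^{-a_2(n)}\overline{g}$, where $f_0 = \overline{g}\cdot\lim_N\E_n T_2^{a_2(n)}\overline{f_2}$.

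Now compute $\E(F_2|\CI(T_2))$. Since $T_2$ preserves $\CI(T_2)$ and conditional expectation commutes with $T_2$, we get $\E(F_2|\CI(T_2)) = \lim_N\E_n \E(\overline{f_0}\,T_1^{a_1(n)}\overline{g}\,|\,\CI(T_2))$, using that $T_1$ and $T_2$ commute and $g$ is $T_1$-invariant. The seminorm then satisfies
\[\nnorm{F_2}_{1,T_2}^2 = \int \overline{F_2}\,\E(F_2|\CI(T_2))\,d\mu = \lim_N \E_n \int f_0\, g\, T_2^{a_2(n)}\E(F_2|\CI(T_2))\,d\mu.\]
The key point is that $T_2^{a_2(n)}$ acts trivially on $\E(F_2|\CI(T_2))$. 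Substituting the definition of $f_0$, we get
\[\nnorm{F_2}_{1,T_2}^2=\lim_N\int |g|^2\,\E_n T_2^{a_2(n)}\overline{f_2}\cdot \E(F_2|\CI(T_2))\,d\mu.\]
Since $|g|^2\E(F_2|\CI(T_2))$ is $T_2$-invariant, we may move $T_2^{-a_2(n)}$ onto it and obtain $\int |g|^2\E(F_2|\CI(T_2))\,\overline{f_2}\,d\mu$. Conditioning on $\CI(T_2)$, this equals $\int |g|^2\E(F_2|\CI(T_2))\,\E(\overline{f_2}|\CI(T_2))\,d\mu=0$. This contradicts $\nnorm{F_2}_{1,T_2}>0$, so the average vanishes.

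The main obstacle is the final step. It needs $|g|^2$ to be $T_2$-invariant, which is false in general because $g$ is only $T_1$-invariant. To fix this I would first use Proposition \ref{P: transferring control to f_j except 2} more cleverly: apply it to the pair $(f_1, f_2)$ with $f_2$ replaced by the dual-type function, in order to reduce $g$ to a function measurable with respect to $\CI(T_1)\cap$ a $T_2$-controlled factor. Failing that, I would run the stashing identity directly, with $f_2$ moved to index $1$ by symmetry of the roadmap. The dual-difference bookkeeping of this step is where I expect the real work to lie.
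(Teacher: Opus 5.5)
Your first paragraph is exactly the paper's argument. You split $f_1=\E(f_1|\CI(T_1))+g_1$, kill the $g_1$-part with Proposition \ref{P: transferring control to f_j except 2}, and bound the remaining term by $\norm{g}_\infty\norm{\E_{n\in[N]}T_2^{a_2(n)}f_2}_{L^2(\mu)}$.

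The gap comes afterwards, and you created it yourself. Having discarded $g$ via the $L^\infty$ bound, you bring it back by running Corollaries \ref{C: seminorm control by structured}--\ref{C: optimal control by dual} on the pair $(g,f_2)$. Your final computation then needs $|g|^2$ to be $T_2$-invariant, which fails, so the argument as written does not close. Your proposed repairs do not help either:
\begin{itemize}
\item There is no symmetry in the roadmap that lets you move $f_2$ to index $1$. The inductive hypothesis $(P_1)$, which is what Proposition \ref{P: degree lowering} uses, only allows a nonergodic eigenfunction at index $2$.
\item No further dual-difference bookkeeping is needed. That work is already packaged inside Proposition \ref{P: degree lowering}.
\end{itemize}

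The fix is to run the chain on the pair $(1,f_2)$, which is legitimate because the corollaries hold for arbitrary $f_1$. Suppose $\lim_{N\to\infty}\norm{\E_{n\in[N]}T_2^{a_2(n)}f_2}_{L^2(\mu)}>0$. Then Corollary \ref{C: optimal control by dual} gives $\nnorm{F_2}_{1,T_2}>0$, where now
\[f_0=\lim_{N\to\infty}\E_{n\in[N]}T_2^{a_2(n)}\overline{f_2}\quad\textrm{and}\quad F_2=\lim_{N\to\infty}\E_{n\in[N]}T_2^{-a_2(n)}\overline{f_0}.\]
Since $\E(T_2^k h|\CI(T_2))=\E(h|\CI(T_2))$ and conditional expectation is continuous, you get $\E(F_2|\CI(T_2))=\E(\overline{f_0}|\CI(T_2))=\E(f_2|\CI(T_2))$. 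Under your contrapositive hypothesis this is $0$, so $\nnorm{F_2}_{1,T_2}=0$, a contradiction.

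Inside the degree-lowering step, $(P_1)$ is then only invoked with $f_1=1$. There it reduces to equidistribution of $a_2$ on nonergodic eigenfunctions of $T_2$. This is precisely the ``mini-rendition of the degree lowering argument'' for the single average along $a_2$ that the paper cites from \cite{Fr21} at this point.
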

\begin{proof}
    Suppose that \eqref{E: replacing f_2 0} holds. We split $f_1 = \E(f_1|\CI(T_1)) + g_1$.
    %For $j\neq 2$, we split $f_j = \E(f_j|\CI(T_j)) + g_j$. 
    The error term $g_1$ has vanishing seminorm $\nnorm{\cdot}_{1, T_1}$, contributing 0 to \eqref{E: replacing f_2 0} by the contrapositive of Proposition \ref{P: transferring control to f_j except 2}. From this and \eqref{E: replacing f_2 0} we deduce that
    \begin{align*}
                \lim_{N\to\infty} \norm{\E_{n\in[N]}T_1^{{a_{1}(n)}}\E(f_1|\CI(T_1))\cdot T_{2}^{a_{2}(n)}f_2}_{L^2(\mu)} > 0,
    \end{align*}
    and hence 
    \begin{align*}
                \lim_{N\to\infty} \norm{\E_{n\in[N]}T_{2}^{a_{2}(n)}f_2}_{L^2(\mu)} > 0
    \end{align*}
    by the $T_1$-invariance of $\E(f_1|\CI(T_1))$ and the H\"older inequality.  % $T_j^{{a_{j}(n)}}\E(f_j|\CI(T_j)) = \E(f_j|\CI(T_j))$, we deduce from the Cauchy-Schwarz inequality that 
    % \begin{align*}
    %     \nnorm{f_2}_{1, T_{2}} = \lim_{N\to\infty} \norm{\E_{n\in[N]}T_{2}^{a_{2}(n)}f_2}_{L^2(\mu)} > 0.
    % \end{align*}
    % %Hence $\nnorm{f_2}_{1, T_{2}} > 0$ by the mean ergodic theorem and definition of degree-1 Host-Kra seminorms. 
    %we have $\nnorm{f_j}_{1, T_j}>0$ for all $j\neq 2$.
    The conclusion $\nnorm{f_2}_{1, T_{2}}>0$ then follows by a mini-rendition of the degree lowering argument that can be found in the proof of \cite[Theorem 1.1]{Fr21}.
\end{proof}

\subsubsection{Outline of the proof of Proposition \ref{P: degree lowering}}
We conclude the proof of Theorem \ref{T: degree lowering} with the proof of Proposition \ref{P: degree lowering}, the deepest and most interesting step in the entire argument. The proofs presented in Section \ref{SS: reduction to degree lowering} used rather basic maneuvers: simple applications of the Cauchy-Schwarz inequality, measure invariance, and the induction hypothesis; if there were any complications, they came from setting up the induction. By contrast, the proof of Proposition \ref{P: degree lowering} involves several formidably technical steps.

% We therefore restrict to the case $\ell = 2$, which captures all the complexity of the general case yet allows for a significant simplification of the notation. Then $F_2$ takes the much simpler form 
% \begin{align*}
%     F_2 = \lim_{N\to\infty}\E_{n\in[N]}T_2^{-a_2(n)}\overline{f_0}\cdot T_1^{a_1(n)}T_2^{-a_2(n)}\overline{f_1}. 
% \end{align*}
% We also restrict this exposition to the case when $T_1, T_2$ are ergodic; the nonergodic setting introduces nontrivial technicalities that obfuscate the main idea. Then Property $(P_1)$, which we shall invoke inductively, reduces to the fairly pleasant statement that 
% \begin{align*}
% \lim_{N\to\infty} \norm{\E_{n\in[N]} T_1^{a_1(n)}f_1 \cdot T_2^{a_1(n)}f_2 - \int f_1\; d\mu\cdot \int f_2\; d\mu}_{L^2(\mu)} = 0
%     %\lim_{N\to\infty} \norm{\E_{n\in[N]} T_1^{a_1(n)}f_1 \cdot T_2^{a_1(n)}f_2 - \E(f_1|\CI(T_1))\cdot \E(f_2|\CI(T_2))}_{L^2(\mu)} = 0
% \end{align*}
% whenever $f_2$ is an eigenfunction of $T_2$.
% %is controlled by $\nnorm{f_1}_{s, T_1}$

We will prove two instances of Proposition \ref{P: degree lowering}, corresponding to $s=2,3$. The first of these cases avoids much of the technicalities present in the general case, yet it shows why the degree reduction happens and where Property $(P_1)$ is invoked. The second of these cases will be considerably more technical, but the notation will not be as prohibitive as in the general case. We list these two cases as separate statements and prove them one by one.

The proof that we present below differs from the argument of Frantzikinakis and the author from \cite{FrKu22a}; rather, it is based on a later argument of the author in the discrete setting \cite{Kuc23}. The proof in \cite{FrKu22a} covers Theorem \ref{T: Krat control criteria}, which is more tricky than Theorem \ref{T: invariant control criteria} and therefore requires a more subtle treatment, one that the relatively simpler argument below does not provide.

\begin{proposition}[Degree lowering, baby case I]\label{P: degree lowering s=2}
    Assume %that $T_1, T_2$ are ergodic, and 
    that Property $(P_1)$ holds. If $\nnorm{F_2}_{2, T_2}>0$, then also $\nnorm{F_2}_{1, T_2}>0$. 
\end{proposition}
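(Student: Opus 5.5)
We use the degree-2 inverse theorem for $\nnorm{F_2}_{2,T_2}>0$, exploit that $F_2$ is a dual-type average, and reduce to Property $(P_1)$ applied with a nonergodic eigenfunction in the second slot.

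\textbf{Step 1: a correlating eigenfunction.} By the Host-Kra theory (Appendix \ref{A: Host-Kra theory}), $\CZ_1(T_2)$ is spanned by nonergodic eigenfunctions of $T_2$. Since $\nnorm{F_2}_{2,T_2}>0$, we have $\E(F_2|\CZ_1(T_2))\neq 0$. Hence there is a nonergodic eigenfunction $\chi$ of $T_2$ with $\int \overline{F_2}\cdot \chi\, d\mu\neq 0$. We may write $T_2\chi = \lambda\chi$ with $\lambda$ a $T_2$-invariant function of modulus 1. Then $T_2^{k}\chi = \lambda^{k}\chi$ for every $k\in\Z$, where $\lambda^{k}$ is again $T_2$-invariant.

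\textbf{Step 2: unfolding $F_2$.} Insert the defining formula \eqref{E: structured} for $F_2$ and compose each integral with $T_2^{a_2(n)}$:
\begin{align*}
0\neq \lim_{N\to\infty}\E_{n\in[N]}\int f_0\cdot T_1^{a_1(n)}f_1\cdot T_2^{a_2(n)}\chi\; d\mu .
\end{align*}
This is a correlation of $f_0$ against the average $\E_{n\in[N]}T_1^{a_1(n)}f_1\cdot T_2^{a_2(n)}\chi$, whose second function is a nonergodic eigenfunction of $T_2$. Property $(P_1)$ therefore applies. It says this average converges in $L^2(\mu)$ to $\E(f_1|\CI(T_1))\cdot \E(\chi|\CI(T_2))$. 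Consequently
\begin{align*}
\int f_0\cdot \E(f_1|\CI(T_1))\cdot \E(\chi|\CI(T_2))\; d\mu\neq 0,
\end{align*}
and in particular $\E(\chi|\CI(T_2))\neq 0$.

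\textbf{Step 3: $\chi$ may be taken $T_2$-invariant.} On the set where $\lambda\neq 1$, the mean ergodic theorem gives $\E(\chi|\CI(T_2)) = \lim_{N\to\infty} \E_{k\in[N]}\lambda^{k}\chi = 0$. Hence $\E(\chi|\CI(T_2)) = 1_{\{\lambda=1\}}\chi$, and the function $\psi:=1_{\{\lambda=1\}}\chi$ is $T_2$-invariant. We now run Step 2 again with $\psi$ in place of $\chi$. Property $(P_1)$ gives
\begin{align*}
\lim_{N\to\infty}\E_{n\in[N]}\int f_0\cdot T_1^{a_1(n)}f_1\cdot T_2^{a_2(n)}\psi\; d\mu = \int f_0\cdot \E(f_1|\CI(T_1))\cdot \psi\; d\mu ,
\end{align*}
and this equals the nonzero quantity obtained at the end of Step 2. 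Reversing the computation of Step 2 (composing back with $T_2^{-a_2(n)}$) turns the left-hand side into $\int \overline{F_2}\cdot \psi\, d\mu$. So $\int \overline{F_2}\cdot\psi\,d\mu\neq 0$ with $\psi$ a $T_2$-invariant function. It follows that $\E(F_2|\CI(T_2))\neq 0$, that is, $\nnorm{F_2}_{1,T_2}>0$.

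\textbf{Main obstacle.} The delicate point is the nonergodic bookkeeping. First, we must check that Property $(P_1)$ is stated for nonergodic eigenfunctions with invariant eigenvalue $\lambda$, so that it covers both $\chi$ and the truncation $\psi$. Second, we must justify the exchange of limits and integrals: this uses the $L^2(\mu)$ convergence of the averages, granted by the standing assumption and by $(P_1)$. If the spectral decomposition of $\CZ_1(T_2)$ only provides approximations by linear combinations of eigenfunctions, an extra approximation step will be needed. That step is routine, because a nonzero inner product with an $L^2$-limit of such combinations forces a nonzero inner product with a single eigenfunction.
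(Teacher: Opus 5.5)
Your argument is correct, and it takes a different and, for $s=2$, shorter route than the paper's. You use Property $(P_1)$ as an exact limit formula inside the correlation with $f_0$, with no Cauchy--Schwarz step. This yields the identity $\int \overline{F_2}\,\chi\, d\mu = \int \overline{F_2}\,\E(\chi|\CI(T_2))\, d\mu$ for every nonergodic eigenfunction $\chi$ of $T_2$, so in effect you prove $\E(F_2|\CZ_1(T_2)) = \E(F_2|\CI(T_2))$. For this you only need the qualitative fact that nonergodic eigenfunctions span a dense subspace of $L^2(\CZ_1(T_2))$. That fact is routine, because they are closed under products and conjugation and generate $\CZ_1(T_2)$. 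You therefore avoid both the sharpened inverse theorem that gives $\E(F_2\overline{\chi_2}|\CI(T_2))\geq 0$ and the invariant sets $U, U'$.

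The paper uses $(P_1)$ only after Cauchy--Schwarz, in the weaker form ``the $L^2$ norm of the average is positive, hence $\E(\chi_2 1_U|\CI(T_2))\neq 0$''. It then needs the nonnegativity of $\E(F_2\overline{\chi_2}|\CI(T_2))$ so that restricting to the set $U'$, where $\chi_2$ is $T_2$-invariant, keeps the correlation positive.

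What the paper's heavier route buys is uniformity in $s$. In the case $s=3$ and beyond, the dual-difference interchange is itself a Cauchy--Schwarz step. After it, no exact identity links the new averages back to $\Delta_{h\be_2}F_2$. One must instead return to the positive starting expression \eqref{E: lower bound on U s=3} and localize to the sets $U'_h$, exactly as in the $s=2$ template. Your identity-based argument does not obviously survive that step. It is best read as a clean proof of this base case, not as a template for the inner induction.
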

For the reader who has not seen the degree lowering argument before, we advise to assume on the first read that $T_2$ is ergodic. Under this assumption, we can simply take $U$ to be all of $X$ and $\chi_2$ to be an eigenfunction of $T_2$ in the proof that follows. Furthermore, \eqref{E: chi_2 2} and Property $(P_1)$ will imply that $\chi_2$ is constant.
\begin{proof}
    By the inverse theorem for the degree-2 Host-Kra seminorm, there exists a nonergodic eigenfunction $\chi_2$ of $T_2$ such that 
    \begin{align}\label{E: positivity s=2}
        \int F_2 \cdot \overline{\chi_2}\; d\mu > 0\quad \textrm{and} \quad \E(F_2\cdot \overline{\chi_2}|\CI(T_2))\geq 0
    \end{align}
    (if $T_2$ is ergodic, these two properties are the same).
    The essence of the proof lies in showing that this can only happen if $\chi_2$ has special structure, and specifically it is $T_2$-invariant on a positive-measure set (if $T_2$ is ergodic, then $\chi_2$ has to be constant). Once we establish this property, it will be straightforward to show that $F_2$ indeed has positive degree-1 Host-Kra seminorm.
    
    The inequalities above imply that there exists $\delta > 0$ and a $T_2$-invariant set $U\subseteq X$ such that
    \begin{align}\label{E: lower bound on U}
        \E(F_2\cdot \overline{\chi_2}|\CI(T_2))(x) \geq \delta \quad \textrm{for all}\quad x\in U;
    \end{align}
    in other words, $F_2\cdot \overline{\chi_2}$ is bounded away from 0 on a positive proportion of the ergodic components of $\mu$ with respect to $T_2$ (if $T_2$ is ergodic, then $U=X$, and the conclusion of being bounded away from 0 is immediate from \eqref{E: positivity s=2}).
    Hence
    \begin{align*}
        \int F_2 \cdot \overline{\chi_2}\cdot 1_U\; d\mu = \int \E(F_2\cdot \overline{\chi_2}|\CI(T_2))\cdot 1_U\; d\mu > 0.
    \end{align*}
    On expanding the definition of $F_2$, we have
    \begin{align}\label{E: expanding dual}
        \lim_{N\to\infty}\E_{n\in[N]}\int T_2^{-a_2(n)}\overline{f_0}\cdot T_1^{a_1(n)}T_2^{-a_2(n)}\overline{f_1}\cdot \overline{\chi_2}\cdot 1_U\; d\mu > 0.
    \end{align}
    Composing with $T_2^{a_2(n)}$ and taking the complex conjugate, we deduce that
    \begin{align}\label{E: chi_2}
        \lim_{N\to\infty}\E_{n\in[N]}\int {f_0}\cdot T_1^{a_1(n)}f_1\cdot T_2^{a_2(n)}(\chi_2\cdot 1_U)\; d\mu > 0.
    \end{align}
    The Cauchy-Schwarz inequality then gives
    \begin{align}\label{E: chi_2 2}
        \lim_{N\to\infty}\norm{\E_{n\in[N]} T_1^{a_1(n)}f_1\cdot T_2^{a_2(n)}(\chi_2\cdot 1_U)}_{L^2(\mu)}> 0.
    \end{align}
     
     Since $\chi_2\cdot 1_U$ is a nonergodic eigenfunction, this average is amenable to the application of Property $(P_1)$. This is the key step where we reduce the complexity of the original average in a manner that allows us to invoke the induction hypothesis.
     Indeed, Property $(P_1)$ combined with \eqref{E: chi_2 2} implies that 
     \begin{align*}%\label{E: structure on nonergodic eigenfunction}
         \norm{\E(\chi_2\cdot 1_U|\CI(T_2))}_{L^2(\mu)}>0,
     \end{align*} 
     and so we can find a $T_2$-invariant subset $U'\subseteq U$ with $\mu(U')>0$ such that $$\abs{\E(\chi_2|\CI(T_2))(x)}>0$$ for all $x\in U'$. Now, if $\lambda_2$ is the nonergodic eigenvalue of $\chi_2$, then from the pointwise ergodic theorem and the fact that $|\lambda_2(x)|\in\{0,1\}$ we deduce that
     \begin{equation}\label{E: projection of eigenfunction}
         \begin{split}
         \E(\chi_2|\CI(T_2))(x) &= \chi_2(x)\cdot \lim_{N\to\infty}\E_{n\in[N]}\lambda(x)^n\\ &= \chi_2(x)\cdot 1_{\lambda(x)=1} = \chi_2(x)\cdot 1_{\chi_2(x) = \chi_2(T_2 x)}    
         \end{split}
     \end{equation}
     % \begin{align}\label{E: projection of eigenfunction}
     %     \E(\chi_2|\CI(T_2))(x) = \chi_2(x)\cdot \lim_{N\to\infty}\E_{n\in[N]}\lambda(x)^n = \chi_2(x)\cdot 1_{\lambda(x)=1} = \chi_2(x)\cdot 1_{\chi_2(x) = \chi_2(T_2 x)}
     % \end{align}
     for $\mu$-a.e. $x\in U'$. Removing the exceptional null set from $U'$, we conclude that $\chi_2$ is $T_2$-invariant on $U'$. 
     %(If $T_2$ is ergodic, then this entire reduction simplifies, and it follows rather immediately from the invocation of Property $(P_1)$ that $\chi_2$ is constant.)
     % Now, let $\mu = \int \mu_x\; d\mu(x)$ be the ergodic decomposition of $\mu$ with respect to $T_2$. From \eqref{E: structure on nonergodic eigenfunction} we deduce that
     % \begin{align*}
     %    \abs{\E(\chi_2|\CI(T_2))(x)} = \abs{\int \chi_2 \; d\mu_x} >0
     % \end{align*}
     % for some $T_2$-invariant subset $U'\subseteq U$ with $\mu(U')>0$. Since $\chi_2$ is an eigenfunction of $T_2$ in $L^2(\mu_x)$ for $\mu$-a.e. $x\in X$, we deduce that $\chi_2$ must be constant $\mu_x$-a.e. for all $x\in U'$. 
     
     We have thus established that on a positive-measure set, $\chi_2$ is $T_2$-invariant. %(if $T_2$ were ergodic, we would reach the simpler conclusion that $\chi_2$ is constant).
     %positive proportion of ergodic components, $\chi_2$ becomes constant. 
     Hence $\chi_2$ possesses an extra structure that it a priori would not need to have. This conclusion plays a key part in degree lowering, and we shall see now how exactly it is applied.
     Combining the lower bound \eqref{E: lower bound on U}, the inclusion $U'\subseteq U$, and the fact that $U'$ is $T$-invariant and has positive measure, we deduce that
     \begin{align*}
        \int F_2 \cdot \overline{\chi_2}\cdot 1_{U'}  \; d\mu= \int \E(F_2 \cdot \overline{\chi_2}|\CI(T_2))\cdot 1_{U'}\; d\mu > 0.
     \end{align*}
     The function $\overline{\chi_2}\cdot 1_{U'}$ is $T_2$-invariant, which allows us to replace $F_2$ by $\E(F_2|\CI(T_2))$ in the first expression above. The Cauchy-Schwarz inequality then gives
     \begin{align*}
         \nnorm{F_2}_{1, T_2} = \norm{\E(F_2|\CI(T_2))}_{L^2(\mu)} > 0,
     \end{align*}
     as claimed

     Note that if $T_2$ is ergodic, then the argument following \eqref{E: chi_2 2} radically simplifies. It follows rather immediately from \eqref{E: chi_2 2} and the invocation of Property $(P_1)$ that $\chi_2$ is constant. On plugging this back to \eqref{E: positivity s=2}, we immediately reach the conclusion that $\nnorm{F_2}_{1,T_2} = \abs{\int F_2\; d\mu} >0$.
\end{proof}
%Without the ergodicity assumption, we would only be able to conclude that $\chi_2$ is $T_2$-invariant on a positive-measure set. This conclusion might suffice to reach the desired conclusion in the proof of Proposition \ref{P: degree lowering s=2}, but it would not work for $s>2$. In the nonergodic case, we instead need to work on ergodic components with respect to $T_2$ and use the fact that any nonergodic eigenfunction $\chi_2$ of $T_2$ is a (classical) eigenfunction on almost every ergodic component. The fact that ergodic components with respect to $T_2$ are not $T_1$-invariant gives rise to a number of spine-chilling technical issues that require great case. We direct the reader interested in these nightmarish subtleties to \cite[Section 7]{FrKu22a}.

Here is another perspective on the maneuvers carried out in the proof of Proposition \ref{P: degree lowering s=2}. We start with the average \eqref{E: degree lowering average} with arbitrary $f_2$. After the first stashing, we replace it by the auxiliary function $F_2$. This function is ``weakly structured'' in the sense that it encapsulates the behavior of the original average. In the proof of Proposition \ref{P: degree lowering s=2}, we reduce to the average \eqref{E: degree lowering average} in which the place of $f_2$ is taken by the nonergodic eigenfunction $\chi_2\cdot 1_U$ (see \eqref{E: chi_2}), which is ``properly structured''. In other words, we progressively replace $f_2$ by a more and more structured function. To this average we then apply Property $(P_1)$, the induction hypothesis.

We now state the second baby case of degree lowering, which introduces further technicalities compared to Proposition \ref{P: degree lowering s=3}.
\begin{proposition}[Degree lowering, baby case II]\label{P: degree lowering s=3}
    Assume %that $T_1, T_2$ are ergodic, and 
    that Property $(P_1)$ holds. If $\nnorm{F_2}_{3, T_2}>0$, then also $\nnorm{F_2}_{2, T_2}>0$. 
\end{proposition}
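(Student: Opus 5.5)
The plan is to reduce Proposition \ref{P: degree lowering s=3} to an $h$-indexed family of situations resembling Proposition \ref{P: degree lowering s=2}, using the inductive formula for Host-Kra seminorms (Appendix \ref{A: Host-Kra theory}):
\begin{align*}
\nnorm{F_2}_{3,T_2}^8=\lim_{H\to\infty}\E_{h\in[H]}\nnorm{F_2\cdot T_2^h\overline{F_2}}_{2,T_2}^4 .
\end{align*}
From $\nnorm{F_2}_{3,T_2}>0$ I get a set $B\subseteq\N$ of positive upper density with $\nnorm{F_2\cdot T_2^h\overline{F_2}}_{2,T_2}>0$ for $h\in B$. For each such $h$, the inverse theorem for the degree-2 seminorm gives a nonergodic eigenfunction $\chi_{2,h}$ of $T_2$ and a $T_2$-invariant set $U_h$ with $\E(F_2\cdot T_2^h\overline{F_2}\cdot\overline{\chi_{2,h}}|\CI(T_2))\geq\delta$ on $U_h$. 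I will choose $\chi_{2,h}$, $U_h$ measurably in $h$ and $\delta$ uniformly on a positive-density subset of $B$, which is a routine pigeonhole. The goal mirrors baby case I: show that for many $h$, $\chi_{2,h}$ is $T_2$-invariant on a positive-measure subset $U_h'\subseteq U_h$. Granting this, we get $\int F_2\cdot T_2^h\overline{F_2}\cdot\overline{\chi_{2,h}}1_{U_h'}\,d\mu>0$ with $\overline{\chi_{2,h}}1_{U'_h}$ being $T_2$-invariant. This gives $\nnorm{F_2\cdot T_2^h\overline{F_2}}_{1,T_2}>0$ for a positive-density set of $h$, hence $\nnorm{F_2}_{2,T_2}>0$ by the same inductive formula one level down.

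To get the structure of $\chi_{2,h}$, I expand the unshifted copy of $F_2$ via \eqref{E: structured}, compose with $T_2^{a_2(n)}$ and average over $h$:
\begin{align*}
\E_{h}\lim_{N\to\infty}\E_{n\in[N]}\int f_0\cdot T_1^{a_1(n)}f_1\cdot T_2^{a_2(n)}\bigl(T_2^h\overline{F_2}\cdot\chi_{2,h}1_{U_h}\bigr)\,d\mu>0 .
\end{align*}
Unlike in baby case I, the function at index $2$ is now not a nonergodic eigenfunction, because of the factor $T_2^h\overline{F_2}$. This is where the dual-difference interchange should enter. I would apply Cauchy--Schwarz in $h$ and $x$ to remove $f_0$, then expand the second copy $T_2^h\overline{F_2}$ once more through \eqref{E: structured}. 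The $h$-averaged eigenfunctions $\chi_{2,h}$ should then be collected into a single auxiliary average (a ``dual'' built from $\chi_{2,h}$) of the form $\E_h T_2^{-h}(\cdots)$, which is simultaneously controlled by $\nnorm{\cdot}_{s,T_2}$-type estimates and by an average of the shape
\begin{align*}
\lim_{N\to\infty}\norm{\E_{n\in[N]}T_1^{a_1(n)}\tilde f_1\cdot T_2^{a_2(n)}(\chi_{2,h}\chi'_{2,h}1_{U_h})}_{L^2(\mu)}>0
\end{align*}
for many $h$, where $\chi'_{2,h}$ is another eigenfunction coming from the expansion. Here $\chi_{2,h}\chi'_{2,h}1_{U_h}$ is a nonergodic eigenfunction, so Property $(P_1)$ applies. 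Arguing exactly as around \eqref{E: projection of eigenfunction}, I then expect to conclude that the product eigenfunction, and after one more Cauchy--Schwarz (pairing $h$ with $h'$) $\chi_{2,h}$ itself up to a $T_2$-invariant twist, is $T_2$-invariant on a positive-measure $T_2$-invariant set. An invariant twist is harmless in the final step.

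The main obstacle is this middle step: removing the non-structured factor $T_2^h\overline{F_2}$ so that only eigenfunctions sit at index $2$ before invoking $(P_1)$. This requires the dual-difference interchange and a careful Cauchy--Schwarz in $h$. In the nonergodic setting it also requires tracking the invariant sets $U_h$, whose ergodic components for $T_2$ are not $T_1$-invariant. If the direct approach stalls, I would first carry out the argument assuming $T_2$ ergodic (so $U_h=X$ and $\chi_{2,h}$ are genuine eigenfunctions with eigenvalues $\beta_h$). The goal there would be to show $\beta_h=0$ for a positive-density set of $h$, and only then globalize to ergodic components.
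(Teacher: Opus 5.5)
There is a genuine gap. It sits exactly in the step you flag as the main obstacle, and it is compounded by aiming at the wrong structural conclusion.

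First, the dual-difference interchange is set up the wrong way round. Suppose you expand the unshifted copy of $F_2$ and use Cauchy--Schwarz to discard $f_0$. Then the function at index $2$ is $\E_{h}1_\Lambda(h)\,T_2^h\overline{F_2}\cdot\chi_{2,h}1_{U_h}$, which still contains $F_2$. Expanding that copy again through \eqref{E: structured} inside $T_2^{a_2(n)}$ creates a second, independent averaging variable and never leaves a pure eigenfunction at index $2$. So $(P_1)$ cannot be applied, and the eigenfunction $\chi'_{2,h}$ you posit has no source in that computation. The paper instead expands the \emph{shifted} copy $T_2^h\overline{F_2}$, moves the $h$-average inside, and applies Cauchy--Schwarz in $(x,n)$. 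This discards the unshifted $F_2$, which depends on neither $h$ nor $n$. Expanding the square (so both copies share the same $n$) and composing with $T_2^{-h'}$ then yields exactly the configuration of baby case I: $\Delta_{(h-h')\be_2}\overline{f_j}$ replaces $\overline{f_j}$, and the nonergodic eigenfunction $T_2^{-h'}(\chi_{2,h}\overline{\chi_{2,h'}})1_{U_h\cap U_{h'}}$ sits at index $2$.

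Second, even with that step in hand, $(P_1)$ only tells you that for many $h$ (and a suitable fixed $h'$) the product $\chi_{2,h}\overline{\chi_{2,h'}}$ is $T_2$-invariant on a $T_2$-invariant set of positive measure. This information is unchanged if every $\chi_{2,h}$ is multiplied by one fixed non-invariant eigenfunction, so it cannot give invariance of $\chi_{2,h}$ itself. What it gives is $\chi_{2,h}=\chi\cdot\lambda_h$ on $U'_h$, where $\lambda_h$ is invariant and $\chi:=\chi_{2,h'}$ is fixed but in general \emph{not} invariant. Your ``invariant twist'' is really a twist by $\chi_{2,h'}$, so $\overline{\chi_{2,h}}1_{U'_h}$ need not be invariant and your endgame is not reached. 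Likewise, in the ergodic fallback you get $\beta_h=\beta_{h'}$, not $\beta_h=0$.

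The missing idea is that shared structure is enough. Since $F_2\cdot\chi$ does not depend on $h$, Cauchy--Schwarz in $h$ applied to $\E_{h\in[H]}1_\Lambda(h)\int \Delta_{h\be_2}F_2\cdot\chi\cdot\lambda_h1_{U'_h}\,d\mu$ gives $\liminf_{H\to\infty}\norm{\E_{h\in[H]}1_\Lambda(h)\lambda_h1_{U'_h}\,T_2^h\overline{F_2}}_{L^2(\mu)}>0$. Expanding the square and using the $T_2$-invariance of the coefficients gives $\liminf_{H\to\infty}\E_{h,h'\in[H]}\norm{\E(T_2^hF_2\cdot T_2^{h'}\overline{F_2}|\CI(T_2))}_{L^2(\mu)}>0$, hence $\nnorm{F_2}_{2,T_2}>0$. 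This does show $\nnorm{\Delta_{k\be_2}F_2}_{1,T_2}>0$ for many differences $k=h'-h$, so your final target is fine. It is the route to it through invariance of each $\chi_{2,h}$ that fails.
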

%For the reader who has not seen the degree lowering argument before, we advise to assume on the first read that $T_2$ is ergodic. Then simply $U=X$ and $\chi_2$ is an eigenfunction of $T_2$, hence \eqref{E: chi_2 2} and Property $(P_1)$ will imply that $\chi_2$ is constant.
\begin{proof}
    First, we observe that the inverse theorem for degree-3 Host-Kra seminorm involves much more complicated objects than nonergodic eigenfunctions. To land in the situation where Property $(P_1)$ is applicable, we apply the inductive formula for Host-Kra seminorms. It gives
    %concerns eigenfunctions, we need to bring our assumption to a situation in which 
    \begin{align*}
        \lim_{H\to\infty}\E_{h\in[H]}\nnorm{\Delta_{h\be_2} F_2}_{2, T_2}^4 >0,
    \end{align*}
    where $\Delta_{h\be_2}f:=f\cdot T_2^h\overline{f}$.
    %  By the inverse theorem for the degree-2 Host-Kra seminorm, for every $h\in\N$ there exists an eigenfunction $\chi_{2,h}$ of $T_2$ such that 
    % \begin{align*}
    %     \lim_{H\to\infty}\E_{h\in[H]}\int \Delta_{h\be_2} F_2 \cdot \overline{\chi_{2,h}}\; d\mu >0,
    % \end{align*}     
    % and furthermore
    % \begin{align}\label{E: condition on positivity}       
    % \end{align}
    In particular, there exist $\delta>0$ and  a set $\Lambda\subseteq \N$ of positive lower density such that 
    \begin{align}\label{E: lower bound on Lambda}
        \nnorm{\Delta_{h\be_2} F_2}_{2, T_2}^4 \geq \delta \quad \textrm{for all}\quad h\in \Lambda.
    \end{align}
    By the inverse theorem for the degree-2 Host-Kra seminorm, for every $h\in\Lambda$ there exists an eigenfunction $\chi_{2,h}$ of $T_2$ such that 
    \begin{align*}%\label{E: positivity s=2}
        \int \Delta_{h\be_2} F_2 \cdot {\chi_{2,h}}\; d\mu \geq \delta\quad \textrm{and} \quad \E(\Delta_{h\be_2} F_2\cdot {\chi_{2,h}}|\CI(T_2))\geq 0.
    \end{align*}
    We will show once again that the nonergodic eigenfunctions $\chi_{2,h}$ admit some special structure, and specifically that on positive-measure sets, they take the form $\chi\cdot \lambda_h$ for some nonergodic eigenfunctions $\chi$ of $T_2$ and $T_2$-invariant $\lambda_h$'s. (If $T_2$ is ergodic, then simply $\chi_{2,h} = \chi$ for $h\in\Lambda$.)
    %(for $h\in\Lambda$)
    
    By the popularity principle, for every $h\in\Lambda$ we can find a $T_2$-invariant set $U_h$ with $\mu(U_h)\geq \delta/2$ such that
    %Consequently, we can find $\delta > 0$ and a $T$-invariant set $U\subseteq X$ so that
    \begin{align}\label{E: lower bound on U s=3}
        \E(\Delta_{h\be_2} F_2\cdot {\chi_{2,h}}|\CI(T_2))(x) \geq \delta/2 \quad \textrm{for all}\quad x\in U_h.
    \end{align}
    Hence
    \begin{align}\label{E: original positivity s=3}
        \liminf_{H\to\infty}\E_{h\in[H]} 1_\Lambda(h)\int \Delta_{h\be_2} F_2 \cdot {\chi_{2,h}}\cdot 1_{U_h}\; d\mu > 0.
    \end{align}

    The key complication in this case compared to the one covered by Proposition \ref{P: degree lowering s=2} is the presence of the multiplicative derivative $\Delta_{h\be_2} F_2$ in place of $F_2$. We want to move the differentiation ``inside'' the definition of $F_2$. This step, not present in Proposition \ref{P: degree lowering s=2}, is called \emph{dual-difference interchange}, and we carry it now. 
    
    The rule of thumb in ergodic theory is that if we do not know what to do, it does not hurt to try applying the Cauchy-Schwarz inequality. This is indeed what we will do. First, however, we expand the definition of $T_2^h F_2$ (but not of the second $F_2$), obtaining
    \begin{align*}
        \liminf_{H\to\infty}\E_{h\in[H]}1_\Lambda(h) \lim_{N\to\infty}\E_{n\in[N]}\int F_2\cdot T_2^h\brac{T_2^{-a_2(n)}{f_0}\cdot T_1^{a_1(n)}T_2^{-a_2(n)}{f_1}}\cdot {\chi_{2,h}}\cdot 1_{U_h}\; d\mu > 0.
    \end{align*}
    %Removing the complex conjugates and 
    Moving the (finite) average over $h$ inside the integral and applying the Cauchy-Schwarz inequality, we get
        \begin{align*}
        \liminf_{H\to\infty} \lim_{N\to\infty}\E_{n\in[N]}\norm{\E_{h\in[H]}1_\Lambda(h) \cdot T_2^h\brac{T_2^{-a_2(n)}{f_0}\cdot T_1^{a_1(n)}T_2^{-a_2(n)}{f_1}}\cdot {\chi_{2,h}}\cdot 1_{U_h}}_{L^2(\mu)}^2> 0.
    \end{align*}
    We then expand the square to get the nasty inequality
    \begin{multline*}
         \liminf_{H\to\infty} \limsup_{N\to\infty}\E_{n\in[N]}\E_{h,h'\in[H]}1_{\Lambda^2}(h,h')\\
         \int T_2^h\brac{T_2^{-a_2(n)}{f_0}\cdot T_1^{a_1(n)}T_2^{-a_2(n)}{f_1}}\cdot T_2^{h'}\brac{T_2^{-a_2(n)}\overline{f_0}\cdot T_1^{a_1(n)}T_2^{-a_2(n)}\overline{f_1}}\\ \chi_{2,h}\cdot\overline{\chi_{2,h'}}\cdot 1_{U_h}\cdot 1_{U_{h'}}\; d\mu > 0.
    \end{multline*}

    Because we started with a degree-3 Host-Kra seminorm, the dual-difference interchange required only one application of the Cauchy-Schwarz inequality to perform dual-difference interchange. If we started with the degree-$s$ seminorm, $s-2$ applications would be needed. Given how much notational mess one Cauchy-Schwarz has introduced, we leave it to the reader's vivid imagination to envisage the nightmare that the general case of dual-difference interchange leads to.
    
    Fortunately, we are able to make the notation more palatable by a few manipulations. Composing the integral with $T_2^{-h'}$, defining
    \begin{align*}
        f_{j,h,h'} := \Delta_{(h-h')\be_2}\overline{f_j},\quad \chi_{2,h,h'} := T_2^{-h'}(\chi_{2,h}\cdot\overline{\chi_{2,h'}}),\quad U_{h,h'} := U_h\cap U_{h'},
    \end{align*}
    and swapping the limsup over $N$ with the average over $h,h'$, we get
    \begin{multline*}%\label{E: after dual-difference}
         \liminf_{H\to\infty} \E_{h,h'\in[H]}1_{\Lambda^2}(h,h')\\
         \limsup_{N\to\infty}\abs{\E_{n\in[N]}\int T_2^{-a_2(n)}f_{0,h,h'}\cdot T_1^{a_1(n)}T_2^{-a_2(n)}{f_{1,h,h'}}\cdot \chi_{2,h,h'}\cdot 1_{U_{h,h'}}\; d\mu} > 0.
    \end{multline*}
    
    This expression looks quite like \eqref{E: expanding dual}, and we proceed analogously to the proof of Proposition \ref{P: degree lowering s=2}. Composing the integral with $T_2^{a_2(n)}$ and applying the Cauchy-Schwarz inequality gives us
    \begin{multline*}
        \liminf_{H\to\infty} \E_{h,h'\in[H]}1_{\Lambda^2}(h,h')
         \limsup_{N\to\infty}\norm{\E_{n\in[N]}T_1^{a_1(n)}{f_{1,h,h'}}\cdot T_2^{a_2(n)}(\chi_{2,h,h'}\cdot 1_{U_{h,h'}})}_{L^2(\mu)} > 0.
    \end{multline*}

    The functions $\chi_{2,h,h'}\cdot 1_{U_{h,h'}}$ are nonergodic eigenfunctions of $T_2$, and so invoking Property $(P_1)$ much the same way as in the proof of Proposition \ref{P: degree lowering s=2}, we deduce that 
    \begin{align*}
                \liminf_{H\to\infty} \E_{h,h'\in[H]}1_{\Lambda^2}(h,h')
         \norm{\E(\chi_{2,h,h'}\cdot 1_{U_{h,h'}}|\CI(T_2))}_{L^2(\mu)} > 0.
    \end{align*}
    By the pigeonhole principle, we can find a single\footnote{Technically, this $h'$ may vary with $H$, but we omit this technicality in order not to clutter the notation.} $h'\in\N$ for which
    \begin{align*}
                \liminf_{H\to\infty} \E_{h\in[H]}1_{\Lambda}(h)
         \norm{\E(\chi_{2,h,h'}\cdot 1_{U_{h,h'}}|\CI(T_2))}_{L^2(\mu)} > 0.
    \end{align*}
    Just like in Proposition \ref{P: degree lowering s=2}, we can find $T_2$-invariant sets $U'_h\subseteq U_{h,h'}$ for which
    \begin{align}\label{E: structure on s=3}
        \liminf_{H\to\infty} \E_{h\in[H]}1_{\Lambda}(h)
         \mu(U'_h) > 0,
    \end{align}    
    and such that $\chi_{2,h,h'}$ is $T_2$-invariant on $U'_h$. Given the formula for $\chi_{2,h,h'}$, this means that $\chi_{2,h} = \chi\cdot\lambda_h$ (where we set $\chi:=\chi_{2,h'}$) on $U'_h$ for some $T_2$-invariant $\lambda_h$. Just like in Proposition \ref{P: degree lowering s=2}, we have uncovered some structure on the eigenfunctions $\chi_{2,h}$ although this time the structure is in a sense ``shared'' across different $h$. 

We now combine the lower bound \eqref{E: lower bound on U s=3} and \eqref{E: structure on s=3} with the fact that $U'_h\subseteq U_h$ is $T_2$-invariant to infer that
    \begin{align*}
         \liminf_{H\to\infty} \E_{h\in[H]}1_{\Lambda}(h)\int \E(\Delta_{h\be_2} F_2\cdot {\chi_{2,h}}|\CI(T_2))\cdot 1_{U'_h}\; d\mu > 0.
    \end{align*}
    Using the special structure of  $\chi_{2,h}$ on $U'_h$ and the $T_2$-invariance of $U'_h$, we deduce that
    \begin{align}\label{E: plugging special structure}
                 \liminf_{H\to\infty} \E_{h\in[H]}1_{\Lambda}(h)\int \Delta_{h\be_2} F_2\cdot \chi\cdot \lambda_h\cdot 1_{U'_h}\; d\mu > 0.
    \end{align}
    % By setting $\lambda_h=0$ for $h\notin\Lambda$ (this does not affect any of the discussion above), we get
    % \begin{align*}
    %              \liminf_{H\to\infty} \E_{h\in[H]}\int \Delta_{h\be_2} F_2\cdot \chi\cdot \lambda_h\cdot 1_{U'_h}\; d\mu > 0.
    % \end{align*}
    We invite the reader to compare this expression with \eqref{E: original positivity s=3}, the starting point, to see how arbitrary $T_2$-eigenfunctions $\chi_{2,h}$ have been replaced by the more structured ones. 

    We are almost at the end of the argument; all that remains is to remove the ``low complexity'' terms $1_{\Lambda}(h)\cdot \chi\cdot \lambda_h\cdot 1_{U'_h}$ from the integral. By applying the Cauchy-Schwarz inequality in $h$, we can remove $\chi$, getting
    \begin{align*}
        \liminf_{H\to\infty} \norm{\E_{h\in[H]}T_2^h \overline{F_2}\cdot 1_{\Lambda}(h)\cdot \lambda_h\cdot 1_{U'_h}}_{L^2(\mu)}^2 > 0.
    \end{align*}
    Expanding the square and using the $T_2$-invariance of $1_{\Lambda}(h)\cdot\lambda_h\cdot 1_{U'_h}$, we further infer from the Cauchy-Schwarz inequality that
    \begin{align*}
        \liminf_{H\to\infty} \E_{h,h'\in[H]}\norm{\E(T_2^h F_2\cdot T_2^{h'}\overline{F_2}|\CI(T_2))}_{L^2(\mu)}>0.
    \end{align*}
    A simple exercise shows that this implies $\nnorm{F_2}_{2, T_2}>0,$ as claimed.
    
    Note that if $T_2$ is ergodic, then from \eqref{E: structure on s=3}, we simply get that $\chi_{2,h} = \chi$ for the positive-lower-density set $h\in\Lambda$. Then the conclusion of \eqref{E: plugging special structure} simplifies to
    \begin{align*}
        \liminf_{H\to\infty} \E_{h\in[H]}1_\Lambda(h)\int \Delta_{h\be_2} F_2\cdot \chi\; d\mu %> 0.
        = \liminf_{H\to\infty} \int F_2\cdot \chi \cdot \E_{h\in[H]} T_2^h \overline{F_2}\cdot 1_\Lambda(h)\; d\mu > 0.
    \end{align*}
    The Cauchy-Schwarz inequality gives 
    \begin{align*}
        \lim_{H\to\infty}\norm{\E_{h\in[H]}T_2^h \overline{F_2}\cdot 1_\Lambda(h)}_{L^2(\mu)}>0,
    \end{align*}
    and by expanding the square and pulling the indicators out of the integral, it is easy to conclude that $\nnorm{F_2}_{2, T_2}>0$.
    % The maneuver of defining $\lambda_h = 0$ for $h\notin\Lambda$ has the same effect as taking absolute values outside the integral and using nonnegativity to get
    %     \begin{align*}
    %     \liminf_{H\to\infty} \E_{h\in[H]}\abs{\int \Delta_{h\be_2} F_2\cdot \chi\; d\mu} > 0.
    %     %= \liminf_{H\to\infty} \int F_2\cdot \chi \cdot \E_{h\in[H]} T_2^h \overline{F_2}\; d\mu > 0,
    % \end{align*}
    % and the application of the Cauchy-Schwarz inequality gives
    % \begin{align*}
    %     \lim_{H\to\infty}
    % \end{align*}
\end{proof}

\subsubsection{Other degree lowering arguments}
The argument presented above is one of four existing degree lowering arguments to prove Theorem \ref{T: invariant control criteria} and its variants (Theorem \ref{T: joint ergodicity criteria} and \ref{T: Krat control criteria}), in addition to those from \cite{Fr21, FrKu22c, FrKu22a} (see also \cite{BFM22} for a variant of the degree lowering from \cite{Fr21} carried out for more general ring actions). While they all differ slightly at the level of details, the overarching strategy in all of them is more or less the same. Specifically, all of them use the inverse theorem for the degree-2 Host-Kra seminorm to reduce the seminorm degree and aim at degree-1 control or its variant (rational Kronecker factor control). The degree lowering philosophy is, however, so robust that it can be used to reach other ``optimal seminorm control'' statements. Theorems \ref{T: FrKu degree lowering on APs}, \ref{T: FrKu sparse corners}, \ref{T: FrKu poly corners}, and \ref{T: degree reduction} are also proved via degree lowering arguments that use other inverse theorems as their starting points and therefore reach different conclusions.

\subsection{New techniques for seminorm control}\label{S: seminorm smoothing}

The second big breakthrough that we now outline are new ways to prove Host-Kra seminorm control. %This section requires the reader to be familiar with the notions of and notations for box seminorms introduced in Appendix \ref{A: Host-Kra theory}.
Previously available technology relied on variants of the PET induction scheme of Bergelson \cite{Ber87} (see Example \ref{Ex: PET n, n^2}), and it was only capable of delivering seminorm control in two cases of interest:
\begin{enumerate}
    \item in the single-transformation case, for ``essentially distinct'' sequences;
    \item in the commuting case, for sequences of ``distinct growth''.
\end{enumerate}
Current machinery blends the PET induction scheme with concatenation and seminorm smoothing techniques, yielding three types of results:
\begin{enumerate}
    \item Host-Kra seminorm estimates for ``pairwise independent'' sequences in the commuting case;
    \item Host-Kra seminorm estimates for general (i.e. potentially ``pairwise dependent'') sequences in the commuting case under additional ergodicity assumptions on the system;
    \item box seminorm estimates in the commuting case.
\end{enumerate}
In this survey, we will only present results of the first (Proposition \ref{P: smoothing} and Theorem \ref{T: HK estimates for pairwise independent polys}) and third type (Theorem \ref{T: DKKST box seminorm control}). For a result of the second type, we direct the reader to \cite[Theorem 1.1]{FrKu22b} or \cite[Theorem 4.10]{DKKST25}. %A result of the third type is covered in \cite[Theorem 2.5]{DFKS22} (see also \cite[Proposition 8.1]{FrKu22a} for an alternative presentation) and \cite[Theorem 10.1]{DKKST25}.

\subsubsection{PET induction scheme: strengths and limitations}\label{SSS: PET}
The one and only way of proving Host-Kra seminorm control before the advent of new techniques was the PET induction scheme. It relies on methodically applying the van der Corput and Cauchy-Schwarz inequalities to an average until we reach one with linear iterates that can be controlled via Host-Kra seminorms as in Example \ref{Ex: HK example}.
% \begin{proposition}
% Let $\ell\in \N$ and $(X, \CX, \mu, T)$ be a system. Then there exists a constant $C_\ell>0$ and a seminorm $\nnorm{f}_{\ell, T}$ on $L^\infty(\mu)$ such that for any 1-bounded $f_1, \ldots, f_\ell\in L^\infty(\mu)$, we have
% \begin{align*}
% \limsup_{N\to\infty}\norm{\E_{n\in[N]} T^n f_1 \cdots T^{\ell n} f_\ell}_{L^2(\mu)} \leq C_\ell \nnorm{f_\ell}_{\ell, T}.
% \end{align*}
% \end{proposition}
The following example illustrates this procedure.
\begin{example}[PET for $(T_1^n, T_2^{n^2})_n$]\label{Ex: PET n, n^2}
    Let $(X, \CX, \mu,$\! $T_1, T_2)$ be a system, and let $f_1, f_2\in L^\infty(\mu)$. Suppose that 
    \begin{align*}
        \lim_{N\to\infty}\norm{\E_{n\in[N]}T_1^n f_1\cdot T_2^{n^2}f_2}_{L^2(\mu)}>0.
    \end{align*}
    By the van der Corput inequality (Lemma \ref{L: vdC}), 
    \begin{align*}
        \limsup_{H_1\to\infty}\E_{h_1\in[H_1]}\lim_{N\to\infty}\abs{\E_{n\in[N]}\int T_1^n f_1\cdot T_1^{n+h_1} \overline f_1\cdot T_2^{n^2}f_2\cdot T_2^{(n+h_1)^2}f_2\; d\mu}>0.
    \end{align*}
    Composing the average with $T_1^{-n}$, we get
    \begin{align*}
        \limsup_{H_1\to\infty}\E_{h_1\in[H_1]}\lim_{N\to\infty}\abs{\E_{n\in[N]}\int f_1\cdot T_1^{h_1} \overline f_1\cdot T_2^{n^2}T_1^{-n}f_2\cdot T_2^{(n+h_1)^2}T_1^{-n}\overline f_2\; d\mu}>0.
    \end{align*}
    We then apply the Cauchy-Schwarz inequality to eliminate the terms independent of $n$, obtaining 
    \begin{align*}
        \limsup_{H_1\to\infty}\E_{h_1\in[H_1]}\lim_{N\to\infty}\norm{\E_{n\in[N]} T_2^{n^2}T_1^{-n}f_2\cdot T_2^{(n+h_1)^2}T_1^{-n}\overline f_2}_{L^2(\mu)}>0.
    \end{align*}

    This new average may look more complicated than the one we started with, not least because it involves two quadratic polynomials rather than one. However, a key observation is that both quadratic polynomials have the same leading coefficients. This relative simplicity of the new average compared to the original one is a baby example of the \emph{complexity reduction} that underpins the PET induction scheme, and it will come in handy shortly. 
    
    Applying the van der Corput lemma once more, we deduce that
        \begin{multline*}
        \limsup_{H_2\to\infty}\E_{h_2\in[H_2]}\limsup_{H_1\to\infty}\E_{h_1\in[H_1]}\\
        \lim_{N\to\infty}
        \left|\E_{n\in[N]}\int T_2^{n^2}T_1^{-n}f_2\cdot T_2^{(n+h_2)^2}T_1^{-(n+h_2)}\overline f_2\right.\\ 
        \left. \vphantom{\E_{n\in[N]}\int} T_2^{(n+h_1)^2}T_1^{-n}\overline f_2\cdot T_2^{(n+h_1+h_2)^2}T_1^{-(n+h_2)}f_2\; d\mu\right|>0.
        %\limsup_{H_2\to\infty}\E_{h_2\in[H_2]}\limsup_{H_1\to\infty}\E_{h_1\in[H_1]}\lim_{N\to\infty}\\
        %\abs{\E_{n\in[N]}\int T^{n^2-n}f_2\cdot T^{(n+h_2)^2-(n+h_2)}\overline f_2\cdot T^{(n+h_1)^2-n}\overline f_2\cdot T^{(n+h_1+h_2)^2-(n+h_2)}f_2\; d\mu}>0.
    \end{multline*}
    Because all four terms have the same leading coefficient in $n$, composing the integral with $T_1^nT_2^{-n^2}$ allows us to reduce the average to one linear in $n$, giving
    \begin{multline*}
        \limsup_{H_2\to\infty}\E_{h_2\in[H_2]}\limsup_{H_1\to\infty}\E_{h_1\in[H_1]}\lim_{N\to\infty}\\
        \abs{\E_{n\in[N]}\int f_2\cdot T_2^{2h_2 n + h_2^2}T_1^{-h_2}\overline f_2\cdot T_2^{2h_1n + h_1^2}\overline f_2\cdot T_2^{2(h_1+h_2)n+(h_1+h_2)^2}T_1^{-h_2}f_2\; d\mu}>0.
    \end{multline*}
    Crucially, for all $h_1, h_2\in\N$ except the zero-density set $\{(h_1,h_2)\in\N\colon h_1 = h_2\}$, the leading coefficients $2h_1, 2h_2, 2(h_1+h_2)$ are all different. Therefore an application of the bound \eqref{E: box seminorm control} jointly with the scaling property \eqref{E: scaling 1} implies that $\nnorm{f_2}_{3, T_2} >0$. The claimed seminorm control follows by contrapositive.

    There are many variants of the argument above. For instance, by using a quantitative version of the van der Corput lemma, we can ensure that $H_1, H_2$ are the same; we can also let $H$ grow slowly with $N$. By taking $H_1, H_2$ large but finite, we can also obtain a quantitative statement of the form
     \begin{align*}
        \lim_{N\to\infty}\norm{\E_{n\in[N]}T_1^n f_1\cdot T_2^{n^2}f_2}_{L^2(\mu)}^{O(1)}\ll \nnorm{f_2}_{3, T_2}
    \end{align*}
    under the assumptions of 1-boundedness on $f_1, f_2$.
    %Such modifications have proved necessary in the study of multiple ergodic averages along Hardy sequences.
\end{example}

Unfortunately, the argument above fails to give Host-Kra seminorm control even for $(T_1^{n^2}, T_2^{n^2+n})_n$. In this case, analogous manipulations only allow us to control the corresponding multiple ergodic average by an average of box seminorms.
\begin{example}[PET for $(T_1^{n^2}, T_2^{n^2+n})_n$]\label{Ex: PET n^2, n^2+n}
    Let $(X, \CX, \mu,$\! $T_1, T_2)$ be a system, and let $f_1, f_2\in L^\infty(\mu)$ be 1-bounded. Then 
    \begin{align}\label{E: box seminorm control n^2, n^2+n}
        \lim_{N\to\infty}\norm{\E_{n\in[N]}T_1^{n^2}f_1\cdot T_2^{n^2+n}}_{L^2(\mu)}^{O(1)}\ll \limsup_{H\to\infty}\E_{h_1,h_2,h_3\in[H]}\nnorm{f_2}_{\bc_1(h), \ldots, \bc_7(h)},
    \end{align}
    where the vectors $\bc_1(h), \ldots, \bc_7(h)$ are given by
    \begin{gather*}
        2\be_2 h_1, 2(\be_2-\be_1)h_2, 2(\be_2-\be_1)h_3,\\
        2\be_2h_1 + 2(\be_2-\be_1)h_2, 2 \be_2 h_1 + 2(\be_2-\be_1)h_3, 2(\be_2-\be_1)(h_2+h_3),\\
        2\be_2h_1 + (\be_2-\be_1)(h_2+h_3).
    \end{gather*}
    A priori, this is much worse than the conclusion of Example \ref{Ex: PET n, n^2} since there is ostensibly no good reason why the beastly expression on the right-hand side of \eqref{E: box seminorm control n^2, n^2+n} would be of any use. Yet the explicit description of the vectors $\bc_1(h), \ldots, \bc_7(h)$ offers certain hope.
    Two crucial points that we shall exploit later is that these vectors are linear polynomials in $h$ (for polynomials of higher degree than $n^2, n^2 + n$, they will be multilinear), and their coefficients are (multiples of) the leading coefficients of the polynomials $\be_2 (n^2+n)$ and $\be_2 (n^2 + n) - \be_1 n^2$.
\end{example}

The computation performed in Example \ref{Ex: PET n^2, n^2+n} can be found in \cite[Example 7]{DFKS22} (for an alternative presentation in the discrete setting, see also \cite[Example 2]{Kuc23} or \cite[Section 4.1]{KKL24a}). In \cite[Section 4]{DFKS22}, Donoso, Ferr\'e-Moragues, Koutsogiannis, and Sun developed a general \emph{coefficient tracking scheme} (later refined by Kravitz, Leng and the author \cite[Section 4]{KKL24a}; see also \cite[Section 7.1]{DKKST24})) that allows us to upper bound
\begin{align}\label{E: general average with products}
    \lim_{N\to\infty}\norm{\E_{n\in[N]}T^{\ba_1(n)}f_1 \cdots T^{\ba_\ell(n)}f_\ell}_{L^2(\mu)}
\end{align}
for some $\ell$-tuples $\ba_1, \ldots, \ba_\ell:\N\to\Z^\ell$ of polynomials, Hardy sequences, etc.
%, the PET induction scheme allows us to upper bound the average with polynomial bounds 
by an \emph{average of box seminorms} of the form
\begin{align*}
    \lim_{H\to\infty}\E_{h\in [H]^s}\nnorm{f_\ell}_{\bc_1(h), \ldots, \bc_s(h)},
\end{align*}
where $\bc_1, \ldots,\bc_s$ are multilinear polynomials in $h$ whose coefficients depend only on the ``leading coefficients''\footnote{This notion means precisely what one thinks it does for polynomials, but it is trickier to define for Hardy sequences. See \cite[Theorem 10.1]{DKKST24} and the discussion underneath for illustration.} of $\ba_\ell, \ba_\ell-\ba_1, \ldots, \ba_\ell-\ba_{\ell-1}$. The statements of such results, each wrapped in a formidable amount of formalism and terminology, can be found e.g. in \cite[Proposition 5.3]{DFKS22}, \cite[Proposition 4.9]{KKL24a}, and \cite[Proposition 7.7]{DKKST24}.

\subsubsection{Concatenation: control by a single box seminorm}\label{SSS: concatenation}
The estimate obtained in Example \ref{Ex: PET n^2, n^2+n} looks rather daunting. It turns out, however, that we can control the troublesome average of box seminorms on the right-hand side of \eqref{E: box seminorm control n^2, n^2+n} by a single box seminorm using a technique known as \emph{concatenation}. Its end result is presented below.
\begin{example}[Concatenation for $(T_1^{n^2}, T_2^{n^2+n})_n$]\label{Ex: concatenation n^2, n^2+n}
    Let $\bc_1, \ldots, \bc_7$ be the polynomials showing up in \eqref{E: box seminorm control n^2, n^2+n}. Then there exists $s\in\N$ such that
    \begin{align}\label{E: concatenation n^2, n^2+n}
        \brac{\limsup_{H\to\infty}\E_{h_1,h_2,h_3\in[H]}\nnorm{f}_{\bc_1(h), \ldots, \bc_7(h)}}^{O(1)}\ll \nnorm{f}_{\be_2^{\times s}, (\be_2-\be_1)^{\times s}}
    \end{align}
    (recall that $\be_2^{\times s}$ denotes $s$ copies of $\be_2$). 
\end{example}

The idea behind the estimate \eqref{E: concatenation n^2, n^2+n} is to \emph{merge} (``concatenate'') different directions $\bc_1(h), \ldots, \bc_7(h)$ with each other in a way that allows us to recover the \emph{principal directions} $\be_2, \be_2-\be_1$. The modern way to do this (performed e.g. in \cite{DKKST24, KKL24a, Kuc23}) is to apply the Cauchy-Schwarz inequality many times to the left-hand side of \eqref{E: concatenation n^2, n^2+n} in a targeted way. A priori, this upper bounds the left-hand side of \eqref{E: concatenation n^2, n^2+n} by a much larger and complicated-looking average of box seminorms. If done correctly, however, the directions of the new seminorms are ``equidistributed'' as the parameters $h$ vary. Using elementary counting arguments, we can then replace these equidistributed directions by $\be_2, \be_2-\be_1$, or the group $\langle \be_2, \be_2-\be_1\rangle$. We present a very simple case of this argument below, warning the reader that the general case involves a considerably more complicated induction.
\begin{example}[Simple example of concatenation]
    Let $(X, \CX, \mu,$\! $T_1, T_2)$ be a system, and let $f\in L^\infty(\mu)$ be 1-bounded.
    We show that 
    \begin{align*}
        \brac{\limsup_{H\to\infty}\E_{h_1,h_2\in[H]}\nnorm{f}_{\be_1 h_1 + \be_2 h_2}^2}^2\ll \nnorm{f}_{\langle\be_1,\be_2\rangle,\langle\be_1,\be_2\rangle},
    \end{align*}
    and hence the right-hand side can be bounded by $\min\limits_{j=1,2}\nnorm{f}_{\be_j, \be_j}$ using \eqref{E: scaling 1}.
    Let
    \begin{align*}
        \delta &:= \limsup_{H\to\infty}\E_{h_1,h_2\in[H]}\nnorm{f}_{\be_1 h_1 + \be_2 h_2}^2\\
        &= \limsup_{H\to\infty}\E_{h_1,h_2\in[H]}\lim_{M\to\infty}\E_{m\in[M]}\int f\cdot T_1^{h_1 m}T_2^{h_2 m}\overline{f}\; d\mu\\
        &=\limsup_{H\to\infty}\lim_{M\to\infty}\int f\cdot \E_{h_1,h_2\in[H]}\E_{m\in[M]} T_1^{h_1 m}T_2^{h_2 m}\overline{f}\; d\mu.
    \end{align*}
    By applying the Cauchy-Schwarz inequality, expanding the square, and changing variables, we get
    \begin{align*}
        \delta^2 &\leq \limsup_{H\to\infty}\limsup_{M\to\infty}\int \E_{\substack{h_{11},h_{12},\\ h_{21},h_{22}\in[H]}}\E_{m_1,m_2\in[M]} T_1^{h_{11} m_1}T_2^{h_{21} m_1}{f}\cdot T_1^{h_{12} m_2}T_2^{h_{22} m_2}\overline{f}\; d\mu\\
        &\leq \limsup_{H\to\infty}\limsup_{M\to\infty}\int \E_{\substack{h_{11},h_{12},\\ h_{21},h_{22}\in[H]}}\E_{m_1,m_2\in[M]} {f}\cdot T_1^{h_{12} m_2 -h_{11} m_1}T_2^{h_{22} m_2-h_{21} m_1}\overline{f}\; d\mu.
    \end{align*}
    We repeat this procedure once more to get
    \begin{multline*}
        \delta^4\leq \limsup_{H\to\infty}\limsup_{M\to\infty}\\
        \int \E_{\substack{h_{1j},h_{2j}\in[H]:\\ 1\leq j\leq 4}}\E_{\substack{m_j\in[M]:\\ 1\leq j\leq 4}} {f}\cdot T_1^{h_{14} m_4 -h_{13} m_3- h_{12} m_2 +h_{11} m_1}T_2^{h_{24} m_4 -h_{23} m_3- h_{22} m_2 +h_{21} m_1}\overline{f}\; d\mu.
    \end{multline*}
    The key point is that as $h_{1j}, h_{2j}$'s vary over $[H]$ and $m_j$'s vary over $[M]$, then the polynomial 
    \begin{align}\label{E: multilinear poly}
        (h_{14} m_4 -h_{13} m_3- h_{12} m_2 +h_{11} m_1, h_{24} m_4 -h_{23} m_3- h_{22} m_2 +h_{21} m_1)
    \end{align}
    is ``equidistributed'' over its range $[\pm 2HM]^2$ in the sense that each element $(n_1,n_2)\in[\pm 2HM]^2$ admits at most $O(1)$ times the expected number of representations of the form \eqref{E: multilinear poly} (for large $H,M$). This bound, which we do not prove here, follows from a much more general estimate \cite[Proposition 7.5]{KKL24a}.
    What happens is that two applications of the Cauchy-Schwarz inequality give us more variables to play with at the cost of losing a power of $\delta$, and the polynomials in more variables (like \eqref{E: multilinear poly}) tend to be more uniformly distributed over their range than those in fewer variables (like the original polynomial $(h_1 m, h_2 m)$).
    
    Plugging this equidistribution property into the last inequality, we get
    \begin{align*}
        \delta^4 \ll \limsup_{H\to\infty}\limsup_{M\to\infty}\E_{n_1,n_2\in[\pm 2 HM]}\abs{\int f\cdot T_1^{n_1}T_2^{n_2}\overline{f}\; d\mu}.
    \end{align*}
    One more application of the Cauchy-Schwarz inequality gives
    \begin{align*}
        \delta^8 \ll  \limsup_{M\to\infty}\E_{n_1,n_2\in[\pm M]}\abs{\int f\cdot T_1^{n_1}T_2^{n_2}\overline{f}\; d\mu}^2 = \nnorm{f\otimes \overline f}_{\langle \be_1,\be_2\rangle}^2 \leq\nnorm{f}_{\langle \be_1,\be_2\rangle, \langle \be_1,\be_2\rangle}^4,
    \end{align*}
    where we leave the last inequality as an exercise. 
    %upper bounds the right-hand side by $\nnorm{f}_{\langle\be_1,\be_2\rangle,\langle\be_1,\be_2\rangle},$ as claimed.    
\end{example}

The original concatenation argument was developed by Tao and Ziegler \cite{TZ16} to study polynomial progressions in primes. It differed from the argument above in that it was about concatenating \emph{factors}, not seminorms, and hence it was very qualitative in nature. This argument was later applied in the works of Donoso, Koutsogiannis, and Sun \cite{DKS22, DKS23} as well as Donoso, Ferr\'e-Moragues, Koutsogiannis, and Sun \cite{DFKS22} to give \emph{qualitative} comparison control averages of box seminorms 
by a single box seminorm. In particular, in \cite[Example 8]{DFKS22}, they show the following qualitative version of \eqref{E: box seminorm control n^2, n^2+n}:
\begin{align}\label{E: box seminorm control n^2, n^2+n infinitary}
    \limsup_{H\to\infty}\E_{h_1,h_2,h_3\in[H]}\nnorm{f}_{\bc_1(h), \ldots, \bc_7(h)} = 0\quad \textrm{whenever}\quad \nnorm{f}_{\be_2^{\times s}, (\be_2-\be_1)^{\times s}} = 0.
\end{align}
%(compare it with \eqref{E: box seminorm control n^2, n^2+n}).

In a different direction, Peluse and Prendiville \cite{PP19} and Peluse \cite{Pel20} developed the first \emph{quantitative concatenation} argument in their study of single-dimensional polynomial progressions. A more flexible version of this argument was later developed by the author \cite{Kuc23} in the context of multidimensional polynomial patterns over the finite fields, and later extended to the integer setting jointly with Kravitz and Leng \cite{KKL24a}. The argument was subsequently translated to ergodic theory by Donoso, Koutsogiannis, Sun, Tsinas, and the author \cite{DKKST24}. The exposition provided above is based on the argument stemming from \cite{DKKST24, KKL24a, Kuc23}.

By combining the PET induction scheme from Section \ref{SSS: PET} with the concatenation arguments outlined above, we obtain the following rather general statement.
\begin{theorem}[Box seminorm control for general polynomial averages]\label{T: DKKST box seminorm control}
    Let $\ba_1, \ldots, \ba_\ell\in\Z^\ell[t]$ be essentially distinct. Then there exist $C>0$, $s\in\N$, and vectors $\bv_1, \ldots, \bv_s\in\Z^\ell$ belonging to the set of leading coefficients of $\be_\ell, \be_\ell-\be_1, \ldots, \be_\ell-\be_1$, such that for all systems $(X, \CX, \mu, T_1, \ldots, T_\ell)$ and all 1-bounded functions $f_1, \ldots, f_\ell\in L^\infty(\mu)$, we have
    \begin{align*}
        \lim_{N\to\infty}\norm{\E_{n\in[N]}T^{\ba_1(n)}f_1\cdots T^{\ba_\ell(n)}f_\ell}_{L^2(\mu)}^C \leq C \nnorm{f_\ell}_{\bv_1, \ldots, \bv_s}.
    \end{align*}
\end{theorem}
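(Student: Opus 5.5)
The proof combines the two tools just described: the PET induction scheme of Section \ref{SSS: PET}, in its coefficient-tracking form, followed by quantitative concatenation as in Section \ref{SSS: concatenation}.

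\textbf{Step 1: PET with coefficient tracking.} Since $\ba_1, \ldots, \ba_\ell$ are essentially distinct, each difference $\ba_\ell-\ba_j$ and $\ba_\ell$ itself is nonconstant. I would therefore run the PET scheme to eliminate all functions but $f_\ell$, as in Example \ref{Ex: PET n, n^2} and Example \ref{Ex: PET n^2, n^2+n}.
\begin{enumerate}
\item At each stage, apply the quantitative van der Corput inequality (Lemma \ref{L: vdC}) with a fixed finite $H$.
\item Compose with the iterate of the function of minimal ``type''.
\item Use Cauchy--Schwarz to discard the terms that no longer depend on $n$.
\end{enumerate}
The PET weight vector decreases, so after finitely many steps every iterate is linear in $n$, and the average is bounded via \eqref{E: box seminorm control}. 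The coefficient tracking scheme of \cite{DFKS22, KKL24a} records what the surviving directions look like. Each is a multilinear polynomial $\bc_i(h_1, \ldots, h_{s'})$ whose coefficients are integer multiples of the leading coefficients of $\ba_\ell, \ba_\ell-\ba_1, \ldots, \ba_\ell-\ba_{\ell-1}$. The output is a polynomial bound
\begin{align*}
\lim_{N\to\infty}\norm{\E_{n\in[N]}\prod_{j} T^{\ba_j(n)}f_j}_{L^2(\mu)}^{C_1}\leq C_1\limsup_{H\to\infty}\E_{h\in[H]^{s'}}\nnorm{f_\ell}_{\bc_1(h),\ldots,\bc_{s''}(h)},
\end{align*}
with the exceptional degenerate $h$ (where some $\bc_i(h)=0$) forming a density-zero set that contributes nothing.

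\textbf{Step 2: Quantitative concatenation.} Next I would remove the average over $h$ by concatenation, generalizing the simple example. The plan is to write each box seminorm power as an average of correlations $\int \Delta_{\ldots} f_\ell\, d\mu$ over $m_1,\ldots,m_{s''}$. I would then apply Cauchy--Schwarz repeatedly, duplicating the $h$-variables each time. After enough duplications, each direction $\bc_i(h)m$ is replaced by a sum $\sum_k \pm\bc_i(h^{(k)})m_k$. Because $\bc_i$ is multilinear with coefficients in the span of the leading vectors, this sum becomes a multilinear form in many variables. Its values equidistribute, up to $O(1)$ multiplicity, in the lattice generated by the relevant leading vector $\bv$ (or a bounded-index sublattice of it); the counting input is \cite[Proposition 7.5]{KKL24a}. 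Replacing the average over these forms by a uniform average over the group then bounds everything by $\nnorm{f_\ell}_{\bv_1,\ldots,\bv_s}$, with group-generated directions $\langle\ldots\rangle$ converted to single directions via \eqref{E: scaling 1}. This is done direction by direction, by induction on the number of non-principal directions, losing only polynomial powers at each stage.

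\textbf{Main obstacle.} I expect the main difficulty to be Step 2 when some $\bc_i$ involve several distinct leading vectors simultaneously, e.g.\ $2\be_2h_1+2(\be_2-\be_1)h_2$. Here the concatenated forms equidistribute only in a sublattice spanned by several leading vectors, and one must prevent cross-contamination between different directions. I would first try to handle one direction at a time: at each step, fix all the other $h$-variables, as in the induction of \cite[Section 7]{DKKST24}. If the leftover group is generated by several leading vectors, I would bound its group seminorm by a seminorm with repeated copies of one of them using \eqref{E: scaling 1}. This explains why the final vectors $\bv_i$ are only asserted to lie in the set of leading coefficients.
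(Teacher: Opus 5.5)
Your plan is the same as the paper's: PET with coefficient tracking bounds the average by an average of box seminorms of $f_\ell$ along multilinear directions built from the leading coefficients of $\ba_\ell, \ba_\ell-\ba_1,\ldots,\ba_\ell-\ba_{\ell-1}$, and quantitative concatenation (driven by the counting estimate \cite[Proposition 7.5]{KKL24a}) then reduces this to a single box seminorm, which is how the survey obtains the theorem as a special case of \cite[Proposition 8.6]{DKKST24}. One small correction: when concatenation lands you in a bounded-index sublattice such as $\langle 2\bv\rangle$, getting back to $\bv$ itself requires \eqref{E: scaling 2} (after arranging $s\geq 2$ by monotonicity); \eqref{E: scaling 1} only lets you pass from a group generated by several leading vectors down to one of them.
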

Theorem \ref{T: DKKST box seminorm control} is a special case of \cite[Proposition 8.6]{DKKST24}. Its discrete version can be found in \cite[Theorem 1.1]{KKL24a}, and its qualitative version was previously established in \cite[Theorem 2.5]{DFKS22}. Theorem \ref{T: DKKST box seminorm control} extends to (tuples of) Hardy sequences (see \cite[Theorem 10.1]{DKKST24}). However, this extension is much more challenging to state because the notion of leading coefficients for Hardy sequences is more tricky, and because we need a more robust notion of box seminorms (defined in \cite[Section 3]{DKKST24}) for tuples like $(T_1^{\sfloor{\sqrt{2}n^2}}, T_2^{\sfloor{\sqrt{3}n^2 + n}})_n$ in which the leading coefficient $\sqrt{3}\be_2-\sqrt{2}\be_1$ of the difference $\sfloor{\sqrt{3}n^2 + n}\be_2-\sfloor{\sqrt{2}n^2}\be_1$ has rationally independent entries. 

\subsubsection{Seminorm smoothing}
By combining the arguments from Examples \ref{Ex: PET n^2, n^2+n} and \ref{Ex: concatenation n^2, n^2+n}, we obtain the following inequality:
\begin{align}\label{E: single box seminorm control n^2, n^2+n}
    \lim_{N\to\infty}\norm{\E_{n\in[N]}T_1^{n^2}f_1\cdot T_2^{n^2+n}}_{L^2(\mu)}^{O(1)}\ll \nnorm{f}_{\be_2^{\times s}, (\be_2-\be_1)^{\times s}},
\end{align}
which looks much nicer than \eqref{E: box seminorm control n^2, n^2+n}. Unfortunately, it is not very useful in itself, as we know no useful inverse theorems for the box seminorm $\nnorm{f}_{\be_2^{\times s}, (\be_2-\be_1)^{\times s}}$ for $s>1$. The last step, therefore, is to upgrade \eqref{E: single box seminorm control n^2, n^2+n} to a proper Host-Kra seminorm estimate. This is done via a \emph{seminorm smoothing} technique: it consists in ``smoothing'' the seminorm $\nnorm{f}_{\be_2^{\times s}, (\be_2-\be_1)^{\times s}}$ by replacing each vector $\be_2-\be_1$ one-by-one by (possibly many) copies of the vector $\be_2$. In this particular example, this result takes the following form.
\begin{proposition}\label{P: smoothing}
    Let $(X, \CX, \mu,$\! $T_1, T_2)$ be a system and $f_1, f_2\in L^\infty(\mu)$ be 1-bounded. Let $s_1, s_2\in\N$. There exists $s_1'\in\N$ so that if 
    \begin{align*}
        \brac{\lim_{N\to\infty}\norm{\E_{n\in[N]}T_1^{n^2}f_1\cdot T_2^{n^2+n}}_{L^2(\mu)}}^{O(1)}\ll \nnorm{f}_{\be_2^{\times s_1}, (\be_2-\be_1)^{\times s_2}},
    \end{align*}
    then also
    \begin{align*}
        \brac{\lim_{N\to\infty}\norm{\E_{n\in[N]}T_1^{n^2}f_1\cdot T_2^{n^2+n}f_2}_{L^2(\mu)}}^{O(1)}\ll \nnorm{f}_{\be_2^{\times s'_1}, (\be_2-\be_1)^{\times (s_2-1)}},
    \end{align*}
\end{proposition}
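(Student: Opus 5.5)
The plan is to run the seminorm smoothing argument of \cite{FrKu22b, FrKu22a}, which combines stashing (as in Lemma \ref{L: stashing in degree lowering}) with one removal of the direction $\be_2-\be_1$. First I would replace $f_2$ by the dual function
\begin{align*}
F_2:=\lim_{N\to\infty}\E_{n\in[N]}T_2^{-(n^2+n)}\overline{f_0}\cdot T_1^{n^2}T_2^{-(n^2+n)}\overline{f_1},\qquad f_0:=\lim_{N\to\infty}\E_{n\in[N]}T_1^{n^2}\overline{f_1}\cdot T_2^{n^2+n}\overline{f_2}.
\end{align*}
(In general these limits are weak subsequential limits.) Stashing shows that the average with $f_2$ is bounded by a power of the average with $F_2$. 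The hypothesis then gives $\nnorm{F_2}_{\be_2^{\times s_1},(\be_2-\be_1)^{\times s_2}}\gg\delta^{O(1)}$. Next I would unfold all but one copy of $\be_2-\be_1$ using the inductive formula for box seminorms. This gives, for a positive-density set of parameters $\underline h$, the bound $\nnorm{\Delta_{\underline h}F_2}_{\be_2-\be_1}\gg\delta^{O(1)}$, where the multiplicative derivatives are taken in the remaining $s_1$ copies of $\be_2$ and $s_2-1$ copies of $\be_2-\be_1$. Since the degree-one box seminorm in direction $\be_2-\be_1$ equals $\norm{\E(\cdot|\CI(T_2T_1^{-1}))}_{L^2(\mu)}$, this yields $T_2T_1^{-1}$-invariant $1$-bounded functions $u_{\underline h}$ with $\int\Delta_{\underline h}F_2\cdot u_{\underline h}\,d\mu\gg\delta^{O(1)}$ on average.

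Second, I would perform the dual-difference interchange. I expand one copy of $F_2$ inside $\Delta_{\underline h}F_2$ and apply Cauchy–Schwarz in $\underline h$ to move the differencing onto $f_0$ and $f_1$, exactly as in the proof of Proposition \ref{P: degree lowering s=3}. After composing with $T_2^{n^2+n}$ and applying Cauchy–Schwarz once more, I expect an average of quantities of the form
\begin{align*}
\norm{\E_{n\in[N]}T_1^{n^2}f_{1,\underline h,\underline h'}\cdot T_2^{n^2+n}u_{\underline h,\underline h'}}_{L^2(\mu)},
\end{align*}
where $u_{\underline h,\underline h'}$ is $T_2T_1^{-1}$-invariant, up to products with shifts of $F_2$ that I would have to absorb. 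The key gain is that invariance lets me write $T_2^{n^2+n}u=T_1^{n^2+n}u$. The average then involves only the single transformation $T_1$ along the essentially distinct iterates $n^2$ and $n^2+n$. By Theorem \ref{T: HK seminorm control for polys and single trafo}, or directly by PET and Example \ref{Ex: HK example}, it is controlled by $\nnorm{f_{1,\underline h,\underline h'}}_{s,T_1}$. Undoing the differencing via the inductive formula, and concatenating as in Section \ref{SSS: concatenation}, should then control the original average by a box seminorm of $f_1$ with directions $\be_1^{\times s'}$ and $(\be_1-\be_2)^{\times(s_2-1)}$; that is, one copy of the bad direction is lost.

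Third, I would run the same stashing argument symmetrically, now stashing $f_1$. Combined with the box seminorm control of $f_1$ just obtained, this transfers control back to $f_2$. The $T_1$-structured pieces become $T_2$-structured after using $T_2T_1^{-1}$-invariance again, which yields the bound by $\nnorm{f_2}_{\be_2^{\times s_1'},(\be_2-\be_1)^{\times(s_2-1)}}$. I expect the main obstacle to be the dual-difference interchange in the second step. There the differenced dual function mixes the $\be_2$-derivatives with $T_1^{n^2}T_2^{-(n^2+n)}$, and one must make sure the auxiliary functions that appear really are invariant under $T_2T_1^{-1}$, or else are eliminated by Cauchy–Schwarz, so that the $T_2\to T_1$ substitution is legitimate. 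One must also keep all losses polynomial so that the final $O(1)$ exponents survive. If the bookkeeping becomes unmanageable, my fallback is to argue qualitatively with factors: replace $f_2$ by its projection onto $\CZ(\be_2^{\times s_1},(\be_2-\be_1)^{\times(s_2-1)})$ and show that the orthogonal part contributes zero.
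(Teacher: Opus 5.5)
Your first two steps are the paper's \emph{ping} step. You stash $f_2$ into $F_2$, extract a $T_2T_1^{-1}$-invariant $u$ from the $\be_2-\be_1$ direction, and use $T_2^{n^2+n}u=T_1^{n^2+n}u$ to reach a single-transformation average, which gives Host--Kra control of $f_1$. In the base case $(s_1,s_2)=(0,1)$, the only case the paper proves, no dual-difference interchange is needed.

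The genuine gap is in your third (\emph{pong}) step. After stashing $f_1$ into $F_1$, all you know is that a seminorm of $F_1$ is large, and when $s_2=1$ its directions are exclusively $\be_1$. By \eqref{E: dual identity}, $F_1$ then correlates with a $T_1$-dual function $\CD=\CD_{s,T_1}(F_1)$. Unfolding $F_1$ and composing with $T_1^{n^2}$ gives
\begin{align*}
\lim_{N\to\infty}\E_{n\in[N]}\int f_0\cdot T_1^{n^2}\overline{\CD}\cdot T_2^{n^2+n}f_2\; d\mu\gg\delta^{O(1)}.
\end{align*}
Nothing in this expression is $T_2T_1^{-1}$-invariant. $\CD$ has no reason to be, and for $s_2=1$ no copy of $\be_2-\be_1$ survives the ping step at all. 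So ``using $T_2T_1^{-1}$-invariance again'' has nothing to act on, and your step does not transfer control to $f_2$.

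The missing idea is that $(T_1^n\overline{\CD})_n$ is a dual sequence, which behaves like a nilsequence. By \cite[Proposition 6.1]{Fr12}, finitely many applications of the van der Corput and Cauchy--Schwarz inequalities remove $T_1^{n^2}\overline{\CD}$, at the cost of duplicating $T_2^{n^2+n}f_2$. This leaves
\begin{align*}
\lim_{H\to\infty}\E_{h\in[H]^s}\lim_{N\to\infty}\norm{\E_{n\in[N]}\prod_{\eps\in\{0,1\}^s}T_2^{(n+\eps\cdot h)^2+(n+\eps\cdot h)}f_2}_{L^2(\mu)}\gg\delta^{O(1)}.
\end{align*}
For a full-density set of $h$, this is an average along essentially distinct polynomials of the single transformation $T_2$. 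Single-transformation PET plus quantitative concatenation then give $\nnorm{f_2}_{s_1',T_2}\gg\delta^{O(1)}$. This elimination is what turns $\be_1$-directions into $\be_2$-directions. Your qualitative fallback only restates the desired conclusion, and would also lose the polynomial bounds, so it does not supply this step.

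A smaller bookkeeping point concerns general $(s_1,s_2)$. The $\be_2$-derivatives that dual-difference interchange moves from $F_2$ onto $f_1$ do not disappear, and concatenation does not remove them. So the ping step gives control by $\nnorm{f_1}_{\be_1^{\times s'},\be_2^{\times s_1},(\be_2-\be_1)^{\times(s_2-1)}}$, not by $\nnorm{f_1}_{\be_1^{\times s'},(\be_1-\be_2)^{\times(s_2-1)}}$. This is harmless for the target, since those directions reappear as extra copies of $\be_2$ on $f_2$. It does mean that in the pong step you must first unfold the $\be_2$ and $\be_2-\be_1$ directions of $F_1$ and interchange again. Only then is a pure $T_1$-dual function of $\Delta_{\underline h}F_1$ left to eliminate as above.
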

Iterating Proposition \ref{P: smoothing}, we can get rid of all copies of $\be_2-\be_1$. 

The proof of Proposition \ref{P: smoothing} uses a lot of tricks similar to degree lowering, but the underlying philosophy is different: we do not lower the overall degree of the seminorm, only the number of the problematic vectors $\be_2-\be_1$. We present the proof of Proposition \ref{P: smoothing} in the relatively simple case $(s_1, s_2) = (0,1)$, noting that the general case requires nasty maneuvers like dual-difference interchange present in the proof of Proposition \ref{P: degree lowering s=3}. For more exposition of seminorm smoothing, consult \cite[Section 8]{FrKu22a}, \cite[Section 4]{FrKu22b}, or \cite[Section 4.2]{DKKST25}.
\begin{proof}[Proof for $(s_1, s_2) = (0,1)$]
    Let 
    \begin{align}\label{E: smoothing beginning}
        \delta := \lim_{N\to\infty}\norm{\E_{n\in[N]}T_1^{n^2}f_1\cdot T_2^{n^2+n}f_2}_{L^2(\mu)}.
    \end{align}
    By Lemma \ref{L: stashing in degree lowering}, we have
    \begin{align*}
        \norm{\E_{n\in[N]}T_1^{n^2}f_1\cdot T_2^{n^2+n}F_2}_{L^2(\mu)}\geq \delta^4,
    \end{align*}
    where 
    \begin{align*}
        F_2 :=\lim_{N\to\infty}\E_{n\in[N]}T_2^{-n^2-n}\overline{f_0}\cdot T_1^{n^2}T_2^{-n^2-n}\overline{f_1}
    \end{align*}
    for some 1-bounded $f_0\in L^\infty(\mu)$.
    Our assumption gives $\nnorm{F_2}_{\be_2-\be_1}\gg \delta^{O(1)}.$ Using the inverse theorem for the degree-1 box seminorm \eqref{E: deg 1 inverse theorem}, we have %we can find $u_2\in I(T_2T_1\inv)$ such that %$\int F_2\cdot \overline{u_2}\; d\mu \gg \delta^{O(1)}$. 
    \begin{align*}
        \int F_2\cdot \overline{u_2}\; d\mu \gg \delta^{O(1)}
    \end{align*}
    for $u_2 =\E(\overline{F_2}|\CI(T_2T_1\inv))$.
    Expanding the definition of $F_2$, taking complex conjugates, and composing the resulting integral with $T_2^{n^2 + n}$, we get 
    \begin{align*}
        \lim_{N\to\infty}\E_{n\in[N]}\int f_0 \cdot T_1^{n^2}f_1\cdot T_2^{n^2+n}u_2\; d\mu \gg \delta^{O(1)}.
    \end{align*}
    Much like in degree lowering, we have replaced the arbitrary function $f_2$ by a special function $u_2$. The $T_2T_1\inv$-invariance of $u_2$ means that $T_2^{n^2+n}u_2 = T_1^{n^2+n}u_2$, and hence
    \begin{align*}
        \lim_{N\to\infty}\E_{n\in[N]}\int f_0 \cdot T_1^{n^2}f_1\cdot T_1^{n^2+n}u_2\; d\mu \gg \delta^{O(1)}.
    \end{align*}
    This is the essence of seminorm smoothing: to reduce the original average of commuting transformations to one involving a single transformation. Using known Host-Kra seminorm estimates for the averages in the single-transformation case, we deduce that $\nnorm{f_1}_{s, T_1} \gg \delta^{O(1)}$ for some $s\in\N$. In other words, we have shown that $\nnorm{f_1}_{s, T_1}$ controls the original average with polynomial bounds.

    So far, we have passed from box seminorm control for $f_2$ to Host-Kra seminorm control for $f_1$. In the next step, we will transfer seminorm control back to $f_1$. Because of this going back-and-forth between seminorms of $f_1$ and $f_2$, we have called these two steps \emph{ping} and \emph{pong} in \cite{FrKu22a}.
    %In \cite{FrKu22a}, we have called this first step \emph{ping}, as it transferred seminorm control from $f_2$ to $f_1$. In the next step

    We proceed to the \emph{pong} step. Defining
        \begin{align*}
        F_1 :=\lim_{N\to\infty}\E_{n\in[N]}T_1^{-n^2}\overline{f_0}\cdot T_2^{n^2+n}T_1^{-n^2}\overline{f_2},
    \end{align*}
    we deduce once again from Lemma \ref{L: stashing in degree lowering} applied to the original average that 
       \begin{align*}
        \lim_{N\to\infty}\norm{\E_{n\in[N]}T_1^{n^2}F_1\cdot T_1^{n^2+n}f_2}_{L^2(\mu)} \geq\delta^4.
    \end{align*} 
    Applying the seminorm estimate from the \emph{ping} step with $F_1$ in place of $f_1$, we deduce that $\nnorm{F_1}_{s,T_1}\gg\delta^{O(1)}$. By the weak inverse theorem for the Host-Kra seminorms \eqref{E: dual identity}, we have
    \begin{align*}
        \int F_1 \cdot \CD\; d\mu \gg \delta^{O(1)},
    \end{align*}
    where $\CD = \CD_{s,T_1}(F_1)$ is the degree-$s$ dual function of $F_1$. Expanding the definition of $F_1$ and composing the resulting integral with $T_1^{n^2 + n}$, we get
    \begin{align*}
                \lim_{N\to\infty}\E_{n\in[N]}\int f_0 \cdot T_1^{n^2}\overline{\CD}\cdot T_2^{n^2+n}f_2\; d\mu \gg \delta^{O(1)}.
    \end{align*}
    The point now is that the sequence $(T_1^n \overline{\CD})_n$ is a so-called \emph{dual sequence}, and it behaves in many ways like a nilsequence. In particular, it was shown in \cite[Proposition 6.1]{Fr12} that finitely many applications of the Cauchy-Schwarz and van der Corput inequalities completely eliminate this term from the average at the cost of duplicating $T_2^{n^2+n}f_2$ each time the inequalities are applied. As an end result of this procedure, we get an expression like
    \begin{align*}
        \lim_{H\to\infty}\E_{h\in[H]^s}\lim_{N\to\infty}\norm{\E_{n\in[N]}\prod_{\eps\in\{0,1\}^s}T_2^{(n+\eps\cdot h)^2+(n+\eps\cdot h)}f_2}_{L^2(\mu)}\gg \delta^{O(1)}.
    \end{align*}
    For a full-density set of $h$'s, this is an average of essentially distinct polynomials of a single transformation, and so seminorm estimates in the single-transformation case combined with a quantitative concatenation argument like the one outlined in Section \ref{SSS: concatenation} give the claimed conclusion $\nnorm{f_2}_{s'_1,T_2}\gg\delta^{O(1)}$ for some $s'_1\in\N$.
\end{proof}
The seminorm smoothing argument was developed by Frantzikinakis and the author to prove Theorem \ref{T: FrKu pairwise affine independent}\eqref{i: poly HK seminorm control}, seminorm estimates for pairwise independent integer polynomials. At that time, the quantitative concatenation arguments from \cite{DKKST24, KKL24a, Kuc23} were not yet known, and so the proof concluded with the qualitative conclusion from Theorem \ref{T: FrKu pairwise affine independent}\eqref{i: poly HK seminorm control}. The fully quantitative Host-Kra seminorm estimates were provided by Donoso, Koutsogiannis, Sun, Tsinas, and the author \cite[Theorem 1.16]{DKKST24} for pairwise independent Hardy sequences. This result used essentially the same seminorm smoothing argument, but with a much more robust PET argument and quantitative rather than qualitative concatenation.
%We state below a special case of \cite[Theorem 1.16]{DKKST24} for pairwise independent fractional polynomials with arbitrary positive powers (hence it also covers real polynomials).
\begin{theorem}[Quantitative Host-Kra seminorm estimates for pairwise independent Hardy sequences]\label{T: HK estimates for pairwise independent polys}
For any pairwise independent %$0<b_1 < \cdots < b_d$, and for $1\leq j\leq \ell$, define $a_j(t) = \sum_{i=1}^d \alpha_{ji}t^{b_i}$ for some $\alpha_{ji}\in\R$. If the functions 
$a_1, \ldots, a_\ell\in\CH$, we can find $C>0$ and $s\in\N$ so that for any system $(X, \CX, \mu, T_1, \ldots,T_\ell)$ and any 1-bounded functions $f_1, \ldots, f_\ell\in L^\infty(\mu)$, we have
\begin{align*}
    \brac{\limsup_{N\to\infty}\norm{\E_{n\in[N]}T_1^{\sfloor{a_1(n)}}f_1\cdots T_\ell^{\sfloor{a_\ell(n)}}f_\ell}_{L^2(\mu)}}^C \leq C \min_{1\leq j\leq \ell}\nnorm{f_j}_{s,T_j}.
\end{align*}
% Let $0<b_1 < \cdots < b_d$, and for $1\leq j\leq \ell$, define $a_j(t) = \sum_{i=1}^d \alpha_{ji}t^{b_i}$ for some $\alpha_{ji}\in\R$. If the functions $a_1, \ldots, a_\ell$ are pairwise independent, then we can find $C>0$ and $s\in\N$ so that for any system $(X, \CX, \mu, T_1, \ldots,T_\ell)$ and any 1-bounded functions, we have
% \begin{align*}
%     \brac{\limsup_{N\to\infty}\norm{\E_{n\in[N]}T_1^{\sfloor{a_1(n)}}f_1\cdots T_\ell^{\sfloor{a_\ell(n)}}f_\ell}_{L^2(\mu)}}^C \leq C \min_{1\leq j\leq \ell}\nnorm{f_j}_{s,T_j}.
% \end{align*}
\end{theorem}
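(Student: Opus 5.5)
We would prove Theorem \ref{T: HK estimates for pairwise independent polys} by the three-stage pipeline of this section: PET, concatenation, and seminorm smoothing, all kept quantitative. By symmetry it suffices to bound the average by $\nnorm{f_\ell}_{s,T_\ell}$; the other indices follow by relabeling. The bound by $\min_j\nnorm{f_j}_{s,T_j}$ then holds after enlarging $s$ and $C$ to the worst case over $j$.

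\textbf{Stage 1 (PET and concatenation).} We first Taylor expand each $a_j$ on short intervals $(N,N+L(N)]$, which turns the average into averages of variable polynomials with controlled error terms. On each short interval we run the coefficient tracking PET scheme, which yields a bound by an average over $h\in[H]^{s_0}$ of box seminorms $\nnorm{f_\ell}_{\bc_1(h),\ldots,\bc_{s_0}(h)}$. Here the $\bc_i$ are multilinear in $h$, and their coefficients are leading coefficients of $a_\ell\be_\ell$ and of the differences $a_\ell\be_\ell-a_j\be_j$. Pairwise independence is what guarantees that these differences are genuinely two-dimensional and not degenerate. Quantitative concatenation, in the form behind Theorem \ref{T: DKKST box seminorm control} and its Hardy version \cite[Theorem 10.1]{DKKST24}, then gives the polynomial bound
\begin{align*}
\nnorm{f_\ell}_{\be_\ell^{\times s_1},(\be_\ell-\be_1)^{\times s_1},\ldots,(\be_\ell-\be_{\ell-1})^{\times s_1}}.
\end{align*}
More precisely, the directions are the robust "leading coefficient" vectors $c_\ell\be_\ell-c_j\be_j$. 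Pairwise independence makes these directions nontrivial in both coordinates.

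\textbf{Stage 2 (seminorm smoothing).} We remove the mixed directions one at a time, following Proposition \ref{P: smoothing}. We induct on the number of mixed directions, and, for $\ell>2$, also on $\ell$, so that averages with fewer transformations already enjoy Host-Kra control.

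In the \emph{ping} step we stash $f_\ell$ into a dual-type function $F_\ell$ via Lemma \ref{L: stashing in degree lowering}. We then use the inverse theorem for the last mixed direction, after dual-difference interchange over the remaining directions. This replaces $f_\ell$ by a function invariant under $T_\ell^{c_\ell}T_j^{-c_j}$, up to averaging over $h$. That invariance converts $T_\ell^{a_\ell(n)}$ into a power of $T_j$, which reduces the number of transformations. The induction hypothesis on $\ell$ then gives Host-Kra control of some other function $f_i$, with polynomial bounds.

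In the \emph{pong} step we stash that $f_i$, use the dual function $\CD_{s,T_i}$, and eliminate the resulting dual sequence by van der Corput \cite[Proposition 6.1]{Fr12}. This returns to an average in which the last mixed direction is now replaced by copies of $\be_\ell$.

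\textbf{Main obstacle.} The ping step for Hardy sequences is the hardest part. The invariance under $T_\ell^{c_\ell}T_j^{-c_j}$ only interacts cleanly with $\sfloor{a_\ell(n)}$ after the iterates are rescaled and rounding errors are absorbed. The robust box seminorms of \cite[Section 3]{DKKST24} are needed precisely to handle irrational coefficient ratios. All of this has to be done while keeping every bound polynomial, and uniform across the short-interval polynomial approximations.
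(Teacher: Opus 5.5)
Your overall route (short-interval Taylor expansion, coefficient-tracking PET, quantitative concatenation, then ping/pong smoothing) is the one the paper sketches. However, the proposal never uses pairwise independence where it is actually needed. As written, every step goes through verbatim for the pairwise dependent pair $(T_1^{n^2},T_2^{n^2})$, where the conclusion is false: take $T_1=T_2$ weakly mixing, $f_1=f$, $f_2=\overline f$ with $\int f\,d\mu=0$; then $\nnorm{f_1}_{s,T_1}=0$ for all $s$, while the average converges to $\int|f|^2\,d\mu$.

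Stage 1 uses nothing beyond $a_j\succ\log$, because the vector family $a_j\be_j$ is automatically essentially distinct. Indeed, Theorem \ref{T: DKKST box seminorm control} gives the non-degenerate direction $\be_2-\be_1$ for $(n^2,n^2)$ just as well. The hypothesis enters in the ping step. After you use the invariance of $u$ along $c_\ell\be_\ell-c_j\be_j$, the function $u$ sits at (a rounding of) the iterate $\frac{c_j}{c_\ell}a_\ell$ of $T_j$, next to $f_j$ at $a_j$. The fewer-transformation estimate you invoke applies only if these two iterates are essentially distinct, i.e. $c_\ell a_j-c_ja_\ell\succ\log$. That is exactly pairwise independence, and checking it is the heart of the argument.

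This also means the induction cannot run on the theorem as stated. The reduced average still has $\ell$ functions, two of them on $T_j$. So the inductive statement must cover pairwise independent families in which several iterates share a transformation. This class is preserved by the rewriting, since rescaling one member preserves pairwise independence. The base case is the quantitative single-transformation estimate for essentially distinct Hardy sequences.

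A smaller issue is the reduction to $f_\ell$ by relabeling, together with your claim that all directions are nontrivial in both coordinates. This fails when growth rates differ. For the pairwise independent pair $a_1(t)=t^2$, $a_2(t)=t$ with target $f_2$, the leading coefficient of $a_2\be_2-a_1\be_1$ is $-\be_1$. Invariance of $u$ under $T_1$ does not turn $T_2^{a_2(n)}u$ into a power of $T_1$, so your ping step stalls.

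To repair this, run Stages 1 and 2 only for an index $m$ whose iterate has maximal growth; then every direction is either a multiple of $\be_m$ or genuinely mixed. Afterwards, transfer control to each other index by a pong-type step:
stash $f_m$;
use its Host--Kra control to correlate it with a dual function of $T_m$;
eliminate the resulting dual sequence by van der Corput;
apply the induction hypothesis to the remaining, simpler average.
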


\subsubsection{Pairwise dependent case and relative concatenation}\label{SSS: pairwise dependence}
For ergodic averages along pairwise independent sequences, %whatever that means for a particular class of sequences, 
one can typically hope for Host-Kra seminorm control in accordance with Heuristic \ref{H: pairwise independent seminorm}. For pairwise dependent sequences, not only is this normally not possible, but in fact there seem to be no ``optimal'' seminorm estimates. For instance, the works of Donoso, Ferr\'e-Moragues, Koutsogiannis, and Sun \cite{DFKS22} as well as Donoso, Koutsogiannis, Sun, Tsinas and the author \cite{DKKST25} establish two different box seminorm estimates for averages along pairwise dependent polynomials.
\begin{example}[Different seminorm estimates for $(T_1^{n^2}, T_2^{n^2}, T_3^{n^2+n})_n$]\label{Ex: n^2, n^2, n^2+n}
    As a special case of Theorem \ref{T: DKKST box seminorm control}, it follows that
    %\cite[Example 8]{DFKS22}, a special case of \cite[Theorem 2.5]{DFKS22},
    \begin{align}\label{E: n^2, n^2, n^2+n}
        \lim_{N\to\infty}\norm{\E_{n\in[N]}T_1^{n^2}f_1\cdot T_2^{n^2}f_2\cdot T_3^{n^2+n}f_3}_{L^2(\mu)} = 0
    \end{align}
    whenever $\nnorm{f_3}_{\be_3^{\times s}, (\be_3-\be_2)^{\times s}, (\be_3-\be_1)^{\times s}} = 0$ for some $s\in\N$. By contrast, \cite[Theorem 4.5]{DKKST25} gives \eqref{E: n^2, n^2, n^2+n} whenever $\nnorm{f_3}_{\be_3^{\times s'}, (\be_2-\be_1)^{\times s'}} = 0$ for some $s'\in\N$. 
    The two estimates are not comparable as each box seminorm involves some vectors absent from the second one. 
    %They can also be used for different purposes. In \cite{FrKu22b}, Frantzikinakis and the author used the first estimate to show that 
    
    Any of these two seminorm estimates (and analogous estimates for $f_1,f_2$) can be used to show that if $\CI(T_1\inv T_2) = \CI(T_1)\cap \CI(T_2)$, i.e. the only sets invariant under $T_1\inv T_2$ are those simultaneously invariant under $T_1$ and $T_2$, then the average in  \eqref{E: n^2, n^2, n^2+n} admits Host-Kra seminorm control. Different proofs of this assertion can be found in \cite{FrKu22b} and \cite{DKKST25}. This condition is essentially sharp, in the sense that if there exists a $T_1\inv T_2$-invariant $f$ that is not $T_1,T_2$-invariant, then on taking $f_1 = f$, $f_2 = \overline{f_2}$, $f_3 =1$, we get 
    \begin{align*}
        \lim_{N\to\infty}\norm{\E_{n\in[N]}T_1^{n^2}f_1\cdot T_2^{n^2}f_2\cdot T_3^{n^2+n}f_3}_{L^2(\mu)} = \lim_{N\to\infty}\norm{\E_{n\in[N]}T_1^{n^2}|f_1|^2}_{L^2(\mu)},
    \end{align*}
    which is controlled by $\nnorm{|f_1|^2}_{2,T_1}$, but in general not by $\nnorm{f_1}_{s,T_1}$ for any $s\in\N$.
    %holds whenever $\nnorm{f_3}_{s'',T_3} = 0$ for some $s''\in\N$.
\end{example}

In \cite{FrKu22b}, Frantzikinakis and the author developed a more sophisticated variant of the seminorm smoothing argument with the goal of obtaining Host-Kra seminorm control for averages involving pairwise dependent polynomials under some necessary invariance assumptions. The Host-Kra seminorm control result mentioned in Example \ref{Ex: n^2, n^2, n^2+n} is a special case. Unfortunately, this argument relies on a delicate induction that breaks for sequences that are pairwise dependent with irrational coefficients, like $(T_1^{\sfloor{\sqrt{2}n^2}}, T_2^{\sfloor{\sqrt{3}n^2}}, T_3^{n^2+n})_n$. In \cite[Section 4]{DKKST25}, Donoso, Koutsogiannis, Sun, Tsinas, and the author found a more robust way of running the seminorm smoothing argument in the pairwise dependent context, one that extends to Hardy sequences. As already mentioned above, this method gives different box seminorm estimates for averages along Hardy sequences than those provided by Theorem \ref{T: DKKST box seminorm control}. Those new estimates played a key role in the proof of Theorem \ref{T: DKKST sufficient condition}, the difficult direction of the joint ergodicity classification problem for Hardy sequences.

One important technique that the smoothing argument from \cite{DKKST25} added to the toolbox is \textit{relative concatenation}. In their foundational work on concatenation, Tao and Ziegler \cite[Theorem 1.16]{TZ16} showed that for any subgroups $G_1, \ldots, G_s, G_1',\ldots, G_s'\subseteq \Z^\ell$,
\begin{align}\label{E: TZ concatenation}
    \CZ(G_1, \ldots, G_s) \cap \CZ(G_1', \ldots, G_{s'}')\subseteq \CZ(G_i + G'_j\colon\; 1\leq i\leq s,\; 1\leq j\leq s'),
\end{align}
i.e. an intersection of two box factors lies in a box factor of higher degree but larger groups. For instance, if $s=s'=2$, then
\begin{align*}
    \CZ(G_1, G_2)\cap\CZ(G_1', G_2')\subseteq \CZ(G_1 + G_1', G_1 + G_2', G_2+G_1', G_2+G_2').
\end{align*}
Such a result is particularly useful if the groups $G_i + G_j'$ are much larger than $G_i,G_j'$ individually, e.g. if $G_i + G_j' = \Z^\ell$ for all $i,j'$. For instance, if $\ell = 2$, $G_i = \langle \be_1\rangle$ and $G_j' = \langle \be_2\rangle$ for all $i,j'$, then \eqref{E: TZ concatenation} gives that
\begin{align*}
    \CZ_{s-1}(\be_1) \cap \CZ_{s'-1}(\be_2) \subseteq \CZ_{s+s'-1}(\Z^2).
\end{align*}

An important corollary of \eqref{E: TZ concatenation} is \cite[Corollary 1.21]{TZ16}, which asserts that any $f\in L^\infty(\mu)$ can be decomposed as $f_1 +f_2+f_3$, where 
\begin{gather*}
    \E(f_1|\CZ(G_1, \ldots, G_s)) = 0,\quad 
    %\textrm{and}\quad 
    \E(f_2|\CZ(G_1', \ldots, G_{s'}'))=0\\
    \E(f_3|\CZ(G_i + G_j'\colon\; 1\leq i\leq s,\; 1\leq j\leq s')) = 0.
\end{gather*}

In \cite[Theorem 3.1]{DKKST25}, Donoso, Koutsogiannis, Sun, Tsinas, and the author obtain a relative version of this result: for any subgroups $H_1, \ldots, H_r\subseteq \Z^\ell$, we get the following generalization of \eqref{E: TZ concatenation}:
\begin{multline}\label{E: DKKST concatenation}
    \CZ(H_1, \ldots, H_r, G_1, \ldots, G_s) \cap \CZ(H_1, \ldots, H_r, G_1', \ldots, G_{s'}')\\
    \subseteq \CZ(H_1, \ldots, H_r, G_i + G_j'\colon\; 1\leq i\leq s,\; 1\leq j\leq s').
\end{multline}
For instance,
\begin{align*}
        \CZ(H, G_1, G_2)\cap\CZ(H, G_1', G_2')\subseteq \CZ(H, G_1 + G_1', G_1 + G_2', G_2+G_1', G_2+G_2').
\end{align*}

While \eqref{E: DKKST concatenation} may seem like a small improvement over \eqref{E: TZ concatenation}, the corresponding decomposition result for functions found a vital application in the seminorm smoothing argument for pairwise dependence sequences from \cite{DKKST25}. We demonstrate its utility with the following example; for a more robust example, we encourage the reader to consult \cite[Section 4.2]{DKKST25}.
\begin{example}
    We want to show how the second estimate from Example \ref{Ex: n^2, n^2, n^2+n} can be deduced from the first. Suppose that we know that for some values $s_1, s_2, s_3\in\N$, we have \eqref{E: n^2, n^2, n^2+n} whenever 
    \begin{align}\label{E: relative concatenation 1}
        \nnorm{f_3}_{\be_3^{\times s_1}, (\be_2-\be_1)^{\times s_2}, (\be_3-\be_2)^{\times s_3}} = 0.
    \end{align}
    (Such an estimate forms an intermediate step of passing between two estimates from Example \ref{Ex: n^2, n^2, n^2+n}. We arrive at it after removing all the vectors $\be_3-\be_1$ at the cost of getting some vectors $\be_2-\be_1$ and increasing the number of vectors $\be_3$.)
    To obtain the desired estimate, we want to get rid of the vectors $\be_3-\be_2$ altogether. In the process of running the \textit{ping} and \textit{pong} step of the seminorm smoothing argument, we establish that \eqref{E: n^2, n^2, n^2+n} also holds whenever
    \begin{align}\label{E: relative concatenation 2}
        \nnorm{f_3}_{\be_3^{\times s_1'}, (\be_2-\be_1)^{\times s_2'}, (\be_3-\be_2)^{\times (s_3-1)}, \be_2^{\times s_4'}} = 0
    \end{align}
    for some $s_1',s_2',s_3'\in\N$. Without loss of generality, we can assume that $s_1 = s_1'$ and $s_2=s_2'$; if not, we simply replace them with $\max(s_1,s_1'), \max(s_2,s_2')$ respectively and use the monotonicity property of box seminorms. 

    We now want to use relative concatenation to get that \eqref{E: n^2, n^2, n^2+n} holds whenever
    \begin{align*}%\label{E: relative concatenation 3}
        \nnorm{f_3}_{\be_3^{\times (s_1+s_4')}, (\be_2-\be_1)^{\times s_2}, (\be_3-\be_2)^{\times (s_3-1)}} = 0.
    \end{align*}
    Since $\be_3 \in \langle \be_3-\be_2, \be_2\rangle$, the claim will follow from \eqref{E: scaling 1} if we can show that \eqref{E: n^2, n^2, n^2+n} holds whenever 
    \begin{align}\label{E: relative concatenation 3}
        \nnorm{f_3}_{\be_3^{\times s_1}, (\be_2-\be_1)^{\times s_2}, (\be_3-\be_2)^{\times (s_3-1)}, \langle \be_3 -\be_2, \be_2 \rangle^{\times s_4'}} = 0.
    \end{align}
    Indeed, if \eqref{E: relative concatenation 3} holds, then using 
    \begin{multline*}
        \CZ(\be_3^{\times s_1}, (\be_2-\be_1)^{\times s_2}, (\be_3-\be_2)^{\times s_3})\cap\CZ(\be_3^{\times s_1}, (\be_2-\be_1)^{\times s_2}, (\be_3-\be_2)^{\times (s_3-1)}, \be_2^{\times s_4'})\\
        \subseteq \CZ(\be_3^{\times s_1}, (\be_2-\be_1)^{\times s_2}, (\be_3-\be_2)^{\times (s_3-1)}, \langle \be_3 -\be_2, \be_2 \rangle^{\times s_4'}),
    \end{multline*}
    a special form of \eqref{E: DKKST concatenation}, we can split $f_3 = f_3'+f_3''$, where $f_3'$ satisfies \eqref{E: relative concatenation 1} while $f_3''$ satisfies \eqref{E: relative concatenation 2}. Using our previous two estimates, we thus deduce that both $f_3'$ and $f_3''$ contribute 0 to the average, and hence does $f_3$. 
\end{example}

Despite sharing many features, the proofs and statements of concatenation of factors differ markedly from those for quantitative concatenation. It would be interesting to find a uniform perspective on both types of results, leading to the rather open question below.
\begin{problem}[Uniform perspective on concatenation]
    Can we find a single concatenation result that implies both the results on concatenation of factors, proved in \cite{DKKST25, TZ16} and presented in Section \ref{SSS: pairwise dependence}, and the results on quantitative concatenation, proved in \cite{DKKST24, KKL24a, Kuc23,  Pel20, PP19} and presented in Section \ref{SSS: concatenation}?
\end{problem}

\section{Beyond joint ergodicity}\label{S: beyond joint ergodicity}
Joint ergodicity is the dream scenario, one where the limit is as simple and explicit as it could be. But what if it is not there? This section aims to briefly summarize the main results and open problems on the limits of multiple ergodic averages outside the realm of joint ergodicity.

% \subsection{Identities for the same iterates}
% We start with the case that is as far from joint ergodicity as possible: when all the sequences are the same, or at least multiples of each other. This section thus deals with the following rather general problem.
% \begin{problem}\label{Pr: identity along corners}
%     For which sequences $a:\N\to\Z$ and systems $(X, \CX, \mu,$\! $T_1, \ldots, T_\ell)$ do we have the identity
%     \begin{align}\label{E: identity along corners}
%         \lim_{N\to\infty}\norm{\E_{n\in[N]}\prod_{j=1}^\ell T_j^{a(n)}f_j - \E_{n\in[N]}\prod_{j=1}^\ell T_j^nf_j}_{L^2(\mu)} = 0
%     \end{align}
%     for all $f_1, \ldots, f_\ell\in L^\infty(\mu)$?
% \end{problem}
% For this problem, the single-transformation case corresponds to taking $T_j = T^j$ for some system $(X, \CX, \mu, T)$, in which case we aim at the identity
% \begin{align}\label{E: identity along APs}
%         \lim_{N\to\infty}\norm{\E_{n\in[N]}\prod_{j=1}^\ell T^{j a(n)}f_j - \E_{n\in[N]}\prod_{j=1}^\ell T^{jn}f_j}_{L^2(\mu)} = 0.
%     \end{align}

% The single-transformation case of Problem \ref{Pr: identity along corners} was studied by Frantzikinakis, who proved the following two results.
% \begin{theorem}
%     Let $a\in\Z[t]$ be nonconstant and $(X, \CX, \mu, T)$ be totally ergodic. Then \eqref{E: }
% \end{theorem}

\subsection{Identities for arithmetic progressions and corners}
We start with a case that is as far from joint ergodicity as possible: when all the sequences are the same, or at least multiples of each other. In this case, we can hope to describe a limit neatly by comparison as explained in Section \ref{SSS: limit by comparison}. The properties that we shall investigate in this subsection are summarized in the following definition.
\begin{definition}[Good behavior along APs and corners]
    %Let $\ell\in\N$. 
    We say that $a:\N\to\Z$ is:
    \begin{enumerate}
        \item \emph{well-behaved along $\ell$-APs\footnote{Of course, ``$\ell$-APs'' stand for ``$\ell$-term arithmetic progressions''.} for the system $(X, \CX, \mu, T)$} if 
        \begin{align}\label{E: identity along APs}
        \lim_{N\to\infty}\norm{\E_{n\in[N]}\prod_{j=1}^\ell T^{j a(n)}f_j - \E_{n\in[N]}\prod_{j=1}^\ell T^{jn}f_j}_{L^2(\mu)} = 0
    \end{align}
    holds for all $f_1, \ldots, f_\ell\in L^\infty(\mu)$;
    \item \emph{well-behaved along APs for the system $(X, \CX, \mu, T)$} if it is well-behaved along $\ell$-APs for the system for any $\ell\in\N$;
    \item \emph{well-behaved along $\ell$-corners for the system $(X, \CX, \mu,$\! $T_1, \ldots, T_\ell)$} if 
        \begin{align}\label{E: identity along corners}
        \lim_{N\to\infty}\norm{\E_{n\in[N]}\prod_{j=1}^\ell T_j^{a(n)}f_j - \E_{n\in[N]}\prod_{j=1}^\ell T_j^nf_j}_{L^2(\mu)} = 0
    \end{align}
    for all $f_1, \ldots, f_\ell\in L^\infty(\mu)$;
    \item \emph{well-behaved along corners for the system $(X, \CX, \mu,$\! $T_1, \ldots, T_\ell)$} if it is well-behaved along $\ell$-corners for the system for any $\ell\in\N$;
    \item \emph{well-behaved along ($\ell$-)APs/corners} if it is well-behaved along ($\ell$-)APs/corners for any system.
    %\item \emph{well-behaved along APs}
    \end{enumerate}
\end{definition}
In \cite{FrKu22c}, a property very similar to good behavior along $\ell$-APs was called ``being good for mean convergence along $\ell$-APs''.  

This section  deals with the following rather general problems.
\begin{problem}\label{Pr: identity along APs}
    Let $\ell\in\N$. When is a sequence $a:\N\to\Z$ well-behaved along $\ell$-APs for a system $(X,\CX,\mu,T)$?
\end{problem}
\begin{problem}\label{Pr: identity along corners}
Let $\ell\in\N$. When is a sequence $a:\N\to\Z$ well-behaved along $\ell$-corners for a system $(X,\CX,\mu,T_1, \ldots, T_\ell)$?
    % For which sequences $a:\N\to\Z$ and systems $(X, \CX, \mu,$\! $T_1, \ldots, T_\ell)$ do we have the identity
    % \begin{align}\label{E: identity along corners}
    %     \lim_{N\to\infty}\norm{\E_{n\in[N]}\prod_{j=1}^\ell T_j^{a(n)}f_j - \E_{n\in[N]}\prod_{j=1}^\ell T_j^nf_j}_{L^2(\mu)} = 0
    % \end{align}
    % for all $f_1, \ldots, f_\ell\in L^\infty(\mu)$?
\end{problem}
We can consider many variants of Problems \ref{Pr: identity along APs} and \ref{Pr: identity along corners}: we can fix $\ell$ and the system, or we may want the good behavior property to hold for all $\ell$, all systems, or both.
%for fixed $\ell$, system, or both, or we can

In general, good behavior along $\ell$-APs does not imply good behavior along $(\ell+1)$-APs: to see this, take $a(n)$ to be the indicator function of $\{n\in\N\colon 1/4<\norm{\sqrt{2}n^{\ell+1}}<1/2\}$ (see \cite[Theorem A]{FLW06} or the remark below \cite[Theorem 3.4]{FrKu22c} for explanation).

\subsubsection{Good behavior along APs for specific sequences}
%In early work on Problem \ref{Pr: identity along APs}, Frantzikinakis proved the following two results.
For integer polynomials and primes, i.e. sequences not uniformly distributed in residue classes, Frantzikinakis, Frantzikinakis-Host-Kra, as well as Moreira-Richter proved the following results.
\begin{theorem}[{\cite[Theorem A]{Fr08}}]\label{T: Fr polys along APs}
    Any nonconstant $a\in\Z[t]$ is well-behaved along APs for any totally ergodic system.
    %$(X, \CX, \mu, T)$ be totally ergodic
\end{theorem}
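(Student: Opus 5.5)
The plan is to prove that, for a nonconstant $a\in\Z[t]$ and a totally ergodic $(X,\CX,\mu,T)$, the averages $\E_{n\in[N]}\prod_{j=1}^\ell T^{ja(n)}f_j$ and $\E_{n\in[N]}\prod_{j=1}^\ell T^{jn}f_j$ have the same $L^2(\mu)$ limit. We first reduce both averages to the same characteristic factor and then compare them on nilsystems.

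\textbf{Step 1 (common characteristic factor).} For the linear average, Example \ref{Ex: HK example} gives control by $\nnorm{f_j}_{\ell,T}$, hence by the factor $\CZ_{\ell-1}(T)$. For the polynomial average, the iterates $a(n),2a(n),\ldots,\ell a(n)$ are essentially distinct, since $ja-ia=(j-i)a$ is nonconstant. Theorem \ref{T: HK seminorm control for polys and single trafo} therefore gives control by $\nnorm{\cdot}_{s,T}$ for some $s$. We may take a common $s\ge \ell$ and replace each $f_j$ by $\E(f_j|\CZ_{s-1}(T))$ in both averages. By Theorem \ref{T: HK structure theorem} and an $L^2$ approximation with the martingale theorem, it then suffices to prove the identity when $X=G/\Gamma$ is an ergodic nilsystem with nilrotation by $b$ and each $f_j$ is continuous. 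Total ergodicity of $T$ passes to this factor, so the nilrotation is totally ergodic.

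\textbf{Step 2 (equidistribution on nilmanifolds).} Fix $x$ and look at the orbit $g(n)=(b^{a(n)}x,b^{2a(n)}x,\ldots,b^{\ell a(n)}x)$ in $X^\ell$, together with the linear orbit $(b^nx,\ldots,b^{\ell n}x)$. By Leibman's theorem, the linear orbit equidistributes on a subnilmanifold $Y_x\subseteq X^\ell$: the orbit closure of $(x,\ldots,x)$ under $(b,b^2,\ldots,b^\ell)$. The goal is to show that the polynomial orbit equidistributes on the same $Y_x$ for a.e. $x$. Both sequences take values in the orbit of the connected-component structure, so by Leibman's criterion equidistribution on $Y_x$ is equivalent to equidistribution of the projection to the horizontal torus of $Y_x$ (the maximal factor torus of $Y_x$). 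On that torus, the linear orbit equals $n\beta$ and the polynomial orbit equals $a(n)\beta$ for some $\beta$ in a subtorus. Total ergodicity means that the eigenvalues of $T$ on its Kronecker factor contain no nonzero rationals. So for every nontrivial character $\chi$ of that torus, $\chi(\beta)$ is irrational, and Weyl's theorem gives that $\chi(a(n)\beta)$ equidistributes, since $a$ is nonconstant with integer coefficients.

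\textbf{Main obstacle.} The hard step is Step 2: identifying the horizontal torus of $Y_x$ and checking that it is generated by the same element along both sequences. The danger is that the relevant subtorus of $Y_x$ is not the diagonal image of the Kronecker factor, because $Y_x$ is a proper subnilmanifold of $X^\ell$. I would first handle the case where $G$ is connected, which total ergodicity allows after passing to a suitable presentation. In that case I would use the Host–Kra/Ziegler description of $Y_x$ as an orbit of the Hall–Petresco group $\{(g,g^2h_2,\ldots)\}$. Its horizontal torus is then spanned by the images of $(b,b^2,\ldots,b^\ell)$ together with higher-commutator directions. Those higher-commutator directions are exactly the issue. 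I would test polynomial characters on $Y_x$ against $g(n)$ by a vdC/PET argument to show they average to zero. Failing that, I would invoke Leibman's equidistribution criterion directly and rule out nontrivial horizontal characters vanishing along $g(n)$ by means of total ergodicity.

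Once equidistribution of both orbits on $Y_x$ holds for a.e. $x$, integrating $\prod_j f_j$ gives the same limit pointwise, hence in $L^2(\mu)$, which proves the identity for every $\ell$.
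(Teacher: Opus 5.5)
Your Step 1 and the overall architecture are sound: reduce to a totally ergodic nilsystem, then compare the orbits of $\tilde b:=(b,b^2,\ldots,b^\ell)$ on $Y_x$. Step 2, however, has a genuine gap, and the sentence that carries the argument is a non sequitur. For a character $\chi$ of the horizontal torus of $Y_x$, the number $\chi(\beta)$ is an eigenvalue of the nilrotation $\tilde b$ acting on $Y_x$, not of $T$. The spectrum of $(Y_x,\tilde b)$ is not controlled by $\spec(T)$, and it can even depend on the base point.

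Here is an example. Let $X=\T^2$ and $T(u,v)=(u+\alpha,v+u)$ with $\alpha\notin\Q$; this is a totally ergodic nilsystem with $\spec(T)=\Z\alpha$. Take $\ell=2$ and $\psi((u_1,v_1),(u_2,v_2)):=e(u_1+4v_1-v_2)$. Using $T^n(u,v)=(u+n\alpha,v+nu+\binom{n}{2}\alpha)$, one finds $\psi(T^nx,T^{2n}x)=e(u+3v)\,e(2nu)$ for $x=(u,v)$. So $\tilde b$ has the eigenvalue $2u$ on $Y_x$. For generic $u$ this is not in $\spec(T)$, and for $u=1/6$ it equals $1/3$. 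For such $x$ the set $Y_x$ is disconnected and the conclusion of Step 2 fails: along $a(n)=n^2$ the averages of $\psi$ converge to $e(u+3v)\,\tfrac{1+2e(1/3)}{3}\neq 0$, whereas along $n$ they converge to $0$.

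So the irrationality you need can only hold for $\mu$-a.e.\ $x$. That would suffice, since you integrate in $x$ at the end, but it cannot be read off from the absence of rational eigenvalues of $T$. Also, Leibman's reduction to the horizontal torus, in the form you use it, presupposes that $Y_x$ is connected. Your ``Main obstacle'' paragraph points at exactly this issue but offers only tentative strategies, so the decisive step is missing.

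The point to isolate is that $(b^{a(n)}x,\ldots,b^{\ell a(n)}x)=\tilde b^{a(n)}(x,\ldots,x)$ is a polynomial subsequence of the $\tilde b$-orbit. Moreover, $\tilde b$ acts uniquely ergodically on the orbit closure $Y_x$. Hence all you need is that for $\mu$-a.e.\ $x$ the nilrotation $\tilde b$ is \emph{totally} ergodic on $Y_x$, equivalently that $Y_x$ is connected. Leibman's theorem together with Weyl's theorem then give equidistribution of $\tilde b^{a(n)}(x,\ldots,x)$ in $Y_x$, and you never need to identify the higher-commutator directions. This almost-everywhere connectedness is a genuine structural fact about orbits of the diagonal, and it has to be proved. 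For example, you could try to derive it from an explicit Hall--Petresco description of $Y_x$, checking that the orbit closure is the same for $b$ and for every $b^r$, all of which are ergodic.

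Alternatively, within the framework of this survey you can bypass the diagonal orbit entirely. Since $a,2a,\ldots,\ell a$ are essentially distinct, Theorem~\ref{T: HK seminorm control for polys and single trafo} provides Host--Kra seminorm control. Theorem~\ref{T:APsConvergenceTE} then reduces the claim to equidistribution of single orbits $(S^{a(n)}y)_n$ on totally ergodic nilsystems $(Y,\CY,\nu,S)$. That is the one-polynomial case of \cite[Theorem 1.2]{FrKr05} (or Leibman plus Weyl), and it works because there total ergodicity is assumed for the very rotation being sampled.
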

For the necessity of total ergodicity, recall Example \ref{Ex: n^2}.
\begin{theorem}\label{T: primes along APs}
    The sequence of primes is well-behaved along APs for any totally ergodic system.
    %$(X, \CX, \mu, T)$ be totally ergodic
\end{theorem}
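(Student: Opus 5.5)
The plan is to prove the identity
\begin{align*}
\lim_{N\to\infty}\norm{\E_{n\in[N]}\prod_{j=1}^\ell T^{jp_n}f_j - \E_{n\in[N]}\prod_{j=1}^\ell T^{jn}f_j}_{L^2(\mu)} = 0
\end{align*}
(averaging over primes on the left) by passing through the $W$-trick, in the spirit of the transference principle of Theorem \ref{T: KoTs transference}. By the prime number theorem, the average over primes up to $N$ equals $\E_{n\in[N]}\Lambda(n)\prod_j T^{jn}f_j$ up to $o(1)$. Fix $w$ and let $W$ be the $w$-th primorial. We split $[N]$ into residue classes mod $W$; the classes with $\gcd(b,W)>1$ contain $O(1)$ primes and can be discarded. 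This rewrites the weighted average as $\E_{b}\,\E_{n\in[N/W]}\Lambda_{w,b}(n)A(Wn+b)$, where $b$ runs over the reduced residues and
\begin{align*}
A(m)=\prod_j T^{jm}f_j.
\end{align*}

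The first step is the uniformity estimate \eqref{E: uniformity estimate for modified Lambda} for this linear $A$. I would apply van der Corput / PET a bounded number of times to the average $\E_n(\Lambda_{w,b}(n)-1)\prod_j T^{j(Wn+b)}f_j$. Since the iterates are linear, this bounds it by a power of the Gowers norm $\norm{\Lambda_{w,b}-1}_{U^{\ell}[N]}$, uniformly in the $1$-bounded $f_j$. The Green–Tao results \cite{GT08b, GT10b, GT12b} then give the limit $0$ as $N\to\infty$ followed by $w\to\infty$. The only technical care needed is that $\Lambda_{w,b}$ is unbounded, so the Cauchy–Schwarz steps must put the weight into the Gowers norm (a generalized von Neumann argument, as in \cite{FHK07}).

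The second step is to identify $\lim_N \E_{n\in[N]}\prod_j T^{j(Wn+b)}f_j$. This is the Furstenberg average for $S=T^W$ applied to the functions $T^{jb}f_j$. Because $T$ is totally ergodic, $S$ is ergodic and $\CZ_{\ell-1}(S)=\CZ_{\ell-1}(T)$, and the system is a totally ergodic inverse limit of nilsystems in the sense of Theorem \ref{T: HK structure theorem}. I would then show that this limit is independent of $W$ and $b$, and therefore equals $\lim_N\E_{n\in[N]}\prod_j T^{jn}f_j$.

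The cleanest route is to reduce, via seminorm control \eqref{E: HK seminorm control}, to an ergodic nilsystem $(G/\Gamma,R)$ with $R$ totally ergodic, using the martingale theorem to pass to the inverse limit. There the orbit $(R^{jn}x)_{j\le\ell}$ equidistributes in its orbit closure, a sub-nilmanifold $Y_x\subseteq (G/\Gamma)^\ell$. Total ergodicity ensures that the subprogressions $Wn+b$ equidistribute on the same $Y_x$ (Leibman's theorem: the connected component is all of $Y_x$). This step, showing that restriction to progressions does not change the limit, is the main obstacle. I would try first to handle it via Ziegler's description of the limit \cite{Z05} and the fact that, for totally ergodic nilsystems, $Y_x$ is connected.

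Combining the two steps: $\E_b$ of a quantity independent of $b$ returns that quantity. Then letting $w\to\infty$ closes the argument.
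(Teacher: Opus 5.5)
Your proposal is correct and follows essentially the route the survey describes for this result: the $W$-trick, the Green--Tao uniformity of $\Lambda_{w,b}-1$ fed into a generalized von Neumann estimate as in Frantzikinakis--Host--Kra, and the independence of the limit along $Wn+b$. There is one harmless slip, and the step you call the main obstacle can simply be quoted: an $\ell$-term linear average is controlled by $\norm{\Lambda_{w,b}-1}_{U^{\ell+1}[N]}$ rather than $U^{\ell}$ (irrelevant here, since Green--Tao cover all degrees), and the independence of the limit along $Wn+b$ is exactly the linear case $a(n)=Wn+b$ of Theorem \ref{T: Fr polys along APs}.
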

For 2-APs, Theorem \ref{T: primes along APs} was proved by Frantzikinakis, Host, and Kra \cite[Theorem 5]{FHK07}, who also outlined the proof of the general case conditional on the Gowers uniformity of the modified von Mangoldt function. A slightly different proof of the general case was provided by Moreira and Richter \cite[Theorem 2.9]{MR19}.

For Hardy sequences that are good for equidistribution, Frantzikinakis established the following.
\begin{theorem}[{\cite[Theorem 2.2]{Fr10}}]\label{T: Fr Hardy along APs}
    Any $a\in\CH$ satisfying
    \begin{align}\label{E: staying logarithmically away from cQ[t]}
        \lim_{t\to\infty}\abs{\frac{a(t) - c p(t)}{\log t}} = \infty\quad \textrm{for\; all}\quad c\in\R,\; p\in\Q[t]
    \end{align}
    is well-behaved along APs.
    %$(X, \CX, \mu, T)$ be totally ergodic
\end{theorem}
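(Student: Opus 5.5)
The plan is to compare the average along $a$ with the Furstenberg average along $n$ by reducing both to the same characteristic factor and then comparing them on nilsystems.

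\textbf{Step 1: seminorm control.} I would first show that $a$ satisfies a Host--Kra seminorm estimate for the tuple $(T^{a(n)},T^{2a(n)},\ldots,T^{\ell a(n)})_n$. The tool is the PET induction scheme for Hardy sequences (as in Theorem \ref{T: Ts essentially distinct}), run on short intervals $(N,N+L(N)]$ where $a$ is Taylor expanded as a variable polynomial. The sequences $ja(n)$, $1\le j\le\ell$, are essentially distinct, since $(i-j)a\succ\log$ by \eqref{E: staying logarithmically away from cQ[t]} with $p=0$. Hence there is $s$ with
\[
\limsup_{N\to\infty}\norm{\E_{n\in[N]}\prod_{j=1}^\ell T^{ja(n)}f_j}_{L^2(\mu)}=0 \quad\text{whenever } \nnorm{f_j}_{s,T}=0 \text{ for some } j.
\]
The Furstenberg average is controlled by $\nnorm{\cdot}_{\ell,T}$ by \eqref{E: HK seminorm control}. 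Splitting each $f_j$ into $\E(f_j|\CZ_{s'}(T))$ plus a remainder, with $s'\ge\max(s,\ell)$, and using multilinearity, both averages reduce to $\CZ_{s'}(T)$-measurable functions. Passing to ergodic components and applying Theorem \ref{T: HK structure theorem} with an $L^2$ approximation, it then suffices to treat ergodic nilsystems $(G/\Gamma,S)$ with continuous $f_j$.

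\textbf{Step 2: equidistribution on nilsystems.} This is the main obstacle. For an ergodic nilsystem, consider the sequence $g(n)=(S^{a(n)}y,S^{2a(n)}y,\ldots,S^{\ell a(n)}y)$ in $Y^\ell$. This is a polynomial-type orbit $(S,S^2,\ldots,S^\ell)^{a(n)}$ of the nilrotation $\tilde S=S\times S^2\times\cdots\times S^\ell$ on $Y^\ell$. The goal is to show that for a.e.\ $y$, $(\tilde S^{a(n)}\tilde y)_n$ is equidistributed on the orbit closure $\overline{\{\tilde S^n\tilde y\}}$ with respect to the same measure along which $(\tilde S^n\tilde y)_n$ equidistributes. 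Both Ces\`aro limits would then equal the integral over that orbit closure, pointwise and hence in $L^2$.

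To get this I would use Leibman's criterion: a (variable) polynomial sequence on a connected nilmanifold equidistributes if and only if its projection to the horizontal torus does. Since the nilsystem may be disconnected, I would write $\tilde S^{a(n)}$ through a one-parameter flow $\tilde S^t=\exp(t\log\tilde S)$ on a finite cover. This requires lifting to a connected group, plus a small rational adjustment, which is exactly what \eqref{E: staying logarithmically away from cQ[t]} with $p\in\Q[t]$ handles. The horizontal torus projection is then $a(n)\beta$ for a vector $\beta$ in the torus. Equidistribution of $(a(n)\beta)_n$ on the subtorus $\overline{\{t\beta\}}$ reduces, by Weyl's criterion, to the equidistribution mod $1$ of $c\,a(n)$ for $c\neq0$. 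This follows from Theorem \ref{T: Boshernitzan} applied to $ca$: condition \eqref{E: staying logarithmically away from cQ[t]} gives $|ca-p|\succ\log$ for every $p\in\Q[t]$.

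The delicate part is handling the discrete nature of $\sfloor{a(n)}$ versus the continuous flow, and the disconnected components. For this I would split $n$ according to the fractional part $\{a(n)\}$, which is itself equidistributed, and absorb the remaining error using continuity of the $f_j$. This is the step where I would first try to reuse Frantzikinakis' flow argument rather than invoke general nilsystem equidistribution.
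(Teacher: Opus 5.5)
The survey only cites this theorem, so the comparison is with Frantzikinakis' argument in \cite{Fr10}. Your architecture is the same as his: seminorm control, reduction to nilsystems via Theorem \ref{T: HK structure theorem}, and equidistribution of $(\tilde S^{\sfloor{a(n)}}\tilde y)_n$ in $\overline{\{\tilde S^n\tilde y\}}$. In the survey's terms, this is Theorem \ref{T:APsConvergence} fed by Theorem \ref{T: Ts essentially distinct}. Step 1 is fine except for a small point. Theorem \ref{T: Ts essentially distinct} concerns $\sfloor{ja(n)}$, while the average involves $j\sfloor{a(n)}=\sfloor{ja(n)}-\sfloor{j\srem{a(n)}}$, and this $n$-dependent bounded shift needs its own treatment.

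The real gap is in Step 2, where all the difficulty lies. Leibman's criterion is a statement about genuine polynomial sequences, and $\tilde S^{a(n)}$ is not one. If you Taylor-expand on short intervals, you get polynomials whose coefficients change with $N$, and the qualitative criterion gives nothing uniform in $N$. You would need a quantitative (Green--Tao type) statement with a uniform lower bound on the relevant smoothness norms. Even that cannot cover every $a$ allowed by the hypothesis. Take $a(t)=t^2+\log^2t$ and the rotation $x\mapsto x+1$ on $\Z/4\Z$. On any interval $(N,N+L]$ with $L=o(N/\log N)$, $\sfloor{\log^2 n}$ takes at most two consecutive values $k,k+1$. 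Hence $\sfloor{a(n)}=n^2+\sfloor{\log^2n}$ avoids the class $k+3$ mod $4$ there. Equidistribution comes only from the drift of $\sfloor{\log^2n}$ over long ranges, which no short-interval argument sees.

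Your fractional-part step is also underspecified. Knowing that $\srem{a(n)}$ and the flow samples are each equidistributed says nothing about their joint law. For example, for the flow $t\mapsto x+t$ on $\T$ (whose time-one map is the identity), the point $x+a(n)$ is a function of $\srem{a(n)}$. What you actually need is equidistribution of $(\tilde S^{a(n)}\tilde y,\,a(n)\bmod 1)$ in the orbit closure of the suspension nilflow $t\mapsto(\tilde S^t\tilde y,\,t\bmod 1)$. The limit along $\sfloor{a(n)}$ is then read off through $F(z,s)=f(\tilde S^{-s}z)$. The horizontal characters of that orbit closure, evaluated along the flow, are $e(\gamma t+\mathrm{const})$ with $\gamma\neq 0$. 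This is where Theorem \ref{T: Boshernitzan} and the hypothesis with arbitrary real $c$ correctly enter.

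Passing from the horizontal torus to the nilmanifold for such a non-polynomial orbit is exactly Frantzikinakis' equidistribution theorem \cite{Fr09}. Under this hypothesis, $(b^{\sfloor{a(n)}}x)_n$ is equidistributed in $\overline{\{b^nx\}}$ for every nilmanifold $G/\Gamma$, every $b\in G$ and every $x$; this is the input \cite{Fr10} uses. If you quote it, your proof closes, but it cannot be derived from Leibman's theorem in the way you sketch.
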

Koutsogiannis and Tsinas combined Theorem \ref{T: Fr Hardy along APs} with Theorem \ref{T: KoTs transference} to obtain an analogous result when the Hardy sequence is sampled along primes.
\begin{theorem}[{\cite[Theorem 1.3]{KoTs23}}]\label{T: KoTs Hardy along APs}
    Let $a\in\CH$ satisfy \eqref{E: staying logarithmically away from cQ[t]}. Then $a(p_n)$ is well-behaved along APs, where we recall that $\P = \{p_1<p_2<\cdots\}$.
\end{theorem}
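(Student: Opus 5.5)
The plan is to run the transference principle Theorem \ref{T: KoTs transference} in the single-transformation setting, with $T_j = T^j$ and $a_j = j a$ for $1\leq j\leq \ell$. Convention \ref{Conv: real sequences} means we round, so the iterates are $j\sfloor{a(p_n)}$ rather than $\sfloor{j a(p_n)}$. The theorem is stated for sequences $a_j$, but the remark after it allows each term to be a product of powers, so I take the iterate of the $j$-th term to be $T^{j\sfloor{a(n)}}$. The hypothesis of the transference theorem needs each $a_j$ either to stay logarithmically away from rational polynomials or to be an almost rational polynomial. Condition \eqref{E: staying logarithmically away from cQ[t]} with $c = j$ gives this for $ja$. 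It also gives it for every real multiple of $a$, which matters because inside the proof of the transference principle one works with $W$-shifted versions.

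The key step is to check the hypothesis of Theorem \ref{T: KoTs transference}. For every $w,b$ with $W$ the primorial, we need
\begin{align*}
\E_{n\in[N]}\prod_{j=1}^\ell T^{j\sfloor{a(Wn+b)}}f_j
\end{align*}
to converge in $L^2(\mu)$ to the same limit $F := \lim_{N\to\infty}\E_{n\in[N]}\prod_{j=1}^\ell T^{jn}f_j$, which exists by Host–Kra. I will apply Theorem \ref{T: Fr Hardy along APs} to the function $a_{W,b}(t) := a(Wt+b)$. Since $\CH$ is closed under composition with affine maps, $a_{W,b}\in\CH$. It remains to verify \eqref{E: staying logarithmically away from cQ[t]} for $a_{W,b}$. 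Suppose $|a(Wt+b) - cp(t)|\ll \log t$ along a sequence, with $p\in\Q[t]$. Substituting $t = (u-b)/W$ gives $|a(u) - c\,p((u-b)/W)| \ll \log u$. Here $p((u-b)/W)\in\Q[u]$ because $W,b$ are integers, and $\log((u-b)/W)\asymp \log u$. Because Hardy functions are eventually comparable, a $\limsup$ failure of the condition is a genuine $\lim$ failure. This contradicts \eqref{E: staying logarithmically away from cQ[t]} for $a$. Theorem \ref{T: Fr Hardy along APs} therefore gives that the $(W,b)$-averages equal $F$ in the limit for every $w,b$. Theorem \ref{T: KoTs transference} then yields convergence of the prime average to $F$.

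The averages along primes use $\E_{n\in[N]\cap\P}$, whereas good behavior of $a(p_n)$ is phrased with $\E_{n\in[N]}$ over indices of $p_n$. These agree, since averaging over the first $N$ primes is the same as averaging over $[p_N]\cap\P$, and $L^2$ convergence is preserved under this reindexing.

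I expect the main obstacle to be the applicability of the transference principle. In particular, one must confirm that its hypotheses concern only $a$ itself, and not linear combinations such as $ja - j'a$, which are multiples of $a$ and so still covered. One must also confirm that the required uniformity in $b$ (the maximum over residues coprime to $W$ in \eqref{E: uniformity estimate for modified Lambda}) is supplied inside the proof of Theorem \ref{T: KoTs transference} and need not be checked separately. If the theorem demands convergence for all $W\in\N$ rather than primorials, the same substitution argument covers arbitrary $W$, so nothing changes.
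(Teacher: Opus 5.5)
Your proposal is correct and follows the paper's route: the paper obtains this result by combining Theorem \ref{T: Fr Hardy along APs}, applied to $a(Wt+b)$ (which inherits \eqref{E: staying logarithmically away from cQ[t]} by your substitution argument), with the transference principle of Theorem \ref{T: KoTs transference}. One slip is cosmetic: the clean setup is $T_j = T^j$ and $a_j = a$ for every $j$, not $a_j = ja$; this gives the iterates $T^{j\sfloor{a(n)}}$ directly, so you do not need the product-of-powers remark.
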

Both Theorems \ref{T: Fr Hardy along APs} and \ref{T: KoTs Hardy along APs} also hold when $a\in\CH$ is replaced by $a\in\CT$; see \cite[Section 2.1.5]{Fr10} and \cite[Theorem 7.4]{KoTs23}.

Since Theorems \ref{T: Fr Hardy along APs} and \ref{T: KoTs Hardy along APs} make no assumptions on the system, they yield new extensions of the Szemer\'edi theorem (a similar comment holds for Theorem \ref{T: FrKu sparse corners} below).

Note that the assumption \eqref{E: staying logarithmically away from cQ[t]} is stronger than \eqref{E: staying away}, as in \eqref{E: staying logarithmically away from cQ[t]} we require $a$ to stay logarithmically away from all \emph{real} multiples of rational polynomials. This assumption is there to prevent \emph{irrational obstructions} to equidistribution, and its necessity is evidenced by the following example.
\begin{example}\label{Ex: sqrt 2 n}
    Take $a(n) = \sqrt{2}n$, and consider $Tx = x+\sqrt{2}$ on $X=\T$. Setting $f(x) = e(x)$, we have
    \begin{align*}
        \lim_{N\to\infty}\E_{n\in[N]}f(T^{\sfloor{\sqrt{2}n}}x) &= f(x)\cdot\lim_{N\to\infty}\E_{n\in[N]}e(\sqrt{2}\sfloor{\sqrt{2}n})\\
        &= f(x)\cdot\lim_{N\to\infty}\E_{n\in[N]}e(-\sqrt{2}\srem{\sqrt{2}n})\\
        &= f(x)\cdot\int_0^1 e(-\sqrt{2}x)\;dx \neq 0
    \end{align*}
    by Weyl's equidistribution theorem whereas the pointwise ergodic theorem gives
    \begin{align*}
        \lim_{N\to\infty}\E_{n\in[N]}f(T^n x) = 0.
    \end{align*}
\end{example}

As shown by Moreira and Richter, the sequence $\sqrt{2}n$ is well-behaved along APs as long as particular irrational obstructions do not arise in the system. 
%The systematic study of Problem \ref{Pr: identity along APs} was carried out by Moreira and Richter \cite{MR19}, who derived a number of further results. The first of these concerns sequences like the one in Example \ref{Ex: sqrt 2 n}.
\begin{theorem}[{\cite[Theorem 2.5]{MR19}}]\label{T: MR generalized linear}
    Let $a(n) = c n + d$ for $c\in\R_+,d\in\R$. Then the sequence $a$ is well-behaved along APs for any system $(X,\CX,\mu,T)$ satisfying the spectral condition $\spec(T)\cap\frac{1}{c}\Z = \{0\}$.
\end{theorem}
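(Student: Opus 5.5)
The plan is to compare both averages with a common ``model'' obtained from the Host--Kra structure theory, and to reduce the comparison to an equidistribution statement for the sequence $(\sfloor{cn+d})_n$ paired with the linear sequence $n$. First, note that $\sfloor{cn+d}$ is a generalized linear sequence, so a PET/van der Corput argument (or simply the fact that the difference sequence $\sfloor{c(n+h)+d}-\sfloor{cn+d}$ takes boundedly many values $\sfloor{ch}+\{0,1\}$) gives Host--Kra seminorm control: the average $\E_{n\in[N]}\prod_{j=1}^\ell T^{j\sfloor{cn+d}}f_j$ vanishes in $L^2(\mu)$ whenever $\nnorm{f_j}_{s,T}=0$ for some $j$ and some $s=s(\ell)$. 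The same holds for the AP average by \eqref{E: HK seminorm control}. Passing to ergodic components where needed and using Theorem \ref{T: HK structure theorem} with an $L^2$ approximation, it suffices to prove \eqref{E: identity along APs} when $(X,\CX,\mu,T)$ is an ergodic nilsystem $G/\Gamma$ with $Tx=bx$ and the $f_j$ are continuous; the spectral condition passes to factors, and since the spectrum of an ergodic nilsystem is the spectrum of its Kronecker factor, it is still $\spec(T)\cap\frac1c\Z=\{0\}$.

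On the nilsystem, write $\sfloor{cn+d}=cn+d-\{cn+d\}$ and lift to the suspension: consider the flow-like nilmanifold $\tilde X=(G\times\R)/(\Gamma\times\Z)$ with the polynomial sequence $g(n)=(b^{j\cdot}, \ldots)$, more concretely the sequence $n\mapsto (b^{\sfloor{cn+d}}x,\dots,b^{\ell\sfloor{cn+d}}x)$ is obtained from the orbit $n\mapsto (\beta^{cn+d},\, cn+d \bmod 1)$ of a one-parameter-like element in $\tilde G=(G\ltimes\ldots)$, i.e. from a rotation on a nilmanifold $Y\times\T$ evaluated through the (Riemann-integrable, discontinuous only on a null set) map $(y,t)\mapsto y\cdot b^{-t}$ realized via the suspension flow of $b$. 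By Leibman's theorem the orbit of $n$ in this nilmanifold equidistributes on a sub-nilmanifold; the key claim is that this sub-nilmanifold is the full product over the torus coordinate, i.e. that $(T^{\sfloor{cn+d}}x\text{-data},\{cn+d\})$ equidistributes as a product of the AP-orbit closure measure with Lebesgue on $\T$. Granting this, averaging out the $\T$ coordinate yields exactly the limit of $\E_{n}\prod_j T^{jn}f_j$ (the Ziegler limit formula \cite{Z05}), proving \eqref{E: identity along APs}.

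The main obstacle is the equidistribution claim, and I would attack it via the horizontal torus (Leibman's criterion): failure of equidistribution means a nontrivial horizontal character $\chi$ of the orbit nilmanifold annihilates the sequence, which after projecting reduces to a relation $k\cdot\alpha\cdot(\text{something in }n)+m(cn+d)\in\Z$ identically, where $\alpha$ ranges over the eigenvalues of the Kronecker factor of $T$ (the horizontal torus of the AP nilsystem is generated by the eigenvalues of $T$). Since the horizontal component along the $\T$-fiber direction is $cn$ and along $X$ it is $\alpha\sfloor{cn+d}$, which on the suspension becomes $\alpha(cn+d)-\alpha t$, a linear relation forces $\alpha c\cdot k+mc\in\Z$-type identities, i.e. $\alpha\in\frac1c\Z$ modulo the trivial case; the hypothesis $\spec(T)\cap\frac1c\Z=\{0\}$ rules this out. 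Example \ref{Ex: sqrt 2 n} shows this is exactly where the condition is needed. I would handle the careful bookkeeping (discontinuity of the fractional part, and checking that only the horizontal eigenvalues matter) by approximating $\{\cdot\}$ by continuous functions off a small neighbourhood of $0$.
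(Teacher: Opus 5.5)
\textbf{There is a genuine gap at the key step.} Your reduction to an ergodic nilsystem $X=G/\Gamma$, $Tx=bx$, with continuous $f_j$, is fine. The argument fails at the step where you assert that the horizontal torus of the ``AP nilsystem'' is generated by eigenvalues of $T$. Here the AP nilsystem is the orbit closure $Y_x$ of $\tilde b^n\tilde x$, where $\tilde b=(b,\ldots,b^\ell)$ and $\tilde x=(x,\ldots,x)$. This assertion is false: $(Y_x,\tilde b)$ typically has eigenvalues outside $\spec(T)$, and these depend on $x$.

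For a concrete example, take the Heisenberg group with $(x,y,z)(x',y',z')=(x+x',y+y',z+z'+xy')$, $\Gamma=\Z^3$, $b=(\sqrt2,\sqrt3,0)$ and $\ell=2$, so that $\spec(T)=\Z\sqrt2+\Z\sqrt3 \pmod 1$. Let $H=\{(h_1,h_2)\in G^2\colon h_2h_1^{-2}\in[G,G]\}$. This subgroup contains $\tilde b$ and $H\cap\Gamma^2$ is cocompact in $H$. On $H$, the map $\psi(h_1,h_2)=z(h_2)-4z(h_1)$ (with $z$ the central coordinate) is a homomorphism, it is integer-valued on $H\cap\Gamma^2$, and $\psi(b^n,b^{2n})=n\sqrt6$. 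Hence, for $x=\Gamma$, the function $e(\psi)$ is a continuous eigenfunction on $Y_x$ with eigenvalue $\sqrt6\notin\spec(T)$.

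Now take $c=1/\sqrt6$ and $d=0$. The hypothesis holds, since $1,\sqrt2,\sqrt3,\sqrt6$ are $\Q$-independent. Yet $\E_{n\in[N]}e(\sqrt6\sfloor{n/\sqrt6})\to\int_0^1 e(-\sqrt6 t)\,dt\neq 0$, while $\E_{n\in[N]}e(\sqrt6 n)\to0$. So $\tilde b^{\sfloor{cn}}\tilde x$ does not equidistribute on $Y_x$, and your pointwise claim is false at this $x$.

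For a basepoint $g\Gamma$ with $g=(u,v,w)$, the extra eigenvalue becomes $\sqrt6-2(\sqrt2 v-\sqrt3 u)$. So the failure is confined to a null set and the theorem survives. But it shows your argument must be an almost-everywhere (or integrated) one, and it needs an input you do not supply. You would have to show that for each $\theta\in\frac1c\Z\setminus\spec(T)$, $\theta$ is an eigenvalue of $(Y_x,\tilde b)$ only for a null set of $x$. This amounts to showing $\E_{n\in[N]}e(n\theta)\prod_j T^{jn}f_j\to0$ in $L^2(\mu)$ for such $\theta$.

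\textbf{What supplies the missing input.} This is precisely what the route the paper attributes to Moreira and Richter provides. They work with the integrated multicorrelation sequences $\int f_0\cdot T^nf_1\cdots T^{\ell n}f_\ell\,d\mu$ and prove the spectral refinement of the Bergelson--Host--Kra decomposition (Theorem~\ref{T: MR spectral refinement}). There the nilsequence component comes from a nilsystem $(Y,S)$ with $\spec(S)\subseteq\spec(T)$; integrating over $x$ washes out the $x$-dependent frequencies exhibited above. It is only for nilsequences of this kind that a suspension/horizontal-torus computation phrased in terms of eigenvalues of $T$, like yours, is legitimate. The null part is harmless, because $n\mapsto\sfloor{cn+d}$ has bounded multiplicity.

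\textbf{Two smaller issues.}
\begin{enumerate}
\item Your claimed product structure with Lebesgue measure in the $\T$-coordinate is false for rational $c$: for $c\in\N$, $\{cn+d\}$ is constant. The theorem does cover arithmetic progressions $qn+r$. In that case you must split $n$ into residue classes, and what you need becomes ergodicity of a power $\tilde b^{p}$ on $Y_x$. This is again a spectral statement about $Y_x$, not about $T$.
\item The observation that $\sfloor{c(n+h)+d}-\sfloor{cn+d}$ takes two values does not by itself give Host--Kra seminorm control. A clean route is to write the multiplicity function of the Beatty set as a Riemann-integrable function of $m/c \bmod 1$, approximate it by linear phases, and then use seminorm control for linearly twisted AP averages.
\end{enumerate}
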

 It would be interesting to know whether Theorem \ref{T: MR generalized linear} extends to other real polynomials not covered by Theorem \ref{T: Fr Hardy along APs}, or to multiples of primes.
 \begin{problem}
     Let $a(n) = c p(n) + d$ for some $c\in\R_+, d\in\R$, and nonconstant $p\in\Q[t]$. Is the sequence $a$ well-behaved along APs for any system $(X,\CX,\mu,T)$ satisfying the spectral condition $\spec(T)\cap\frac{1}{c}\Q = \{0\}$?
 \end{problem}
  \begin{problem}
     Let $a(n) = c p_n + d$ for some $c\in\R_+, d\in\R$, where $(p_n)_n$ is the sequence of primes. Is the sequence $a$ well-behaved along APs for any system $(X,\CX,\mu,T)$ satisfying the spectral condition $\spec(T)\cap\frac{1}{c}\Q = \{0\}$?
 \end{problem}

Moreira-Richter's method proceeds by analyzing the spectrum of the multicorrelation sequences arising on a given system. Specifically, their main technical tool is the following spectral refinement of Bergelson-Host-Kra's decomposition \cite[Theorem 1.9]{BHK05}. We refer the reader to Appendix \ref{A: nilsystems} for definitions.
\begin{theorem}[Spectral refinement of BHK decomposition {\cite[Theorem 2.1]{MR19}}]\label{T: MR spectral refinement}
    Let $(X, \CX, \mu, T)$ be a system and $f_0, \ldots, f_\ell\in L^\infty(\mu)$. For every $\veps>0$, there exists a decomposition
    \begin{align*}
        \int f_0 \cdot T^nf_1\cdots T^{\ell n}f_\ell\; d\mu = \phi(n) + \omega(n) + \gamma(n),
    \end{align*}
    where $\omega$ is a null-sequence, $\norm{\gamma}_\infty\leq \veps$, and $\phi(n)=F(S^ny)$ for some nilmanifold $(Y, \CY, \nu, S)$ with $\spec(S)\subseteq \spec(T)$, $y\in Y$, and $F\in C(Y)$.
\end{theorem}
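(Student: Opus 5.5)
We prove Theorem \ref{T: MR spectral refinement} by starting from the Bergelson-Host-Kra decomposition \cite[Theorem 1.9]{BHK05} and then removing the spectrally bad part of the nilsequence. Three reductions come first.

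\emph{Reductions.} We first pass to the ergodic components of $\mu$, or alternatively work directly with nonergodic eigenvalues. Assume for the moment that $T$ is ergodic. By Host-Kra seminorm control \eqref{E: HK seminorm control}, replacing each $f_j$ by $\E(f_j|\CZ_{\ell-1}(T))$ changes the correlation $\int f_0\cdot T^nf_1\cdots T^{\ell n}f_\ell\,d\mu$ only by a null sequence. To see this, write each $f_j$ as its projection plus an orthogonal part, and note that a term containing an orthogonal part has vanishing Ces\`aro average of its absolute value squared, by a van der Corput argument applied to the product system.

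By Theorem \ref{T: HK structure theorem}, $\CZ_{\ell-1}(T)$ is an inverse limit of $(\ell-1)$-step nilsystems. So up to an $L^2$ error $\veps$, which produces the uniformly small term $\gamma$, we may take each $f_j$ to be a continuous function on a single nilsystem factor $(Y_0,m,S_0)$ of $X$. The correlation then equals $\int F_0\cdot S_0^nF_1\cdots S_0^{\ell n}F_\ell\,dm$. By Leibman \cite{L05b} and the Bergelson-Host-Kra argument, this is a nilsequence $\psi(n)=G(R^n z)$ on a nilmanifold built from $Y_0^{\ell+1}$, namely the orbit closure of the diagonal action $(S_0^{0},S_0,\ldots,S_0^\ell)$ integrated over $Y_0$.

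\emph{Spectral step.} The crux is to control the spectrum of the rotation $R$. The eigenvalues of an ergodic nilrotation are the eigenvalues of its horizontal torus rotation, and for the nilmanifold arising as an orbit closure in a power of $Y_0$, these lie in the subgroup generated by $\spec(S_0)\subseteq\spec(T)$. Since $\spec(T)$ is a group, this gives $\spec(R)\subseteq\spec(T)$.

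There is one wrinkle: the nilsystem need not be ergodic. In that case we decompose it into finitely many ergodic components, and these components contribute only rational eigenvalues. We cannot assume these rational eigenvalues lie in $\spec(T)$ without further work, so we handle them as follows. The eigenvalues of the component nilrotations all come from characters of the horizontal torus of the $Y_0$-orbit closure, and those characters are sums of characters of $Y_0$'s torus, which are in $\spec(T)$. Any rational eigenvalue not of this form would force a nonconstant $T^r$-invariant function. This is the step where extra care, or total ergodicity, may be needed.

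\emph{Nonergodic case.} If $T$ is not ergodic, we run the argument on each ergodic component. We then glue the components, using the fact that the decomposition can be chosen measurably with uniformly bounded complexity up to an $\veps$ error. We take the Kronecker-spectrum inclusion $\spec(T_x)\subseteq\spec(T)$ for the components. The resulting averaged nilsequence is approximated by a single nilsequence, again with its error absorbed into $\gamma$ and $\omega$.

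I expect the main obstacle to be the spectral inclusion for the orbit-closure nilmanifold: verifying that its horizontal characters restricted to the diagonal action produce only frequencies in $\spec(T)$, including the rational ones coming from disconnectedness.
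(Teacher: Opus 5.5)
Your reduction to a nilsystem factor follows the standard Bergelson--Host--Kra route. The step that is new in \cite[Theorem 2.1]{MR19}, however, is the spectral inclusion, and your proposal does not prove it; the mechanism you give for it is false. The eigenfunctions of an orbit closure of a diagonal point under $\Id\times S_0\times\cdots\times S_0^{\ell}$ in $Y_0^{\ell+1}$ need not come from characters of the horizontal torus of $Y_0$. Vertical coordinates of $Y_0$ can become horizontal, and the resulting eigenvalues depend on the base point.

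For instance, take $Y_0=\T^2$ and $S_0(x,y)=(x+\alpha,y+x)$ with $\alpha$ irrational, so $\spec(S_0)=\alpha\Z$. Let $\ell=2$ and $z=(x_0,y_0)$, and write $Y^{(0)},Y^{(1)},Y^{(2)}$ for the second coordinates of the three components. Along the orbit of $(z,z,z)$ under $\Id\times S_0\times S_0^2$ one has $3Y^{(0)}-4Y^{(1)}+Y^{(2)}=n(\alpha-2x_0)$ at time $n$. Hence $e(3Y^{(0)}-4Y^{(1)}+Y^{(2)})$ is a continuous eigenfunction on the orbit closure with eigenvalue $\alpha-2x_0$. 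For generic $x_0$ this is not in $\alpha\Z$, nor even in $\alpha\Z+\Q$.

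Such frequencies cancel only after integrating over $z\in Y_0$. So the nilsystem carrying $\psi(n)=\int_{Y_0}\prod_{j}F_j(S_0^{jn}z)\,dm(z)$ cannot be such an orbit closure. It has to be a different one in which the base point has been integrated out (as in the construction of \cite{BHK05}), or a factor of it through which $\psi$ factors. Showing that the eigenvalues of that system lie in $\spec(T)$, including the rational ones you explicitly leave open, is the whole content of the theorem. The proposal gives no argument for it.

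Two further steps need repair.

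\emph{Wrong factor in the reduction.} Correlation sequences $a(n):=\int f_0\cdot T^nf_1\cdots T^{\ell n}f_\ell\,d\mu$, as opposed to their Ces\`aro averages, are controlled modulo null-sequences by $\nnorm{\cdot}_{\ell+1,T}$, that is, by $\CZ_\ell(T)$. This is why Bergelson--Host--Kra obtain $\ell$-step nilsequences. Your own product-system argument gives, for $1$-bounded functions, $\limsup_{N\to\infty}\E_{n\in[N]}|a(n)|^2\ll\nnorm{f_j\otimes\overline{f_j}}_{\ell,T\times T}\le\nnorm{f_j}_{\ell+1,T}^2$. With $\CZ_{\ell-1}(T)$ the claim already fails for $\ell=1$. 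If $T$ is ergodic and $T\chi=e(\theta)\chi$ with $\theta\neq 0$, then $\int\overline{\chi}\cdot T^n\chi\,d\mu=e(n\theta)$ is not a null-sequence, although $\E(\chi|\CZ_0(T))=0$. This is fixed by working with $\ell$-step nilsystems.

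\emph{Nonergodic case.} This step relies on $\spec(T_x)\subseteq\spec(T)$ for the ergodic components, which is false. For $T(x,y)=(x,y+x)$ on $\T^2$, the spectrum of $T$ is $\{0\}$, while almost every ergodic component is the rotation by $x$. Moreover, for nonergodic $T$ the set $\spec(T)$ need not be a group. Nothing in your sketch justifies merging the componentwise nilsequences into a single nilsequence with controlled spectrum.
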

It is unclear to what extent Theorem \ref{T: MR spectral refinement} extends to sparse sequences.
The following question was hinted at in \cite{MR19}, at least for integer polynomials.
\begin{problem}
    Let $(X, \CX, \mu, T)$ be a system and $f_0, \ldots, f_\ell\in L^\infty(\mu)$. Let $a:\N\to\Z$ be any of the following: (i) the sequence of primes; (ii) a nonconstant integer polynomial; (iii) (the integer part) of a Hardy sequence. Can we find for every $\veps>0$ a decomposition
    \begin{align*}
        \int f_0 \cdot T^{a(n)}f_1\cdots T^{\ell a(n)}f_\ell\; d\mu = \phi(n) + \omega(n) + \gamma(n),
    \end{align*}
    where $\omega$ is a null-sequence, $\norm{\gamma}_\infty\leq \veps$, and $\phi(n)=F(S^ny)$ for some nilmanifold $(Y, \CY, \nu, S)$ with $\spec(S)\subseteq \spec(T)$, $y\in Y$, and $F\in C(Y)$?
\end{problem}

\subsubsection{General criteria for good behavior along APs}
The results mentioned so far address Problem \ref{Pr: identity along APs} for rather concrete examples of sequences. In \cite{FrKu22c}, Frantzikinakis and the author provide an equivalent characterization for good behavior along APs for general sequences. 
\begin{theorem}[Equivalent characterization of good behavior along APs {\cite[Theorem 3.11]{FrKu22c}}]\label{T: FrKu degree lowering on APs}
    Let $\ell\in\N$. A sequence $a:\N\to\Z$ is well-behaved along $\ell$-APs for the system $(X, \CX, \mu, T)$ if and only if the following two conditions hold:
    \begin{enumerate}
        \item the sequences $a, 2a, \ldots, \ell a$ admit Host-Kra seminorm control for $(X, \CX, \mu, T)$;
        \item $a$ is well-behaved along $\ell$-APs for $(X, \CZ_\ell(T), \mu, T)$.
    \end{enumerate}
\end{theorem}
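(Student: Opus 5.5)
The forward direction should be routine. Suppose $a$ is well-behaved along $\ell$-APs for $(X,\CX,\mu,T)$. By \eqref{E: identity along APs} the $a$-average has the same $L^2(\mu)$-limit as the linear Furstenberg average $\E_{n\in[N]}\prod_{j=1}^\ell T^{jn}f_j$. By the Host--Kra estimate \eqref{E: HK seminorm control}, and its symmetric versions obtained by composing with $T^{-jn}$, the linear average vanishes whenever $\nnorm{f_j}_{\ell,T}=0$ for some $j$. Hence $a,2a,\ldots,\ell a$ admit degree-$\ell$ Host--Kra seminorm control, which gives (i). For (ii), the factor $(X,\CZ_\ell(T),\mu,T)$ is itself a system. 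Its $L^2$-norms and averages of $\CZ_\ell(T)$-measurable functions agree with those computed in $X$, so \eqref{E: identity along APs} restricts to it.

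For the converse, first observe what suffices. Assume we knew that the $a$-average vanishes whenever $\nnorm{f_j}_{\ell+1,T}=0$ for some $j$, that is, whenever $\E(f_j|\CZ_\ell(T))=0$ by \eqref{E: HK factor property}. Then we split each $f_j=\E(f_j|\CZ_\ell(T))+g_j$ and expand multilinearly. Every term containing some $g_j$ vanishes for the $a$-average by this assumption, and for the linear average by \eqref{E: HK seminorm control}, since $\nnorm{g_j}_{\ell,T}\leq\nnorm{g_j}_{\ell+1,T}=0$. The remaining term involves only $\CZ_\ell(T)$-measurable functions, and it is handled by (ii). So the whole task is to improve the degree-$s$ control given by (i) to degree-$(\ell+1)$ control. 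I would do this by a degree lowering induction on $s$, modelled on Proposition \ref{P: degree lowering}: if the average is controlled by $\nnorm{\cdot}_{s,T}$ with $s\geq \ell+2$, then it is controlled by $\nnorm{\cdot}_{s-1,T}$.

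For the degree lowering step I would first localize, as in the roadmap of Section \ref{SS: reduction to degree lowering}, to the last function. Using Lemma \ref{L: stashing in degree lowering}, I replace $f_\ell$ by the dual-type function
\begin{align*}
F_\ell=\lim_{N\to\infty}\E_{n\in[N]}T^{-\ell a(n)}\overline{f_0}\prod_{j<\ell}T^{(j-\ell)a(n)}\overline{f_j},
\end{align*}
so that positivity of the average forces $\nnorm{F_\ell}_{s,T}>0$. Next I use the inductive formula and the inverse theorem for $\nnorm{\cdot}_{s,T}$. Averaging over $h\in[H]^{s-2}$ gives correlations of $\Delta_{h}F_\ell$ with nonergodic eigenfunctions $\chi_{h}$, and the dual-difference interchange of Proposition \ref{P: degree lowering s=3} then moves the derivatives inside $F_\ell$. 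This produces $a$-averages along $\ell$-APs in which the $\ell$-th function is an eigenfunction-type object $\chi_{h,h'}$, multiplied by differenced copies of $f_0,\ldots,f_{\ell-1}$.

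The crucial point, and the step I expect to be the main obstacle, is the substitute for Property $(P_1)$. An eigenfunction, and more generally a function lying in $\CZ_1(T)\subseteq\CZ_\ell(T)$, is structured. Using the control we already have together with (ii), I would replace the $a$-average in which such a function appears by the corresponding linear average. The linear average is controlled, via \eqref{E: HK seminorm control}, by degree-$\ell$ seminorms. From this I would extract the ``shared'' structure $\chi_{h}=\chi\cdot\lambda_h$ on large $T$-invariant sets, and then remove the low-complexity factors by Cauchy--Schwarz as in Proposition \ref{P: degree lowering s=3}, concluding that $\nnorm{F_\ell}_{s-1,T}>0$. The delicate part is to justify this replacement: (ii) only speaks about $\CZ_\ell(T)$-measurable inputs, while the remaining functions in these averages are arbitrary. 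My first attempt would be to first project those functions onto $\CZ_{s-1}(T)$, using the current degree-$s$ control, and to argue for the nonergodic case on $T$-invariant pieces as in the proofs above.

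Finally, control of $F_\ell$ must be transferred back to $f_\ell$ and to the other $f_j$. For this I would run the ping-pong argument of Propositions \ref{P: transferring control to f_j except 2} and \ref{P: transferring control to f_2}.
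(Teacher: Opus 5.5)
Your forward direction is fine, and so is your reduction of the converse to proving control by $\nnorm{\cdot}_{\ell+1,T}$, followed by splitting and applying (ii). For $\ell=1$ your scheme also goes through. For $\ell\geq 2$, however, the degree lowering step has a genuine gap, in two places.

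First, there is no hypothesis that justifies replacing the $a$-average by the linear one at the point where you need it. After the dual--difference interchange, the slots $1,\ldots,\ell-1$ carry differenced copies of the arbitrary $f_j$, whereas (ii) only concerns tuples of $\CZ_\ell(T)$-measurable functions. Projecting onto $\CZ_{s-1}(T)$ via the current degree-$s$ control does not help, because $s-1\geq \ell+1$. In the joint ergodicity argument this replacement is supplied by $(P_1)$, which is the previous stage of an \emph{outer} induction. You would need an analogous outer induction here, say on $m$, with the hypothesis that \eqref{E: identity along APs} holds whenever $f_{m+1},\ldots,f_\ell$ are $\CZ_\ell(T)$-measurable; the base case is (ii). You would then stash the $m$-th function, so that after the interchange all slots $m,\ldots,\ell$ are structured. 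Your closing ping-pong step depends on the same missing hypothesis.

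Second, even with such a hypothesis, the $U^2$ inverse theorem cannot produce the shared structure $\chi_h=\chi\cdot\lambda_h$. The linear average is controlled by $\nnorm{\cdot}_{\ell,T}$. For $\ell\geq 2$ this seminorm does not vanish on nonzero eigenfunctions; it equals $1$ for a unimodular eigenfunction of an ergodic $T$. So this control says nothing about $\chi_{h,h'}$. Concretely, if $T\chi=e(\beta)\chi$ sits in the last slot, the linear average is $\chi\cdot\E_{n\in[N]}e(\ell\beta n)\prod_{j<\ell}T^{jn}f_j$. This is nonzero for suitable $f_j$ whatever $\beta$ is; for $\ell=2$, it is nonzero as soon as $f_1$ has a component in the $e(-2\beta)$-eigenspace. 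Thus positivity does not force $\chi_{h,h'}$ to be invariant on a set of positive measure, which is exactly what $(P_1)$ delivered in Proposition \ref{P: degree lowering s=3}.

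For the argument to be informative, you need to start from the inverse theorem at degree $\ell+1$, applied to $\Delta_{\underline h}F$ with $\underline h\in\Z^{s-\ell-1}$. The correlating objects are then $\CZ_\ell(T)$-measurable of ``order $\ell$'', and $\nnorm{\cdot}_{\ell,T}$-positivity of their products $g_{\underline h}\overline{g_{\underline h'}}$ genuinely encodes shared top-order structure. This is what the survey means when it says this theorem is proved by degree lowering that uses a different inverse theorem as its starting point. Note also that the statement follows from Theorem \ref{T: degree reduction} with $s=\ell$. Indeed, (ii) together with \eqref{E: HK seminorm control} gives $\nnorm{\cdot}_{\ell,T}$-control on $\CZ_\ell(T)$, and that theorem lifts this control to $X$.
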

Theorem \ref{T: FrKu degree lowering on APs} amounts to saying the following: if 
\begin{align*}%\label{E: average along APs}
        \lim_{N\to\infty}\norm{\E_{n\in[N]}T^{a(n)}f_1\cdots T^{\ell a(n)}f_\ell}_{L^2(\mu)}
        %\lim_{N\to\infty}\norm{\E_{n\in[N]}\prod_{j=1}^\ell T^{j a(n)}f_j}_{L^2(\mu)}
\end{align*}
%\eqref{E: average along APs} 
admits Host-Kra seminorm control, then the identity \eqref{E: identity along APs} holds for all functions if and only if it holds for $\CZ_\ell(T)$-measurable functions. While the assumption of Host-Kra seminorm control reduces the task of verifying \eqref{E: identity along APs} to asserting it for all $\CZ_s(T)$-measurable functions for some possibly large $s\in\N$, Theorem  \ref{T: FrKu degree lowering on APs} says that it suffices to consider $\CZ_\ell(T)$-measurable functions.
%Thus, it reduces the problem of verifying \eqref{E: identity along APs} to verifying it over $\CZ_\ell(T)$, whereas the assumption of Host-Kra seminorm control. 
%Note that if $a$ is well-behaved along $\ell$-APs for the system $(X, \CX, \mu, T)$, then the factor $\CZ_{\ell-1}(T)$ is characteristic for \eqref{E: average along APs}. 
%     \begin{align}\label{E: average along APs}
%         \lim_{N\to\infty}\norm{\E_{n\in[N]}\prod_{j=1}^\ell T^{j a(n)}f_j}_{L^2(\mu)} = 0
%     \end{align}
% for this system. So, 

Theorem \ref{T: FrKu degree lowering on APs} gives two global corollaries that reduce the good behavior along APs to equidistribution on nilsystems. 
\begin{theorem}[Criteria for convergence along APs - general systems {\cite[Theorem 3.1]{FrKu22c}}]\label{T:APsConvergence}
	Let $\ell\in\N$ and $a\colon \N\to \Z$ be a sequence. Assume that $a, 2a, \ldots, \ell a$ admit Host-Kra seminorm control for all systems $(X, \CX, \mu, T)$. Then the  following   properties are equivalent:
	\begin{enumerate}
		\item $a$ is well-behaved along $\ell$-APs  for all  systems.
		
		\item $a$ is well-behaved along $\ell$-APs
		for all  ergodic $\ell$-step nilsystems.
		
		\item  $a$ equidistributes on all $\ell$-step nilsystems $(Y, \CY, \nu, S)$.
	\end{enumerate}
\end{theorem}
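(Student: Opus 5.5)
The plan is to go around the cycle (i)$\Rightarrow$(ii)$\Rightarrow$(iii)$\Rightarrow$(i), with Theorem \ref{T: FrKu degree lowering on APs} as the main engine.

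The implication (i)$\Rightarrow$(ii) is trivial, since ergodic $\ell$-step nilsystems are systems.

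For (ii)$\Rightarrow$(iii), take an $\ell$-step nilsystem $(Y,\CY,\nu,S)$. I would first reduce to the ergodic case: a nilrotation decomposes into finitely many ergodic nilrotations on connected components (or sub-nilmanifolds given by orbit closures). Then apply Corollary \ref{C: joint ergodicity criteria for nilsystems} in its single-transformation form, with the sequences $a,2a,\dots,\ell a$. The idea is to test the identity \eqref{E: identity along APs} on eigenfunctions and, more generally, on horizontal characters. If $\chi$ is an eigenfunction with eigenvalue $\beta$, choose $f_j$ to be products of eigenfunctions with eigenvalues $\beta_j$. The identity then compares $\E_{n}e(a(n)\sum_j j\beta_j)$ with $\E_n e(n\sum_j j\beta_j)$, and this forces the exponential sums along $a$ to vanish whenever $\sum j\beta_j\neq 0$. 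Taking $\ell=1$ directly gives equidistribution of $a$ on the Kronecker factor of nilsystems. For the full nilmanifold equidistribution, I would use that equidistribution of a sequence of the form $S^{a(n)}y$ on a nilmanifold is governed by horizontal characters (Leibman). So it suffices to show that $\E_n e(a(n)\beta)\to 0$ for every nonzero $\beta\in\spec(S)$, which follows from the $\ell=1$ case of (ii) applied to the ergodic nilsystem. Total or partial nonergodicity is handled by passing to ergodic components.

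For (iii)$\Rightarrow$(i), fix a system $(X,\CX,\mu,T)$. By Theorem \ref{T: FrKu degree lowering on APs}, together with the standing assumption of Host-Kra seminorm control, it suffices to verify \eqref{E: identity along APs} for $f_j$ measurable with respect to $\CZ_\ell(T)$. Passing to the ergodic decomposition, which is legitimate for a single transformation, we may assume $T$ is ergodic. Then $\CZ_\ell(T)$ is an inverse limit of $\ell$-step nilsystems by Theorem \ref{T: HK structure theorem}. An $L^2$ approximation together with the martingale convergence theorem reduces matters to continuous functions on a single ergodic $\ell$-step nilsystem $(Y,\nu,S)$.

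There, (iii) gives, for $\nu$-a.e.\ $y$, the convergence $\E_n \prod_j F_j(S^{ja(n)}y)\to\int_{\overline{\{S^{jn}y\}}}\prod F_j$. More precisely, we need equidistribution of $(S^{a(n)}y,\dots,S^{\ell a(n)}y)$. This is the orbit of $a(n)$ under the diagonal-type nilrotation $S\times S^2\times\dots\times S^\ell$ on $Y^\ell$, which is itself an $\ell$-step nilsystem. Applying (iii) to this product nilsystem, the orbit along $a(n)$ equidistributes in the same sub-nilmanifold as the orbit along $n$. Ziegler's/Leibman's description of the latter then shows that both averages converge pointwise to the same limit. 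Bounded convergence upgrades this to $L^2$.

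The main obstacle is this last step: it requires applying (iii) to the possibly nonergodic product nilsystem and identifying the orbit closures along $a(n)$ and along $n$ as the same sub-nilmanifold with the same Haar measure. I would handle it by interpreting ``equidistributes on the nilsystem'' for the product system exactly as in the definition, namely with limit measure equal to that of $\{(S^n y,\dots,S^{\ell n}y)\}$. I would then invoke Leibman's theorem that the $n$-orbit equidistributes on its closure.
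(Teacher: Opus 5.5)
\textbf{Verdict: there is a genuine gap, in the direction (ii)$\Rightarrow$(iii).}

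Your (i)$\Rightarrow$(ii) and your degree-lowering reduction in (iii)$\Rightarrow$(i) are sound. The reduction is essentially the survey's route, which obtains the theorem as a corollary of Theorem \ref{T: FrKu degree lowering on APs}. Note, though, that it lands exactly on continuous functions on ergodic $\ell$-step nilsystems, so it proves (ii)$\Rightarrow$(i) directly. All the remaining content is the nilsystem equivalence (ii)$\Leftrightarrow$(iii), and that is where your argument breaks.

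In (ii)$\Rightarrow$(iii) you invoke Leibman's criterion that equidistribution on a nilmanifold is detected by horizontal characters. That criterion is about polynomial sequences in $G$. For an arbitrary $a\colon\N\to\Z$, $n\mapsto S^{a(n)}$ is not a polynomial sequence, and horizontal equidistribution does not imply equidistribution. Accordingly, your argument only ever uses the case $\ell=1$ of (ii), and that cannot suffice. Here is a counterexample:
\begin{itemize}
\item Take $\alpha$ irrational and let $a$ enumerate $E=\{m\in\N\colon \{m^2\alpha\}\in[0,1/2)\}$.
\item Weyl's theorem (joint equidistribution of $(m^2\alpha,m\beta)$, and of $m^2\alpha$ along residue classes) gives $\E_{n\in[N]}e(a(n)\beta)\to 0$ for every $\beta\in\T\setminus\{0\}$. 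So $a$ passes every horizontal test and is well-behaved along $1$-APs for all systems.
\item The sequences $a,2a$ admit Host--Kra seminorm control: sandwich $1_{[0,1/2)}$ between trigonometric polynomials and run PET with the weights $e(km^2\alpha)$. So $a$ is within the scope of the theorem.
\item For the $2$-step nilrotation $S(x,y)=(x+\alpha,y+2x+\alpha)$ on $\T^2$ one has $S^{a(n)}(0,0)=(a(n)\alpha,a(n)^2\alpha)$. Its second coordinate never leaves $[0,1/2)$, while $(S^n(0,0))_n$ equidistributes in $\T^2$.
\item Correspondingly, (ii) fails for $\ell=2$. For the ergodic nilrotation $R(x,y)=(x+\alpha,y+x)$ with $f_1=e(-2y)$ and $f_2=e(y)$, one computes $R^mf_1\cdot R^{2m}f_2=e(-y)\,e(m^2\alpha)$. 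Hence the $L^2$ distance between the two $2$-AP averages tends to $2/\pi$.
\end{itemize}
So your argument would derive (iii) from $\ell=1$ information that this $a$ satisfies, even though (ii), and hence (iii), fails for it.

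\textbf{What is missing.} You need a way to feed the full $\ell$-term identity into tests against genuinely $\ell$-step nilsequences evaluated at specific points. The computation above is the prototype: it exhibits $e(m^2\alpha)$ as the $2$-AP correlation $\int e(y)\cdot R^m e(-2y)\cdot R^{2m}e(y)\,d\mu$ of an ergodic $2$-step nilsystem. You need an argument of this kind for general $\ell$-step nilsequences $F(b^ny)$, so that equality of the limits along $a(n)$ and $n$ transfers to $\lim\E_{n\in[N]}F(b^{a(n)}y)=\lim\E_{n\in[N]}F(b^ny)$ for every $y$. Testing on eigenfunctions cannot do this.

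\textbf{A related point in (iii)$\Rightarrow$(i).} There you apply (iii) to $(Y^\ell,S\times S^2\times\cdots\times S^\ell)$ at diagonal points. These form a null set of $Y^\ell$, so (iii) must be used as an every-point statement, with limit the Haar measure of the orbit closure, not merely an almost-everywhere statement or equality of closures. In the example above, the diagonal points are exactly where the bias appears: the combination $-2y_1+y_2$ evolves as $-y+m^2\alpha$.

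\textbf{Minor.} A nilrotation need not have finitely many ergodic components; for example, $(x,y)\mapsto(x,y+x)$ on $\T^2$ has uncountably many. Restrict to the orbit closure of the point instead.
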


\begin{theorem}[Criteria for  convergence along $\ell$-term APs - totally ergodic systems {\cite[Theorem 3.2]{FrKu22c}}]\label{T:APsConvergenceTE}
	Let $\ell\in\N$ and $a\colon \N\to \Z$ be a sequence. Assume that $a, 2a, \ldots, \ell a$ admit Host-Kra seminorm control for all systems $(X, \CX, \mu, T)$. Then the following   properties are equivalent:
	\begin{enumerate}
		\item $a$ is well-behaved along $\ell$-APs for all  totally ergodic systems.
		
		\item  $a$ is well-behaved along $\ell$-APs for all  totally ergodic $\ell$-step nilsystems.
		\suspend{enumerate}
		Furthermore, both properties are implied by the following one:
		\resume{enumerate}
		\item   $a$  equidistributes on all totally ergodic $\ell$-step nilsystems $(Y, \CY, \nu, S)$.
	\end{enumerate}
\end{theorem}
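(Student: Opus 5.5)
We deduce this from Theorem \ref{T: FrKu degree lowering on APs}, applied system by system, together with reductions to nilsystems. The direction (i)$\Rightarrow$(ii) is trivial. For (ii)$\Rightarrow$(i), fix a totally ergodic system $(X,\CX,\mu,T)$. By assumption, $a,2a,\ldots,\ell a$ admit Host-Kra seminorm control for it, so Theorem \ref{T: FrKu degree lowering on APs} reduces the problem to showing that $a$ is well-behaved along $\ell$-APs for the factor $(X,\CZ_\ell(T),\mu,T)$. Since $T$ is ergodic, Theorem \ref{T: HK structure theorem} realizes this factor as an inverse limit of $\ell$-step nilsystems $(Y_i,\CY_i,\nu_i,S_i)$. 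Each of these is a factor of a totally ergodic system and is therefore itself totally ergodic, so hypothesis (ii) applies to each $Y_i$.

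To pass to the inverse limit, take $\CZ_\ell(T)$-measurable $f_1,\ldots,f_\ell$ and approximate each in $L^2(\mu)$ by $\E(f_j|\CY_i)$, using the martingale convergence theorem. Both averages in \eqref{E: identity along APs} are multilinear and, for $1$-bounded functions, Lipschitz in each $f_j$ with respect to $\norm{\cdot}_{L^2(\mu)}$, uniformly in $N$. Hence a standard $\veps/3$ argument transfers the identity from the $Y_i$'s to $\CZ_\ell(T)$. This proves (i)$\Leftrightarrow$(ii).

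For (iii)$\Rightarrow$(ii), let $(Y,\CY,\nu,S)$ be a totally ergodic $\ell$-step nilsystem, and argue pointwise. By (iii), for $\nu$-a.e.\ $y$ the orbit $(S^{a(n)}y)$ is equidistributed in $Y$. We want to compare
\[
\E_{n\in[N]}F_1(S^{a(n)}y)\cdots F_\ell(S^{\ell a(n)}y)
\quad\text{with}\quad
\E_{n\in[N]}F_1(S^{n}y)\cdots F_\ell(S^{\ell n}y),
\]
and it suffices to do so for continuous $F_j$. The natural move is to view $(S^{n}y,\ldots,S^{\ell n}y)$ as an orbit of the nilrotation $\tilde S=(S,S^2,\ldots,S^\ell)$ on $Y^\ell$, whose orbit closure is a sub-nilmanifold $\Omega_y$ (the Leibman/Ziegler description). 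The right-hand average then converges to $\int_{\Omega_y}F_1\otimes\cdots\otimes F_\ell$.

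The left-hand average is taken along the $\tilde S$-orbit at times $a(n)$. The main obstacle is that (iii) as stated only gives equidistribution of $a$ on $Y$, not on the possibly non-ergodic system $(\Omega_y,\tilde S)$. I would try to resolve this by observing that $(\Omega_y,\tilde S)$ is itself a nilsystem of step $\ell$ built from $Y$; that total ergodicity of $S$ makes $(\Omega_y,\tilde S)$ totally ergodic (a finite-order character on it would yield one on $Y$); and then applying (iii) to this system. This is exactly why (iii) is imposed on all totally ergodic $\ell$-step nilsystems rather than only on $Y$. Equidistribution of $(\tilde S^{a(n)}(y,\ldots,y))$ in $\Omega_y$ for a.e.\ $y$ then matches the two limits pointwise, and dominated convergence gives $L^2$ convergence, which yields (ii).
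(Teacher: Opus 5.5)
Your deduction of (i)$\Leftrightarrow$(ii) is correct and is the intended route. Theorem \ref{T: FrKu degree lowering on APs} reduces the problem to $\CZ_\ell(T)$. Theorem \ref{T: HK structure theorem} writes this factor as an inverse limit of $\ell$-step nilsystems, each totally ergodic as a factor of a totally ergodic system. $L^2$-approximation then passes to the limit.

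The gap is in (iii)$\Rightarrow$(ii), at the last step. You need $\tilde S^{a(n)}(y,\ldots,y)$ to equidistribute in $\Omega_y$ from the \emph{specific} point $(y,\ldots,y)$. But equidistribution on a nilsystem, as defined in the paper, concerns only Haar-almost every starting point. Applying (iii) to $(\Omega_y,\tilde S)$ therefore gives equidistribution only for $m_{\Omega_y}$-a.e.\ $\omega\in\Omega_y$, and $(y,\ldots,y)$ is a single, generally Haar-null, point of $\Omega_y$. Letting $y$ range over $Y$ does not help, because "a.e.\ $y\in Y$" is not the same as "a.e.\ point of $\Omega_y$".

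Nor can you move the base point. Writing $\tilde S$ as translation by $\tilde b=(b,b^2,\ldots,b^\ell)$, one has $\tilde b^{a(n)}h\omega=h\,(h^{-1}\tilde b h)^{a(n)}\omega$. So information at a generic point is information about a conjugate nilrotation, not about $(y,\ldots,y)$. For rotations on abelian groups, translation invariance would rescue you; in genuinely nilpotent systems it does not. As written, the argument works only if (iii) is read as equidistribution from \emph{every} point. With the almost-everywhere definition you need an additional idea.

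Your justification that $(\Omega_y,\tilde S)$ is totally ergodic also runs the wrong way. Projection to the first coordinate makes $(Y,S)$ a factor of $(\Omega_y,\tilde S)$, so eigenfunctions lift from $Y$ to $\Omega_y$, not conversely. In fact the claim fails for some $y$:
\begin{itemize}
\item Take $S(x_1,x_2)=(x_1+\alpha,x_2+x_1)$ on $\T^2$ with $\alpha\notin\Q$, which is totally ergodic, and $\ell=2$, with coordinates $(z_1,z_2,z_3,z_4)$ on $Y^2$.
\item Then $z_3-2z_1$ is $\tilde S$-invariant and equals $-x_1$ on $\Omega_y$.
\item Hence $e(z_4-4z_2)$ is an eigenfunction of $\tilde S$ on $\Omega_y$ with eigenvalue $e(\alpha-2x_1)$.
\item This eigenvalue is a nontrivial root of unity whenever $\alpha-2x_1\in\Q\setminus\Z$, so $(\Omega_y,\tilde S)$ is not totally ergodic for those $y$.
\end{itemize}
Total ergodicity does hold for a.e.\ $y$, but this needs an argument. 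For example, Theorem \ref{T: Fr polys along APs} for $a(n)=qn$, combined with everywhere convergence of the averages on nilsystems, shows that the $\tilde S^q$- and $\tilde S$-orbit closures of $(y,\ldots,y)$ coincide for a.e.\ $y$. This second issue is repairable; the base-point issue is the real gap.
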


%It is unclear if Theorem \ref{T: FrKu degree lowering on APs} extends to the commuting case. There, the role of Host-Kra seminorms would be played by box seminorms. Hence, before we formulate the surmised commuting version of Theorem \ref{T: FrKu degree lowering on APs}, we introduce one more definition.

% The proposed commuting case of Theorem \ref{T: FrKu degree lowering on APs} then takes the following form.
% \begin{problem}
%     Let $a:\N\to\Z$ be a sequence, and $(X, \CX, \mu,$\! $T_1, \ldots, T_\ell)$. Is is true that $a$ is well-behaved along $\ell$-corners for the system if and only if it admits box seminorm control for the system, and the identity \eqref{E: identity along corners} holds for all
%     \begin{align}\label{E: complicated condition}
%         f_j\in \bigcup_{\substack{0\leq k \leq \ell\colon\\ k\neq j}}Z_{(T_jT_k\inv)^{\times 2}, \{T_j T_i\inv\colon\; 0\leq i\leq \ell,\; i\neq j,k\}}
%         % T_jT_1\inv, \ldots, T_j T_{j-1}\inv, T_j T_{j+1}\inv, \ldots, T_j T_\ell\inv}
%     \end{align}
%     and all $1\leq j\leq \ell$.
% \end{problem}
% The condition \eqref{E: complicated condition} is rather complicated to understand in general; for $\ell=2$, it simply means that we want to check \eqref{E: identity along corners} for
% \begin{align*}
%     f_1 \in Z_{T_1, T_1, T_1T_2\in}\cup Z_{T_1, T_1T_2\inv, T_1T_2\inv}\quad \textrm{and}\quad f_2 \in Z_{T_2, T_2, T_2T_1\in}\cup Z_{T_2, T_2T_1\inv, T_2T_1\inv}.
% \end{align*}

\subsubsection{Good behavior along corners}
While there are many results towards Problem \ref{Pr: identity along APs}, very little is known for Problem \ref{Pr: identity along corners}. The main reason is that it is notoriously hard to find useful inverse theorems for the box seminorms that arise in the study of the averages
\begin{align*}%\label{E: average along corners}
        \limsup_{N\to\infty}\norm{\E_{n\in[N]}T_1^{a(n)}f_1\cdots T_\ell^{a(n)}f_\ell}_{L^2(\mu)}.
        %\lim_{N\to\infty}\norm{\E_{n\in[N]}\prod_{j=1}^\ell T^{j a(n)}f_j}_{L^2(\mu)}
\end{align*}
In particular, the following problem stands unresolved.
\begin{problem}\label{Pr: Hardy well-behaved along corners}
    Show that any Hardy sequence $a\in\CH$ satisfying \eqref{E: staying logarithmically away from cQ[t]} is well-behaved along $\ell$-corners for any $\ell\in\N.$
\end{problem}
It is not even known if 
\begin{align*}
    \lim_{N\to\infty}\norm{\E_{n\in[N]}T_1^{\sfloor{n^{3/2}}}f_1\cdot T_2^{\sfloor{n^{3/2}}}f_2-\E_{n\in[N]}T_1^n f_1\cdot T_2^n f_2}_{L^2(\mu)} = 0;
\end{align*}
nor even if the average on the left converges in norm.
%present in \eqref{E: identity along corners}.

Recently, Daskalakis made the following progress towards Problem \ref{Pr: Hardy well-behaved along corners}.
\begin{theorem}[{\cite[Theorem 1.1]{Das25a}}]\label{T: Daskalakis}
    For any $b\in(1,23/22)$, the sequence $n^b$ is well-behaved along 2-corners. 
\end{theorem}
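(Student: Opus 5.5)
The plan is to compare both averages to a common ``model'' through the Host--Kra/box seminorm machinery on each side. For a rigid shape $(T_1^{a(n)}, T_2^{a(n)})$ we cannot expect Host--Kra control (the shape is pairwise dependent), so the identity has to be established through a box-seminorm characteristic factor. By Host's estimate \eqref{E: box seminorm control}, the linear average $\E_{n\in[N]}T_1^nf_1\cdot T_2^nf_2$ is controlled by $\nnorm{f_2}_{T_2,T_2T_1\inv}$, and symmetrically for $f_1$. The first step is therefore to prove the analogous statement for $a(n)=\sfloor{n^b}$:
\begin{align*}
\limsup_{N\to\infty}\norm{\E_{n\in[N]}T_1^{\sfloor{n^b}}f_1\cdot T_2^{\sfloor{n^b}}f_2}_{L^2(\mu)}^{C}\ll \nnorm{f_2}_{T_2^{\times s},(T_2T_1\inv)^{\times s}}
\end{align*}
for some $s$. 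We then need to lower this to degree $(1,1)$, i.e. to $\nnorm{f_2}_{T_2,T_2T_1\inv}$.

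For $1<b<2$, the Taylor expansion on short intervals $(N,N+L(N)]$ with $L(N)\approx N^{1-b/2}$ or so (cf.\ the example after Boshernitzan's discussion) writes $n^b$ as a linear polynomial with slowly varying slope $bN^{b-1}$ plus a small error. Applying van der Corput once then reduces the problem to averages of correlations along $T_1^{\sfloor{n^b}}T_2^{-\sfloor{n^b}}$, and concatenation (as in Section~\ref{SSS: concatenation}) gives control by a box seminorm in the directions $\be_2,\be_2-\be_1$.

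To bring the seminorm degree down to one, I would run a degree-lowering/smoothing argument in the spirit of Proposition~\ref{P: smoothing}. First stash via Lemma~\ref{L: stashing in degree lowering} to replace $f_2$ by the dual $F_2$. The inverse theorem for degree-one box seminorms then gives functions $u\in\CI(T_2)$ and $v\in\CI(T_2T_1\inv)$ with correlation to $F_2$. Feeding $u$ or $v$ back into the average collapses it to a single-transformation average along $\sfloor{n^b}$, where the ergodic theorem along $\sfloor{n^b}$ (equidistribution: $n^b$ stays away from polynomials) applies. In each collapsed case the $\sfloor{n^b}$-average and the $n$-average have the same limit, which is exactly the needed identity on the characteristic factor $\CZ(T_2,T_2T_1\inv)$. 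Combining this with the analogous statement for $f_1$ and splitting each $f_j$ into its projection plus a remainder yields \eqref{E: identity along corners}.

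The main obstacle is the first step: getting a quantitative estimate in which the degree-$(s,s)$ box seminorm already has directions only $\be_2$ and $\be_2-\be_1$ with tolerable $s$. After van der Corput, the error in the Taylor expansion of $n^b$ must be genuinely negligible, and exponential-sum bounds (van der Corput/Weyl for $e(\alpha n^b)$, or Type I/II estimates) are needed to equidistribute the short-interval slopes $bN^{b-1}$ in the concatenation step. I expect these estimates to close only when $b$ is very close to $1$; this is presumably the source of the restriction $b<23/22$. Concretely, I would first try to bound the relevant two-dimensional exponential sums $\sum_n e(\alpha\sfloor{n^b}+\beta n)$ by exponent-pair methods, and accept whatever range of $b$ they yield.
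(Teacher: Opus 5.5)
The gap is in the step where you bring the seminorm degree down to one. After stashing, all you know is $\nnorm{F_2}_{T_2^{\times s},(T_2T_1\inv)^{\times s}}>0$.

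\begin{itemize}
\item The inverse theorem you invoke does not apply at that point. Correlation with a single $T_2$- or $T_2T_1\inv$-invariant function is the inverse theorem for degree-one seminorms. The structured objects for $\nnorm{\cdot}_{T_2,T_2T_1\inv}$ are products $uv$ with $u\in\CI(T_2)$, $v\in\CI(T_2T_1\inv)$, and only on a magic extension. In a degree-lowering scheme you can use this only on the derivatives $\Delta_{\bh}F_2$ from the inductive formula.
\item You would then need a closing step like Property $(P_1)$ in Propositions~\ref{P: degree lowering s=2} and~\ref{P: degree lowering s=3}, and it does not close here. Putting $u_{\bh}v_{\bh}$ in place of $f_2$ collapses the average to $u_{\bh}\cdot T_1^{\sfloor{n^b}}(f_{1,\bh}v_{\bh})$. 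Its limit $u_{\bh}\,\E(f_{1,\bh}v_{\bh}|\CI(T_1))$ couples $v_{\bh}$ with $f_{1,\bh}$. So the mechanism that forces extra structure on the eigenfunctions $\chi_{2,h}$ there has no evident counterpart.
\item Seminorm smoothing as in Proposition~\ref{P: smoothing} cannot help either. It trades the directions $\be_2-\be_1$ for copies of $\be_2$, i.e.\ it aims at Host--Kra control. That control is false for this rigid shape: take $f_1=f$, $f_2=\overline f$ with $f$ invariant under $T_2T_1\inv$, and the average becomes $\E_{n\in[N]} T_1^{\sfloor{n^b}}|f|^2$.
\end{itemize}

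This missing inverse/structure theory for box seminorms is exactly the known obstruction. The degree-lowering argument behind Theorem~\ref{T: FrKu weak limit identity for corners} gives only the weak identity~\eqref{E: sparse corner identity}, and it breaks for $L^2$ limits. Your final collapsing step is fine on a magic extension, but it presupposes the unproven control by $\nnorm{f_2}_{T_2,T_2T_1\inv}$.

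A sanity check confirms the problem. Box seminorm control (your Step 1) is already known for all such Hardy sequences, with no restriction on $b$. Nothing in your Step 2 uses $b<23/22$. Your argument would therefore prove the $L^2$ identity for every $b\in(1,2)$, including $n^{3/2}$. That case is open: even norm convergence of that average is unknown. So your guess that the restriction on $b$ enters in Step 1 is also misplaced.

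The actual proof avoids characteristic factors altogether. Daskalakis changes variables $m=\sfloor{n^b}$, which is injective for $b>1$. This rewrites the sparse average as a weighted linear average $\E_{m\in[N^c]}w_{N,m}\,T_1^mf_1\cdot T_2^mf_2$. The weights are supported on the image of $\sfloor{n^b}$ and so are unbounded, of size a power of $N$. He then shows that $w_{N,m}-1$ is Gowers uniform. A generalized von Neumann-type estimate for the linear corner average then sends the difference of the two averages to $0$ in $L^2(\mu)$ directly.

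The exponential-sum (exponent-pair) estimates you mention are what this uniformity step needs. Because $w_{N,m}$ is far from $1$-bounded, their power savings must beat its size, and that is what confines $b$ to $(1,23/22)$.
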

In fact, the result holds if $n^b$ is replaced by any $b$-regularly varying function with $b$ in this range (see \cite[Definition 1.3]{Das25a}). Daskalakis' method is very finitary; it proceeds by expressing the original average as a weighted ergodic average
\begin{align*}
    \E_{n\in[N]}T_1^{\sfloor{n^b}}f_1\cdot T_2^{\sfloor{n^b}}f_2 = \E_{n\in[N^c]}w_{N,n} \cdot T_1^n f_1\cdot T_2^n f_2,
\end{align*}
and then establishing the Gowers uniformity of the difference $w_{N,n}-1$. The fact that the weight $w_{N,n}$ is far from being 1-bounded is one reason why the author can only consider $b$ just slightly better than 1 to make sure that all the quantitative estimates yield cancellations.

Soon after, Frantzikinakis and the author removed the restriction on $b$ from Theorem \ref{T: Daskalakis} at the cost of taking a weaker mode of convergence.
\begin{theorem}[Limiting formula for Hardy corners {\cite[Theorem 1.1]{FrKu25}}]\label{T: FrKu sparse corners}
	   For every Hardy sequence $a\in\CH$ satisfying \eqref{E: staying logarithmically away from cQ[t]}, every system $(X,\CX,\mu,T_1, T_2)$, and all functions $f_0,f_1,f_2 \in L^\infty(\mu)$, we have the identity
 	\begin{align}\label{E: sparse corner identity}
 	    \lim_{N\to\infty} \frac{1}{N}\sum_{n=1}^N \, \int f_0\cdot T_1^{\sfloor{a(n)}}f_1\cdot T_2^{\sfloor{a(n)}}f_2\, d\mu =
	\lim_{N\to\infty} \frac{1}{N}\sum_{n=1}^N\,\int f_0\cdot   T_1^n f_1\cdot  T_2^n f_2\, d\mu.
 	\end{align}
\end{theorem}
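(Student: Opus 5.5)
We prove \eqref{E: sparse corner identity} by a degree-lowering argument adapted to corners, in the spirit of Theorem \ref{T: FrKu degree lowering on APs}. The averages are weak (integrated against $f_0$), which gives us the freedom to move the transformations around.

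\textbf{Step 1: box seminorm control for both sides.} By the Hardy analogue of Theorem \ref{T: DKKST box seminorm control} (\cite[Theorem 10.1]{DKKST24}), for the pair $(T_1^{\sfloor{a(n)}},T_2^{\sfloor{a(n)}})$ we obtain control of the left-hand side by a box seminorm $\nnorm{f_2}_{\be_2^{\times s},(\be_2-\be_1)^{\times s}}$, and symmetrically for $f_1$. Since the leading coefficients of $a\be_2$ and $a(\be_2-\be_1)$ are real multiples of $\be_2$ and $\be_2-\be_1$, some care with \cite[Section 3]{DKKST24}-type seminorms is needed. Host's estimate \eqref{E: box seminorm control} gives the same type of control (with $s=1$) for the linear right-hand side.

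\textbf{Step 2: lowering $s$ to $1$.} This is the main step and the main obstacle. We aim to show that the left-hand side is already controlled by $\nnorm{f_2}_{\be_2,\be_2-\be_1}$. We run the degree lowering scheme of Section \ref{S: degree lowering proof}, one direction at a time:
\begin{itemize}
\item First stash $f_2$ into a dual function $F_2$ via Lemma \ref{L: stashing in degree lowering}.
\item Use the inductive formula for box seminorms to reduce a degree-$(s+1)$ copy of a direction to averages of degree-$s$ seminorms of $\Delta_h F_2$.
\item Perform the dual-difference interchange.
\item At the bottom level, we need an inverse theorem for $\nnorm{\cdot}_{\be_2^{\times 2}}$ and $\nnorm{\cdot}_{(\be_2-\be_1)^{\times 2}}$, namely nonergodic eigenfunctions of $T_2$ and of $T_2T_1^{-1}$.
\end{itemize}
When $f_2$ is replaced by such an eigenfunction $\chi$, the average collapses to essentially a single-transformation average twisted by $e(\lambda \sfloor{a(n)})$. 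At this point the hypothesis \eqref{E: staying logarithmically away from cQ[t]} enters: via Theorem \ref{T: Boshernitzan}, it forces $e(\lambda\sfloor{a(n)})$ to behave like $e(\lambda n)$ on average, ruling out irrational obstructions as in Example \ref{Ex: sqrt 2 n}. This yields the analogue of Property $(P_1)$ and lowers the degree.

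The hard part is that box seminorms with two different directions lack a clean inverse theorem. We therefore lower copies of $\be_2$ and of $\be_2-\be_1$ alternately, keeping the other direction frozen, using a relative version of the inverse theorem in the spirit of \eqref{E: DKKST concatenation}.

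\textbf{Step 3: reduction to structured functions and the final comparison.} Once both sides are controlled by $\nnorm{f_2}_{\be_2,\be_2-\be_1}$ (and symmetrically for $f_1$), Example \ref{Ex: Host example} lets us pass to a magic extension. There, $f_2$ may be assumed to be a sum of products $g\cdot h$ with $g$ being $T_2$-invariant and $h$ being $T_2T_1^{-1}$-invariant, so that by \eqref{E: magic functional property} we have $T_2^{m}(gh)=g\cdot T_1^{m}h$.

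Both sides then reduce to single averages $\int f_0 g\cdot T_1^{\sfloor{a(n)}}(f_1h)\,d\mu$ versus $\int f_0 g\cdot T_1^{n}(f_1h)\,d\mu$. These agree in the limit by the spectral theorem together with Boshernitzan's equidistribution of $\lambda\sfloor{a(n)}$ for irrational $\lambda$ (again using \eqref{E: staying logarithmically away from cQ[t]}). The contributions of the rational and zero frequencies match as well, since $\sfloor{a(n)}$ is equidistributed in residue classes.

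Finally, both sides are defined and controlled in the extension, and the integrals are unchanged upon lifting, so the identity \eqref{E: sparse corner identity} descends to the original system.
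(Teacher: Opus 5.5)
Your Steps 1 and 3 are fine. Suppose you knew that, on every system, $\lim_{N\to\infty}\E_{n\in[N]}\int f_0\cdot T_1^{\sfloor{a(n)}}f_1\cdot T_2^{\sfloor{a(n)}}f_2\,d\mu=0$ whenever $\nnorm{f_2}_{T_2,T_2T_1^{-1}}=0$. Then the magic extension, \eqref{E: magic functional property}, the spectral theorem and the equidistribution of $(\sfloor{a(n)}t)_n$ for $t\notin\Z$ would give \eqref{E: sparse corner identity}. However, by \eqref{E: box seminorm control} this weak control is \emph{equivalent} to the theorem, so all the content is in Step 2, and the mechanism you sketch there fails.

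\textbf{The base case.} Let $\chi$ be a nonergodic eigenfunction of $T_2$ with eigenvalue $\lambda$. Then $\int G\cdot T_1^{m}f\cdot T_2^{m}\chi\,d\mu=\int G\chi\cdot\lambda^{m}\cdot T_1^{m}f\,d\mu$, where $\lambda$ is only $T_2$-invariant. In general it is neither constant nor $T_1$-invariant, and you cannot decompose ergodically with respect to $T_2$ without losing $T_1$-invariance of the measure. This is still a genuinely two-dimensional expression, to which neither the spectral theorem for $T_1$ nor Boshernitzan's theorem applies. The same happens for eigenfunctions of $T_2T_1^{-1}$. So the ``analogue of Property $(P_1)$'' you invoke is not available.

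\textbf{The closing step.} Even when $T_2$ is ergodic, so that the base case is fine, the step that closes each degree-lowering iteration has no counterpart. In Proposition \ref{P: degree lowering s=2} and its successors, $(P_1)$ forces the eigenfunctions produced by the inverse theorem to project nontrivially onto the target factor $\CI(T_2)$; that is what lowers the degree. Here the target factor $\CZ(T_2,T_2T_1^{-1})$ already contains every eigenfunction $\chi$ of an ergodic $T_2$: since $\Delta_{j\be_2}\chi$ is constant, $\nnorm{\chi}_{T_2,T_2T_1^{-1}}=1=\norm{\chi}_{L^2(\mu)}$. So knowing that the corner average with $f_2=\chi_h$ behaves like the linear one puts no constraint on the $\chi_h$; it only gives spectral information about the differenced $f_1$. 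Nothing then forces $\E_h\nnorm{\Delta_hF_2}_{T_2}>0$. The ``relative inverse theorem'' you appeal to also does not exist: \eqref{E: DKKST concatenation} is an inclusion between box factors, not an inverse theorem.

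\textbf{A warning sign.} The scheme you invoke (stashing as in Lemma \ref{L: stashing in degree lowering}, Cauchy--Schwarz, a norm statement at the bottom) would yield $L^2$-control by $\nnorm{f_2}_{T_2,T_2T_1^{-1}}$. Your Step 3 would then give the $L^2$ version of \eqref{E: sparse corner identity}, which is open, using only equidistribution mod $1$.

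\textbf{What the paper does.} It proves the criterion of Theorem \ref{T: FrKu weak limit identity for corners}: box seminorm control on all systems plus equidistribution of $a$ on nilsystems implies the weak identity. The argument is a degree-lowering scheme that exploits the weak formulation; this is why it does not extend to norm limits, and why equidistribution on nilsystems, not merely mod $1$, is required. The paper then checks both hypotheses for Hardy sequences satisfying \eqref{E: staying logarithmically away from cQ[t]}.
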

Interestingly, Theorem \ref{T: FrKu sparse corners} only establishes an identity of \emph{weak} limits, which is enough to yield recurrence along Hardy corners. The method very quickly breaks when applied to strong limits or longer corners (see \cite[Section 3.4]{FrKu25} for discussion). This is one of the rare examples in the study of multiple ergodic averages when a result for weak limits cannot easily be upgraded to strong limits.

The same paper establishes a double limiting identity for polynomials.
\begin{theorem}[Limiting formula for polynomial corners {\cite[Theorem 1.2]{FrKu25}}]\label{T: FrKu poly corners}
	For every nonconstant $a\in \Z[t]$ with zero constant terms, every system $(X,\CX,\mu,T_1,T_2)$, and all functions $f_0,f_1, f_2 \in L^\infty(\mu)$, we have the identity
 	\begin{multline}\label{E: poly corner identity}
 	 \lim_{k\to \infty}\lim_{N\to\infty}\frac{1}{N}\sum_{n=1}^N\,  \int f_0\cdot T_1^{a(k!n)}f_1\cdot T_2^{a(k!n)}f_2\; d\mu\\
     =\lim_{k\to \infty}\lim_{N\to\infty}\frac{1}{N}\sum_{n=1}^N\,  \int f_0\cdot T_1^{k!n}f_1\cdot T_2^{k!n}f_2\, d\mu.
 	\end{multline}
\end{theorem}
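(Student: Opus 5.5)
We argue by degree lowering, in the spirit of Theorem \ref{T: FrKu degree lowering on APs} but adapted to corners. The first step is to pass to a fixed $k$ and the sequence $b_k(n):=a(k!n)$. We then observe that the right-hand side of \eqref{E: poly corner identity} is the limit of the linear corner average along $k!\Z$. By Host's estimate \eqref{E: box seminorm control}, it is controlled by box seminorms of $f_2$ with directions $(T_2T_1\inv)^{k!}$ and $T_2^{k!}$. As $k\to\infty$, it is governed by the invariant factors of these powers, that is, by the rational Kronecker-type factors $\Krat(T_2T_1\inv)$ and $\Krat(T_2)$.

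For the left-hand side, we first use PET together with concatenation, via Theorem \ref{T: DKKST box seminorm control}. This controls the polynomial average $\E_n T_1^{b_k(n)}f_1\cdot T_2^{b_k(n)}f_2$ by a box seminorm $\nnorm{f_2}_{\be_2^{\times s},(\be_2-\be_1)^{\times s}}$, and symmetrically for $f_1$. Since we only need weak limits, we test against $f_0$ and use stashing (Lemma \ref{L: stashing in degree lowering}) to replace $f_2$ by a dual-type function $F_2$.

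We then run a degree lowering in two directions simultaneously, reducing $s$ to $1$ in each of $\be_2$ and $\be_2-\be_1$. The base step should go as in Proposition \ref{P: degree lowering s=2}. For this we need an inverse theorem for $\nnorm{\cdot}_{\be_2,\be_2-\be_1}$. In the weak setting, such a function correlates with a product $u\cdot v$, where $u$ is $T_2$-invariant and $v$ is $T_2T_1\inv$-invariant; in fact, it suffices that it correlates with functions measurable with respect to $\CI(T_2)\vee\CI(T_2T_1\inv)$. For such products, $T_2^{m}(uv)=u\cdot T_1^{m}v$, so the corner average collapses to a single-transformation average. The latter is controlled by the rational Kronecker factor by Proposition \ref{P: Krat}. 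Taking $k\to\infty$ kills the rational obstructions because $b_k$ is divisible by $k!$, since $a$ has zero constant term. On functions in $\Krat$, both sides of \eqref{E: poly corner identity} then coincide, as $T^{k!n}$ eventually acts trivially on each periodic piece.

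Finally, we decompose $f_1,f_2$ into a structured part in the relevant factor and a remainder whose box seminorm vanishes. The remainders contribute zero to both sides: by the lowered seminorm control on the left, and by Host's bound on the right. The structured parts give equal limits by the computation above.

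The main obstacle is the degree lowering step for box seminorms with two distinct directions. Here no good inverse theorem exists beyond degree $1$ per direction. Our plan is to perform the dual-difference interchange along $\be_2$ while keeping the $\be_2-\be_1$ directions intact, and to verify that the induction closes using only weak limits. We expect this to work only for $2$-corners, consistent with the remarks after Theorem \ref{T: FrKu sparse corners}.
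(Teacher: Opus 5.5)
The overall architecture is in line with how the result is proved: box seminorm control via Theorem \ref{T: DKKST box seminorm control}, stashing, degree lowering down to $\nnorm{\cdot}_{\be_2,\be_2-\be_1}$, then collapse to a single transformation with the $k!$ device. However, the step that carries all the weight is only announced, namely the degree lowering for $\nnorm{\cdot}_{\be_2^{\times s},(\be_2-\be_1)^{\times s}}$. The plan you give for it does not close.

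Suppose you lower along $\be_2$ as in Propositions \ref{P: degree lowering s=2} and \ref{P: degree lowering s=3}. Then you absorb the $\be_2-\be_1$ directions into multiplicative derivatives via the inductive formula and apply the inverse theorem for $\nnorm{\cdot}_{2,T_2}$. After the dual-difference interchange you face averages $\lim_{N}\E_{n\in[N]}\int g_0\cdot T_1^{b(n)}g_1\cdot T_2^{b(n)}\chi$, with $\chi$ a nonergodic eigenfunction of $T_2$.

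The mechanism of Proposition \ref{P: degree lowering s=3} needs the analogue of Property $(P_1)$: such averages should only see $\E(\chi|\CI(T_2))$. This fails for every sequence $b$. Take $T_1=T_2$ an irrational rotation, $\chi(x)=e(x)$, $g_1=\overline{\chi}$, $g_0=1$. The average equals $1$, while $\E(\chi|\CI(T_2))=0$.

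At best one learns that the eigenvalue of $\chi$ is $T_2T_1^{-1}$-invariant, i.e. that $\chi$ is $\CZ(\be_2,\be_2-\be_1)$-measurable. Feeding that back only trades a copy of $\be_2$ for a copy of $\be_2-\be_1$, and the symmetric step along $\be_2-\be_1$ trades it back. Keeping the $\be_2-\be_1$ directions inside the seminorm instead would require an inverse theorem for a multidirectional box seminorm such as $\nnorm{\cdot}_{\be_2,\be_2,\be_2-\be_1}$, which is not available.

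So a different starting inverse theorem, with a matching lower-complexity input, is needed. This is where the paper's argument departs from the template of Section \ref{S: degree lowering proof}. Note that its general criterion, Theorem \ref{T: FrKu weak limit identity for corners}, requires equidistribution on nilsystems, whereas your plan only ever uses Proposition \ref{P: Krat}.

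There are two further errors.

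\emph{The inverse theorem you state for $\nnorm{\cdot}_{\be_2,\be_2-\be_1}$ is false on the original system.} For $T_1=\mathrm{id}$ and $T_2$ an irrational rotation, $\nnorm{e(x)}_{\be_2,\be_2-\be_1}=\nnorm{e(x)}_{2,T_2}=1$, yet $\CI(T_2)\vee\CI(T_2T_1^{-1})$ is trivial. The statement holds only on Host's magic extension (Example \ref{Ex: Host example}). Your structured-part computation must be done there and then pushed down.

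\emph{The right-hand side of \eqref{E: poly corner identity} is not governed by $\Krat(T_2)$ and $\Krat(T_2T_1^{-1})$.} By \eqref{E: box seminorm control} and \eqref{E: scaling 2}, for every $k$ it is controlled by the degree-two box factor $\CZ(\be_2,\be_2-\be_1)$, which can be much larger. For $T_1=T$, $T_2=T^2$ with $T$ an irrational rotation, the functions $f_0=f_2=e(x)$ and $f_1=e(-2x)$ give limit $1$, although both rational Kronecker factors are trivial. Your final step uses Host's bound rather than this remark, so nothing breaks there, but the remark should be removed.
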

A slight modification of the left-hand side of \eqref{E: poly corner identity} yields a double limiting identity for all intersective polynomials; see \cite[Theorem 1.2]{FrKu25} for the details. Combined with a result of Chu \cite[Theorem 1.1]{Chu11}, Theorems \ref{T: FrKu sparse corners} and \ref{T: FrKu poly corners} establish a variant of the popular common difference results for sparse corners \cite[Theorems 1.4 and 1.5]{FrKu25}.
%namely, for every $A\subseteq\Z^2$ and $\veps>0$, the set

The limitations of the method leading up to Theorem \ref{T: FrKu sparse corners} motivate the following two subcases of Problem \ref{Pr: Hardy well-behaved along corners}.
\begin{problem}
    Extend \eqref{E: sparse corner identity} to an identity for $L^2(\mu)$ limits.
\end{problem}
\begin{problem}
    Extend \eqref{E: sparse corner identity} and \eqref{E: poly corner identity} to 3-corners. 
\end{problem}

The proof of Theorem \ref{T: FrKu sparse corners} proceeds by a variant of the degree lowering argument. It gives rather general sufficient conditions on a sequence $a:\N\to\Z$ so that the identity
	\begin{align}\label{E: weak limit identity for corners}
		\lim_{N\to\infty}\E_{n\in[N]}\int f_0\cdot T_1^{a(n)}f_1\cdot T_2^{a(n)}f_2 \, d\mu = \lim_{N\to\infty}\E_{n\in[N]} \int f_0\cdot T_1^n f_1\cdot T_2^n f_2\, d\mu
	\end{align}
    holds for all systems. Before we state this result, we need one more definition.
\begin{definition}[Box seminorm control]\label{D: box seminorm control}
% We say that sequences $a_1,\ldots,a_\ell\colon \N\to \Z$: {\em admit box seminorm control for the system  $(X, \CX, \mu,T_1,\ldots, T_\ell)$} if there exists $s\in\N$ such that for all  $f_1,\ldots, f_\ell\in L^\infty(\mu)$, we have
% \begin{align}\label{E: vanishing}
%     \lim_{N\to\infty}\norm{\E_{n\in[N]} T_1^{a_1(n)}f_1\cdots T_\ell^{a_\ell(n)}f_\ell}_{L^2(\mu)} = 0
% \end{align}
We say that $a\colon \N\to \Z$: {\em admits box seminorm control for the system  $(X, \CX, \mu,T_1,\ldots, T_\ell)$} if there exists $s\in\N$ such that for all  $f_1,\ldots, f_\ell\in L^\infty(\mu)$, we have
\begin{align*}%\label{E: vanishing}
    \lim_{N\to\infty}\norm{\E_{n\in[N]} T_1^{a(n)}f_1\cdots T_\ell^{a(n)}f_\ell}_{L^2(\mu)} = 0
\end{align*}
whenever $\nnorm{f_\ell}_{T_\ell^{\times s}, (T_\ell T_{\ell-1}\inv)^{\times s}, \ldots, (T_\ell T_1\inv)^{\times s}} = 0$.
%, and likewise on permuting the indices $1, \ldots, \ell$.
%replacing $\ell$ with any of $1, \ldots, \ell-1$.
%We say that they admit \emph{box seminorm control} if they admit it for every system. If $a :=a_1 = \cdots = a_\ell$, then we say that $a$ admits box seminorm control (for a system).
\end{definition}
\begin{theorem}[{\cite[Theorem 3.3]{FrKu25}}]\label{T: FrKu weak limit identity for corners}
	Suppose that $a\colon \N\to\Z$ admits box seminorm control on every system $(X, \CX, \mu,$\! $T_1, T_2)$ and equidistributes on nilsystems $(Y, \CY, \nu, S)$. Then the identity \eqref{E: weak limit identity for corners} holds for every system $(X, \CX, \mu,$\! $T_1, T_2)$ and all functions $f_0,f_1, f_2\in L^\infty(\mu)$.
\end{theorem}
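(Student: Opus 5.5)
We would prove Theorem \ref{T: FrKu weak limit identity for corners} by a degree lowering argument modelled on Section \ref{S: degree lowering proof}, with ``optimal control'' now meaning control by a low-degree box factor rather than by the invariant factor.

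\textbf{Step 1 (reduction to structured functions).} Write $\Delta(f_0,f_1,f_2)$ for the difference of the two sides of \eqref{E: weak limit identity for corners}; in what follows ``limit'' means a subsequential weak limit along a fixed sequence $N_k$. The box seminorm control assumption applies to the sparse average. By Host's estimate \eqref{E: box seminorm control}, the same box seminorm $\nnorm{\cdot}_{T_2^{\times s},(T_2T_1^{-1})^{\times s}}$ also controls the linear average $\E_{n}T_1^nf_1\cdot T_2^nf_2$. Hence both sides of \eqref{E: weak limit identity for corners} vanish whenever $f_2$ is orthogonal to $\CZ(T_2^{\times s},(T_2T_1^{-1})^{\times s})$. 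The symmetric statement holds for $f_1$ after composing with $T_1^{-a(n)}$ and treating $f_0$ as the ``middle'' function. So we may assume $f_2$ is measurable with respect to this box factor.

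\textbf{Step 2 (degree lowering via stashing).} Suppose $\Delta\neq0$. Apply Lemma \ref{L: stashing in degree lowering} to replace $f_2$ by a dual-type function
\[
F_2=\lim_k \E_{n\in[N_k]}T_2^{-a(n)}\overline{f_0}\cdot T_1^{a(n)}T_2^{-a(n)}\overline{f_1}.
\]
The goal is to show inductively that positivity of $\nnorm{F_2}_{T_2^{\times s},(T_2T_1^{-1})^{\times s}}$ forces positivity of the same seminorm with one fewer copy of $T_2$. Mimicking Proposition \ref{P: degree lowering s=3}, we use the inductive formula for box seminorms to differentiate $s-1$ times, perform the dual-difference interchange, and land on an expression of the form
\[
\limsup\E_{n}\int f_0\cdot T_1^{a(n)}f_{1,\underline h}\cdot T_2^{a(n)}u_{\underline h}\,d\mu .
\]
Here $u_{\underline h}$ is a function in the ``degree-2'' factor $\CZ(T_2,T_2T_1^{-1})$, which the magic extension of Example \ref{Ex: Host example} lets us replace by products $g\cdot g'$ with $g\in\CI(T_2)$ and $g'\in\CI(T_2T_1^{-1})$. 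On such products $T_2^{a(n)}$ acts as $T_1^{a(n)}$ on $g'$ and trivially on $g$. The average then collapses to a single-transformation average $\E_n\int f_0 g\cdot T_1^{a(n)}(f_{1,\underline h} g')\,d\mu$.

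\textbf{Step 3 (using equidistribution).} For averages $\E_n T^{a(n)}f$ of a single transformation, equidistribution of $a$ on nilsystems, in particular on rotations, gives $\lim\E_nT^{a(n)}f=\E(f|\CI(T))=\lim\E_nT^nf$. This is the mean ergodic theorem along $a$, obtained from the spectral theorem because $a$ is good for equidistribution. We use it to deduce the same structural constraint one would get from the linear average, and thereby transfer the lower bound to a seminorm of lower degree. Iterating down to degree one in the $T_2$ direction, and then doing the symmetric ping--pong for $f_1$ as in Propositions \ref{P: transferring control to f_j except 2} and \ref{P: transferring control to f_2}, reduces the problem to $f_1,f_2$ measurable with respect to $\CI(T_1)\vee\CI(T_1T_2^{-1})$ and $\CI(T_2)\vee\CI(T_2T_1^{-1})$, on the magic extension. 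For such functions, \eqref{E: magic functional property} turns both sides of \eqref{E: weak limit identity for corners} into single ergodic averages along $a(n)$ versus $n$, and these agree by the equidistribution assumption.

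\textbf{Main obstacle.} The hard step is the inner degree lowering for box seminorms. There is no inverse theorem for box seminorms with multiple copies of $T_2$, so we must differentiate and stash. The plan is to keep all $T_2T_1^{-1}$ copies fixed and lower only the $T_2$ copies. The delicate point is ensuring the magic-extension reduction commutes with the weak limits defining $F_2$. We expect this to work only for weak limits and only for $\ell=2$, matching the limitations noted after Theorem \ref{T: FrKu sparse corners}.
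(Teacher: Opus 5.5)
Your Step 1 reductions and the stashing set-up are fine. The central claim, however, is asserted rather than proved: that the lower bound can be pushed from $\nnorm{F_2}_{T_2^{\times s},(T_2T_1^{-1})^{\times s}}$ all the way down to $\nnorm{F_2}_{T_2,T_2T_1^{-1}}$. After your collapse, the only information you have is
\begin{align*}
\lim_{N\to\infty}\E_{n\in[N]}\int f_{0}G\cdot T_1^{a(n)}(f_{1}G')\,d\mu=\int f_{0}G\cdot \E(f_{1}G'|\CI(T_1))\,d\mu
\end{align*}
for $G\in\CI(T_2)$ and $G'\in\CI(T_2T_1^{-1})$. In other words, sparse and linear averages agree once $f_2$ is already $\CI(T_2)\vee\CI(T_2T_1^{-1})$-measurable. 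That is the trivial endpoint, not a lowering step. Unlike Property $(P_1)$ in Section~\ref{S: degree lowering proof}, it places no constraint on the structured functions $G_{\underline h}G'_{\underline h}$ attached to different $\underline h$. Such a constraint (essential independence of $\underline h$) is exactly what produces a seminorm of lower degree. Your phrase ``deduce the same structural constraint one would get from the linear average'' has no mechanism behind it.

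More seriously, no argument of your shape can work. The only arithmetic input you use is the mean ergodic theorem along $a$, i.e.\ equidistribution on rotations. Here is a counterexample to any scheme relying only on that and box seminorm control.
\begin{itemize}
\item Let $\alpha$ be irrational, $A=\{m\in\N\colon \mathrm{dist}(\alpha m^2,\Z)<1/4\}$, and let $a(n)$ be the $n$-th element of $A$. Write $1_A(m)=\phi(\alpha m^2)$ with $\phi$ the indicator of $\{t\colon\mathrm{dist}(t,\Z)<1/4\}$.
\item Approximating $\phi$ by trigonometric polynomials, Weyl's theorem gives $\E_{n\in[N]}e(\beta a(n))\to 0$ for every $\beta\notin\Z$.
\item One application of Lemma~\ref{L: vdC} turns each weight $e(k\alpha m^2)$ into a linear phase. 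A phase-twisted ergodic average of $T_2T_1^{-1}$ is controlled by $\nnorm{\cdot}_{2,T_2T_1^{-1}}$. Hence the corner average is controlled by $\nnorm{f_2}_{T_2,(T_2T_1^{-1})^{\times 2}}$ on every system, so $a$ has both properties you use.
\item Now take $T_1=T$, $T_2=T^2$ with $T(x,y)=(x+\alpha,y+x)$ on $\T^2$, and $f_0=e(y)$, $f_1=e(-2y)$, $f_2=e(y)$. Then $\int f_0\cdot T_1^{a(n)}f_1\cdot T_2^{a(n)}f_2\,d\mu=e(\alpha a(n)^2)$. Its Ces\`aro averages tend to $2\int_{-1/4}^{1/4}e(t)\,dt=2/\pi$.
\item The right-hand side of \eqref{E: weak limit identity for corners} is $\lim_{N\to\infty}\E_{n\in[N]}e(\alpha n^2)=0$.
\end{itemize}
Compare Theorem~\ref{T:APsConvergence} with $\ell=2$. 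So your scheme, if completed with only these inputs, would prove a false statement.

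Equidistribution of $a$ on nilsystems of step at least $2$ must therefore be used inside the lowering itself. The natural place is where what remains of $f_2$ is Host--Kra structure of a single transformation (here $T_2T_1^{-1}=T$), and your outline has no such place. The survey indicates that the cited proof is a degree lowering argument starting from a different inverse theorem. Whatever its details, that is where the nilsystem hypothesis does its work, and your Step~3 has to be rebuilt so that this hypothesis enters.
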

A variant of Theorem \ref{T: FrKu weak limit identity for corners} {\cite[Theorem 3.4]{FrKu25}} gives Theorem \ref{T: FrKu poly corners}.

It is unclear whether the two conditions from Theorem \ref{T: FrKu weak limit identity for corners} will suffice for
	\begin{align}\label{E: weak limit identity for long corners}
		\lim_{N\to\infty}\E_{n\in[N]}\int f_0\cdot T_1^{a(n)}f_1\cdots T_\ell^{a(n)}f_\ell \, d\mu = \lim_{N\to\infty}\E_{n\in[N]} \int f_0\cdot T_1^n f_1\cdots T_\ell^n f_\ell\, d\mu
	\end{align}
for $\ell>2$. While the box seminorm control assumption will surely be needed, it is likely that equidistribution on nilsystems may not be enough to guarantee \eqref{E: weak limit identity for long corners}. This is an interesting open problem, as it ties with the important question of understanding inverse theorems for box seminorms.
\begin{problem}
    If $a\colon \N\to\Z$ admits box seminorm control on every system $(X, \CX, \mu$, $T_1, \ldots, T_\ell)$ and equidistributes on nilsystems $(Y, \CY, \nu, S)$, does it necessarily satisfy \eqref{E: weak limit identity for long corners} for all $f_0, \ldots, f_\ell\in L^\infty(\mu)$?
\end{problem}
There are grounds to believe that even for $\ell\geq 3$, we may need to deal with proper ``multidimensional'' obstructions that go beyond nilsystems. See \cite[Theorem 3.5]{FrKu25} for a flavor of those.

\subsection{Optimal characteristic factors in the single-transformation case}
As explained in Section \ref{S: introduction}, in the absence of joint ergodicity and nice identities, the best we can do is to identify a factor or a seminorm that controls an average of interest. As a consequence of the seminorm estimates presented throughout this survey, we know that large classes of multiple ergodic averages are controlled by some Host-Kra factor. Since these factors are nested (see \eqref{E: monotonicity of factors}), and the first two of them admit a much more explicit description than the rest, it is often useful to identify the \emph{optimal} characteristic factor for a given average (this can be used, e.g., to get new results on popular common differences). For polynomial averages of a single transformation, variants of this question have been studied by Bergelson, Leibman, and Lesigne \cite{BLL07}, Leibman \cite{L09}, Frantzikinakis \cite{Fr08}, and the author \cite{Kuc22a}. Before presenting open questions and conjectures, we introduce several definitions to organize the presentation.
\begin{definition}[Host-Kra and Weyl complexities of polynomial families]\label{D: complexity}
    Let $s\in\N_0$. We say that essentially distinct polynomials $a_1, \ldots, a_\ell\in\Z[t]$ have
    \begin{enumerate}
        \item \emph{Host-Kra complexity $s$} if $\CZ_s(T)$ the smallest Host-Kra factor that is characteristic for $a_1, \ldots, a_\ell$ along any totally ergodic system $(X, \CX, \mu, T)$;
%         \footnote{Meaning that for arbitrary $f_1, \ldots, f_\ell\in L^\infty(\mu)$,
%         \begin{align}\label{E: complexity average}
%             \lim_{N\to\infty}\norm{\E_{n\in[N]}T^{a_1(n)}f_1\cdots T^{a_\ell(n)}f_\ell}_{L^2(\mu)} = 0 \quad\textrm{whenever}\quad \E(f_j|\CZ_s(T)) = 0\quad \textrm{for\; some}\quad 1\leq j\leq \ell.
%             %\min_{1\leq j\leq \ell}\nnorm{f_j}_{s, T} = 0
%         \end{align}}
% %        for arbitrary $f_1, \ldots, f_\ell\in L^\infty(\mu)$.}
        \item \emph{Weyl complexity $s$} if $\CZ_s(T)$ the smallest Host-Kra factor that is characteristic for $a_1, \ldots, a_\ell$ along any totally ergodic Weyl system $(X, \CX, \mu, T)$.
        % \item \emph{algebraic complexity $s$} if $s$ is the smallest nonnegative integer such that the progression
        % \begin{align}\label{E: polynomial progression}
        %     m,\; m + a_1(n),\; \ldots,\; m+a_\ell(n)
        % \end{align}
        % is \emph{algebraically independent of degree $s+1$}, in that there are no polynomials $Q_j\in\Z[t]$ with $\max\limits_{0\leq j\leq \ell}\deg Q_j = s+1$ such that\footnote{The work of Bergelson, Leibman, and Lesigne \cite{BLL07} has an alternative presentation of this condition in terms of a matrix involving the polynomials $a_1, \ldots, a_\ell$ rather than the progression \eqref{E: polynomial progression}. We believe, however, that the formulation using algebraic relations \eqref{E: algebraic independence} is conceptually clearer. To motivate how such progressions arise out of the polynomial family, we expand the average as
        % \begin{align*}
        %     \lim_{N\to\infty}\norm{\E_{n\in[N]}T^{a_1(n)}f_1\cdots T^{a_\ell(n)}f_\ell}_{L^2(\mu)}^2 &= \lim_{N\to\infty}\E_{n\in[N]}\int f_0\cdot T^{a_1(n)}f_1\cdots T^{a_\ell(n)}f_\ell\; d\mu\\
        %     &= \lim_{N\to\infty}\E_{m,n\in[N]}\int T^m f_0\cdot T^{m+a_1(n)}f_1\cdots T^{m+a_\ell(n)}f_\ell\; d\mu
        % \end{align*}
        % for some $f_0\in L^\infty(\mu)$ using measure invariance. 
        % }
        % \begin{align}\label{E: algebraic independence}
        %     Q_0(m) + Q_1(m+a_1(n)) + \cdots + Q_\ell(m+a_\ell(n)) = 0.
        % \end{align}
    \end{enumerate}
\end{definition}

The nomenclature employed in Definition \ref{D: complexity} comes from \cite{Kuc22a}; previous works called the aforementioned notions differently, in some cases also shifting the indices (e.g. Weyl complexity 2 according to Definition \ref{D: complexity} corresponds to Weyl complexity 3 in \cite{Fr08}). 
We assume total ergodicity in the definition so that affinely independent polynomials have Weyl complexity 0, which would fail due to local obstructions if we considered only ergodic systems.

A \emph{Weyl system} is an ergodic system $(X, \CX, \mu, T)$, where $X$ is a compact abelian Lie group and $T$ is a \emph{unipotent affine transformation} on $X$, i.e. $Tx = \phi(x) + a$ for $a\in X$ and an automorphism $\phi$ of $X$ satisfying $(\phi - \rm{Id}_X)^s = 0$ for some $s\in\N$. The examples to have in mind are skew products
\begin{align*}
    T(x_1, \ldots, x_d) = (x_1 + \alpha, x_2 + x_1, \ldots, x_d + x_{d-1})
\end{align*}
on $X=\T^d$ for some $\alpha\notin\Q$. The motivation to study complexity over Weyl systems comes from 
%In the light of 
Leibman's equidistribution theorem, which asserts that a polynomial orbit is equidistributed on a connected nilmanifold if and only if it is equidistributed on its maximal-factor torus \cite{L05c, L05b}. Frantzikinakis and Kra then showed that for a connected, ergodic (i.e. totally ergodic) nilsystem, its maximal factor torus is a Weyl system \cite[Proposition 3.1]{FrKr05}. Thereafter, it has been conjectured on many occasions \cite[Section 0.10]{L09}, \cite[Section 0.B]{BLL07}, \cite[Section 5.1.1]{Fr16}, \cite[Conjecture 1.9]{Kuc22a} that these two notions of complexity should be the same.
\begin{problem}\label{Pr: Host and Weyl complexities are the same}
    Show that for any essentially distinct polynomials, Host-Kra and Weyl complexities are the same.
\end{problem}
For 3 polynomials, Problem \ref{Pr: Host and Weyl complexities are the same} has been resolved by Frantzikinakis \cite{Fr08}, but it remains open even for 4 polynomials.

Informed by works  of Gowers and Wolf on true complexity in additive combinatorics \cite{gowers_wolf_2010, gowers_wolf_2011a, gowers_wolf_2011b, gowers_wolf_2011c}, the author introduced the notion of algebraic complexity \cite{Kuc22a}.
\begin{definition}
    Essentially distinct polynomials $a_1, \ldots, a_\ell\in\Z[t]$ have \emph{algebraic complexity $s$} if $s$ is the smallest nonnegative integer such that the progression
        \begin{align}\label{E: polynomial progression}
            m,\; m + a_1(n),\; \ldots,\; m+a_\ell(n)
        \end{align}
        is \emph{algebraically independent of degree $s+1$}, in that there are no polynomials $Q_j\in\Z[t]$ with $\max\limits_{0\leq j\leq \ell}\deg Q_j = s+1$ such that\footnote{The work of Bergelson, Leibman, and Lesigne \cite{BLL07} has an alternative presentation of this condition in terms of a matrix involving the polynomials $a_1, \ldots, a_\ell$ rather than the progression \eqref{E: polynomial progression}. We believe, however, that the formulation using algebraic relations \eqref{E: algebraic independence} is conceptually clearer. To motivate how such progressions arise out of the polynomial family, we expand the average as
        \begin{align*}
            \lim_{N\to\infty}\norm{\E_{n\in[N]}T^{a_1(n)}f_1\cdots T^{a_\ell(n)}f_\ell}_{L^2(\mu)}^2 &= \lim_{N\to\infty}\E_{n\in[N]}\int f_0\cdot T^{a_1(n)}f_1\cdots T^{a_\ell(n)}f_\ell\; d\mu\\
            &= \lim_{N\to\infty}\E_{m,n\in[N]}\int T^m f_0\cdot T^{m+a_1(n)}f_1\cdots T^{m+a_\ell(n)}f_\ell\; d\mu
        \end{align*}
        for some $f_0\in L^\infty(\mu)$ using measure invariance. 
        }
        \begin{align}\label{E: algebraic independence}
            Q_0(m) + Q_1(m+a_1(n)) + \cdots + Q_\ell(m+a_\ell(n)) = 0.
        \end{align}
\end{definition}
\begin{example}
%~\ 
The algebraic complexity equals:
\begin{enumerate}
        \item 0 for affinely independent polynomials like $n,\; n^2,\; \ldots,\; n^\ell$;
        \item $\ell-1$ for $n,\; 2n,\; \ldots, \; \ell n$; for instance, for $\ell=3$, we have the relationship
        \begin{align*}
            m^2 - 3(m+n)^2 + 3(m+2n)^2 - (m+3n)^2 = 0;
        \end{align*}
        \item 1 for $n,\; n^2,\; n+n^2$ because they satisfy the linear relation
        \begin{align*}
            m - (m+n) - (m +n^2) + (m+n+n^2) = 0;
        \end{align*}
        but no quadratic relation.
        \item 2 for the family $n,\; 2n,\; n^2$ because of the quadratic relation
        \begin{align*}
            (m^2 + 2m) - 2(m+n)^2 + (m+2n)^2 - 2(m+n^2) = 0.
        \end{align*}
        This relation is ``inhomogeneous'' in that it is quadratic in the first three terms and linear in the last one. Studying complexity for such families is difficult, as the corresponding polynomial orbits on nilmanifolds cannot be as neatly described as for the previous three examples. 
    \end{enumerate}
    % \begin{enumerate}
    %     \item Affinely independent polynomials like $n,\; n^2,\; \ldots,\; n^\ell$ have algebraic complexity 0.
    %     \item The family $n,\; 2n,\; \ldots, \; \ell n$ has algebraic complexity $\ell-1$; for instance, for $\ell=3$, we have the relationship
    %     \begin{align*}
    %         m^3 - 3(m+n)^2 + 3(m+2n)^2 - (m+3n)^2 = 0.
    %     \end{align*}
    %     \item The family $n,\; n^2,\; n+n^2$ has algebraic complexity 1 because it satisfies
    %     \begin{align*}
    %         m - (m+n) - (m +n^2) + (m+n+n^2) = 0,
    %     \end{align*}
    %     but no quadratic relation.
    %     \item The family $n,\; 2n,\; n^2$ has algebraic complexity 2 because of the quadratic relation
    %     \begin{align*}
    %         (m^2 + 2m) - 2(m+n)^2 + (m+2n)^2 - 2(m+n^2) = 0.
    %     \end{align*}
    %     This relation is however ``inhomogeneous'' in that it is linear in the first three terms and quadratic in the last one. Studying complexity for such families is difficult, as the corresponding polynomial orbits on nilmanifolds cannot be as neatly described as for the previous three examples. 
    % \end{enumerate}
\end{example}
In \cite[Theorem 1.18]{Kuc22a}, the author showed that algebraic and Weyl complexities are the same.
This is important insofar as it 
%shows that Weyl complexity is a purely algebraic construct, and so it 
reduces Problem \ref{Pr: Host and Weyl complexities are the same} to 
%showing that all obstructions to Host-Kra complexity come from
a purely algebraic question of studying algebraic relations of the form \eqref{E: algebraic independence}. The author then resolved Problem \ref{Pr: Host and Weyl complexities are the same} for a broad class of polynomial families called \emph{homogeneous} \cite[Theorem 1.11]{Kuc22a}, which includes among others all progressions of algebraic complexity 0 and 1. However, the general case
%this is only the easier half 
of Problem  \ref{Pr: Host and Weyl complexities are the same} remains open.

A second major problem, mentioned e.g. in \cite[Section 0.9]{L09}, \cite[Section 5]{BLL07}, \cite[Problem 9]{Fr16}, \cite[Conjecture 1.9]{Kuc22a} is to upper bound Weyl/algebraic complexity uniformly in terms of the length of the progression.
\begin{problem}[Upper bound on complexity]\label{Pr: upper bound on complexity}
        Show that for any $\ell$ essentially distinct polynomials, their algebraic complexity is at most $\ell-1$.
\end{problem}
The upper bound is sharp, as evidenced e.g. by $n, 2n, \ldots, \ell n$.
Despite the semblance of a simple algebraic exercise, Problem \ref{Pr: upper bound on complexity} is notoriously difficult. Although it has been resolved for $\ell\leq 4$ in \cite[Proposition 3.5]{Fr08} and \cite[Theorem 1.1]{McClendon}, the proofs consist of tedious case-by-case computations and offer no insight into the general case.
%and has resisted any attempts so far.

There remain many other questions where characteristic factors are not known. For instance, the following problem has been proposed by Moreira (private communication).
\begin{problem}\label{Pr: Moreira}
    Let $\alpha_1, \ldots, \alpha_\ell\in\R$ be distinct and nonzero. What is the optimal Host-Kra factor controlling the average along $\sfloor{\alpha_1 n}, \ldots, \sfloor{\alpha_\ell n}$ for all ergodic systems $(X, \CX, \mu, T)$? In particular, if $\alpha_1, \alpha_2, 1$ are $\Q$-independent, is the average along $\sfloor{\alpha_1 n},$ $\sfloor{\alpha_2 n}$  controlled by the Kronecker factor $\CZ_1(T)$?
\end{problem}
For integer polynomials, Problem \ref{Pr: Host and Weyl complexities are the same} inquires whether all the obstructions to uniformity come from rational algebraic relations \eqref{E: algebraic independence}. Problem \ref{Pr: Moreira} asks whether irrational obstructions also play a role. 

%One interest in Problem \ref{Pr: Moreira} is because it could yield new results on popular common differences; see Problem \ref{?} below.

We conclude this section with a result of Frantzikinakis and the author that gives general conditions for when a particular Host-Kra seminorm/factor control a given average.
\begin{theorem}[General degree reduction property {\cite[Theorem 3.11]{FrKu22c}}]\label{T: degree reduction}
Let $s\in\N$, $a_1,\ldots, a_\ell\colon \N\to\Z$ be sequences, and $(X, \CX, \mu, T)$ be a system. Then the Host-Kra seminorm $\nnorm{\cdot}_{s,T}$ (equivalently, the factor $\CZ_{s-1}(T)$) controls $a_1, \ldots, a_\ell$ on $(X, \CX, \mu, T)$ if and only if the following two conditions hold:
\begin{enumerate}
    \item $a_1, \ldots, a_\ell$ admit Host-Kra seminorm control for the system;
    \item $\nnorm{\cdot}_{s,T}$ controls $a_1, \ldots, a_\ell$ on $(X, \CZ_s(T), \mu, T)$.
\end{enumerate}
\end{theorem}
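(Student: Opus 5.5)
The forward direction is immediate: if $\nnorm{\cdot}_{s,T}$ controls $a_1,\ldots,a_\ell$ on $(X,\CX,\mu,T)$, then we have degree-$s$ Host-Kra seminorm control. Restricting to $\CZ_s(T)$-measurable functions gives the second condition, since for such functions the seminorm $\nnorm{\cdot}_{s,T}$ computed in the factor agrees with the one computed in $X$.

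For the converse, I would follow the degree lowering template of Section \ref{S: degree lowering proof}, replacing the target degree $1$ by $s$. By assumption (i) there is some $s'\in\N$ with $\nnorm{\cdot}_{s',T}$ controlling the average. If $s'\le s$ we are done by monotonicity, so suppose $s'>s$. The induction would then lower the controlling degree from $s'$ to $s'-1$ whenever $s'-1\ge s$. To set it up, fix an index $j$ and suppose
\[
\lim_{N\to\infty}\norm{\E_{n\in[N]}\prod_i T^{a_i(n)}f_i}_{L^2(\mu)}>0 .
\]
Stashing as in Lemma \ref{L: stashing in degree lowering} replaces $f_j$ by the weakly structured dual-type function
\[
F_j=\lim_{N\to\infty}\E_{n\in[N]}T^{-a_j(n)}\overline{f_0}\prod_{i\ne j}T^{a_i(n)-a_j(n)}\overline{f_i},
\]
so that $\nnorm{F_j}_{s',T}>0$. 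We then want to show $\nnorm{F_j}_{s'-1,T}>0$.

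Here is the planned degree lowering step. Apply the inductive formula $s'-s$ times to get
\[
\nnorm{F_j}_{s',T}^{2^{s'}}=\lim_{H\to\infty}\E_{\mathbf h}\nnorm{\Delta_{\mathbf h}F_j}_{s+1,T}^{2^{s+1}}.
\]
Instead of the degree-$2$ inverse theorem, use the structure of $\CZ_s(T)$. Positivity of $\nnorm{\Delta_{\mathbf h}F_j}_{s+1,T}$ means $\Delta_{\mathbf h}F_j$ correlates with a $\CZ_s(T)$-measurable function $\psi_{\mathbf h}$, namely its projection. Perform the dual-difference interchange as in Proposition \ref{P: degree lowering s=3}, and unfold $F_j$ by composing with $T^{a_j(n)}$. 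This yields a positive average of the original shape in which the $j$-th function is $\CZ_s(T)$-measurable, built from products of the $\psi_{\mathbf h}$. Two further steps are needed here. First, replace each remaining $f_i$ by $\E(f_i|\CZ_s(T))$; this is done by running the analogous ping step first for indices other than $j$, exactly as in Propositions \ref{P: transferring control to f_j except 2} and \ref{P: transferring control to f_2}. Once all functions are $\CZ_s(T)$-measurable, hypothesis (ii) applies and gives $\nnorm{\cdot}_{s,T}$-positivity of the structured function. Feeding this back forces the $\psi_{\mathbf h}$ to agree on many $\mathbf h$ with $\CZ_{s-1}$-type structure shared across $\mathbf h$. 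Removing the low-complexity factors by Cauchy–Schwarz then gives $\nnorm{F_j}_{s'-1,T}>0$. Iterating down to degree $s$, and transferring from $F_j$ back to each $f_j$ as in the end of the degree lowering argument, completes the proof.

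The main obstacle is the structural step replacing "nonergodic eigenfunction" by "$\CZ_s$-measurable function." In the $s=1$ case, eigenfunctions have a clean projection formula \eqref{E: projection of eigenfunction}. For general $s$, we need that a $\CZ_s$ function $\psi$ with $\nnorm{\psi}_{s,T}=0$ cannot appear in a positive correlation. We also need the pigeonholed structure across $\mathbf h$ (the analogue of $\chi_{2,h}=\chi\lambda_h$) to yield a lower-degree correlation. I would first try to handle this via the nilsystem description of $\CZ_s$ on ergodic components, treating the $(s+1)$-step dual functions as nilsequences. Working only with the contrapositive statements, this should allow the inductive use of (ii) on the factor $\CZ_s(T)$.
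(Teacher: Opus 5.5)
Your forward direction is fine, and degree lowering is the right framework; the survey says this theorem is proved by a degree lowering argument that starts from a different inverse theorem. The converse, however, has two genuine gaps.

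The first gap is the induction scheme. To invoke (ii), you propose to replace every $f_i$, $i\neq j$, by $\E(f_i|\CZ_s(T))$ ``by running the ping step for the other indices''. At an intermediate stage you only know that $\nnorm{\cdot}_{s',T}$ controls the average. That lets you project onto $\CZ_{s'-1}(T)$, not onto $\CZ_s(T)$. Projection onto $\CZ_s(T)$ is available only when $s'=s+1$, and then the theorem is trivial: project all functions onto $\CZ_s(T)$, note that $\E(\E(f_j|\CZ_s(T))|\CZ_{s-1}(T))=\E(f_j|\CZ_{s-1}(T))$, and apply (ii). Moreover, Propositions \ref{P: transferring control to f_j except 2} and \ref{P: transferring control to f_2} are outputs of a completed degree lowering combined with the induction hypothesis, so calling on them at this point is circular.

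What is missing is the outer induction of Section \ref{S: degree lowering proof}, with ``$\CZ_s(T)$-measurable'' in place of ``nonergodic eigenfunction''. Let $(P_m)$ assert $\nnorm{\cdot}_{s,T}$-control whenever $f_{m+1},\ldots,f_\ell$ are $\CZ_s(T)$-measurable. Then $(P_0)$ is exactly (ii), and (i) gives weak seminorm control. In proving $(P_m)$ you lower the degree only at index $m$. After the dual-difference interchange, the functions at indices $>m$ are still $\CZ_s(T)$-measurable, since the factor is $T$-invariant and closed under products. So $(P_{m-1})$ applies to the resulting average with $f_1,\ldots,f_{m-1}$ left arbitrary. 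The same $(P_{m-1})$ also drives the final transfer of control back to the $f_i$.

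The second gap is the structural step, which you flag as the main obstacle but leave open. Your choice $\psi_\bh=\E(\Delta_\bh F_j|\CZ_s(T))$ cannot close it. Positivity of $\nnorm{\cdot}_{s,T}$ for a cube product of arbitrary $\CZ_s(T)$-measurable functions imposes no relation among the $\psi_\bh$; already $\nnorm{\psi\overline{\psi}}_{s,T}>0$ for every nonzero $\psi$. So there is nothing to feed back. Note also that the fact that a $\CZ_s$ function with vanishing $\nnorm{\cdot}_{s,T}$ contributes nothing is exactly what $(P_{m-1})$ supplies; it is not the missing input.

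The missing ingredient is an inverse theorem at level $s+1$ whose correlating functions have the rigidity that \eqref{E: projection of eigenfunction} gives for nonergodic eigenfunctions. One candidate is $\CZ_s(T)$-measurable functions $\psi$ with a vertical frequency, meaning $T^h\psi\cdot\overline{\psi}$ is $\CZ_{s-1}(T)$-measurable for all $h$. Examples are fiber characters of $\CZ_s$ viewed as a compact abelian group extension of $\CZ_{s-1}$, or vertical nilcharacters. They must be chosen measurably across ergodic components, because (i) and (ii) are hypotheses on the fixed, possibly nonergodic, system.

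For such functions, $\psi\overline{\psi'}$ is, on each ergodic component, either orthogonal to $L^2(\CZ_{s-1})$ or $\CZ_{s-1}$-measurable. Hence $(P_{m-1})$ forces the frequencies of the $\psi_\bh$ to agree for many pairs of parameters. This gives $\psi_\bh=\psi\cdot\lambda_\bh$ with $\lambda_\bh$ $\CZ_{s-1}(T)$-measurable. Only then does the Cauchy--Schwarz removal of low-complexity terms yield $\nnorm{F_m}_{s'-1,T}>0$.

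A minor point: reaching degree $s+1$ takes $s'-s-1$ applications of the inductive formula, not $s'-s$.
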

Thus, Theorem \ref{T: degree reduction} shows that under the assumption of Host-Kra seminorm control, to check for $\CZ_{s-1}(T)$-control, it suffices to verify it ``one level up'', i.e. for $\CZ_{s}(T)$-measurable functions. In particular, when $s=1$, Theorem \ref{T: degree reduction} reduces to the single-transformation case of Theorem \ref{T: joint ergodicity criteria}.

\subsection{Obstructions in the commuting case}
In contrast to the single-transformation case, extremely little is known regarding optimal seminorms and characteristic factors in the commuting case. Even the following simple problem remains open.
% \begin{problem}\label{Pr: optimal factor for double n^2}
%     Let $(X, \CX, \mu,$\! $T_1, T_2)$ be a system. Show that 
%     \begin{align*}
%         \lim_{N\to\infty}\norm{\E_{n\in[N]}T_1^{n^2}f_1\cdot T_2^{n^2}f_2}_{L^2(\mu)} = 0
%     \end{align*}
%     whenever $\nnorm{f_1}_{T_1,T_1T_2\inv} = 0$ or $\nnorm{f_2}_{T_2, T_2T_1\inv} = 0$.
% \end{problem}
% If $n^2$ is replaced by $n$, then this is known due to Host \cite{H09}; see Example \ref{Ex: Host example}. For $n^2$, the best we have is the weaker conclusion that the double average
% \begin{align*}
%         \lim_{k\to\infty}\lim_{N\to\infty}\norm{\E_{n\in[N]}T_1^{(k!n)^2}f_1\cdot T_2^{(k!n)^2}f_2}_{L^2(\mu)}
% \end{align*}
% vanishes under the claimed assumptions, which follows from Theorem \ref{T: FrKu poly corners}.
%
% In Problem \ref{Pr: optimal factor for double n^2}, we can at least make a reasonable conjecture for what the optimal seminorm/factor should be. The same cannot be said for even slightly more complicated averages.
\begin{problem}\label{Pr: optimal factor n, n^2, n^2+n commuting}
    Let $(X, \CX, \mu, T_1, T_2, T_3)$ be a system. What are the optimal seminorms/ factors for the ergodic average along $(T_1^n, T_2^{n^2}, T_3^{n^2+n})_n$?
\end{problem}
The family $n,\; n^2,\; n^2+n$ is the simplest polynomial family whose terms are pairwise independent but not affinely independent. If $T:=T_1 = T_2 = T_3$, we know thanks to Frantzikinakis \cite[Theorem B]{Fr08} that  $\CZ_1(T)$ is characteristic. By the seminorm control from Theorem \ref{T: FrKu pairwise affine independent}, we know that the Host-Kra factors $\CZ_s(T_j)$ will be characteristic for some $s\in\N$; but it is not clear whether $s=1$ would do. The study of Problem \ref{Pr: optimal factor n, n^2, n^2+n commuting} would likely involve equidistribution problems on nonergodic nilsystems, which can be quite challenging (see \cite[Section 7]{CFH11} for a foretaste).

Another important open problem concerns the structure of multicorrelation sequences of commuting transformations.
\begin{problem}[Decomposition of polynomial multicorrelation sequences]\label{Pr: multicorrelation decomposition}
    Let $a_1, \ldots, a_\ell\in\Z[t]$. Show that for any system $(X, \CX, \mu,$\! $T_1, \ldots, T_\ell)$ and any $f_1, \ldots, f_\ell\in L^\infty(\mu)$, we can decompose 
    \begin{align}\label{E: multicorrelation}
        \int f_0\cdot T_1^{a_1(n)}f_1\cdots T_\ell^{a_\ell(n)}f_\ell\; d\mu = \phi(n) + \omega(n),
    \end{align}
    where $\phi$ is a nilsequence and $\omega$ is a null-sequence.
\end{problem}
When $T_1 = \cdots = T_\ell$, such a decomposition is known due to Bergelson-Host-Kra for $a_j(n) = j n$ \cite[Theorem 1.9]{BHK05} and Leibman for general polynomials \cite[Theorem 3.1]{L10}. In the commuting case, Frantzikinakis and the author resolved Problem \ref{Pr: multicorrelation decomposition} for pairwise independent polynomials \cite[Theorem 2.17]{FrKu22a}. For pairwise dependent polynomials, only several cases are known. Recently, Leng established \eqref{E: multicorrelation} for $a_1(n) = \cdots = a_\ell(n) = n$ \cite[Theorem 1.6]{Leng25}, and later Frantzikinakis and the author extended this to $\ell=2$ and arbitrary $a_1 = a_2\in\Z[t]$ \cite[Theorem 1.6]{FrKu25}. 

In a different direction, Frantzikinakis showed that for every $\veps>0$ we can get \eqref{E: multicorrelation} with a weaker condition $\limsup\limits_{N\to\infty}\E\limits_{n\in[N]}|\omega(n)|^2 \leq \veps$ regardless of the choice of polynomials \cite[Theorem 1.1]{Fr15}. But with $\veps = 0$, Problem \ref{Pr: multicorrelation decomposition} remains unresolved for general polynomials, even for tuples $(T_1^{n^2}, T_2^{n^2}, T_3^{n^2})_n$ and $(T_1^{n^2}, T_2^{n^2}, T_3^{n^2+n})_n$.

\subsection{Multiple recurrence}
Given that multiple ergodic averages originate in the study of multiple recurrence, it is only fitting to record here several problems on the latter topic that still remain open. The most important one concerns the extension of the multidimensional polynomial Szemer\'edi theorem to jointly intersective polynomials.
\begin{definition}[Joint intersectivity]
    Polynomials $a_1, \ldots, a_\ell\in\Z[t]$ are called \emph{jointly intersective} if for every $r\in\N$ there exists $n\in\Z$ such that $r$ divides $a_1(n), \ldots, a_\ell(n)$.
\end{definition}
For background regarding intersective and jointly intersective polynomials, we refer the reader to \cite{BLL08}. %The following problem is probably the most important open problem in the study of multiple recurrence.
\begin{problem}[Multidimensional polynomial Szemer\'edi conjecture for jointly intersective polynomials {\cite[Conjecture 6.3]{BLL08}}]\label{Pr: joint intersective}
    Show that for any jointly intersective polynomials $a_1, \ldots, a_\ell\in\Z[t]$, any system $(X, \CX, \mu,$\! $T_1, \ldots, T_\ell)$, and any $E\in\CX$ with $\mu(E)>0$, we have
    \begin{align*}
       \lim_{N-M\to\infty}\E_{n\in [M,N)}\mu(E\cap T_1^{-a_1(n)}E\cap \cdots \cap T_\ell^{-a_\ell(n)}E)>0.
    \end{align*}
    %for some $n\in\Z$.
\end{problem}
In fact, {\cite[Conjecture 6.3]{BLL08}} is stated for more general polynomials over $\Z^k$; we leave the extension to the interested reader.

When $T_1 = \cdots = T_\ell$, the problem has been resolved by Bergelson, Leibman, and Lesigne \cite[Theorem 1.1]{BLL08}. For commuting transformations, the case $\ell=2$ has been addressed by Frantzikinakis and the author \cite[Theorem 1.3]{FrKu25}. The general case remains open. Just like with many other open problems on commuting commuting transformations in this survey, a major stumbling block is the absence of usable structure theory for box seminorms and factors.
%Clearly, all polynomials with zero constant terms are jointly intersective (take any $n\in r\Z$).

The next question inquires about the multiple recurrence counterpart of Problem \ref{Pr: Hardy well-behaved along corners}.
\begin{problem}[Multiple recurrence for Hardy corners]\label{Pr: multiple recurrence for Hardy corners}
    Let $a\in\CH$ be a Hardy sequence satisfying \eqref{E: staying logarithmically away from cQ[t]} (such as $a(n) = n^{3/2}$). Show that for any system $(X, \CX, \mu,$\! $T_1, \ldots, T_\ell)$ and any $E\in\CX$ with $\mu(E)>0$, we have
    \begin{align*}
       \lim_{N\to\infty}\E_{n\in [N]}\mu(E\cap T_1^{-\sfloor{a(n)}}E\cap \cdots \cap T_\ell^{-\sfloor{a(n)}}E)>0.
    \end{align*}
\end{problem}
Obviously, the successful resolution of Problem \ref{Pr: Hardy well-behaved along corners} would automatically address Problem \ref{Pr: multiple recurrence for Hardy corners}. Alternatively, one could aim for a proof along the lines of Frantzikinakis and Wierdl \cite{FrW09} that does not require the full identity \eqref{E: identity along corners}.
The fact that Problem \ref{Pr: multiple recurrence for Hardy corners}  remains unresolved demonstrates how far we are from having a satisfactory picture of multiple recurrence for real polynomials, fractional polynomials and more general Hardy sequences.

%real polys

%fractional polys 

Lastly, we record one problem regarding popular common differences. In what follows, we use the following definition.
\begin{definition}[Popular common differences]
    We say that $a_1, \ldots, a_\ell\in\Z[t]$ \emph{admit popular common differences} if for every ergodic system $(X, \CX, \mu, T)$, set $E\in\CX$, and $\veps>0$, the set
    \begin{align*}
        \rem{n\in\Z\colon \mu(E\cap T^{-a_1(n)}E\cap \cdots \cap T^{-a_\ell(n)}E) > \mu(E)^{\ell+1}-\veps}
    \end{align*}
    is syndetic.
\end{definition}
Whereas multiple recurrence is a very general phenomenon, popular common differences constitute a peculiarity that arises only for special patterns. For instance, for nonzero $c_1<c_2<c_3$, the pattern $c_1n,\; c_2n,\; c_3n$ admits popular differences if and only if $c_3=c_1+c_2$. Furthermore, linear families of length 4 do not admit them at all. The next problem aims to fully classify those families that admit this behavior.
\begin{problem}[Polynomial families admitting popular common differences]\label{Pr: popular common differences}
%    Let $a_1, \ldots, a_\ell\in\Z[t]$ be essentially distinct polynomials with zero constant terms. Show that they do not admit popular common differences unless they fall into one of the following classes:
Prove or disprove: essentially distinct polynomials $a_1, \ldots, a_\ell\in\Z[t]$ with zero constant terms do not admit popular common differences unless they fall into one of the following classes:
    \begin{enumerate}
        \item they have algebraic complexity at most 1;
        \item they are of the form $c_1 a(n),\; c_2 a(n), c_3 a(n)$ for some nonconstant $a\in\Z[t]$ and nonzero $c_1<c_2<c_3$ satisfying $c_3=c_1+c_2$.
    \end{enumerate}
\end{problem}
The exceptional cases listed above are known to contain popular common differences thanks to Bergelson-Host-Kra \cite[Theorem 1.2]{BHK05}, Frantzikinakis-Kra \cite[Theorem 1.3]{FrKr06}, Frantzikinakis \cite[Theorem C]{Fr08}, and the author \cite[Theorem 1.16]{Kuc22a}. To the reader interested in this topic, we also highly recommend the additive combinatorial work of Sah, Sawhney, and Zhao \cite{SSZ21} that classifies all linear patterns admitting popular common differences and has extensive references to other combinatorial works on the subject.
%For the state of the art on the additive combinatorics counterpart of Problem \ref{Pr: popular common differences}, we direct the reader to \cite{SSZ21}.
%Popular common differences is a much
%Popular common differences for general polynomial progressions. 
%\subsection{Norm convergence}
%Like multiple recurrence, norm convergence is such a fundamental question in the study of multiple ergodic averages that we list several problems still awaiting solution. 

%generalized polys

%fractional polys

\subsection{Asymptotic discorrelation}
We conclude with one generalization of joint ergodicity that we find worth exploring. We start with a rather immediate restatement of joint ergodicity.
\begin{lemma}
    Sequences $a_1, \ldots, a_\ell:\N\to\Z$ are jointly ergodic for the system $(X, \CX, \mu$, $T_1, \ldots, T_\ell)$ if and only if the following two conditions are satisfied:
    \begin{enumerate}
        \item the actions $(T_j^{a_j(n)})_n$ are ergodic on $(X, \CX, \mu)$ for all $1\leq j\leq \ell$;
        \item the actions $(T_j^{a_j(n)})_n$ are \emph{asymptotically discorrelated} in the sense that
        \begin{align}\label{E: asymptotic discorrelation}
            \lim_{N\to\infty}\norm{\E_{n\in[N]}\prod_{1\leq j\leq \ell}T_j^{a_j(n)}f_j - \prod_{1\leq j\leq \ell}\E_{n\in[N]}T_j^{a_j(n)}f_j}_{L^2(\mu)}=0
        \end{align}
        for all $f_1, \ldots, f_\ell\in L^\infty(\mu)$. 
    \end{enumerate}
\end{lemma}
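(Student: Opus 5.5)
The plan is to prove both directions directly from the definitions, using only the triangle inequality in $L^2(\mu)$ and the uniform boundedness of the averages involved. Throughout, we may assume $f_1, \ldots, f_\ell$ are $1$-bounded, since both sides of every identity are multilinear.

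For the forward direction, assume \eqref{E: joint ergodicity}. To obtain condition (i) for a fixed index $j$, we apply \eqref{E: joint ergodicity} with $f_i = 1$ for all $i\neq j$. This gives $\E_{n\in[N]}T_j^{a_j(n)}f_j\to\int f_j\,d\mu$ in $L^2(\mu)$, which is exactly the ergodicity of $(T_j^{a_j(n)})_n$. For condition (ii), we write
\begin{align*}
\prod_{j}\E_{n\in[N]}T_j^{a_j(n)}f_j-\prod_j\int f_j\,d\mu=\sum_{k=1}^\ell \Big(\prod_{j<k}\int f_j\,d\mu\Big)\Big(\E_{n\in[N]}T_k^{a_k(n)}f_k-\int f_k\,d\mu\Big)\prod_{j>k}\E_{n\in[N]}T_j^{a_j(n)}f_j .
\end{align*}
Each factor other than the bracketed difference is bounded by $1$ in $L^\infty(\mu)$. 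Hence the $L^2(\mu)$ norm of the left-hand side is at most $\sum_k\norm{\E_{n\in[N]}T_k^{a_k(n)}f_k-\int f_k\,d\mu}_{L^2(\mu)}$, and this tends to $0$ by (i). Combining this with \eqref{E: joint ergodicity} through the triangle inequality yields \eqref{E: asymptotic discorrelation}.

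For the converse, assume (i) and (ii). We again apply the telescoping identity, and (i) shows that the product of individual averages converges in $L^2(\mu)$ to $\prod_j\int f_j\,d\mu$. The triangle inequality then combines this with \eqref{E: asymptotic discorrelation} to give \eqref{E: joint ergodicity}.

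The only step needing any care is the telescoping estimate. There one must use $L^\infty$ bounds on all factors except one, rather than $L^2$ bounds, because a product of $L^2$-convergent sequences need not converge in $L^2$. Since every factor is $1$-bounded, this is immediate.
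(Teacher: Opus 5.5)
Your proof is correct. The paper gives no proof, calling the lemma ``a rather immediate restatement of joint ergodicity'', and your argument is the routine verification it leaves implicit: set $f_i=1$ for $i\neq j$ to get (i), and use the telescoping identity with $L^\infty$ bounds on all but one factor, together with the triangle inequality, for both directions.
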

Thus, joint ergodicity = ergodicity + asymptotic discorrelation. Given that we by now understand well joint ergodicity, it makes sense to reach a similar level of understanding for asymptotic discorrelation. We propose one possible characterization of this phenomenon in the spirit of Theorem \ref{T: joint ergodicity criteria}.
\begin{problem}[Characterization of asymptotic discorrelation]\label{Pr: discorrelation}
    Suppose that the sequences $a_1, \ldots, a_\ell:\N\to\Z$ admit Host-Kra seminorm control for the system $(X, \CX, \mu,$\! $T_1, \ldots, T_\ell)$. Prove or disprove the following: they are asymptotically discorrelated for that system if and only if \eqref{E: asymptotic discorrelation} holds whenever $f_j$ is a nonergodic eigenfunction of $T_j$ for every $1\leq j\leq \ell$.
\end{problem}
Since nonergodic eigenfunctions of $T_j$ generate the entire $\CZ_1(T_j)$ factor, Problem \ref{Pr: discorrelation} inquires whether asymptotic discorrelation for all functions can be inferred from \eqref{E: asymptotic discorrelation} holding just for $\CZ_1(T_j)$-measurable functions. It may well be that the characterization proposed by Problem \ref{Pr: discorrelation} is too good to hope for, and to get asymptotic discorrelation, we need it to hold at least on $\CZ_2(T_j)$. However, we do not have any explicit counterexamples showing the need for a larger factor than $\CZ_1(T_j)$.

\section{Beyond commutativity}\label{S: beyond commutativity}
The entire discussion up to now focused on systems in which the transformations commute with each other. In this section, we dispose of this assumption, 
%entering the \emph{terra incognita} of noncommutative actions. This section 
presenting the most important results and open questions for \emph{nilpotent} systems and beyond. 
\begin{definition}[Nilpotent systems]
    Let $k\in\N$. We call $(X, \CX, \mu,$\! $T_1, \ldots, T_\ell)$ a \emph{$k$-step nilpotent system} if $T_1, \ldots, T_\ell$ generate a $k$-step nilpotent group of measure-preserving transformations on a standard probability space $(X, \CX, \mu)$. We call a system \emph{nilpotent} if it is $k$-step nilpotent for some $k\in\N$.
\end{definition}

\subsection{Nilpotent heuristic}

For nilpotent systems, we have two foundational results of Leibman and Walsh.
\begin{theorem}[Leibman's nilpotent multiple recurrence theorem {\cite[Theorem 0.4]{L98}}]\label{T: Leibman nilpotent}
    Let $a_1, \ldots, a_\ell\in\Z[t]$ have zero constant terms. For every nilpotent system $(X, \CX, \mu,$\! $T_1, \ldots, T_\ell)$ and every $E\in\CX$ with $\mu(E)>0$, there exists $n\in\N$ such that
    \begin{align*}
        %\liminf_{N\to\infty}\E_{n\in[N]}
        \mu(E\cap T_1^{-a_1(n)}E\cap \cdots \cap T_\ell^{-a_\ell(n)}E)>0.
    \end{align*}
\end{theorem}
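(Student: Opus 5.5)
The plan follows Furstenberg--Katznelson and Bergelson--Leibman: a structure theory for the nilpotent group action $G=\langle T_1,\ldots,T_\ell\rangle$, combined with polynomial IP/van der Waerden methods. I would prove the stronger statement that the set of good $n$ is IP$^*$, or at least that the measures in question have positive lower Ces\`aro average over suitable subsets. The reason is that an inductive argument passing through extensions needs a robust form of recurrence that survives conditioning. The first step is to rewrite the problem as one about polynomial mappings into a nilpotent group. Each $n\mapsto T_j^{a_j(n)}$ is a polynomial sequence in $G$, meaning that its iterated discrete derivatives $g(n+h)g(n)^{-1}$ eventually become trivial. These sequences vanish at $0$ because the $a_j$ have zero constant terms. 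The statement then reduces to: for any finite family $g_1,\ldots,g_\ell$ of polynomial sequences in $G$ with $g_j(0)=\Id$, and any $E$ with $\mu(E)>0$, some $n$ satisfies $\mu(\bigcap_j g_j(n)^{-1}E)>0$.

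The second step is Leibman's PET-type induction adapted to nilpotent groups. I would define a weight on families of polynomial sequences that uses the lower central series filtration $G=G_1\supseteq G_2\supseteq\cdots\supseteq G_{k+1}=\{\Id\}$ together with the degrees of the sequences modulo each $G_i$. Replacing a family by its ``derivatives'' $g_j(n+h)g_j(n)^{-1}$ and differences $g_1(n)^{-1}g_j(n)$ should strictly reduce this weight, in the well-ordered sense. Here nilpotency is essential. Commutators of polynomial sequences are again polynomial sequences, and they land deeper in the filtration, so the derivative operation terminates. In the abelian case this is the usual PET scheme.

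The third step is the structure theory, where I would use the Furstenberg--Katznelson framework. Consider the maximal factor on which the statement holds, for all polynomial families of a given weight. If this factor were a proper subfactor, there would be a nontrivial compact (isometric) extension relative to some subgroup, or else a relatively weak mixing one. In the relatively weak mixing case, the van der Corput lemma together with the PET induction of the second step shows that the multiple correlations factor through the base, so recurrence lifts. In the compact case, I would use a polynomial Hales--Jewett or IP-polynomial Szemer\'edi argument in the Bergelson--Leibman style, applied along finitely many almost-periodic ``colors,'' to lift recurrence. Finally, a Zorn's lemma argument, together with the fact that the statement passes to inverse limits, concludes that the maximal factor is all of $X$.

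I expect the main obstacle to be the relatively weak mixing step in the nilpotent setting. A factor that is invariant for $G$ need not give a weak mixing extension for every element of $G$, and the conjugations arising from noncommutativity produce sequences such as $T_i^{-a(n)}T_j^{b(n)}T_i^{a(n)}$. These must still be shown to lie in the well-ordered class and to be handled by the inductive hypothesis. I would first try to work with the full group generated by all values of the polynomial sequences. I would then apply the relative van der Corput estimate to the family of sequences taken modulo the subgroup $H$ that acts compactly, keeping track of the filtration degrees so that the weight decreases.
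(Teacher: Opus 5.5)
The survey does not prove this theorem; it quotes it from \cite{L98}. The architecture you describe is indeed that of Leibman's proof: polynomial mappings into $G$, a PET induction adapted to the lower central series, Furstenberg--Katznelson structure theory, and a colouring argument for the compact part. As a proof, however, your proposal has a genuine gap, located where you place the main obstacle, and your structural step is stated in a way that hides it.

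The dichotomy ``compact extension relative to some subgroup, or else relatively weak mixing'' does not give two cases that can be handled separately. Relative weak mixing of the $G$-action says nothing about individual elements such as $T_iT_j^{-1}$ or the values of $g_i(n)^{-1}g_j(n)$. For instance, if $T_1$ is weakly mixing and $T_2=\Id$, the $\Z^2$-action is weakly mixing, yet $\E_{n\in[N]}T_1^nf_1\cdot T_2^nf_2\to f_2\int f_1\, d\mu$ does not factor through the trivial factor. So your van der Corput/PET step cannot work in that generality. Compactness relative to a proper subgroup $H$, in turn, lets the colouring argument control only the iterates that stay in $H$. What Furstenberg and Katznelson actually use is an intermediate \emph{primitive} extension, which is compact relative to a subgroup and relatively weakly mixing in the complementary directions at the same time. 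They then lift recurrence through this mixed situation: group the sequences into classes that agree modulo $H$, decouple different classes by relative weak mixing, and handle each class by compactness.

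In the nilpotent setting this is precisely the hard part. The subgroup $H$ has to be normal, since otherwise the maximal $H$-compact intermediate extension need not be $G$-invariant. But normal subgroups of nilpotent groups typically have no complement; a complement of the centre of the discrete Heisenberg group would force the group to be abelian. So the abelian splitting $\Gamma=\Gamma_c\oplus\Gamma_w$ is not available. Furstenberg and Katznelson use that splitting both to find primitive intermediate extensions and to decompose each iterate into a compact and a weakly mixing component. Without it, conjugation terms such as $T_i^{-a(n)}T_j^{b(n)}T_i^{a(n)}$ must also be controlled in the PET weight relative to $H$. Constructing this nilpotent substitute for primitive extensions and proving that recurrence lifts through it is the substance of the proof, and your last paragraph only says what you would try.

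Two smaller points:
\begin{itemize}
\item The compact step needs a van der Waerden-type theorem for polynomial configurations $\gamma g_1(n),\ldots,\gamma g_\ell(n)$ inside the noncommutative group $G$. The polynomial Hales--Jewett and IP polynomial Szemer\'edi results you invoke are formulated for commuting configurations.
\item IP$^*$ recurrence is more than you need. Positivity of $\liminf_{N}\E_{n\in[N]}\mu(\cdots)$, which is what Leibman proves, is already robust enough for the inductive scheme.
\end{itemize}
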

\begin{theorem}[Walsh's norm convergence theorem {\cite[Theorem 1.1]{W12}}]\label{T: Walsh}
    Let $a_1, \ldots, a_\ell\in\Z[t]$. For every nilpotent system $(X, \CX, \mu,$\! $T_1, \ldots, T_\ell)$ and all $f_1, \ldots, f_\ell\in L^\infty(\mu)$, the average
    \begin{align*}
        \E_{n\in[N]}T_1^{a_1(n)}f_1\cdots T_\ell^{a_\ell(n)}f_\ell
    \end{align*}
    converges in $L^2(\mu)$.
\end{theorem}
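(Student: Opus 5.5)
Walsh's theorem (Theorem \ref{T: Walsh}) is a deep result; I would follow Walsh's strategy rather than the structure-theoretic route, since no usable characteristic factors exist for nilpotent actions. The plan is to argue by contradiction with a \emph{metastable} (finitary) formulation. Norm convergence of $A_N:=\E_{n\in[N]}T_1^{a_1(n)}f_1\cdots T_\ell^{a_\ell(n)}f_\ell$ is equivalent to the statement that for every $\veps>0$ and every growth function $F:\N\to\N$ there is $M=M(\veps,F)$ such that some interval $[K,F(K)]$ with $K\leq M$ satisfies $\norm{A_N-A_{N'}}_{L^2(\mu)}\leq\veps$ for all $N,N'\in[K,F(K)]$. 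Crucially, I would prove this with $M$ uniform over all systems and all $1$-bounded functions. Uniformity is what lets the induction close: functions produced at one stage can be fed back in as arbitrary inputs at the next.

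First, I would replace the commuting-case PET scheme (Section \ref{SSS: PET}) by a nilpotent analogue. Rewrite the average along a general polynomial sequence $g(n)=T_1^{a_1(n)}\cdots$ in the nilpotent group $G$ generated by $T_1,\ldots,T_\ell$, encoded via the $\ell$ polynomial mappings $n\mapsto g_j(n)\in G$. Define a well-ordered complexity on finite families of such polynomial mappings, using degrees along the lower central series. Check that a van der Corput step (Lemma \ref{L: vdC}) followed by composition with $g_1(n)^{-1}$ produces a family of strictly smaller complexity, with the new maps of the form $g_1(n)^{-1}g_j(n+h)$; these remain polynomial maps into $G$ by Leibman's theory of polynomial mappings in nilpotent groups. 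Iterating reduces to families where everything is ``linear'' relative to $G$ modulo constants.

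Second, the heart of the argument is a finitary dichotomy replacing the Host-Kra/box seminorm control that is unavailable here. For a fixed $1$-bounded $f_\ell$, either the average is small, or $f_\ell$ correlates with a \emph{dual-type function}: a limit of averages $\E_{n}\prod_{j<\ell}g_\ell(n)^{-1}g_j(n)\overline{f_j}$, obtained by stashing as in Lemma \ref{L: stashing in degree lowering}. I would run an energy-increment/projection argument: decompose $f_\ell=f_{\rm str}+f_{\rm unif}$, where $f_{\rm str}$ is a bounded combination of such dual functions at a scale chosen via $F$. The uniform part contributes little by the vdC reduction and the induction hypothesis on lower-complexity families, applied with uniform bounds. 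Plugging dual functions back in turns the original average into one of lower complexity, again handled by induction. Each refinement of the decomposition increases an $L^2$ energy bounded by $1$, so after $O_\veps(1)$ steps a stable interval is found; this yields the metastable bound.

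The main obstacle is this second step. The induction must be set up on a carefully chosen well-ordering of families so that both the vdC step and the ``dual function absorption'' step strictly decrease complexity in the noncommutative setting. Moreover, all constants must remain independent of the system, so that the reduced averages, which involve new group elements and functions, fall under the inductive hypothesis. Making the complexity descend correctly under conjugations and commutators in $G$ is where I expect most of the work to lie.
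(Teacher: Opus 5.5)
The survey gives no proof of Theorem \ref{T: Walsh}; it only quotes \cite{W12}. Your architecture is in the spirit of Walsh's argument: metastability uniform in the functions, dual functions produced by stashing, an energy increment, and induction on a PET-type complexity of Leibman polynomial maps. However, the proposal stops exactly where the proof has to happen. The claim that ``plugging dual functions back in turns the original average into one of lower complexity'' is the entire inductive step, and as stated it is neither justified nor true for the usual complexity.

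Take $\CD=\E_{n\in[L]}g_\ell(n)^{-1}\big(u_0\prod_{i<\ell}g_i(n)u_i\big)$. To absorb the factor $g_\ell(m)$, $m\le N$, that $A_N(f_1,\dots,f_{\ell-1},\CD)$ applies to $\CD$, you must substitute $n\mapsto n+m$, at a cost of $O(N/L)$. Hence the structured part has to consist of dual functions at scales $L$ far above the window $[K,F(K)]$ on which you want stability, while uniformity is needed at the scales inside that window. Your energy increment must be organised (e.g.\ over windows of decreasing scale) to deliver exactly this separation.

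After the substitution, $A_N(f_1,\dots,f_{\ell-1},\CD)$ becomes an average over $n\in[L]$ of averages over $m\in[N]$. These run along $g_i(m)$ and $g_\ell(m)g_\ell(m+n)^{-1}g_i(m+n)$ for $i<\ell$, together with $g_\ell(m)g_\ell(m+n)^{-1}$. The PET weight of \cite{Ber87} (number of equivalence classes in each degree, compared from the top degree down) can increase under this operation. For $(T^{n^2},T^{n^2+n})$ one obtains, after merging maps that differ by a constant, the pair $(T^{m^2},T^{-2nm})$: the quadratic class survives and a new linear class appears. So you must construct an ordering adapted to this reduction and prove that it strictly descends, also when the new maps involve noncommuting conjugations. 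The proposal only names this as the expected obstacle, so the heart of the proof is missing.

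Two further points are unaddressed.

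First, the inner averages over $m$ have group elements and functions depending on $n\in[L]$, so you need a single stable window for most $n$ simultaneously. The inductive hypothesis applied to each $n$ separately yields windows that need not align; pigeonholing over the boundedly many candidate windows captures only a small proportion of the $n$'s. One way out, and a genuine use of your uniformity over systems, is to apply the inductive hypothesis to the auxiliary system $X\times[L]$, on which $G^{L}$ acts fibrewise. This requires your complexity and bounds to be insensitive to such products.

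Second, your van der Corput step does not do the work you assign to it. If ``uniform'' means having small correlation with dual functions at the averaging scales, then the identity $\norm{A_N}_{L^2(\mu)}^2=\int f_\ell\cdot\E_{n\in[N]}g_\ell(n)^{-1}\big(\overline{A_N}\prod_{i<\ell}g_i(n)f_i\big)\,d\mu$ already makes the uniform part negligible. Meanwhile, iterating van der Corput down to ``linear'' families yields nothing usable in a setting where, as you note, no seminorm with a structure theory is available. Asking one ordering to descend under both operations only adds constraints; the PET-type combinatorics you actually need is that of the ordering under the dual-function reduction.
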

Both Theorems \ref{T: Leibman nilpotent} and \ref{T: Walsh} hold for more general expressions, but we have opted to just state the simpler case for the clarity of presentation. Before Theorem \ref{T: Walsh}, the only truly nilpotent (i.e. involving non-commutative actions) average for which norm convergence was known was the double ergodic average
\begin{align}\label{E: BL nilpotent average}
    \E_{n\in[N]}T_1^n f_1\cdot T_2^n f_2,
\end{align}
%along $(T_1^n, T_2^n)_n$, 
a result due to Bergelson and Leibman \cite{BL02}. %Afterwards, Theorem \ref{T: Walsh} has been strengthened by Zorin-Kranich \cite[Theorem 1.1]{ZK16} to cover e.g. uniform averages.

Walsh's method for proving norm convergence requires rather minimal understanding of the structure of multiple ergodic averages under consideration compared to other results in this survey. Despite its incredible scope, its reach outside polynomials remains rather unexplored - hence the following problem. 
%The first problem in this section inquires whether it extends past polynomials. 
%In the spirit of Heuristic \ref{H: nilpotent heuristic}, the first problem in this section considers the extension of Theorem \ref{T: Walsh} to fractional polynomials.
\begin{problem}[Norm convergence along fractional polynomials]\label{Pr: norm convergence fractional polynomials}
        Let $a_1, \ldots, a_\ell$ be fractional polynomials. Does the average
    \begin{align*}
        \E_{n\in[N]}T_1^{a_1(n)}f_1\cdots T_\ell^{a_\ell(n)}f_\ell
    \end{align*}
    converge in $L^2(\mu)$ for every nilpotent system $(X, \CX, \mu,$\! $T_1, \ldots, T_\ell)$ and all $f_1, \ldots, f_\ell\in L^\infty(\mu)$?
\end{problem}
Problem \ref{Pr: norm convergence fractional polynomials} remains open even in the commuting case.

We conclude this section's introduction with a heuristic that guides our thinking on nilpotent actions.
\begin{heuristic}[Nilpotent heuristic]\label{H: nilpotent heuristic}
    All good properties (multiple recurrence, norm and pointwise convergence, structural properties) that hold for multiple ergodic averages for commuting systems should extend to nilpotent systems.
\end{heuristic}

\subsection{Groups of superpolynomial growth}
While nilpotent actions are good for the purpose of multiple recurrence and norm convergence, the same cannot be said for solvable actions of exponential growth. In \cite[Section 4]{BL02}, Bergelson and Leibman constructed examples of such actions which are neither good for multiple recurrence nor for norm convergence.
%Bergelson and Leibman \cite[Section 4]{BL02} constructed systems $(X, \CX,$ $\mu, T_1, T_2)$, with $T_1, T_2$ generating solvable groups of exponential growth, for which the multiple recurrence property fails and the average \eqref{E: BL nilpotent average} fails to converge. 
On the other hand, it is unclear what happens for groups of intermediate (i.e. superpolynomial but subexponential) growth.
\begin{problem}[Multiple recurrence in groups of intermediate growth]
    Suppose that $T_1, T_2$ generate a group of intermediate growth of measure-preserving transformations on a standard probability space $(X, \CX, \mu)$. Does this system admit the \emph{multiple recurrence property}, i.e. given $E\in\CX$ with $\mu(E)>0$, does there exist $n\in\N$ for which $\mu(E\cap T_1^{-n}E\cap T_2^{-n}E)>0$?
    %$(X, \CX, \mu,$\! $T_1, T_2)$ be a system with $T_1, T_2$ gener
\end{problem}
\begin{problem}[Norm convergence in groups of intermediate growth]
    Suppose that $T_1, T_2$ generate a group of intermediate growth of measure-preserving transformations on a standard probability space $(X, \CX, \mu)$. Does this system admit the \emph{norm convergence property}, i.e. the average \eqref{E: BL nilpotent average}
    % \begin{align*}
    %     \E_{n\in[N]}T_1^n f_1\cdot T_2^n f_2
    % \end{align*}
    converges in $L^2(\mu)$ for all $f_1, f_2\in L^\infty(\mu)$?
    %$(X, \CX, \mu,$\! $T_1, T_2)$ be a system with $T_1, T_2$ gener
\end{problem}
An example of an amenable group of intermediate growth is the Grigorchuk group. The study of multiple ergodic averages for groups of intermediate growth encounters obstructions somewhat similar to those faced by sequences of superpolynomial growth (see Section \ref{SSS: superpolynomial}): finitely many applications of the van der Corput inequality are incapable of bringing the average to one sufficiently simple to be amenable to known techniques.

For more discussion and conjectures on multiple ergodic averages in solvable groups, we direct the reader to \cite[Section 5]{BL02}.

\subsection{Independent and distinct-growth sequences}
As is the case with commuting systems, the ergodic averages are easier to understand when the sequences are pairwise independent, and even better - if they have distinct growth. Perhaps the strongest validation of Heuristic \ref{H: nilpotent heuristic} would come from resolving affirmatively the following two problems, which extend Heuristics \ref{H: independent} and \ref{H: pairwise independent seminorm} to nilpotent systems.
\begin{problem}\label{Pr: affinely independent nilpotent}
    Show that any affinely independent polynomials $a_1, \ldots, a_\ell\in\Z[t]$ admit rational Kronecker factor control for any nilpotent system $(X, \CX, \mu,$\! $T_1, \ldots, T_\ell)$.
\end{problem}
\begin{problem}\label{Pr: pairwise independent nilpotent}
    Show that any pairwise independent polynomials $a_1, \ldots, a_\ell\in\Z[t]$ admit Host-Kra seminorm control for any nilpotent system $(X, \CX, \mu,$\! $T_1, \ldots, T_\ell)$.
\end{problem}

Problems \ref{Pr: affinely independent nilpotent} and \ref{Pr: pairwise independent nilpotent} seem out of reach of current methods given that both the degree lowering and seminorm smoothing methods used to establish their commutative counterparts rely on commutativity (specifically, the dual-difference interchange step uses commutativity in an essential way). The experience from the commutative case suggests that sequences of distinct growth should do much better. For that reason, we record separately the following two problems, the first of which is a special case of Problems \ref{Pr: affinely independent nilpotent} and \ref{Pr: pairwise independent nilpotent}.
\begin{problem}\label{Pr: distinct degree nilpotent}
    Show that any nonconstant polynomials $a_1, \ldots, a_\ell\in\Z[t]$ of distinct degrees admit Host-Kra seminorm control/rational Kronecker factor control for any nilpotent system. %$(X, \CX, \mu,$\! $T_1, \ldots, T_\ell)$.
\end{problem}
While rational Kronecker factor control is a priori harder to establish than Host-Kra seminorm control, obtaining the latter for distinct-degree polynomials and nilpotent systems should give the former when combined with the equidistribution results of Chu, Frantzikinakis, and Host \cite[Section 7.4]{CFH11} - at least for monomials $n, n^2, \ldots, n^\ell$.
\begin{problem}\label{Pr: distinct powers nilpotent}
    Let $0<b_1< \cdots < b_\ell$. Show that the sequences $n^{b_1}, \ldots, n^{b_\ell}$ admit Host-Kra seminorm control/invariant factor control for any nilpotent system. %$(X, \CX, \mu,$\! $T_1, \ldots, T_\ell)$.
\end{problem}

Given that virtually nothing is known on Problems \ref{Pr: distinct degree nilpotent} and \ref{Pr: distinct powers nilpotent}, we explicitly present two simplest open cases.
\begin{problem}
    Show that $n, n^2$ are jointly ergodic for any 2-step nilpotent system $(X, \CX, \mu,$\! $T_1, T_2)$ with $T_1, T_2$ totally ergodic/weakly mixing.
\end{problem}
\begin{problem}
    Show that $n, n^{3/2}$ are jointly ergodic for any 2-step nilpotent system $(X, \CX, \mu,$\! $T_1, T_2)$ with $T_1, T_2$ ergodic/weakly mixing.
\end{problem}
\subsection{Joint ergodicity on nilpotent actions}
We conclude the discussion of nilpotent actions with a joint ergodicity result of Bergelson and Leibman and its potential generalization.
\begin{theorem}[Characterization of joint ergodicity on nilpotent systems {\cite[Theorem B]{BL02}}]\label{T: BL nilpotent joint ergodicity}
    Let $(X, \CX, \mu,$\! $T_1, T_2)$ be a $k$-nilpotent system. Then $(T_1^n, T_2^n)_n$ is jointly ergodic for $(X,\CX,\mu)$ if and only if the following two conditions hold:
    \begin{enumerate}
        \item $T_1 \times T_2$ is ergodic on $(X\times X, \CX\otimes \CX, \mu\times\mu)$;
        \item the group $\langle T_1^nT_2^{-n}\colon 1\leq n\leq k\rangle$ acts ergodically on $(X, \CX, \mu)$.
    \end{enumerate}
\end{theorem}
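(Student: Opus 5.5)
The plan is to prove the two directions separately. The easy direction is necessity, and the bulk of the work goes into sufficiency, following the Host-style magic extension strategy of Example \ref{Ex: Host example}. The nilpotent structure enters through the commutator subgroup.

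\emph{Necessity.} Suppose $(T_1^n,T_2^n)_n$ is jointly ergodic.

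For condition (i), suppose $g_1\otimes g_2$ is a $T_1\times T_2$-invariant function. Apply joint ergodicity with test functions $g_1, g_2$ and integrate against $\overline{g_1\otimes g_2}$. Rewriting the integral via measure invariance shows that the invariant function must have its product-integral equal to $\int g_1\int g_2$. A standard density argument over $L^2(\mu\times\mu)$ then gives ergodicity of $T_1\times T_2$.

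For condition (ii), the argument is the nilpotent analogue of the proof of Theorem \ref{T: BB characterization 2}. Suppose $f$ is nonconstant and invariant under $T_1^nT_2^{-n}$ for all $1\le n\le k$. Using $k$-step nilpotency, expanding $T_1^{m}T_2^{-m}$ as a word in the $T_1^nT_2^{-n}$, $n\le k$, times commutator corrections shows that $f$ is invariant under $T_1^mT_2^{-m}$ for every $m$; this is the step where I would invoke Bergelson--Leibman's algebraic lemma. Then taking $f_1=f$, $f_2=\overline{T_2^{-m}\cdots}$ yields $\lim\E_n\int T_1^nf\cdot T_2^n\overline f\,d\mu=\int|f|^2\,d\mu>|\int f\,d\mu|^2$. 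This contradicts joint ergodicity.

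\emph{Sufficiency.} I would first establish seminorm control. The van der Corput inequality (Lemma \ref{L: vdC}) applied to $\E_n T_1^nf_1\cdot T_2^nf_2$ gives
\[\lim_H\E_{h}\lim_N\Big\|\E_n T_1^n\Delta f_1\cdot T_2^{n}(T_2^{h}\ldots)\Big\|.\]
After composing with $T_1^{-n}$, this produces terms $T_1^{-n}T_2^n$. These are no longer powers of a single map; they are the polynomial sequence $g(n)=T_1^{-n}T_2^n$ in a nilpotent group. The PET scheme for such polynomial sequences, in Leibman's formalism, terminates after finitely many steps, by induction on the weight vector of the polynomial mapping. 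It bounds the average by a Host-type box seminorm of $f_2$ along the group $G=\langle T_1^nT_2^{-n}\colon 1\le n\le k\rangle$ together with $T_2$. Condition (ii) makes $G$ act ergodically, so its invariant factor is trivial. This upgrades the estimate to control by $\nnorm{f_2}_{s,T_2}$-type seminorms.

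\emph{Equidistribution and conclusion.} Next I would show the limit on the characteristic factor. The hope is to use condition (i) as in the proof of Theorem \ref{T: BB characterization 2}: eigenfunctions of $T_1$, $T_2$ with $\beta_1+\beta_2\ne0$ average to zero. Here the main obstacle appears. Theorem \ref{T: joint ergodicity criteria} relies on commutativity in the degree lowering step (dual-difference interchange). I would therefore not use it directly. Instead I would run Bergelson--Leibman's inductive argument on the nilpotency class $k$, passing to the factor $\CX^{[T_1,T_2]}$ of functions invariant under the commutator subgroup. On that factor $T_1,T_2$ commute, so Theorem \ref{T: BB characterization 2} applies there, and condition (ii) restricted to the factor gives its difference ergodicity.

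The remaining and hardest step is showing that this factor is characteristic. My plan is to handle it through a relative van der Corput argument inside each fibre, using condition (ii) to kill fibre-wise invariant functions.
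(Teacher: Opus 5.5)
Since the survey only quotes this theorem from \cite{BL02}, there is no in-paper proof to compare with, so I judge your argument on its own terms. Your necessity direction is essentially fine, with two corrections. For (i), the real input is the classical fact that for ergodic $T_1,T_2$ the $T_1\times T_2$-invariant functions are spanned by products of eigenfunctions with opposite eigenvalues; a density argument alone does not give this. For (ii), $n\mapsto T_1^nT_2^{-n}$ is a polynomial sequence of degree at most $k$, so every value already lies in $\langle T_1^nT_2^{-n}\colon 1\leq n\leq k\rangle$, and no commutator corrections are left over.

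The genuine gap is in sufficiency, which is the whole content of the theorem. The decisive step is that the factor of $[G,G]$-invariant sets ($G=\langle T_1,T_2\rangle$) is characteristic. You only announce this as a plan, and the plan cannot work as stated, for two reasons.

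First, passing to this factor is not a real inductive reduction, because the factor can be trivial while (i) and (ii) hold. Take a Bernoulli action of the discrete Heisenberg group with $T_1,T_2$ the standard generators. There $[G,G]$ is the infinite centre and acts mixingly, so ``the factor is characteristic'' is literally the theorem.

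Second, (ii) cannot kill the fibre contributions. These are governed by the polynomial sequence $S_n:=T_2^nT_1^{-n}$, whose commutator part creates periodic obstructions. Concretely, let $X=\Z/4\Z\times\Z/2\Z$ with $T_1(u,v)=(u,v+u)$ and $T_2(u,v)=(u+1,v+u)$, the second coordinate taken mod $2$.
\begin{itemize}
\item $G$ is $2$-step nilpotent, and $[G,G]$ is generated by $(u,v)\mapsto(u,v+1)$.
\item The maps $T_1^nT_2^{-n}$, $n=1,2$, are translations generating all of $X$, so (ii) holds.
\item $S_n$ is the translation by $(n,\binom{n}{2})$.
\item Put $f_1=i^{-u}(-1)^v$ and $f_2=i^{u}(-1)^v$. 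Both are orthogonal to the $[G,G]$-invariant functions.
\item Nevertheless $T_1^nf_1\cdot T_2^nf_2=i^n(-1)^{\binom{n}{2}}$, whose averages converge to $(1+i)/2\neq 0$.
\end{itemize}
Condition (i) fails here, as it must, since $u$ is $T_1$-invariant. So any fibrewise argument has to use (i) inside the fibres, and your plan (``using condition (ii) to kill fibre-wise invariant functions'') gives no indication of how.

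The seminorm-control step does not fill this gap, for three reasons.
\begin{itemize}
\item The upgrade you claim from (ii) is unjustified. After one van der Corput step you must control $\E_{n\in[N]} g\circ S_n$, and the same example shows this is not governed by functions invariant under $\langle T_1^nT_2^{-n}\rangle$. For $g=i^u(-1)^v$ the average converges to $\frac{1+i}{2}\,g$, even though that group acts ergodically.
\item Box seminorms are only set up for commuting actions (subgroups of $\Z^\ell$). The cube structure behind the Gowers--Cauchy--Schwarz inequality uses commutativity, so ``Host-type box seminorms along $G$ together with $T_2$'' are not an available tool.
\item Even granting control of the average by $\nnorm{f_2}_{s,T_2}$, you could not use it. $\CZ_s(T_2)$ is not $T_1$-invariant, and, as you note yourself, the degree-lowering criteria need commutativity. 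Accordingly this step plays no role in the rest of your outline.
\end{itemize}
What is missing is an actual mechanism that controls averages along the non-commuting polynomial sequence $T_2^nT_1^{-n}$ using (i) and (ii) jointly.
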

A glimpse at the proof of Theorem \ref{T: BL nilpotent joint ergodicity} shows that it exploits heavily the fact that the authors deal with two transformations. Therefore, a more sophisticated method would probably be required to extend it to an arbitrary number of transformations.
\begin{problem}[{\cite[Conjecture 5.5]{BL02}}]
    Prove or disprove the following: for any nilpotent system $(X, \CX, \mu,$\! $T_1, \ldots, T_\ell)$, the action $(T_1^n, \ldots, T_\ell^n)_n$ is jointly ergodic for $(X,\CX,\mu)$ if and only if the following two conditions hold:
    \begin{enumerate}
        \item $T_1 \times \cdots \times T_\ell$ is ergodic on $(X^\ell, \CX^{\otimes \ell}, \mu^\ell)$;
        \item for any distinct $1\leq i, j\leq \ell$, the group $\langle T_i^nT_j^{-n}\colon n\in\Z\rangle$ acts ergodically on $(X, \CX, \mu)$.
    \end{enumerate}
\end{problem}
%\subsection{Entropy assumptions}
%/$\Z^\ell$ actions}

\section{Other topics}\label{S: other topics}
Despite the size of this survey, there are way too many topics related to joint ergodicity that we have not managed to cover. This section aims to list such topics with relevant references but without going into too detailed discussion.
\begin{enumerate}
    \item \emph{Interactions of additive and multiplicative systems.} In \cite{BR22}, Bergelson and Richter initiated an interesting study of the mutual interactions between additive and multiplicative systems. Charamaras \cite{Ch25} and Xiao \cite{Xiao25} then proved a number of joint ergodicity and asymptotic discorrelation results for averages involving both additive and multiplicative actions, an example of which is the average
    \begin{align*}
        \E_{n\in[N]}T_1^n f_1 \cdot T_2^{\Omega(n)}f_2
    \end{align*}
    for (not necessarily commuting) measure-preserving transformations $T_1, T_2$.
    \item \emph{Total joint ergodicity.} Koutsogiannis and Sun \cite{KoSu23} introduced the notion of \emph{total joint ergodicity}, i.e. joint ergodicity along all arithmetic progressions. They proved a number of results and posed several questions for the total joint ergodicity of real polynomials and Hardy sequences. 
    \item \emph{Multiple ergodic averages with entropy assumptions.} Frantzikinakis \cite{Fr22b} and Frantzikinakis-Host \cite{FH23} proved norm convergence and structural results on averages
    \begin{align*}
        \E_{n\in[N]}T_1^n f_1 \cdot T_2^{a(n)}f_2,
    \end{align*}
    where $a(n)$ is a nonlinear polynomial or a fractional power, $T_1, T_2$ do not commute, and $T_1$ has zero entropy. In contrast to earlier works on multiple ergodic averages, they proceeded by investigating Furstenberg systems of sequences $(f(T^{a(n)}x))_n$ and applying disjointness arguments.
    This line of research was carried on by Huang-Shao-Ye \cite{HSY24a, HSY24b, HSY25, HSY26}, Austin \cite{Au25}, Ryzhikov \cite{Ry24}, and Kosloff-Sanadhya \cite{KoSa25, Kosa24}. Most of these works construct counterexamples to the phenomena observed by Frantzikinakis and Host when either $n$ is replaced by a nonlinear polynomial or $a(n) = n$. Their upshot is that the behavior distilled by Frantzikinakis and Host is very rare and breaks for even slight modifications of the averages. 
    \item \emph{Topological analogs of joint ergodicity.} Many questions on multiple ergodic averages have natural topological analogs; joint ergodicity is one of them. In the topological setting, the point is to show that for fixed sequences $a_1, \ldots, a_\ell:\N\to\Z$ and under suitable assumptions on $T_1, \ldots, T_\ell$, homeomorphisms of a compact metric space $X$, the orbit $$(T_1^{a_1(n)}x, \ldots, T_\ell^{a_\ell(n)}x)_n$$ is dense in $X^\ell$ for some $x\in X$. Starting with Glasner's topological analog of Theorem \ref{T: Furstenberg averages weakly mixing} \cite{Gl94}, this question has been extensively studied e.g. by Moothathu \cite{Mo10}, Kwietniak-Oprocha \cite{KwOp12}, Huang-Shao-Ye \cite{HSY19}, Zhang-Zhao \cite{ZhZh21}, Qiu \cite{Qiu23}, and Donoso-Koutsogiannis-Sun \cite{DKS24}. 
    \item \emph{Joint ergodicity of piecewise monotone maps.} Bergelson and Son \cite{BS23, BS22} proved interesting (norm and pointwise) joint ergodicity results for various piecewise monotone maps on $\T$, including beta-transformations, Gauss maps, and interval exchange transformations. These results have applications to the problems of joint normality of numbers and independence of their digit and continuous fraction expansions. 
    \item \emph{Pointwise convergence of multiple ergodic averages.} Starting with the Krause-Mirek-Tao \cite{KMT20} breakthrough, recent years have seen an outbreak of activity on pointwise convergence of multiple ergodic averages due to Daskalakis \cite{Das25b}, Kosz-Mirek-Peluse-Wan-Wright \cite{KMPWW24}, Krause-Mirek-Tao-Ter\"av\"ainen \cite{KMTT24}, Ter\"av\"ainen \cite{Ter24}, and Wan \cite{Wan25}. While not directly related to joint ergodicity, they nonetheless imply pointwise versions of many results described in this survey. Earlier results on the topic include the classical work of Bourgain \cite{Bo90} on double recurrence, as well as the works of Huang-Shao-Ye \cite{HSY19b} and Donoso-Sun \cite{DS21} that cover pointwise convergence on distal systems.   
    \item \emph{Asymptotic total ergodicity.} Recent joint ergodicity results appeared in parallel to analogous quantitative discorrelation estimates in the discrete setting, starting with the result of Bourgain and Chang \cite{BC17}
    \begin{align*}
        \abs{\E_{m,n\in\F_p}f_0(m)f_1(m+n)f_2(m+n^2) - \E_{m\in\F_p}f_0(m)\cdot \E_{m\in\F_p}f_1(m)\cdot \E_{m\in\F_p}f_2(m)}\ll p^{-1/10}
    \end{align*}
    for 1-bounded functions $f_0,f_1,f_2:\F_p\to\C$. More results on this form were proved by Peluse \cite{Pel18, Pel19}, Dong-Li-Sawin \cite{DLS17}, Han-Lacey-Yang \cite{HLY21}, Bergelson-Best \cite{BB23a}, and the author \cite{Kuc23, Kuc22b}. Due to parallels with joint ergodicity, Bergelson and Best refer to this phenomenon as \emph{asymptotic joint ergodicity} \cite{BB23b}.
    \item \emph{Joint ergodicity for general group actions.} Bergelson and Rosenblatt \cite{BR99} as well as Son \cite{Son20} investigated variants of Theorem \ref{T: BB characterization} for general group actions. 
    In a different direction, Ackelsberg and Bergelson \cite{AB24} as well as Best and Ferr\'e-Moragues \cite{BFM22} extended Theorem \ref{T: FrKr affine independence totally ergodic} to actions by more general number fields and rings respectively. The Ackelsberg-Bergelson work followed the Old Joint Ergodicity Strategy, and specifically derived joint ergodicity from an equidistribution result on Weyl systems. By contrast, Best and Ferr\'e-Moragues extended Theorem \ref{T: joint ergodicity criteria} to second-countable locally compact abelian groups and then used it to deduce their result for polynomials.
    % In a different direction, Best and Ferr\'e-Moragues \cite{BFM22} extended the single-transformation case of Theorem \ref{T: joint ergodicity criteria} to second-countable locally compact abelian groups. Then they applied this result to extend Theorem \ref{T: FrKr affine independence totally ergodic} to more general polynomial ring actions.
    \item \emph{Joint ergodicity for flows.} In largely overlooked \cite[Chapter 7]{Fr21}, Frantzikinakis proved pointwise joint ergodicity results for not necessarily commuting flows $T_1,\ldots, T_\ell$. Specifically, he showed that if $a_1, \ldots, a_\ell\in\CH$ are Hardy sequences of ``distinct but comparable growth'', then 
    \begin{align*}
        \lim_{N\to\infty}\int_0^N f_1(T_1^{a_1(t)}x)\cdots f_\ell(T_\ell^{a_\ell(t)}x)\; dt = \E(f_1|\CI(T_1))(x)\cdots \E(f_\ell|\CI(T_\ell))(x)
    \end{align*}
    almost everywhere.
    The method fails for sequences whose growth is either ``too similar'' (like $t^2,\; t^2+t$ and $t,\; t\log t$) or ``too different'' (like $t,\; 2^t$). The case of distinct noninteger fractional powers was previously established by Bergelson, Boshernitzan, and Bourgain \cite{BBB94}.
\end{enumerate}
%\section{Interactions of additive and multiplicative systems}

\appendix
\section{Notation, conventions, and preliminaries}
%The following basic notation is used throughout the paper.

\subsection{Notation}
The symbols $\C, \R, \R_+, \Q, \Z, \N_0, \N, \P$ respectively stand for the sets of complex numbers, reals, positive reals, rationals, integers, nonnegative integers, positive integers, and primes. With $\T$, we denote the one dimensional torus, and we often identify it with $\R/\Z$ or  with $[0,1)$. Likewise, $\S^1$ stands for the unit circle inside $\C$. For $p$ prime, $\F_p$ stands for the finite field with $p$ elements. For $N\in\N$, we denote $[N]:=\{1, \ldots, N\}$ and $[\pm N]:=\{-N, \ldots, N\}$. %as is customary in the field. 
%For a ring $R$, we denote the collection of polynomials over $R$ in variables $x_1, \ldots, x_k$ via $R[x_1, \ldots, x_k]$. 
If $E\subseteq G$ is a subset of a group $G$, then $\langle E \rangle$ is the subgroup generated by $E$.

%We frequently use the interval notation to mean either a real interval or its restriction to $\Z$; the meaning would typically be clear from the context. If we restrict to $\Z$, then we write $[M,N] = \{M, \ldots, N\}$ and  $[N]=\{1, \ldots, |N|\}$ for integers $M\leq N$.

For a finite set $E,$ we define the average of a sequence $a:E\to \C$ as 
\[\E_{n\in E}a(n) :=\frac{1}{|E|}\sum_{n\in E}a(n).\] 
%We will mainly use finite averages along subsets of integers of the form $[N]^s$ or $[\pm N]^s$, also known as \emph{Ces\`aro averages}.

We let $\CC z := \overline{z}$ be the complex conjugate of $z\in \C$.

For a set $E$, we define its indicator function by $1_E$. %Whenever convenient, we also use the notation $1(E)$ for an event $E$.

We denote $\ell$-tuples with the bold notation $\bh = (h_1, \ldots, h_\ell)$. Given a system $(X, \CX, \mu,$  $T_1, \ldots, T_\ell)$ and $\bh\in\Z^\ell$, we let $T^\bh:=T_1^{h_1}\cdots T_\ell^{h_\ell}$.

%Typically, we use underlined symbols $\uh$ to denote elements of $\R^s$ (which we think as tuples of parameters), and we employ the bold notation $\bx$ to denote elements of $\R^\ell$ (which we think as points in a space); sometimes we also use bold notation to denote $s$-tuples of points in $\R^\ell$, e.g., $\bm=(\bm_1, \ldots, \bm_s)\in(\R^s)^\ell$. We denote the integer and fractional parts of $\bx$ via
% \begin{align*}
%     \floor{\bx} = (\floor{x_1}, \ldots, \floor{x_\ell})\quad \textrm{and}\quad \rem{\bx} =(\rem{x_1}, \ldots, \rem{x_\ell}).
% \end{align*}

% We often write $\ueps\in\{0,1\}^s$ for a vector of 0s and 1s of length $s$. For $\ueps\in\{0,1\}^s$ and $\uh, \uh'\in\R^s$, we set $\ueps\cdot \uh:=\eps_1 h_1+\cdots+ \eps_s h_s$, $\abs{\uh} := |h_1|+\cdots+|h_s|$ and
% \begin{gather}\label{E: h^eps}
%     h_j^{\eps_j} =\begin{cases}
%         h_j,\; &\eps_j =0\\
%         h_j',\; &\eps_j = 1
%     \end{cases}
%     \quad \textrm{for}\quad j\in[s].
% \end{gather}

% Given a system $(X, \CX, \mu,$\! $T_1, \ldots, T_\ell)$ and $\bm=(m_{1},\dots,m_{\ell})\in\R^\ell$, we define $$T^{\floor{\bm}}:=T_1^{\floor{m_1}}\cdots T_\ell^{\floor{m_\ell}}.$$
% For $j\in[\ell]$, we set $\be_j$ to be the unit vector in $\R^\ell$ in the $j$-th direction, and we let $\be_0 = \mathbf{0}$, so that $T^{\be_j} = T_j$ for $j\in[\ell]$ and $T^{\be_0}$ is the identity transformation. 

%For $M>0$, 
Let $(X, \CX, \mu)$ be a probability space. We call a function $f\in L^\infty(\mu)$ \emph{$1$-bounded} if $\norm{f}_{L^\infty(\mu)}\leq 1$.

We denote the space of complex-valued continuous functions a topological space $Y$ by $C(Y).$

For a $\sigma$-algebra $\CA\subseteq\CX$ and $f\in L^1(\mu)$, we let $\E(f|\CA)$ be the conditional expectation of $f$ with respect to $\CA$. %By abuse of notation, we can identify the $\sigma$-algebra $\CA$ with the algebra of functions $L^2(\CA)$. 
%One $\sigma$-algebra that we often work with is the \emph{invariant factor} $\CI(T) = \{A\in \CX:\; T\inv A = A\}$ of a transformation $T$.

Throughout, all the equalities between functions and sets are understood to hold up to sets of measure 0.

Given two functions $a,b:(t_0,\infty)\to\mathbb{R}$ for some $t_0\in\R$, we write
\begin{enumerate}
    \item $a\ll b$ or $a = O(b)$ if there exists $C>0$ %and $x_1\geq x_0$ 
    such that $|a(x)| \leq C |b(x)|$ for all $t\geq t_0$; 
    \item $a \asymp b$ if $a\ll b$ and $b\ll a$;
    \item $a \prec b$ or $a = o(b)$  if $\lim\limits_{x\to\infty}\frac{a(x)}{b(x)} = 0$.
    %\item $a \lll b$ if there exists $\delta > 0$ such that $|a(x)|x^\delta \prec |b(x)|$.
\end{enumerate}
%If the absolute constant in (i) depends on some parameters $x_1, \ldots, x_n$, then we write $a\ll_{x_1, \ldots, x_n} b$, $a = O_{x_1, \ldots, x_n}(b)$, etc. If in (ii) we want to emphasize what parameter we are taking to infinity, we can also write $a = o_{x\to\infty; x_1, \ldots, x_n} (b)$ to emphasize that $x_1, \ldots, x_n$ are kept fixed while $x$ goes to infinity.

\subsection{Van der Corput inequality}

The van der Corput inequality is a ubiquitous tool in ergodic theory and additive combinatorics. Here is one version.
\begin{lemma}[Van der Corput inequality]\label{L: vdC} 
    Let $(v_n)_n$ be a 1-bounded sequence in a Hilbert space. Then
    \begin{align*}
        \limsup_{N\to\infty}\norm{\E_{n\in[N]}v_n}^2 \leq \limsup_{H\to\infty}\E_{h\in[H]}\limsup_{N\to\infty}\abs{\E_{n\in[N]}\langle v_n, v_{n+h}\rangle}. 
    \end{align*}
\end{lemma}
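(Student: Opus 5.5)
The plan is the classical Hilbert-space argument: expand the norm of an average of $v_n$ by comparing it with an average of shifted averages, then apply Cauchy–Schwarz. Fix $H\in\N$. Since the $v_n$ are 1-bounded, for each $h\in[H]$ we have
\begin{align*}
\norm{\E_{n\in[N]}v_n - \E_{n\in[N]}v_{n+h}}\leq \frac{2h}{N},
\end{align*}
and therefore
\begin{align*}
\norm{\E_{n\in[N]}v_n - \E_{n\in[N]}\E_{h\in[H]}v_{n+h}}\leq \frac{2H}{N}\to 0 \quad\textrm{as}\quad N\to\infty.
\end{align*}
Thus, up to an $o_{N\to\infty}(1)$ error, $\norm{\E_{n\in[N]}v_n}^2$ equals $\norm{\E_{n\in[N]}\E_{h\in[H]}v_{n+h}}^2$. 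By the Cauchy–Schwarz (or convexity) inequality in $n$, this is at most $\E_{n\in[N]}\norm{\E_{h\in[H]}v_{n+h}}^2$.

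Expanding the square gives
\begin{align*}
\E_{h,h'\in[H]}\E_{n\in[N]}\langle v_{n+h},v_{n+h'}\rangle.
\end{align*}
Shifting $n$ by $h$ costs another $O(H/N)$ error, which turns the inner average into $\E_{n\in[N]}\langle v_n,v_{n+h'-h}\rangle$. Grouping according to $k=h'-h$, each $k$ with $|k|<H$ appears with weight $(H-|k|)/H^2$. The term $k=0$ contributes at most $1/H$. Since $\langle v_n,v_{n-k}\rangle$ is the conjugate of a shift of $\langle v_n,v_{n+k}\rangle$, each negative $k$ has the same absolute value as $|k|$ up to $o(1)$. Taking $\limsup_{N\to\infty}$ therefore yields
\begin{align*}
\limsup_{N\to\infty}\norm{\E_{n\in[N]}v_n}^2\leq \frac1H+\frac{2}{H}\sum_{k=1}^{H-1}\Big(1-\frac kH\Big)\limsup_{N\to\infty}\abs{\E_{n\in[N]}\langle v_n,v_{n+k}\rangle}.
\end{align*}

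The main obstacle is matching the constant and the form of the weights to the stated inequality, which has the plain average $\E_{h\in[H]}$ with constant $1$ on the right. As written, the weighted sum above is bounded by $2\,\E_{k\in[H]}c_k$, where $c_k:=\limsup_{N\to\infty}\abs{\E_{n\in[N]}\langle v_n,v_{n+k}\rangle}$; this produces a factor $2$ instead of $1$. To remove it, I would exploit the Fejér weights more carefully: write
\begin{align*}
\frac{2}{H}\sum_{k<H}\Big(1-\frac kH\Big)c_k
\end{align*}
as an average over $H'\leq H$ of the Cesàro means $\E_{k\in[H']}c_k$, then take $\limsup_{H\to\infty}$. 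If the constant $2$ genuinely cannot be removed by this route, I would fall back to the variant with $\limsup_{H}\E_{h\in[H]}$ and constant $2$ (equivalently, a two-sided average over $h\in[\pm H]$), noting that such a constant is harmless for all applications in the survey.

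Finally, let $H\to\infty$ along a sequence realizing the $\limsup$. The $1/H$ term vanishes, giving the claimed bound.
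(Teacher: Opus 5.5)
Your argument is correct and is the standard proof; the paper does not prove this lemma itself but refers to the Bergelson--Moreira survey. The one loose end is the factor $2$, and it is not a real obstacle. The Abel summation you propose works and should be carried out. The constant-$2$ fallback would not prove the lemma as stated.

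The factor $2$ comes only from the crude bound $1-k/H\leq 1$; the Fej\'er weights $\frac{2}{H}(1-\frac kH)$ have total mass $(H-1)/H<1$. Set $c_k:=\limsup_{N\to\infty}\abs{\E_{n\in[N]}\langle v_n,v_{n+k}\rangle}\in[0,1]$ and $A_{H'}:=\E_{k\in[H']}c_k$. The identity $H-k=\#\{H'\colon k\leq H'\leq H-1\}$ gives
\begin{align*}
\frac{2}{H}\sum_{k=1}^{H-1}\Big(1-\frac kH\Big)c_k=\frac{2}{H^2}\sum_{H'=1}^{H-1}\sum_{k=1}^{H'}c_k=\sum_{H'=1}^{H-1}\frac{2H'}{H^2}\,A_{H'}.
\end{align*}
The weights $2H'/H^2$ are nonnegative with total mass $(H-1)/H$. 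The mass they place on $H'<H_0$ is at most $H_0^2/H^2$.

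Let $L:=\limsup_{H\to\infty}A_H\geq 0$. Given $\veps>0$, pick $H_0$ with $A_{H'}\leq L+\veps$ for all $H'\geq H_0$. Using $0\leq A_{H'}\leq 1$ and your bound, for every $H\geq H_0$ we get
\begin{align*}
\limsup_{N\to\infty}\norm{\E_{n\in[N]}v_n}^2\leq \frac1H+\frac{H_0^2}{H^2}+L+\veps.
\end{align*}
Letting $H\to\infty$ and then $\veps\to0$ gives the lemma with constant $1$. No subsequence in $H$ is needed, since the left-hand side does not depend on $H$.
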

For an extensive discussion on the role of Lemma \ref{L: vdC} in ergodic theory, consult the survey of Bergelson and Moreira \cite{BM16}.

\section{Host-Kra theory}\label{A: Host-Kra theory}
This section is dedicated to the basic properties of box seminorms, defined by Host \cite{H09}, and their special but important subclass: Host-Kra seminorms originating in \cite{HK05a}. We refer the reader e.g. to \cite{DKS22, FrKu22b, HK05a, HK18, TZ16} for the proofs of their various properties listed in this section.

\subsection{Box seminorms: definition and basic properties}\label{AA: box seminorms}
Let $(X, \CX, \mu,$\! $T_1, \ldots, T_\ell)$ be a system and $f\in L^{\infty}(\mu)$. For $\bh = (h_1, \ldots, h_\ell)\in\Z^\ell$, we set $T^\bh:=T_1^{h_1}\cdots T_\ell^{h_\ell}$.
%$R\in\angle{T_1, \ldots, T_\ell}$, 
We define the \emph{multiplicative derivative} of $f$ along $\bh$ via
 $$
 \Delta_{\bh} f :=  f \cdot T^\bh \overline{f} 
 $$
   and for $\bh_1, \ldots, \bh_s\in\Z^\ell$, we denote the \emph{iterated multiplicative derivative} via
$$
\Delta_{\bh_1, \ldots, \bh_s} f  := \Delta_{\bh_1}\cdots\Delta_{\bh_s} f = \prod_{\ueps\in\{0,1\}^s} \CC^{|\ueps|} T^{\eps_1 \bh_1 + \cdots \eps_s\bh_s }f.
$$

% If each transformation $R_j$ appears $k_j$ times, we also denote
% \begin{align*}
%     \Delta_{R_1^{\times k_1}, \dots, R_s^{\times k_s}}f := \Delta_{\underbrace{R_1, \ldots, R_1}_{k_1},\; \ldots,\; \underbrace{R_s, \ldots, R_s}_{k_s}}f.
% \end{align*}
% Lastly, we employ the conventions
% \begin{align*}
%     \Delta_{R_1, \ldots, R_s; h}f &:=\Delta_{R_1^{h_1},\ldots, R_s^{h_s}}f\quad &&\textrm{for}\quad h\in\Z^s\\
%     \textrm{and}\quad \Delta_{R_1^{\times k_1}, \ldots, R_s^{\times k_s}; h}f &:=\Delta_{R_1^{h_{11}},\ldots, R_1^{h_{1k_1}}}\cdots\Delta_{R_s^{h_{s1}},\ldots,  R_s^{h_{sk_s}}}f\quad &&\textrm{for}\quad h\in\Z^{k_1+\cdots+k_s}
% \end{align*}
% whenever convenient.

For subgroups $G_1, \ldots, G_s\subseteq\Z^\ell$, we define the \emph{box seminorm of $f$ along $G_1, \ldots, G_s$} by 
\begin{align}\label{E: box seminorm}
	\nnorm{f}_{G_1, \ldots, G_s}^{2^{s}}:=  \E_{\bh_s\in G_s}\cdots \E_{\bh_1\in G_1}\int \Delta_{\bh_1, \ldots, \bh_s}f\; d\mu.
    %\lim_{H\to\infty}\E_{h\in[H]}\nnorm{\Delta_{R_s^h}f}_{R_1, \ldots, R_{s-1}}^{2^{s-1}}
\end{align}
Above, we set
\begin{align*}
    \E_{\bh\in G} := \lim_{H\to\infty}\E_{\bh\in G\cap[\pm H]^\ell},
\end{align*}
noting that the limit is well-defined, $(G\cap [\pm H]^\ell)_H$ can be replaced by any F{\o}lner sequence on $G$, and we can combine several limits into one (e.g. we can replace $\E_{\bh_s\in G_s}\cdots \E_{\bh_1\in G_1}$ by $\E_{(\bh_1, \ldots, \bh_s)\in G_1\times \cdots \times G_s}$). Whenever convenient, we use the following shorthand notations:
\begin{enumerate}
    \item if each of $G_1, \ldots, G_s$ appears $k_1, \ldots, k_s$ times respectively, we write the seminorm as $\nnorm{f}_{G_1^{\times k_1}, \ldots, G_s^{\times k_s}}$;
    \item if $G_j = \langle \bv_j\rangle $ for $1\leq j\leq s$, we also write $\nnorm{f}_{G_1, \ldots, G_s} = \nnorm{f}_{\bv_1, \ldots, \bv_s}$;
    \item if additionally $R_j = T^{\bv_j}$ for $1\leq j\leq s$, we denote $\nnorm{f}_{G_1, \ldots, G_s} = \nnorm{f}_{R_1, \ldots, R_s}$;
    \item if additionally $R:=R_1 = \cdots R_s$, we write $\nnorm{f}_{G_1, \ldots, G_s} = \nnorm{f}_{s, R}$.
\end{enumerate}
In the latter case, we call $\nnorm{f}_{s, R}$ the \emph{degree-$s$ Host-Kra seminorm of $f$ along $R$}.

% ide 
% for $s\in\N$ and $R_1, \ldots, R_s\in\angle{T_1, \ldots, T_\ell}$. Expanding the definition, we thus have
% \begin{align*}
%     \nnorm{f}_{R_1, \ldots, {R_s}}^{2^{s}} = \lim_{H_s\to\infty}\E_{h_s\in[H_s]}\cdots \lim_{H_1\to\infty}\E_{h_1\in[H_1]}\int \Delta_{R_1, \ldots, R_s; h}f\; d\nu.
% \end{align*}
% As explained in \cite[Lemma 1]{H09}, we can replace $[H_i]$ in the average by any F{\o}lner sequence in $\Z$. Combined with \cite[Lemma 1.1]{BL15}, this allows us to replace any $s'$ iterated limits by the single limit $\lim\limits_{H\to\infty}\E_{h\in[H]^{s'}}$.

Box seminorms satisfy several standard properties freely used throughout the paper: %Their proofs can be found e.g. in \cite{DKKST24, FrKu22b, H09}. In what follows, we take $R_1, \ldots, R_{s}\in\angle{T_1, \ldots, T_\ell}$ and $f\in L^\infty(\mu)$.
\begin{enumerate}
    % \item (Inductive formula) we have
    % \begin{align*}
    %     \nnorm{f}_{G_1, \ldots, G_s}^{2^s}=\E_{\bh_s\in G_s}\nnorm{\Delta_{\bh_s}f}_{G_1, \ldots, G_{s-1}}^{2^{s-1}};
    % \end{align*}
    \item (Permutation invariance) for
    any permutation $\sigma:[s]\to[s]$, 
    \begin{align*}
        \nnorm{f}_{G_1, \ldots, G_s} = \nnorm{f}_{G_{\sigma(1)},\ldots, G_{\sigma(s)}};
    \end{align*}
    \item (Monotonicity) we have
    \begin{align*}
    \nnorm{f}_{G_1}\leq \nnorm{f}_{G_1, G_2}\leq \cdots\leq    \nnorm{f}_{G_1, \ldots, G_s};
    \end{align*}
    \item (Inductive formula) for any $1\leq s'\leq s$, 
    \begin{align*}
        \nnorm{f}_{G_1, \ldots, G_s}^{2^s} = \E_{\bh_s\in G_s}\cdots \E_{\bh_{s'+1}\in G_{s'+1}}\nnorm{\Delta_{\bh_{s'+1}, \ldots, \bh_s}f}_{G_1, \ldots, G_{s'}}^{2^{s'}};
        %\lim_{H_s\to\infty}\E_{h_s\in[H_s]}\cdots \lim_{H_{s'+1}\to\infty}\E_{h_{s'+1}\in[H_{s'+1}]}\nnorm{\Delta_{G_{s'+1}, \ldots, G_s; h'}f}_{G_1, \ldots, G_{s'}}^{2^{s'}},
    \end{align*}
    %where $h' = (h_{s'+1}, \ldots, h_s)$; %Moreover, the iterated limit can be replaced by the single limit $\lim\limits_{H\to\infty}\E_{h'\in [H]^{s-s'}}$.
    % \item (Gowers-Cauchy-Schwarz inequality) for any $(f_\ueps)_{\ueps\in\{0,1\}^s}\subseteq L^\infty(\mu)$, we have 
    % \begin{align*}
    %     \Big|\lim_{H_s\to\infty}\E_{h_s\in[H_s]}\cdots \lim_{H_1\to\infty}\E_{h_1\in[H_1]}\int \prod_{\ueps\in\{0,1\}^s}\CC^{|\ueps|} G_1^{\eps_1}\cdots G_s^{\eps_s}f_\ueps\; d\mu\Big|\leq \prod_{\ueps\in\{0,1\}^s}\nnorm{f_\ueps}_{G_1, \ldots, G_s}.
    % \end{align*}
    % Once again, the iterated limit on the left-hand side can be replaced by the single limit $\lim\limits_{H\to\infty}\E_{h\in [H]^{s}}$.
    \item \label{I:scaling} (Scaling) for any subgroups $G'_1\subseteq G_1, \ldots, G'_s\subseteq G_s$,
    \begin{align}\label{E: scaling 1}
        \nnorm{f}_{G_1, \ldots, G_s}\leq \nnorm{f}_{G_1', \ldots, G_s'};
    \end{align}
    %(We caution that $G_i^{r_i}$ denotes the $r_i$-th power of $G_i$, not $r_i$ copies of $G_i$, denoted by $G_i^{\times r_i}$.) 
    conversely, if $s\geq 2$ and $|G_i/G_i'| = r_i$ for all $1\leq i\leq s$, then
    \begin{align}\label{E: scaling 2}
        \nnorm{f}_{G_1', \ldots, G_s'}\leq |r_1\cdots r_s|^{1/2^s}\nnorm{f}_{G_1, \ldots, G_s}.
    \end{align}
\end{enumerate}

\subsection{Box factors and dual functions}\label{AA: box factors}
Once again, let $(X, \CX, \mu,$\! $T_1, \ldots, T_\ell)$ be a system and $G_1, \ldots, G_s\subseteq\Z^\ell$ be subgroups. Set $\{0,1\}^s_* := \{0,1\}^s\setminus\{{0}\}$. For $(f_\ueps)_{\ueps\in \{0,1\}^s_*}\subseteq L^\infty(\mu)$, we define the \emph{dual function of  $(f_\ueps)_{\ueps\in \{0,1\}^s_*}$ along $G_1, \ldots, G_s$} via
\begin{align*}
    \CD_{G_1, \ldots, G_s}((f_\ueps)_\ueps) := \E_{\bh_s\in G_s}\cdots \E_{\bh_1\in G_1} \prod_{\ueps\in\{0,1\}^s_*}\CC^{|\ueps|} T_1^{\eps_1 \bh_1 +\cdots +\eps_s\bh_s}f_\ueps.
    %\CD_{R_1, \ldots, R_s}((f_\ueps)_\ueps) := \lim_{H_s\to\infty}\E_{h_s\in[H_s]}\cdots \lim_{H_1\to\infty}\E_{h_1\in[H_1]} \prod_{\ueps\in\{0,1\}^s_*}\CC^{|\ueps|} R_1^{\eps_1}\cdots R_s^{\eps_s}f_\ueps.
\end{align*}
%The limit exists in $L^2(\mu)$ (see e.g. \cite[Proposition 2.2]{TZ16}) and can be replaced by the single limit $\lim\limits_{H\to\infty}\E_{h\in [H]^{s}}$ thanks to \cite[Lemma 1.1]{BL15}. 
We call $s$ the \emph{degree} of the dual function. If $f_\ueps = f$ for all $\ueps\in\{0,1\}^s_*$, we simply denote
\begin{align*}
    \CD_{G_1, \ldots, G_s}(f) := \CD_{G_1, \ldots, G_s}((f_\ueps)_\ueps),
\end{align*}
and when $G:=G_1 = \cdots = G_s$, we let
\begin{align*}
    \CD_{G_1, \ldots, G_s}((f_\ueps)_\ueps) = \CD_{s, G}((f_\ueps)_\ueps).
\end{align*}

Dual functions arise in the identity
\begin{align}\label{E: dual identity}
        \nnorm{f}_{R_1, \ldots, R_s}^{2^s} = \int f \cdot \CD_{R_1, \ldots, R_s}(f)\, d\mu,
\end{align}
which should be seen as a \emph{weak inverse theorem} for box seminorms: if $\nnorm{f}_{R_1, \ldots, R_s}$ is large, then $f$ correlates with a dual function along $R_1, \ldots, R_s$.
While too weak for applications regarding norm convergence, this inverse theorem is nevertheless sufficient for the analytic manipulations performed in the seminorm smoothing argument in order to show Host-Kra seminorm control.

While far from obvious, the $L^2(\mu)$-closed linear span of dual functions $\CD_{G_1, \ldots, G_s}((f_\ueps)_\ueps)$ forms an algebra $Z(G_1, \ldots, G_\ell)$ invariant under $T_1, \ldots, T_\ell$. Using the usual identification between closed invariant subalgebras of $L^2(\mu)$ and invariant sub-$\sigma$-algebras of $\CX$, there exists a factor $\CZ({G_1, \ldots G_s})\subseteq\CX$, called the \emph{box factor} along $G_1, \ldots, G_s$, such that $Z(G_1, \ldots, G_\ell) = L^2(\CZ({G_1, \ldots G_s}))$.
%We also define the \emph{box factor} $\CZ({G_1, \ldots G_s})$ to be the smallest factor with respect to which all dual functions $\CD_{G_1, \ldots, G_s}((f_\ueps)_\ueps)$ are measurable (equivalently, $L^2(\CZ({G_1, \ldots G_s}))$ is the closed linear span of such dual functions). 

The factor $\CZ({G_1, \ldots, G_s})$ satisfies the well-known property
\begin{align}\label{E: factor property}
    \nnorm{f}_{G_1, \ldots, G_s} = 0 \quad \Longleftrightarrow\quad \E(f|\CZ(G_1, \ldots, G_s)) = 0;
\end{align}
one direction follows from \eqref{E: dual identity} while the other one is a consequence of the Gowers-Cauchy-Schwarz inequality. In addition, box factors satisfying the following properties that can be directly deduced from the similar properties for seminorms:
\begin{enumerate}
    % \item (Inductive formula) we have
    % \begin{align*}
    %     \nnorm{f}_{G_1, \ldots, G_s}^{2^s}=\E_{\bh_s\in G_s}\nnorm{\Delta_{\bh_s}f}_{G_1, \ldots, G_{s-1}}^{2^{s-1}};
    % \end{align*}
    \item (Permutation invariance) $\CZ(G_1, \ldots, G_s) = \CZ(G_{\sigma(1)},\ldots, G_{\sigma(s)})$ for
    any permutation $\sigma:[s]\to[s]$; 
    \item (Monotonicity) we have
    \begin{align}\label{E: monotonicity of factors}
    \CZ(G_1)\subseteq \CZ({G_1, G_2})\subseteq \cdots\subseteq    \CZ({G_1, \ldots, G_s});
    \end{align}
    \item  (Scaling) for any subgroups $G'_1\subseteq G_1, \ldots, G'_s\subseteq G_s$, we have $$\CZ(G_1, \ldots, G_s)\subseteq \CZ(G_1', \ldots, G_s'),$$ with equality if $s\geq 2$ and every $G_i/G_i'$ is finite.
\end{enumerate}

When $G:=G_1 = \cdots = G_s$, we denote $\CZ_{s-1}(G) := \CZ(G_1, \ldots, G_s)$ and call it \emph{Host-Kra factor} along $G$ of degree $s-1$. The rather unfortunate choice of indices is justified by the \emph{Host-Kra structure theorem} (Theorem \ref{T: HK structure theorem}), which uncovers a deep connection between the (ergodic) Host-Kra factor of a fixed degree to nilsystems of the same degree. The Host-Kra structure theorem is the main reason why Host-Kra seminorms/factors are vastly preferable to box seminorms/factors. Except in a few special cases, the structure theory for the latter is completely unknown. The simplest possible open case is provided in the following problem.
\begin{problem}[Structure theory for box factors]\label{Pr: structure theorem for box}
    Let $(X, \CX, \mu,$\! $T_1, T_2)$ be a system. Find a useful structure theorem for the factor $\CZ(T_1, T_2, T_1T_2)$ (possibly on a suitable extension).
\end{problem}
The additive combinatorial version of Problem \ref{Pr: structure theorem for box} is currently the major stumbling block for proving reasonable bounds in the multidimensional Szemer\'edi theorem - see \cite[Section 4.2]{Pel24} for the discussion.

\subsection{Low-degree Host-Kra factors}\label{A: low-degree HK factors}
We now specialize to the case of a single transformation. Let $(X, \CX, \mu, T)$ be a system; we summarize the properties of the factors $\CZ_0(T), \CZ_1(T)$.
For $\CZ_0(T)$, we observe that $\CD_{1,T}(f) = \E(\overline f|\CI(T))$; hence
\begin{align}\label{E: deg 1 inverse theorem}
    \nnorm{f}_{1,T}^2 = \int f \cdot \E(\overline f|\CI(T))\; d\mu,
\end{align}
and $\CZ_0(T) = \CI(T)$
% \begin{align*}
%   \CZ_0(T) = \CI(T) = \{E\in \CX\colon\;  T^{-\bh} E = E\; \textrm{for\; all}\; \bh\in G\}
%   %\CZ(G) = \CI(G) = \{E\in \CX\colon\;  T^{-\bh} E = E\; \textrm{for\; all}\; \bh\in G\}
% \end{align*}
is the $\sigma$-algebra of $T$-invariant functions/sets. %Here and elsewhere, all the equalities are understood to hold up to sets of measure 0.
%\section{Nonergodic eigenfunctions}\label{A: nonergodic eigenfunctions}

To describe the $\CZ_1(T)$ factor, we need the notion of nonergodic eigenfunctions. %A \emph{eigenfunction} of $T$ is a function $f$ satisfying $Tf = \lambda f$ for $\lambda\in\C$. A  

\begin{definition}[Nonergodic eigenfunctions]
A function $\chi\in L^\infty(\mu)$ is called a \emph{nonergodic eigenfunction} of $T$ if it satisfies the following properties:
    \begin{enumerate}
        \item $T\chi = \lambda \chi$ for some $T$-invariant $\lambda\in L^\infty(\mu)$ (which we call the \emph{nonergodic eigenvalue} of $f$);
        \item $|\chi(x)|\in\{0,1\}$ for $\mu$-a.e. $x\in X$, and $\lambda(x)=0$ whenever $\chi(x)=0$.
\end{enumerate}
%We denote the set of nonergodic eigenfunctions of $T$ by $\CE(T)$.
\end{definition}

		 For ergodic systems, a nonergodic eigenfunction is either the zero function or a classical unit-modulus eigenfunction. For general systems, each nonergodic eigenfunction $\chi$ satisfies
		 \begin{equation*}%\label{E:nonergodiceigen}
		 	\chi(Tx)=1_E(x) \cdot e(\phi(x))\cdot \chi(x)
		 \end{equation*}
		  for some $T$-invariant set $E\in \CX$ and measurable $T$-invariant function $\phi \colon X\to \T$. 
          \begin{example}
              Consider the nonergodic system $X = \T^2$, $T(x,y) = (x, y+x)$. All its classical eigenfunctions are constant; by contrast, functions $\chi(x,y) = e(kx+ly)$ are nonergodic eigenfunctions for any $k,l\in\Z$ since $\chi(T(x,y)) = e(lx)\chi(x,y)$.
          \end{example}
          
          By \cite[Theorem 5.2]{FH18}, the factor $\CZ_1(T)$ is generated by nonergodic eigenfunctions.

Nonergodic eigenfunctions can be used to give a \emph{quantitative} inverse theorem for the degree-2 Host-Kra seminorm. The version below is used in the proof of Proposition \ref{P: degree lowering}; since it is stronger than those stated in the literature, we provide a full proof. 
\begin{proposition}
    Let $(X, \CX, \mu, T)$ be a system. For every 1-bounded $f\in L^\infty(\mu)$ and every $\veps>0$ we can find a nonergodic eigenfunction $\chi$ such that 
    \begin{align*}
        \nnorm{f}_{2,T}^4 \leq \int f\cdot \chi\; d\mu + \veps,
    \end{align*}
    and moreover $\E(f\cdot \chi|\CI(T))\geq 0$.
\end{proposition}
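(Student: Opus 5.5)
The plan is to go through the ergodic decomposition with respect to $T$ and use the spectral (Fourier) description of the degree-2 seminorm on each ergodic component. After that, I will glue the resulting eigenfunctions measurably into a single nonergodic eigenfunction.

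Write $\mu=\int \mu_x\,d\mu(x)$ for the ergodic decomposition of $\mu$ with respect to $T$. Recall the standard identity $\nnorm{f}_{2,T}^4=\int \nnorm{f}_{2,T,\mu_x}^4\,d\mu(x)$. On each ergodic component, $\nnorm{f}_{2,T,\mu_x}^4=\sum_{\beta}|\widehat{\sigma_{f,x}}(\{\beta\})|^2$, where the sum runs over atoms of the spectral measure. Equivalently, it equals $\sum_{\beta\in\Spec_x}|\langle f,e_{\beta,x}\rangle_{\mu_x}|^4$, with $e_{\beta,x}$ the unit-modulus eigenfunctions of $(X,\mu_x,T)$. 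Since $f$ is 1-bounded, $\sum_\beta|\langle f,e_{\beta,x}\rangle|^2\le 1$ by Bessel's inequality. This gives
\begin{align*}
\nnorm{f}_{2,T,\mu_x}^4\le \max_\beta |\langle f,e_{\beta,x}\rangle_{\mu_x}|^2\le \max_\beta|\langle f,e_{\beta,x}\rangle_{\mu_x}|.
\end{align*}
So for $\mu$-a.e. $x$ there is an eigenfunction $\chi_x$ of $(X,\mu_x,T)$ with $\nnorm{f}_{2,T,\mu_x}^4\le |\int f\chi_x\,d\mu_x|$.

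Next I rotate each $\chi_x$ by the unimodular constant that makes $\int f\chi_x\,d\mu_x=|\int f\chi_x\,d\mu_x|\ge0$. This is exactly where the condition $\E(f\chi|\CI(T))\ge0$ will come from, because $\E(f\chi|\CI(T))(x)=\int f\chi\,d\mu_x$.

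The main obstacle is measurability. The choice $x\mapsto\chi_x$ must assemble into a single function $\chi\in L^\infty(\mu)$ with $T\chi=\lambda\chi$ for a $T$-invariant $\lambda$, and with $|\chi|\in\{0,1\}$. This is the reason for the $\veps$. I will not select an exact maximizer but an $\veps$-approximate one, chosen via a countable procedure. I would use the result that $\CZ_1(T)$ is generated by nonergodic eigenfunctions \cite[Theorem 5.2]{FH18}, together with the structure of $L^2(\CZ_1(T))$ as a direct integral. Concretely, I would find countably many nonergodic eigenfunctions $\chi^{(k)}$ whose restrictions to almost every component, together with their rotations by rational angles, are $\veps$-dense among the eigenfunctions that matter. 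The components with $\nnorm{f}_{2,T,\mu_x}$ tiny can be discarded, which leaves only finitely many relevant eigenvalues per component.

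Then I partition $X$ into $T$-invariant sets $E_k$. On $E_k$, the function $\chi^{(k)}$, multiplied by a suitable $T$-invariant unimodular rotation, achieves
\begin{align*}
\int f\chi^{(k)}\,d\mu_x\ge \nnorm{f}_{2,T,\mu_x}^4-\veps.
\end{align*}
I then set $\chi=\sum_k 1_{E_k}\cdot u_k\cdot\chi^{(k)}$, where each $u_k$ is $T$-invariant and unimodular on $E_k$. A function of this form is again a nonergodic eigenfunction, since multiplication by invariant indicators and invariant phases preserves the defining properties. Integrating the componentwise bound over $x$ then gives $\nnorm{f}_{2,T}^4\le\int f\chi\,d\mu+\veps$, with $\E(f\chi|\CI(T))\ge0$ by construction.
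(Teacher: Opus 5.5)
Your argument is correct in outline, but it takes a different route from the paper. The paper treats the inverse theorem as a black box. \cite[Proposition 4.3]{FrKu22a} gives a nonergodic eigenfunction $\chi_0$ with $\nnorm{f}_{2,T}^4\le \Re\int f\chi_0\,d\mu+\veps$. The only new step is the phase correction $\chi:=\chi_0\lambda$, where $\lambda$ is the $T$-invariant function with $\lambda\,\E(f\chi_0|\CI(T))=|\E(f\chi_0|\CI(T))|$; this gives $\E(f\chi|\CI(T))\ge 0$ and $\int f\chi\,d\mu\ge\Re\int f\chi_0\,d\mu$.

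Your componentwise rotation is the same device. In addition, you re-derive the inverse theorem itself from the ergodic decomposition, Wiener's lemma and Bessel's inequality on each component. This costs you a measurable-selection argument. In return, the proof becomes self-contained modulo \cite[Theorem 5.2]{FH18} and the factor property of $\CZ_1(T)$. It also gives slightly more: done carefully, it yields $\nnorm{f}_{2,T}^4\le\int f\chi\,d\mu$ with no $\veps$. The $\veps$ is not what makes the choice measurable.

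The selection step is where your write-up only asserts what it needs. You claim that countably many nonergodic eigenfunctions represent, on a.e.\ component, every eigenvalue that matters. Knowing that $\CZ_1(T)$ is generated by nonergodic eigenfunctions does not immediately give this fibrewise statement, but the missing link is short.

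\begin{enumerate}
\item Fix nonergodic eigenfunctions $\chi^{(k)}$ whose linear span is dense in $L^2(\CZ_1(T))$. This is possible by \cite[Theorem 5.2]{FH18} and separability, since nonergodic eigenfunctions are closed under products and conjugation.
\item Approximate $\E(f|\CZ_1(T))$ by finite combinations of the $\chi^{(k)}$ and pass to a subsequence converging in $L^2(\mu_x)$ for a.e.\ $x$. Then on a.e.\ component, $\E(f|\CZ_1(T))$ lies in the closed span of the restrictions $\chi^{(k)}|_{\mu_x}$, each of which is $0$ or a unimodular eigenfunction.
\item Since $\nnorm{f-\E(f|\CZ_1(T))}_{2,T}=0$, your componentwise formula shows that $f$ and $\E(f|\CZ_1(T))$ have the same coefficients $\langle\cdot,e_{\beta,x}\rangle_{\mu_x}$ on a.e.\ component. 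Hence every $\beta$ with $\langle f,e_{\beta,x}\rangle_{\mu_x}\neq0$ is the eigenvalue of some $\chi^{(k)}|_{\mu_x}$.
\item Set $c_k:=\E(f\overline{\chi^{(k)}}|\CI(T))$. Then $\max_\beta|\langle f,e_{\beta,x}\rangle_{\mu_x}|=\max_k|c_k(x)|$, and the maximum is attained.
\item Let $E_k$ be the $T$-invariant set where $k$ is the first maximizing index. Put $\chi:=\sum_k 1_{E_k}u_k\overline{\chi^{(k)}}$, with $u_k:=\overline{c_k}/|c_k|$ where $c_k\neq0$ and $u_k:=1$ where $c_k=0$.
\end{enumerate}

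This gives $\E(f\chi|\CI(T))(x)=\max_k|c_k(x)|\ge\nnorm{f}_{2,T,\mu_x}^4$ for a.e.\ $x$, which integrates to the claim. A minor point: the atom masses are $\sigma_{f,x}(\{\beta\})$, not $\widehat{\sigma_{f,x}}(\{\beta\})$.
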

\begin{proof}
    By \cite[Proposition 4.3]{FrKu22a}, we can find a nonergodic eigenfunction $\chi_0$ such that
    \begin{align*}
        \nnorm{f}_{2,T}^4 \leq \Re\brac{\int f\cdot \chi_0\; d\mu} + \veps.
    \end{align*}
    %Modifying $\chi_0$ by a phase, we can remove the real part. It remains to show that we can modify $\chi_0$ into $\chi$ satisfying $\E(f\cdot \chi|\CI(T))\geq 0$. 
    Observe that
    \begin{align*}
        \Re\brac{\int f\cdot \chi_0\; d\mu} = \Re\brac{\int \E(f\cdot \chi_0|\CI(T))\; d\mu}\leq \int \abs{\E(f\cdot \chi_0|\CI(T))}\; d\mu.
    \end{align*}
    Then there exists a function $\lambda\in L^\infty(\mu)$ for which 
    \begin{align*}
        \abs{\E(f\cdot \chi_0|\CI(T))} = \lambda\cdot \E(f\cdot \chi_0|\CI(T)),
    \end{align*}
    and which takes value on the unit circle whenever $\E(f\cdot \chi_0|\CI(T)) \neq 0$ and equals 0 otherwise. Moreover, the definition clearly shows that $\lambda$ is $T$-invariant. Hence
    \begin{align*}
        \int \abs{\E(f\cdot \chi_0|\CI(T))}\; d\mu = \int \lambda\cdot\E(f\cdot \chi_0|\CI(T))\; d\mu = \int f\cdot (\chi_0\cdot \lambda)\; d\mu,
    \end{align*}
    and so the result follows with $\chi := \chi_0\cdot \lambda$, which can be easily seen to be be a nonergodic eigenfunction.
\end{proof}

When $T$ is ergodic, $\CZ_1(T)$ equals the \emph{Kronecker factor}, the largest factor of $\CX$ measure-isomorphic to a group rotation, generated by \emph{eigenfunctions} of $T$, i.e. functions $f\in L^\infty(\mu)$ satisfying $Tf = e(\beta) f$ for some $\beta\in\T$ (called the \emph{eigenvalue} of $f$). 
Many properties of $\CZ_1(T)$ can then be read off from its \emph{spectrum}, i.e. the subgroup of $\T$ consisting of all the eigenvalues of $T$.
% \begin{align*}
%     \Spec(T) :=\rem{\beta\in\T\colon\; Tf = e(\beta) f\; \textrm{for some}\; f\in L^\infty(\mu)}.
% \end{align*}

\subsection{Nilsystems}\label{A: nilsystems}
While the point of modern joint ergodicity tools is to avoid proving complicated equidistribution results on nilsystems, the latter nevertheless arise in many places throughout the survey. Therefore, we state the most important definitions, directing the reader to \cite[Chapters 10 \& 11]{HK18} for definitions and major properties. 

An {\em $s$-step  nilmanifold} is a compact homogeneous space $Y=G/\Gamma$ where $G$ is an $s$-step nilpotent Lie group and $\Gamma$ is a discrete cocompact subgroup.  %We denote by  $e_X$ the image  of the identity element  of $G$ in $X$. 
For $b\in G$, the transformation $S\colon Y\to Y$ defined by $Sy=by$ is called an {\em $s$-step  nilrotation} on $S$ and the system $(Y,\CY, m,S)$, where $\CY$ is the Borel $\sigma$-algebra and $m$ is the Haar measure on $Y$,  is called an {\em $s$-step nilsystem.}
%If $X=G/\Gamma$ is a nilmanifold and $b\in G$, $x\in X$, then the closure $Y$ of the sequence $(b^n\cdot x)_n$ admits the structure of a nilmanifold, and the action  of $b$ on $Y$ is uniquely ergodic  (\cite[Chapter~11, Theorem~17]{HK18} or \cite[Section~2]{Lei05a}).  
\begin{example}[Examples of nilsystems]\label{Ex: nilsystems}
    Three classical examples of nilsystems are:
    \begin{enumerate}
        \item Rotations on $\T$;
        \item Skew products $T(x_1, \ldots, x_d) = (x_1 + \alpha, x_2 + x_1, \ldots, x_d + x_{d-1})$ on $\T^d$;
        \item Left-multiplication on the Heisenberg nilmanifold $\begin{pmatrix} 1 & \R & \R \\ 0 & 1 & \R \\ 0 & 0 & \R \end{pmatrix}/\begin{pmatrix} 1 & \Z & \Z \\ 0 & 1 & \Z \\ 0 & 0 & \Z \end{pmatrix}$.
    \end{enumerate}
\end{example}

Following \cite{BHK05}, we say that $\psi:\Z\to\C$ is a {\em nilsequence} if it is a uniform limit of sequences of the form $n\mapsto F(b^ny)$ for some  $b\in G$, $y\in Y$, and $F\in C(Y)$ (we note a notational disparity between the ergodic and combinatorial works on the subject: additive combinatorists call such functions \emph{generalized nilsequences}, referring only to functions like $n\mapsto F(b^ny)$ as nilsequences). We call $\omega:\Z\to\C$ a \emph{null-sequence} if $\lim\limits_{N\to\infty}\E\limits_{n\in[N]}|\omega(n)|^2 =0$. 
\begin{example}[Examples of nilsequences]\label{Ex: nilsequences}
    The nilsystems in Example \ref{Ex: nilsystems} give rise to the following three classes of nilsequences:
    \begin{enumerate}
        \item (Linear phases) $\psi(n) = e(\alpha n)$ for any $\alpha\in\T$;
        \item (Polynomial phases) $\psi(n) = e(p(n))$ for any $p\in\R[t]$;
        \item (Bracket/generalized polynomials) $\psi(n) = e(\alpha n\sfloor{\beta n})$.
    \end{enumerate}
\end{example}

\section{Hardy fields}\label{A: Hardy}
Let $B$ be the collection of equivalence classes (``germs'') of real valued functions defined on some halfline $(t_0,\infty)$ for $t_0\geq 0$, where we identify two functions that eventually agree.  A \emph{Hardy field} is a subfield of the ring $(B, +, \cdot)$ that is closed under differentiation~\cite{Hardy12}. We call $a\colon (t_0,\infty)\to\R$ a \emph{Hardy-field function} if it belongs to some Hardy field, in which case we call $(a(n))_n$ a \emph{Hardy sequence}.
 By abuse of notation, we speak of ``functions'' rather than ``germs'', understanding that all the operations defined and statements made for elements of $\CH$ are considered only for sufficiently large values of $t\in \mathbb{R}$. 
  We say that $a\in\CH$ has \emph{polynomial growth} if there exists $d>0$ such that $a(t)\ll t^d$.   
 Basic properties of Hardy-field functions can be found in \cite{Boshernitzan-equidistribution, Fr10, Kho}.

A typical example of a Hardy field is the Hardy field $\mathcal{LE}$ of \emph{logarithmico-exponential functions}, i.e., functions constructed using a finite combination of symbols $+, -, \times, \div$, $\exp,$ $\log$ acting on the real variable $t$ and on real constants. These include, for example,  the functions $t^b (\log{t})^c$, where $b,c\in \R$. For many results stated in this survey, we need the Hardy field to be closed under composition and compositional inverses (and occasionally also under variable shifts $t\mapsto t + c$). Since there exists a Hardy field satisfying these closure properties and containing $\CL\CE$, we fix such a field throughout the survey, and we let $\CH$ be the collection of Hardy functions of polynomial growth lying inside this field.
%Here, we just recall that by $\CH$ we denote the collection of all Hardy functions of polynomial growth lying in some fixed Hardy field that contains $\CL\CE$, all logarithmico-exponential functions, and enjoys a bunch of other nice properties (it is closed under compositions, compositional inverses, and variable shifts).
%{\em In this article, we focus on Hardy-field functions in $\mathcal{LE}$.} 
    % Our results also apply beyond
    % this setting, to more general Hardy-fields, provided they satisfy the mild assumptions in \cite[Section 2.2]{DKKST24}.
    % {In our statements, we also assume that the Hardy-field functions are defined on all of $\R_+$, but all arguments remain valid without modification if they are defined only on a halfline.}
    
  \bibliography{library}
\bibliographystyle{plain}

\end{document}